\let\mathescr=\mathscr
\DeclareMathAlphabet{\mymathbb}{U}{BOONDOX-ds}{m}{n}
\newtheorem{theorem}{Theorem}[section]
\newtheorem{lemma}[theorem]{Lemma}
\newtheorem{proposition}[theorem]{Proposition}
\newtheorem{coro}[theorem]{Corollary}
\theoremstyle{definition}
\newtheorem{definition}[theorem]{Definition}
\newtheorem{assumption}[theorem]{Assumption}
\newtheorem{example}[theorem]{Example}
\theoremstyle{remark}
\newtheorem{remark}[theorem]{Remark}
\numberwithin{equation}{section}
\DeclareMathAlphabet{\mathsl}{OT1}{cmss}{m}{sl}
\SetMathAlphabet{\mathsl}{bold}{OT1}{cmss}{bx}{sl}
\newcommand{\cK}{\ensuremath{\mathcal K}}
\newcommand{\cN}{\ensuremath{\mathcal N}}
\newcommand{\cS}{\ensuremath{\mathcal S}}
\newcommand{\cX}{\ensuremath{\mathcal X}}
\newcommand{\cY}{\ensuremath{\mathcal Y}}
\newcommand{\bbR}{\ensuremath{\mathbb R}}
\newcommand{\rmv}{\ensuremath{\mathrm{v}}}
\newcommand{\rme}{\ensuremath{\mathrm{e}}}
\DeclareMathOperator{\prob}{\mathbb{P}}
\newcommand{\ldef}{\ensuremath{\mathrel{\mathop:}=}}
\def\indicator{{\mathchoice {1\mskip-4mu\mathrm l}
{1\mskip-4mu\mathrm l}{1\mskip-4.5mu\mathrm l}
{1\mskip-5mu\mathrm l}}}
\newcommand{\C}{\mathbb{C}}
\newcommand{\D}{\mathbb{D}}
\newcommand{\E}{\mathbb{E}}
\newcommand{\h}{\mathbb{H}}
\newcommand{\N}{\mathbb{N}}
\newcommand{\Z}{\mathbb{Z}}
\newcommand{\p}{\mathbb{P}}
\newcommand{\Q}{\mathbb{Q}}
\newcommand{\R}{\mathbb{R}}
\newcommand{\s}{\mathbb{S}}
\newcommand{\Fh}{\mathfrak {h}}
\newcommand{\CB}{\mathcal {B}}
\newcommand{\CD}{\mathcal {D}}
\newcommand{\CF}{\mathcal {F}}
\newcommand{\CS}{\mathcal {S}}
\newcommand{\CT}{\mathcal {T}}
\newcommand{\CW}{\mathcal {W}}
\newcommand{\CN}{\mathcal {N}}
\newcommand{\CA}{\mathcal {A}}
\newcommand{\CH}{\mathcal {H}}
\newcommand{\SLE}{{\rm SLE}}
\newcommand{\QLE}{{\rm QLE}}
\newcommand{\dist}{\text{dist}}
\newcommand{\diam}{\mathop{\text{diam}}}
\newcommand{\re}{\text{Re}}
\newcommand{\cov}{\text{cov}}
\newcommand{\one}{{\bf 1}}
\newcommand{\wt}{\widetilde}
\newcommand{\ol}{\overline}
\newcommand{\giv}{\,|\,}
\newcommand{\sol}[1]{{}}
\newcommand{\strip}{{\mathscr{S}}}
\newcommand{\cyl}{\mathscr{C}}
\newcommand{\wh}{\widehat}
\newcommand{\fb}[2]{B^\bullet(#1,#2)}
\newcommand{\bandlaw}[2]{\mu_{{\mathrm{Band}}}^{L=#1,W=#2}}
\newcommand{\diskweighted}[2]{\mu_{\mathrm{BD,W}}^{A=#1,L=#2}}
\newcommand{\bmlaw}[1]{\mu_{\mathrm{BM}}^{A=#1}}
\newcommand{\bminflaw}{\mu_{\mathrm{BM}}}
\newcommand{\bdisklaw}[1]{\mu_{\mathrm{BD}}^{L=#1}}
\newcommand{\bdisklawweighted}[1]{\mu_{\mathrm{BD,W}}^{L=#1}}
\newcommand{\qsphereinflaw}{\mu_{\mathrm{QSPH}}}
\newcommand{\qdiskinflaw}{\mu_{\mathrm{QD}}}
\newcommand{\qdisk}[1]{\mu_{\mathrm{QD}}^{L=#1}}
\newcommand{\qdiskweighted}[1]{\mu_{\mathrm{QD,W}}^{L=#1}}
\newcommand{\qwedge}[1]{\mu_{\mathrm{QW}}^{W=#1}}
\newcommand{\qdiskweightedfixedarea}[2]{\mu_{\mathrm{QD,W}}^{A=#1,L=#2}}
\newcommand{\qdistnoarg}[1]{d_{#1}}
\newcommand{\qdist}[3]{d_{#1}(#2,#3)}
\newcommand{\qball}[3]{B_{#1}(#2,#3)}
\newcommand{\qmeasure}[1]{\mu_{#1}}
\newcommand{\qbmeasure}[1]{\nu_{#1}}
\newcommand{\qfb}[4]{B_{#1,#2}^\bullet(#3,#4)}
\newcommand{\eball}[2]{B(#1,#2)}
\newcommand{\innerboundary}{\partial_{\mathrm {In}}}
\newcommand{\outerboundary}{\partial_{\mathrm {Out}}}
\newcommand{\CRT}{\mathrm{CRT}}
\newcommand{\BM}{\mathrm{BM}}
\newcommand{\MCPU}[1]{\mathbb{M}^{\mathrm{CPU}}_{#1}}
\newcommand{\ccw}{{\boldsymbol {\circlearrowleft}}}
\newcommand{\cw}{{\boldsymbol {\circlearrowright}}}
\definecolor{mygreen}{cmyk}{1,0,1,0}
\begin{document}

\title[Two-sided heat kernel bounds for $\sqrt{8/3}$-Liouville Brownian motion]{Two-sided heat kernel bounds for $\sqrt{8/3}$-Liouville Brownian motion}

\author{Sebastian Andres}
\address{%Centre for Mathematical Sciences, University of Cambridge
%\newline\indent Department of Mathematics, University of Manchester
%\newline\indent
Institut f\"ur Mathematische Stochastik, Technische Universit\"at Braunschweig}
\curraddr{Institut f\"ur Mathematische Stochastik,
Technische Universit\"at Braunschweig,
Universit\"atsplatz 2, 38106 Braunschweig, Germany}
\email{sebastian.andres@tu-braunschweig.de}
\thanks{}

\author{Naotaka Kajino}
\address{Department of Mathematics, Graduate School of Science, Kobe University
\newline\indent Research Institute for Mathematical Sciences, Kyoto University}
\curraddr{Research Institute for Mathematical Sciences,
Kyoto University,
Kitashirakawa-Oiwake-cho, Sakyo-ku, Kyoto 606-8502, Japan}
\email{nkajino@kurims.kyoto-u.ac.jp}
\thanks{}

\author{Konstantinos Kavvadias}
\address{Department of Mathematics, Massachusetts Institute of Technology}
\curraddr{Department of Mathematics,
Massachusetts Institute of Technology,
77 Massachusetts Avenue, Cambridge MA 02139-4307, USA 
}
\email{kavva941@mit.edu}
\thanks{}

\author{Jason Miller}
\address{Centre for Mathematical Sciences, University of Cambridge}
\curraddr{Centre for Mathematical Sciences,
University of Cambridge,
Wilberforce Road, Cambridge
CB3 0WB, UK}
\email{jpmiller@statslab.cam.ac.uk}
%\thanks{}

\subjclass[2020]{Primary: 60J35, 60J55, 60J60, 60K37; Secondary: 31C25, 60J45, 60G15.}

\keywords{Liouville quantum gravity, Liouville Brownian motion, heat kernel}

\date{March 12, 2026}%\today}

\dedicatory{}

\begin{abstract}
Liouville Brownian motion (LBM) is the canonical diffusion process on a Liouville quantum gravity (LQG) surface.  In this work, we establish upper and lower bounds for the heat kernel for LBM when $\gamma=\sqrt{8/3}$ in terms of the $\sqrt{8/3}$-LQG metric which are sharp up to a polylogarithmic factor in the exponential.
\end{abstract}

\maketitle
\setcounter{tocdepth}{1}
\tableofcontents

\section{Introduction}
\label{sec:intro}

\subsection{Background}
During the past 15 years, substantial research activity has been focused on the study of a random geometry induced by the two-dimensional Gaussian free field, commonly known under the banner of \emph{Liouville quantum gravity}. For an overview, see, for instance, the recent surveys \cite{BP24, SH23, Gw20}. 
Suppose that $h$ is an instance of (some form of) the Gaussian free field (GFF) on a domain $D \subseteq \C$.  The Liouville quantum gravity (LQG) surface described by $h$ formally refers to the random two-dimensional Riemannian manifold with metric tensor
\begin{equation}
\label{eqn:metric_tensor_intro}
e^{\gamma h(z)} (dx^2 + dy^2)
\end{equation}
where $\gamma \in (0,2]$ is a parameter and $dx^2 + dy^2$ denotes the Euclidean metric on $D$.
This expression does not make literal sense because $h$ is a distribution and not a function.
There has been a considerable amount of work in recent years aimed at making rigorous sense of~\eqref{eqn:metric_tensor_intro}.
The construction of the volume form of~\eqref{eqn:metric_tensor_intro} is related to Kahane's theory of \emph{Gaussian multiplicative chaos} \cite{k1985gmc} and appears in a number of places.  The approach taken in \cite{ds2011kpz} is to let $h_\epsilon(z)$ be the average of $h$ on $\partial \eball{z}{\epsilon}$ and then take
\begin{equation}
\label{eqn:area_intro}
\qmeasure{h} = \lim_{\epsilon \to 0} \epsilon^{\gamma^2/2} e^{\gamma h_\epsilon(z)} dz
\end{equation}
where $dz$ denotes Lebesgue measure on $D$, see \cite{Be17} for an elementary approach establishing a universal limit measure for a general class of mollifications of the field.

The metric (i.e., two-point distance function) associated with~\eqref{eqn:metric_tensor_intro} was first constructed in the case $\gamma=\sqrt{8/3}$ in \cite{ms2015qle1,ms2016qle2,ms2016qle3,ms2015mapmaking}, building on \cite{ms2016qledef} and using the tools from \cite{dms2014mating,ms2015spheres}.
The construction is indirect and is based on defining a growth process called \emph{quantum Loewner evolution} which turns out to describe the growth of metric balls in the resulting metric space.
The metric associated with~\eqref{eqn:metric_tensor_intro} was subsequently constructed for all $\gamma \in (0,2)$ as a limit of the type~\eqref{eqn:area_intro} in \cite{dddf2019tight,dfgs2019weak,gm2019local,gm2019conf,gm2019metric,gm2019confcov} but with $\gamma$ replaced by $\xi = \gamma/d_\gamma$ where $d_\gamma$ is the exponent constructed in \cite{dzz2019heat}.  In the present work, we will focus on the case that $\gamma=\sqrt{8/3}$.  This value is special because it turns out to be equivalent to the Brownian map \cite{ms2016qle2,ms2015mapmaking}, which is the Gromov-Hausdorff-Prokhorov scaling limit of random quadrangulations \cite{lg2013brownianmap,m2013brownianmap}.

Our main focus will be on the relationship between \emph{Liouville Brownian motion} (LBM) and the $\sqrt{8/3}$-LQG metric.
Recall that LBM is the Brownian motion associated with~\eqref{eqn:metric_tensor_intro} and was first constructed in \cite{grv2016lbm,Be15}.
It is defined as a time change of a standard planar Brownian motion where the change of time depends on the underlying LQG surface.
By general theory, the LBM turns out to be symmetric with respect to the Liouville measure $\qmeasure{h}$.
In \cite{grv2014dirichlet} Garban, Rhodes and Vargas also identified the Dirichlet form associated with the LBM and showed that its transition semigroup is absolutely continuous with respect to $\qmeasure{h} $,
meaning that the Liouville heat kernel $p_t(x,y)$ exists. Moreover, they observed that the intrinsic metric
generated by that Dirichlet form is identically zero, which indicates some non-Gaussian heat kernel behavior.
This degeneracy of the intrinsic metric is known to occur typically for diffusions on fractals, whose heat kernels indeed satisfy the so-called sub-Gaussian estimates; see e.g.\ the survey articles \cite{Ba13,Ku14} and references therein.
The works \cite{mrvz2016heat,AK16} establish the continuity of the Liouville heat kernel $p_{t}(x,y)$ in $(t,x,y)$ and some upper and lower bounds on it.
The bounds in \cite{AK16} have successfully identified the order of the on-diagonal part $p_{t}(x,x)$ for small $t$ as $t^{-1}$, up to a factor of a power of $\log t^{-1}$ reflecting the randomness of the environment and except that the lower bound is proved only for $\qmeasure{h}$-a.e.\ $x$.
On the other hand, for the off-diagonal behavior of $p_{t}(x,y)$, the sub-Gaussian upper bounds obtained in \cite{mrvz2016heat,AK16} are stated in terms of the \emph{Euclidean metric} and thereby are expected to be far from being sharp, and the known sub-Gaussian lower bound due to \cite{mrvz2016heat} gives a decay estimate only in $t$, with an exponent which is also not expected to be sharp, for each fixed $x,y$.
The work \cite{dzz2019heat} is focused on the transition kernel for a random walk on a certain graph approximation to LQG between fixed points and uses it to construct the dimension exponent for LQG.

The definition of LBM is further motivated by recent works which have shown that it arises as the scaling limit of simple random walk on certain graph approximations to LQG.  The convergence was first proved modulo time parameterization in \cite{gms2017matedcrt} (for the so-called mated-CRT map) and \cite{gms2018tuttepoissonvoronoi} (for the Poisson-Voronoi tessellation of the Brownian map, equivalently $\sqrt{8/3}$-LQG) using the invariance principle established in \cite{gms2018rwre}.  The convergence was upgraded to obtain the time parameterization in \cite{bg2020lbm}.

Most results in LQG, including the present work, remain specific to two dimensions and have not yet been extended to
higher dimensions, which is mainly due to the critical role of conformal invariance in two-dimensional results that is not available in higher dimension. Nonetheless, recent
works \cite{Ce22,DGZ23, DHKS24, DHKS23, BG25}  initiated the study of higher-dimensional analogs of LQG. In this context, higher dimensional version of LBM have been constructed in \cite{DHKS24, BG25}, where in \cite{BG25} the short-time asymptotics of the heat kernel along the diagonal and the spectral dimension are identified.

\subsection{Main results}

The main contributions of the present work are sharp off-diagonal upper and lower bounds for the (continuous) Liouville heat kernel $p_{t}(u,v)$ in the case $\gamma=\sqrt{8/3}$ in terms of the $\sqrt{8/3}$-LQG metric, denoted by $\qdistnoarg{h}$ in the following,  which hold for all points simultaneously for a.e.\ instance of the $\sqrt{8/3}$-LQG surface and match up to polylog errors in the exponent.
We will state and prove our results in the case that the underlying LQG surface is the $\sqrt{8/3}$-LQG sphere \cite{dms2014mating,dkrv2016sphere}.  By absolute continuity, one can extract similar heat kernel bounds for other LQG surfaces.

\begin{theorem}
\label{thm:ubd}
There exists a finite constant $\kappa >0$ such that the following is true.  For a.e.\ instance $\CS = (\s^2,h)$ of the $\sqrt{8/3}$-LQG sphere, there exist random positive constants $C_i=C_i(h)$, $i=1,2$, such that for all $u,v \in \CS$ and $t \in (0,1/2]$, 
\begin{equation} \label{eq:ubd}
p_t(u,v) \leq \frac{C_1 (\log t^{-1})^\kappa}{t} \exp\Biggl( - C_2 \biggl(\frac{\qdist{h}{u}{v}^{4}}{t} \biggr)^{\frac{1}{3}} \biggl( \log\biggl( e + \frac{\qdist{h}{u}{v}}{t} \biggr) \biggr)^{-\kappa} \Biggr).
\end{equation}
\end{theorem}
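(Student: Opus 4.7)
The plan is to follow the classical strategy for sub-Gaussian heat-kernel upper bounds on fractal-type spaces: combine a uniform on-diagonal estimate with a sharp exit-time tail bound expressed in the $\qdistnoarg{h}$-metric, and then convert these into an off-diagonal bound via an iterated strong-Markov argument in the spirit of Barlow--Bass and Grigoryan--Telcs. The target exponent is the universal sub-Gaussian form $(d^{d_{w}}/t)^{1/(d_{w}-1)}$ with walk dimension $d_{w}=4$, which matches both the Hausdorff dimension $d_{H}=4$ of the $\sqrt{8/3}$-LQG sphere and the prefactor $C_{1}/t$, since the Liouville mass of a $d_{h}$-ball of radius $r$ is of order $r^{4}$ up to polylog corrections. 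Note that Davies' perturbation method is unavailable here because the intrinsic metric of the LBM Dirichlet form vanishes, as observed in \cite{grv2014dirichlet}, so a route through exit times is essentially forced.

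The first ingredient is a pointwise on-diagonal bound $p_{t}(u,u)\leq C_{1}t^{-1}(\log t^{-1})^{\kappa}$ valid for every $u\in \CS$ and small $t$. This would be obtained by upgrading the $\qmeasure{h}$-a.e.\ estimate of \cite{AK16} using a uniform lower volume bound $\qmeasure{h}(\qball{h}{u}{r})\geq r^{4}(\log r^{-1})^{-\kappa}$ for small $r$, combined with a Nash-type inequality for the LBM Dirichlet form of \cite{grv2014dirichlet}. The second and crucial ingredient is the exit-time tail
\[
\prob^{u}\bigl[\tau_{\qball{h}{u}{r}}\leq s\bigr]\leq C\exp\bigl(-c(r^{4}/s)^{1/3}(\log r^{-1})^{-\kappa}\bigr),
\]
uniform in $u$ and in small $r,s$. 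The key tool is the representation $X_{t}=B_{F^{-1}(t)}$ of LBM as a time-change of planar Brownian motion by the positive continuous additive functional $F$ with Revuz measure $\qmeasure{h}$: the LBM exit time from $\qball{h}{u}{r}$ equals $F(\sigma)$ where $\sigma$ is the planar Brownian exit time from the same Euclidean open set. Sharpness here comes from using the $\sqrt{8/3}$-LQG metric to obtain genuine two-sided control of $d_{h}$-balls in terms of the Liouville occupation of planar Brownian motion, replacing the suboptimal Euclidean comparisons of \cite{mrvz2016heat,AK16}.

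With these two inputs, the chaining step proceeds as follows: for $d=\qdist{h}{u}{v}$, one covers a near-geodesic from $u$ to $v$ by roughly $k$ overlapping balls of radius $r=d/k$ in the $d_{h}$-metric, and observes that on the event $\{X_{t}\in\qball{h}{v}{r}\}$ the process must successively exit each of these balls. Iterating the strong Markov property and applying the Step~2 tail bound yields a product estimate of the form $\exp(-ck(r^{4}/(t/k))^{1/3}(\log)^{-\kappa})$. Optimizing so that $t=k(d/k)^{4}$, i.e.\ $k\asymp (d^{4}/t)^{1/3}$, produces the sub-Gaussian exponent of \eqref{eq:ubd}; combining with the on-diagonal bound via an integration against the Liouville measure on a ball around $v$ and the volume lower bound then gives the prefactor $C_{1}(\log t^{-1})^{\kappa}/t$.

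The principal obstacle will be Step~2: proving the exit-time tail bound with polylog-sharp corrections, uniformly in $u\in\CS$. This requires quantitative, uniform two-sided distortion estimates between Euclidean and $d_{h}$-geometry at all dyadic scales, together with a multiscale absorption of the multifractal fluctuations of the GFF so that the logarithmic corrections remain of the form $(\log)^{\kappa}$ rather than becoming polynomial. The fact that the log factor inside the exponential of \eqref{eq:ubd} appears with a \emph{negative} power $-\kappa$ reflects that near-geodesic realizations of $d_{h}$ produce the most efficient LBM trajectories, and quantifying this uniformly along the entire random sphere is the delicate crux of the argument.
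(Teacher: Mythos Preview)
Your high-level architecture is correct and matches the paper: an on-diagonal bound (from \cite{AK16}, upgraded pointwise by absolute continuity with respect to the GFF on bounded domains), a two-sided exit-time estimate $E_{u}[\tau_{\qball{h}{u}{r}}]\asymp r^{4}$ up to polylog, and then a Grigor'yan--Telcs chaining (the paper packages this last step as a general theorem in its Section~2, which extends \cite{GT12,GK17} to allow polylog fluctuations in the inputs). So Steps~1 and~3 are essentially as in the paper.

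The gap is Step~2, the exit-time \emph{lower} bound, which you correctly flag as the crux. Your proposed route---write $\tau_{\qball{h}{u}{r}}=F(\sigma)$ and control the Liouville occupation of planar Brownian motion via ``two-sided distortion estimates between Euclidean and $d_{h}$-geometry at all dyadic scales''---cannot give the sharp $r^{4}$ scaling. The only quantitative comparison available between $d_{h}$ and Euclidean distance is the bi-H\"older bound $C^{-1}|z-w|^{1/\alpha}\leq d_{h}(z,w)\leq C|z-w|^{\alpha}$ with $\alpha$ far from $1$. Feeding this into the Green-function representation $E_{u}[\tau_{B}]=\int_{B}G_{B}(u,w)\,d\qmeasure{h}(w)$ works perfectly for the \emph{upper} exit-time bound (and that is exactly how the paper proves it), but for the lower bound it yields only $E_{u}[\tau]\gtrsim r^{4/\alpha^{2}}$, which is polynomially weaker than $r^{4}$. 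The ``multiscale absorption of multifractal fluctuations'' language in your last paragraph does not supply a mechanism to close this gap.

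The paper's solution is entirely different and constitutes its main technical contribution. It abandons Euclidean comparison and instead constructs, around every point and at every scale $r$, an annulus assembled from ``good'' chunks of $\SLE_{6}$, via a percolation argument on quantum disks (Section~5 in the paper). Each good chunk is, by definition, guaranteed to contain a definite amount of quantum mass at a definite $d_{h}$-distance from its boundary. One then shows---using the fact that the hull of planar Brownian motion has the same law as a whole-plane $\SLE_{6}$ hull, together with a skinniness estimate for $\SLE_{6}$---that whenever the LBM crosses such an annulus it must enter deep into a good chunk and thereby accumulate at least $r^{4}(\log r^{-1})^{-\kappa}$ units of Liouville time. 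Uniform existence of these annuli (at every center and scale) is what delivers the uniform lower exit-time bound. Nothing in your proposal points toward this $\SLE_{6}$/percolation machinery.
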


\begin{theorem}
\label{thm:lbd}
There exists a finite constant $\kappa>0$ such that the following is true.  For a.e.\ instance $\CS = (\s^2,h)$ of the $\sqrt{8/3}$-LQG sphere, there exists a random constant $C=C(h) \in (0,1)$ such that for all $u,v \in \CS$ and $0 < t < C \qdist{h}{u}{v}$,

\begin{equation} \label{eq:lbd}
p_t(u,v) \geq \exp\Biggl( - \biggl(\frac{\qdist{h}{u}{v}^{4}}{t} \biggr)^{\frac{1}{3}} \biggl( \log\biggl( \frac{\qdist{h}{u}{v}}{t} \biggr) \biggr)^{\kappa} \Biggr).
\end{equation}
Moreover, for all $u \in \CS$ and $t \in (0,1/2]$, we have that $p_t(u,u) \geq C t^{-1} (\log t^{-1})^{-\kappa}$.
\end{theorem}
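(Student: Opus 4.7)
The plan is to follow the classical route for sub-Gaussian heat kernel lower bounds on metric measure spaces: first establish a near-diagonal lower bound at the natural diffusive scale $t^{1/4}$, then propagate it to arbitrary pairs $(u,v)$ by a Chapman--Kolmogorov chaining argument along a $\qdistnoarg{h}$-geodesic. The exponent $4/3$ in \eqref{eq:lbd} is characteristic of walk dimension $d_{w}=4$, consistent with the $\sqrt{8/3}$-LQG surface being equivalent to the Brownian map and with $\qmeasure{h}$-balls in the $\qdistnoarg{h}$-metric having volume $\asymp r^{4}$ up to polylogarithmic corrections, so that the on-diagonal rate $p_{t}(u,u) \asymp 1/\qmeasure{h}(\qball{h}{u}{t^{1/4}}) \asymp t^{-1}$ (modulo polylog) emerges naturally.

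The first step is the near-diagonal/on-diagonal bound. Starting from polylog-sharp volume estimates $\qmeasure{h}(\qball{h}{u}{r}) \asymp r^{4}$ and matching exit-time estimates (expected exit time of LBM from $\qball{h}{u}{r}$ is $\asymp r^{4}$), both up to polylogarithmic factors and uniformly in $u$, I would argue that with uniformly positive probability the LBM started at $u$ remains in $\qball{h}{u}{Ct^{1/4}\psi(t)}$ up to time $t$, where $\psi$ denotes a generic polylog. Combined with the Markov property and a Fubini/mean-value argument in the spirit of Barlow--Bass and Grigoryan--Telcs, this yields
\begin{equation*}
p_{t}(u,v) \;\geq\; \frac{c}{\qmeasure{h}\bigl(\qball{h}{u}{Ct^{1/4}\psi(t)}\bigr)}
\qquad\text{whenever } \qdist{h}{u}{v} \leq c\, t^{1/4}\psi(t)^{-1}.
\end{equation*}
Taking $v=u$ and inserting the volume estimate yields the on-diagonal lower bound $p_{t}(u,u) \geq C t^{-1}(\log t^{-1})^{-\kappa}$.

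For the off-diagonal bound, set $R=\qdist{h}{u}{v}$, $N \asymp (R^{4}/t)^{1/3}$, $s = t/N$, and choose waypoints $u = u_{0}, u_{1}, \dots, u_{N} = v$ along a $\qdistnoarg{h}$-geodesic from $u$ to $v$ with $\qdist{h}{u_{i-1}}{u_{i}} \asymp R/N \asymp s^{1/4}$, so that consecutive waypoints are at the diffusive scale associated with time $s$. Chapman--Kolmogorov gives
\begin{equation*}
p_{t}(u,v) \;\geq\; \int p_{s}(u_{0},w_{1})\,p_{s}(w_{1},w_{2})\cdots p_{s}(w_{N-1},u_{N})\,d\qmeasure{h}(w_{1})\cdots d\qmeasure{h}(w_{N-1}),
\end{equation*}
and restricting each $w_{i}$ to $\qball{h}{u_{i}}{\rho}$ with $\rho \asymp s^{1/4}$, each factor is bounded below via the near-diagonal estimate by $c\psi(s)^{-a}/\qmeasure{h}(\qball{h}{u_{i}}{C\rho\psi(s)})$. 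The product of these lower bounds integrated against the product of ball volumes leaves a factor of order $(c\,\psi(s)^{-b})^{N}$, which after taking logarithms gives a loss of order $N\log\psi(s) \lesssim N(\log(R/t))^{\kappa}$, precisely the form demanded by \eqref{eq:lbd}.

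The main obstacle is the uniform, polylog-sharp tracking of the volume, exit-time, and near-diagonal constants simultaneously at every scale and at every point along the geodesic chain. The volume of $\qdistnoarg{h}$-balls and the LBM exit times have place-dependent, random polylog fluctuations, and the chaining must succeed for \emph{all} pairs $(u,v)$ on the sphere for a.e.\ instance of $h$. Producing, for a.e.\ $h$, a family of chains whose waypoints $u_{i}$ admit good volume and exit-time control at scale $s^{1/4}$ simultaneously in $i$ and $s$ -- thereby ensuring the per-step loss is polylog in $1/s$ rather than polynomial -- is the delicate point, and it is what forces the polylog correction $(\log(R/t))^{\kappa}$ inside the exponential rather than a clean Gaussian-type exponent. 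Union bounds over dyadic scales together with the uniform regularity estimates for the $\sqrt{8/3}$-LQG metric and measure developed earlier in the paper are the natural tools to carry this through.
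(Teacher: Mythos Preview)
Your on-diagonal argument matches the paper exactly: Lemma~\ref{lem:on_diagonal_lbd} invokes Theorem~\ref{thm:heat-kernel-simple}-(ii), feeding in the volume upper bound of Theorem~\ref{thm:ball_concentration} and the two-sided exit-time estimate of Theorem~\ref{thm:bm_exit} to obtain $p_t(u,u)\gtrsim t^{-1}(\log t^{-1})^{-\kappa}$.

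The off-diagonal step has a genuine gap. The near-diagonal bound $p_t(u,v)\gtrsim c/\qmeasure{h}(\qball{h}{u}{Ct^{1/4}\psi(t)})$ for $\qdist{h}{u}{v}\lesssim t^{1/4}\psi(t)^{-1}$ does \emph{not} follow from volume and exit-time estimates alone. In the Barlow--Bass/Grigor'yan--Telcs framework this requires a Harnack-type inequality or some other mechanism to push mass from a neighborhood of $u$ to a neighborhood of a \emph{specified} target $v$; the exit-time lower bound only yields confinement $P_u[X_t\in\qball{h}{u}{r}]\geq c$, hence (via Cauchy--Schwarz) the on-diagonal estimate, but nothing about where inside the ball $X_t$ lands. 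Your phrase ``Combined with the Markov property and a Fubini/mean-value argument'' is precisely where the argument stops working, and no such Harnack inequality is available for LBM in this paper.

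The paper's workaround is substantially different from chaining balls. Around every $z\in\CS$ it constructs a \emph{good annulus} $\CA$ made of $\SLE_6$ chunks for which the event $E$ of Subsection~\ref{subsec:good_event} holds (Proposition~\ref{prop:cover_by_good_annuli}). Condition~\eqref{it:hitting_bound} in that event supplies the missing per-step input directly: for $x,y$ inside a chunk at distance $\geq r$ from its boundary there is a time in a polylog window with $\int_{\qball{h}{y}{r}}p_t^{\CN}(x,w)\,d\qmeasure{h}(w)\geq r^{M}$. Establishing this for a single chunk (Lemma~\ref{lem:hitting_bound}) uses the conformal structure of the chunk, Green's function bounds, and crucially the \emph{already proved upper bound} of Theorem~\ref{thm:ubd} to localize the time integral. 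Showing that such good chunks percolate into annuli at every scale and every location is the role of the $\SLE_6$ percolation machinery of Proposition~\ref{prop:good_chunks_percolate}. The final chaining in Subsection~\ref{subsec:rest_of_proof} runs through these annuli rather than balls, with a multi-scale descent to enter the first annulus from $u$, a single-scale chain of $\asymp(\qdist{h}{u}{v}^{4}/t)^{1/3}$ annuli along the geodesic, and a symmetric ascent to $v$. You correctly flag uniform per-step control as the delicate point, but ``union bounds over dyadic scales together with uniform regularity estimates'' does not by itself produce a near-diagonal lower bound; the $\SLE_6$-chunk construction is what supplies it.
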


  \begin{remark}
  (i) The lower bound in Theorem~\ref{thm:lbd} becomes effective in the regime $0 < t < \qdist{h}{u}{v}^4$, which reflects that the LBM needs to travel a sufficiently long distance so that the possibly bad local geometries around the starting point become irrelevant.
  A similar phenomenon can be observed, for instance, for simple random walks on supercritical percolation clusters, where one has to allow the random walk some random time to exit bad parts of the cluster before Gaussian heat kernel decay emerges; see \cite{Ba04}.
  
  (ii) While we do not discuss here the necessity of the polylogarithmic corrections in the established heat kernel bounds, we remark that in many instances of stochastic processes on random media, in particular for processes in low dimensions and models at criticality, heat kernel fluctuations are known to occur, caused by local irregularities in the random medium; see \cite{ACK23} for a recent review on this topic.
  \end{remark}

Off-diagonal upper and lower bounds of the heat kernel similar to~\eqref{eq:ubd} and~\eqref{eq:lbd}
have been proved in \cite{Cr08PTRF,Cr12PRIMS,BarCrKum2017AOP,Ar2021AIHP,AngCrHTShi2021AOP},
for the canonical diffusions on various random fractals which are trees or sufficiently
close to being trees so that their heat kernel behavior can be described very well in terms of
the effective resistance metric as established in the general results in \cite{Cr07}.
Our main results, Theorems~\ref{thm:ubd} and~\ref{thm:lbd} above, are in sharp contrast to
those preceding results in that the effective resistance metric is no longer well-defined
since the LBM a.s.\ does not hit a given point. To the best of our knowledge,
Theorems~\ref{thm:ubd} and~\ref{thm:lbd} are the first result in the literature establishing
sharp sub-Gaussian heat kernel bounds for diffusions on random fractals
which do not admit well-defined effective resistance metrics.
We also mention the work \cite{berestycki2023weyl} establishing a version of Weyl's law for LQG surfaces.
Precise heat kernel estimates are also a key point in that paper, but this concerns the on-diagonal
behaviour rather than the off-diagonal one and especially getting the right leading constant in some sense.

As for general values of $\gamma \in (0,2)$, we expect that it should be possible to obtain some
Liouville heat kernel estimates involving both the time and the LQG metric by using the methods
introduced in the present paper. However, we do not expect them to be as sharp as our current estimates.
The main difference between the general $\gamma \in (0,2)$ case and the $\gamma = \sqrt{8/3}$ case
(i.e., that of the Brownian map) is that in the latter case, we have a way to explore the metric
structure of the space in a Markovian way while such an exploration is not available in the former case.
More precisely, suppose that we choose two points $x,y$ on the Brownian map $(\CS,d)$ independently
according to the quantum area measure $\nu$ and consider the filled metric ball $B^{\bullet}(x,r)$
centered at $x$ with radius $r > 0$ (i.e., the set of points which are disconnected from $y$ by the
metric ball $B(x,r)$ in $(\CS,d)$). Then conditional on the boundary length of $\partial B^{\bullet}(x,r)$,
we have that $\mathcal{S} \setminus B^{\bullet}(x,r)$, considered as a random metric measure space
with its canonical metric and measure, has an explicit law that depends only on its boundary length
and it is independent of $B^{\bullet}(x,r)$ considered similarly as a random metric measure space.
Moreover, the process describing the boundary length evolution of $\partial B^{\bullet}(x,r)$
with respect to $r$ has an explicit form and it can be described by using tools from the theory of
L\'evy processes; for details see Section~\ref{sec:bm-LQG}, in particular Subsection \ref{ssec:bm-review}.
That type of explicit description and Markovian structure allow us to obtain such sharp estimates
on the Liouville heat kernel as in Theorems \ref{thm:ubd} and \ref{thm:lbd}, and we do not have
them for other values of $\gamma$.

\subsection{Strategy and outline}

The proof of Theorem~\ref{thm:ubd} is based on a criterion for off-diagonal sub-Gaussian heat kernel upper bounds, implied by results in \cite{GT12,GK17}, in terms of an exit time estimate and an on-diagonal heat kernel upper bound.
However, since such estimates hold only with polylogarithmic corrections in the present context of LQG, in a first step we extend the relevant results in \cite{GT12,GK17}.
More precisely, we derive a perturbed off-diagonal sub-Gaussian heat kernel upper bound which allows fluctuations in the exit time estimate and in the on-diagonal heat kernel upper bound to be given by a general class of perturbation functions, including polynomial and polylogarithmic corrections.
Assuming a similar upper bound on the volumes of metric balls instead of an on-diagonal heat kernel upper bound, we also deduce a similarly perturbed on-diagonal heat kernel lower bound by a standard method.
These are done in Section~\ref{sec:slD}, which is written in the framework of a general diffusion (without killing inside) as it might be of independent interest.
We refer to \cite{Cr07} for similar results on heat kernel bounds with fluctuations for local resistance forms (symmetric diffusions with well-defined effective resistance metrics).

After a review of the Brownian map, LQG and LBM in Section~\ref{sec:bm-LQG},
we next establish volume estimates for $\sqrt{8/3}$-LQG which hold with polylogarithmic corrections in Section~\ref{sec:bm-volume}.  More precisely, it was shown in earlier work by Le Gall \cite{lg2010geodesics} in the context of the Brownian map that for each $\delta > 0$ there a.s.\ exists $r_0 > 0$ so that for all $r \in (0,r_0)$ the volume of every ball of radius $r$ is between $r^{4+\delta}$ and $r^{4-\delta}$.  We improve this to show that there exists a constant $\kappa > 0$ and a.s.\ exists $r_0 > 0$ so that the volume of every ball of radius $r \in (0,r_0)$ is between $r^4 (\log r^{-1})^{-\kappa}$ and $r^4 (\log r^{-1})^\kappa$.  The proofs in this part of the work are based purely on Brownian map techniques.  By the equivalence of $\sqrt{8/3}$-LQG and the Brownian map, the same estimates hold also for $\sqrt{8/3}$-LQG.  We remark that some of our volume estimates in Section~\ref{sec:bm-volume} can also be extracted from more recent work of Le Gall \cite{LG21} but we have included our proofs since the reader might find them of independent interest.

As a preparation for the proofs of the quenched exit time lower bound for LBM and
the off-diagonal heat kernel lower bound~\eqref{eq:lbd} in Theorem~\ref{thm:lbd},
in Section~\ref{sec:percolation-exploration} we prove a percolation result for graphs
formed by tilings of $\sqrt{8/3}$-LQG surfaces by chunks of $\SLE_6$
(Propositions~\ref{prop:good_chunks_percolate} and~\ref{prop:good_chunks_percolate_half_plane}).
Roughly speaking, they state that with high probability we can construct a strongly
supercritical configuration of such tiles each of which have a certain prescribed set of properties,
under the assumption that with sufficiently high probability each tile has that set of properties.
We first prove this result in the simpler setting of the half-plane
as Proposition~\ref{prop:good_chunks_percolate_half_plane}, and then
translate it into an analogous result, Proposition~\ref{prop:good_chunks_percolate},
for the setting of the disk.

We then proceed in Section~\ref{sec:exit-time-bounds} to establish the quenched exit time upper and lower bounds for LBM,
which states that there exists a constant $\kappa > 0$ and a.s.\ exists $r_0 > 0$ so that for all $r \in (0,r_0)$ the conditional expectation (given the underlying LQG surface)
of the amount of time it takes an LBM to exit any ball of radius $r$ when starting from its center is between $r^4 (\log r^{-1})^{-\kappa}$ and $r^4 (\log r^{-1})^{\kappa}$.
The upper bound follows easily by combining the volume upper bound from Section~\ref{sec:bm-volume} with the known two-sided H\"{o}lder continuity estimate for $\qdistnoarg{h}$ with respect to the usual spherical metric due to \cite[Theorem~1.2]{ms2016qle2}
and the fact that the LBM has the same Green functions as the Brownian motion on the sphere.
The proof of the lower bound is based on an application of Proposition~\ref{prop:good_chunks_percolate}
with the prescribed property for each tile being that with uniformly positive probability it takes LBM a certain amount of time to cross.
Altogether, this is enough to conclude Theorem~\ref{thm:ubd} and the on-diagonal heat kernel lower bound in Theorem~\ref{thm:lbd}
thanks to the on-diagonal heat kernel upper bound in \cite{AK16}, the general result from Section~\ref{sec:slD} and the volume upper bound from Section~\ref{sec:bm-volume}.

We finish by establishing~\eqref{eq:lbd} in Theorem~\ref{thm:lbd} using a chaining argument in Section~\ref{sec:LBM-HK-bounds}.
We will construct our chains out of annuli consisting of chunks of $\SLE_6$ by applying Proposition~\ref{prop:good_chunks_percolate} in a manner similar to the proof of the exit time lower bound above.
The special property that the chunks which make up these annuli will have is an a priori lower bound on the amount of time it takes the LBM to cross.

\subsection*{Acknowledgements}
N.K.\ was supported in part by JSPS KAKENHI Grant Numbers JP18H01123, JP22H01128, JP23K22399.
J.M.\ was supported by ERC starting grant 804116 (SPRS) and ERC consolidator grant ARPF (Horizon Europe UKRI G120614).
This work was supported by the Research Institute for Mathematical Sciences (RIMS), an International Joint Usage/Research Center located in Kyoto University.
In particular, part of this work was achieved while S.A.\ was staying at RIMS as a visiting associate professor, and he gratefully acknowledges the hospitality and the received support.

\section{Heat kernel bounds with fluctuations for general diffusions} \label{sec:slD}

The purpose of this section is to give some sufficient conditions for the heat kernel
(transition density) of a diffusion (without killing inside) on a general state space to satisfy
a sub-Gaussian type off-diagonal upper bound and an on-diagonal lower bound which possibly involve some
lower order correction terms that are typically polylogarithmic. The main results of
this section (Theorem~\ref{thm:uhk} and Proposition~\ref{prop:lower-ondiag} below),
stated in their simplest possible forms that are still applicable to the case of the
$\sqrt{8/3}$-Liouville Brownian motion, yield the following theorem;
see \cite[Appendix A.2 and Section 4.5]{FOT11} and \cite[Appendix A.1]{CF12} for the basics of Markov processes.

\begin{theorem} \label{thm:heat-kernel-simple}
Let $(\cX,d)$ be a compact metric space with at least two points,
let $\mu$ be a Radon measure on $\cX$ with full support,
and let $X=(\{X_t\}_{t\in[0,\infty)},\{ P_x\}_{x \in \cX})$ be a conservative diffusion on $\cX$ which
admits a (unique) continuous function $p=p_{t}(x,y):(0,\infty)\times \cX \times \cX \to [0,\infty)$
such that for any $(t,x) \in (0,\infty)\times \cX$,
\begin{equation} \label{eq:transition-density}
P_{x}[X_{t} \in dy] = p_{t}(x,y)\,\mu(dy)\quad\text{(as Borel measures on $\cX$).}
\end{equation}
Further set $\diam\cX := \sup_{x,y\in\cX}d(x,y)(\in(0,\infty))$,
$B(x,r):=\{y\in\cX \mid d(x,y)<r\}$ for $(x,r)\in\cX \times (0,\infty)$,
$\tau_{A}:=\inf\{t\in[0,\infty)\mid X_{t}\not\in A\}$ \textup{($\inf\emptyset:=\infty$)} for $A\subset \cX$,
let $\alpha \in (0,\infty)$, $\beta \in (1,\infty)$, and consider the following conditions:
\begin{enumerate}
\item[$\mathrm{(V)}_{\leq}$]\textup{(Volume upper bound)}
	There exist $\kappa_{\mathrm{vu}}\in[0,\infty)$ and $C_{\mathrm{V}}\in(0,\infty)$
	such that for any $(x,r)\in \cX \times (0,\diam\cX]$,
	\begin{align} \label{eq:volume-simple}
	\mu(B(x,r)) \leq C_{\mathrm{V}} r^{\alpha} \bigl(\log(e+r^{-1})\bigr)^{\kappa_{\mathrm{vu}}}.
	\end{align} 
\item[$\mathrm{(E)}$]\textup{(Mean exit time estimate)}
	There exist $\kappa_{\mathrm{el}},\kappa_{\mathrm{eu}}\in[0,\infty)$, $a_{\rme}\in[1,\infty)$
	and $C_{\rme}\in(0,\infty)$ such that for any $(x,r)\in \cX \times (0,a_{\rme}^{-1}\diam\cX]$,
	\begin{align} \label{eq:exit_est-simple}
	C_{\rme}^{-1} r^{\beta} \bigl(\log(e+r^{-1})\bigr)^{-\kappa_{\mathrm{el}}}
		\leq E_x[ \tau_{B(x,r)} ]
		\leq C_{\rme} r^{\beta} \bigl(\log(e+r^{-1})\bigr)^{\kappa_{\mathrm{eu}}}.
	\end{align}
\item[$\mathrm{(DU)}$]\textup{(On-diagonal upper bound)}
	There exist $\kappa_{\mathrm{du}}\in[0,\infty)$ and $C_{\mathrm{du}}\in(0,\infty)$
	such that for any $(t,x,y) \in (0,(\diam\cX)^{\beta}]\times \cX \times \cX$,
	\begin{align} \label{eq:duhk-simple}
	p_{t}(x,y) \leq C_{\mathrm{du}} t^{-\alpha/\beta} \bigl(\log(e+t^{-1})\bigr)^{\kappa_{\mathrm{du}}}.
	\end{align}
\end{enumerate}
Then the following hold:
\begin{enumerate}[\upshape(i)]
\item \textup{(Off-diagonal upper bound)} Assume $\mathrm{(E)}$ and $\mathrm{(DU)}$,
	set $\kappa_{\mathrm{u}}:=(2+\beta)(\kappa_{\mathrm{el}}+\kappa_{\mathrm{eu}})$,
	let $\varepsilon_{h}\in(0,\infty)$ satisfy $\varepsilon_{h}\kappa_{\mathrm{u}}<1$ and
	set $\kappa_{\mathrm{du}}':=(1-\varepsilon_{h}\kappa_{\mathrm{u}})^{-1}(\kappa_{\mathrm{du}}+\kappa_{\mathrm{u}}\alpha/\beta)$.
	Then there exist $c_{1},c_{2}\in(0,\infty)$ such that for any $(t,x,y) \in (0,(\diam\cX)^{\beta}]\times \cX \times \cX$,
	\begin{align} \label{eq:uhk-simple}
	p_{t}(x,y) \leq
		c_{1} \frac{\bigl(\log(e+t^{-1})\bigr)^{\kappa_{\mathrm{du}}'}}{t^{\alpha/\beta}}
		\exp\Biggl(-c_{2}\biggl(\frac{d(x,y)^{\beta}}{t}\biggr)^{\frac{1}{\beta-1}} \biggl(\log\biggl(e+\frac{d(x,y)}{t}\biggr)\biggr)^{-\frac{\kappa_{\mathrm{u}}}{\beta-1}}\Biggr).
\end{align}
\item \textup{(On-diagonal lower bound)} Assume $\mathrm{(V)}_{\leq}$, $\mathrm{(E)}$ and that $X$ is $\mu$-symmetric,
	i.e., $p_{t}(x,y)=p_{t}(y,x)$ for any $(t,x,y)\in(0,\infty) \times \cX \times \cX$.
	Set $\kappa_{\mathrm{u}}:=(2+\beta)(\kappa_{\mathrm{el}}+\kappa_{\mathrm{eu}})$ and
	$\kappa_{\mathrm{dl}}:=\kappa_{\mathrm{vu}}+\kappa_{\mathrm{u}}\alpha/\beta$.
	Then there exists $c_{3}\in(0,\infty)$ such that for any $(t,x) \in (0,(\diam\cX)^{\beta}] \times \cX$,
	\begin{align} \label{eq:lower-ondiag-simple}
	p_{t}(x,x) \geq c_{3} t^{-\alpha/\beta} \bigl(\log(e+t^{-1})\bigr)^{-\kappa_{\mathrm{dl}}}.
	\end{align}
\end{enumerate}
\end{theorem}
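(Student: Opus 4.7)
My approach follows the classical route to sub-Gaussian heat kernel bounds due to Grigor'yan--Telcs \cite{GT12} and Grigor'yan--Kumagai \cite{GK17}, perturbed so as to propagate the polylogarithmic correction factors through each step. The output of this bookkeeping is the exponent $\kappa_{\mathrm{u}}=(2+\beta)(\kappa_{\mathrm{el}}+\kappa_{\mathrm{eu}})$, and the core work lies in verifying that the standard chaining and Cauchy--Schwarz manipulations still yield the stated powers of $\log$.

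For part (i), I would proceed in three stages. \emph{Single-exit tail.} The Dynkin-type identity
\[
E_x[\tau_{B(x,r)}] \le t + P_x(\tau_{B(x,r)} > t)\,\sup_{y \in B(x,r)} E_y[\tau_{B(x,r)}],
\]
together with the monotonicity $E_y[\tau_{B(x,r)}] \le E_y[\tau_{B(y,2r)}]$ and the two-sided bound $\mathrm{(E)}$, yields $P_x(\tau_{B(x,r)} > t) \ge c(\log(e+r^{-1}))^{-(\kappa_{\mathrm{el}}+\kappa_{\mathrm{eu}})}$ whenever $t \le c\,r^{\beta}(\log(e+r^{-1}))^{-\kappa_{\mathrm{el}}}$. \emph{Chaining to a sub-Gaussian tail.} For $t\ll r^{\beta}$, choose $k$ of order $(r^{\beta}/t)^{1/(\beta-1)}$ up to a polylog factor, so that each of $k$ consecutive sub-balls of radius $r/k$ can be crossed within time $c(r/k)^{\beta}(\log)^{-\kappa_{\mathrm{el}}}$; strong Markov then gives
\[
P_x(\tau_{B(x,r)}\le t)\le \bigl(1-c(\log)^{-(\kappa_{\mathrm{el}}+\kappa_{\mathrm{eu}})}\bigr)^{k}\le C\exp\!\Biggl(-c\Bigl(\tfrac{r^{\beta}}{t}\Bigr)^{\!1/(\beta-1)}\bigl(\log(e+r/t)\bigr)^{-\kappa_{\mathrm{u}}/(\beta-1)}\Biggr).
\]
\emph{Off-diagonal upper bound.} Writing $d=d(x,y)$ and splitting Chapman--Kolmogorov $p_{t}(x,y)=\int p_{t/2}(x,z)p_{t/2}(z,y)\,\mu(dz)$ over $B(x,d/2)^{c}$ and $B(x,d/2)$, the total mass on the first region is at most $P_x(\tau_{B(x,d/2)}\le t/2)$ and supplies the exponential factor, while on the second region $\mathrm{(DU)}$ provides the prefactor $t^{-\alpha/\beta}(\log)^{\kappa_{\mathrm{du}}}$ after a second Chapman--Kolmogorov step. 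Introducing a Davies-type perturbation parameter $\varepsilon_{h}\in(0,\kappa_{\mathrm{u}}^{-1})$ to balance prefactor against exponential decay produces the exponent $\kappa'_{\mathrm{du}}=(1-\varepsilon_{h}\kappa_{\mathrm{u}})^{-1}(\kappa_{\mathrm{du}}+\kappa_{\mathrm{u}}\alpha/\beta)$ in \eqref{eq:uhk-simple}.

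For part (ii), the argument is shorter. By $\mu$-symmetry and Chapman--Kolmogorov, $p_{2t}(x,x)=\int p_{t}(x,z)^{2}\,\mu(dz)$, and Cauchy--Schwarz restricted to $B(x,r)$ gives
\[
p_{2t}(x,x)\ge \frac{P_x(X_t \in B(x,r))^{2}}{\mu(B(x,r))}.
\]
Choosing $r$ so that $E_x[\tau_{B(x,r)}]\ge 2t$, i.e., $r\asymp t^{1/\beta}(\log t^{-1})^{\kappa_{\mathrm{el}}/\beta}$, Stage~1 above gives $P_x(X_t\in B(x,r))\ge P_x(\tau_{B(x,r)}>t)\ge c(\log t^{-1})^{-(\kappa_{\mathrm{el}}+\kappa_{\mathrm{eu}})}$; combining with the volume bound $\mathrm{(V)}_{\le}$ at this scale delivers \eqref{eq:lower-ondiag-simple} with $\kappa_{\mathrm{dl}}$ as claimed.

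The principal obstacle is the careful bookkeeping of polylogarithmic factors in the chaining of Stage~2: the choice of $k$ must simultaneously respect the per-step constraint $t/k\le c(r/k)^{\beta}(\log(k/r))^{-\kappa_{\mathrm{el}}}$ and maximize the total exponent $k\cdot c(\log)^{-(\kappa_{\mathrm{el}}+\kappa_{\mathrm{eu}})}$, and the specific coefficient $2+\beta$ in $\kappa_{\mathrm{u}}$ emerges only from this joint optimization together with the Davies step in Stage~3. A more technical subtlety is that part (i) does not assume symmetry, so the convenient trick of transferring spatial decay onto the $y$-variable via $p_{t/2}(z,y)=p_{t/2}(y,z)$ is unavailable; instead, the decay on $B(x,d/2)$ must be extracted by a direct strong Markov decomposition at the first exit from $B(x,d/2)$, with $\mathrm{(DU)}$ applied only at time scales $\ge t/2$ where it remains finite.
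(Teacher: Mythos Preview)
Your outline for part~(i) follows the same GT12/GK17 route as the paper. The paper packages your Stages~1--2 through the Laplace transform $E_x[e^{-\lambda\tau_{B(x,r)}}]$ (Lemmas~\ref{lem:exit1}--\ref{lem:exit2} and Proposition~\ref{prop:laplace_tau}) rather than iterating the exit probability directly, then converts to $P_x(\tau_{B(x,r)}\le t)$ by optimizing over $\lambda$ via the function $\Phi_{\kappa_{\mathrm{u}}}$ of Definition~\ref{def:Phikappa}, and finally defers your Stage~3 entirely to \cite[Theorems~6.2 and~6.4]{GK17} rather than spelling out a Chapman--Kolmogorov split. One clarification: $\varepsilon_h$ is not a Davies-type parameter introduced to balance the final bound; it is the doubling exponent of $h(r)=\log(e+r^{-1})$ fixed in Assumption~\ref{ass:scale-functions}, and it enters $\kappa'_{\mathrm{du}}$ through the comparison $h(\Psi_{\kappa_{\mathrm{u}}}(t))\lesssim h(\Psi(t))^{1/(1-\varepsilon_h\kappa_{\mathrm{u}})}$ needed when passing from the modified scale function $\Psi_{\kappa_{\mathrm{u}}}$ of Lemma~\ref{lem:Phi_kappa} back to $\Psi$.

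Your part~(ii), however, does not yield the stated exponent $\kappa_{\mathrm{dl}}=\kappa_{\mathrm{vu}}+\kappa_{\mathrm{u}}\alpha/\beta$. With your choice $r\asymp t^{1/\beta}(\log t^{-1})^{\kappa_{\mathrm{el}}/\beta}$ and the Stage~1 bound $P_x(\tau_{B(x,r)}>t)\gtrsim(\log t^{-1})^{-(\kappa_{\mathrm{el}}+\kappa_{\mathrm{eu}})}$, Cauchy--Schwarz and $\mathrm{(V)}_{\le}$ produce the exponent $\kappa_{\mathrm{vu}}+\alpha\kappa_{\mathrm{el}}/\beta+2(\kappa_{\mathrm{el}}+\kappa_{\mathrm{eu}})$, which is not $\kappa_{\mathrm{dl}}$ in general. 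The paper's argument (Proposition~\ref{prop:lower-ondiag}) instead invokes the \emph{chained} exit bound---your Stage~2, i.e.\ Corollary~\ref{cor:P}---at the larger radius $\rho\asymp t^{1/\beta}(\log t^{-1})^{\kappa_{\mathrm{u}}/\beta}$: at this scale $\Phi_{\kappa_{\mathrm{u}}}(c\rho,t)$ is bounded below by an arbitrarily large constant, so $P_x(\tau_{B(x,\rho)}>t)\ge 1/2$ with no polylog loss in the numerator, and the entire correction $\kappa_{\mathrm{dl}}$ then comes from the volume bound $\mu(B(x,\rho))\lesssim t^{\alpha/\beta}(\log t^{-1})^{\kappa_{\mathrm{vu}}+\kappa_{\mathrm{u}}\alpha/\beta}$ at this inflated radius. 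Your simpler route gives a valid lower bound, just not the one stated.
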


Theorem~\ref{thm:heat-kernel-simple} is obtained at the end of this section as a corollary of the
main results of this section (Theorem~\ref{thm:uhk} and Proposition~\ref{prop:lower-ondiag} below),
which, for potential future applications, we state and prove in the general setting
of a diffusion without killing inside on a locally compact separable metric space.

The rest of this section is organized as follows.
First in Subsection~\ref{ssec:setting_general}, we introduce the general setting and
suitable generalizations of conditions $\mathrm{(V)}_{\leq}$, $\mathrm{(E)}$ and $\mathrm{(DU)}$
of Theorem~\ref{thm:heat-kernel-simple}. In Subsection~\ref{ssec:uhk_general}, we state
the generalization of Theorem~\ref{thm:heat-kernel-simple}-(i) (Theorem~\ref{thm:uhk})
and prove it as an application of \cite[Theorems 6.2 and 6.4]{GK17}.
Subsection~\ref{ssec:dlhk_general} states and proves the generalization of
Theorem~\ref{thm:heat-kernel-simple}-(ii) (Proposition~\ref{prop:lower-ondiag}),
which is a mere adaptation of a well-known argument to our setting. Then
Theorem~\ref{thm:heat-kernel-simple} is deduced from Theorem~\ref{thm:uhk} and
Proposition~\ref{prop:lower-ondiag} in Subsection~\ref{ssec:proof-hksimple}.

\subsection{Setting and conditions} \label{ssec:setting_general}

Throughout the rest of Section~\ref{sec:slD}, we assume that $(\cX,d)$ is a locally compact
separable metric space, and that $\mu$ is a Radon measure on $\cX$ with full support,
i.e., a Borel measure on $\cX$ which is finite on any compact subset of $\cX$ and
strictly positive on any non-empty open subset of $\cX$.
We will refer to such a triple $(\cX, d, \mu)$ as a \emph{metric measure space}.
For $(x,r) \in \cX \times (0,\infty)$, we set $B(x,r) := \{ y \in \cX \mid d(x,y)<r \}$,
and the closure of $B(x,r)$ in $\cX$ is denoted by $\overline{B}(x,r)$.
Let $\cX_{\partial}:=\cX\cup\{\partial\}$ be the one-point compactification of $\cX$, so that
the Borel $\sigma$-field $\mathescr{B}(\cX_{\partial})$ of $\cX_{\partial}$ can be expressed,
in terms of that $\mathescr{B}(\cX)$ of $\cX$, as
$\mathescr{B}(\cX_{\partial})=\mathescr{B}(\cX)\cup\{A\cup\{\partial\}\mid A\in\mathescr{B}(\cX)\}$.
In what follows, $[-\infty,\infty]$-valued functions on $\cX$ are always set
to be $0$ at $\partial$ unless their values at $\partial$ are already defined:
$f(\partial):=0$ for $f:\cX\to[-\infty,\infty]$.

Let
$X=\bigl(\Omega,\mathescr{M},\{X_{t}\}_{t\in[0,\infty]},\{P_{x}\}_{x\in \cX_{\partial}}\bigr)$
be a diffusion without killing inside on $(\cX,\mathescr{B}(\cX))$
with life time $\zeta$ and shift operators $\{\theta_{t}\}_{t\in[0,\infty]}$.
By definition, $(\Omega,\mathescr{M})$ is a measurable space, $\{X_{t}\}_{t\in[0,\infty]}$
is a family of $\mathescr{M}/\mathescr{B}(\cX_{\partial})$-measurable maps $X_{t}:\Omega\to\cX_{\partial}$
such that $[0,\infty)\ni t\mapsto X_{t}(\omega)\in \cX_{\partial}$ is continuous and
$X_{t}(\omega)=\partial$ for any $t\in[\zeta(\omega),\infty]$ for each $\omega\in\Omega$,
where $\zeta(\omega):=\inf\{t\in[0,\infty)\mid X_{t}(\omega)=\partial\}$, and
$\{\theta_{t}\}_{t\in[0,\infty]}$ is a family of maps $\theta_{t}:\Omega\to\Omega$
satisfying $X_{s}\circ\theta_{t}=X_{s+t}$ for any $s,t\in[0,\infty]$. The pair $X$
of such a stochastic process $\bigl(\Omega,\mathescr{M},\{X_{t}\}_{t\in[0,\infty]}\bigr)$
and a family $\{P_{x}\}_{x\in \cX_{\partial}}$ of probability measures
on $(\Omega,\mathescr{M})$ is then called a
\emph{diffusion without killing inside} on $(\cX,\mathescr{B}(\cX))$
if and only if $X$ is a normal Markov process on
$(\cX,\mathescr{B}(\cX))$ whose minimum completed admissible filtration
$\mathescr{F}_{*}=\{\mathescr{F}_{t}\}_{t\in[0,\infty]}$ is \emph{right-continuous}
and which is \emph{strong Markov} with respect to $\mathescr{F}_{*}$; see
\cite[Section A.2, (M.2)--(M.5), the paragraph before Lemma A.2.2, and (A.2.3)]{FOT11}
for the precise definitions of these notions.
We set $E_{x}[(\cdot)]:=\int_{\Omega}(\cdot)\,dP_{x}$ for $x\in \cX_{\partial}$.
For each $\sigma$-finite Borel measure $\nu$ on $\cX_{\partial}$, the function
$\cX_{\partial}\ni x\mapsto P_{x}[B]$ is measurable with respect to the $\nu$-completion
of $\mathescr{B}(\cX_{\partial})$ for any $B\in\mathescr{F}_{\infty}$
by \cite[Exercise A.1.20-(i)]{CF12}, and associated with $\nu$ is a measure $P_{\nu}$ on
$(\Omega,\mathescr{F}_{\infty})$ given by $P_{\nu}[B]:=\int_{\cX_{\partial}}P_{x}[B]\,d\nu(x)$.
We also set $\dot{\sigma}_{B}(\omega):=\inf\{t\in[0,\infty)\mid X_{t}(\omega)\in B\}$ ($\inf\emptyset:=\infty$)
and $\tau_{B}(\omega):=\dot{\sigma}_{\cX_{\partial}\setminus B}(\omega)$
for $B\subset \cX_{\partial}$ and $\omega\in\Omega$, so that $\dot{\sigma}_{B},\tau_{B}$ are
$\mathescr{F}_{*}$-stopping times if $B\in\mathescr{B}(\cX_{\partial})$ by \cite[Theorem A.2.3]{FOT11}.

The most general form of heat kernel bounds we aim to establish in this section involves
three functions $v(r)$, $\Psi(r)$ and $h(r)$ as introduced below, which correspond to the functions
$r^{\alpha}$, $r^{\beta}$ and $\log(e+r^{-1})$ in Theorem~\ref{thm:heat-kernel-simple},
respectively.

\begin{assumption} \label{ass:scale-functions}
Throughout the rest of Section~\ref{sec:slD}, we fix homeomorphisms
$v,\Psi:[0,\infty) \rightarrow [0,\infty)$ and a non-increasing continuous function $h:(0,\infty] \rightarrow [1,\infty)$ with the properties that there exist positive constants $C_{v},\alpha_{1},\alpha_{2},C_{\Psi},\beta_{1},\beta_{2},C_{h},\varepsilon_{h}$
with $\alpha_{1}\leq \alpha_{2}$ and $1<\beta_{1}\leq\beta_{2}$ such that the following hold:
for any $s,t\in(0,\infty)$ with $s\leq t$,
\begin{align} \label{eq:scale_v}
C_{v}^{-1} \Bigl( \frac{t}{s} \Bigr)^{\alpha_{1}}
	&\leq \frac{v(t)}{v(s)} \leq C_{v} \Bigl( \frac{t}{s} \Bigr)^{\alpha_{2}},\\
\label{eq:def:scalefct}
C_{\Psi}^{-1} \Bigl( \frac{t}{s} \Bigr)^{\beta_{1}}
	&\leq \frac{\Psi(t)}{\Psi(s)} \leq C_{\Psi} \Bigl( \frac{t}{s} \Bigr)^{\beta_{2}},\\
\label{eq:ass_h}
\frac{h(s)}{h(t)} &\leq C_{h} \Bigl( \frac{t}{s} \Bigr)^{\varepsilon_{h}}.
\end{align}
Note that~\eqref{eq:def:scalefct} is equivalent to the property that for any $s,t\in(0,\infty)$ with $s\leq t$,
\begin{align} \label{eq:inv_scalefct}
C_{\Psi}^{-1/\beta_{2}} \Bigl( \frac{t}{s} \Bigr)^{1/\beta_{2}}
	\leq \frac{\Psi^{-1}(t)}{\Psi^{-1}(s)}
	\leq C_{\Psi}^{1/\beta_{1}} \Bigl( \frac{t}{s} \Bigr)^{1/\beta_{1}}.
\end{align}
\end{assumption} 

\begin{example} \label{ex:scale-functions-simple}
Let $\alpha\in(0,\infty)$ and $\beta\in(1,\infty)$. Then the triple of functions
$v,\Psi:[0,\infty)\to[0,\infty)$ and $h:(0,\infty]\to[1,\infty)$ given by
\begin{align} \label{eq:scale-functions-simple}
v(r) := r^{\alpha}, \quad \Psi(r) := r^{\beta} \quad \text{and} \quad h(r) := \log(e+r^{-1})
\end{align}
satisfies Assumption~\ref{ass:scale-functions} with $\alpha_{1}=\alpha_{2}=\alpha$, $\beta_{1}=\beta_{2}=\beta$,
arbitrary $\varepsilon_{h}\in(0,\infty)$ and $C_{h}=2+e^{-1}\varepsilon_{h}^{-1}$.
Indeed, to see~\eqref{eq:ass_h}, let $\varepsilon_{h}\in(0,\infty)$ and let $s,t\in(0,\infty)$ satisfy $s\leq t$.
Then clearly $h(s)/h(t)\leq\log(2e)\leq 2(t/s)^{\varepsilon_{h}}$ for $s\geq e^{-1}$, whereas for $s\leq e^{-1}$ we have
\begin{align*}
\frac{\log(e+s^{-1})}{\log(e+t^{-1})}
& \; = \;
1+ \frac{\log \bigl( \frac{e+s^{-1}} {e+t^{-1}}\bigr)} {\log(e+t^{-1})}
\; \leq \;
1+ \log \Bigl( \frac{e+s^{-1}} {e+t^{-1}}\Bigr)
\; \leq \;
1+ \log\Bigl(et + \frac t s\Bigr) \\
& \; \leq \;
1+ \log \frac{2t}{s}
\; \leq \;
2 + \log \frac{t}{s} \; \leq \; (2+e^{-1}\varepsilon_{h}^{-1}) \Bigl( \frac{t}{s} \Bigr)^{\varepsilon_{h}}.
\end{align*}
\end{example}

For the sake of the applicability of the main results of this section
(Theorem~\ref{thm:uhk} and Proposition~\ref{prop:lower-ondiag} below)
to diffusions on random non-compact spaces and to strongly local regular
symmetric Dirichlet forms, we further introduce the following assumption.

\begin{assumption} \label{ass:RYN}
Throughout the rest of Section~\ref{sec:slD}, we fix $R\in (0,\infty]$, a non-empty
open subset $\cY$ of $\cX$, and a Borel subset $\cN$ of $\cX$ with the property that
\begin{align} \label{eq:exceptional}
P_{x}[\dot{\sigma}_{\cN} = \infty] = 1 \qquad \text{for any $x\in \cX \setminus \cN$.}
\end{align}
\end{assumption}

Our conditions $\mathrm{(V)}_{\leq}$, $\mathrm{(E)}$ and $\mathrm{(DU)}$ for heat kernel bounds,
which we state next, concern the behavior of the measure $\mu$ and the diffusion $X$
\emph{only within metric balls contained in $\cY$ with radii at most $R$}, and hence
can often be verified even for diffusions on random non-compact spaces by choosing $R$
to be finite and $\cY$ to be bounded. The set $\cN$ can be considered as being removed
from the set of starting points of the diffusion $X$ by virtue of~\eqref{eq:exceptional},
and is thereby going to play the role of a set of \emph{``capacity zero with respect to $X$''};
we remark that the presence of such $\cN$ is inevitable in analyzing symmetric diffusions on
the basis of the general theory of regular symmetric Dirichlet forms presented in \cite{FOT11,CF12},
as illustrated in Remark~\ref{rmk:duhk-regularDF} below.

\begin{definition} [Volume upper bound] \label{def:volume}
We say that condition $\mathrm{(V)}_{\leq}$ holds if there exist constants
$\kappa_{\mathrm{vu}}\in[0,\infty)$, $a_{\rmv}\in[1,\infty)$ and $C_{\mathrm{V}}\in(0,\infty)$ such that
for any $(x,r)\in (\cY \setminus \cN) \times \bigl(0,\frac{R}{a_{\rmv}}\bigr)$ with $B(x,a_{\rmv} r) \subset \cY$,
\begin{align} \label{eq:volume}
\mu(B(x,r)) \leq C_{\mathrm{V}} v(r) h(r)^{\kappa_{\mathrm{vu}}}.
\end{align}
\end{definition}

\begin{definition} [Mean exit time estimate] \label{def:E}
We say that the \emph{mean exit time estimate} $\mathrm{(E)}$ holds if there exist constants
$\kappa_{\mathrm{el}},\kappa_{\mathrm{eu}}\in[0,\infty)$, $a_{\rme}\in[1,\infty)$ and $C_{\rme}\in(0,\infty)$
such that for any $(x,r)\in (\cY \setminus \cN) \times \bigl(0,\frac{R}{a_{\rme}}\bigr)$ with $B(x,a_{\rme} r) \subset \cY$,
\begin{align} \label{eq:exit_est}
C_{\rme}^{-1} \Psi(r) h(r)^{-\kappa_{\mathrm{el}}}
	\leq E_{x}[ \tau_{B(x,r)} ] \leq C_{\rme} \Psi(r) h(r)^{\kappa_{\mathrm{eu}}}.
\end{align}

\end{definition}

\begin{definition} [On-diagonal upper bound] \label{def:duhk}
We say that condition $\mathrm{(DU)}$ holds if there exist constants $\kappa_{\mathrm{du}}\in[0,\infty)$,
$a_{\mathrm{du}} \in [1,\infty)$ and $C_{\mathrm{du}}\in(0,\infty)$ such that for any
$(x_{0},r)\in \cY \times \bigl(0,\frac{R}{a_{\mathrm{du}}}\bigr)$ with $B(x_{0},a_{\mathrm{du}} r) \subset \cY$,
any $t\in (0,\Psi(r))$ and any Borel subset $A$ of $B(x_{0},r)$,
\begin{align} \label{eq:duhk}
P_{x}[X_{t} \in A,\, t < \tau_{B(x_{0},r)}]
	\leq \frac{C_{\mathrm{du}}}{v(\Psi^{-1}(t))} h(t)^{\kappa_{\mathrm{du}}} \mu(A)
	\quad \text{for any $x\in B(x_{0},r) \setminus \cN$.}
\end{align}
\end{definition}

\begin{remark} \label{rmk:duhk-regularDF}
Assume that $X$ is $\mu$-symmetric and associated with a regular symmetric Dirichlet form on $L^{2}(\cX,\mu)$
(see \cite[Sections 1.1, 1.4, 4.1 and 4.2]{FOT11} for the precise definitions of these notions).
In this case, \emph{the validity of condition $\mathrm{(DU)}$ for some Borel subset $\cN$
of $\cX$ satisfying $\mu(\cN)=0$ and~\eqref{eq:exceptional} follows from $\mathrm{(DU)}$ with
``any $x\in B(x_{0},r) \setminus \cN$'' in~\eqref{eq:duhk} replaced by ``$\mu$-a.e.\ $x\in B(x_{0},r)$''}.

Indeed, for each $(x_{0},r)\in \cY \times (0,\frac{R}{a_{\mathrm{du}}})$ with $B(x_{0},a_{\mathrm{du}} r) \subset \cY$,
\cite[Theorem 5.4]{GK17} implies the existence of $\cN_{x_{0},r}\in\mathescr{B}(\cX)$
satisfying $\mu(\cN_{x_{0},r})=0$ and~\eqref{eq:exceptional} such that~\eqref{eq:duhk} with
$\cN_{x_{0},r}$ in place of $\cN$ holds for all $t\in (0,\Psi(r))$ and all Borel subset $A$ of $B(x_{0},r)$.
Then, choosing a countable dense subset $\cY_{0}$ of $\cY$, we see from \cite[Theorem 4.1.1]{FOT11}
that there exists $\cN \in \mathescr{B}(\cX)$ with the properties $\mu(\cN)=0$, \eqref{eq:exceptional} and
$\bigcup_{x_{0}\in\cY_{0},\,r\in(0,R/a_{\mathrm{du}})\cap\mathbb{Q},\,B(x_{0},a_{\mathrm{du}} r) \subset \cY}\cN_{x_{0},r} \subset \cN$.
Now it is elementary to see that $\mathrm{(DU)}$ holds with this $\cN$ and the same
constants $a_{\mathrm{du}}, C_{\mathrm{du}}, \kappa_{\mathrm{du}}$.
\end{remark}

\subsection{Off-diagonal upper bounds of the heat kernel} \label{ssec:uhk_general}

The statement of our main result on off-diagonal upper bounds of the heat kernel
(Theorem~\ref{thm:uhk} below) requires the following definition.

\begin{definition} \label{def:Phikappa}
For any $\kappa \in [0,\infty)$, we define a lower semi-continuous function
$\Phi_{\kappa}:[0,\infty)\times(0,\infty)\to[0,\infty]$ by
\begin{align} \label{eq:defPhi}
\Phi_{\kappa}(r,t) := \sup_{s\in(0,\infty)} \biggl( \frac{r}{\Psi^{-1}(sh(s)^{\kappa})} -\frac{t}{s} \biggr),
\end{align}
so that for any $r,t\in(0,\infty)$, $\Phi_{\kappa}(\cdot,t)$ is non-decreasing,
$\Phi_{\kappa}(r,\cdot)$ is non-increasing and $\Phi_{\kappa}(0,t)=0<\Phi_{\kappa}(r,t)<\infty$
by the upper inequality in~\eqref{eq:inv_scalefct}, $\beta_{1}>1$ and the assumption
that $h$ is $[1,\infty)$-valued and non-increasing.
\end{definition}

\begin{example} \label{ex:power_scaling}
Let $\beta\in(1,\infty)$ and assume that $\Psi(r)=r^{\beta}$ for any $r\in[0,\infty)$.
Then an elementary differential calculus easily shows that
for any $(r,t)\in[0,\infty)\times(0,\infty)$,
\begin{align}\label{eq:power_scaling_Phi0}
\Phi_{0}(r,t)=c_{\beta}\biggl(\frac{r^{\beta}}{t}\biggr)^{\frac{1}{\beta-1}},
\end{align}
where $c_{\beta}:=\beta^{-\beta/(\beta-1)}(\beta-1)=\beta^{-1/(\beta-1)}-\beta^{-\beta/(\beta-1)}$. On the other hand,
for each $\kappa\in[0,\infty)$, the effect of the correction term $h(s)^{\kappa}$ in~\eqref{eq:defPhi} can be estimated as
\begin{align}\label{eq:power_scaling_Phikappa}
\Phi_{\kappa}(r,t) \geq c_{\beta} \biggl( \frac{r^{\beta}}{t} \biggr)^{\frac{1}{\beta-1}} h\Bigl( ( t/r )^{\frac{\beta}{\beta-1}} \Bigr)^{-\frac{\kappa}{\beta -1}}
	= \Phi_{0}(r,t) h\Bigl( ( t/r )^{\frac{\beta}{\beta-1}} \Bigr)^{-\frac{\kappa}{\beta -1}}
\end{align}
($t/0:=\infty$) for any $(r,t)\in[0,\infty)\times(0,\infty)$.
Indeed, noting that~\eqref{eq:power_scaling_Phikappa} is obvious for $r=0$ and
that $h$ is $[1,\infty)$-valued and non-increasing, let $r,t\in(0,\infty)$ and set
\begin{align*}
s := \beta^{\frac{\beta}{\beta-1}} ( t/r )^{\frac{\beta}{\beta-1}} h\Bigl( ( t/r )^{\frac{\beta}{\beta-1}} \Bigr)^{\frac{\kappa}{\beta -1}}
	\in \bigl[ ( t/r )^{\frac{\beta}{\beta-1}},\infty\bigr),
\end{align*}
so that $h(s)\leq h\bigl( (t/r)^{\beta/(\beta-1)} \bigr)$. Then by this last inequality and~\eqref{eq:defPhi},
\begin{align*}
\Phi_{\kappa}(r,t) & \geq \frac{r}{\Psi^{-1}(sh(s)^{\kappa})} -\frac{t}{s}
	= \frac{r}{s^{1/\beta} h(s)^{\kappa/\beta}} -\frac{t}{s}  \\
& = \biggl( \frac{r^{\beta}}{t} \biggr)^{\frac{1}{\beta-1}} h\Bigl( ( t/r )^{\frac{\beta}{\beta-1}} \Bigr)^{-\frac{\kappa}{\beta -1}}
	\biggl( \beta^{-\frac{1}{\beta-1}} h(s)^{-\kappa/\beta} h\Bigl( ( t/r )^{\frac{\beta}{\beta-1}} \Bigr)^{\kappa/\beta} -\beta^{-\frac{\beta}{\beta-1}} \biggr) \\
&\geq \biggl( \frac{r^{\beta}}{t} \biggr)^{\frac{1}{\beta-1}} h\Bigl( ( t/r )^{\frac{\beta}{\beta-1}} \Bigr)^{-\frac{\kappa}{\beta -1}}
	\Bigl( \beta^{-\frac{1}{\beta-1}} -\beta^{-\frac{\beta}{\beta-1}} \Bigr),
\end{align*}
proving~\eqref{eq:power_scaling_Phikappa}.
\end{example}

In fact, the lower bound on the ratio $\Phi_{\kappa}(r,t)/\Phi_{0}(r,t)$ exhibited in~\eqref{eq:power_scaling_Phikappa}
extends to the case of general $\Psi$, as follows.

\begin{lemma}\label{lem:Phikappa_Phi0}
Let $\kappa\in[0,\infty)$. Then there exists $C_{\Phi_{\kappa}}\in(0,\infty)$
such that for any $(r,t)\in[0,\infty)\times(0,\infty)$,
\begin{equation} \label{eq:Phikappa_Phi0}
\Phi_{\kappa}(r,t) \geq C_{\Phi_{\kappa}} \Phi_{0}(r,t) h\Bigl( (t/r)^{\frac{\beta_{1}}{\beta_{1}-1}} \Bigr)^{-\frac{\kappa}{\beta_{1}-1}}.
\end{equation}
\end{lemma}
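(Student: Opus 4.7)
My plan is to adapt the explicit calculation carried out in Example~\ref{ex:power_scaling}, where $\Phi_\kappa$ was bounded by evaluating the supremum defining it at a specific $s$ that is essentially a near-optimiser of $\Phi_0$ multiplied by the appropriate power of $h(t_0)$. The cases $r=0$ and $\Phi_0(r,t)=0$ are trivial, so I assume $r,t>0$ and $\Phi_0(r,t)>0$. Set $F_0(s):=r/\Psi^{-1}(s)-t/s$; by \eqref{eq:inv_scalefct} and $\beta_1>1$, $F_0$ is continuous on $(0,\infty)$ with $F_0(s)\to-\infty$ as $s\to 0^+$ and $F_0(s)\to 0$ as $s\to\infty$, so $\Phi_0(r,t)=\sup F_0$ is finite and I can pick $s^{*}>0$ with $F_0(s^{*})\geq \tfrac{1}{2}\Phi_0(r,t)$.

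\textbf{Main computation.} The key substitution is
\[
\tilde s := M\,s^{*}\,h(s^{*})^{\kappa/(\beta_1-1)}, \qquad M := 2^{\beta_1/(\beta_1-1)}\,C_{\Psi}^{1/(\beta_1-1)}.
\]
Since $\tilde s\geq s^{*}$ and $h$ is non-increasing, $h(\tilde s)\leq h(s^{*})$ and hence $\tilde s\,h(\tilde s)^{\kappa}\leq M\,s^{*}\,h(s^{*})^{\beta_1\kappa/(\beta_1-1)}$. The upper inequality in \eqref{eq:inv_scalefct} applied with base $s^{*}$ then yields
\[
\Psi^{-1}\bigl(\tilde s\,h(\tilde s)^{\kappa}\bigr)\leq C_{\Psi}^{1/\beta_1} M^{1/\beta_1} h(s^{*})^{\kappa/(\beta_1-1)} \Psi^{-1}(s^{*}),
\]
so that, writing $A^{*}:=r/\Psi^{-1}(s^{*})$ and $B^{*}:=t/s^{*}$,
\[
\Phi_{\kappa}(r,t)\geq \frac{r}{\Psi^{-1}(\tilde s\,h(\tilde s)^{\kappa})}-\frac{t}{\tilde s}
\geq h(s^{*})^{-\kappa/(\beta_1-1)}\bigl[C_{\Psi}^{-1/\beta_1} M^{-1/\beta_1} A^{*} - M^{-1} B^{*}\bigr].
\]
The value of $M$ was arranged so that $C_{\Psi}^{-1/\beta_1}M^{-1/\beta_1}=2M^{-1}$; hence the bracket equals $M^{-1}(2A^{*}-B^{*})\geq M^{-1}A^{*}\geq M^{-1}F_0(s^{*})\geq (2M)^{-1}\Phi_0(r,t)$.

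\textbf{Comparing $h(s^{*})$ and $h(t_0)$.} It remains to upgrade the correction $h(s^{*})^{-\kappa/(\beta_1-1)}$ to $h(t_0)^{-\kappa/(\beta_1-1)}$ with $t_0:=(t/r)^{\beta_1/(\beta_1-1)}$. Writing $u^{*}:=\Psi^{-1}(s^{*})$, the positivity $F_0(s^{*})>0$ rewrites as $\Psi(u^{*})/u^{*}>t/r$. When $u^{*}\leq 1$, the one-sided upper bound $\Psi(u)\leq C_{\Psi}\Psi(1)u^{\beta_1}$ for $u\leq 1$ (a direct consequence of \eqref{eq:def:scalefct} with $s=u$ and reference $1$) gives $\Psi(u^{*})/u^{*}\leq C_{\Psi}\Psi(1)u^{*(\beta_1-1)}$, forcing $u^{*}\geq (t/(rC_{\Psi}\Psi(1)))^{1/(\beta_1-1)}$ and hence $s^{*}\geq u^{*}(t/r)\geq c_0\,t_0$ with $c_0:=(C_{\Psi}\Psi(1))^{-1/(\beta_1-1)}$. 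When $u^{*}>1$, one instead has $s^{*}\geq \Psi(1)$ while $t_0$ is also bounded below by a positive constant coming from the threshold $t/r>C_{\Psi}\Psi(1)$, so $h(s^{*})$ and $h(t_0)$ both lie between explicit positive constants. Combining the two cases with the scaling \eqref{eq:ass_h} of $h$ (and $h\geq 1$) yields $h(s^{*})\leq C\,h(t_0)$ with a constant $C$ depending only on $C_\Psi,C_h,\varepsilon_h,\beta_1,\Psi(1)$, and the lemma follows with $C_{\Phi_{\kappa}}:=C^{-\kappa/(\beta_1-1)}/(2M)$.

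\textbf{Main obstacle.} The delicate step is the lower bound $s^{*}\geq c_0\,t_0$ in the regime $u^{*}\leq 1$. A lazier use of the scaling—for instance estimating $s^{*}=\Psi(u^{*})\geq C_\Psi^{-1}\Psi(1)u^{*\beta_2}$ and feeding in the same lower bound on $u^{*}$—would yield only $s^{*}\gtrsim (t/r)^{\beta_2/(\beta_1-1)}$, which for $\beta_2>\beta_1$ is strictly weaker than $t_0$. The sharpness of the exponent $\beta_1$ appearing inside $h$ in the statement reflects exactly the fact that only the $\beta_1$-side of the scaling of $\Psi$ is engaged by the positivity $\Psi(u^{*})/u^{*}>t/r$; once this observation is made, everything else reduces to routine scaling manipulation.
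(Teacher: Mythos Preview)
Your approach is essentially the same as the paper's: both pick a near-optimiser $s^{*}$ for $\Phi_{0}$, evaluate the supremum defining $\Phi_{\kappa}$ at a point of the form $s^{*}h(s^{*})^{\kappa/(\beta_{1}-1)}$ (the paper absorbs the scaling constant into $r$ via $r'=C_{\Psi}^{-1/\beta_{1}}r$ rather than into $M$, and then quotes \cite[Lemma~2.10]{Mu20} to pass from $\Phi_{0}(r',t)$ back to $\Phi_{0}(r,t)$, whereas your choice of $M$ makes that step self-contained), and then both use the positivity $\Psi(u^{*})/u^{*}>t/r$ to bound $s^{*}$ from below by a multiple of $t_{0}=(t/r)^{\beta_{1}/(\beta_{1}-1)}$.

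There is one genuine slip in your treatment of the case $u^{*}>1$. You assert that this case forces the ``threshold $t/r>C_{\Psi}\Psi(1)$'', but that does not follow: the inequality $\Psi(u^{*})/u^{*}>t/r$ only gives an \emph{upper} bound on $t/r$, never a lower one, and nothing prevents a near-optimiser with $u^{*}>1$ from coexisting with arbitrarily small $t/r$. Consequently your claim that $h(t_{0})$ ``lies between explicit positive constants'' is unjustified. Fortunately the conclusion you need in this case is immediate for a simpler reason: from $u^{*}>1$ you get $s^{*}>\Psi(1)$ and hence $h(s^{*})\leq h(\Psi(1))$, while $h(t_{0})\geq h(\infty)\geq 1$ always, so $h(s^{*})\leq h(\Psi(1))\,h(t_{0})$. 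This is exactly how the paper handles its analogous case ($r'\leq t$). With this correction your argument goes through.
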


\begin{proof}
Set $\kappa':=(\beta_{1}-1)^{-1}\kappa$. \eqref{eq:Phikappa_Phi0} is obvious for $r=0$, so let $r,t\in(0,\infty)$,
and set $r':=C_{\Psi}^{-1/\beta_{1}}r$. Since $\{sh(s)^{-\kappa'}\mid s\in(0,\infty)\}=(0,\infty)$
and $\Phi_{0}(r',t)>0$, in view of~\eqref{eq:defPhi} we can choose $s\in(0,\infty)$ so that
\begin{align}\label{eq:Phikappa_Phi0-proof1}
\frac{r'}{\Psi^{-1}(sh(s)^{-\kappa'})}-\frac{t}{sh(s)^{-\kappa'}}>\frac{1}{2}\Phi_{0}(r',t)>0.
\end{align}
Then by~\eqref{eq:inv_scalefct}, $r'=C_{\Psi}^{-1/\beta_{1}}r$ and $\kappa'=(\beta_{1}-1)^{-1}\kappa$ we have
\begin{align} \label{eq:Phikappa_Phi0-proof2} 
\frac{r'h(s)^{-\kappa'}}{\Psi^{-1}(sh(s)^{-\kappa'})}
	\leq \frac{C_{\Psi}^{1/\beta_{1}}r'h(s)^{-\kappa'}}{\Psi^{-1}(sh(s)^{\kappa})}\biggl(\frac{sh(s)^{\kappa}}{sh(s)^{-\kappa'}}\biggr)^{1/\beta_{1}}
	= \frac{r}{\Psi^{-1}(sh(s)^{\kappa})},
\end{align}
and therefore from \cite[Lemma 2.10]{Mu20} (see also~\eqref{eq:Phi_kappa_ratio} and~\eqref{eq:Phi_kappa_comparable} below),
\eqref{eq:Phikappa_Phi0-proof1}, \eqref{eq:Phikappa_Phi0-proof2} and~\eqref{eq:defPhi} we obtain
\begin{align}
\Phi_{0}(r,t)h(s)^{-\kappa'}
	&\leq C_{\Psi}^{\frac{1}{\beta_{1}-1}}\Bigl(\frac{r}{r'}\Bigr)^{\frac{\beta_{1}}{\beta_{1}-1}}\Phi_{0}(r',t)h(s)^{-\kappa'}
	\leq 2C_{\Psi}^{\frac{2}{\beta_{1}-1}}\biggl(\frac{r'h(s)^{-\kappa'}}{\Psi^{-1}(sh(s)^{-\kappa'})}-\frac{t}{s}\biggr) \notag \\
&\leq 2C_{\Psi}^{\frac{2}{\beta_{1}-1}}\biggl(\frac{r}{\Psi^{-1}(sh(s)^{\kappa})}-\frac{t}{s}\biggr)
	\leq 2C_{\Psi}^{\frac{2}{\beta_{1}-1}}\Phi_{\kappa}(r,t).
\label{eq:Phikappa_Phi0-proof3}
\end{align}
On the other hand, setting $C_{\Psi}':=\bigl(C_{\Psi}^{-1/\beta_{1}}\Psi^{-1}(1)\bigr)^{\beta_{1}/(\beta_{1}-1)}\wedge 1$
and noting that
\begin{align*}
\frac{r'}{\Psi^{-1}(u)}-\frac{t}{u}
	=\frac{t}{\Psi^{-1}(u)}\biggl(\frac{r'}{t}-\frac{\Psi^{-1}(u)}{u}\biggr)
	\leq \frac{t}{\Psi^{-1}(u)}\biggl(\frac{r'}{t}-\Bigl(\frac{C_{\Psi}'}{u}\Bigr)^{\frac{\beta_{1}-1}{\beta_{1}}}\biggr)
\end{align*}
for any $u\in(0,1]$ by~\eqref{eq:inv_scalefct}, we have
\begin{align}\label{eq:Phikappa_Phi0-proof4}
\frac{r'}{\Psi^{-1}(u)}-\frac{t}{u}\leq 0
	\qquad\text{for any $u\in\biggl(0,C_{\Psi}'\Bigl(\frac{t}{r'}\wedge 1\Bigr)^{\frac{\beta_{1}}{\beta_{1}-1}}\biggr]$.}
\end{align}
It then follows from~\eqref{eq:Phikappa_Phi0-proof4} and~\eqref{eq:Phikappa_Phi0-proof1} that
$s\geq sh(s)^{-\kappa'}>C_{\Psi}'\bigl(\frac{t}{r'}\wedge 1\bigr)^{\beta_{1}/(\beta_{1}-1)}$,
and hence further from~\eqref{eq:ass_h} that, with
$C_{\Psi,h}:=C_{h}\bigl(\Psi^{-1}(1)\wedge 1\bigr)^{-\varepsilon_{h}\beta_{1}/(\beta_{1}-1)}$,
\begin{align*}
h(s)\leq
	\begin{cases}
	h\Bigl(C_{\Psi}'(t/r')^{\frac{\beta_{1}}{\beta_{1}-1}}\Bigr) \leq C_{\Psi,h}h\Bigl( (t/r)^{\frac{\beta_{1}}{\beta_{1}-1}} \Bigr) & \text{if $r'\geq t$,} \\
	h(C_{\Psi}') \leq h(C_{\Psi}')h(\infty)^{-1}h\Bigl( (t/r)^{\frac{\beta_{1}}{\beta_{1}-1}} \Bigr) & \text{if $r'\leq t$,}
	\end{cases}
\end{align*}
which together with~\eqref{eq:Phikappa_Phi0-proof3} shows~\eqref{eq:Phikappa_Phi0}.
\end{proof}

Now we can state the main result of this subsection; note that by Lemma~\ref{lem:Phikappa_Phi0} we can replace
$\exp\bigl(-c_{2}\Phi_{\kappa_{\mathrm{u}}}\bigl( d(x,y)\wedge R_{y},t\bigr)\bigr)$ in the right-hand side of~\eqref{eq:uhk} below by
\begin{align*}
\exp\Biggl(-c_{2}C_{\Phi_{\kappa_{\mathrm{u}}}}\Phi_{0}\bigl(d(x,y)\wedge R_{y},t\bigr)h\biggl(\Bigl(\frac{t}{d(x,y)\wedge R_{y}}\Bigr)^{\frac{\beta_{1}}{\beta_{1}-1}}\biggr)^{-\frac{\kappa_{\mathrm{u}}}{\beta_{1}-1}}\Biggr).
\end{align*}

\begin{theorem}[Off-diagonal upper bound] \label{thm:uhk}
Assume $\mathrm{(E)}$, $\mathrm{(DU)}$ and that $\overline{B}(x,r)$ is compact
for any $(x,r)\in(\cY\setminus\cN)\times(0,\frac{R}{2 a_\rme})$ with $B(x,2(a_\rme+1) r) \subset \cY$.
Set $\kappa_{\mathrm{u}}:=(2+\beta_2)(\kappa_{\mathrm{el}}+\kappa_{\mathrm{eu}})$ and
assume that $\varepsilon_{h}\kappa_{\mathrm{u}}<1$. Then there exists a Borel measurable function
$p=p_{t}(x,y): (0,\infty) \times (\cX\setminus\cN) \times \cY \rightarrow [0,\infty)$
such that the following hold:
\begin{enumerate}[\upshape(i)]
\item For any $(t,x) \in (0,\infty) \times (\cX\setminus\cN)$,
	\begin{align*}
	P_{x}[X_{t} \in dy] = p_{t}(x,y) \, \mu(dy).
	\end{align*}
\item There exist $c_{1}, c_{2} \in (0,\infty)$ 
	such that, with
	$\kappa_{\mathrm{du}}':=(1-\varepsilon_{h}\kappa_{\mathrm{u}})^{-1}(\kappa_{\mathrm{du}}+\kappa_{\mathrm{u}}\alpha_{2}/\beta_{1})$,
	for any $(t,x,y) \in (0,\infty) \times (\cX\setminus\cN) \times \cY$,
	\begin{align} \label{eq:uhk}
	p_{t}(x,y) \leq
	\frac{c_{1} h\bigl(t\wedge\Psi(R_{y})\bigr)^{\kappa_{\mathrm{du}}'}}{v\bigl(\Psi^{-1}(t)\wedge R_{y}\bigr)}
		\exp\Bigl(-c_{2}\Phi_{\kappa_{\mathrm{u}}}\bigl( d(x,y)\wedge R_{y},t\bigr)\Bigr),
	\end{align}
	where $R_{y} := R\wedge \inf_{z \in \cX \setminus \cY}d(y,z)$ \textup{($\inf\emptyset:=\infty$)}
	and $\Psi(\infty):=\infty$.
\end{enumerate}
\end{theorem}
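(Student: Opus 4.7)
The plan is to establish Theorem~\ref{thm:uhk} as a perturbed version of \cite[Theorems~6.2 and~6.4]{GK17}, carefully tracking the polylogarithmic fluctuations present in $\mathrm{(E)}$ and $\mathrm{(DU)}$. The existence of the density $p_{t}(x,y)$ satisfying (i) will follow from $\mathrm{(DU)}$ via a Radon--Nikodym argument, after exhausting $\cX\setminus\cN$ and $\cY$ by balls $B(x_{0},r)$ of the form appearing in~\eqref{eq:duhk}; the compactness hypothesis on closed balls inside $\cY$ ensures tightness of the corresponding sub-Markov kernels and weak continuity of $t\mapsto P_{x}[X_{t}\in\cdot]$, which together give a jointly measurable density.

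For the bound~\eqref{eq:uhk}, the first step is to convert the mean exit time upper bound in $\mathrm{(E)}$ into a tail estimate of the form $P_{x}[\tau_{B(x,r)}\leq t]\leq C\exp(-c\Phi_{\kappa_{1}}(r,t))$ for an appropriate $\kappa_{1}$. The standard approach applies the strong Markov property at successive exit times from sub-balls of radius $r/n$: Chebyshev's inequality together with the upper bound in~\eqref{eq:exit_est} gives $P_{z}[\tau_{B(z,r/n)}\leq t/n]\leq 1/2$ once $t/n\geq 2C_{\rme}\Psi(r/n)h(r/n)^{\kappa_{\mathrm{eu}}}$, and iterating $n$ times yields exponential decay in $n$; optimizing over $n$ using~\eqref{eq:def:scalefct}, \eqref{eq:inv_scalefct} and~\eqref{eq:ass_h} produces the $\Phi_{\kappa_{1}}$-type rate. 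This is the perturbed analog of \cite[Theorem~6.2]{GK17}: the factor $h(r)^{\kappa_{\mathrm{eu}}}$ is precisely what forces $\Phi_{0}$ to be replaced by $\Phi_{\kappa_{1}}$ in the exit time tail.

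Next I would combine the exit time tail with $\mathrm{(DU)}$ along the lines of \cite[Theorem~6.4]{GK17}. Using the semigroup property at time $t/2$ together with the strong Markov property at $\tau_{B(x,d(x,y)/2)}$, one decomposes $p_{t}(x,y)$ into a near piece controlled by the on-diagonal density multiplied by the exit time tail, and a far piece handled symmetrically from the $y$ side via $\mathrm{(DU)}$ applied inside a ball around $y$ of radius comparable to $R_{y}/a_{\mathrm{du}}$. This is where the truncation at $R_{y}$ in~\eqref{eq:uhk} originates: the $\mathrm{(DU)}$ input is only available inside balls contained in $\cY$, so the effective distance and the effective time must both be cut off by $R_{y}$ and $\Psi(R_{y})$ respectively.

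The main technical obstacle is the bookkeeping of the polylogarithmic exponents. The constant $\kappa_{\mathrm{u}}=(2+\beta_{2})(\kappa_{\mathrm{el}}+\kappa_{\mathrm{eu}})$ arises from the joint use of both inequalities in~\eqref{eq:exit_est} together with the worst-case scaling rate $\beta_{2}$ of~\eqref{eq:def:scalefct}: the lower bound enters when propagating the exit time tail across nested balls, the upper bound when applying Chebyshev, and the additive $+2$ accounts for losses in the semigroup decomposition. The more delicate exponent $\kappa_{\mathrm{du}}'=(1-\varepsilon_{h}\kappa_{\mathrm{u}})^{-1}(\kappa_{\mathrm{du}}+\kappa_{\mathrm{u}}\alpha_{2}/\beta_{1})$ will emerge from a self-improving fixed-point argument: the off-diagonal estimate at time $t$ gets expressed via on-diagonal estimates at time $t/2$, each of which spawns an additional $h(\cdot)^{\kappa_{\mathrm{u}}}$ factor that must be re-absorbed using~\eqref{eq:ass_h}. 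The resulting geometric series $\sum_{k\geq 0}(\varepsilon_{h}\kappa_{\mathrm{u}})^{k}$ closes exactly when $\varepsilon_{h}\kappa_{\mathrm{u}}<1$, which explains both the standing hypothesis and the appearance of the prefactor $(1-\varepsilon_{h}\kappa_{\mathrm{u}})^{-1}$. Once this is done, Lemma~\ref{lem:Phikappa_Phi0} is used to express the exponent in terms of the perturbed $\Phi_{\kappa_{\mathrm{u}}}$ rather than $\Phi_{0}$, yielding the final form~\eqref{eq:uhk}.
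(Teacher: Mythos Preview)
Your overall architecture---establish an exit time tail bound from $\mathrm{(E)}$ via chaining, then feed it together with $\mathrm{(DU)}$ into the machinery of \cite[Theorems~6.2 and~6.4]{GK17}---is indeed what the paper does. But you are missing the key organizing idea, and your account of where $\kappa_{\mathrm{du}}'$ comes from is wrong.

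The paper does \emph{not} redo the proofs of \cite{GK17} with $h$-factors carried along. Instead it observes (Lemma~\ref{lem:Phi_kappa}) that the perturbed function $s\mapsto\Psi^{-1}(sh(s)^{\kappa_{\mathrm{u}}})$ is comparable to $\Psi_{\kappa_{\mathrm{u}}}^{-1}$ for a new homeomorphism $\Psi_{\kappa_{\mathrm{u}}}$ which itself satisfies the scaling hypothesis~\eqref{eq:def:scalefct} (with $\beta_{2}$ replaced by $(1-\varepsilon_{h}\kappa_{\mathrm{u}})^{-1}\beta_{2}$). The exit time tail from Corollary~\ref{cor:P} is then exactly the hypothesis of \cite[Theorem~6.2/6.4]{GK17} \emph{for the unperturbed problem with $\Psi_{\kappa_{\mathrm{u}}}$ in place of $\Psi$}, so those theorems apply verbatim. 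All the work with $h$-factors is confined to (a) the Laplace transform estimate of Proposition~\ref{prop:laplace_tau}, where $\kappa_{\mathrm{u}}=(2+\beta_{2})(\kappa_{\mathrm{el}}+\kappa_{\mathrm{eu}})$ appears (the ``$2$'' comes from the condition $\lambda\geq c_{1}h(\rho)^{2\kappa_{1}}/\Psi(\rho)$ in Lemma~\ref{lem:exit2}, not from the semigroup decomposition as you suggest), and (b) a final conversion step translating the output of \cite{GK17} from $\Psi_{\kappa_{\mathrm{u}}}$-language back to $\Psi$-language.

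This conversion step is where $\kappa_{\mathrm{du}}'$ actually arises, and it is not a geometric series or iterative fixed-point argument. One needs to compare $h(\Psi_{\kappa_{\mathrm{u}}}(t))$ with $h(\Psi(t))$: from $\Psi_{\kappa_{\mathrm{u}}}(t)h(\Psi_{\kappa_{\mathrm{u}}}(t))^{\kappa_{\mathrm{u}}}\asymp\Psi(t)$ and~\eqref{eq:ass_h} one gets the self-referential inequality $h(\Psi_{\kappa_{\mathrm{u}}}(t))\leq C\,h(\Psi_{\kappa_{\mathrm{u}}}(t))^{\varepsilon_{h}\kappa_{\mathrm{u}}}h(\Psi(t))$, which solves in one algebraic step to $h(\Psi_{\kappa_{\mathrm{u}}}(t))\leq C\,h(\Psi(t))^{1/(1-\varepsilon_{h}\kappa_{\mathrm{u}})}$. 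Combined with the ratio $v(\Psi_{\kappa_{\mathrm{u}}}^{-1}(t))/v(\Psi^{-1}(t))\leq C\,h(t)^{\kappa_{\mathrm{u}}\alpha_{2}/\beta_{1}}$ from~\eqref{eq:scale_v} and~\eqref{eq:inv_scalefct}, this produces the claimed $\kappa_{\mathrm{du}}'$. There is no iteration: the off-diagonal bound is obtained once from \cite{GK17} and then rewritten.
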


The proof of Theorem~\ref{thm:uhk} is concluded at the end of this subsection. For this purpose,
we need some preliminary results on basic properties of $\Phi_{\kappa}$ and on upper bounds on
$P_{x}[\tau_{B(x,r)}\leq t]$. First, for $\Phi_{\kappa}$ we have the following lemma,
which asserts that, provided $\varepsilon_{h}\kappa<1$, the function
$\Psi^{-1}(sh(s)^{\kappa})$ in~\eqref{eq:defPhi} is comparable to $\Psi_{\kappa}^{-1}(s)$
for some homeomorphism $\Psi_{\kappa}:[0,\infty)\to[0,\infty)$ that still satisfies
\eqref{eq:def:scalefct} (for some different constants) even though $\Psi^{-1}(sh(s)^{\kappa})$
might not be itself strictly increasing.

\begin{lemma}\label{lem:Phi_kappa}
Let $\kappa\in[0,\varepsilon_{h}^{-1})$ and define a homeomorphism
$\Psi_{\kappa}:[0,\infty)\to[0,\infty)$ by
\begin{equation}\label{eq:Phi_kappa}
\Psi_{\kappa}^{-1}(t):=\sup_{s\in(0,t]}\Psi^{-1}(sh(s)^{\kappa})+\Psi^{-1}(t),
\end{equation}
which can be defined since $\lim_{s\downarrow 0}\Psi^{-1}(sh(s)^{\kappa})=0$ by
$\varepsilon_{h}\kappa<1$ and~\eqref{eq:ass_h}. Then there exists $C>0$ such that
\begin{equation}\label{eq:Psi_kappa_comparable}
C\Psi_{\kappa}^{-1}(t)\leq\Psi^{-1}(th(t)^{\kappa})\leq\Psi_{\kappa}^{-1}(t)
	\qquad\text{for any $t\in(0,\infty)$,}
\end{equation}
and $\Psi_{\kappa}$ satisfies~\eqref{eq:def:scalefct} with
$\beta_{2}$ replaced by $\beta_{\kappa,2}:=(1-\varepsilon_{h}\kappa)^{-1}\beta_{2}$ and
$C_{\Psi}$ by some $C_{\Psi,\kappa}\in(0,\infty)$. Moreover, with $\beta_{\kappa,1}:=\beta_{1}$ and
$\widetilde{\Phi}_{\kappa}(r,t):=\sup_{s\in(0,\infty)} \bigl( r/\Psi_{\kappa}^{-1}(s) -t/s \bigr)$ for
$(r,t)\in[0,\infty)\times(0,\infty)$, there exists $C'\in(0,\infty)$ such that for any $r,t,s\in(0,\infty)$ with $s\leq r$,
\begin{align}\label{eq:Phi_kappa_bounds}
C'^{-1}\min_{j\in\{1,2\}}\biggl(\frac{\Psi_{\kappa}(r)}{t}\biggr)^{\frac{1}{\beta_{\kappa,j}-1}}
	&\leq\widetilde{\Phi}_{\kappa}(r,t)
	\leq C'\max_{j\in\{1,2\}}\biggl(\frac{\Psi_{\kappa}(r)}{t}\biggr)^{\frac{1}{\beta_{\kappa,j}-1}},\\
\label{eq:Phi_kappa_ratio}
C'^{-1}\Bigl(\frac{r}{s}\Bigr)^{\frac{\beta_{\kappa,2}}{\beta_{\kappa,2}-1}}
	&\leq\frac{\widetilde{\Phi}_{\kappa}(r,t)}{\widetilde{\Phi}_{\kappa}(s,t)}
	\leq C'\Bigl(\frac{r}{s}\Bigr)^{\frac{\beta_{\kappa,1}}{\beta_{\kappa,1}-1}},\\
\label{eq:Phi_kappa_comparable}
\widetilde{\Phi}_{\kappa}(r,t) &\leq \Phi_{\kappa}(r,t) \leq C'\widetilde{\Phi}_{\kappa}(r,t).
\end{align}
\end{lemma}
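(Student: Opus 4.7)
The plan splits into three stages: (a) verifying that $\Psi_\kappa^{-1}$ is a well-defined homeomorphism and establishing the two-sided comparison \eqref{eq:Psi_kappa_comparable}; (b) deducing the claimed scaling bounds for $\Psi_\kappa$; (c) transferring estimates from the ``clean'' Legendre-type transform $\widetilde{\Phi}_\kappa$ over to $\Phi_\kappa$.

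For (a), I would fix any $t_0\in(0,\infty)$: \eqref{eq:ass_h} gives $sh(s)^{\kappa}\leq C_h^{\kappa}h(t_0)^{\kappa}t_0^{\varepsilon_h\kappa}s^{1-\varepsilon_h\kappa}\to 0$ as $s\downarrow 0$, using $\varepsilon_h\kappa<1$, so the supremum in \eqref{eq:Phi_kappa} is finite and $\Psi_\kappa^{-1}$ defines a homeomorphism of $[0,\infty)$. The upper inequality of \eqref{eq:Psi_kappa_comparable} is immediate from the $s=t$ term in the supremum. For the lower one, a short computation using \eqref{eq:ass_h} shows that for $0<s\leq t$ one has $sh(s)^{\kappa}\leq C_h^{\kappa}(s/t)^{1-\varepsilon_h\kappa}\,th(t)^{\kappa}\leq C_h^{\kappa}th(t)^{\kappa}$; then \eqref{eq:inv_scalefct} bounds $\Psi^{-1}(sh(s)^{\kappa})\leq C_\Psi^{1/\beta_1}C_h^{\kappa/\beta_1}\Psi^{-1}(th(t)^{\kappa})$, and $\Psi^{-1}(t)\leq\Psi^{-1}(th(t)^{\kappa})$ holds trivially since $h(t)\geq 1$, so summing yields \eqref{eq:Psi_kappa_comparable} with $C=(C_\Psi^{1/\beta_1}C_h^{\kappa/\beta_1}+1)^{-1}$.

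For (b), I would reduce via \eqref{eq:Psi_kappa_comparable} to controlling the ratio $\Psi^{-1}(th(t)^{\kappa})/\Psi^{-1}(sh(s)^{\kappa})$ for $0<s\leq t$. By \eqref{eq:ass_h} and the monotonicity of $h$, the quantity $th(t)^{\kappa}/sh(s)^{\kappa}=(t/s)(h(t)/h(s))^{\kappa}$ lies in $[C_h^{-\kappa}(t/s)^{1-\varepsilon_h\kappa},\,t/s]$. Plugging these bounds into \eqref{eq:inv_scalefct} and splitting cases on whether $th(t)^{\kappa}$ does or does not dominate $sh(s)^{\kappa}$ (in the non-dominating case, the trivial bound $1$ suffices on the larger side, while \eqref{eq:inv_scalefct} applied in reverse covers the other) produces $\Psi_\kappa^{-1}(t)/\Psi_\kappa^{-1}(s)\leq c_1(t/s)^{1/\beta_1}$ and $\Psi_\kappa^{-1}(t)/\Psi_\kappa^{-1}(s)\geq c_2(t/s)^{(1-\varepsilon_h\kappa)/\beta_2}$, giving exactly $\beta_{\kappa,1}=\beta_1$ and $\beta_{\kappa,2}=\beta_2/(1-\varepsilon_h\kappa)$.

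For (c), \eqref{eq:Phi_kappa_bounds} follows by applying \cite[Lemma~2.10]{Mu20} to $\Psi_\kappa$, which satisfies \eqref{eq:def:scalefct} with the new exponents by (b), and \eqref{eq:Phi_kappa_ratio} is then a direct consequence of \eqref{eq:Phi_kappa_bounds} combined with the scaling of $\Psi_\kappa$. The first inequality in \eqref{eq:Phi_kappa_comparable} is immediate from $\Psi^{-1}(sh(s)^{\kappa})\leq\Psi_\kappa^{-1}(s)$ applied inside the supremum; for the second, the bound $C\Psi_\kappa^{-1}(s)\leq\Psi^{-1}(sh(s)^{\kappa})$ gives $\Phi_\kappa(r,t)\leq\widetilde{\Phi}_\kappa(r/C,t)$, and since $C\leq 1$ one has $r/C\geq r$, so \eqref{eq:Phi_kappa_ratio} bounds $\widetilde{\Phi}_\kappa(r/C,t)\leq C''\widetilde{\Phi}_\kappa(r,t)$. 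The main obstacle is the case analysis in stage (b): because $sh(s)^{\kappa}$ is not monotone in $s$ and can exceed $th(t)^{\kappa}$ even for $s<t$, a single application of \eqref{eq:inv_scalefct} does not suffice, and both directions of that inequality together with \eqref{eq:ass_h} must be combined with care to recover the sharp exponent $\beta_{\kappa,2}=\beta_2/(1-\varepsilon_h\kappa)$ rather than a worse value.
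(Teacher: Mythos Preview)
Your proof is correct and follows essentially the same route as the paper's: establish \eqref{eq:Psi_kappa_comparable} via \eqref{eq:ass_h} and \eqref{eq:inv_scalefct}, deduce the scaling of $\Psi_\kappa$ by the case split on whether $sh(s)^\kappa$ exceeds $th(t)^\kappa$, and then invoke external results for \eqref{eq:Phi_kappa_bounds}--\eqref{eq:Phi_kappa_comparable}. One small point: the paper cites \cite[Lemma~3.19]{GT12} and \cite[Lemma~5.7]{GK17} for \eqref{eq:Phi_kappa_bounds} and \cite[Lemma~2.10]{Mu20} for \eqref{eq:Phi_kappa_ratio}, whereas you have these swapped --- and deriving the sharp exponents in \eqref{eq:Phi_kappa_ratio} from \eqref{eq:Phi_kappa_bounds} alone is not as direct as you suggest (one rather uses a substitution $u\mapsto\lambda u$ in the variational formula together with the scaling of $\Psi_\kappa^{-1}$), though it does not affect the logic of the argument.
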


\begin{proof}
Let $t\in(0,\infty)$. Clearly $\Psi^{-1}(th(t)^{\kappa})\leq\Psi_{\kappa}^{-1}(t)$.
By~\eqref{eq:inv_scalefct} and~\eqref{eq:ass_h},
for any $s\in(0,t]$, if $sh(s)^{\kappa}\geq th(t)^{\kappa}$ then
\begin{align}\label{eq:prf:Phi_kappa-1}
\Bigl(\frac{s}{t}\Bigr)^{1/\beta_{1}}\leq 1
	\leq\frac{\Psi^{-1}(sh(s)^{\kappa})}{\Psi^{-1}(th(t)^{\kappa})}
	\leq c\biggl(\frac{sh(s)^{\kappa}}{th(t)^{\kappa}}\biggr)^{1/\beta_{1}}
	\leq cC_{h}^{\kappa}\Bigl(\frac{s}{t}\Bigr)^{\frac{1-\varepsilon_{h}\kappa}{\beta_{1}}}
	\leq cC_{h}^{\kappa}\Bigl(\frac{s}{t}\Bigr)^{\frac{1-\varepsilon_{h}\kappa}{\beta_{2}}},
\end{align}
and if $sh(s)^{\kappa}\leq th(t)^{\kappa}$ then
\begin{align}\label{eq:prf:Phi_kappa-2}
c^{-1}\Bigl(\frac{s}{t}\Bigr)^{1/\beta_{1}}
	\leq c^{-1}\biggl(\frac{sh(s)^{\kappa}}{th(t)^{\kappa}}\biggr)^{1/\beta_{1}}
	\leq\frac{\Psi^{-1}(sh(s)^{\kappa})}{\Psi^{-1}(th(t)^{\kappa})}
	\leq c\biggl(\frac{sh(s)^{\kappa}}{th(t)^{\kappa}}\biggr)^{1/\beta_{2}}
	\leq cC_{h}^{\kappa}\Bigl(\frac{s}{t}\Bigr)^{\frac{1-\varepsilon_{h}\kappa}{\beta_{2}}}.
\end{align}
In particular, we see from~\eqref{eq:prf:Phi_kappa-1} and~\eqref{eq:prf:Phi_kappa-2} that
$\Psi^{-1}(sh(s)^{\kappa})\leq cC_{h}^{\kappa}\Psi^{-1}(th(t)^{\kappa})$ for any $s\in(0,t]$ and hence that
$\Psi_{\kappa}^{-1}(t)\leq 2\sup_{s\in(0,t]}\Psi^{-1}(sh(s)^{\kappa})\leq cC_{h}^{\kappa}\Psi^{-1}(th(t)^{\kappa})$,
proving~\eqref{eq:Psi_kappa_comparable}. It follows from~\eqref{eq:prf:Phi_kappa-1}, \eqref{eq:prf:Phi_kappa-2} and
\eqref{eq:Psi_kappa_comparable} that $\Psi_{\kappa}$ satisfies~\eqref{eq:def:scalefct} with $\beta_{2}$ replaced by
$\beta_{\kappa,2}=(1-\varepsilon_{h}\kappa)^{-1}\beta_{2}$ and $C_{\Psi}$ by some constant $C_{\Psi,\kappa}\in(0,\infty)$.
Finally, we have~\eqref{eq:Phi_kappa_bounds} by \cite[Lemma 3.19]{GT12} and
\cite[Lemma 5.7]{GK17}, \eqref{eq:Phi_kappa_ratio} by \cite[Lemma 2.10]{Mu20}, and
then~\eqref{eq:Phi_kappa_comparable} by~\eqref{eq:Psi_kappa_comparable} and
\eqref{eq:Phi_kappa_ratio}.
\end{proof}

Next, we prove some upper bounds on $P_{x}[\tau_{B(x,r)}\leq t]$, which is a key condition
for applying \cite[Theorems 6.2 and 6.4]{GK17} to conclude Theorem~\ref{thm:uhk}.
For any open set $U\subset \cX$ we set
\begin{align*}
\overline{E} (U) \; \ldef \; \sup_{x\in U\setminus \mathcal{N}} E_x [\tau_U].
\end{align*}

\begin{lemma} \label{lem:exit1}
For any open subset $U$ of $\cX$ with $\overline{E}(U)<\infty$
and any $(t,x)\in(0,\infty)\times (\cX \setminus \cN)$,
\begin{align*}
P_{x}[\tau_{U}<t] \leq 1 - \frac{E_x[\tau_U]}{\overline{E}(U)} + \frac{t}{\overline{E}(U)}.
\end{align*} 
\end{lemma}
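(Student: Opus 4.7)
The plan is a standard one-step Markov argument at the deterministic time $t$, using the continuity of paths to write $\tau_U = t + \tau_U \circ \theta_t$ on the event $\{\tau_U > t\}$.

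First I would split the expectation as
\begin{align*}
E_x[\tau_U] = E_x[\tau_U; \tau_U \leq t] + E_x[\tau_U; \tau_U > t].
\end{align*}
The first term is trivially bounded by $t P_x[\tau_U \leq t]$. For the second, note that on $\{\tau_U > t\}$ the continuity of $s \mapsto X_s$ and the fact that $U$ is open give $X_s \in U$ for every $s \in [0,t]$, so $\tau_U(\omega) = t + \tau_U(\theta_t \omega)$. Applying the strong Markov property (or just the simple Markov property, since $t$ is deterministic and $\{\tau_U > t\} \in \mathescr{F}_t$) yields
\begin{align*}
E_x[\tau_U; \tau_U > t] = t P_x[\tau_U > t] + E_x\bigl[\indicator_{\{\tau_U > t\}} E_{X_t}[\tau_U]\bigr].
\end{align*}

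The next step, and the only slightly delicate point, is to justify that $E_{X_t}[\tau_U] \leq \overline{E}(U)$ on $\{\tau_U > t\}$ $P_x$-a.s. By definition of $\overline{E}(U)$ this requires $X_t \in U \setminus \cN$. The inclusion $X_t \in U$ follows from the path argument above, and $X_t \notin \cN$ $P_x$-a.s.\ follows from the hypothesis $x \in \cX \setminus \cN$ combined with Assumption~\ref{ass:RYN}, which gives $P_x[\dot{\sigma}_{\cN} = \infty] = 1$ and hence that the entire trajectory avoids $\cN$ $P_x$-a.s. Combining this with the previous display gives
\begin{align*}
E_x[\tau_U; \tau_U > t] \leq \bigl(t + \overline{E}(U)\bigr) P_x[\tau_U > t].
\end{align*}

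Adding the two bounds produces $E_x[\tau_U] \leq t + \overline{E}(U) P_x[\tau_U > t] = t + \overline{E}(U)\bigl(1 - P_x[\tau_U \leq t]\bigr)$. Rearranging (and using $\overline{E}(U) < \infty$ to divide) gives
\begin{align*}
P_x[\tau_U \leq t] \leq 1 - \frac{E_x[\tau_U]}{\overline{E}(U)} + \frac{t}{\overline{E}(U)},
\end{align*}
and the claimed bound on $P_x[\tau_U < t]$ follows from $P_x[\tau_U < t] \leq P_x[\tau_U \leq t]$. I do not anticipate any real obstacle; the only point worth articulating carefully is the a.s.\ avoidance of $\cN$ used to legitimately invoke the definition of $\overline{E}(U)$ inside the expectation.
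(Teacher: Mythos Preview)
Your argument is correct and is precisely the standard proof of this fact. The paper does not give its own proof here but simply cites \cite[Lemma~3.12]{GT12}; your write-up spells out that argument in full, including the one point specific to the present setting (using Assumption~\ref{ass:RYN} to ensure $X_t \notin \cN$ so that the bound $E_{X_t}[\tau_U] \leq \overline{E}(U)$ is legitimate).
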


\begin{proof}
See \cite[Lemma~3.12]{GT12}.
\end{proof}

\begin{lemma} \label{lem:exit2}
Assume that condition $\mathrm{(E)}$ holds, and set $\kappa_{1}:=\kappa_{\mathrm{el}}+\kappa_{\mathrm{eu}}$.
Then there exist constants $c_{1}, c_{2} \in (0,\infty)$ such that for any
$(x,r) \in (\cY\setminus \cN) \times \bigl(0,\frac{R}{2 a_{\rme}}\bigr)$ with
$B(x,(2a_{\rme}+1) r) \subset \cY$ and any $\lambda \in [c_{1} h(r)^{2\kappa_{1}}/\Psi(r),\infty)$,
\begin{align*}
E_{x}\bigl[ e^{-\lambda \tau_{B(x,r)}} \bigr] \leq 1 - c_{2} h(r)^{-\kappa_{1}}.
\end{align*}
\end{lemma}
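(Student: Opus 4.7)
The plan is to reduce the Laplace transform estimate to a lower bound on $P_{x}[\tau_{B(x,r)}\geq t]$ via the elementary inequality
\begin{equation*}
1-E_{x}\bigl[e^{-\lambda \tau_{B(x,r)}}\bigr]=E_{x}\bigl[1-e^{-\lambda \tau_{B(x,r)}}\bigr]\geq (1-e^{-\lambda t})P_{x}[\tau_{B(x,r)}\geq t],
\end{equation*}
and then to obtain the latter by applying Lemma~\ref{lem:exit1} with both sides controlled by condition $\mathrm{(E)}$. The key step, which uses the extra room in the hypothesis $B(x,(2a_{\rme}+1)r)\subset\cY$, is the uniform upper bound on $\overline{E}(B(x,r))$: for every $y\in B(x,r)\setminus\cN$ one has $B(x,r)\subset B(y,2r)$ and hence $\tau_{B(x,r)}\leq \tau_{B(y,2r)}$; since $B(y,2a_{\rme}r)\subset B(x,(2a_{\rme}+1)r)\subset\cY$ and $2r\leq R/a_{\rme}$, condition $\mathrm{(E)}$ applies to the ball $B(y,2r)$ to yield
\begin{equation*}
E_{y}[\tau_{B(x,r)}]\leq E_{y}[\tau_{B(y,2r)}]\leq C_{\rme}\Psi(2r)h(2r)^{\kappa_{\mathrm{eu}}}\leq C_{\rme}C_{\Psi}2^{\beta_{2}}\Psi(r)h(r)^{\kappa_{\mathrm{eu}}},
\end{equation*}
where we used \eqref{eq:def:scalefct} and the monotonicity of $h$. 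Taking the supremum over $y$ gives $\overline{E}(B(x,r))\leq c_{\rmh}\Psi(r)h(r)^{\kappa_{\mathrm{eu}}}$.

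Next, the lower bound in $\mathrm{(E)}$ (applied to $B(x,r)$ itself, which is legitimate as $B(x,a_{\rme}r)\subset B(x,(2a_{\rme}+1)r)\subset\cY$ and $r\leq R/(2a_{\rme})\leq R/a_{\rme}$) gives $E_{x}[\tau_{B(x,r)}]\geq C_{\rme}^{-1}\Psi(r)h(r)^{-\kappa_{\mathrm{el}}}$. Choosing
\begin{equation*}
t:=\tfrac{1}{2}C_{\rme}^{-1}\Psi(r)h(r)^{-\kappa_{\mathrm{el}}},
\end{equation*}
Lemma~\ref{lem:exit1} yields
\begin{equation*}
P_{x}[\tau_{B(x,r)}\geq t]\geq \frac{E_{x}[\tau_{B(x,r)}]-t}{\overline{E}(B(x,r))}\geq \frac{\tfrac{1}{2}C_{\rme}^{-1}\Psi(r)h(r)^{-\kappa_{\mathrm{el}}}}{c_{\rmh}\Psi(r)h(r)^{\kappa_{\mathrm{eu}}}}=c_{3}h(r)^{-\kappa_{1}}
\end{equation*}
with $\kappa_{1}=\kappa_{\mathrm{el}}+\kappa_{\mathrm{eu}}$.

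Finally, fixing $c_{1}:=2C_{\rme}$, whenever $\lambda\geq c_{1}h(r)^{2\kappa_{1}}/\Psi(r)$ the choice of $t$ above gives
\begin{equation*}
\lambda t\geq \tfrac{c_{1}}{2C_{\rme}}h(r)^{2\kappa_{1}-\kappa_{\mathrm{el}}}= h(r)^{\kappa_{\mathrm{el}}+2\kappa_{\mathrm{eu}}}\geq 1,
\end{equation*}
since $h\geq 1$ and $\kappa_{\mathrm{el}},\kappa_{\mathrm{eu}}\geq 0$; consequently $1-e^{-\lambda t}\geq 1-e^{-1}$. Plugging the lower bound on $P_{x}[\tau_{B(x,r)}\geq t]$ into the opening display yields
\begin{equation*}
1-E_{x}\bigl[e^{-\lambda \tau_{B(x,r)}}\bigr]\geq (1-e^{-1})c_{3}h(r)^{-\kappa_{1}}=:c_{2}h(r)^{-\kappa_{1}},
\end{equation*}
which is exactly the claimed inequality. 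There is no genuinely hard step here; the only point requiring care is the derivation of the uniform upper bound on $\overline{E}(B(x,r))$, which is what forces the enlarged inclusion hypothesis $B(x,(2a_{\rme}+1)r)\subset\cY$, and the bookkeeping that $2\kappa_{1}\geq \kappa_{\mathrm{el}}$ so that the lower threshold on $\lambda$ stated in the lemma does suffice to produce $\lambda t\geq 1$.
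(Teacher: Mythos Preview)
Your proof is correct and follows essentially the same route as the paper: both bound $\overline{E}(B(x,r))$ via condition $\mathrm{(E)}$ applied to $B(y,2r)$, combine this with the lower bound on $E_x[\tau_{B(x,r)}]$ through Lemma~\ref{lem:exit1}, and then choose $t$ proportional to $\Psi(r)h(r)^{-\kappa_{\mathrm{el}}}$. The only cosmetic difference is that the paper splits $E_x[e^{-\lambda\tau}]\leq P_x[\tau<t]+e^{-\lambda t}$ and absorbs the additive $e^{-\lambda t}$ term, whereas you use the complementary inequality $1-E_x[e^{-\lambda\tau}]\geq(1-e^{-\lambda t})P_x[\tau\geq t]$ with a multiplicative factor; the arithmetic and the resulting threshold on $\lambda$ are the same.
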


\begin{proof}
For $r$ and $x$ as in the statement  set $B\ldef B(x,r)$. Then, by Lemma~\ref{lem:exit1} we have for all $t,\lambda \in (0,\infty)$,
\begin{align*}
E_x\big[ e^{-\lambda \tau_B} \big]  & \; \leq \; E_x\big[ e^{-\lambda \tau_B} \indicator_{\{\tau_B < t\}} \big] +
E_x\big[ e^{-\lambda \tau_B} \indicator_{\{\tau_B \geq t\}} \big]
\; \leq \; 
P_x \big[ \tau_B < t \big] + e^{-\lambda t} \\
& \; \leq \ 
1 - \frac{E_x[\tau_B]}{\overline{E}(B)} + \frac{t}{\overline{E}(B)}
+   e^{-\lambda t}.
\end{align*}
Next, note that for any $y\in B\setminus \mathcal{N}$ we have $B(y, 2a_{\rme} r)   \subset B(x,(2a_{\rme}+1) r) \subset \cY$ and therefore by condition  $\mathrm{(E)}$ we obtain
$\overline{E}(B) \geq E_x[\tau_B] \geq C_\rme^{-1} \Psi(r) h(r)^{-\kappa_{\mathrm{el}}}$ and
\begin{align} \label{eq:est:barE_B}
\overline{E}(B) &\; = \sup_{y\in B\setminus \mathcal{N}} E_y \big[\tau_B \big] \; \leq  
 \sup_{y\in B\setminus \mathcal{N}} E_y \big[\tau_{B(y,2r)} \big]
 \; \leq \;
 C_\rme \,  \Psi(2r)\, h(2r)^{\kappa_{\mathrm{eu}}} \nonumber \\
 & \; \leq \;
 c \, \Psi(r) \, h(r)^{\kappa_{\mathrm{eu}}} 
 \; \leq \;  c \,  h(r)^{\kappa_{\mathrm{el}}+\kappa_{\mathrm{eu}}} \,
 E_x \big[\tau_B \big].
\end{align}
Hence, setting $\kappa_1\ldef \kappa_{\mathrm{el}}+\kappa_{\mathrm{eu}}$ we get
\begin{align*}
E_x\big[ e^{-\lambda \tau_{B(x,r)}} \big] \; \leq \; 1- c_3 \,  h(r)^{-\kappa_1} +
c_4 \,  t \, \Psi(r)^{-1} \, h(r)^{\kappa_{\mathrm{el}}} +  e^{-\lambda t}.
\end{align*}
Now choose $t$ such that $c_3  h(r)^{-\kappa_1}=
2 c_4 \,  t \, \Psi(r)^{-1} \, h(r)^{\kappa_{\mathrm{el}}}$, so that
\begin{align*}
E_x\big[ e^{-\lambda \tau_{B(x,r)}} \big] \; \leq \; 1- \frac  12  c_3  h(r)^{-\kappa_1} +  e^{-\lambda t}.
\end{align*}
Finally, note that for all  $\lambda \geq c_1 h(r)^{2\kappa_1}/  \Psi(r)$ with $c_1>0$ sufficiently large and $t$ as chosen above  we have $ e^{-\lambda t}\leq \frac 1 4 c_3  h(r)^{-\kappa_1}$, which completes the proof.
\end{proof}

\begin{proposition} \label{prop:laplace_tau}
Assume that condition $\mathrm{(E)}$ holds, and set
$\kappa_{\mathrm{u}} \ldef (2+\beta_2)(\kappa_{\mathrm{el}}+\kappa_{\mathrm{eu}})$.
Then there exist $C_5, \gamma \in (0,\infty)$ such that, for any
$(x,r)\in (\cY\setminus \cN) \times \bigl(0,\frac{R}{2 a_{\rme}}\bigr)$ with $\overline{B}(x,r)$
compact and $B(x,2(a_\rme+1) r) \subset \cY$, and for any $\lambda \in (0,\infty)$,
\begin{align*}
E_{x}\bigl[ e^{-\lambda \tau_{B(x,r)}} \bigr] \leq C_5 \exp\biggl( - \frac{\gamma r}{\Psi^{-1}\bigl(\lambda^{-1} h(\lambda^{-1})^{\kappa_{\mathrm{u}}}\bigr) } \biggr).
\end{align*}
\end{proposition}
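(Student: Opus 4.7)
The plan is a standard chaining argument: iterate Lemma~\ref{lem:exit2} at a well-chosen step size $\rho$ via the strong Markov property, and then optimize $\rho$ in terms of $\lambda$.

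First I would dispose of the easy regime $r \leq \Psi^{-1}\bigl(\lambda^{-1} h(\lambda^{-1})^{\kappa_{\mathrm{u}}}\bigr)$ using the trivial bound $E_{x}[e^{-\lambda \tau}] \leq 1$, absorbed into the constant $C_{5}$. So assume $r$ is large compared to this quantity, and set $\kappa_{1} := \kappa_{\mathrm{el}} + \kappa_{\mathrm{eu}}$. Pick $\rho \in (0,r)$ satisfying
\begin{align*}
\lambda = c_{1} h(\rho)^{2\kappa_{1}} / \Psi(\rho),
\end{align*}
where $c_{1}$ is the constant from Lemma~\ref{lem:exit2}; existence follows from continuity together with the scaling in Assumption~\ref{ass:scale-functions}, which forces $\rho\mapsto\Psi(\rho)/h(\rho)^{2\kappa_{1}}$ to range over $(0,\infty)$. (One also needs $\rho$ to be small enough compared to $r$ that the containment $B(X_{T_{k}}, 2a_{\rme}\rho) \subset \cY$ required in Lemma~\ref{lem:exit2} is guaranteed by $B(x,2(a_{\rme}+1)r) \subset \cY$; the borderline case where $\rho$ is not small is again handled trivially.)

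Next I would define stopping times $T_{0} := 0$ and $T_{k+1} := T_{k} + \tau_{B(X_{T_{k}},\rho)} \circ \theta_{T_{k}}$, which are $\mathscr{F}_{*}$-stopping times thanks to the right-continuity of the filtration and the assumed compactness of $\overline{B}(x,r)$. Since \eqref{eq:exceptional} guarantees that $X_{T_{k}} \in \cX \setminus \cN$ $P_{x}$-almost surely, Lemma~\ref{lem:exit2} applies at each step, and the strong Markov property yields inductively
\begin{align*}
E_{x}[e^{-\lambda T_{N}}] \leq \bigl(1 - c_{2} h(\rho)^{-\kappa_{1}}\bigr)^{N} \leq \exp\bigl(-c_{2} h(\rho)^{-\kappa_{1}} N\bigr).
\end{align*}
By the triangle inequality applied to $X_{T_{0}}, \ldots, X_{T_{N}}$, the event $\{T_{N} < \tau_{B(x,r)}\}$ forces $d(X_{T_{N}}, x) \leq N\rho < r$, so choosing $N := \lfloor r/(2\rho) \rfloor$ gives $\tau_{B(x,r)} \geq T_{N}$ and hence
\begin{align*}
E_{x}[e^{-\lambda \tau_{B(x,r)}}] \leq C \exp\bigl(-c\, h(\rho)^{-\kappa_{1}} r/\rho\bigr).
\end{align*}

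The final task is to recast $h(\rho)^{-\kappa_{1}}/\rho$ as $1/\Psi^{-1}\bigl(\lambda^{-1} h(\lambda^{-1})^{\kappa_{\mathrm{u}}}\bigr)$ up to constants. Using the defining equation of $\rho$ and the upper scaling of $\Psi$ in~\eqref{eq:def:scalefct} (recall $h(\rho) \geq 1$), we get
\begin{align*}
\Psi\bigl(\rho h(\rho)^{\kappa_{1}}\bigr)
\leq C_{\Psi}\, \Psi(\rho)\, h(\rho)^{\kappa_{1}\beta_{2}}
= C_{\Psi} c_{1}\, h(\rho)^{\kappa_{1}(2+\beta_{2})}/\lambda
= C_{\Psi} c_{1}\, h(\rho)^{\kappa_{\mathrm{u}}}/\lambda,
\end{align*}
precisely because of how $\kappa_{\mathrm{u}} = (2+\beta_{2})\kappa_{1}$ was defined. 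Applying $\Psi^{-1}$ and absorbing the multiplicative constant using~\eqref{eq:inv_scalefct} gives $\rho h(\rho)^{\kappa_{1}} \lesssim \Psi^{-1}(h(\rho)^{\kappa_{\mathrm{u}}}/\lambda)$. The main obstacle is the last comparison $h(\rho) \lesssim h(\lambda^{-1})$ up to a multiplicative constant, which I would resolve via~\eqref{eq:ass_h} together with the lower bound $\rho \geq \Psi^{-1}(c_{1}/\lambda)$ (from $h \geq 1$) and~\eqref{eq:inv_scalefct} to compare $\Psi^{-1}(c_{1}/\lambda)$ with $\lambda^{-1}$, splitting into the cases $\rho \geq \lambda^{-1}$ (trivial by monotonicity of $h$) and $\rho \leq \lambda^{-1}$ (where~\eqref{eq:ass_h} gives a polynomial blowup that is reabsorbed into the constant $\gamma$). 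This concludes the bound with a worse multiplicative constant in $\gamma$, as required.
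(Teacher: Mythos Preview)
Your proposal is correct and follows essentially the same chaining argument as the paper, which in turn follows \cite[Lemma~3.14]{GT12}. The only notable differences are bookkeeping: the paper iterates via the suprema $m_k = \sup_{\overline{B}(x,k\rho)\setminus\cN} E_{\cdot}[e^{-\lambda\tau}]$ rather than stopping times, and sets $\rho := \Psi^{-1}(C_0 \lambda^{-1} h(\lambda^{-1})^{2\kappa_1})$ explicitly (then verifies the hypothesis of Lemma~\ref{lem:exit2}) rather than solving for $\rho$ implicitly, which makes the final conversion $\rho h(\rho)^{\kappa_1} \lesssim \Psi^{-1}(\lambda^{-1}h(\lambda^{-1})^{\kappa_{\mathrm{u}}})$ slightly more direct and avoids the somewhat delicate comparison $h(\rho)\lesssim h(\lambda^{-1})$ you sketch at the end.
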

\begin{proof}
We follow the arguments in \cite[Lemma~3.14]{GT12}. 

\emph{Step~1.} Let $\lambda>0$, fix some $\rho <r$ to be specified later and set $n=\lfloor \frac{r}{\rho} \rfloor$. Further, set $\tau\ldef \tau_{B(x,r)}$ and 
\begin{align*}
u(y)\ldef E_y\big[ e^{-\lambda \tau}\big], \qquad m_k \ldef \sup_{ \overline{B}(x,k\rho) \setminus \cN} u, \qquad k=1,2, \ldots, n.
\end{align*}
For abbreviation define $\varepsilon \ldef C_2 h(\rho)^{-\kappa_1}$ (cf.\ Lemma~\ref{lem:exit2}) and $\varepsilon' \ldef \varepsilon/2$. Let $y_k$ be a point in $\overline{B}(x,k\rho) \setminus \cN$ such that 
\begin{align*}
(1-\varepsilon') \, m_k \; \leq \; u(y_k) \; \leq \; m_k \; \leq \; 1. 
\end{align*}
For $k\leq n-1$ notice that $B(y_k,\rho) \subset B(x, (k+1)\rho) \subset B(x,r)$.
Consider the function $v_k(y)\ldef E_y[e^{-\lambda \tau_k}]$ defined for $y\in B(y_k,\rho)$
where $\tau_k \ldef \tau_{B(y_k,\rho)}$. Then, since $\overline{B}(x,r)$ is compact,
by the continuity of $[0,\infty)\ni t\mapsto X_{t}(\omega)\in \cX_{\partial}$ for each
$\omega\in\Omega$, for all $y\in B(y_k,\rho) \setminus \cN$ we have
$X_{\tau_k} \in \overline{B}(y_k,\rho)\subset \overline{B}(x,(k+1) \rho)$ $P_y$-a.s.\ on $\{ \tau_k<\infty \}$.
By the strong Markov property \cite[Theorem A.1.21]{CF12} of $X$,
 \begin{align*}
 u(y) & \; = \; E_y \big[ e^{-\lambda \tau_k} e^{-\lambda (\tau-\tau_k)}  \big] \; = \; 
  E_y \Big[ e^{-\lambda \tau_k} E_{X_{\tau_k}} \big[ e^{-\lambda \tau} \big]  \Big]  \\
&  \; = \;
   E_y \big[ \indicator_{\{  \tau_k<\infty, X_{\tau_k} \in \overline{B}(x,(k+1)\rho)\setminus \cN \}} e^{-\lambda \tau_k} u(X_{\tau_k}) \big] \\
    &\; \leq \;
    E_y \big[ e^{-\lambda \tau_k} \big] \, \sup_{ \overline{B}(x,(k+1)\rho) \setminus \cN} u 
     \; =\; v_k(y) \, m_{k+1}.
 \end{align*}
 In particular, by choosing $y=y_k$, we get $ u(y_k)  \leq  v_k(y_k) m_{k+1}$ and therefore 
 \begin{align} \label{eq:pre_iter1}
  (1-\varepsilon') \, m_k \; \leq \;  v_k(y_k) \, m_{k+1}.
\end{align}  
If additionally 
\begin{align} \label{eq:lb_lam}
\lambda \; \geq \; C_1  \frac{h(\rho)^{2\kappa_1}}{\Psi(\rho)},
\end{align} 
 since $B(y_k,(2a_{\rme}+1) \rho) \subset  B(x,2(a_{\rme}+1) r) \subset \cY$ we may apply Lemma~\ref{lem:exit2} to $B(y_k, \rho)$ and obtain that $v_k(y_k) \leq 1 -\varepsilon$, which combined with~\eqref{eq:pre_iter1} shows that
\begin{align*}
(1-\varepsilon') \, m_k \leq  (1-\varepsilon) \, m_{k+1}, \qquad \forall k\in \{1,\ldots, n-1\}.
\end{align*}
By iteration we get that
\begin{align*}
u(x) & \; \leq \; m_1 \; \leq \; \Big( \frac{1-\varepsilon}{1-\varepsilon'} \Big)^{n-1} \, m_n  \; \leq \; \Big( \frac{1-2\varepsilon'}{1-\varepsilon'} \Big)^{n-1} \\
&
\; \leq \;  \exp\Big(-(n-1) \log\Big(1+ \frac{\varepsilon'}{1-2\varepsilon'}\Big) \Big) 
\; \leq \;  \exp\big(-(n-1)\log(1+2 \varepsilon') \big),
\end{align*}
where we used in the last step that $m_n \leq 1$. Since we may assume that $C_2\leq \frac  12$ and therefore  $\varepsilon'\leq \frac  14$ and since  $n\geq \frac{r}{\rho} -1$, 
 using  the definition of $\varepsilon'$ we obtain that
 \begin{align} \label{eq:post_iter}
 u(x) & \; \leq \;    \exp\Big(-(n-1) \log\big(1+2\varepsilon'\big) \Big)
 \; \leq \;  C \exp\Big(- C  \frac{r}{\rho h(\rho)^{\kappa_1}} \Big)
\end{align}  
 provided~\eqref{eq:lb_lam} is satisfied.
 
 \emph{Step 2: Choice of $\rho$.} In order to choose an appropriate $\rho \in (0,r)$, set $\kappa_0 \ldef 2\kappa_1$ and $C_0\ldef C_{\Psi}^{-1} \vee (C_1 h(\Psi^{-1}(1))^{\kappa_0})$, where    $C_\Psi>0$ is such that $\Psi^{-1}(t) \geq C_\Psi (t^{1/\beta_1} \wedge t^{1/\beta_2})$ for all $t>0$. Let $\rho \ldef \Psi^{-1} (C_0 \lambda^{-1} h(\lambda^{-1})^{\kappa_0})$. We claim that for this choice of $\rho$,
 \begin{align*}
  \lambda \; \geq \; C_1 \, \frac{h(\rho)^{\kappa_0}}{ \Psi(\rho)}, 
 \end{align*}
in particular, \eqref{eq:lb_lam} holds. To see this, let us first consider the case $\lambda \geq C_0$. Then, since $\beta_2\geq \beta_1> 1$ and
 \begin{align} \label{eq:lam_rho}
 \rho \geq \Psi^{-1} (C_0 \lambda^{-1}) \geq  C_\Psi \left((C_0 \lambda^{-1})^{1/\beta_1} \wedge (C_0 \lambda^{-1})^{1/\beta_2}\right) \geq C_{\Psi} \, C_0 \, \lambda^{-1} \geq \lambda^{-1},
 \end{align}
 we have 
 \begin{align*}
 \frac{h(\rho)^{\kappa_0}}{ \Psi(\rho)} = \frac{\lambda}{C_0} \, h(\lambda^{-1})^{-\kappa_0} h(\rho)^{\kappa_0} 
 \leq \frac{\lambda}{C_0} \leq \frac{\lambda}{C_3}. 
 \end{align*}
 On the other hand, if $\lambda \leq C_0$, then
 \begin{align} \label{eq:rho_1}
  \rho \geq \Psi^{-1} (C_0 \lambda^{-1}) \geq \Psi^{-1}(1)
 \end{align}
 and hence
 \begin{align*}
  \frac{h(\rho)^{\kappa_0}}{ \Psi(\rho)}  \leq 
  \frac{\lambda}{C_0} \, h(\Psi^{-1}(1))^{-\kappa_0} h(\rho)^{\kappa_0} \leq \frac{\lambda}{C_3}. 
 \end{align*}
 Thus, \eqref{eq:lb_lam} is satisfied for this choice of $\rho$ and by~\eqref{eq:post_iter} we have that
 \begin{align} \label{eq:claimrho}
E_x\big[ e^{-\lambda \tau_{B(x,r)}} \big]   \; \leq \;  C \exp\Big(- C  \frac{r}{\rho h(\rho)^{\kappa_1}} \Big)
 \end{align}
 provided $\rho < r$.

 \emph{Step 3: Conclusion.} In order to deduce the desired inequality let us assume first that $\rho \ldef \Psi^{-1} (C_0 \lambda^{-1} h(\lambda^{-1})^{\kappa_0})<r$. We need an upper estimate on $\rho h(\rho)^{\kappa_1}$.
 
  Let us again consider the case  $\lambda \geq C_0$ first. Then, by~\eqref{eq:lam_rho} we get
 \begin{align} \label{eq:boundrhoh1}
 \rho \, h(\rho)^{\kappa_1} & \; \leq \;  \rho \, h(\lambda^{-1})^{\kappa_1} \;
 = \;
 \Psi^{-1} \big(C_0 \lambda^{-1} h(\lambda^{-1}\big)^{\kappa_0}) \,
 h(\lambda^{-1})^{\kappa_1} \nonumber  \\
 & \; \leq \;
 C \, \Psi^{-1} \big(C_0 \lambda^{-1} h(\lambda^{-1}\big)^{\kappa_0+\beta_2\kappa_1}) \, \bigg( \frac{  C_0 \lambda^{-1} h(\lambda^{-1}\big)^{\kappa_0}}{C_0 \lambda^{-1} h(\lambda^{-1}\big)^{\kappa_0+\beta_2\kappa_1}} \bigg)^{\! \frac 1 {\beta_2}} 
  h(\lambda^{-1})^{\kappa_1}  \nonumber \\
  & \; \leq \;
  C \,  \Psi^{-1} \big( \lambda^{-1} h(\lambda^{-1})^{\kappa_{\mathrm{u}}}\big)
 \end{align}
 with $\kappa_{\mathrm{u}} \ldef \kappa_0+\beta_2\kappa_1 = (2+\beta_2)(\kappa_{\mathrm{el}}+\kappa_{\mathrm{eu}})$,
where we used  that $\Psi^{-1}(r_2)/\Psi^{-1}(r_1) \leq C (r_2/r_1)^{1/\beta_2}$ for any $0<r_1\leq r_2$.

On the other hand, if $\lambda \leq C_0$, then $\rho \geq \Psi^{-1}(1)$  by~\eqref{eq:rho_1} and therefore
\begin{align}  \label{eq:boundrhoh2}
\rho \, h(\rho)^{\kappa_1} & \; \leq \;  \rho \, h(\Psi^{-1}(1))^{\kappa_1} \; =\; 
\Psi^{-1} (C_0 \lambda^{-1} h(\lambda^{-1})^{\kappa_0}) 
 \, h(\Psi^{-1}(1))^{\kappa_1} \nonumber \\
  & \; \leq \; 
    C \,  \Psi^{-1} \big( \lambda^{-1} h(\lambda^{-1}\big)^{\kappa_{\mathrm{u}}}).
\end{align}
The claim now follows combining~\eqref{eq:claimrho} with~\eqref{eq:boundrhoh1} and~\eqref{eq:boundrhoh2}.

Now assume  that $\rho \ldef \Psi^{-1} (C_0 \lambda^{-1} h(\lambda^{-1})^{\kappa_0})\geq r$. Then,
\begin{align*}
r \; \leq \; \Psi^{-1} ( C_0 \lambda^{-1} h(\lambda^{-1})^{\kappa_0}) \; \leq \;
C \,  \Psi^{-1} \big( \lambda^{-1} h(\lambda^{-1})^{\kappa_{\mathrm{u}}}\big),
\end{align*}
and the desired estimate follows from $ e^{-\lambda \tau_{B(x,r)}} \leq 1$ by adjusting the constants $\gamma$ and $C_5$.
\end{proof}

\begin{coro} \label{cor:P}
Suppose that condition $\mathrm{(E)}$ holds, and set
$\kappa_{\mathrm{u}} \ldef (2+\beta_2)(\kappa_{\mathrm{el}}+\kappa_{\mathrm{eu}})$.
Then there exist $c_{1},c_{2} \in (0,\infty)$ such that, for any
$(x,r)\in (\cY\setminus \cN) \times \bigl(0,\frac R {2 a_{\rme}}\bigr)$ with $\overline{B}(x,r)$
compact and $B(x,2(a_\rme+1) r) \subset \cY$, and for any $t\in(0,\infty)$,
\begin{align*}
P_{x}[ \tau_{B(x,r)} \leq t ] \; \leq \; c_{1} \exp\bigl( -\Phi_{\kappa_{\mathrm{u}}}(c_{2}r,t) \bigr).
\end{align*}
\end{coro}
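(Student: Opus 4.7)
The plan is to combine Proposition~\ref{prop:laplace_tau} with a Markov-type (Chernoff) inequality and then optimize over the free parameter $\lambda$, which will reproduce exactly the supremum defining $\Phi_{\kappa_{\mathrm{u}}}$. I expect this to be essentially a bookkeeping exercise once the Laplace transform bound is in hand, with no substantial obstacle.

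Concretely, first I would note that for every $\lambda\in(0,\infty)$, a straightforward application of Markov's inequality to the bounded random variable $e^{-\lambda\tau_{B(x,r)}}$ gives
\begin{equation*}
P_{x}[\tau_{B(x,r)}\leq t]
=P_{x}\bigl[e^{-\lambda\tau_{B(x,r)}}\geq e^{-\lambda t}\bigr]
\leq e^{\lambda t}\,E_{x}\bigl[e^{-\lambda\tau_{B(x,r)}}\bigr].
\end{equation*}
Under the standing hypotheses, Proposition~\ref{prop:laplace_tau} applies and yields
\begin{equation*}
E_{x}\bigl[e^{-\lambda\tau_{B(x,r)}}\bigr]
\leq C_{5}\exp\biggl(-\frac{\gamma r}{\Psi^{-1}\bigl(\lambda^{-1}h(\lambda^{-1})^{\kappa_{\mathrm{u}}}\bigr)}\biggr).
\end{equation*}

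Combining these two estimates and making the change of variable $s=\lambda^{-1}\in(0,\infty)$,
\begin{equation*}
P_{x}[\tau_{B(x,r)}\leq t]
\leq C_{5}\exp\biggl(\frac{t}{s}-\frac{\gamma r}{\Psi^{-1}\bigl(sh(s)^{\kappa_{\mathrm{u}}}\bigr)}\biggr)
\quad\text{for every }s\in(0,\infty).
\end{equation*}
Taking the infimum over $s\in(0,\infty)$ on the right-hand side and recognizing the resulting exponent as the negative of the supremum appearing in Definition~\ref{def:Phikappa} (with $r$ replaced by $\gamma r$), I obtain
\begin{equation*}
P_{x}[\tau_{B(x,r)}\leq t]
\leq C_{5}\exp\bigl(-\Phi_{\kappa_{\mathrm{u}}}(\gamma r,t)\bigr).
\end{equation*}
This is exactly the claimed bound with $c_{1}:=C_{5}$ and $c_{2}:=\gamma$, completing the proof. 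The only point requiring any care is to verify that every $s\in(0,\infty)$ corresponds to an admissible $\lambda=s^{-1}\in(0,\infty)$ in Proposition~\ref{prop:laplace_tau}, which is precisely the range covered there, so taking the infimum over $s$ is legitimate.
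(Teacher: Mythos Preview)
Your proof is correct and follows essentially the same approach as the paper's: Markov/Chernoff inequality, then Proposition~\ref{prop:laplace_tau}, then substitute $s=\lambda^{-1}$ and take the infimum over $s$ to recover $\Phi_{\kappa_{\mathrm{u}}}(\gamma r,t)$. The identification $c_{1}=C_{5}$, $c_{2}=\gamma$ is exactly what the paper obtains.
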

\begin{proof}
For $x$ and $r$ as in the statement and any $s,t\in (0,\infty)$, by Proposition~\ref{prop:laplace_tau}
\begin{align*} 
P_x[ \tau_{B(x,r)}\leq t ]
	\; = \; P_x \big[  e^{-\tau_{B(x,r)}/s} \geq e^{-t/s}\big]
	\; &\leq \; e^{t/s} \, E_x\big[ e^{-\tau_{B(x,r)}/s}  \big] \\
& \leq \; C_5 \exp\biggl( \frac{ t}{s} -  \frac{c r}{\Psi^{-1}(sh(s)^{\kappa_{\mathrm{u}}})}\biggr).
\end{align*}
Now the assertion follows by taking the infimum in $s\in(0,\infty)$ of the right-hand side of
this inequality and recalling the definition~\eqref{eq:defPhi} of $\Phi_{\kappa_{\mathrm{u}}}$.
\end{proof}

\begin{proof}[Proof of Theorem~\ref{thm:uhk}]
If $\mathcal{Y}=\mathcal{X}$ and $R=\infty$, then by Lemma~\ref{lem:Phi_kappa} along with
$\varepsilon_{h}\kappa_{\mathrm{u}}<1$, Corollary~\ref{cor:P} along with $(\mathrm{E})$, and
$\mathrm{(DU)}$ along with $\Psi_{\kappa_{\mathrm{u}}}\leq\Psi$, all the assumptions of \cite[Theorem 6.4]{GK17}
with $\Psi_{\kappa_{\mathrm{u}}}$ in place of $\Psi$ are satisfied, and hence \cite[Theorem 6.4]{GK17}
together with~\eqref{eq:Phi_kappa_ratio} and~\eqref{eq:Phi_kappa_comparable} yields the assertions.

Thus we may assume that either $\mathcal{Y}\not=\mathcal{X}$ or $R<\infty$ holds,
so that $R_{y}\in(0,\infty)$ for any $y\in\mathcal{Y}$.
Let $\{y_{n}\}_{n\in\mathbb{N}}$ be a countable dense subset of $\mathcal{Y}$.
For each $y\in\mathcal{Y}$,
set $R_{y}':=R_{y}/\bigl((2a_{\mathrm{du}})\vee(4a_{\mathrm{e}}+4)\bigr)$, so that
$R_{y}'\in(0,\frac R {a_{\mathrm{du}}})$, $B(y,a_{\mathrm{du}}R_{y}')\subset\mathcal{Y}$,
$R_{y}'\in(0,\frac R {2a_{\mathrm{e}}})$ and $B(x,2(a_{\mathrm{e}}+1)r)\subset\mathcal{Y}$
for any $(x,r)\in B(y,R_{y}'/2)\times(0,R_{y}')$. Therefore for each $n\in\mathbb{N}$,
Lemma~\ref{lem:Phi_kappa} along with $\varepsilon_{h}\kappa_{\mathrm{u}}<1$,
Corollary~\ref{cor:P} along with $(\mathrm{E})$, and
$\mathrm{(DU)}$ along with $\Psi_{\kappa_{\mathrm{u}}}\leq\Psi$
together imply that the open subset $B(y_{n},R_{y_{n}}'/2)$ of $\cX$ and the function $\Psi_{\kappa_{\mathrm{u}}}$
satisfy all the assumptions of \cite[Theorem 6.2]{GK17}, and hence we see from \cite[(6.4) and (6.5)]{GK17},
\eqref{eq:Phi_kappa_bounds}, \eqref{eq:Phi_kappa_ratio} and~\eqref{eq:Phi_kappa_comparable} that for any
$(t,x)\in(0,\infty)\times(\mathcal{X}\setminus\mathcal{N})$ and any Borel subset $A$ of $B(y_{n},R_{y_{n}}'/4)$,
\begin{align}\notag
P_{x}[X_{t}\in A]
	&\leq\int_{A} \frac {c h\bigl(t\wedge\Psi_{\kappa_{\mathrm{u}}}(R_{y_{n}}')\bigr)^{\kappa_{\mathrm{du}}}}{v\bigl(\Psi^{-1}(t\wedge\Psi_{\kappa_{\mathrm{u}}}(R_{y_{n}}'))\bigr)}
		\exp\Bigl(-\widetilde{\Phi}_{\kappa_{\mathrm{u}}}\bigl(c \bigl(d(x,y)\wedge R_{y_{n}}'\bigr),t\bigr)\Bigr) \, \mu(dy)\\
&\leq\int_{A} \frac {c h\bigl(t\wedge\Psi_{\kappa_{\mathrm{u}}}(R_{y_{n}}')\bigr)^{\kappa_{\mathrm{du}}}}{v\bigl(\Psi^{-1}(t\wedge\Psi_{\kappa_{\mathrm{u}}}(R_{y_{n}}'))\bigr)}
	\exp\Bigl(-c_{2}\Phi_{\kappa_{\mathrm{u}}}\bigl( d(x,y)\wedge R_{y}, t \bigr)\Bigr) \, \mu(dy).
\label{eq:prf:uhk-1}
\end{align}
On the other hand, for any $t\in(0,\infty)$,
$C\Psi(t)\leq\Psi_{\kappa_{\mathrm{u}}}(t)h(\Psi_{\kappa_{\mathrm{u}}}(t))^{\kappa_{\mathrm{u}}}\leq\Psi(t)$
by~\eqref{eq:Psi_kappa_comparable} and~\eqref{eq:def:scalefct}, hence
$h(\Psi_{\kappa_{\mathrm{u}}}(t))\leq h\bigl(C\Psi(t)h(\Psi_{\kappa_{\mathrm{u}}}(t))^{-\kappa_{\mathrm{u}}}\bigr)
	\leq Ch(\Psi_{\kappa_{\mathrm{u}}}(t))^{\varepsilon_{h}\kappa_{\mathrm{u}}}h(\Psi(t))$
by~\eqref{eq:ass_h}, therefore
$h(\Psi_{\kappa_{\mathrm{u}}}(t))\leq Ch(\Psi(t))^{\frac{1}{1-\varepsilon_{h}\kappa_{\mathrm{u}}}}$ and thus
\begin{equation}\label{eq:prf:uhk-2}
h\bigl(t\wedge\Psi_{\kappa_{\mathrm{u}}}(R_{y_{n}}')\bigr)
	\leq Ch\bigl(\Psi(\Psi_{\kappa_{\mathrm{u}}}^{-1}(t))\wedge\Psi(R_{y_{n}}')\bigr)^{\frac{1}{1-\varepsilon_{h}\kappa_{\mathrm{u}}}}
	\leq C'h\bigl(t\wedge\Psi(R_{y})\bigr)^{\frac{1}{1-\varepsilon_{h}\kappa_{\mathrm{u}}}}
\end{equation}
for any $y\in B(y_{n},R_{y_{n}}'/4)$ by $\Psi_{\kappa_{\mathrm{u}}}^{-1}\geq\Psi^{-1}$,
\eqref{eq:ass_h} and~\eqref{eq:def:scalefct}. Also for any
$t\in(0,\infty)$, by $\Psi_{\kappa_{\mathrm{u}}}^{-1}\geq\Psi^{-1}$, \eqref{eq:scale_v},
\eqref{eq:Psi_kappa_comparable} and~\eqref{eq:inv_scalefct} we have
\begin{equation*}
\frac{v(\Psi_{\kappa_{\mathrm{u}}}^{-1}(t))}{v(\Psi^{-1}(t))}
	\leq C_{v}\biggl(\frac{\Psi_{\kappa_{\mathrm{u}}}^{-1}(t)}{\Psi^{-1}(t)}\biggr)^{\alpha_{2}}
	\leq C'\biggl(\frac{\Psi^{-1}(th(t)^{\kappa_{\mathrm{u}}})}{\Psi^{-1}(t)}\biggr)^{\alpha_{2}}
	\leq C''h(t)^{\kappa_{\mathrm{u}}\alpha_{2}/\beta_{1}},
\end{equation*}
and therefore for any $y\in B(y_{n},R_{y_{n}}'/4)$ we further obtain
\begin{equation}\label{eq:prf:uhk-3}
\frac{1}{v\bigl(\Psi^{-1}(t\wedge\Psi_{\kappa_{\mathrm{u}}}(R_{y_{n}}'))\bigr)}
	\leq\frac{C''h\bigl(t\wedge\Psi_{\kappa_{\mathrm{u}}}(R_{y_{n}}')\bigr)^{\kappa_{\mathrm{u}}\alpha_{2}/\beta_{1}}}{v\bigl(\Psi_{\kappa_{\mathrm{u}}}^{-1}(t)\wedge R_{y_{n}}'\bigr)}
	\leq\frac{C'''h\bigl(t\wedge\Psi_{\kappa_{\mathrm{u}}}(R_{y_{n}}')\bigr)^{\kappa_{\mathrm{u}}\alpha_{2}/\beta_{1}}}{v\bigl(\Psi^{-1}(t)\wedge R_{y}\bigr)}
\end{equation}
by $\Psi_{\kappa_{\mathrm{u}}}^{-1}\geq\Psi^{-1}$ and~\eqref{eq:scale_v}. Combining
\eqref{eq:prf:uhk-1}, \eqref{eq:prf:uhk-2} and~\eqref{eq:prf:uhk-3},
for any $(t,x)\in(0,\infty)\times(\mathcal{X}\setminus\mathcal{N})$ and
any Borel subset $A$ of $B(y_{n},R_{y_{n}}'/4)$ we get
\begin{equation}\label{eq:prf:uhk-4}
P_{x}[X_{t}\in A]
	\leq\int_{A} \frac{c_{1} h\bigl(t\wedge\Psi(R_{y})\bigr)^{\kappa_{\mathrm{du}}'}}{v\bigl(\Psi^{-1}(t)\wedge R_{y}\bigr)}
	\exp\Bigl(-c_{2}\Phi_{\kappa_{\mathrm{u}}}\bigl( d(x,y)\wedge R_{y}, t \bigr)\Bigr) \, \mu(dy),
\end{equation}
which then holds for any Borel subset $A$ of $\mathcal{Y}$ since
$\mathcal{Y}=\bigcup_{n\in\mathbb{N}}B(y_{n},R_{y_{n}}'/4)$. Now the assertions
follow from~\eqref{eq:prf:uhk-4} and \cite[Proposition 5.6]{GK17}.
\end{proof}

\subsection{On-diagonal lower bounds of the heat kernel} \label{ssec:dlhk_general}

In the present general setting, our result on on-diagonal lower bounds of the heat kernel
is formulated as in the following proposition.

\begin{proposition} [On-diagonal lower bound] \label{prop:lower-ondiag}
Assume $\mathrm{(V)}_{\leq}$ and $\mathrm{(E)}$, set
$\kappa_{\mathrm{u}}:=(2+\beta_2)(\kappa_{\mathrm{el}}+\kappa_{\mathrm{eu}})$
and assume $\varepsilon_{h}\kappa_{\mathrm{u}}<1$. Also set
$\kappa_{\mathrm{dl}}:=\kappa_{\mathrm{vu}}+\kappa_{\mathrm{u}}\alpha_{2}/\beta_{1}$
and $\kappa_{\mathrm{t}}:=(1-\varepsilon_{h}\kappa_{\mathrm{u}})^{-1}\kappa_{\mathrm{u}}$.
Then there exist $\varepsilon_{\mathrm{t}}\in(0,1)$ and $C_{\mathrm{dl}}\in(0,\infty)$ such that,
for any $(x_{0},r)\in \cY \times \bigl(0,\frac{R}{a_{\rmv}\vee a_{\rme}}\bigr)$ with
$\overline{B}(x_{0},r)$ compact and $B\bigl(x_{0},((a_{\rmv}\vee a_{\rme})+2) r\bigr) \subset \cY$,
and for any $(t,x) \in (0,\varepsilon_{\mathrm{t}}\Psi(r)h(\Psi(r))^{-\kappa_{\mathrm{t}}}] \times (B(x_{0},r/2)\setminus\cN)$
with $P_{x}[X_{t}\in dy,\, t<\tau_{B(x_{0},r)}]=p^{B(x_{0},r)}_{t,x}(y)\,\mu(dy)$
for some Borel measurable function $p^{B(x_{0},r)}_{t,x}:B(x_{0},r)\to[0,\infty)$,
\begin{align}\label{eq:lower-ondiag-L2}
\int_{B(x_{0},r)}p^{B(x_{0},r)}_{t,x}(y)^{2}\,\mu(dy)
	\geq \frac{C_{\mathrm{dl}}}{v(\Psi^{-1}(t))} h(t)^{-\kappa_{\mathrm{dl}}}.
\end{align}
\end{proposition}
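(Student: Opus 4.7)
The plan is to apply the standard Cauchy--Schwarz based lower bound for the $L^{2}$ norm of a sub-Markov density, adapted to the presence of polylogarithmic corrections in the volume and exit-time estimates. The starting point is the elementary inequality
\begin{equation*}
\int_{B(x_{0},r)} p^{B(x_{0},r)}_{t,x}(y)^{2}\,\mu(dy)
  \geq \frac{\bigl(P_{x}[X_{t}\in A,\,t<\tau_{B(x_{0},r)}]\bigr)^{2}}{\mu(A)},
\end{equation*}
valid for any Borel $A\subset B(x_{0},r)$ with $\mu(A)>0$. The task is then to choose $A$ so that both the numerator can be bounded below using $\mathrm{(E)}$ and the denominator above using $\mathrm{(V)}_{\leq}$.

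I would choose $A:=B(x,r')$ with $r':=\Psi_{\kappa_{\mathrm{u}}}^{-1}(t/\varepsilon_{0})$, where $\Psi_{\kappa_{\mathrm{u}}}$ is the auxiliary scale function from Lemma~\ref{lem:Phi_kappa} and $\varepsilon_{0}\in(0,1)$ is a small constant to be fixed. Using Lemma~\ref{lem:Phi_kappa}, \eqref{eq:Psi_kappa_comparable} and \eqref{eq:ass_h}, the hypothesis $t\leq \varepsilon_{\mathrm{t}}\Psi(r)h(\Psi(r))^{-\kappa_{\mathrm{t}}}$ with $\kappa_{\mathrm{t}}=(1-\varepsilon_{h}\kappa_{\mathrm{u}})^{-1}\kappa_{\mathrm{u}}$ ensures $r'\leq r/2$, so that $A\subset B(x_{0},r)$ and $\mathrm{(V)}_{\leq}$ is applicable to $A$. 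Since $\{t<\tau_{B(x,r')}\}\subset\{X_{t}\in B(x,r')\}\cap\{t<\tau_{B(x_{0},r)}\}$, the numerator dominates $P_{x}[t<\tau_{B(x,r')}]$. I would bound the latter below by applying Lemma~\ref{lem:exit1} to $B(x,r')$: the lower bound on $E_{x}[\tau_{B(x,r')}]$ from $\mathrm{(E)}$ combined with the upper bound on $\overline{E}(B(x,r'))$ obtained via an argument analogous to~\eqref{eq:est:barE_B} yields, once $\varepsilon_{0}$ is small enough that $t\leq\tfrac{1}{2}E_{x}[\tau_{B(x,r')}]$,
\begin{equation*}
P_{x}[t<\tau_{B(x,r')}] \geq c\,h(r')^{-(\kappa_{\mathrm{el}}+\kappa_{\mathrm{eu}})}.
\end{equation*}
Together with $\mu(B(x,r'))\leq C_{\mathrm{V}}v(r')h(r')^{\kappa_{\mathrm{vu}}}$ from $\mathrm{(V)}_{\leq}$, this reduces the problem to estimating $v(r')$ and $h(r')$ in terms of $v(\Psi^{-1}(t))$ and $h(t)$.

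For the final translation to the desired form, I would use $r'\asymp\Psi^{-1}(th(t)^{\kappa_{\mathrm{u}}})$ (from Lemma~\ref{lem:Phi_kappa}) together with~\eqref{eq:scale_v} and~\eqref{eq:inv_scalefct} to get $v(r')\leq Cv(\Psi^{-1}(t))h(t)^{\kappa_{\mathrm{u}}\alpha_{2}/\beta_{1}}$, and $r'\geq\Psi^{-1}(t)$ combined with~\eqref{eq:ass_h} and the scaling of $\Psi^{-1}$ to get $h(r')\leq Ch(t)$. Substituting both into the Cauchy--Schwarz bound yields~\eqref{eq:lower-ondiag-L2}. The main obstacle is the careful polylogarithmic bookkeeping: verifying that the ample choice $\kappa_{\mathrm{u}}=(2+\beta_{2})(\kappa_{\mathrm{el}}+\kappa_{\mathrm{eu}})$ is large enough that all the various $h(\cdot)$-contributions (in particular the $h(r')^{-2(\kappa_{\mathrm{el}}+\kappa_{\mathrm{eu}})}$ factor arising from squaring the exit probability) can be absorbed into the stated exponent $\kappa_{\mathrm{dl}}=\kappa_{\mathrm{vu}}+\kappa_{\mathrm{u}}\alpha_{2}/\beta_{1}$, and that the particular form of the hypothesis on $t$ yields the correct threshold $r'\leq r/2$ via Lemma~\ref{lem:Phi_kappa}.
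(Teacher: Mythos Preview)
Your overall architecture (Cauchy--Schwarz on a small ball $B(x,r')$, volume bound for the denominator, exit-time bound for the numerator) matches the paper's. However, the step you flag as ``the main obstacle'' is a genuine gap, not merely bookkeeping. Using Lemma~\ref{lem:exit1} together with~\eqref{eq:est:barE_B} can at best give
\[
P_{x}[t<\tau_{B(x,r')}] \geq c\,h(r')^{-(\kappa_{\mathrm{el}}+\kappa_{\mathrm{eu}})},
\]
and squaring this contributes an extra factor $h(t)^{-2(\kappa_{\mathrm{el}}+\kappa_{\mathrm{eu}})}$ to the final bound. Your approach therefore yields exponent $\kappa_{\mathrm{dl}}+2(\kappa_{\mathrm{el}}+\kappa_{\mathrm{eu}})$, strictly larger than the $\kappa_{\mathrm{dl}}=\kappa_{\mathrm{vu}}+\kappa_{\mathrm{u}}\alpha_{2}/\beta_{1}$ claimed in the proposition. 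The ``ample'' size of $\kappa_{\mathrm{u}}$ does not help here: $\kappa_{\mathrm{dl}}$ is a fixed number specified in the statement, and nothing in your argument cancels the additional $2(\kappa_{\mathrm{el}}+\kappa_{\mathrm{eu}})$.

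The paper avoids this loss by bounding the exit probability not via Lemma~\ref{lem:exit1} directly but via Corollary~\ref{cor:P}, which gives the stretched-exponential estimate $P_{x}[\tau_{B(x,\rho)}\leq t]\leq c_{1}\exp(-\Phi_{\kappa_{\mathrm{u}}}(c_{2}\rho,t))$. Choosing $\rho=\Psi^{-1}(th(t)^{\kappa_{\mathrm{u}}})/\delta$ for a small constant $\delta$ and invoking~\eqref{eq:Phi_kappa_bounds}, one gets $\Phi_{\kappa_{\mathrm{u}}}(c_{2}\rho,t)\geq C\delta^{-\beta_{\kappa_{\mathrm{u}},2}/(\beta_{\kappa_{\mathrm{u}},2}-1)}$, so for $\delta$ small enough $P_{x}[t<\tau_{B(x,\rho)}]\geq 1/2$, a constant independent of $t$ and $r$. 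With this constant in the numerator, the only $h$-corrections come from the volume bound on $\mu(B(x,\rho))$, and these give exactly $\kappa_{\mathrm{dl}}$. The key ingredient you are missing is thus the iteration behind Proposition~\ref{prop:laplace_tau} and Corollary~\ref{cor:P}, which upgrades the single-step $h^{-\kappa_{1}}$ exit bound of Lemma~\ref{lem:exit1} to an exponentially small tail, yielding a constant probability lower bound at the relevant scale.
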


We need the following lemma for the proof of Proposition~\ref{prop:lower-ondiag}.

\begin{lemma} \label{lem:lower-ondiag-time}
Let $\kappa\in[0,\varepsilon_{h}^{-1})$ and $\delta\in(0,1)$. Then there exists $\varepsilon\in(0,1)$
such that $t h(t)^{\kappa} \leq \Psi(\frac{\delta}{2}r)$ for any $r,t\in(0,\infty)$
with $t \leq \varepsilon \Psi(r) h(\Psi(r))^{-\kappa/(1-\varepsilon_{h}\kappa)}$.
\end{lemma}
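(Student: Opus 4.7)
The plan is to reduce everything to elementary estimates coming from the power-like bounds \eqref{eq:def:scalefct} and \eqref{eq:ass_h}, with the key algebraic identity being $\kappa'(1-\varepsilon_h\kappa)=\kappa$ where $\kappa':=\kappa/(1-\varepsilon_h\kappa)$.

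First I would set $t_0:=\Psi(r)$ and note that since $h\geq 1$ and we may take $\varepsilon\in(0,1)$, the hypothesis $t\leq \varepsilon t_0 h(t_0)^{-\kappa'}$ forces $t\leq t_0$. Then \eqref{eq:ass_h} applied with the roles $s\leftarrow t$, $t\leftarrow t_0$ gives
\begin{equation*}
h(t)\;\leq\; C_h\,h(t_0)\,(t_0/t)^{\varepsilon_h}.
\end{equation*}
Raising this to the power $\kappa$ and multiplying by $t$,
\begin{equation*}
t\,h(t)^{\kappa}\;\leq\; C_h^{\kappa}\,h(t_0)^{\kappa}\,t^{\,1-\varepsilon_h\kappa}\,t_0^{\varepsilon_h\kappa}.
\end{equation*}

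Next I would plug in the hypothesis $t\leq \varepsilon t_0 h(t_0)^{-\kappa'}$, using that $1-\varepsilon_h\kappa>0$, to obtain
\begin{equation*}
t^{\,1-\varepsilon_h\kappa}\;\leq\;\varepsilon^{\,1-\varepsilon_h\kappa}\,t_0^{\,1-\varepsilon_h\kappa}\,h(t_0)^{-\kappa'(1-\varepsilon_h\kappa)}
\;=\;\varepsilon^{\,1-\varepsilon_h\kappa}\,t_0^{\,1-\varepsilon_h\kappa}\,h(t_0)^{-\kappa},
\end{equation*}
where the final equality is precisely the identity $\kappa'(1-\varepsilon_h\kappa)=\kappa$. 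Combining the last two displays, the factors $h(t_0)^{\kappa}$ and $h(t_0)^{-\kappa}$ cancel and the powers of $t_0$ collapse, yielding
\begin{equation*}
t\,h(t)^{\kappa}\;\leq\; C_h^{\kappa}\,\varepsilon^{\,1-\varepsilon_h\kappa}\,\Psi(r).
\end{equation*}

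Finally, I would invoke the lower scaling in \eqref{eq:def:scalefct} applied with $s\leftarrow\delta r/2$, $t\leftarrow r$ (valid since $\delta<1$): this gives $\Psi(\delta r/2)\geq C_{\Psi}^{-1}(\delta/2)^{\beta_2}\Psi(r)$. Choosing
\begin{equation*}
\varepsilon\;:=\;\min\!\Bigl\{\tfrac12,\; \bigl(C_h^{-\kappa}C_\Psi^{-1}(\delta/2)^{\beta_2}\bigr)^{\,1/(1-\varepsilon_h\kappa)}\Bigr\}
\end{equation*}
(which lies in $(0,1)$ since $\varepsilon_h\kappa<1$ and $\delta<1$) then yields $t\,h(t)^\kappa\leq \Psi(\delta r/2)$, as required. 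There is no real obstacle here; the only thing to be careful about is the bookkeeping of exponents so that $\kappa'$ is exactly the value that makes the $h(t_0)$ factors cancel, and using the correct exponent $\beta_2$ (not $\beta_1$) for the lower bound on $\Psi(\delta r/2)/\Psi(r)$.
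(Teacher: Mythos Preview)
Your proof is correct. Both your argument and the paper's hinge on the same algebraic identity $\kappa'(1-\varepsilon_h\kappa)=\kappa$ that makes the $h(\Psi(r))$ factors cancel, but the routes differ. The paper introduces an intermediate point $s:=\Psi(r)h(\Psi(r))^{-\kappa'}$, shows $sh(s)^{\kappa}\leq C_h^{\kappa}\Psi(r)$, and then appeals to the comparison estimates \eqref{eq:prf:Phi_kappa-1}--\eqref{eq:prf:Phi_kappa-2} from Lemma~\ref{lem:Phi_kappa} together with \eqref{eq:inv_scalefct} to pass from $\Psi^{-1}(th(t)^{\kappa})$ to $\Psi^{-1}(sh(s)^{\kappa})$ and finally to $r$. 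Your argument bypasses both the intermediate point and Lemma~\ref{lem:Phi_kappa} entirely: you apply \eqref{eq:ass_h} once directly to $h(t)$ relative to $h(\Psi(r))$, substitute the hypothesis, and land immediately on $th(t)^{\kappa}\leq C_h^{\kappa}\varepsilon^{1-\varepsilon_h\kappa}\Psi(r)$. This is more self-contained and arguably cleaner; the paper's version has the minor advantage of reusing existing machinery, but that machinery is not actually needed for this lemma.
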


\begin{proof}
Set $\kappa_{\mathrm{t}}:=(1-\varepsilon_{h}\kappa)^{-1}\kappa$.
Let $\varepsilon\in(0,1)$, which we will choose to be sufficiently small later in this proof,
and let $r,t\in(0,\infty)$ satisfy $t \leq \varepsilon \Psi(r) h(\Psi(r))^{-\kappa_{\mathrm{t}}}$.
Then, setting $s:=\Psi(r) h(\Psi(r))^{-\kappa_{\mathrm{t}}} \in (t,\Psi(r)]$, by~\eqref{eq:ass_h} we have
\begin{align} \label{eq:lower-ondiag-time-proof}
sh(s)^{\kappa}
	\leq s \bigl( C_{h} h(\Psi(r)) (\Psi(r)/s)^{\varepsilon_{h}} \bigr)^{\kappa}
	=C_{h}^{\kappa}\Psi(r).
\end{align}
Therefore we see from~\eqref{eq:prf:Phi_kappa-1}, \eqref{eq:prf:Phi_kappa-2},
$t\leq \varepsilon s$, \eqref{eq:lower-ondiag-time-proof} and~\eqref{eq:inv_scalefct} that
\begin{align*}
\Psi^{-1}(th(t)^{\kappa}) \leq C(t/s)^{1/\beta_{2,\kappa}}\Psi^{-1}(sh(s)^{\kappa})
	\leq C\varepsilon^{1/\beta_{2,\kappa}}\Psi^{-1}(C_{h}^{\kappa}\Psi(r))
	\leq C'\varepsilon^{1/\beta_{2,\kappa}}r
\end{align*}
for a constant $C'\in[1,\infty)$ independent of $\varepsilon\in(0,1)$, and hence
the assertion follows by assuming that $\varepsilon$ has been chosen to be
$\varepsilon:=(\delta/(2C'))^{\beta_{2,\kappa}}\in(0,1)$.
\end{proof}

\begin{proof}[Proof of Proposition~\ref{prop:lower-ondiag}]
We follow the standard argument for proving on-diagonal lower bounds as presented, e.g.,
in \cite[Proof of Lemma 5.13]{GT12}. Set $\kappa:=\kappa_{\mathrm{u}}\in[0,\varepsilon_{h}^{-1})$,
let $\delta\in(0,1)$, which we will choose to be sufficiently small later in this proof,
let $\varepsilon\in(0,1)$ be as in Lemma~\ref{lem:lower-ondiag-time} and set
$\varepsilon_{\mathrm{t}}:=\varepsilon$. Let $(x_{0},r),(x,t)$ be as in the statement
and set $\rho := \Psi^{-1}(t h(t)^{\kappa_{\mathrm{u}}})/\delta$, so that $\rho \in (0,\frac{r}{2}]$
by the property of $\varepsilon_{\mathrm{t}}=\varepsilon$ from Lemma~\ref{lem:lower-ondiag-time}.
Then $C\Psi_{\kappa_{\mathrm{u}}}^{-1}(t) \leq \Psi^{-1}(t h(t)^{\kappa_{\mathrm{u}}})=\delta\rho$
by~\eqref{eq:Psi_kappa_comparable}, hence
$t\leq\Psi_{\kappa_{\mathrm{u}}}(C^{-1}\delta\rho)
	\leq C'\delta^{\beta_{\kappa_{\mathrm{u}},2}}\Psi_{\kappa_{\mathrm{u}}}(c_{2}\rho)$
by~\eqref{eq:def:scalefct} for $\Psi_{\kappa_{\mathrm{u}}}$ proved in Lemma~\ref{lem:Phi_kappa},
with $c_{2}$ as in Corollary~\ref{cor:P}, and therefore
\begin{align} \label{eq:lower-ondiag-proof1}
\Phi_{\kappa_{\mathrm{u}}}(c_{2}\rho,t)
	\geq \widetilde{\Phi}_{\kappa_{\mathrm{u}}}(c_{2}\rho,t)
	\geq C'' \min_{j\in\{1,2\}}\biggl(\frac{\Psi_{\kappa_{\mathrm{u}}}(c_{2}\rho)}{t}\biggr)^{\frac{1}{\beta_{\kappa_{\mathrm{u}},j}-1}}
	\geq C'''\delta^{-\frac{\beta_{\kappa_{\mathrm{u}},2}}{\beta_{\kappa_{\mathrm{u}},2}-1}}
\end{align}
by~\eqref{eq:Phi_kappa_comparable} and~\eqref{eq:Phi_kappa_bounds}.
On the other hand, $\rho\leq\frac{r}{2}<\frac{R}{2 a_\rme}$,
the compactness of $\overline{B}(x_{0},r)$ implies that of $\overline{B}(x,\rho)$
by $\overline{B}(x,\rho)\subset\overline{B}(x_{0},r)$, and we also have
$B(x,2(a_\rme + 1)\rho)\subset B(x,(a_\rme + 1)r)\subset B(x_{0},(a_\rme + 2)r) \subset \cY$,
so that by Corollary~\ref{cor:P} and~\eqref{eq:lower-ondiag-proof1} we get
\begin{align} \label{eq:lower-ondiag-proof2}
P_{x}[\tau_{B(x,\rho)}\leq t]
	\leq c_{1}\exp\bigl(-\Phi_{\kappa_{\mathrm{u}}}(c_{2}\rho,t)\bigr)
	\leq c_{1}\exp\Bigl(-C'''\delta^{-\frac{\beta_{\kappa_{\mathrm{u}},2}}{\beta_{\kappa_{\mathrm{u}},2}-1}}\Bigr)
\end{align}
with $c_{1}$ as in Corollary~\ref{cor:P}. Since $c_{1},C'''$ in~\eqref{eq:lower-ondiag-proof2}
are independent of $\delta\in(0,1)$, we may assume that $\delta$ has been chosen to satisfy
$c_{1}\exp\bigl(-C'''\delta^{-\beta_{\kappa_{\mathrm{u}},2}/(\beta_{\kappa_{\mathrm{u}},2}-1)}\bigr) \leq \frac{1}{2}$,
and then by~\eqref{eq:lower-ondiag-proof2} we have
\begin{align} \label{eq:lower-ondiag-proof3}
P_{x}[t<\tau_{B(x,\rho)}]=1-P_{x}[\tau_{B(x,\rho)}\leq t] \geq \frac{1}{2}.
\end{align}
It therefore follows from $B(x,\rho)\subset B(x_{0},r)$ and~\eqref{eq:lower-ondiag-proof3} that
\begin{align}
\int_{B(x_{0},r)}p^{B(x_{0},r)}_{t,x}(y)^{2}\,\mu(dy)
	&\geq \int_{B(x,\rho)}p^{B(x_{0},r)}_{t,x}(y)^{2}\,\mu(dy) \notag \\
&\geq \frac{1}{\mu(B(x,\rho))} \biggl(\int_{B(x,\rho)}p^{B(x_{0},r)}_{t,x}(y)\,\mu(dy)\biggr)^{2} \notag \\
&= \frac{1}{\mu(B(x,\rho))} P_{x}[X_{t}\in B(x,\rho), \, t<\tau_{B(x_{0},r)}]^{2} \notag \\
&\geq \frac{1}{\mu(B(x,\rho))} P_{x}[t<\tau_{B(x,\rho)}]^{2} \notag \\
&\geq \frac{1}{4\mu(B(x,\rho))}.
\label{eq:lower-ondiag-proof4}
\end{align}
Finally, noting that $\Psi^{-1}(t)\leq\Psi^{-1}(t h(t)^{\kappa_{\mathrm{u}}})/\delta = \rho\leq\frac{r}{2}<\frac{R}{a_{\rmv}}$ and that
$B(x,a_{\rmv}\rho)\subset B(x_{0},(a_{\rmv}+1)\frac{r}{2})\subset \cY$, we see from
$\mathrm{(V)}_{\leq}$, \eqref{eq:scale_v} and~\eqref{eq:inv_scalefct} that
\begin{align}
\mu(B(x,\rho)) \leq C_{\mathrm{V}} v(\rho) h(\rho)^{\kappa_{\mathrm{vu}}}
	&\leq C_{\mathrm{V}}C_{v} v(\Psi^{-1}(t)) h(\Psi^{-1}(t))^{\kappa_{\mathrm{vu}}} \Bigl(\frac{\rho}{\Psi^{-1}(t)}\Bigr)^{\alpha_{2}} \notag \\
&\leq C'''' v(\Psi^{-1}(t)) h(\Psi^{-1}(t))^{\kappa_{\mathrm{vu}}} h(t)^{\kappa_{\mathrm{u}}\alpha_{2}/\beta_{1}}.
\label{eq:lower-ondiag-proof5}
\end{align}
Now~\eqref{eq:lower-ondiag-L2} follows from~\eqref{eq:lower-ondiag-proof4}, \eqref{eq:lower-ondiag-proof5}
and the fact that $h( \Psi^{-1}(t) ) \leq h\bigl(C_{\Psi}^{-1/\beta_{2}}\bigr) \leq h\bigl(C_{\Psi}^{-1/\beta_{2}}\bigr) h(t)$
by~\eqref{eq:inv_scalefct} if $t\geq 1$ and
$h(\Psi^{-1}(t)) \leq h\bigl(C_{\Psi}^{-1/\beta_{1}}\Psi^{-1}(1)t^{1/\beta_{1}}\bigr) \leq C'''''h(t)$
by~\eqref{eq:inv_scalefct} and~\eqref{eq:ass_h} if $t\leq 1$.
\end{proof}

\subsection{Proof of Theorem~\ref{thm:heat-kernel-simple}} \label{ssec:proof-hksimple}

We conclude this section with deducing Theorem~\ref{thm:heat-kernel-simple}
from Theorem~\ref{thm:uhk} and Proposition~\ref{prop:lower-ondiag}.

\begin{proof}[Proof of Theorem~\ref{thm:heat-kernel-simple}]
Define $v,\Psi:[0,\infty)\to[0,\infty)$ and $h:(0,\infty]\to[1,\infty)$ by~\eqref{eq:scale-functions-simple},
so that Assumption~\ref{ass:scale-functions} holds with arbitrary $\varepsilon_{h}\in(0,\infty)$
by Example~\ref{ex:scale-functions-simple}. Set $R:=2\diam\cX\in(0,\infty)$, $\cY:=\cX$ and
$\cN:=\emptyset$, which satisfy Assumption~\ref{ass:RYN}.

For (i), since $R_{y} = R\wedge \inf_{z \in \cX \setminus \cY}d(y,z) = 2\diam\cX$ for any $y\in\cX$, by Theorem~\ref{thm:uhk}
there exist a Borel measurable function $\tilde{p}=\tilde{p}_{t}(x,y):(0,\infty)\times\cX\times\cX\to[0,\infty)$
and $c_{1},c_{2}\in(0,\infty)$ such that for each $(t,x)\in(0,(\diam\cX)^{\beta}]\times\cX$, for $\mu$-a.e.\ $y\in\cX$,
\begin{align}\label{eq:heat-kernel-simple-proof1}
p_{t}(x,y)=\tilde{p}_{t}(x,y)
	\leq c_{1}t^{-\alpha/\beta}\bigl(\log(e+t^{-1})\bigr)^{\kappa_{\mathrm{du}}'}
		\exp\Bigl(-c_{2}\Phi_{\kappa_{\mathrm{u}}}\bigl(d(x,y),t\bigr)\Bigr).
\end{align}
Then by the continuity of $p_{t}(x,\cdot)$ and the lower semi-continuity of $\Phi_{\kappa_{\mathrm{u}}}$,
the upper bound on $p_{t}(x,y)$ in~\eqref{eq:heat-kernel-simple-proof1} extends to any
$(t,x,y)\in(0,(\diam\cX)^{\beta}]\times\cX\times\cX$, from which we obtain~\eqref{eq:uhk-simple}
by noting that~\eqref{eq:power_scaling_Phikappa} in Example~\ref{ex:power_scaling} yields
\begin{align*}
\Phi_{\kappa_{\mathrm{u}}}\bigl(d(x,y),t\bigr)
	&\geq c_{\beta}\biggl(\frac{d(x,y)^{\beta}}{t}\biggr)^{\frac{1}{\beta-1}} \biggl(\log\biggl(e+\Bigl(\frac{d(x,y)}{t}\Bigr)^{\frac{\beta}{\beta-1}}\biggr)\biggr)^{-\frac{\kappa_{\mathrm{u}}}{\beta-1}} \\
&\geq c_{\beta}(1-\beta^{-1})^{\frac{\kappa_{\mathrm{u}}}{\beta-1}} \biggl(\frac{d(x,y)^{\beta}}{t}\biggr)^{\frac{1}{\beta-1}} \biggl(\log\biggl(e+\frac{d(x,y)}{t}\biggr)\biggr)^{-\frac{\kappa_{\mathrm{u}}}{\beta-1}}.
\end{align*}

For (ii), note first that for any $(x_{0},r),(x,t)\in\cX\times(0,\infty)$ a function
$p^{B(x_{0},r)}_{t,x}$ as in Proposition~\ref{prop:lower-ondiag} exists and satisfies
$p^{B(x_{0},r)}_{t,x}\leq p_{t}(x,\cdot)$ $\mu$-a.e.\ on $B(x_{0},r)$
by~\eqref{eq:transition-density} and the Radon--Nikodym theorem. Therefore,
choosing any $\varepsilon_{h}\in(0,\infty)$ with $\varepsilon_{h}\kappa_{\mathrm{u}}<1$,
we immediately see that all the assumptions of Proposition~\ref{prop:lower-ondiag} are satisfied and thus that,
with $\kappa_{\mathrm{t}},\varepsilon_{\mathrm{t}},C_{\mathrm{dl}}$ as in Proposition~\ref{prop:lower-ondiag},
\eqref{eq:lower-ondiag-L2} holds for any $(x_{0},r)\in \cX \times (0,a_{\rme}^{-1}\diam\cX)$ and any
$(t,x) \in \bigl(0,\varepsilon_{\mathrm{t}}r^{\beta}\bigl(\log(e+r^{-\beta})\bigr)^{-\kappa_{\mathrm{t}}}\bigr] \times B(x_{0},r/2)$.
Now set $r_{0}:=a_{\rme}^{-1}\diam\cX$,
$t_{0}:=\varepsilon_{\mathrm{t}}r_{0}^{\beta}\bigl(\log(e+r_{0}^{-\beta})\bigr)^{-\kappa_{\mathrm{t}}}$
and let $(t,x)\in(0,t_{0})\times\cX$. Then
$t \in \bigl(0,\varepsilon_{\mathrm{t}}r^{\beta}\bigl(\log(e+r^{-\beta})\bigr)^{-\kappa_{\mathrm{t}}}\bigr]$
for some $r\in (0,r_{0})$, and hence from $p_{t}(x,\cdot)=p_{t}(\cdot,x)$ on $\cX$,
$p_{t}(x,\cdot)\geq p^{B(x,r)}_{t,x}\geq 0$ $\mu$-a.e.\ on $B(x,r)$ and~\eqref{eq:lower-ondiag-L2} we obtain
\begin{align*}
p_{2t}(x,x) &= \int_{\cX}p_{t}(x,y)p_{t}(y,x)\,\mu(dy)
	= \int_{\cX}p_{t}(x,y)^{2}\,\mu(dy)
	\geq \int_{B(x,r)}p_{t}(x,y)^{2}\,\mu(dy) \notag \\
&\geq \int_{B(x,r)}p^{B(x,r)}_{t,x}(y)^{2}\,\mu(dy)
	\geq C_{\mathrm{dl}} t^{-\alpha/\beta} \bigl(\log(e+t^{-1})\bigr)^{-\kappa_{\mathrm{dl}}},
\end{align*}
proving~\eqref{eq:lower-ondiag-simple} for $(t,x)\in(0,2t_{0})\times\cX$. Finally,
\eqref{eq:lower-ondiag-simple} for $(t,x)\in[2t_{0},(\diam\cX)^{\beta}]\times\cX$
follows since, for each $s\in(0,\infty)$, we can define a bounded self-adjoint operator
$T_{s}:L^{2}(\cX,\mu)\to L^{2}(\cX,\mu)$ satisfying $T_{s}\one=\one$ by
$T_{s}f:=\int_{\cX}p_{s}(\cdot,y)f(y)\,\mu(dy)$, then
$\int_{\cX}|T_{s}f|^{2}\,d\mu
	=\int_{\cX}\bigl|T_{s}f-\frac{1}{\mu(\cX)}\int_{\cX}f\,d\mu\bigr|^{2}\,d\mu
		+\frac{1}{\mu(\cX)}\bigl(\int_{\cX}f\,d\mu\bigr)^{2}$
for any $f\in L^{2}(\cX,\mu)$ and hence for any $(t,x)\in(s,\infty)\times\cX$,
\begin{align*}
p_{2t}(x,x)=\int_{\cX}p_{t}(\cdot,x)^{2}\,d\mu
	=\int_{\cX}\bigl|T_{s}(p_{t-s}(\cdot,x))\bigr|^{2}\,d\mu
	\geq\frac{\bigl(\int_{\cX}p_{t-s}(\cdot,x)\,d\mu\bigr)^{2}}{\mu(\cX)}
	=\frac{1}{\mu(\cX)}.
\end{align*}
We have thus completed the proof of Theorem~\ref{thm:heat-kernel-simple}.
\end{proof}

\section{Background on the Brownian map and Liouville quantum gravity} \label{sec:bm-LQG}
\subsection{A review on the Brownian map} \label{ssec:bm-review}

\subsubsection{Definition} \label{sssec:bm-dfn}

The Brownian map is a random metric measure space which was shown by Le Gall \cite{lg2013brownianmap} and Miermont \cite{m2013brownianmap} to arise as the Gromov-Hausdorff scaling limit of certain types of uniformly random planar maps.  The name Brownian map was introduced by Marckert and Mokkadem in \cite{mm2006bm} who proved a weaker form of convergence of rescaled uniformly random quadrangulations.  The standard (unit area) Brownian map is defined from the Brownian snake using the following procedure.  Let $X$ be a normalized Brownian excursion on $[0,1]$.  Let $\CT$ be the instance of the continuum random tree (CRT) \cite{ald1991crt} which is encoded by $X$.  That is, for $0 \leq s < t \leq 1$, we let
\[ m_X(s,t) = X_s + X_t - 2 \inf_{r \in [s,t]} X_r.\]
Then $m_X$ defines a pseudometric on $[0,1]$.  We say that $s \sim t$ if $m_X(s,t) = 0$.  Then $\CT$ is given by the metric quotient of $[0,1]$ with respect to the equivalence relation~$\sim$.  Let $\rho_\CRT \colon [0,1] \to \CT$ be the natural projection map.  Given $X$, we let $Y$ be the mean-zero Gaussian process with
\[ \cov(Y_s,Y_t) = \inf_{r \in [s,t]} X_r \]
so that $\mathbb{E}[(Y_s-Y_t)^2]=m_X(s,t)$. In particular, if $s \sim t$ then $Y_s = Y_t$, so that $Y$ induces a Brownian process on the branches of the CRT instance $\CT$, and is called the \emph{Brownian snake}. We note that $Y$ is a.s.\ $\alpha$-H\"older continuous for any $\alpha < 1/4$.  This follows from the usual Kolmogorov-Centsov argument.

For $s,t \in [0,1]$ with $s < t$ and $[t,s] = [0,1] \setminus (s,t)$, let
\[ d^\circ(s,t) = Y_s + Y_t - 2\max\left(\inf_{r \in [s,t]} Y_r, \inf_{r \in [t,s]} Y_r \right).\]
For $a,b \in \CT$, we set
\[ d_\CT^\circ(a,b) = \min\{ d^\circ(s,t) : \rho_{\CRT}(s) = a, \rho_\CRT(t) = b\}.\]
Finally, for $a,b \in \CT$, we set
\[ d(a,b) = \inf\left\{ \sum_{j=1}^k d_\CT^\circ(a_{j-1},a_j) \right\}\]
where the infimum is over all $k \in \N$ and $a_0 = a,a_1,\ldots,a_k = b$ in $\CT$.  We say that $a \cong b$ if and only if $d(a,b) = 0$.  The Brownian map $(\CS,d)$ is then defined to be the metric quotient $\CT / \cong$.  Let $\rho \colon \CT \to \CS$ be the natural projection map associated with this metric quotient and let $\rho_\BM = \rho \circ \rho_\CRT$.  Let $\nu$ denote the pushforward of Lebesgue measure from $[0,1]$ to $\CS$ by $\rho_\BM$ so that $(\CS,d,\nu)$ is a metric measure space.

The Brownian map is also naturally marked by two points.  The first marked point $x$ is called the \emph{root} and is equal to $\rho_\BM(s^*)$ where $s^*$ is the a.s.\ unique point in $[0,1]$ at which $Y$ attains its infimum \cite[Lemma~16]{mm2006bm} (see also \cite[Proposition~2.5]{lgw2006condbrowniantrees}).  The second marked point $y$ is called the \emph{dual root} and is equal to $\rho_\BM(0) = \rho_\BM(1)$.  The reason for this terminology is that $x$ is the root of the tree of geodesics from every point $z \in \CS$ to $x$ and $y$ is the root of the dual tree, where the tree encoded by $X$ is called the \emph{dual tree} and the tree encoded by $Y$ is called the \emph{geodesic tree} (tree of geodesics from every point back to $x$).
It turns out that the conditional law of $x,y$ given $(\CS,d,\nu)$ is that of independent picks from $\nu$ \cite[Section~8]{lg2010geodesics}.  That is, the law of $(\CS,d,\nu,x,y)$ is invariant under the operation of resampling $x,y$ independently using $\nu$.

We let $\bmlaw{1}$ denote the law of $(\CS,d,\nu,x,y)$.  The superscript $A=1$ serves to emphasize that $\nu(\CS) = 1$ as $X$ is defined on $[0,1]$.  If $a > 0$ is fixed then we define $\bmlaw{a}$ by replacing $X$ with a Brownian excursion of length $a$ and in this case we have that $\nu(\CS) = a$.

As we will see later, in many situations it is more convenient to consider the Brownian map with \emph{random} rather than fixed (unit) area.  The starting point for the definition of the (doubly marked) Brownian map with random area is the infinite measure on Brownian excursions \cite{ry1999mg}.  We remind the reader that one can ``sample'' from this distribution using the following two steps:
\begin{itemize}
\item Pick a lifetime $t$ from the measure $c t^{-3/2} dt$ where $c > 0$ is a constant and $dt$ denotes Lebesgue measure on $\R_+$.
\item Given $t$, pick a Brownian excursion $X$ of length $t$.	
\end{itemize}
Recall that a Brownian excursion $X$ of length $t$ can be constructed by first sampling a Brownian excursion $\wt{X}$ of unit length and then by setting $X_s = \sqrt{t} \wt{X}_{s/t}$.

We define $\bminflaw$ to be the law of $(\CS,d,\nu,x,y)$ where $X$ in the definition of the standard (unit area) Brownian map is replaced by a sample from the infinite measure on Brownian excursions.  Since the law on Brownian excursions is an infinite measure, so is $\bminflaw$.  However, if we condition $\bminflaw$ on $\nu(\CS) = a$ for $a > 0$ then we obtain the probability measure $\bmlaw{a}$.  It is also possible to condition $\bminflaw$ on other events to obtain a probability measure.  Another important example which we will discuss momentarily in more detail is the event $d(x,y) > 1$. More precisely,  it follows from \cite[Theorem~1.1]{ms2015mapmaking} that $\bminflaw[d(x,y) >1] \in (0,\infty)$.  In particular,  $\bminflaw[d(x,y) > 1]^{-1}$ times the restriction of $\bminflaw$ to $\{d(x,y)>1\}$ is a probability measure. In fact,  we have that $\bminflaw[\diam(\CS) >1] \in (0,\infty)$.  Indeed,  as we noted earlier,  the Brownian snake $Y$ used to construct a sample $(\CS,d)$ from $\bmlaw{1}$ is $\alpha$-H\"older continuous for any $\alpha < 1/4$.  Also,  the Kolmogorov-Centsov argument implies that $\wt{Y} = \max_{0 \leq t \leq 1} |Y_t| \in L^p$ for all $p \in [1,\infty)$.  Furthermore we have that $\bminflaw[\diam(\CS)>1] = c \int_{t=0}^{\infty}\bmlaw{t}[\diam(\CS)>1] t^{-3/2}dt$ for some constant $c \in (0,\infty)$ which combined with scaling and change of coordinates implies that $\bminflaw[\diam(\CS)>1] = 4c \int_{s=0}^{\infty}\bmlaw{1}[\diam(\CS)>s] s ds$.  Fix $p>2$.  Since $\diam(\CS)\leq 2\wt{Y}$ by construction and $\bmlaw{1}[\wt{Y} > s] \lesssim s^{1-p}$ by Markov's inequality,  we obtain that $\bminflaw[\diam(\CS) >1] \lesssim 1 + \int_{s=1}^{\infty} s^{1-p} ds \lesssim 1$.

We now record an estimate for the volume of metric balls in the Brownian map.  This result will involve a polynomial correction, which we will improve later to a polylogarithmic correction.

\begin{lemma}
\label{lem:brownian_map_volume_estimates}
Suppose that $(\CS,d,\nu,x,y)$ is an instance of the Brownian map.  For each $u > 0$ there a.s.\ exists $r_0 > 0$ so that
\[ r^{4+u} \leq \nu( B(z,r)) \leq r^{4-u} \quad\text{for all}\quad r \in (0,r_0) \quad\text{and all} \quad z \in \CS.\]
\end{lemma}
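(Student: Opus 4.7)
The plan is to reduce both inequalities to moment bounds for the volume of a ball around the root $x$, then transfer them to arbitrary points via the re-rooting invariance of the doubly-marked Brownian map, and finally pass from a net of points to every $z\in\CS$ by a covering argument combined with Borel--Cantelli along dyadic scales.

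First I would exploit the identity $d(x,\rho_{\mathrm{BM}}(s))=Y_{s}-Y_{s^{*}}$, which expresses
$$
\nu(B(x,r))=\mathrm{Leb}\bigl\{s\in[0,1]\,:\,Y_{s}-Y_{s^{*}}\leq r\bigr\}.
$$
Using the scaling property of the Brownian snake driven by a Brownian excursion (time rescales as $a$, space as $a^{1/4}$), together with standard integral computations for the snake, one derives the two-sided moment bounds
$$
C_{p}^{-1}r^{4p}\leq \mathbb{E}[\nu(B(x,r))^{p}]\leq C_{p}r^{4p}\qquad\text{for every integer }p\geq 1\text{ and }r\in(0,1].
$$
By the re-rooting invariance of $(\CS,d,\nu,x,y)$ under independent resamplings of the marked points from $\nu$, the same moment bounds apply to $\nu(B(z,r))$ whenever $z$ is sampled from $\nu$. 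Integrating against $\nu$ then gives, for any fixed $p$, $\mathbb{E}\bigl[\int_{\CS}\nu(B(z,r))^{p-1}\,d\nu(z)\bigr]\leq C_{p}r^{4(p-1)}$ and a matching lower bound of the same form.

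For the upper bound, I fix a large $p$ (depending on $u$) and apply Markov's inequality to the random variable $\nu(B(z,r))$ for $z$ sampled from $\nu$. Choosing a maximal $r/2$-separated net $\{z_{i}\}$ in $\CS$ of cardinality $\leq r^{-4-u/4}$ (which holds with overwhelming probability by the crude Hausdorff-dimension upper bound already established in \cite{lg2010geodesics}) and combining with a union bound and Borel--Cantelli along $r_{n}=2^{-n}$, we see that no $\nu$-typical net point satisfies $\nu(B(z_{i},2r_{n}))>r_{n}^{4-u/2}$ for all sufficiently large $n$. For an arbitrary $z\in\CS$ there is some $z_{i}$ with $d(z,z_{i})\leq r$, so $B(z,r)\subset B(z_{i},2r)$, transferring the upper bound to every $z$.

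The main obstacle is the lower bound: ruling out anomalously small ball volumes everywhere. I would first use the positive lower moment $\mathbb{E}[\nu(B(x,r))^{p}]\geq c_{p}r^{4p}$ in combination with the upper moment $\mathbb{E}[\nu(B(x,r))^{2p}]\leq C_{2p}r^{8p}$ in a Paley--Zygmund argument to obtain
$$
\mathbb{P}\bigl[\nu(B(z,r))<\varepsilon r^{4}\bigr]\leq C(p)\varepsilon^{q}
$$
for $\nu$-a.e.\ $z$, with $q=q(p)\to\infty$ as $p\to\infty$; one upgrades this to a quantitative estimate by taking $\varepsilon=r^{u/2}$ and optimizing in $p$. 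To extend from $\nu$-typical points to all $z\in\CS$, one uses that with overwhelming probability every $z\in\CS$ has a $\nu$-typical point $z'$ within distance $r/2$, whose small ball $B(z',r/2)\subset B(z,r)$ already has volume at least $r^{4+u}$ by the previous step. The transfer uses the upper bound established earlier (to bound the cardinality of the required net) and a Borel--Cantelli argument over dyadic radii $r_{n}=2^{-n}$. Together these steps give the two-sided polynomial volume regularity claimed in the lemma.
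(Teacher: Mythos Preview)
Your upper-bound route via moment estimates and a union bound is broadly in the right spirit (and is essentially how Le~Gall proves it in \cite{lg2010geodesics}, which the paper simply cites), though as written you conflate a maximal separated net with a net of $\nu$-typical points; the clean way around this is the inequality $\sup_{z_0}\nu(B(z_0,r))^p\le\int_{\CS}\nu(B(z,2r))^{p-1}\,d\nu(z)$ followed by re-rooting and Markov, with no net needed.

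The real gap is in your lower bound. Paley--Zygmund applied to $Z=\nu(B(x,r))^{p}$ with your two-sided moment bounds only yields
\[
\p\bigl[Z>\theta\,\E[Z]\bigr]\ \ge\ (1-\theta)^{2}\,\frac{(\E[Z])^{2}}{\E[Z^{2}]}\ \ge\ (1-\theta)^{2}\,\frac{c_{p}^{2}}{C_{2p}},
\]
which is merely a positive constant, not an estimate of the form $\p[\nu(B(x,r))<\varepsilon r^{4}]\le C\varepsilon^{q}$ with $q=q(p)\to\infty$. Positive moment bounds alone cannot produce a decaying upper tail on the \emph{lower} side; for that you would need negative moments of $\nu(B(x,r))$, which you have not established. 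There is also a circularity in your transfer to arbitrary $z$: showing that an i.i.d.\ $\nu$-sample covers $\CS$ at scale $r$ already requires a uniform lower bound $\nu(B(z,r))\ge r^{4+u}$, which is precisely what you are trying to prove.

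The paper's argument for the lower bound is both simpler and avoids all of this. Since $Y$ is a.s.\ $\alpha$-H\"older continuous for every $\alpha<1/4$, for any $s\in[0,1]$ and $|t-s|\le\epsilon$ one has
\[
d\bigl(\rho_{\mathrm{BM}}(s),\rho_{\mathrm{BM}}(t)\bigr)\ \le\ d^{\circ}(s,t)\ \le\ 2\sup_{r\in[s\wedge t,\,s\vee t]}|Y_{r}-Y_{s}|\ \le\ C\epsilon^{\alpha},
\]
so $\rho_{\mathrm{BM}}([s-\epsilon,s+\epsilon])\subset B(\rho_{\mathrm{BM}}(s),C\epsilon^{\alpha})$ and hence $\nu\bigl(B(\rho_{\mathrm{BM}}(s),C\epsilon^{\alpha})\bigr)\ge 2\epsilon$. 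Inverting gives $\nu(B(z,r))\ge c\,r^{1/\alpha}$ for every $z\in\CS$ simultaneously, and letting $\alpha\uparrow 1/4$ yields $\nu(B(z,r))\ge r^{4+u}$. No tail bounds or covering arguments are needed.
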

\begin{proof}
The upper bound is proved in \cite[Corollary~6.2]{lg2010geodesics}.  The lower bound follows from the H\"older continuity of the Brownian snake.	
\end{proof}

\subsubsection{Breadth-first construction}
\label{subsec:bread_first}

We will now review the basic properties of the breadth-first construction of the Brownian map developed in \cite{ms2015mapmaking}.

\subsubsection*{Continuous state branching processes}
\label{subsubsec:csbp} 
Recall that a \emph{continuous state branching process} (CSBP) with \emph{branching mechanism} $\psi$ is the cadlag Markov process $Y$ on $\R_+$ whose law is characterized by its Laplace transforms
\begin{equation}
\label{eqn:csbp_laplace_transform}
 \E[ \exp(-\lambda Y_t) \giv Y_s] = \exp(-Y_s u_{t-s}(\lambda) ) \quad\text{for}\quad 0 \leq s \leq t
\end{equation}
where
\[ \frac{\partial u_t}{\partial t}(\lambda) = -\psi(u_t(\lambda)) \quad\text{for}\quad u_0(\lambda) = \lambda.\]
See \cite[Chapter~10]{kyp2006levy} or \cite{lg1999spatial} for an introduction to CSBPs.  There is a correspondence between CSBPs and L\'evy processes via the so-called \emph{Lamperti transform} \cite{lam1967csbp}.  In particular, if $Y$ is a L\'evy process stopped upon hitting $(-\infty,0)$ with only upward jumps and Laplace exponent $\psi$ and we define the time-change
\begin{equation}
\label{eqn:lamperti_levy_to_csbp}
s(t) = \inf\{r > 0 : \int_0^r \frac{1}{Y_u} \, du \geq t\}	
\end{equation}
then the process $Y_{s(t)}$ is a CSBP with branching mechanism $\psi$.  Conversely, if $Y$ is a CSBP with branching mechanism $\psi$ and we set
\begin{equation}
\label{eqn:lamperti_csbp_to_levy}
s(t) = \inf\{r > 0 : \int_0^r Y_u \, du \geq t \}
\end{equation}
then the time-changed process $Y_{s(t)}$ is a L\'evy process stopped upon hitting $(-\infty,0)$ with only upward jumps and Laplace exponent $\psi$.

In this work, we will be primarily interested in the case that $\psi(\lambda) = c\lambda^\alpha$ where $\alpha \in (1,2)$ and $c > 0$ is a constant.  We will call the corresponding CSBP an \emph{$\alpha$-stable CSBP} since the associated L\'evy process is $\alpha$-stable.  In this case, we have the explicit formula
\begin{equation}
\label{eqn:u_t_form}
u_t(\lambda) = (\lambda^{1-\alpha} + c t)^{1/(1-\alpha)}.	
\end{equation}
Note that the explicit form of $u_t$ combined with~\eqref{eqn:csbp_laplace_transform} implies that the following is true.  If $Y$ is an $\alpha$-stable CSBP starting from $Y_0 > 0$ then for each $c > 0$ we have that $t \mapsto Y_{c^{\alpha-1} t}$ has the same law as $c Y$, up to a change of starting point.

Let $Y$ be an $\alpha$-stable CSBP and let $\zeta = \inf\{t \geq 0 : Y_t = 0\}$ be its extinction time.  The formula~\eqref{eqn:csbp_laplace_transform} yields an explicit formula for the distribution of $\zeta$, which is as follows
\begin{equation}
\label{eqn:csbp_extinction_time}
\p[ \zeta > t ] = \p[ Y_t > 0 ] = 1 - \lim_{\lambda \to \infty} \E[ e^{-\lambda Y_t} ] = 1-\exp(-c t^{1/(1-\alpha)} Y_0).
\end{equation}

\subsubsection*{Boundary length and conditional independence of inside and outside of filled metric balls}
\label{subsubsec:boundary_length}

Suppose that $(\CS,d,\nu,x,y)$ is distributed according to $\bminflaw$.  For each $r \geq 0$, we let $\fb{x}{r}$ be the complement of the $y$-containing component of $\CS \setminus \ol{B(x,r)}$, where we set $B(x,0)=\emptyset$. We call $\fb{x}{r}$ the \emph{filled metric ball} of radius $r$ centered at~$x$.  In other words, $\fb{x}{r}$ is defined as the closure of the union of $B(x,r)$ together with all of the components of $\CS \setminus \ol{B(x,r)}$ which do not contain $y$.

On the event $\{d(x,y) > r\}$, it is shown in \cite{ms2015mapmaking} how to associate with $\partial \fb{x}{r}$ a \emph{boundary length} $L_r$ in a manner which is measurable with respect to $(\CS,d,\nu,x,y)$.  It turns out that the marginal law on $L_r$ can be ``sampled'' from by first ``picking'' a $3/2$-stable CSBP excursion from the infinite measure on such excursions and then taking the cadlag modification of the time-reversal.
The infinite measure on $3/2$-stable CSBP excursions can be described as follows.  Recall from \cite[Chapter~VIII, Section~4]{bertoin1996levy} that the infinite measure on $3/2$-stable L\'evy excursions with only upward jumps can be sampled from as follows.
\begin{itemize}
\item Pick a lifetime $t$ from the measure $ct^{-5/3} dt$ where $c > 0$ is a constant and $dt$ denotes Lebesgue measure on $\R_+$.
\item Given $t$, pick a $3/2$-stable L\'evy excursion with only upward jumps of time-length $t$.
\end{itemize}
The infinite measure on $3/2$-stable CSBPs can be sampled from by first sampling from the infinite measure on $3/2$-stable L\'evy excursions and then applying the Lamperti transform~\eqref{eqn:lamperti_levy_to_csbp}.

For each $r > 0$, we can view $\fb{x}{r}$ as a metric measure space which is marked by $x$ and equipped with the restriction of $\nu$ to $\fb{x}{r}$.  The metric that we put on $\fb{x}{r}$ is the \emph{interior-internal metric}, which is defined by setting the distance between points $u,v \in \fb{x}{r}$ to be the infimum of the $d$-length of paths which connect $u,v$ and stay in the interior of $\fb{x}{r}$ except possibly at their endpoints.  We can similarly view $\CS \setminus \fb{x}{r}$ as a metric measure space which is marked by $y$ and equipped with the interior-internal metric.

It is shown in \cite{ms2015mapmaking} that $L_r$ is a.s.\ determined by $\fb{x}{r}$, that $L_r$ is a.s.\ determined also by $\CS \setminus \fb{x}{r}$ and, moreover, that $\fb{x}{r}$ and $\CS \setminus \fb{x}{r}$ are conditionally independent given $L_r$.  On the event that $\{d(x,y) > r\}$, the same also holds if we replace $r$ by $s = d(x,y) - r$.  In this case, $\CS \setminus \fb{x}{s}$ is the region which has been explored after performing $r$ units of \emph{reverse metric exploration}.  It is also shown in \cite{ms2015mapmaking} that for each $r > 0$ the metric measure spaces corresponding to $\CS \setminus \ol{B(x,r)}$ are conditionally independent given their boundary lengths.  The conditional law of the components of $\CS \setminus \ol{B(x,r)}$ which do not contain $y$ are given by $\bdisklaw{\ell}$ (where $\ell$ is the hole boundary length) and the conditional law of the component which contains $y$ is given by $\bdisklawweighted{\ell}$ (where $\ell$ is again the hole boundary length);
here $\bdisklaw{\ell}$ and $\bdisklawweighted{\ell}$ denote the law of a Brownian disk with boundary length equal to $\ell$ and that of a Brownian disk weighted by its area, respectively, introduced below at the end of Subsection~\ref{subsec:bread_first}. 
This in fact follows from the conditional independence of $\fb{x}{r}$ and $\CS \setminus \fb{x}{r}$ and a re-rooting argument. These statements hold more generally if $r$ is replaced by a stopping time $\tau$ for the boundary length process of the whole collection of components of $\CS \setminus \ol{B(x,s)}$, $s\geq 0$.

\subsubsection*{Metric bands}

The inside-outside independence of filled metric balls allows us to decompose an instance of the Brownian map into conditionally independent \emph{metric bands}.  More precisely, suppose that we have fixed $r_0 = 0 < r_1 < r_2 < \cdots < r_k$ and we let $s_j = d(x,y) - r_j$ for each $0 \leq j \leq k$.  Then we can view each $\CB_j = \fb{x}{s_{j-1}} \setminus \fb{x}{s_j}$, $1 \leq j \leq k$, as a metric measure space with its interior-internal metric $d_{\CB_j}$ and measure $\nu_{\CB_j} = \nu|_{\CB_j}$, where we set $B(x,r)=\fb{x}{r}=\emptyset$ for $r\leq 0$. Note that each $\CB_j$ is either a topological disk or annulus.  Its \emph{inner} (resp.\ \emph{outer}) boundary is the component of $\partial \CB_j$ whose distance to $x$ is equal to $s_{j-1}$ (resp.\ $s_j$).  We will denote the inner (resp.\ outer) boundary of $\partial \CB_j$ by $\innerboundary \CB_j$ (resp.\ $\outerboundary \CB_j$).  We also note that $\innerboundary \CB_j$ is naturally marked by the point visited by the a.s.\ unique geodesic connecting $x$ and $y$.  The \emph{width} of $\CB_j$ is $s_{j-1} - s_j = r_j - r_{j-1}$.  We note that the independence property for the reverse metric exploration implies that $\CB_j$ is conditionally independent of $\CB_1,\ldots,\CB_{j-1}$ given the boundary length of $\innerboundary \CB_j$.  Moreover, the conditional law of $\CB_j$ given that its inner boundary length is equal to $\ell$ depends only on $\ell$ and the width $r_j - r_{j-1}$.

We let $\bandlaw{\ell}{w}$ be the law on metric bands $(\CB,d_\CB,\nu_\CB,z)$ with inner boundary length $\ell$, width $w$, and marked by a point $z$ on the inner boundary of $\CB$.  We note that if $(\CB,d_\CB,\nu_\CB,z)$ has law $\bandlaw{\ell}{w}$, $a > 0$ and we rescale distances by $a$, boundary lengths by $a^2$, and areas by $a^4$, then we obtain a sample from $\bandlaw{a^2 \ell}{aw}$.

\subsubsection*{Brownian disks and the metric net}

\newcommand{\mnet}{{\mathrm{MetNet}}}

The \emph{metric net} of $(\CS,d,\nu,x,y)$ is defined as $\mnet(\CS):=\bigcup_{r \geq 0} \partial \fb{x}{r}$.  The components of $\CS \setminus \mnet(\CS)$ are each topological disks.  They correspond to the downward jumps of the boundary length process $L_r$ where the magnitude of a given downward jump gives the boundary length of the component. The components are conditionally independent given their boundary lengths and are instances of the \emph{Brownian disk} with the given boundary length (equipped with their interior-internal metric).  For $\ell > 0$, we let $\bdisklaw{\ell}$ denote the law of a Brownian disk with boundary length equal to $\ell$.  We also let $\bdisklawweighted{\ell}$ denote the law of a Brownian disk weighted by its area.  In other words, the Radon-Nikodym derivative of $\bdisklawweighted{\ell}$ with respect to $\bdisklaw{\ell}$ is equal to a normalizing constant times the area of the surface.  If $r > 0$, we have that $\CS \setminus \fb{x}{r}$ has the law $\bdisklawweighted{\ell}$ where $\ell = L_r$.

It turns out that the law of the area of a sample from $\bdisklawweighted{\ell}$ is equal to the law of the amount of time it takes a standard Brownian motion on $\R$ starting from $0$ to hit $-\ell$ \cite{bm2017disk}.  Recall that the density for this law with respect to Lebesgue measure on $\R_+$ at $a$ is given by
\begin{equation}
\label{eqn:bdiskweighted_area_density}
\frac{\ell}{\sqrt{2 \pi a^3}} \exp\left(-\frac{\ell^2}{2a} \right).	
\end{equation}
The law of the area of a sample from $\bdisklaw{\ell}$ thus has density with respect to Lebesgue measure on $\R_+$ at $a$ given by
\begin{equation}
\label{eqn:bdisk_area_density}
\frac{\ell^3}{\sqrt{2\pi a^5}} \exp\left( - \frac{\ell^2}{2a} \right).	
\end{equation}

We note that the Brownian disk can be constructed using a variant of the Brownian snake and this perspective is developed in \cite{bm2017disk}.  We will not describe this construction further here because in what follows we will only need to know that Brownian disks arise as the complementary components when performing a metric exploration of the Brownian map.  The equivalence between these two perspectives was proved in \cite{lg2019disksnake}.

\subsubsection{Explorations of Brownian disks}

Recall from the above that if $(\CS,d,\nu,x,y)$ has distribution $\bminflaw$ and $r > 0$ then the conditional law of $\CS \setminus \fb{x}{r}$ given $L_r$ is equal to $\bdisklawweighted{L_r}$.  This implies that there is a natural exploration of an instance $(\CD,d,\nu,y)$ sampled from $\bdisklawweighted{\ell}$, $\ell > 0$, which is given by considering for each $r \geq 0$ the $y$-containing complementary component $\CD_r$ of the (closed) $r$-neighborhood of $\partial \CD$.  The boundary length $L_r$ of $\partial \CD_r$ is then well-defined and evolves in the same manner as in the case of a metric exploration of the Brownian map.  Moreover, the successive components disconnected from $y$ correspond to the downward jumps of $L_r$ and are conditionally independent Brownian disks with boundary length equal to the size of the corresponding downward jump.  Finally, for each $r \geq 0$, the conditional law given $L_r$ of the metric measure space given by $\CD_r$ and marked by $y$, the restriction of $\nu$ to $\CD_r$, and the interior-internal metric in $\CD_r$ is equal to $\bdisklawweighted{L_r}$.

In the case of $(\CD,d,\nu)$ sampled from $\bdisklaw{\ell}$, there is another exploration which is natural to consider and is called the \emph{center exploration} in \cite{ms2015mapmaking}.  It is analogous to the targeted exploration described just above except that it always continues into the complementary component with the largest boundary length. It can more precisely be constructed as follows.  Suppose that $y \in \CD$ is picked according to $\nu/\nu(\CD)$ independently of everything else.  Then one can consider the metric exploration starting from $\partial \CD$ and targeted at $y$ up until the first time a component $\CD_1$ is disconnected from $y$ with boundary length larger than the $y$-containing component.  At this time, we pick a point $y_1$ in $\CD_1$ from $\nu/\nu(\CD_1)$ independently of everything else and then continue the exploration inside of $\CD_1$ towards $y_1$ until the first time a component $\CD_2$ is disconnected from $y_1$ with boundary length larger than the $y_1$-containing component.
We then continue iterating the above procedure to have a sequence $\{\CD_{j}\}_{j}$ of components inside of which we continue the exploration at each step, until the boundary length of the target component first hits $0$.  We note that this time $R$ is a.s.\ finite since it is at most the diameter of $\CD$, and that a.s.\ the sequence $\{\CD_{j}\}_{j}$ is infinite but only finitely many of them appear by exploration time $r$ for any $r<R$.

Let us now record some important properties of the center exploration.
By the \emph{target component} of the center exploration at exploration time $r$ we mean the component in which the exploration continues after $r$ units of exploration, and each component which remains after the termination of the exploration is called a \emph{component cut off by the center exploration}.
Let $M_r$ denote the boundary length of the target component at exploration time $r$.
Then the downward jumps of $M_r$ correspond to components which are disconnected from the target component.  Moreover, the components are conditionally independent Brownian disks (given the realization of $M$) where the boundary length of the disk is given by the length of the corresponding jump.  Furthermore, the conditional law of the target component given $M_r$ is given by $\bdisklaw{M_r}$.

Let $(a_i)$ denote the sequence of downward jumps made by the center exploration run until $M_r$ hits $0$ and let $\alpha > 0$.  Then it is shown in \cite[Section~4.6]{ms2015mapmaking} that 
\begin{equation}
\label{eqn:center_exploration}
\ell^2 = \E\!\left[ \sum_i a_i^2 \right] \quad\text{and}\quad \ell^\alpha > \E\!\left[ \sum_i a_i^\alpha \right] \quad\text{for}\quad \alpha > 2
\end{equation}
where both expectations are under the law $\bdisklaw{\ell}$.  Moreover, by the scale invariance of the Brownian disk we note that the value of $\E[\sum_i (a_i/\ell)^\alpha]$ does not depend on $\ell$.  The first equality in~\eqref{eqn:center_exploration} can be seen because the conditional expectation of the amount of area inside of $\CD$ given the center exploration up to some time evolves as a martingale and, as explained above, the complementary components are conditionally independent Brownian disks.  The inequality in~\eqref{eqn:center_exploration} can be seen by a direct calculation with the jump law for $M_r$ (which is explicitly identified in \cite{ms2015mapmaking}).

\subsection{Liouville quantum gravity review}

\subsubsection{Basic definitions}

Suppose that $h$ is an instance of (some form of) the Gaussian free field (GFF) on a domain $D \subseteq \C$ (e.g., with Dirichlet or free boundary conditions, defined on the whole-plane, any of the above plus a harmonic function).  The Liouville quantum gravity surface described by $h$ refers to the random two-dimensional Riemannian manifold with metric tensor
\begin{equation}
\label{eqn:metric_tensor}
e^{\gamma h(z)} (dx^2 + dy^2)
\end{equation}
where $\gamma \in (0,2)$ is a parameter and $dx^2 + dy^2$ denotes the Euclidean metric on $D$.  This expression does not make literal sense because $h$ is a distribution and not a function.  The volume form associated with~\eqref{eqn:metric_tensor} was constructed in \cite{ds2011kpz}.  The construction proceeds by letting $h_\epsilon(z)$ be the average of $h$ on $\partial \eball{z}{\epsilon}$ and then taking
\begin{equation}
\label{eqn:limiting_procedure_area}
\qmeasure{h} = \lim_{\epsilon \to 0} \epsilon^{\gamma^2/2} e^{\gamma h_\epsilon(z)} dz
\end{equation}
where $dz$ denotes Lebesgue measure on $D$.  In the case that $D$ has free boundary conditions on a linear segment $L$, one can similarly define a boundary length measure on $L$ by setting
\begin{equation}
\label{eqn:limiting_procedure_boundary}
\qbmeasure{h} = \lim_{\epsilon \to 0} \epsilon^{\gamma^2/4} e^{\gamma h_\epsilon(z)/2} dz
\end{equation}
where $dz$ denotes Lebesgue measure on $L$.  The limiting procedure~\eqref{eqn:limiting_procedure_area} implies that the measures $\qmeasure{h}$ satisfy the following change of coordinates formula.  If $\varphi \colon \wt{D} \to D$ is a conformal transformation and
\begin{equation}
\label{eqn:q_surface_equiv}
\wt{h} = h \circ \varphi + Q \log|\varphi'| \quad\text{where}\quad Q = \frac{2}{\gamma} + \frac{\gamma}{2}
\end{equation}
then $\qmeasure{\wt{h}}(A) = \qmeasure{h}(\varphi(A))$ for all Borel sets $A$.  The same is also true with $\qbmeasure{h}$ in place of $\qmeasure{h}$.  In particular, this gives a way to define $\qbmeasure{h}$ on boundary segments which are not necessarily linear because we can conformally map to such a domain and then compute the boundary length there.

We say that two domain/field pairs $(D,h)$ and $(\wt{D},\wt{h})$ are equivalent as quantum surfaces if $h$, $\wt{h}$ are related as in~\eqref{eqn:q_surface_equiv}.
A \emph{quantum surface} is an equivalence class of domain/field pairs under this equivalence relation.
An \emph{embedding} of a quantum surface is a particular choice of representative, and a quantum surface with an embedding $(D,h)$ for a domain $D\subset\mathbb{C}$ is said to be \emph{parameterized} by $D$.
We will also discuss \emph{marked quantum surfaces} which refer to quantum surfaces with extra marked points and the equivalence relation~\eqref{eqn:q_surface_equiv} is generalized so that
the map $\varphi$ must also take the marked points associated with the surface described by $\wt{h}$ to the marked points associated with the surface described by $h$.
It is not always immediate that two domain/field pairs describe equivalent quantum surfaces, so there is some subtlety to this definition.
For example, two very different looking constructions of the quantum sphere have been given in \cite{dms2014mating} and \cite{dkrv2016sphere} and proved to be equivalent in \cite{ahs2017spheres}.

\subsubsection{Quantum disks, spheres, and wedges} \label{sssec:QD-QS-QW}

We will now describe the construction of a quantum \emph{disk} and a quantum \emph{sphere}, as introduced in \cite{dms2014mating}.
The precise definitions will not be important for what follows, but we include them here for completeness.
Throughout Subsection~\ref{sssec:QD-QS-QW}, $\gamma \in (0,2)$ is arbitrary, and for functions $f,g$ with $L^2$ gradients we define the \emph{Dirichlet inner product} by
\begin{equation}
\label{eqn:dirichlet_inner_product}
(f,g)_\nabla = \frac{1}{2\pi} \int \nabla f(x) \cdot \nabla g(x) \, dx.	
\end{equation}
Let $\| \cdot \|_\nabla$ be the associated norm.

The starting point for the definition of the quantum surfaces which we will discuss is the excursion measure for a Bessel process, which we recall is defined as follows.  Suppose that $\delta \in (-\infty,2)$.  Then one can sample from the measure on Bessel excursions using the following procedure:
\begin{enumerate}[(i)]
\item Pick a lifetime $T$ from the measure $c_\delta T^{\delta/2-2} \, dT$ where $dT$ denotes Lebesgue measure on $\R_+$ and $c_\delta > 0$ is a constant.
\item Sample an independent normalized excursion $\wt{Z} \colon [0,1] \to \R_+$ of a Bessel process of dimension $\delta$.
\item Take $Z$ to be $t \mapsto T^{1/2} \wt{Z}(t/T)$.
\end{enumerate}
We note that the Bessel excursion measure is an infinite measure (as the measure $c_\delta T^{\delta/2-2} \, dT$ is an infinite measure).  It is also defined even for $\delta \leq 0$, although in this case it is not possible to concatenate a Poisson point process of such Bessel excursions to obtain a continuous process.

While reading what follows, the reader may find it helpful to look at \cite[Figure~1.2]{dms2014mating}.

\subsubsection*{Quantum disks}

We will now describe the construction of a \emph{quantum disk}.  This is a finite volume quantum surface which is homeomorphic to the unit disk $\D$ and is naturally marked by two points.  It is easiest to give the definition of the quantum disk when the surface is parameterized by the infinite horizontal strip $\strip = \R \times [0,\pi]$.  We let $\CH(\strip)$ be the closure of the $C^\infty$ functions on $\strip$ with $L^2$ gradient with respect to $\| \cdot \|_\nabla$, viewed modulo additive constant.  Then $\CH(\strip)$ admits the orthogonal decomposition $\CH_1(\strip) \oplus \CH_2(\strip)$ where $\CH_1(\strip)$ (resp.\ $\CH_2(\strip)$) contains those functions in $\CH(\strip)$ which are constant (resp.\ have mean-zero) on vertical lines.

A \emph{quantum disk} is a quantum surface $(\strip,h,-\infty,+\infty)$ whose law can be sampled from using the following steps:
\begin{itemize}
\item Take the projection of $h$ onto $\CH_1(\strip)$ to be given by $\tfrac{2}{\gamma} \log Z$ where $Z$ is a Bessel excursion of dimension $3-\tfrac{4}{\gamma^2}$, reparameterized to have quadratic variation $2\, dt$.
\item Take the projection of $h$ onto $\CH_2(\strip)$ to be independently given by the corresponding projection of a GFF on $\strip$ with free boundary conditions.
\end{itemize}

The above specifies the embedding of the surface modulo one free parameter (since as we will explain below, the points at $\pm \infty$ are marked), which corresponds to the horizontal translation.  There are various ways of fixing the horizontal translation.  One possibility is to let $X$ denote the projection of $h$ onto $\CH_1(\strip)$ and then choose the horizontal translation so that $\sup_{u \in \R} X_u$ is attained at $u=0$.

Since the measure on Bessel excursions of dimension $3-\tfrac{4}{\gamma^2}$ is an infinite measure, this defines an infinite measure on quantum surfaces which we denote by $\qdiskinflaw$.  One can obtain a probability measure by conditioning, for example, on the boundary length $\nu_{h}(\partial\strip)$ or area $\mu_{h}(\strip)$ being equal to a given value.  We let $\qdisk{\ell}$ denote the probability measure obtained when we condition on the boundary length $\nu_{h}(\partial\strip)$ being $\ell > 0$.

The two marked points at $+\infty, -\infty$ are uniformly random given the quantum surface structure \cite[Proposition~A.8]{dms2014mating}.  This means that the law of $(\strip,h,-\infty,+\infty)$ is invariant under the operation of sampling $x,y$ independently from $\qbmeasure{h}$, letting $\varphi \colon \strip \to \strip$ be a conformal transformation which takes $+\infty$ to $x$ and $-\infty$ to $y$, and then replacing $h$ with $h \circ \varphi + Q\log|\varphi'|$.

We will often refer to the law on quantum disks which is obtained by weighting the law $\qdisk{\ell}$ by its total volume $\mu_{h}(\strip)$ and then adding an extra marked point in the interior chosen at random from the quantum measure $\mu_{h}$.  We let $\qdiskweighted{\ell}$ denote this probability measure.

\subsubsection*{Quantum spheres}

We will now describe the construction of the \emph{quantum sphere}.  This is a finite volume surface which is homeomorphic to the two-dimensional sphere $\s^2$ and is naturally marked by two points.  It is easiest to give the definition of the quantum sphere when parameterized by the infinite cylinder $\cyl = \R \times [0,2\pi]$ with the top and bottom identified.  Let $\CH(\cyl)$ be the closure of the $C^\infty(\cyl)$ functions with respect to $\| \cdot \|_\nabla$, defined modulo additive constant.  Then $\CH(\cyl)$ admits the orthogonal decomposition $\CH_1(\cyl) \oplus \CH_2(\cyl)$ where $\CH_1(\cyl)$ (resp.\ $\CH_2(\cyl)$) contains those functions in $\CH(\cyl)$ which are constant (resp.\ have mean-zero) on vertical lines.

Let $h$ be the field on $\cyl$ whose law can be sampled from as follows:
\begin{itemize}
\item Take its projection onto $\CH_1(\cyl)$ to be given by $\tfrac{2}{\gamma} \log Z$ where $Z$ is a Bessel excursion of dimension $4-\tfrac{8}{\gamma^2}$, reparameterized to have quadratic variation $dt$.
\item Take its projection onto $\CH_2(\cyl)$ to be independently given by the corresponding projection of a whole-plane GFF on $\cyl$.
\end{itemize}

As in the case of the quantum disk, this specifies the embedding of the surface modulo one free parameter.  One possible way of fixing it is to let $X$ denote the projection of $h$ onto $\CH_1(\cyl)$ and then choose the horizontal translation so that $\sup_{u \in \R} X_u$ is attained at $u=0$.

We emphasize that since the Bessel excursion measure is an infinite measure, the measure on quantum spheres that we have just defined is also an infinite measure.  We let $\qsphereinflaw$ denote this infinite measure.  One can obtain a probability measure by conditioning on its total volume.

The two marked points at $+\infty,-\infty$ turn out to be uniformly random given the quantum surface structure \cite[Proposition~A.13]{dms2014mating}.  This means that the law of $(\cyl,h,-\infty,+\infty)$ is invariant under the operation of sampling $x,y$ independently from $\qmeasure{h}$, letting $\varphi \colon \cyl \to \cyl$ be a conformal transformation which takes $+\infty$ to $x$ and $-\infty$ to $y$, and then replacing $h$ with $h \circ \varphi + Q\log|\varphi'|$.

\subsubsection*{Quantum wedges}

We will now describe the construction of a \emph{quantum wedge}.  This is a doubly-marked infinite volume surface which is homeomorphic to the upper half-plane $\h$ and is naturally marked by two points (an ``origin'' point and an ``infinity'' point).  Bounded neighborhoods of the origin point a.s.\ have a finite amount of mass and neighborhoods of the infinity point a.s.\ have an infinite amount of mass.  It can be convenient to describe a quantum wedge parameterized either by $\h$ (as this is the setting in which $\SLE$ is easiest to describe) or by $\strip$ (as in this case the field is easiest to describe).

Let us first describe the sampling procedure in the case of $\h$.  We let $\CH(\h)$ be the Hilbert space closure of the $C^\infty$ functions on $\h$ with $L^2$ gradient with respect to $\| \cdot \|_\nabla$, viewed modulo additive constant.  Then we can write $\CH(\h) = \CH_1(\h) \oplus \CH_2(\h)$ where $\CH_1(\h)$ (resp.\ $\CH_2(\h)$) denotes the subspace of functions which are radially symmetric (resp.\ have zero mean on origin-centered semicircles).  The law of the quantum wedge $(\h,h,0,\infty)$ parameterized by $\h$ where $0$ is the origin point and $\infty$ is the infinity point can be sampled from using the following procedure.

First, we define a process $A_s$ as follows.  For $s \geq 0$, we let $A_s = B_{2s} + \gamma s$ where $B$ is a standard Brownian motion with $B_0 = 0$.  For $s \leq 0$, we let $A_s = \wh{B}_{-2s} + \gamma s$ where $\wh{B}$ is a standard Brownian motion independent of $B$ with $\wh{B}_0 = 0$ conditioned so that $\wh{B}_{2u} + (Q - \gamma) u > 0$ for all $u > 0$.
\begin{itemize}
\item We take the projection of $h$ onto $\CH_1(\h)$ to be the function whose common value on $\partial \eball{0}{e^{-s}} \cap \h$ is given by $A_s$ as described above.
\item We take the projection of $h$ onto $\CH_2(\h)$ to be independently given by the corresponding projection of a GFF on $\h$ with free boundary conditions.
\end{itemize}

We note that if we have a quantum wedge parameterized by $\h$, then taking the origin point to be $0$ and the infinity point to be $\infty$ specifies the embedding up to one free parameter which corresponds to the scaling.  In the above construction, we have taken the free parameter so that if we let $r = \sup\{s > 0 : h_s(0) + Q \log s = 0\}$, where $h_s(0)$ denotes the average of $h$ on $\partial \eball{0}{s} \cap \h$, then $r = 1$.  This is sometimes called the \emph{circle average embedding}.  There are other ways of fixing this scaling, but the circle average embedding is often convenient because then the restriction of $h$ to $\D \cap \h$ has the same law as the corresponding restriction to $\D \cap \h$ of a free boundary GFF on $\h$ plus $-\gamma \log|\cdot|$ with the additive constant fixed so that its average on $\partial \D \cap \h$ vanishes.

We can sample from the law of a quantum wedge parameterized by $\strip$ using the following procedure:
\begin{itemize}
\item Take the projection of $h$ onto $\CH_1(\strip)$ to be given by $\tfrac{2}{\gamma} \log Z$, $Z$ a Bessel process of dimension $1 + \tfrac{4}{\gamma^2}$, parameterized to have quadratic variation $2dt$.
\item Take the projection of $h$ onto $\CH_2(\strip)$ to be independently given by the corresponding projection of a free boundary GFF on $\strip$.	
\end{itemize}
We will use the notation $(\strip,h,-\infty,+\infty)$ where $-\infty$ (resp.\ $+\infty$) denotes the origin (resp.\ infinity) point.  This actually specifies the embedding of the surface into $\strip$ modulo horizontal translation.  It is often convenient to take the horizontal translation so that if $X$ denotes the projection of $h$ onto $\CH_1(\strip)$ then $\inf\{ u \in \R : X_u = 0\} = 0$.

We let $\qwedge{2}$ denote the law of a quantum wedge.  The reason for the superscript $W=2$ is that one can in fact consider variants of the quantum wedge which are obtained using a Bessel process of dimension $1 + \tfrac{2}{\gamma^2} W$ and the parameter $W > 0$ is called the \emph{weight}.  In this article, we will only need the $W=2$ quantum wedge so we will not discuss the other variants in further detail.

The weight-$2$ quantum wedge has the special feature that it is invariant under the operation of translating its boundary point by a fixed amount of quantum length.  More precisely, suppose that $(\h,h,0,\infty)$ is a quantum wedge, $L > 0$, and $x >0$ is such that $\qbmeasure{h}([0,x]) = L$.  Then $(\h,h,x,\infty)$ is again a quantum wedge.

\subsubsection{Review of the metric}

A metric for $\sqrt{8/3}$-Liouville quantum gravity which is isometric to the Brownian surfaces is constructed in \cite{ms2015qle1,ms2016qle2,ms2016qle3} using the process $\QLE(8/3,0)$ first defined in \cite{ms2016qledef}.

If we have a quantum surface described by the field $h$ on the domain $D$, we will let $\qdistnoarg{h}$ denote the corresponding metric and $\qball{h}{z}{r}$ the metric ball with respect to $\qdistnoarg{h}$ centered at $z$ of radius $r$.  If we have a marked point $y$, then we will let $\qfb{h}{y}{z}{r}$ denote the filled metric ball relative to $y$ with respect to $\qdistnoarg{h}$.  In other words, $\qfb{h}{y}{z}{r}$ is the complement of the $y$-containing component of $D \setminus \ol{\qball{h}{z}{r}}$.

If $(\cyl,h,-\infty,+\infty)$ is a quantum sphere, then the associated doubly-marked metric measure space $(\cyl,\qdistnoarg{h},\qmeasure{h},-\infty,+\infty)$ has distribution $\bminflaw$.  Moreover, the field $h$ which describes the quantum sphere can be measurably recovered from the metric measure space structure.  Similarly, if $(\strip,h,-\infty,+\infty)$ is a quantum disk then the associated metric measure space $(\strip,\qdistnoarg{h},\qmeasure{h})$ has the law of a Brownian disk.

There are also infinite volume versions of this, whereby the so-called quantum cone is equivalent to the Brownian plane and a quantum wedge is equivalent to the Brownian half-plane.

\subsubsection{$\SLE_6$ explorations of quantum wedges, disks and spheres}
\label{subsec:sle_explorations}

We will now review the basic properties of $\SLE_6$ explorations of quantum disks and spheres when $\gamma=\sqrt{8/3}$.  These results come from \cite{dms2014mating} and \cite{ms2015spheres}.

\begin{figure}[ht!]
\begin{center}
\includegraphics[scale=0.85]{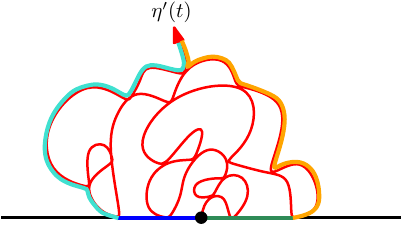}	
\end{center}
\caption{\label{fig:sle6topbottom} Shown is a chordal $\SLE_6$ process $\eta'$ (red) on $\h$ from $0$ to $\infty$ drawn up to time $t$.  The \emph{hull} $K_t$ of $\eta'([0,t])$ is the set of points disconnected from $\infty$ by $\eta'([0,t])$.  The \emph{top} of $K_t$ is the part of $\partial K_t$ in $\h$.  The left (resp.\ right) side of the top of $K_t$ is the part of the top which is to the left (resp.\ right) of $\eta'(t)$ and shown in teal (resp.\ orange).  The \emph{bottom} of $K_t$ is the part of $\partial K_t$ in $\partial \h$.  The left (resp.\ right) side of the bottom of $K_t$ is the part which is to the left (resp.\ right) of $0$ and is shown in blue (resp.\ dark green).  We set $L_t$ to be the quantum length of the top left minus the bottom left and $R_t$ to be the quantum length of the top right minus the bottom right.  The quantum length of the top left is $L_t - \inf_{0 \leq s \leq t} L_s$ and the quantum length of the bottom left is $-\inf_{0 \leq s \leq t} L_s$.}
\end{figure}

Suppose that $(\h,h,0,\infty)$ is a weight-$2$ quantum wedge.  Let $\eta'$ be an independent $\SLE_6$ on $\h$ from $0$ to $\infty$.  We assume that $\eta'$ is parameterized according to quantum natural time.  Recall that this is not the standard capacity parameterization but rather a time parameterization which comes from the quantum surface structure defined by $h$.  It is the continuum analog of the time parameterization of an interface from a statistical mechanics model on a random planar map where the interface is explored one edge at a time.  Suppose that $t \geq 0$ and that $\h_t$ is the unbounded component of $\h \setminus \eta'([0,t])$ and let $K_t = \h \setminus \h_t$ be the associated hull.  The \emph{top} of $K_t$ is $K_t \cap \partial \h_t =\partial K_t \cap \h$ and the \emph{bottom} is $\partial K_t \cap \partial \h$.
We let $L_t=L_t(\eta')$ denote the difference in the quantum length of the part of the top which is to the left of $\eta'(t)$ and the part of the bottom which is to the left of $0$. We similarly let $R_t=R_t(\eta')$ denote the difference in the quantum length of the part of the top which is to the right of $\eta'(t)$ and the part of the bottom which is to the right of $0$.  See Figure~\ref{fig:sle6topbottom} for an illustration of these definitions.  Then $L,R$ evolve as a pair of independent $3/2$-stable L\'evy processes with only downward jumps occurring  whenever $\eta'$ disconnects a bubble from $\infty$ on its left (resp.\ right) side. The magnitude of the downward jump corresponds to the boundary length of the bubble. 

Moreover, for each a.s.\ finite $\{ \mathescr{F}^{\mathrm{W}}_{t+} \}_{ t \in [0,\infty) }$-stopping time $\tau$ (with $\mathescr{F}^{\mathrm{W}}_{t}$ denoting the $\sigma$-field generated by the quantum surfaces disconnected by $\eta'|_{[0,t]}$ from $\infty$
and $\mathescr{F}^{\mathrm{W}}_{t+}:=\bigcap_{s\in(t,\infty)}\mathescr{F}^{\mathrm{W}}_{s}$)
we have that the quantum surface parameterized by the unbounded component of $\h \setminus \eta'([0,\tau])$ and marked by $\eta'(\tau)$ and $\infty$ is a weight-$2$ quantum wedge independent of $\mathescr{F}^{\mathrm{W}}_{\tau+}$.
The quantum surfaces parameterized by the bounded components of $\h \setminus \eta'([0,\tau])$ correspond to the downward jumps of $L|_{[0,\tau]}$ and $R|_{[0,\tau]}$ (depending on whether they are to the left or right of $\eta'$) and are conditionally independent quantum disks given $L|_{[0,\tau]}$, $R|_{[0,\tau]}$ with boundary length given by the size of the corresponding jump.
Here and in what follows, for a quantum surface with an embedding $(D,h)$, we consider each open subset $D_{0}$ of $D$ as parameterizing a quantum surface by equipping $D_{0}$ with the field $h|_{D_{0}}$.

Suppose that $(\CS,x,y)$ is a doubly marked quantum sphere. Let $\eta'$ be a whole-plane $\SLE_6$
from $x$ to $y$ which is sampled independently of $\CS$ and then reparameterized by quantum natural time, and set $t_{\eta'}:=\inf(\eta')^{-1}(y)$, so that $t_{\eta'} < \infty$ a.s.
For $t \in [0,\infty)$,  we let $L_t$ be the quantum boundary length of the connected component of $\CS \setminus \eta'([0,t])$ containing $y$ and let $\mathescr{F}^{\mathrm{S}}_{t}$ denote the $\sigma$-field generated by the quantum surfaces disconnected by
$\eta'|_{[0,t]}$ from $y$, and set $\mathescr{F}^{\mathrm{S}}_{t+} := \bigcap_{ s \in (t,\infty) } \mathescr{F}^{\mathrm{S}}_{s}$.
Then,  \cite[Proposition~6.4]{ms2015spheres} implies that for each $\{ \mathescr{F}^{\mathrm{S}}_{t+} \}_{ t \in [0,\infty) }$-stopping time $\tau$ with $\tau < t_{\eta'}$ a.s.,
conditionally given $\mathescr{F}^{\mathrm{S}}_{\tau+}$, the quantum surface parameterized by the $y$-containing component of $\CS \setminus \eta'([0,\tau])$ has law $\qdiskweighted{L_{\tau}}$ and the location of $\eta'(\tau)$ on its boundary is uniformly random from the quantum boundary measure.
Moreover, the boundary length process $\{ L_{t} \}_{ t \in [0,t_{\eta'}) }$ evolves as the time-reversal of a $3/2$-stable L\'evy excursion.
The same facts apply if $\ell > 0$ is fixed and we explore an instance of $\qdiskweighted{\ell}$ using a radial $\SLE_6$ starting from a uniformly random boundary point according to quantum boundary length measure and targeted at the marked interior point.

In the case of the quantum sphere, the distribution of the amount of time that it takes $\eta'$ to go from $x$ to $y$ is the same as the lifetime distribution for the infinite excursion measure on $3/2$-stable L\'evy excursions with only upward jumps.  Recall from just after~\eqref{eqn:csbp_extinction_time} that this distribution is given by a constant times $t^{-5/3} dt$ where $dt$ denotes Lebesgue measure on $\R_+$.

\subsection{Review of Liouville Brownian Motion}
The Liouville Brownian motion has been constructed in \cite{grv2016lbm, Be15} as the canonical diffusion process under the geometry induced
by the measure $\qmeasure{h}$, where $h$ is a zero-boundary GFF on a planar domain $D \subseteq \mathbb{C}$. It is defined for any choice of the parameter $\gamma\in(0,2)$
as the time change  of the planar Brownian motion on $D$ in terms of the right-continuous inverse of the  positive continuous additive functional (PCAF) associated with $\qmeasure{h}$.

More precisely, let  $B=(B_t)_{t\geq 0}$  be the planar Brownian motion on $D$ defined as the coordinate process on the Wiener space $(C([0,\infty),D), (\mathcal{G}_t)_{t\geq 0}, (P_x)_{x\in D})$ with transition kernel denoted by $q_t(x,y)$, $t>0$, $x,y\in D$.
 In \cite{grv2016lbm}, Garban, Rhodes and Vargas constructed
a PCAF $F=\{ F_t\}_{t\geq 0}$ of  $B$ 
 with the Liouville measure $\qmeasure{h}$ as the associated Revuz measure, that is
\begin{align*}
 \int_{D} f(y) \, d \qmeasure{h}(y)
	\; = \;
	\lim_{t\downarrow 0} \frac 1 t \int_{D} E_x \Bigl[ \int_0^t f(B_s) \, dF_s \Bigr] \, dx
\end{align*}
for any non-negative Borel function $f$ on $D$. 
 Similarly as $\qmeasure{h}$, the PCAF $F$ can be obtained via a regularization procedure from the circle average $h_\epsilon$, i.e.\ for all $x\in D$,
\begin{align*}
F_t \; = \; \lim_{\epsilon\to 0} \int_0^t \epsilon^{\gamma^2/2} \, e^{\gamma h_{\epsilon}(B_s)} \, ds \qquad \text{in $P_x$-probability}
\end{align*}
in  the space $C([0,\infty),\bbR)$ equipped with the topology of uniform
convergence on compact sets (cf.\ \cite[Theorem~2.7]{grv2016lbm}).
 Moreover, for all $x\in D$, $P_x$-a.s., $F$ is strictly increasing and
satisfies $\lim_{t \to \infty} F_t=\infty$.

Then the \emph{($\gamma$-)Liouville Brownian motion} $(X_t)_{t\geq 0}$, abbreviated as \emph{($\gamma$-)LBM}, is defined as $X_t=B_{F^{-1}_t}$.
By the general theory of time
changes of Markov processes we have the following properties of the LBM:
First, it is a recurrent diffusion on $D$ by \cite[Theorems~A.2.12 and 6.2.3]{FOT11}.
Furthermore by \cite[Theorem~6.2.1 (i)]{FOT11} (see also \cite[Theorem~2.18]{grv2016lbm}),
the LBM is $\qmeasure{h}$-symmetric, i.e.\ its transition semigroup $(P_t)_{t>0}$ given by
\begin{align*}
P_t (x,A):=E_x[X_t \in A]
\end{align*}
for $t\in(0,\infty)$, $x\in D$ and a Borel set $A\subset D$,
satisfies
\begin{align*}
 \int_{D} P_t f \cdot g \, d\qmeasure{h} = \int_{D} f \cdot P_t g \, d\qmeasure{h} 
\end{align*}
for all Borel measurable functions $f,g:\  D \rightarrow [0,\infty]$.
By \cite[Theorem~0.4]{grv2014dirichlet} the Liouville semigroup $(P_t)_{t>0}$ is absolutely continuous with respect to the Liouville measure, so there exists the \emph{Liouville heat kernel}
$p=p_{t}(x,y):(0,\infty)\times D \times D \to[0,\infty)$ so that
\begin{align*}
P_t f(x):=E_x[f(X_t)]=\int_{D} p_t(x,y) f(y) \, d\qmeasure{h}(y),
	\qquad x\in D.
\end{align*}
Furthermore, by \cite[Theorem~1.1]{AK16} the Liouville heat kernel admits a jointly continuous version is $(0,\infty)$-valued,
 in particular the LBM is irreducible (cf.\ \cite{mrvz2016heat} for similar results).
Moreover, the transition semigroup $(P_t)_{t>0}$
is strong Feller, i.e.\ $P_t f$ is continuous for any bounded Borel measurable $f: D \rightarrow \bbR$.  We note that the aforementioned properties of the LBM still hold if we replace $h$ by a random field on $D$ whose law is locally absolutely continuous with respect to the law of $h$.  We also note that the fields that we are considering in this paper are locally absolutely continuous with respect to the zero-boundary GFF.

\subsubsection*{The killed Liouville Brownian motion}
Let $U$ be a non-empty open subset  of $D$ and let $U\cup \{ \partial_U \}$
be its one-point compactification. We denote by
$T_U:=\inf \{ s\geq 0: \, B_s \not \in U\}$ the exit time of the
Brownian motion $B$ from $U$ and by
$\tau_U:=\inf \{ s\geq 0: \, X_s \not \in U\}$
that of the LBM $X$, where $\inf\emptyset:=\infty$.
Since by definition $X_t=B_{F^{-1}_t}$, $t\geq 0$, and
$F$ is a homeomorphism on $[0,\infty)$, we have $\tau_U=F_{T_U}$. Let now
$B^U=(B^U_t)_{t\geq 0}$ and $X^U=(X^U_t)_{t\geq 0}$
denote the Brownian motion and the LBM, respectively,
killed upon exiting $U$. That is, they are diffusions on $U$ defined by
\begin{align*}
 B^U_t:= \begin{cases}
B_t & \text{if $t< T_U$,} \\
\partial_U & \text{if $t\geq T_U$,}
 \end{cases}
\qquad
 X^U_t:= \begin{cases}
X_t & \text{if $t< \tau_U$,} \\
\partial_U & \text{if $t\geq \tau_U$.}
 \end{cases}
\end{align*}
Then for $t\in (0,\infty)$, the semigroup operator $P^U_t$ associated with the killed LBM $X^U$ is expressed as  $
 P^U_t f(x):= E_x\bigl[f(X^U_t)\bigr]$, $x\in D$, for each  Borel function $f:U\rightarrow [-\infty, \infty]$ with the convention $f(\partial_U):=0$ for which the expectation exist. By \cite[Proposition~5.1]{AK16} there exists a (unique) jointly continuous function
	$p^U=p^U_t(x,y):(0,\infty)\times U \times U \rightarrow [0,\infty)$
	such that for all $(t,x)\in (0,\infty) \times U$,
	$P_x[X^U_t \in dy]=p^U_t(x,y) \, d\qmeasure{h}(y)$,
	which we refer to as the \emph{Dirichlet Liouville heat kernel} on $U$.
Furthermore, the semigroup operator $P_t^U$ is strong Feller, i.e.\ it maps Borel measurable bounded functions on $U$ to continuous bounded functions on $U$.

If $U$ is bounded, as a time change of $B^U$ the
killed LBM $X^U$ has the same integral kernel for its Green operator
$G^U$ as $B^U$, namely for any non-negative Borel function
$f: U\rightarrow [0,\infty]$ and $x\in D$,
\begin{align} \label{eq:defG_U}
 G^Uf(x):= E_x\Bigl[\int_0^{\tau_U} f(X_t) \, dt \Bigr]
	= E_x\Bigl[\int_0^{T_U} f(B_t) \, dF_t \Bigr]
	= \int_U g_U(x,y) f(y) \, d\qmeasure{h}(y)
\end{align}
Here $g_U$ denotes the Euclidean Green kernel given by
\begin{align} \label{eq:killed_green_U}
 g_U(x,y)=\int_0^{\infty} q^U_t(x,y) \, dt, \qquad x,y\in \bbR^2,
\end{align}
for the heat kernel $q^U_t(x,y)$ of $B^U$:
$q^U=q^U_t(x,y):(0,\infty)\times U\times U\to[0,\infty)$ is the jointly
continuous function such that $P_x[B^U_t\in dy]=q^U_t(x,y)\,dy$ for $t>0$ and
$x\in U$, and we set $q^U_t(x,y):=0$ for $t>0$ and $(x,y)\in(U\times U)^{c}$.  Again we note that the aforementioned properties still hold if we replace $h$ by a a random field whose law is locally absolutely continuous with respect to the law of $h$.
Finally, we recall (see e.g.\ \cite[Example~1.5.1]{FOT11}) that
the Green function $g_{B(x_0,R)}$ over a ball $B(x_0,R)$ is of the form
\begin{align} \label{eq:killed_green}
 g_{B(x_0,R)}(x,y)=\frac 1 \pi \log \frac 1 {|x-y|}+\Psi_{x_0,R}(x,y)
	, \qquad x,y\in B(x_0,R),
\end{align}
for some continuous function $\Psi_{x_0,R}:B(x_0,R)\times B(x_0,R)\to\bbR$.

\section{Quenched estimates on the volume growth} \label{sec:bm-volume}

For an instance $(\CS,d,\nu)$ of the Brownian map, it was shown by Le Gall \cite[Corollary~6.2]{lg2010geodesics} that  $\nu(B(x,r))$ is of order $r^4$ up to a polynomial correction for all $r > 0 $ small enough and every $x$ (recall also Lemma~\ref{lem:brownian_map_volume_estimates}).  We will now improve this estimate to polylogarithmic corrections.

\begin{theorem}
\label{thm:ball_concentration}
Suppose that $(\CS,d,\nu)$ is an instance of the unit area Brownian map.  For each $u > 0$ there a.s.\ exists $r_0 \in (0,1)$ so that
\[ r^4 (\log \tfrac{1}{r})^{-6-u} \leq \nu(B(x,r)) \leq r^4 (\log \tfrac{1}{r})^{8+u}, \qquad \forall r \in (0,r_0), \,  x \in \CS.\]
\end{theorem}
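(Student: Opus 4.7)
My plan is to sharpen the polynomial estimates of Lemma~\ref{lem:brownian_map_volume_estimates} (due to Le~Gall~\cite{lg2010geodesics}) to polylogarithmic ones using the breadth-first construction of the Brownian map (Subsection~\ref{subsec:bread_first}). The argument has three stages: a pointwise tail estimate at the marked root $x$, re-rooting to $\nu$-a.e.\ centers, and a dyadic covering for uniformity in $x\in\CS$.

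For the pointwise stage, take $(\CS,d,\nu,x,y)\sim\bminflaw$ conditioned on $\nu(\CS)=1$. The boundary length $s\mapsto L_s$ of $\partial\fb{x}{s}$ is the time-reversal of a $3/2$-stable CSBP excursion, and conditionally on $L|_{[0,r]}$ the volume $\nu(\fb{x}{r})$ is the total area of independent Brownian disks whose boundary lengths are the downward jumps of $L|_{[0,r]}$. Using the explicit Laplace transform~\eqref{eqn:u_t_form} of the CSBP, the Lamperti time change~\eqref{eqn:lamperti_levy_to_csbp} and a dyadic chaining in $s$, I aim to show that for every $N\in\N$ there is $C_N$ with
\[
\p\!\left[\sup_{s\in[0,r]} L_s > r^{2}(\log r^{-1})^{p_1}\right]\leq C_N(\log r^{-1})^{-N}, \qquad r\in(0,1/2),
\]
and a matching lower-tail control on $\inf_{s\in[r/2,r]} L_s$. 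Combined with the explicit Brownian-disk area density~\eqref{eqn:bdisk_area_density} and a Bernstein-type bound for the sum of the conditionally independent disk areas, this should produce
\[
\p\!\left[\nu(\fb{x}{r}) > r^{4}(\log r^{-1})^{p_2}\right]\leq C'_N(\log r^{-1})^{-N}.
\]
The upper bound $\nu(B(x,r))\leq \nu(\fb{x}{r})$ then gives the upper half of the pointwise claim directly; for the lower bound I would work with the metric band $\fb{x}{r}\setminus\fb{x}{r/2}$ of width $r/2$ and inner boundary length $\approx r^{2}$, and show that a positive fraction of its area lies in $B(x,r)\setminus\fb{x}{r/2}\subset B(x,r)$, so that the band concentration yields $\nu(B(x,r))\gtrsim r^{4}(\log r^{-1})^{-p_3}$.

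Stages~2 and~3 are more routine. Re-rooting invariance of $\bminflaw$ under independent resampling of $x,y$ from $\nu$ (Subsection~\ref{sssec:bm-dfn}) promotes the pointwise estimate from the marked root to $\nu$-a.e.\ center. For uniformity in $x\in\CS$, I would cover $\CS$ at the dyadic scale $r_k=2^{-k}$ by $N_k=O(r_k^{-4}(\log r_k^{-1})^{p_2})$ balls $B(z_{i,k},r_k/4)$ with $(z_{i,k})_i$ sampled from $\nu$ and $N_k$ controlled via the pointwise upper bound. A union bound over $i$ together with the tails above and Borel--Cantelli give the statement at the centers, and the sandwich $B(z_{i,k},r_k/4)\subset B(x,r)\subset B(z_{i,k},2r_k)$ for the $z_{i,k}$ nearest to $x$ and $r\in[r_{k+1},r_k]$ extends it to all $x$ and $r\in(0,r_0)$, with the scale-slack producing the final exponents $6+u$ and $8+u$. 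The principal obstacle is Stage~1: the tail of $\nu(\fb{x}{r})$ must decay in $\log r^{-1}$ fast enough to absorb the $r^{-4}$-sized union bound, and since a single Brownian disk has only polynomial area tails of index $3/2$ (cf.~\eqref{eqn:bdisk_area_density}), this requires careful truncation and iteration in the Bernstein step; the losses in these steps are the source of the gap between the polylog exponents in the two sides of the statement.
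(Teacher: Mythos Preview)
Your proposal has a genuine gap that you yourself flag but do not resolve. The pointwise tail bounds you target in Stage~1,
\[
\p\bigl[\nu(\fb{x}{r}) > r^{4}(\log r^{-1})^{p_2}\bigr]\leq C'_N(\log r^{-1})^{-N},
\]
are only polylogarithmic in $r$, and no bound of this form can survive the union bound over $N_k\asymp r_k^{-4}$ centers in Stage~3: the product $r_k^{-4}(\log r_k^{-1})^{-N}$ diverges for every fixed $N$, so Borel--Cantelli fails. What is actually required, and what the paper achieves, is decay \emph{faster than any polynomial in $r$} at the pointwise level.

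For the upper bound, the obstruction is structural: the inequality $\nu(B(x,r))\leq\nu(\fb{x}{r})$ is too crude. A single Brownian disk cut off by the exploration with boundary length $\lesssim r^2(\log r^{-1})^{p_1}$ has area exceeding $r^{4}(\log r^{-1})^{p_2}$ with probability of order $(\log r^{-1})^{-c}$ by~\eqref{eqn:bdisk_area_density}, and this alone forces $\nu(\fb{x}{r})$ to be large. No truncation in a Bernstein step can repair this, because the offending event is a single heavy-tailed summand, not a fluctuation of the bulk. The paper's remedy is to abandon $\nu(\fb{x}{r})$ and instead iterate the exploration \emph{inside} each large disk via the center exploration: the number of disks with area $\geq cr^4(\log r^{-1})^{6+8u}$ discovered at each stage is stochastically dominated by a random variable with mean strictly less than~$1$ (Lemma~\ref{lem:num_disks_cut_out} and~\eqref{eqn:center_exploration}), so the whole structure is dominated by a subcritical Galton--Watson tree whose total progeny has exponential tails. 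This is the ``iteration'' you allude to, but it is not a refinement of the Bernstein step---it is a different decomposition of the ball volume. For the lower bound, the paper likewise obtains a stretched-exponential tail $\exp(-M_0 c^{2/3})$ (Lemma~\ref{lem:vol_lower_bound_bl_cond}) by a stopping-time argument on the boundary length process, showing that many independent disks of boundary length $\gtrsim r^2/c^3$ are cut off and each contributes area near its boundary with uniformly positive probability; taking $c=(\log r^{-1})^{(6+u)/4}$ then gives superpolynomial decay. Your band approach could in principle be made to work for the lower half, but the tail you claim for it is again too weak.
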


The proofs of the lower and upper bounds will be presented separately in Sections~\ref{subsubsec:vol_lbd} and~\ref{subsubsec:vol_ubd}, respectively.  In what follows, we will use several times the following standard concentration result for Poisson random variables.  Namely, if $Z$ is Poisson with mean $\lambda > 0$ then we have (cf.\ \cite[Lemma~2.9]{ms2016qle2})
\begin{align}
 \p[ Z \leq \alpha \lambda] &\leq e^{  \lambda(\alpha-\alpha\log \alpha-1)}, \qquad \forall \alpha \in (0,1), \label{eqn:poisson_below_mean} \\
 \p[ Z \geq \alpha \lambda] &\leq e^{  \lambda(\alpha-\alpha \log \alpha-1)}, \qquad \forall  \alpha \in(1,\infty). \label{eqn:poisson_above_mean}
\end{align}

\subsection{Lower bound}
\label{subsubsec:vol_lbd}

We will begin by working towards proving the lower bound in Theorem~\ref{thm:ball_concentration}.  The starting point is Lemma~\ref{lem:vol_lower_bound_bl_cond}, which is a pointwise lower bound for the volume in a metric ball when we condition on the event that the filled metric ball boundary length process is not too small.  We will then extend this in Lemma~\ref{lem:vol_lower_bound_dist_cond} when we condition instead on the event that the distance between $x$ and $y$ is at least $1$, from which the lower bound in Theorem~\ref{thm:ball_concentration} easily follows. First,  we will state the following lemma which allows us to compare the laws of a weighted quantum disk and a quantum wedge when both restricted in a small metric neighborhood around the point chosen uniformly according to the quantum boundary length measure.  Its proof is essentially the same as the proof of \cite[Proposition~4.2]{gwynne2017scalinglimituniforminfinite}.

\begin{lemma}\label{lem:weighted_browniab_disk_and_brownian_half_plane}
Fix $\ell,\alpha_1,\alpha_2>0$ with $\alpha_1<\alpha_2$ and let $(\mathbb{H},d)$ be a Brownian disk with area $\alpha$ and boundary length $\ell$ (in the sense of \cite[Section~3.5]{gwynne2017scalinglimituniforminfinite}),  where $\alpha \in [\alpha_1,\alpha_2]$ is fixed.  Let $x \in \partial \mathbb{H}$ be sampled uniformly according to the boundary length measure induced by $(\mathbb{H},d)$.  Let $(\mathbb{H},\wt{d})$ be a Brownian half-plane (equivalently weight-$2$ quantum wedge).  Then,  for each $\epsilon \in (0,1)$ there exists $\wt{\alpha} >0$ depending only on $\epsilon,\ell,\alpha_1$,  and $\alpha_2$,  and a coupling of $(\mathbb{H},d)$ and $(\mathbb{H},\wt{d})$ so that the following is true.  With probability at least $1-\epsilon$,  we have that the metric spaces $B_d(x,\wt{\alpha})$ and $B_{\wt{d}}(0,\wt{\alpha})$ agree in the sense of \cite{gwynne2017scalinglimituniforminfinite}.
\end{lemma}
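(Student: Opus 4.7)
The plan is to reduce the statement to a local field comparison via the Liouville quantum gravity representation of Brownian surfaces, and then to argue that the $\sqrt{8/3}$-LQG metric restricted to a small metric ball is essentially a measurable function of the field in a nearby Euclidean neighborhood. Both surfaces admit canonical quantum embeddings with $\gamma=\sqrt{8/3}$: the weighted Brownian disk of boundary length $\ell$ and area $\alpha \in [\alpha_{1},\alpha_{2}]$ corresponds to the quantum disk $\qdiskweighted{\ell}$ (further conditioned on the area constraint), and the Brownian half-plane corresponds to a weight-$2$ quantum wedge. I would embed both as fields on the strip $\strip = \R \times [0,\pi]$ with the distinguished boundary point sent to $-\infty$, fixing the horizontal translation by a standard convention on the $\CH_{1}(\strip)$-projection $X$ (as in Subsection~\ref{sssec:QD-QS-QW}), so that the uniform sampling of $x$ in the disk case and the intrinsic marking of $0$ in the wedge case induce compatible normalizations at the marked point.

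Second, I would construct a coupling in which the two fields agree on a half-strip $(-\infty,-T]\times[0,\pi]$ with probability at least $1-\epsilon/2$, for some $T = T(\epsilon, \ell, \alpha_{1}, \alpha_{2}) > 0$. The $\CH_{2}(\strip)$-components of the two fields are identically distributed free-boundary GFF projections, so only the $\CH_{1}(\strip)$-components need to be coupled. For the wedge this projection is $(2/\gamma)\log Z$ with $Z$ a Bessel process of dimension $1 + 4/\gamma^{2}$, whereas for the disk it is $(2/\gamma)\log Z$ with $Z$ a Bessel excursion of dimension $3 - 4/\gamma^{2}$ further weighted by the area and then conditioned on the area landing in $[\alpha_{1}, \alpha_{2}]$. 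This last conditioning introduces a Radon--Nikodym derivative that is bounded above and below and thus does not obstruct local absolute continuity; the Bessel excursion and the Bessel process can then be coupled to agree on a long interval far from the endpoints of the excursion, by a standard restart-when-radial-part-is-large argument entirely parallel to the construction used in \cite[Proposition~4.2]{gwynne2017scalinglimituniforminfinite}.

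Finally, by the two-sided H\"older continuity of the $\sqrt{8/3}$-LQG metric with respect to the Euclidean metric (a consequence of \cite[Theorem~1.2]{ms2016qle2}, transported to $\strip$ by a conformal change of coordinates), for any $T' > T$ one can choose $\wt{\alpha} = \wt{\alpha}(\epsilon, \ell, \alpha_{1}, \alpha_{2}, T')$ small enough that both $B_{d}(x, \wt{\alpha}) \subset (-\infty, -T']\times [0,\pi]$ and $B_{\wt{d}}(0, \wt{\alpha}) \subset (-\infty, -T']\times [0,\pi]$ hold with probability at least $1 - \epsilon/2$. The main technical obstacle is to deduce from the field-coupling that the two metric balls actually coincide as metric spaces, which requires the global metric $d$ on the ball $B_{d}(x,\wt{\alpha})$ to depend only on the field restricted to the agreement half-strip $(-\infty,-T]\times[0,\pi]$. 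This in turn reduces to showing that every $d$-geodesic of length at most $\wt{\alpha}$ emanating from $x$ remains inside the agreement region; the lower H\"older bound forces any path reaching the vertical segment $\{-T\}\times[0,\pi]$ to have $d$-length bounded below by a positive constant depending only on $T'-T$, which exceeds $\wt{\alpha}$ once the latter has been chosen sufficiently small. Combining the field coupling with this localization yields the claimed agreement with probability at least $1-\epsilon$.
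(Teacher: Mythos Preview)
Your proposal is correct and follows essentially the same approach as the paper: the paper's proof consists of the single sentence ``It follows from the argument used to prove \cite[Proposition~4.2]{gwynne2017scalinglimituniforminfinite},'' and your sketch is a reasonable outline of precisely that argument (strip embedding, coupling the radial/$\CH_{1}$ parts of the two fields via the Bessel comparison, then localizing the metric balls using bi-H\"older continuity).
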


\begin{proof}
It follows from the argument used to prove \cite[Proposition~4.2]{gwynne2017scalinglimituniforminfinite}.
\end{proof}

\begin{lemma}
\label{lem:vol_lower_bound_bl_cond}
Let $r>0$, and suppose that $(\CS,d,\nu,x,y)$ is distributed according to $\bminflaw$ conditioned so that
if $Y_s$ denotes the boundary length of $\partial \fb{x}{d(x,y)-s}$ then $\sup_{s \geq 0} Y_s\geq r^2$.
There exist constants $c_0, M_0 > 0$ which are independent of $r$ so that
\[ \p[ \nu(B(x,r)) \leq r^4/c^4] \leq c_0 \exp(-M_0 c^{2/3}) \quad\text{for all}\quad c \geq 1.
\]
\end{lemma}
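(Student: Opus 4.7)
The plan is to tile a portion of the boundary of an appropriate filled metric ball with $\asymp c^{2/3}$ approximately independent Brownian half-plane neighborhoods, each of which contributes a sub-ball of $B(x,r)$ of volume at least $r^4/c^4$ with constant positive probability; summing over $\asymp c^{2/3}$ near-independent trials will produce the desired decay $\exp(-M_0 c^{2/3})$.

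Pass to the forward boundary-length process $Z_\rho := Y_{d(x,y)-\rho}$, which is the length of $\partial \fb{x}{\rho}$ and has the law of a $3/2$-stable CSBP excursion under $\bminflaw$; the conditioning reads $\sup_\rho Z_\rho \geq r^2$. Set $\sigma := \inf\{\rho \geq 0 : Z_\rho \geq r^2\}$, which is finite a.s.\ on the conditioning event and is a stopping time for the forward exploration. Using the exact $3/2$-stable scaling of the CSBP excursion (which makes the conditional law of $\sigma/r$ given $\{\sup_\rho Z_\rho \geq r^2\}$ independent of $r$) together with the explicit extinction law~\eqref{eqn:csbp_extinction_time} and a Markov-property iteration on disjoint dyadic scales $[r/2^{k+1},r/2^k]$, one obtains the tail bound $\bminflaw[\sigma > r/2 \mid \sup_\rho Z_\rho \geq r^2] \leq c_1 \exp(-M_1 c^{2/3})$, which can be absorbed into the final error term.

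On the event $\{\sigma \leq r/2\}$, every $z \in \partial\fb{x}{\sigma}$ satisfies $d(x,z) = \sigma \leq r/2$. Since the LQG metric distance between two boundary points of a $\sqrt{8/3}$-quantum surface (and in particular of the Brownian half-plane) scales like the square root of their quantum boundary-length distance, and since $\partial\fb{x}{\sigma}$ has quantum length $L := Z_\sigma \geq r^2$, one can choose $N \asymp c^{2/3}$ points $z_1, \dots, z_N \in \partial\fb{x}{\sigma}$ at pairwise $d$-distance at least $3r/c$ (this is generously within the budget $N \lesssim L/(r/c)^2 = c^2$). The metric balls $B(z_i, r/c) \cap \fb{x}{\sigma}$ are then pairwise disjoint and contained in $B(x, r)$ (using $\sigma + r/c \leq r$ for $c \geq 2$). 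By the conditional-independence structure of forward exploration (Subsection~\ref{subsubsec:boundary_length}) together with a quantitative version of Lemma~\ref{lem:weighted_browniab_disk_and_brownian_half_plane} applied at each $z_i$ at scale $r/c \ll \sqrt{L}$, each $B(z_i, r/c) \cap \fb{x}{\sigma}$ is coupled, up to small error, with the radius-$r/c$ ball around the marked boundary point of a Brownian half-plane, and the $N$ such couplings can be made approximately independent. Consequently each neighborhood has volume at least $r^4/c^4$ with some universal probability $q_0 > 0$ (by scaling of the half-plane, this reduces to a unit-scale positive-probability event).

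Combining, the probability that \emph{all} $N$ neighborhoods fail to produce a volume of at least $r^4/c^4$ is bounded by $(1-q_0)^N = \exp(-c^{2/3} |\log(1-q_0)|)$ up to a quantitatively controlled coupling error; together with the tail bound on $\sigma$ from the previous step, this yields the claim. The main obstacle will be making the approximate conditional independence of the $N$ boundary neighborhoods precise at an exponential rate: while the quantum surfaces cut off by the forward exploration from $x$ are conditionally independent given the boundary-length trajectory, the near-boundary neighborhoods of $\fb{x}{\sigma}$ itself only decouple after a further exploration (from within $\fb{x}{\sigma}$) of enough additional quantum length to separate the $z_i$, which in turn requires a careful multiscale decoupling argument and quantitative control of the coupling error in Lemma~\ref{lem:weighted_browniab_disk_and_brownian_half_plane}.
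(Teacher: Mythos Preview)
Your overall strategy—many near-independent trials each succeeding with constant probability—matches the paper's, but there are two genuine gaps.

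First, the tail bound $\bminflaw[\sigma > r/2 \mid \sup_\rho Z_\rho \geq r^2] \leq c_1 \exp(-M_1 c^{2/3})$ cannot be correct as stated: the left-hand side does not involve $c$ at all (your $\sigma$ targets the fixed level $r^2$, not a $c$-dependent level), so by scale invariance it equals a fixed positive constant and cannot be bounded by something vanishing as $c\to\infty$. The paper handles the analogous issue in its Step~1 by lowering the target from $r^2$ to $r^2/c^2$: it shows that the probability of \emph{never} seeing boundary length $\geq r^2/c^2$ on $[0,r]$ decays like $e^{-M_0 c}$, and it is precisely the $c$-dependent target that produces a $c$-dependent bound.

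Second, and more seriously, the independence obstacle you flag at the end is real and you do not resolve it. The paper sidesteps it entirely by a different choice of ``trials.'' Rather than tiling $\partial\fb{x}{\sigma}$ with metric balls and hoping for approximate decoupling, the paper uses the \emph{downward jumps} of the time-reversed boundary-length process $Y$ over a suitable sub-interval of the exploration in $[0,r]$. These jumps correspond to complementary components cut off by the metric exploration from $x$ toward $y$, and those components are \emph{exactly} conditionally independent Brownian disks given their boundary lengths—no decoupling is needed. A Poisson computation (via the Lamperti transform to the $3/2$-stable L\'evy process) shows that one gets at least $\asymp c$ such jumps of size $\gtrsim r^2/c^3$ with failure probability $\lesssim e^{-a c}$, and each resulting disk independently has area $\gtrsim r^4/c^6$ within distance $r$ of its boundary with uniformly positive probability (this is where Lemma~\ref{lem:weighted_browniab_disk_and_brownian_half_plane} enters, applied once per disk rather than in a multiscale argument). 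This yields $\p[\nu(B(x,2r)) \leq r^4/c^6] \lesssim e^{-a c}$, and the lemma follows after substituting $c \to c^{2/3}$ and $r \to r/2$. The key idea you are missing is that the cut-off Brownian disks already provide a source of genuine conditional independence, so the hard decoupling step you anticipate is unnecessary.
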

\begin{proof}
Let $c>1$.

\emph{Step~1.} We will first argue that with high probability under $\p$ there exists $s \in [0,r]$ so that the boundary length of $\partial \fb{x}{s}$ is at least $r^2/c^2$, i.e., $\sup_{s \in [0,r]} Y_{(d(x,y)-s)\vee 0} \geq r^2/c^2$ with high probability.  We note that it follows from \cite[Chapter~IV,  Section~4]{bertoin1996levy} that if $\tau$ is the first time that $Y$ enters $[r^2,\infty)$,  then the conditional law of $Y_{t+\tau}$ given $\tau<\infty$ is that of a $3/2$-stable CSBP starting from $Y_{\tau}$.  Also,  it is not difficult to see from the scaling properties of the $3/2$-stable L\'{e}vy process that
the distribution of the supremum under the $3/2$-stable L\'{e}vy excursion measure is given by a constant times $t^{-2}\,dt$;
see, e.g., \cite[Corollary 1]{Rivero2005} for a careful proof.  In particular,  we have that $\bminflaw[\tau<\infty] \in (0,\infty)$.  Next, we define stopping times as follows.
We let $\tau_0 = \inf\{s \geq 0 :\sup_{u\in[0,s]} Y_{u} \geq r^{2}, \  Y_{s} \leq r^2/c^2\}$.
Given that $\tau_0,\ldots,\tau_k$ have been defined, we let $\tau_{k+1} = \inf\{s \geq \tau_k+r/c : Y_{s} \leq r^2/c^2\}$.
Let $K = \max\{ k \in \N : \tau_k < \zeta\}$, where $\zeta=\inf\{s > 0: Y_{s}=0\}$.
Note that $\tau<\tau_0$ $\p$-a.s.\  and that $Y_{\tau_k} \leq r^2 / c^2$ for each $k \geq 1$ $\p$-a.s.\  on the event that $\tau_k < \infty$ while $Y_{\tau_0} = r^2/c^2$ since $Y$ has only upward jumps.  It follows that conditionally on $\tau_k < \infty$ and $Y|_{[0,\tau_k]}$, the random variable $\tau_{k+1}-\tau_k$ is stochastically dominated from above by $\tau_1 - \tau_0$.  Set $\wt{Y}_t = C^{-1} Y_{C^{1/2}t}$ with $C = r^2 / c^2$.  Then the scaling property of $Y$ implies that $\wt{Y}$ has the law of a $3/2$-stable CSBP starting from $1$.  Moreover,  by~\eqref{eqn:csbp_extinction_time},  we have that
\begin{align*}
\p[\tau_{k+1} < \infty \giv \tau_k < \infty] \leq \p[\tau_1 < \infty] \leq \p[\zeta > r/c] = p_0
\end{align*}
where $p_0 \in (0,1)$ is a constant which does not depend on $r$ or $c$.  Therefore we obtain that $K$ is stochastically dominated by a geometric random variable with parameter $p_0$.  It follows that there exists a universal constant $M_0>0$ so that
\begin{equation}
\label{eqn:k_geo_tail}
\p[ K \geq c'] \leq e^{-2 M_0 c'} \quad\text{for all}\quad c' \geq 1.
\end{equation}
As the event that $\sup_{s \in [0,r]} Y_{(d(x,y)-s)\vee 0} \leq r^2/c^2$  and $d(x,y) \geq r$ implies $K \geq \lfloor c- \rfloor$, where $\lfloor c- \rfloor:= \max(\mathbb{Z}\cap(-\infty,c))$, we thus see from~\eqref{eqn:k_geo_tail} that
\begin{equation}
\label{eqn:y_not_too_small1}
\p[\sup_{s \in [0,r]} Y_{(d(x,y)-s)\vee 0} \leq r^2/c^2,\ d(x,y) \geq r] \leq e^{-M_0 c} \quad\text{for all}\quad c > 1.
\end{equation}
We note that $d(x,y) \leq r$ implies that $\sup_{s \in [0,r]} Y_{(d(x,y)-s)\vee 0} = \sup_{s \geq 0} Y_s \geq r^2 > r^2/c^2$.  Combining this with~\eqref{eqn:y_not_too_small1}, we thus have
\begin{equation}
\label{eqn:y_not_too_small}
\p[\sup_{s \in [0,r]} Y_{(d(x,y)-s)\vee 0} \leq r^2/c^2] \leq e^{-M_0 c} \quad\text{for all}\quad c > 1.
\end{equation}

\emph{Step~2.} We emphasize that $Y$ has upward but no downward jumps.  Therefore it hits points at its running infimum continuously.  We now let $\zeta_0= \inf\{s \geq 0 : \sup_{u\in[0,s]}Y_{u}\geq r^{2}, \  Y_s = r^2/c^2\}$ and $\xi_0 = \inf\{s \geq \zeta_0 : Y_s = r^2/(2c^2)\}$.
Given that $\zeta_0,\xi_0,\ldots,\zeta_k,\xi_k$ have been defined, we let $\zeta_{k+1} = \inf\{s \geq \xi_k : Y_s > r^2/c^2\}$ and $\xi_{k+1} = \inf\{s \geq \zeta_{k+1} : Y_s = r^2/(2c^2)\}$.  Let $N = \max\{ k \geq 0 : \zeta_k < \infty\}$.  
Note that $\p[\zeta_{k+1} < \infty \giv \zeta_k<\infty] = 1-\p[X_{\wh{\tau}} = -\frac{1}{2}]$,  where $X$ has the law of a $3/2$-stable L\'evy process starting from $0$ with only upward jumps and $\wh{\tau} = \inf\{t \geq 0 : X_t \notin [-\frac{1}{2},\frac{1}{2}]\}$.  This follows by scaling and the strong Markov property of a $3/2$-stable CSBP,  together with the Lamperti transform.  Also,  \cite[Chapter~VII,  Theorem~8]{bertoin1996levy} implies that $\p[X_{\wh{\tau}} = -\frac{1}{2}] >0$.  In particular,  there exists a universal constant $p_1 \in (0,1)$ so that $N$ is stochastically dominated by a geometric random variable with parameter $p_1$.  Thus,  there exists a universal constant $M_1>0$ such that
\begin{equation}
\label{eqn:n_geo_tail}
\p[ N \geq n ] \leq e^{-M_1 n} \quad\text{for all}\quad n \in \N.
\end{equation}

By~\eqref{eqn:y_not_too_small}, off an event with probability at most $e^{-M_0 c}$, there exists $k\geq 0$ so that
$d(x,y) - r < \xi_k < \infty$ and so that $Y|_{[\xi_k,\xi_{k}']}$, where $\xi_{k}' = \inf\{s \geq \xi_k : Y_s = r^2/(4c^2)\}$,
describes the boundary length evolution of $\partial \fb{x}{s}$ for some interval of $s \in [0,r]$.

\emph{Step~3.} Let $Z$ be a $3/2$-stable L\'evy process with only upward jumps starting from $r^2/(2c^2)$ (run even after hitting $(-\infty,0)$) and let $\xi = \inf\{t \geq 0 : Z_t = r^2/(4 c^2)\}$.
For each $k\geq 0$, by the Lamperti transform, conditionally on the event $\{\zeta_k < \infty\} = \{\xi_k < \infty\}$ we have for each $u > 0$ that the number of upward jumps made by $Y|_{[\xi_k,\xi_{k}']}$ with size at least $u$ is equal in distribution to the number of jumps that $Z|_{[0,\xi]}$ makes of size at least $u$.

Let $t>0$. Recalling the form of the L\'evy measure for $Z$, we know that the number of upward jumps of size at least $u$ made by $Z|_{[0,t r^3/c^3]}$ is Poisson with parameter given by $c_0 (tr^3/c^3) u^{-3/2}$ for a constant $c_0 > 0$.
In particular, if we fix $\lambda \geq 1$ and let $u = (\tfrac{c_0 t r^3}{c^3 \lambda})^{2/3}$ then by~\eqref{eqn:poisson_below_mean} the probability that $Z|_{[0,t r^3/c^3]}$ makes fewer than $\lambda/2$ such jumps is at most
\begin{equation}
\label{eqn:levy_jump_bound}
\exp(-a_0 \lambda) \quad\text{for}\quad a_0 = 1 + \tfrac{1}{2} \log \tfrac{1}{2} - \tfrac{1}{2} > 0.
\end{equation}

Note that $\p[\xi \leq tr^3 / c^3] = \p[T(r^2/(4c^2)) \leq tr^3 / c^3]$,  where $T(r^2 / (4c^2))$ is the first time that a $3/2$-stable L\'evy process with only upward jumps and starting from $0$ hits $-r^2 / (4c^2)$.  Set $q  = c_1 (tr / c)^{-3}$ for $c_1>0$ sufficiently small (to be chosen).  Then,  the proof of \cite[Chapter VII,  Corollary~2]{bertoin1996levy} implies that
\begin{equation}\label{eqn:levy_hit_too_fast}
\p[T(r^2 / (4c^2)) \leq tr^3 / c^3] = \p[\exp(-q T(r^2 / (4c^2))) \geq \exp(-qtr^3 / c^3)] \leq \exp(-a_1 t^{-2}),
\end{equation}
where we choose $c_1$ sufficiently small so that $a_1 = c_1^{2/3}(\frac{1}{4}-c_1^{1/3}) > 0$.

Take $t = \lambda^{-1/2}$ so that $t^{-2} = \lambda$.  By decreasing the value of $a_1 > 0$ if necessary and combining~\eqref{eqn:levy_jump_bound} and~\eqref{eqn:levy_hit_too_fast},
we see for each $k\geq 0$ that conditionally on $\{\zeta_k < \infty\} = \{\xi_k < \infty\}=\{N\geq k\}$ the probability that $Y|_{[\xi_k,\xi_{k}']}$ makes fewer than $\lambda/2$ jumps of size at least $u$ is at most
\begin{equation}
\label{eqn:levy_jump_bound2}
2e^{-a_1 \lambda}.
\end{equation}
Note that with the choice $t = \lambda^{-1/2}$, we have that $u = c_1 \lambda^{-1} r^2/c^2$ where $c_1 = c_0^{2/3}$.

By a union bound and combining~\eqref{eqn:levy_jump_bound2} with~\eqref{eqn:n_geo_tail} we have that, the probability that there exists $0 \leq k \leq N$ such that $Y|_{[\xi_k,\xi_{k}']}$ makes fewer than $\lambda/2$ upward jumps of size at least $u$, is for each $n \in \N$ at most
\begin{equation}
\label{eqn:csbp_jump_bound}
2e^{-a_{1}\lambda}\sum_{k=0}^{n-1} \p[\xi_{k}<\infty] + e^{-M_{1}n}
\leq c_{2}e^{-a_{1}\lambda} + e^{-M_{1}n}
\end{equation}
with $M_1 > 0$ as in~\eqref{eqn:n_geo_tail} and $c_{2}=2(1-e^{-M_{1}})^{-1}$ and is thus at most $c_{2}e^{-a_{1}\lambda}$ by letting $n\to\infty$ in~\eqref{eqn:csbp_jump_bound}.

Combining this with~\eqref{eqn:y_not_too_small} implies that the probability that the metric exploration started from $x$ and targeted at $y$ run for time $r$ disconnects fewer than $\lambda/2$ components with boundary length at least $c_{1}\lambda^{-1}r^2/c^2$ is at most $c_{2} e^{-a_1 \lambda} + e^{-M_0 c}$.  We take $\lambda =(\frac{c_1}{c_3}) c$ for some constants $c_3,c_4>0$ (to be chosen) so that the holes have boundary length at least $c_{3}r^2/c^3$.

\emph{Step~4.}
Next we consider the components with boundary length at least $c_{3}r^2/c^3$,
disconnected by the metric exploration started from $x$ and targeted at $y$ run for time $r$. Then
there exists $p_2 \in (0,1)$ which does not depend on $r,c$ so that conditionally on their boundary lengths
each such component independently has probability at least $p_2$ of having area at least $r^4/c^6$ within distance $r$ from its boundary.  Indeed, each component is a Brownian disk conditionally given its boundary length which is at least $c_3r^2/c^3$.  So the claim becomes that there exists $p_2 \in (0,1)$ which does not depend on $r,c$ such that conditionally on the boundary lengths of the above holes,  each such hole independently has probability at least $p_2$ of having area at least $r^4 / c^6$ within metric distance $r$ from the boundary.  By the scaling properties of Brownian disks this is equivalent to the statement that a Brownian disk with boundary length $\ell \in [c_3,\infty)$ has area at least $1$ in the $c^{3/2}$-neighborhood of its boundary,  which is further equivalent to the statement that a Brownian disk with boundary length $1$ has area at least $\ell^{-2}$ in the $c^{3/2} \ell^{-1/2}$-metric neighborhood of its boundary,  where $c \geq c_4$ and $\ell \geq c_3$.  We will first prove the claim for a sample from $\bdisklawweighted{\ell}$.  Note that we have the disintegration $\bdisklawweighted{1}[\cdot] = \int_{\alpha=0}^{\infty}\diskweighted{\alpha}{1}[\cdot ] s \frac{1}{\sqrt{2\pi \alpha^3}}\exp(-1/(2\alpha))d\alpha$.  Thus it suffices to prove the claim for a sample from $\diskweighted{\alpha}{1}$ where the constant $p_2$ can be made to uniform in $\alpha \in [1,2]$.  Fix $\alpha \in [1,2],  \ell \geq c_3,c \geq c_4$ and let $\wt{\alpha}$ be as in Lemma~\ref{lem:weighted_browniab_disk_and_brownian_half_plane} with $\alpha_1 = 1, \alpha_2 = 2$ and $\epsilon=\frac{1}{2}$.  Suppose that we have a coupling between a sample $(\mathbb{H},h)$ from $\diskweighted{\alpha}{1}$ and a quantum wedge of weight 2 $(\mathbb{H},\wt{h},0,\infty)$ such that with probability at least $\frac{1}{2}$,  the metric spaces $B_{d_h}(w,\wt{\alpha})$ and $B_{d_{\wt{h}}}(0,\wt{\alpha})$ agree in the sense of Lemma~\ref{lem:weighted_browniab_disk_and_brownian_half_plane},  where $w$ is chosen uniformly with respect to the boundary length measure $\nu_h$ on $\mathbb{H}$ and suppose that this event holds.  Hence,  it suffices to prove the corresponding claim for $B_{d_{\wt{h}}}(0,\wt{\alpha})$ with high probability if we take $c_3,c_4$ sufficiently large.  To show this,  first we assume that $c^{3/2} \ell^{-1/2} \leq \wt{\alpha}$ and note that $(\mathbb{H},\wt{h},0,\infty)$ is scale invariant in the sense that $(\mathbb{H},\wt{h},0,\infty)$ and $(\mathbb{H},\wt{h}+\wt{C},0,\infty)$ have the same law for each $\wt{C} \in \mathbb{R}$ (see \cite[Proposition~4.7]{dms2014mating}).  Note that adding a constant $\wt{C}$ to the field scales areas by $e^{\gamma \wt{C}}$ and distances by $e^{\gamma \wt{C} /4}$.  We pick $\wt{C} \in \mathbb{R}$ such that $c^{3/2} \ell^{-1/2} e^{\gamma \wt{C}/4} =1$,  and so the statement for $(\mathbb{H},\wt{h},0,\infty)$ becomes equivalent to the statement that the $1$-metric neighborhood of $0$ has area at least $c^{-6}$.  Note that the LQG metric induces the Euclidean topology on $\overline{\mathbb{H}}$ (see \cite[Theorem~1.3]{hughes2024equivalencemetricgluingconformal}) and that $\mu_{\wt{h}}$ gives positive mass to every open subset of $\mathbb{H}$ a.s.  Hence,  we can pick $c_4>0$ sufficiently large such that with probability at least $2/3$,  we have that the $1$-metric neighborhood of $0$ with respect to $d_{\wt{h}}$ has area at least $c^{-6}$,  which implies that the claim holds for $(\mathbb{H},h)$ with probability at least $1/6$.  Suppose now that $c^{3/2}\ell^{-1/2}\geq \wt{\alpha}>0$.  In that case,  we need to bound from below the probability of the even that the $\wt{\alpha}$-metric neighborhood of $0$ with respect to $d_{\wt{h}}$ has area at least $\ell^{-2}$.  Then it suffices to bound from below the probability that the $\wt{\alpha}$-metric neighborhood of $0$ with respect to $d_{\wt{h}}$ has area at least $c_3^{-2}$.  Therefore,  using the scale invariance of $(\mathbb{H},\wt{h},0,\infty)$ and arguing as before,  we obtain that we can choose $c_3>0$ sufficiently large (depending only on $\wt{\alpha}$) such that the probability of the latter event is at least $2/3$.  It follows that the claim holds for $\diskweighted{\alpha}{1}$ with probability at least $1/6$,  for each $\alpha \in [1,2]$,  and hence the same is true for $\bdisklawweighted{1}$ by possibly taking the lower bound on the probability to be smaller.  Finally,  to deduce the claim for $\bdisklaw{1}$,  we note that 
\begin{align*}
\bdisklawweighted{1} = \Big(\int \nu(\CD)d\bdisklaw{1}\Big)^{-1} \nu(\CD)d\bdisklaw{1}
\end{align*}
and that $\nu(\CD)$ has finite moments of all orders under $\bdisklaw{1}$.  Therefore,  the claim holds by combining with H\"older's inequality.

\emph{Step~5.} Note that the event $\nu(B(x,2r)) \leq r^4/c^6$ implies that either there are fewer than $\lambda/2 = c_1c/(2c_3)$ disks of boundary length at least $c_{3}r^2/c^3$ cut off  by
the metric exploration started from $x$ and targeted at $y$ run for time $r$ or there are at least $\lambda/2$ disks cut off and each of them has area less than $r^4/c^6$ at distance $r$ from its boundary.
As we showed above in Step~3, the former event has probability at most $c_{2} e^{-a_1 \lambda} + e^{-M_0 c}$. Since the disks are conditionally independent given their boundary length, the conditional probability that all of the disks cut off 
have area less than $r^4/c^6$ each at distance $r$ from their boundary given there are at least $\lambda/2$ of them is at most $e^{-a_2 \lambda}$ for a constant $a_2 > 0$.  Altogether, this gives that
\[ \p[ \nu(B(x,2r)) \leq r^4/c^6 ] \leq c_{2}\exp(-a_1 c) + \exp(-a_2 c)  + \exp(-M_0 c).\]
Replacing $c$ with $c^{2/3}$ and $r$ with $r/2$ implies the result. 
\end{proof}

\begin{lemma}
\label{lem:csbp_trace}
Fix $c > 0$ and suppose that $Y$ is a $3/2$-stable CSBP starting from $Y_0 \in [0,c]$.  Let $\zeta = \inf\{r \geq 0 : Y_r = 0\}$.  There exists a constant $c_0 > 0$ so that
\[ \p\big[ \sup_r Y_r \leq c, \zeta \geq T\big] \leq \exp(-c_0 \tfrac{T}{c^{1/2}}) \quad\text{for all}\quad T \geq c^{1/2}.\]
\end{lemma}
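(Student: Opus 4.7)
My plan is a short scaling reduction followed by an iterated one-step Markov bound. The case $Y_0 = 0$ is trivial (then $\zeta=0$), so I may assume $Y_0 \in (0, c]$. By the scaling relation recorded just after~\eqref{eqn:u_t_form}, $\wt Y_t := c^{-1} Y_{c^{1/2} t}$ is again a $3/2$-stable CSBP, now starting from $\wt Y_0 = Y_0/c \in (0, 1]$, with extinction time $\wt\zeta = c^{-1/2} \zeta$, and $\sup_r Y_r \leq c$ iff $\sup_r \wt Y_r \leq 1$. Setting $T' := c^{-1/2} T$, the assumption $T \geq c^{1/2}$ becomes $T' \geq 1$, and the claim reduces to
$$\p[\sup_r \wt Y_r \leq 1,\ \wt\zeta \geq T'] \leq \exp(-c_0 T')$$
for every $3/2$-stable CSBP $\wt Y$ starting in $(0, 1]$ and every $T' \geq 1$, with a universal constant $c_0 > 0$.

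The key one-step estimate comes directly from~\eqref{eqn:csbp_extinction_time}: applied with $\alpha = 3/2$ and $t = 1$, it gives a constant $\kappa > 0$ (depending only on the normalization of the branching mechanism) with $\p_y[\wt Y_1 > 0] = 1 - e^{-\kappa y}$ for each $y > 0$. Hence, setting $q := 1 - e^{-\kappa} \in (0, 1)$, we have $\sup_{y \in (0, 1]} \p_y[\wt Y_1 > 0] \leq q$.

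To iterate, fix $n \in \N$ and let $E_n := \{\sup_{r \in [0, n]} \wt Y_r \leq 1\} \cap \{\wt Y_n > 0\}$. On $E_{n-1}$ we have $\wt Y_{n-1} \in (0, 1]$, so the Markov property at time $n-1$ combined with the one-step bound yields
$$\p[E_n \mid \mathscr F_{n-1}]\, \mathbf{1}_{E_{n-1}} \leq q\, \mathbf{1}_{E_{n-1}},$$
and hence $\p[E_n] \leq q^n$ by induction. Since $\wt Y$ has only upward jumps, $\wt\zeta$ is continuous at its hitting of $0$, and its distribution under $\p_y$ ($y > 0$) is atom-free (by~\eqref{eqn:csbp_extinction_time} its CDF $e^{-\kappa y t^{-2}}$ is smooth in $t > 0$); therefore $\{\wt\zeta \geq T'\}$ coincides up to a null set with $\{\wt Y_{T'} > 0\} \subseteq \{\wt Y_{\lfloor T' \rfloor} > 0\}$. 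Using the case check $\lfloor T' \rfloor \geq T'/2$ for $T' \geq 1$, this gives
$$\p[\sup_r \wt Y_r \leq 1,\ \wt\zeta \geq T'] \leq \p[E_{\lfloor T' \rfloor}] \leq q^{\lfloor T' \rfloor} \leq \exp(-c_0 T'),\qquad c_0 := \tfrac{1}{2}|\log q|,$$
as required. There is no serious obstacle; the only points deserving a moment's attention are getting the scaling exponent right so that the threshold $T \geq c^{1/2}$ translates to exactly $T' \geq 1$ (the regime in which the iteration is effective), and verifying that the atom-freeness of $\wt\zeta$ allows the passage from $\{\wt\zeta \geq T'\}$ to $\{\wt Y_{\lfloor T' \rfloor} > 0\}$.
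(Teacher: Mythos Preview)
Your proof is correct and follows essentially the same approach as the paper: a scaling reduction to $c=1$ followed by an iterated one-step Markov bound using that from any state in $(0,1]$ the CSBP dies within one time unit with probability bounded below by a positive constant. You supply more detail (the explicit constant from~\eqref{eqn:csbp_extinction_time}, the floor-function rounding, the atom-freeness of $\wt\zeta$), but the structure is identical to the paper's.
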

\begin{proof}
Let $c = 1$.  Let $\CF_r = \sigma(Y_s : s \leq r)$.  Then there exists $p_0 \in (0,1)$ so that 
\[ \p[ \zeta \leq r+1 \giv \CF_r] \, \one_{\{\zeta \geq r, Y_r \leq 1\}} \geq p_0 \, \one_{\{\zeta \geq r, Y_r \leq 1\}}.\]
Iterating this implies the desired bound for $c=1$.  The general case $c > 0$ follows from the case $c=1$ and the scaling property of $3/2$-stable CSBPs (see just after~\eqref{eqn:u_t_form}).
\end{proof}

\begin{lemma}
\label{lem:vol_lower_bound_dist_cond}
Suppose that $(\CS,d,\nu,x,y)$ is sampled from $\bminflaw$  conditioned on $D = \{ d(x,y) > 1 \}$.
Suppose that $u \in \CS$ is picked independently from $\nu/\nu(\CS)$ and let $M_{r,c} = \{ \nu(B(u,r)) \leq r^4/c^4\}$
for $r\in(0,1)$ and $c\geq 1$.  For each $\rho \in (0,1)$ there exist constants $c_0,M_0,M_1 > 0$ such that  for all  $r \in (0,1)$ and $c \geq 1$,
\[ \p[M_{r,c},\ \nu(B(u,1/8))/\nu(\CS) \leq \rho] \leq c_0 r^{-2} (\log \tfrac{1}{r})^2 \exp(-M_0 c^{2/3})  + \exp(-M_1 (\log \tfrac{1}{r})^2).\]
\end{lemma}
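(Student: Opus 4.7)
The plan is to exploit the re-rooting invariance of $\bminflaw$ to identify $u$ with a standard marked point, after which Lemma~\ref{lem:vol_lower_bound_bl_cond} becomes applicable provided the boundary length process of filled metric balls attains the macroscopic scale $r^2$.

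\emph{Step 1 (Exchangeability and re-rooting).} Under $\bminflaw$, conditional on $(\CS,d,\nu)$ the three points $x$, $y$, $u$ are iid picks from $\nu/\nu(\CS)$, so the triple $(x,y,u)$ is exchangeable. On $D = \{d(x,y)>1\}$ the triangle inequality forces $\max\{d(u,x),d(u,y)\} > 1/2$. Using the $(x,y)$-symmetry of $\bminflaw$ and then swapping $u \leftrightarrow x$, we find that it suffices, after dividing by $\bminflaw[D]\in(0,\infty)$, to bound
\begin{equation*}
Q \ldef \bminflaw\bigl[\nu(B(x,r))\leq r^{4}/c^{4},\ E_x,\ d(x,y)>1/2\bigr],
\end{equation*}
where $E_x \ldef \{\nu(B(x,1/8))/\nu(\CS)\leq \rho\}$.

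\emph{Step 2 (Dichotomy on $F$).} Let $L_s$ denote the boundary length of $\partial\fb{x}{s}$ and set $F = \{\sup_{s\geq 0}L_s\geq r^{2}\}$, so that $F$ is precisely the event on which Lemma~\ref{lem:vol_lower_bound_bl_cond} applies at scale $r$. Split $Q \leq Q_F + Q_{F^c}$ with
\begin{equation*}
Q_F = \bminflaw\bigl[\nu(B(x,r))\leq r^{4}/c^{4},\ F\bigr], \qquad Q_{F^c} = \bminflaw\bigl[F^c,\ E_x,\ d(x,y)>1/2\bigr].
\end{equation*}
For $Q_F$, Lemma~\ref{lem:vol_lower_bound_bl_cond} gives $\bminflaw[\nu(B(x,r))\leq r^{4}/c^{4}\mid F]\leq c_0\exp(-M_0c^{2/3})$ and hence $Q_F \leq c_0 \exp(-M_0c^{2/3})\bminflaw[F]$. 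The marginal law of $L$ under $\bminflaw$ is the infinite $3/2$-stable CSBP excursion measure, whose scaling yields $\bminflaw[F]\lesssim r^{-2}$; we absorb a slack of $(\log 1/r)^2$ to match the first term of the stated bound.

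\emph{Step 3 (Bound on $Q_{F^c}$).} On $F^c\cap\{d(x,y)>1/2\}$, the excursion $L$ remains $\leq r^2$ over an interval of length at least $1/2$. We disintegrate $\bminflaw = \int cA^{-3/2}\bmlaw{A}\,dA$ over total area $A=\nu(\CS)$ and note that $E_x$, combined with the elementary volume lower bound $\nu(B(x,1/8))\gtrsim 1$ on surfaces of small diameter, forces $A\geq c_{*}(\rho)$ for a universal positive constant. On each such slice, Lemma~\ref{lem:csbp_trace} applied with $c=r^{2}$ and $T=1/2$ produces a bound that is comfortably absorbed into $\exp(-M_1(\log 1/r)^{2})$.

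\emph{Main obstacle.} The principal subtlety is reconciling the infinite measure $\bminflaw$ with the normalized event $E_x$: the disintegration over $\nu(\CS)$ is what allows the slicewise use of Lemma~\ref{lem:csbp_trace}, while the exchangeability of $(x,y,u)$ is what turns the pointwise bound of Lemma~\ref{lem:vol_lower_bound_bl_cond} (which is formulated around a marked point $x$) into an estimate around a $\nu$-random point $u$. Once these two reductions are in place, the two terms of the target inequality arise transparently from the dichotomy on $F$.
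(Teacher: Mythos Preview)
Your strategy in Steps~1--2 is sound and is in fact cleaner than the paper's argument. The paper does not use your observation that on $D$ one of $x,y$ is automatically far from $u$; instead it samples $n=(\log\tfrac1r)^2$ auxiliary points $y_j\sim\nu/\nu(\CS)$, runs the exploration from $u$ towards each $y_j$, and uses the event $\{\nu(B(u,1/8))/\nu(\CS)\le\rho\}$ only to guarantee that with probability at least $1-\rho^{n}$ some $y_j$ satisfies $d(u,y_j)\ge 1/8$. This sampling is the source of the prefactor $(\log\tfrac1r)^2$ and of the $\rho$-dependent second term $\exp(-M_1(\log\tfrac1r)^2)$ in the stated bound. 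Your exchangeability reduction secures a target $y$ with $d(x,y)>1/2$ for free and, carried out correctly, would yield the stronger bound $r^{-2}\exp(-M_0c^{2/3})+\exp(-cr^{-1})$ with no dependence on $\rho$ at all.

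Step~3, however, does not go through as written. Under $\bmlaw{A}$ the boundary-length process is a CSBP excursion conditioned (via Lamperti) on its total integral equalling $A$; it is not a CSBP from a fixed initial point, so Lemma~\ref{lem:csbp_trace} cannot be applied ``on each slice''. The ``elementary volume lower bound $\nu(B(x,1/8))\gtrsim 1$'' you invoke does not exist either---there is no deterministic positive lower bound on the mass of a fixed-radius ball---so $E_x$ does not force $A\ge c_*(\rho)$. The fix is that you do not need $E_x$ here at all: since Step~1 already gives $d(x,y)>1/2$, you can bound $Q_{F^c}\le\bminflaw[\sup_s L_s<r^2,\ d(x,y)>1/2]$ directly under the excursion measure. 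This is precisely the quantity the paper controls when bounding $\bminflaw[D_j\cap E_j^c]$: partition into shells $\{e^{-k-1}\le\sup L<e^{-k}\}$, use that after the first time the reversed process hits a small positive level it evolves as a genuine CSBP (what the paper calls the ``resampling property of the Brownian disk''), apply Lemma~\ref{lem:csbp_trace} with $c=e^{-k}$ and $T\asymp 1$ to get a contribution $\lesssim e^{k}\exp(-c_0e^{k/2})$ on each shell, and sum from $k\ge 2\log(1/r)$ to obtain $Q_{F^c}\lesssim\exp(-cr^{-1})$.
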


\begin{proof}
In the proof, we will work under the measure $\bminflaw$ and denote it conditioned on an event $E$ with $\bminflaw[E] \in (0,\infty)$ by $\bminflaw[\cdot \giv E]$.  Let $(y_j)$ be an i.i.d.\ sequence in $\CS$ picked from $\nu/\nu(\CS)$.  Let $D_j = \{ d(u,y_j) \geq 1/8\}$.  Let $Y^j$ be the boundary length process associated with the metric exploration from $u$ targeted at $y_j$ and let $E_j$ be the event that $\sup_{s \geq 0} Y_s^j \geq r^2$.  For each $n \in \N$ we have that
\begin{align}
&  \bminflaw\big[ M_{r,c},\ \nu(B(u,1/8))/\nu(\CS) \leq \rho \giv D\big] \nonumber \\
& \mspace{36mu} \leq \bminflaw\bigg[M_{r,c} \cap \bigcup_{j=1}^n D_j \, \Big| \,  D \bigg]
 	  + \bminflaw\bigg[\bigcap_{j=1}^n D_j^c,\ \nu(B(u,1/8))/\nu(\CS) \leq \rho \, \Big| \,  D\bigg] \notag\\
& \mspace{36mu} \leq \sum_{j=1}^n \bminflaw\big[M_{r,c} \cap D_j \cap E_j^c \giv D \big] + \sum_{j=1}^n \bminflaw\big[ M_{r,c} \cap E_j \giv D\big] \notag\\
&\mspace{65mu} + \bminflaw\bigg[\bigcap_{j=1}^n D_j^c,\ \nu(B(u,1/8))/\nu(\CS) \leq \rho \, \Big| \,  D\bigg] . \label{eqn:m_giv_d_bound}
\end{align}
To bound the first term in the right hand side of~\eqref{eqn:m_giv_d_bound}, we let $E_{j,k} = \{e^{-k-1} \leq \sup_{s \geq 0} Y_{s}^j < e^{-k}\}$ and let $\zeta_j$ be the lifetime of $Y^j$. With $c_0 = 1/\bminflaw[D]$, we note that
\begin{equation}
\label{eqn:m_dj_ejc}
\bminflaw\big[ M_{r,c} \cap D_j \cap E_j^c \giv D\big] \leq \bminflaw\big[D_j \cap E_j^c \giv D\big] \leq c_0 \sum_{k=N}^\infty \bminflaw\big[D_j \giv E_{j,k}\big] \, \bminflaw[ E_{j,k}],
\end{equation}
where $N = \log r^{-2}$.  For $\epsilon \in (0,1)$ small,  we set $\tau_{\epsilon,j} = \inf\{s \geq 0 : Y_s^j \geq \epsilon\}$.  Then,  for each $j,k \in \N$,  we have that 
\begin{align*}
&\bminflaw\big[ D_j \cap E_{j,k} \giv \zeta_j \geq 1/100 \big] \\
&= \lim_{\epsilon \to 0} \bminflaw\big[D_j \cap E_{j,k} \cap \{\zeta_j - \tau_{\epsilon,j} \geq 1/8\} \giv \zeta_j \geq 1/100,  \tau_{\epsilon,j} < 1/100,  Y_{\tau_{\epsilon,j}}^j < e^{-k-1} \big].
\end{align*}
Fix $\epsilon \in (0,e^{-k-1})$ small.  Then it holds that
\begin{align*}
&\bminflaw\big[ D_j \cap E_{j,k} \cap \{\zeta_j - \tau_{\epsilon,j} \geq 1/8\} \giv \zeta_j \geq 1/100,  \tau_{\epsilon,j} < 1/100,  Y_{\tau_{\epsilon,j}}^j < e^{-k-1} \big]\\
&\leq \bminflaw\left[ \zeta_j - \tau_{\epsilon,j} \geq 1/8,\  \sup_{s \geq 0} Y_{s+\tau_{\epsilon,j}}^j \leq e^{-k} \giv \zeta_j \geq 1/100,  \tau_{\epsilon,j} < 1/100, Y_{\tau_{\epsilon,j}}^j < e^{-k-1} \right].
\end{align*}
Also, the resampling property of the Brownian disk combined with Lemma~\ref{lem:csbp_trace} imply that
\begin{align*}
&\bminflaw\left[ \zeta_j - \tau_{\epsilon,j} \geq 1/8,\  \sup_{s \geq 0} Y_{s+\tau_{\epsilon,j}}^j \leq e^{-k} \giv \zeta_j \geq 1/100,  \tau_{\epsilon,j} < 1/100,  Y_{\tau_{\epsilon,j}}^j < e^{-k-1}\right]\\
&\leq \p\left[ \sup_{s \geq 0}Y_s^j \leq e^{-k},\  \zeta_j \geq 1/8 \giv \zeta_j \geq 1/100 \right]\\
&\leq \p\big[\zeta_j \geq 1/100 \big]^{-1} \p\big[ \sup_{s \geq 0} Y_s^j \leq e^{-k},\zeta_j \geq 1/8 \big]\\
&\leq c_1 \exp(-c_0 (1/8) e^{k/2})
\end{align*}
with $c_1 = (\p[\zeta_j \geq 1/100])^{-1}$ and $c_0$ the constant in Lemma~\ref{lem:csbp_trace}.  Therefore,  combining with~\eqref{eqn:m_dj_ejc} we obtain that there exist universal constants $a_1,a_2>0$ so that 
\begin{align*}
\bminflaw\big[ M_{r,c} \cap D_j \cap E_j^c \big] \leq a_1 \exp(-a_2 r^{-1}).
\end{align*}

We turn to bound the second term in~\eqref{eqn:m_giv_d_bound}.  We have that
\begin{align}
\label{eqn:m_ej_d}
\bminflaw\big[ M_{r,c} \cap E_j \giv D \big] = c_0 \, \bminflaw\big[ M_{r,c} \cap E_j \cap D\big] \leq c_0 \, \bminflaw[M_{r,c} \giv E_j\big] \,  \bminflaw[E_j].
\end{align}
After possibly decreasing the value of $a_2 > 0$,  Lemma~\ref{lem:vol_lower_bound_bl_cond} gives that  $\bminflaw[M_{r,c} \giv E_j] \leq c_1 \exp(-a_2 c^{2/3})$ for a constant $c_1>0$.
Recall that under $\bminflaw$ we have that $Y^j$ is distributed as a $3/2$-stable CSBP excursion. By the Lamperti transform, the amount of mass that the $3/2$-stable CSBP excursion measure puts on excursions with maximum in a given interval is the same as the amount of mass that the $3/2$-stable L\'evy excursion measure (with only upward jumps) puts on such excursions.
Recall that the distribution of the supremum under the $3/2$-stable L\'{e}vy excursion measure is given by a constant times $t^{-2}\,dt$.
Therefore the same is also true for the $3/2$-stable CSBP excursion measure. Hence we have that $\bminflaw[E_j] = b_0 r^{-2}$ for a constant $b_0 > 0$. Combining, we have that~\eqref{eqn:m_ej_d} is at most a constant times $r^{-2} \exp(-a_2 c^{2/3})$.

The last term in~\eqref{eqn:m_giv_d_bound} is at most $\rho^n$, since on the event
$\{\nu(B(u,1/8))/\nu(\CS) \leq \rho\}$ the $\nu$-probability of $D_{j}^{c}$ is at most $\rho$
and $(D_{j}^{c})_{j}$ are independent. Taking $n = (\log \tfrac{1}{r})^2$ completes the proof.
\end{proof}

\begin{proof}[Proof of Theorem~\ref{thm:ball_concentration}, lower bound]
It suffices to prove the assertion for a sample $(\CS,d,\nu,x,y)$ from $\bminflaw$
conditioned on $\{d(x,y)>1\}$, since $(\CS,\delta d,\delta^{4}\nu,x,y)$ is then a sample
from $\bminflaw$ conditioned on $\{d(x,y)>\delta\}$ for each $\delta\in(0,\infty)$ and $\bminflaw\big[ \nu(\CS) > 0 \giv d(x,y) > 1 \big] = 1$.
Let $(x_j)$ be an i.i.d.\ sequence chosen from $\nu$.  Fix $\rho \in (0,1)$ and let $E_\rho$ be the event that $\sup_{z \in \CS} \nu(B(z,1/8))/\nu(\CS) \leq \rho$.
Note that $\p[E_\rho] \to 1$ as $\rho$ increases to $1$. Fix $u'>u > 0$.  Lemma~\ref{lem:vol_lower_bound_dist_cond} implies that $\p[ \nu(B(x_j,r)) \leq r^4 / (\log \tfrac{1}{r})^{6+u}, \ E_\rho]$ decays to $0$ faster than any power of $r$ for every $j \in \N$.  Moreover,  since $\bminflaw\big[ \nu(\CS) < \infty \giv d(x,y) > 1 \big] = 1$,  it is a.s.\ the case under $\bminflaw\big[ \cdot \giv d(x,y) >1 \big]$ that there exists $r_0 \in (0,1)$ such that $\nu(B(z,r))/\nu(\CS) \geq r^{4+u}$ for each $r \in (0,r_0),  z \in \CS$,  by the corresponding property of the unit area Brownian map.  Suppose that the above holds and pick $z \in \CS$ uniformly sampled from $\nu/\nu(\CS)$ and independent of $(x_j)$.  Set $N = r^{-4-u'}$ and $A = \CS \setminus \cup_{j=1}^N B(x_j,r/2),  \wt{A} = \CS \setminus \cup_{j=1}^N B(x_j,r)$.  Then,  the probability under $\nu/\nu(\CS)$ that $z \notin \cup_{j=1}^N B(x_j,r/2)$ is at most $(1-(r/2)^{4+u})^N \leq \exp(-r^{u-u'}/2^{4+u})$.  In particular,  we have that $\nu(A)/\nu(\CS) \leq \exp(-r^{u-u'}/2^{4+u})$.  Suppose that $\wt{A} \neq \emptyset$ and fix $w \in \wt{A}$.  Then $\nu(B(w,r/2)/\nu(\CS) \geq (r/2)^{4+u}$.  If $B(w,r/2) \subseteq A$,  then $(r/2)^{4+u} \leq \exp(-r^{u-u'}/2^{4+u})$ and that is a contradiction for $r>0$ sufficiently small.  Hence there exists $1 \leq j \leq N$ and $\wt{w} \in B(x_j,r/2) \cap B(w,r/2)$ which implies that $d(w,x_j) <r$,  and that is a contradiction.  Therefore,  we have that $\CS = \cup_{j=1}^N B(x_j,r)$ with high probability if $r>0$ is chosen sufficiently small.  Suppose that both of the events $\{\CS = \cup_{j=1}^N B(x_j,r)\}$ and $E_{\rho} \cap \bigl( \cap_{j=1}^N \{\nu(B(x_j,r)) \geq r^4 \log(r^{-1})^{-6-u}\}\bigr)$ hold.  Then for each $w \in \CS$,  there exists $1 \leq j \leq N$ such that $w \in B(x_j,r)$ and so $B(x_j,r) \subseteq B(w,2r)$.  In particular,  we have that $\nu(B(x,2r)) \geq \nu(B(x_j,r)) \geq r^4 \log(r^{-1})^{-6-u}$.  The proof is then complete since the probability of $E_{\rho} \cap \bigl( \cap_{j=1}^N \{\nu(B(x_j,r)) \geq r^4 \log(r^{-1})^{-6-u}\} \bigr)$ can be made to be arbitrarily close to that of $E_{\rho}$ if $r>0$ is sufficiently small.
\end{proof}

\subsection{Upper bound}
\label{subsubsec:vol_ubd}

We will prove the upper bound in Theorem~\ref{thm:ball_concentration} by dominating the amount of area inside of the metric ball from above using a subcritical Galton-Watson tree with geometric offspring distribution.  The branching structure will come from a metric exploration on the Brownian map or Brownian disk.  We will begin in Lemma~\ref{lem:boundary_length} by bounding the tail of the maximum of the boundary length of a filled metric ball explored up to radius $r$ and then deduce from this in Lemma~\ref{lem:component_boundary_length} an upper bound on the maximum boundary length of any of the complementary components a metric ball explored up to radius $r$.  We will then proceed in Lemma~\ref{lem:num_disks_cut_out} to prove a bound for the amount of area near the boundary of a Brownian disk, which will eventually be used to show that the dominating Galton-Watson tree is subcritical.

\begin{lemma}
\label{lem:boundary_length}
Suppose that $(\CS,d,\nu,x,y)$ has distribution $\bminflaw$. Let $r>0$, and let
$M_r = \sup_{s \in [0,r]} L_s$ where $L_s$ is the boundary length of $\partial \fb{x}{s}$.
There exist constants $c_0,m_0 > 0$ which are independent of $r$ so that
\[ \bminflaw[ M_r \geq c r^2] \leq m_0 e^{-c_0 c} r^{-2} \quad\text{for all}\quad c \geq 1.\]
\end{lemma}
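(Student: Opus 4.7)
The plan combines the scale invariance of $\bminflaw$ with the description of $L$ as a time-reversed $3/2$-stable CSBP excursion, and then applies the strong Markov property at first passage together with the explicit CSBP extinction-time formula.

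\emph{Scaling reduction.} First, derive the tail $\bminflaw[d(x,y)>T]\asymp T^{-2}$: start from the $t^{-5/3}\,dt$ L\'evy-excursion lifetime density from Section~\ref{subsubsec:boundary_length}, and apply the Lamperti identity $\tau_{\mathrm{csbp}}=\tau_{\mathrm{L}}^{1/3}I$ with $I:=\int_{0}^{1}\widetilde{X}_{u}^{-1}\,du$ for the normalized $3/2$-stable L\'evy excursion $\widetilde{X}$ (the $T^{-2}$ decay uses finiteness of $\E[I^{2}]$). Combined with the covariance $L_{s}(\lambda\text{-rescaled})=\lambda^{2}L_{s/\lambda}$ under the Brownian-map rescaling by $\lambda$, a change of variables yields
\begin{equation*}
\bminflaw[M_{r}\geq cr^{2}]=r^{-2}\,\bminflaw[M_{1}\geq c],
\end{equation*}
so it suffices to show $\bminflaw[M_{1}\geq c]\leq m_{0}\,e^{-c_{0}c}$ for $c\geq 1$.

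\emph{CSBP first passage.} By the identification recalled in Section~\ref{subsubsec:boundary_length}, $L$ under $\bminflaw$ is the time-reversal of a sample from the infinite $3/2$-stable CSBP excursion measure $\mathbf{N}$; setting $Y_{s}:=L_{\zeta-s}$ gives $\bminflaw[M_{1}\geq c]=\mathbf{N}[\sup_{s\in[\zeta-1,\zeta]}Y_{s}\geq c]$. The $3/2$-stable CSBP has only upward jumps and decreases continuously between them, so the first-passage time $\sigma:=\inf\{s:Y_{s}\geq c\}$ is an upward jump time with $Y_{\sigma}\geq c$ on $\{\sigma<\zeta\}$, and by the strong Markov property the post-$\sigma$ process is an independent CSBP from $Y_{\sigma}$. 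The two key inputs are the scaling estimate $\mathbf{N}[\sup Y\geq c]\asymp c^{-1}$ and the explicit extinction-time formula $\p_{y}[\zeta\leq 1]=e^{-c_{1}y}$, read off from~\eqref{eqn:csbp_extinction_time} with $\alpha=3/2$. Together they bound the ``first passage near extinction'' contribution by
\begin{equation*}
\mathbf{N}[\sigma<\zeta,\,\zeta-\sigma\leq 1]\leq\mathbf{N}[\sigma<\zeta]\cdot\sup_{y\geq c}\p_{y}[\zeta\leq 1]\leq Cc^{-1}e^{-c_{1}c}\leq Ce^{-c_{1}c}
\end{equation*}
for $c\geq 1$.

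\emph{Revisits (main obstacle).} The remaining case is that $Y$ reaches $[c,\infty)$ well before extinction and then re-enters $[c,\infty)$ within the last time unit. At each continuous downcrossing of $c$, $Y$ equals $c$ exactly (by continuity of the downward motion), and by the strong Markov the post-downcrossing process is an independent CSBP from $c$, which goes extinct within one unit with probability $\p_{c}[\zeta\leq 1]=e^{-c_{1}c}$. The main obstacle is controlling the total expected number of relevant revisits: a naive iteration at level $c$ fails because the $3/2$-stable CSBP started at $c$ almost surely re-enters $[c,\infty)$ infinitely often near time zero. To circumvent this, the plan is to iterate at a macroscopically lower level (for instance $c/2$), at which the probability of an excursion reaching back up to $[c,\infty)$ before hitting $0$ equals $\p_{c/2}[\sup Y\geq c]=1/2$ by the optional-stopping identity for the nonnegative martingale $Y$. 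Each macroscopic cycle thus contributes a geometric factor of at most $1/2$ times the extinction-in-one-unit cost $e^{-c_{1}c}$, so the total revisit contribution is bounded by a constant multiple of $e^{-c_{1}c}$. Combining with the first-passage bound gives $\bminflaw[M_{1}\geq c]\leq m_{0}\,e^{-c_{0}c}$, and the scaling reduction then completes the proof.
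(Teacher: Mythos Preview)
Your proposal is correct and follows essentially the same strategy as the paper: use the CSBP excursion description of $L$, the $c^{-1}$ tail for $\mathbf{N}[\sup Y\geq c]$, the extinction formula~\eqref{eqn:csbp_extinction_time} applied at downcrossings of the level $c/2$, and a geometric bound on the number of macroscopic revisits (the paper packages first passage and revisits into a single sum $\sum_{j\geq 1}\bminflaw[J\geq j,\,\zeta-\sigma_j\leq r]$ rather than splitting cases, and works with general $r$ rather than scaling to $r=1$, but the substance is identical). One minor imprecision: your optional-stopping identity $\p_{c/2}[\sup Y\geq c]=1/2$ is only an inequality $\leq 1/2$, since $Y$ overshoots level $c$ by an upward jump; this does not affect the argument, and the paper simply invokes CSBP scaling to note that this probability equals the fixed constant $\p_{1/2}[\sup Y\geq 1]\in(0,1)$.
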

\begin{proof}
Let $Y_t$ be the boundary length of $\partial \fb{x}{s}$ where $s = d(x,y) - t$ so that $Y_t$ evolves as a $3/2$-stable CSBP excursion.  Let $\tau_1 = \inf\{t \geq 0 : Y_t \geq  c r^2\}$ and $\sigma_1 = \inf\{t \geq \tau_1 : Y_t = c r^2/2 \}$.  Given that $\tau_1,\sigma_1,\ldots,\tau_j,\sigma_j$ have been defined, let $\tau_{j+1} = \inf\{t \geq \sigma_j : Y_t \geq c r^2\}$ and $\sigma_{j+1} = \inf\{t \geq \tau_{j+1} : Y_t = c r^2/2 \}$.  Let $J = \max\{ j : \sigma_j < \infty\}$ ($J=0$ if $\sigma_{1}=\infty$) and $\zeta = \inf\{t > 0 : Y_t = 0\}$.  Note that the event that $M_r \geq c r^2$ implies that $J\geq 1$ and that $\zeta - \sigma_J \leq r$.  Therefore it suffices to give an upper bound for $\bminflaw[J\geq 1,\,\zeta - \sigma_J \leq r]$.

By the scaling properties of a $3/2$-stable CSBP (recall just after~\eqref{eqn:u_t_form}), we note that there exists $p_0 \in (0,1)$ which does not depend on $c$ or $r$ so that $\bminflaw[ \tau_{j+1} = \infty \giv \sigma_j < \infty] \geq p_0$.  Therefore $J$ under $\bminflaw[ \cdot \mid \sigma_{1} < \infty ]$ is stochastically dominated by a geometric random variable with parameter $p_0$.  Moreover, by~\eqref{eqn:csbp_extinction_time}, we have for a constant $c_0 > 0$ that
\[ \bminflaw\big[ \zeta - \sigma_j \leq r \giv \sigma_j < \infty\big] = e^{-c_0 c}.\]
Therefore
\begin{align*}
\bminflaw[ J\geq 1,\, \zeta - \sigma_J \leq r] 
&\leq \sum_{j=1}^\infty \bminflaw[ J \geq j,\, \zeta - \sigma_j \leq r ]
 = \sum_{j=1}^\infty e^{- c_0 c} \, \bminflaw[J \geq j]  \\
& = e^{-c_0 c} \, \int J \, d\bminflaw
\leq m_0 e^{-c_0 c}\bminflaw[\sigma_{1}<\infty],
\end{align*}
where $m_0$ is the mean of a geometric random variable with parameter $p_0$.
We note that $\bminflaw[\sigma_1 < \infty] = \bminflaw[\tau_1 < \infty]$ is the same as the measure under the infinite measure on $3/2$-stable L\'evy excursions that the maximum is at least $cr^2$.  This, in turn, is equal to a constant times $c^{-1} r^{-2}$ as shown in the paragraph of~\eqref{eqn:m_ej_d}.
\end{proof}

\begin{lemma}
\label{lem:component_boundary_length}
Suppose that $(\CS,d,\nu,x,y)$ is distributed according to $\bminflaw$.  Fix $u > 0$.  The $\bminflaw$-measure of the event that there exists $s \in (0,r]$ such that some component of $\CS \setminus \overline{B(x,s)}$ has boundary length larger than $r^2 (\log \tfrac{1}{r})^{1+u}$ decays to $0$ as $r \to 0$ faster than any polynomial of $r$.
\end{lemma}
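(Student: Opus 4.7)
Set $K_r := r^2(\log r^{-1})^{1+u}$. I would split the event into two parts depending on whether the component violating the bound is the $y$-containing one.

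The boundary length of the $y$-containing component of $\CS\setminus\overline{B(x,s)}$ at time $s\in (0,r]$ equals $L_s$, so the event that this component alone violates the bound is $\{M_r>K_r\}$. Applying Lemma~\ref{lem:boundary_length} with $c=(\log r^{-1})^{1+u}$ gives
\begin{equation*}
\bminflaw[M_r>K_r]\leq m_0 r^{-2}\exp\bigl(-c_0(\log r^{-1})^{1+u}\bigr) = m_0\, r^{\,c_0(\log r^{-1})^{u}-2},
\end{equation*}
which decays faster than any polynomial of $r$, since $(\log r^{-1})^u\to\infty$ as $r\to 0$.

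For the non-$y$-containing components, I would use the resampling invariance of $\bminflaw$ under re-sampling $y$ from $\nu/\nu(\CS)$: conditionally on the unmarked Brownian map, any given component $D$ of $\CS\setminus\overline{B(x,s)}$ is the $y$-containing one with probability $\nu(D)/\nu(\CS)$. Combining this with the fact that any Brownian disk of boundary length $\ell$ has area at least $c\ell^2$ with probability at least some $p_0>0$ uniformly in $\ell$ (which follows from the explicit density~\eqref{eqn:bdisk_area_density}) yields, after conditioning on $\nu(\CS)=a$ and integrating against the infinite excursion measure $ca^{-3/2}da$, a bound of the form
\begin{equation*}
\bminflaw[\exists D:|\partial D|>K_r\text{ at time }s]\;\lesssim\;\frac{1}{K_r^2}\,\bminflaw\bigl[\nu(\CS)\mathbf{1}_{L_s>K_r}\bigr].
\end{equation*}
The right-hand side can then be controlled via the scaling relation $L^{(a)}_s=a^{1/2}L^{(1)}_{s/a^{1/4}}$, reducing to a fixed-area version of Lemma~\ref{lem:boundary_length} after the change of variable $t=s/a^{1/4}$; the event $\{L_s>K_r\}$ forces $a\gtrsim K_r^2$, giving $t\leq (\log r^{-1})^{-(1+u)/2}$ and a large-deviation parameter $K_r/s^2=(\log r^{-1})^{1+u}$, which produces the same type of super-polynomial bound as in the $y$-containing case. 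To upgrade the single-$s$ estimate to a uniform bound over $s\in(0,r]$, I would discretize $(0,r]$ on a polynomial grid of spacing $r^N$ for large $N$, take a union bound (the polynomial factor $r^{-N}$ being absorbed into the super-polynomial decay), and control variation between grid points using that downward jumps of $L$ (which are the only source of discontinuous changes in component boundary lengths) are bounded by $M_r$, which is itself $\leq K_r$ off an already-controlled event.

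The main obstacle is extracting a pointwise tail bound $\p^{1}[M_t>ct^2]\lesssim e^{-c_0 c}$ from Lemma~\ref{lem:boundary_length}, since naive scaling only yields an integrated version $\int\p^{1}[M_t>ct^2]t\,dt\lesssim e^{-c_0 c}$. This can be addressed either by revisiting the CSBP argument in the proof of Lemma~\ref{lem:boundary_length} at fixed area, or alternatively by iterating Lemma~\ref{lem:boundary_length} itself: each cut-off disk $D$ is, by the inside--outside independence recalled in Section~\ref{subsubsec:boundary_length}, an independent Brownian disk to which an analogous CSBP bound applies for its internal exploration, and summing over cut-offs against the $3/2$-stable L\'evy intensity $u^{-5/2}du$ of downward jumps of $L_s$ (restricted to jump sizes $\leq M_r\leq K_r$) converts the per-disk super-polynomial decay into an aggregate super-polynomial bound.
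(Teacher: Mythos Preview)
Your high-level strategy matches the paper's: handle the $y$-containing component directly via Lemma~\ref{lem:boundary_length}, and for the other components use the disk area density together with resampling of $y$ to reduce back to the $M_r$ bound. However, the execution diverges from the paper in two places, and both create the ``main obstacle'' you flag at the end.

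First, your displayed resampling inequality is not what the resampling argument actually gives. If $D^*$ is a component with $|\partial D^*|>K_r$ at time $s$ and $\nu(D^*)\gtrsim K_r^2$, then conditioning on the unmarked surface and integrating over $y$ yields a factor $\nu(D^*)/\nu(\CS)$, so
\[
\bminflaw[L_s>K_r]\;\gtrsim\; K_r^2\int \frac{1}{\nu(\CS)}\,\indicator_{E_s,\,\nu(D^*)\gtrsim K_r^2}\,d\bminflaw,
\]
with $\nu(\CS)^{-1}$, not $\nu(\CS)$. To extract $\bminflaw[E_s]$ from this you must control $\nu(\CS)$ from above, not below. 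The paper does this by the simple truncation $\{\nu(\CS)\leq r^{-p}\}$, which costs only $O(r^{p/2})$ under $\bminflaw$ and turns the $\nu(\CS)^{-1}$ into a harmless $r^{-p}$ factor. This replaces your disintegration over $a$ entirely.

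Second, the paper avoids both the supremum over $s$ and the need for a fixed-area tail bound by introducing the stopping time $\tau=\inf\{s\geq 0:\text{some component of }\CS\setminus\overline{B(x,s)}\text{ has boundary length}\geq K_r\}$. The strong Markov property of the metric exploration at $\tau$ gives the problematic component $\CD$ the law $\bdisklaw{\ell}$ (or $\bdisklawweighted{\ell}$) given its boundary length $\ell$, so the density~\eqref{eqn:bdisk_area_density} yields $\p[\nu(\CD)\leq r^4\mid\tau\leq r]\to 0$ faster than any polynomial. Once $\nu(\CD)\geq r^4$ and $\nu(\CS)\leq r^{-p}$, the resampling step gives $\p[y\in\CD]\geq r^{4+p}$, and on $\{y\in\CD\}$ one has $M_r\geq K_r$; hence $\bminflaw[\tau\leq r]\leq 2r^{-4-p}\bminflaw[M_r\geq K_r]+O(r^{p/2})$, and Lemma~\ref{lem:boundary_length} applies \emph{as is}. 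No discretization of $s$, no iteration over cut-off disks, and no fixed-area version of the lemma is needed---the obstacle you identified is an artifact of conditioning on $\nu(\CS)=a$ rather than truncating it.
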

\begin{proof}
Let $\tau$ be the first $s \geq 0$ such that $\CS \setminus \overline{B(x,s)}$ has a component with boundary length at least $r^2 (\log \tfrac{1}{r})^{1+u}$.  On $\tau \leq r$, we let $\CD$ be such a component, breaking ties in a measurable manner, and we let $\ell$ be the boundary length of $\partial \CD$.  Then we know that the conditional law of $\CD$ given $\ell$ is given by $\bdisklaw{\ell}$ if $y \notin \CD$ and is given by $\bdisklawweighted{\ell}$ if $y \in \CD$.
By~\eqref{eqn:bdisk_area_density}, \eqref{eqn:bdiskweighted_area_density}, and the strong Markov property of the metric exploration, conditionally on $\tau \leq r$, the probability that $\nu(\CD) \leq r^4$ decays to $0$ as $r \to 0$ faster than any polynomial of $r$.  
Next,  we note that $0 < \bminflaw\big[ s \leq \tau \leq r \big] \leq \bminflaw\big[ \diam(\CS) \geq s\big] < \infty$ for each $r>0$,  $s \in (0,r)$ and the above imply that $\bminflaw\big[\nu(\CD) \leq r^4 \giv s \leq \tau \leq r \big] \leq \frac{1}{2}$ for each $r>0$ sufficiently small,  uniformly in $s \in (0,r)$.  Thus,  we have that
\begin{align*}
\bminflaw\big[ s \leq \tau \leq r \big] & = \bminflaw\big[ s \leq \tau \leq r,  \nu(\CD) \leq r^4 \big] + \bminflaw\big[ s \leq \tau \leq r,  \nu(\CD) > r^4 \big]\\
&\leq \frac{1}{2} \bminflaw\big[ s \leq \tau \leq r\big] + \bminflaw\big[ s \leq \tau \leq r,  \nu(\CD) > r^4 \big],
\end{align*}
for all $r \in (0,1)$ sufficiently small,  and all $s \in (0,r)$.  Sending $s \to 0$ gives that
\begin{align*}
\bminflaw\big[ \tau \leq r \big] \leq \frac{1}{2} \bminflaw\big[ \tau \leq r \big] + \bminflaw\big[ \tau \leq r,  \nu(\CD) > r^4 \big]
\end{align*}
which implies that $\bminflaw\big[ \tau \leq r \big] \leq 2 \bminflaw\big[ \tau \leq r,  \nu(\CD) > r^4 \big]$ for all $r \in (0,1)$ sufficiently small.
Let $p > 0$ and recall that the $\bminflaw$ mass of the event that $\nu(\CS) \geq r^{-p}$ is $O(r^{p/2})$ (as it is the same as the mass put by the infinite measure on Brownian excursions on those excursions with length at least $r^{-p}$).
On $\tau \leq r$, $\nu(\CS) \leq r^{-p}$ and $\nu(\CD)\geq r^4$, the conditional probability that $y \in \CD$ is at least $r^{4+p}$ as the conditional law of $y$ given $(\CS,d,\nu)$ is $\nu$ normalized to be a probability measure (i.e., $\nu/\nu(\CS)$).  Let $M_r$ be as in Lemma~\ref{lem:boundary_length}.  Altogether, we have shown that
\begin{align*}
   \bminflaw[ \tau \leq r ]
&\leq  2\bminflaw[ \tau \leq r, \nu(\CS) \leq r^{-p}, \nu(\CD)\geq r^4] + 2\bminflaw[ \tau \leq r, \nu(\CS) > r^{-p}] \\
&\leq  2\bminflaw[\tau \leq r, \nu(\CS) \leq r^{-p}, \nu(\CD)\geq r^4] + O(r^{p/2}) \\
&\leq  2r^{-4-p} \bminflaw[\tau \leq r, \nu(\CS) \leq r^{-p}, \nu(\CD)\geq r^4,  y \in \CD] + O(r^{p/2}) \\
&\leq  2 r^{-4-p} \bminflaw[ M_r \geq r^2 (\log \tfrac{1}{r})^{1+u}] + O(r^{p/2}).
\end{align*}
This completes the proof as Lemma~\ref{lem:boundary_length} implies that $\bminflaw[ M_r \geq r^2 (\log \tfrac{1}{r})^{1+u}]$ decays to $0$ as $r \to 0$ faster than any polynomial of $r$ and $p > 0$ was arbitrary.
\end{proof}

\begin{lemma}
\label{lem:num_disks_cut_out}
There exist $c_0 \geq 1$ and $\delta \in (0,1)$ such that the following is true for all $\ell \in (0,1]$ and $c \geq c_0$.  Suppose that $(\CD,d,\nu)$ has law given by $\bdisklaw{\ell}$ conditioned on having area at least~$c$.  Consider the center exploration from $\partial \CD$ run for one unit of time.  Let $p_c$ be the probability that this exploration cuts off a component with area at least $c$.  Then $p_c \leq 1-\delta$.
The same also holds if we instead suppose that $(\CS,d,\nu,x,y)$ has law $\bminflaw$ conditioned on having area at least $c$ and consider the metric exploration starting at $x$ and targeted at $y$.
\end{lemma}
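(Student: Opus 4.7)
The plan is to bound $p_c$ as a ratio of two tail probabilities for Brownian disk areas, with the gap supplied by the strict moment inequality in~\eqref{eqn:center_exploration}. I will treat the disk case in detail; the Brownian map case follows the same template with analogous scaling.

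Under $\bdisklaw{\ell}$, let $(a_i)$ be the boundary lengths of the cut-off disks $\CD_i$ in one unit of center exploration time. Since conditional on $(a_i)$ the $\CD_i$ are independent with $\CD_i\sim\bdisklaw{a_i}$, the crude bound $\bdisklaw{a}[\nu(\CD)\geq c]\leq\tfrac{2a^3}{3\sqrt{2\pi}\,c^{3/2}}$ --- obtained by dropping the exponential factor in the density~\eqref{eqn:bdisk_area_density} and integrating --- yields via a union bound
\begin{equation*}
\bdisklaw{\ell}[\exists i:\nu(\CD_i)\geq c]
\leq \frac{2}{3\sqrt{2\pi}\,c^{3/2}}\,\E_{\bdisklaw{\ell}}\!\left[\sum_i a_i^3\right].
\end{equation*}
The same density bounded from below using $e^{-\ell^2/(2u)}\geq e^{-\ell^2/(2c)}$ for $u\geq c$ gives $\bdisklaw{\ell}[\nu(\CD)\geq c]\geq e^{-\ell^2/(2c)}\cdot\tfrac{2\ell^3}{3\sqrt{2\pi}\,c^{3/2}}$, so dividing produces
\begin{equation*}
p_c \leq e^{\ell^2/(2c)}\cdot\frac{\E_{\bdisklaw{\ell}}[\sum_i a_i^3]}{\ell^3}.
\end{equation*}

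It remains to establish the uniform strict bound $\sup_{\ell\in(0,1]}\ell^{-3}\E_{\bdisklaw{\ell},\,\mathrm{time}\,1}[\sum_i a_i^3]<1$. I will obtain this via Brownian disk scaling. Rescaling distances by $\lambda=1/\sqrt{\ell}$ converts the time-$1$ center exploration of $\bdisklaw{\ell}$ into a time-$1/\sqrt{\ell}$ center exploration of $\bdisklaw{1}$, with cut-off boundary lengths rescaled to $a_i/\ell$. Hence $\ell^{-3}\E_{\bdisklaw{\ell},\,\mathrm{time}\,1}[\sum_i a_i^3]=\E_{\bdisklaw{1},\,\mathrm{time}\,1/\sqrt{\ell}}[\sum_i a_i^3]$, which is nondecreasing in $1/\sqrt{\ell}$. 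For $\ell\in(0,1]$ we have $1/\sqrt{\ell}\geq 1$, so this quantity is bounded above by its $\ell\to 0$ limit. By monotone convergence the limit equals the expectation under the \emph{full} center exploration of $\bdisklaw{1}$, which is some $\rho<1$ by~\eqref{eqn:center_exploration}. Choosing $c_0$ large enough that $e^{1/(2c_0)}\rho<1$ then gives $p_c\leq 1-\delta$ with $\delta:=1-e^{1/(2c_0)}\rho>0$.

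For the Brownian map case, the cut-off components are again conditionally Brownian disks given their boundary lengths (corresponding to downward jumps of $L_r$ on $[0,1]$), so the numerator bound carries over unchanged. The denominator becomes $\bminflaw[\nu(\CS)\geq c]=2c_1\,c^{-1/2}$ from the $t^{-3/2}\,dt$ excursion-length distribution. The analog of the strict moment bound can be obtained via the scale invariance of $\bminflaw$ under distance rescaling by $c^{-1/4}$, which converts the time-$1$ exploration into a vanishing time-$c^{-1/4}$ exploration of a surface of area $\sim 1$, so that the probability of any cut-off having rescaled area $\geq 1$ tends to zero as $c\to\infty$. The main obstacle I foresee is making the cubic-moment computation explicit in the $\bminflaw$-setting, which lacks a fixed initial boundary length to scale against; I would handle this by restricting to $\{d(x,y)\geq 2\}$ and invoking the conditional independence of $\fb{x}{1}$ and $\CS\setminus\fb{x}{1}$ given $L_1$ to reduce to a computation on the weighted Brownian disk $\CS\setminus\fb{x}{1}\sim\bdisklawweighted{L_1}$, then applying the strict inequality in~\eqref{eqn:center_exploration} in that disk setting.
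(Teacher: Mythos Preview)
Your disk-case argument is correct and matches the paper's in its essentials: both write $p_c=\bdisklaw{\ell}[E_c]/\bdisklaw{\ell}[A_c]$, bound the numerator by a union bound together with $\bdisklaw{a}[\nu\geq c]\leq \tfrac{2a^3}{3\sqrt{2\pi}\,c^{3/2}}$, bound the denominator from~\eqref{eqn:bdisk_area_density}, and invoke the strict inequality in~\eqref{eqn:center_exploration} with $\alpha=3$. Your scaling-and-limit route to the uniform bound $\sup_{\ell\in(0,1]}\ell^{-3}\E_{\text{time }1}[\sum_i a_i^3]<1$ is correct but more elaborate than necessary: the paper simply bounds the time-$1$ sum pathwise by the full-exploration sum, whose ratio $\ell^{-3}\E[\sum_i a_i^3]$ is an $\ell$-independent constant strictly below $1$ by the scale invariance noted after~\eqref{eqn:center_exploration}.

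For the Brownian map case you have the right idea but two arguments tangled together. The cubic-moment framework you set up (numerator $\lesssim c^{-3/2}\E_{\bminflaw}[\sum_i a_i^3]$, denominator $\asymp c^{-1/2}$) runs into the difficulty that $\E_{\bminflaw}[\sum_i a_i^3]$ is an integral against an infinite measure with no evident finiteness or normalization; your proposed workaround through $\{d(x,y)\geq 2\}$ and conditional independence would alter both numerator and denominator in ways you have not controlled. However, the scaling observation you insert in between --- rescaling distances by $c^{-1/4}$ converts $p_c$ into the probability that a time-$c^{-1/4}$ exploration of a surface of area $\geq 1$ cuts off a piece of area $\geq 1$ --- is already a complete argument once formalized, and needs no moment computation at all. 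The paper does exactly this: it bounds $p_c\leq \bminflaw[\nu(\fb{x}{1})\geq c\mid\nu(\CS)\geq c]$, disintegrates over area, changes variables, and rewrites the integrand via Brownian-map scaling as $\bmlaw{b}[\nu(\fb{x}{c^{-1/4}})\geq 1]$; dominated convergence against $b^{-3/2}\,db$ on $[1,\infty)$ then gives $p_c\to 0$. You should drop the cubic-moment scaffolding and the $\{d(x,y)\geq 2\}$ workaround and simply carry your scaling remark through.
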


\begin{proof}
First, let $\ell\in(0,1]$ and let $(\CD,d,\nu)$ have law given by $\bdisklaw{\ell}$ conditioned on having area at least $c$.
Let $E_{c}$ denote the event that the center exploration from $\partial \CD$ which is targeted at $y$ and run for one unit of time cuts off a component from $y$ with area at least $c$.
Note that $p_c = \bdisklaw{\ell}[E_c] / \bdisklaw{\ell}[A_c]$ where $A_c = \{\nu(\CD) \geq c\}$.  By~\eqref{eqn:bdisk_area_density}, for each $\epsilon > 0$ there exists $c_0 \geq 1$ so that $c \geq c_0$ implies that
\begin{equation}
\label{eqn:a_c_lbd}
\bdisklaw{\ell}[A_c] \geq (1-\epsilon) \frac{\sqrt{2} \ell^3}{3\sqrt{\pi} c^{3/2}}.
\end{equation}
Let $(a_i)$ be the set of downward jumps made by the center exploration.  It follows from a union bound, \eqref{eqn:bdisk_area_density}, and~\eqref{eqn:center_exploration} (with $\alpha=3$) that there exists $0 < \delta <1$ so that
\begin{equation}
\label{eqn:e_c_ubd}
\bdisklaw{\ell}[E_c] \leq \E\left[ \sum_{i} \frac{\sqrt{2} a_i^3}{3\sqrt{\pi} c^{3/2}} \right] \leq \frac{(1-2\delta) \sqrt{2} \ell^3}{3\sqrt{\pi} c^{3/2}}.
\end{equation}
Combining~\eqref{eqn:a_c_lbd} and~\eqref{eqn:e_c_ubd}, we have that
\[ p_c  = \frac{ \bdisklaw{\ell}[E_c] }{ \bdisklaw{\ell}[A_c] } \leq \frac{1-2\delta}{1-\epsilon}.\]
Thus,  by taking $\epsilon \in (0,1)$ sufficiently small such that $\frac{1-2\delta}{1-\epsilon} \leq 1-\delta$,  we obtain that there exists $c_0 \geq 1$ sufficiently large such that $p_c \leq 1-\delta$,  for each $c \geq c_0$.

Next, let $(\CS,d,\nu,x,y)$ have law $\bminflaw$ conditioned on having area at least $c$.
Let $E_{c}$ denote the event that the metric exploration starting at $x$, targeted at $y$
and run for one unit of time cuts off a component from $y$ with area at least $c$. Then
recalling that $\bminflaw[\cdot]=\int_{0}^{\infty}\bmlaw{a}[\cdot]c_{0}a^{-3/2}\,da$
as explained in Subsection~\ref{sssec:bm-dfn}, we obtain
\begin{align} \label{eq:area_fbx1_decay}
p_{c} = \bminflaw[ E_{c} \mid \nu(\cS)\geq c]
&\leq \bminflaw[ \nu(\fb{x}{1}) \geq c \mid \nu(\cS)\geq c] \\
&= \frac{ \int_{c}^{\infty} \bmlaw{a}[ \nu(\fb{x}{1}) \geq c ] a^{-3/2}\,da }{ \int_{c}^{\infty} a^{-3/2}\,da} \notag\\
&=\frac{1}{2}\int_{1}^{\infty} \bmlaw{b}\bigl[ \nu(\fb{x}{c^{-1/4}}) \geq 1 \bigr] b^{-3/2}\,db
	\xrightarrow{c \to \infty} 0, \notag
\end{align}
proving the assertion for $(\CS,d,\nu,x,y)$ with law $\bminflaw$.
\end{proof}

In the following two lemmas and the completion of the proof of the upper bound of Theorem~\ref{thm:ball_concentration} given below, we will consider the following exploration.  Suppose that $r\in(0,1/e]$, $c > 1$ and $u > 0$.  Fix $\ell \in [0, r^2 (\log r^{-1})^{3+4u}] $.  If $\ell > 0$, we suppose that we have a Brownian disk $(\CD,d,\nu)$ with boundary length $\ell$ conditioned on having area at least $c r^4 (\log r^{-1})^{6+8u}$.   Consider the center exploration from $\partial \CD$.  Let $\tau_1$ be the first time that the center exploration has cut off a component with area at least $c r^4 (\log r^{-1})^{6+8u}$.  Given that $\tau_1,\ldots,\tau_k$ have been defined, let $\tau_{k+1}$ be the first time after $\tau_k$ that the center exploration cuts off another component with area at least $c r^4 (\log r^{-1})^{6+8u}$.  If $\ell = 0$, we suppose that we have a Brownian map instance $(\CS,d,\nu,x,y)$ conditioned on having area at least $c r^4 (\log r^{-1})^{6+8u}$ and define the exploration analogously except starting at $x$ and targeted at $y$.

\begin{lemma}
\label{lem:num_components}
Suppose that we have the setup described just above.  Let
\[
E_r =\Bigl\{\sup_{s\in[0,r]}L_{s}\leq r^2 (\log r^{-1})^{3+4u}\Bigr\}
\]
denote the event that the boundary length $L_{s}$ of the metric exploration is at most $r^2 (\log r^{-1})^{3+4u}$ for all $s \in [0,r]$.  There exist $c_0 > 1$ and $\delta\in (0,1)$ which are independent of $r,u,\ell$ so that for all $c \geq c_0$ and $r\in(0,1/e]$ the following is true. With $N_{r}=\sup\{k : \tau_k \leq r\}$ ($\sup\emptyset:=0$) we have that the probability of $E_{r}\cap\{N_{r} \geq n\}$ is at most $(1-\delta)\wt{p}_c^{n-1}$ where $\wt{p}_c \in (0,\infty)$ depends only on $c$ and satisfies $\wt{p}_c \to 0$ as $c \to \infty$.
\end{lemma}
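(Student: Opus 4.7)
The plan is to iterate a one-step bound on the probability of cutting off a component of area at least $A \ldef cr^{4}(\log r^{-1})^{6+8u}$ within time $\leq r$, treating the very first cutoff and all later ones by distinct arguments. Throughout, write $B \ldef r^{2}(\log r^{-1})^{3+4u}$, so that $E_{r}$ is the event that the target-component boundary length $L_{s}$ stays in $[0,B]$ for every $s \in [0,r]$.

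First I would bound $\p[\tau_{1} \leq r]$. Rescaling by the factor $1/r$ (distances by $1/r$, boundary lengths by $1/r^{2}$, areas by $1/r^{4}$) reduces the problem to that of the center exploration of a Brownian disk with rescaled boundary length at most $(\log r^{-1})^{3+4u}$ conditioned on rescaled area at least $c(\log r^{-1})^{6+8u}$, run for one unit of time. Lemma~\ref{lem:num_disks_cut_out} is stated for $\ell \in (0,1]$ only, but the proof depends on $\ell$ solely through the ratios appearing in~\eqref{eqn:a_c_lbd} and~\eqref{eqn:e_c_ubd}, both of which remain valid with the same $\delta$ as long as the area threshold dominates $\ell^{2}$ by a universal factor $C_{\varepsilon}$. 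In the rescaled setting this amounts to requiring $c \geq C_{\varepsilon}$, a universal constant. The Brownian map case ($\ell = 0$) is handled identically via the second assertion of Lemma~\ref{lem:num_disks_cut_out}. Hence $\p[\tau_{1} \leq r] \leq 1 - \delta$ for $c$ sufficiently large.

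Next I would establish an unconditional one-step bound for subsequent cutoffs. The key observation is that after $\tau_{1}$, i.e.\ after a component of area $\geq A$ has been cut off, the original conditioning $\nu \geq A$ on the initial surface is automatically satisfied by the cutoff alone, so for each $k \geq 1$, conditionally on $\mathcal{F}_{\tau_{k}}$, the residual target is a Brownian disk of boundary length $L_{\tau_{k}}$ with no additional area conditioning. On $E_{r} \cap \{\tau_{k} \leq r\}$ we have $L_{\tau_{k}} \leq B$. A union bound over all cutoffs ever produced by the center exploration of this residual target, combined with the tail estimate~\eqref{eqn:bdisk_area_density} and the third-moment bound from~\eqref{eqn:center_exploration} applied with $\alpha = 3$, yields a one-step probability of order
\begin{align*}
\frac{L_{\tau_{k}}^{3}}{A^{3/2}} \leq \frac{B^{3}}{A^{3/2}} = \frac{r^{6}(\log r^{-1})^{9+12u}}{c^{3/2} r^{6}(\log r^{-1})^{9+12u}} = \frac{1}{c^{3/2}}.
\end{align*}
Calling the resulting bound $\tilde{p}_{c}$, this quantity tends to $0$ as $c \to \infty$, as required.

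Finally I would conclude by iteration. Set $E_{r}^{k} \ldef \{L_{s} \leq B \ \forall s \in [0,\tau_{k}]\}$. The strong Markov property of the center exploration at $\tau_{k-1}$, combined with the unconditional one-step bound, gives $\p[E_{r}^{k} \cap \{\tau_{k} \leq r\}] \leq \tilde{p}_{c} \, \p[E_{r}^{k-1} \cap \{\tau_{k-1} \leq r\}]$ for each $k \geq 2$, while the first-step bound furnishes $\p[E_{r}^{1} \cap \{\tau_{1} \leq r\}] \leq 1 - \delta$. Since $E_{r} \cap \{N_{r} \geq n\} \subseteq E_{r}^{n} \cap \{\tau_{n} \leq r\}$, iterating yields the claimed bound $(1-\delta) \tilde{p}_{c}^{n-1}$. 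The main obstacle I anticipate is verifying the ``conditioning-used-up'' step rigorously, in particular in the disk case where the center exploration switches target at the times when a disconnected component has boundary length larger than the current target; one must check that this target-switching mechanism does not interfere with the strong Markov property at the stopping times $\tau_{k}$, nor with the dissipation of the conditioning on total area.
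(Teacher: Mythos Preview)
Your overall strategy—bound the first step by $1-\delta$ via Lemma~\ref{lem:num_disks_cut_out}, then bound each subsequent step by some $\wt{p}_c\to 0$ using that the area conditioning has been ``used up'' once a large cutoff occurs, and iterate—is exactly the paper's. The ``conditioning-used-up'' observation you flag as the main obstacle is indeed the key point, and the paper invokes it just as you do: on $\{\tau_k<\infty\}$ (for $k\geq 1$) the conditional law of the target given $\mathcal{F}_{\tau_k}$ is simply $\bdisklaw{L_{\tau_k}}$ (or $\bdisklawweighted{L_{\tau_k}}$ when $\ell=0$), with no residual area constraint.

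Two technical differences are worth noting. First, the paper rescales boundary lengths by $B^{-1}=(r^{2}(\log r^{-1})^{3+4u})^{-1}$ rather than by $r^{-2}$; this lands the rescaled boundary length in $(0,1]$ and the rescaled area threshold at exactly $c$, so Lemma~\ref{lem:num_disks_cut_out} applies verbatim without re-opening its proof. Your rescaling works but is less clean. Second, for the one-step bound at $k\geq 1$ the paper does not use a union bound plus the third-moment estimate~\eqref{eqn:center_exploration}; it simply observes that $\{\tau_{k+1}<\infty\}\subseteq\{\nu(\text{target at }\tau_{k})\geq A\}$ and reads off the area tail from~\eqref{eqn:bdisk_area_density} and~\eqref{eqn:bdiskweighted_area_density}, obtaining $\wt{p}_c=c^{-1/2}\vee c^{-3/2}$.

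Your union-bound route has a real gap in the $\ell=0$ case. There the exploration is the metric exploration \emph{targeted at} $y$, not the center exploration, and the residual target at each $\tau_k$ is a \emph{weighted} Brownian disk. The moment identity~\eqref{eqn:center_exploration} concerns the center exploration of an unweighted disk and does not transfer: for the targeted exploration of a weighted disk the analogous moment sums diverge (the weighted disk has infinite expected area, hence $\E[\sum_i a_i^2]=\infty$). The paper's simpler containment $\{\tau_{k+1}<\infty\}\subseteq\{\nu(\text{target at }\tau_k)\geq A\}$ sidesteps this entirely and works uniformly in both cases, so the fix is immediate once you replace your union-bound step with it.
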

\begin{proof}
Recall that by rescaling
boundary lengths by $( r^2 (\log r^{-1})^{3+4u})^{-1}$, areas by $(r^4 (\log r^{-1})^{6+8u})^{-1}$
and distances by $(r (\log r^{-1})^{3/2+2u})^{-1/2}$, we obtain a sample from the law $\bdisklaw{\ell_{r}}$
with $\ell_{r}$ given by $\ell_{r}=\ell( r^2 (\log r^{-1})^{3+4u})^{-1} \in (0,1]$ conditioned on having area at least $c$ if $\ell>0$,
and a sample from the law $\bminflaw$ conditioned on having area at least $c$ if $\ell=0$.
Under this rescaling, the event $E_{r}$ and the random variable $N_{r}$ become
\begin{align} \label{eq:E_r-N_r-rescaled}
E_{r}=\Bigl\{\sup_{s\in[0,r_{0}]}L_{s}\leq 1\Bigr\}
\qquad\text{and}\qquad
N_{r}=\sup\{k: \tau_{k} \leq r_{0}\},
\end{align}
respectively, where $r_{0}=(\log r^{-1})^{-3/2-2u}$, and we are to prove the assertion for these $E_{r}$ and $N_{r}$ instead.

Set $E_{k}=\{\sup_{s\in[0,\tau_{k}]}L_{s}\leq 1\}$ for each $k\geq 1$. For each $t\in[0,\infty)$,
let $\mathcal{F}_{t}$ be the $\sigma$-algebra generated by $\{L_{r}\}_{r\in[0,t]}$ and the components cut off
by the exploration up to time $t$. Then for each $k\geq 1$, a.s.\ on the event $\{\tau_{k}<\infty\}$,
the conditional law given $\mathcal{F}_{\tau_{k}}$ of the target component at time $\tau_{k}$
is $\bdisklaw{L_{\tau_{k}}}$ if $\ell>0$ and $\bdisklawweighted{L_{\tau_{k}}}$ if $\ell=0$,
and therefore by~\eqref{eqn:bdiskweighted_area_density} and~\eqref{eqn:bdisk_area_density} we have, a.s.,
\begin{align}
\prob[ \tau_{k+1} \leq r_{0},\, E_r \mid \mathcal{F}_{\tau_k} ]
& \leq
\prob[ \tau_{k+1} \leq r_{0},\, E_{k+1} \mid \mathcal{F}_{\tau_k} ]
\leq
\prob[ \tau_{k+1} \leq r_{0},\, E_{k} \mid \mathcal{F}_{\tau_k} ] \notag \\
&\leq
\prob[ \nu(\text{the target component at $\tau_k$}) \geq c \mid \mathcal{F}_{\tau_k}] \, \indicator_{\{\tau_{k} \leq r_{0}\}\cap E_k} \notag \\
&\leq
\begin{cases}
	\bdisklaw{L_{\tau_{k}}}[ \nu( \CD ) \geq c ] \, \indicator_{\{\tau_{k} \leq r_{0}\}\cap E_k} & \text{if $\ell>0$,} \\
	\bdisklawweighted{L_{\tau_{k}}}[ \nu( \CD ) \geq c ] \, \indicator_{\{\tau_{k} \leq r_{0}\}\cap E_k} & \text{if $\ell=0$,}
\end{cases} \notag \\
& \leq
\wt{p}_c \, \indicator_{\{\tau_{k} \leq r_{0}\}\cap E_k},
\label{eq:center-exp-use-strong-Markov}
\end{align}
where $\wt{p}_c = c^{-1/2}\vee c^{-3/2}$. Therefore, taking $c_{0}\in[1,\infty)$ and
$\delta\in(0,1)$ as in Lemma~\ref{lem:num_disks_cut_out} and applying
\eqref{eq:center-exp-use-strong-Markov} and Lemma~\ref{lem:num_disks_cut_out},
for any $n\geq 1$ and any $c\geq c_{0}$ we get
\begin{align*}
\prob[N_{r}\geq n, \, E_{r}]
=
\prob[\tau_{n}\leq r_{0}, \, E_{r}]
&\leq
\prob[\tau_{n}\leq r_{0}, \, E_{n}]
\leq
\wt{p}_c^{n-1} \prob[\tau_{1}\leq r_{0}, \, E_{1}] \notag \\
&\leq
\wt{p}_c^{n-1} \prob[\tau_{1}\leq r_{0}]
\leq
\wt{p}_c^{n-1}(1-\delta),
\end{align*}
completing the proof.
\end{proof}

\begin{lemma}
\label{lem:area_in_rest}
Suppose we have the same setup as in Lemma~\ref{lem:num_components}. Let $A_{r}$ denote the sum of the areas of the components which are cut off by the exploration within exploration time $r$ and have area at most $cr^{4}(\log r^{-1})^{6+8u}$ each.
The probability of the event $E_r \cap \{ A_{r} >r^4 (\log r^{-1})^{7+12u} \}$ decays to $0$ as $r \to 0$ with a decay rate independent of $\ell$ and faster than any polynomial of $r$.

\end{lemma}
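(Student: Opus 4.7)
The plan is to condition on the boundary length process $L|_{[0,r]}$ and combine two concentration steps. Set $\ell_\star := r^2 (\log r^{-1})^{3+4u}$ and $M := c r^4 (\log r^{-1})^{6+8u}$. On $E_r$ every cut-off component's boundary length $a_i$ with $s_i \leq r$ satisfies $a_i \leq L_{s_i-} \leq \ell_\star$, and in particular $a_i^2 \leq M/c$. Conditionally on $L|_{[0,r]}$ the cut-off components are independent Brownian disks with boundary lengths $a_i$, so writing $X_i := \nu_i \wedge M$, which dominates $\nu_i \indicator_{\nu_i \leq M}$, one has $X_i \in [0,M]$, and explicit integration of the area density~\eqref{eqn:bdisk_area_density} gives $\E[X_i \mid a_i] \leq a_i^2$ (indeed the untruncated expectation already equals $a_i^2$).

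The main step is to bound $S := \sum_{i: s_i \leq r} a_i^2$ via an exponential Markov inequality. In the Brownian map case, $L$ is a $3/2$-stable CSBP excursion, and in the Brownian disk center exploration $L$ is a concatenation of $3/2$-stable CSBPs separated by finitely many upward re-targeting jumps, themselves bounded by $\ell_\star$ on $E_r$. In either situation the Lamperti transform~\eqref{eqn:lamperti_csbp_to_levy} identifies the downward jumps of $L$ on $[0,r]$ with the jumps of a spectrally positive $3/2$-stable L\'evy process $Z$ over a time interval of length $\int_0^r L_u\,du \leq r\ell_\star$, and on $E_r$ each such jump has size at most $\ell_\star$. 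Taking $\lambda := \ell_\star^{-2}$ and using $e^{\lambda x^2} - 1 \leq e\,\lambda x^2$ for $x \leq \ell_\star$, Campbell's formula for the Poisson jump measure of $Z$ (whose L\'evy density is a constant multiple of $x^{-5/2}$) yields
\[
\log \E\bigl[\exp(\lambda S) \,\indicator_{E_r}\bigr] \leq r\ell_\star \int_0^{\ell_\star} (e^{\lambda x^2} - 1)\,c_2 x^{-5/2}\,dx \leq C_3\, r\, \ell_\star^{-1/2} \xrightarrow{r \to 0} 0.
\]
Choosing $S_0 := r^4 (\log r^{-1})^{7+10u}$ so that $\lambda S_0 = (\log r^{-1})^{1+2u}$, Chebyshev's inequality yields
\[
\p\bigl[E_r \cap \{S > S_0\}\bigr] \leq \exp\bigl(-c_4 (\log r^{-1})^{1+2u}\bigr),
\]
which decays faster than any polynomial of $r$ since $1+2u > 1$.

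The final step is Bernstein's inequality applied to $A_r \leq \sum_i X_i$ conditionally on $L|_{[0,r]}$, using $X_i \in [0,M]$, $\sum_i \E[X_i \mid L] \leq S$ and $\sum_i \mathrm{Var}(X_i \mid L) \leq MS$. Setting $T := r^4 (\log r^{-1})^{7+12u}$, on the event $\{S \leq S_0\}$ this gives
\[
\p[A_r > T \mid L|_{[0,r]}] \leq \exp\!\Bigl(-\tfrac{(T-S_0)^2/2}{MS_0 + MT/3}\Bigr) \leq \exp\bigl(-c_5 T/M\bigr) = \exp\bigl(-c_5 c^{-1} (\log r^{-1})^{1+4u}\bigr),
\]
since $T/S_0 = (\log r^{-1})^{2u} \to \infty$ for $u > 0$. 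Combining this with the previous step, and absorbing the polynomial normalizing factor from the conditioning on area at least $M$ (which only contributes a polynomial in $r^{-1}$), gives the desired super-polynomial upper bound on $\p[E_r \cap \{A_r > T\}]$. The main obstacle is a rigorous implementation of the Lamperti argument in presence of the re-targeting upward jumps of the Brownian disk center exploration, which I expect can be handled by splitting at the finitely many re-targeting times and invoking the strong Markov property of the CSBP at each such time.
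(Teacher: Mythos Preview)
Your two-step strategy --- bound $S=\sum_i a_i^2$ by an exponential moment, then apply Bernstein conditionally on $L$ --- is a reasonable alternative to the paper's route, and the conditional Bernstein step is fine. The genuine gap lies in the Campbell step, and it is more basic than the re-targeting issue you flag at the end. You assert that the Lamperti transform identifies the downward jumps of $L$ on $[0,r]$ with those of a spectrally positive $3/2$-stable L\'evy \emph{process}. But under $\bminflaw$ the boundary-length process $L$ is the time-reversal of a $3/2$-stable CSBP \emph{excursion}, and its Lamperti image is therefore a $3/2$-stable L\'evy \emph{excursion}, not a process. The jump measure of a stable L\'evy excursion is not a PPP with intensity $c\,x^{-5/2}\,dx\,dt$ --- the excursion conditioning biases the jumps --- and the time window you care about ($L$-time $[0,r]$, i.e.\ $Y$-time near extinction) sits at the endpoint where this bias is strongest. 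So your Laplace-functional identity for $\log\E[e^{\lambda S}\indicator_{E_r}]$ is unjustified as written. The same issue recurs for the $\bdisklaw{\ell}$ center exploration: even between re-targetings, the boundary length is not a raw CSBP under $\bdisklaw{\ell}$; the link to a CSBP passes through $\bdisklawweighted{\ell}$ and a change of measure, so ``concatenation of CSBPs separated by re-targeting jumps'' is not an accurate description.

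The paper's proof is largely devoted to exactly this transfer. It first proves the concentration for the genuine L\'evy process, working directly with the area-valued PPP $\{(t,v)\}$ of intensity $v^{-7/4}\,dv\,dt$ (which merges your two steps into one). It then passes to L\'evy excursions via the straddling-excursion embedding from Bertoin, then to $\bminflaw$ via Lamperti, then to $\bdisklawweighted{\ell}$ by a first-passage argument, and finally to the $\bdisklaw{\ell}$ center exploration by a change of measure using an auxiliary marked point $y$. Repairing your Campbell step would require analogues of all of these transfers; the Bernstein half of your argument is the easy part.
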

\begin{proof}
Set
\[ R = r^3 (\log r^{-1})^{3+4u} \quad \text{and} \quad R' = r^{24} (\log r^{-1})^{36+54u}.\]

\emph{Step~1.} Let $Z$ be a $3/2$-stable L\'evy process with only upward jumps and let $\Lambda = \{(t,u)\}$ be the set of pairs consisting of the jump times and sizes for $Z$.  That is, $(t,u) \in \Lambda$ if and only if $u = Z_t - Z_{t-} > 0$.  Then $\Lambda$ is a Poisson point process with intensity measure given by a constant times $u^{-5/2} \,du \, dt$ where $du,dt$ both denote Lebesgue measure on $\R_+$.  Let $f$ denote the density function for the area of a sample from $\bdisklaw{1}$ as given in~\eqref{eqn:bdisk_area_density}.  We associate with each upward jump of $Z$ an independent random variable $a$ with density given by $f$.  Then $\Lambda_0 = \{(t,u,a)\}$ is a Poisson point process with intensity measure given by a constant times $dt \otimes u^{-5/2} du \otimes f(a) da$ where $dt$, $du$, and $da$ all denote Lebesgue measure on $\R_+$.  Thus \cite[Lemma~4.19]{dms2014mating} implies that $\wt{\Lambda} = \{(t,u^2 a)\} = \{ (t,v)\}$ is a Poisson point process with intensity measure given by a constant times $dt \otimes v^{-7/4} dv$ where again $dt$ and $dv$ denote Lebesgue measure on $\R_+$.  Note that $u^2 a$ is equal in distribution to the area of a Brownian disk with boundary length $u$.

Fix $S \in (0,R')$.  For $k \in \Z$, the number of elements $(t,v) \in \wt{\Lambda}$ with $t \in [1-S,1+S]$ and $v \in (2^{-k-1},2^{-k}]$ is distributed as a Poisson random variable with mean $m_k$ given by a constant times $S 2^{3k/4}$.  Let $k_0 \in \Z$ be the smallest $k \in \Z$ so that $S^{-1/2} m_k \geq 1/2$.
For $k \geq k_0$, \eqref{eqn:poisson_above_mean} with $\alpha = (\log \tfrac{1}{r})^{1+u} S^{-1}$ implies that the probability that there are more than $(\log \tfrac{1}{r})^{1+u} S^{-1} m_k$ such elements is at most $\exp(-(\log \tfrac{1}{r})^{1+u} S^{-1} m_k)$, which decays to $0$ as $S \to 0$ faster than any power of $S$ when $r > 0$ is small enough and fixed.
It likewise decays to $0$ faster than any power of $r$ as $r \to 0$ provided $S \in (0,R')$.  Let $k_1$ be the smallest $k$ so that $2^{-k} \leq r^4 (\log \frac{1}{r})^{6+9u}$.
By~\eqref{eqn:poisson_above_mean} with $\alpha = (\log \frac{1}{r})^{1+u}/m_k$, for each $p > 0$ there exists $r_0 > 0$ so that for all $r \in (0,r_0)$ and for each $k \in [k_1,k_0]\cap\mathbb{Z}$, the probability that we have more than $(\log \tfrac{1}{r})^{1+u}$ elements $(t,v) \in \wt{\Lambda}$ with $t \in [1-S,1+S]$ and $v \in (2^{-k-1},2^{-k}]$ is $O(S^p)$ as $S \to 0$.  Likewise, it tends to $0$ as $r \to 0$ faster than any power of $r$ provided $S \in (0,R')$. 
Altogether, we see that for each $p > 0$ there exists $r_0 > 0$ so that for all $r \in (0,r_0)$ the probability that $\sum_{(t,v) \in \wt{\Lambda} : t \in [1-S,1+S], \, v \leq cr^{4}(\log r^{-1})^{6+8u} } v$ exceeds $r^4 (\log \tfrac{1}{r})^{7+11u}$ is $O(S^p)$ as $S \to 0$. Likewise, it tends to $0$ as $r \to 0$ faster than any power of $r$ provided $S \in (0,R')$.
We also see that for each $q \in [0,\infty)$, the probability that $\sum_{(t,v) \in \wt{\Lambda} : t \in [(q-4R)\vee 0,q+4R], \, v \leq cr^{4}(\log r^{-1})^{6+8u} } v$ exceeds $r^4 (\log \tfrac{1}{r})^{7+11u}$ tends to $0$ as $r \to 0$ faster than any power of $r$,
by the argument in this paragraph with $m_k$ a constant times $R 2^{3k/4}$, $k_0 = \min\{k\in \mathbb{Z}: m_k \geq 1/2\}$ and~\eqref{eqn:poisson_above_mean} used with $\alpha = (\log \frac{1}{r})^{1+u}$ for $k\geq k_0$.

\emph{Step~2.}
We are now going to transfer the result of Step~1 about $3/2$-stable L\'evy processes to the setting of $3/2$-stable L\'evy excursions.  Let $Z$ be a $3/2$-stable L\'evy process and set $S_t = \sup\{0 \vee Z_s : 0 \leq s \leq t\}$ for each $t \geq 0$.  Let also $L = (L_t)_{t \geq 0}$ be a local time of $S-Z$ at $0$ in the sense of \cite[Chapter IV,  Sections 2-4]{bertoin1996levy} and let $L^{-1}$ be the right-continuous inverse of $L$.  Note that we can choose $L$ by setting $L_t = -\inf\{0 \wedge Z_s : 0 \leq s \leq t\}$ for each $t \geq 0$ (see \cite[Chapter VI]{bertoin1996levy}).
By \cite[Chapter VIII, Lemma 1]{bertoin1996levy}, we have that $L^{-1}$ is a stable subordinator of index $1/3$.  Let $A = \sup\{t \leq 1 : Z_t - \inf_{0 \leq s \leq t} Z_s = 0\}$ and $B = \inf\{t \geq 1 : Z_t - \inf_{0 \leq s \leq t} Z_s = 0\}$.  Then $B-A$ is equal to the length of the interval in the complement of the range of $L^{-1}$ which contains $1$.
In particular, by \cite[Chapter~III, Proposition~2\,(i)]{bertoin1996levy}, the probability that $B - A \in [s,2s]$ is of order $s^{1/3}$ as $s \to 0$ and is of order $s^{-1/3}$ as $s\to\infty$.    By \cite[Chapter~VIII, Proposition 15]{bertoin1996levy}, the conditional law of the process 
$(B-A)^{-2/3} (Z_{A+(B-A)t} - \inf_{0 \leq s \leq 1} Z_s)$ is that of a unit length $3/2$-stable L\'evy excursion. By Step~1, the probability that $\sum_{(t,v) \in \wt{\Lambda} : t \in [1-S,1+S], \, v \leq cr^{4}(\log r^{-1})^{6+8u} } v$ exceeds $r^4 (\log \tfrac{1}{r})^{7+11u}$ decays to $0$ as $S \to 0$ faster than any fixed power of $S$ provided $r > 0$ is sufficiently small and fixed.
Therefore,  combining with the above description of the unit length $3/2$-stable L\'evy excursion with the scaling property of a $3/2$-stable L\'evy process,  we obtain that the same is true if $\wt{Z}$ is a $3/2$-stable L\'evy excursion of time length $S \in (0,R')$ and we sum $v$ over all of the associated jumps.
Similarly, for a $3/2$-stable L\'evy excursion $\wt{Z}$ of time length $S \in [R',\infty)$ and for each $p>0$, the probability that the sum of $v$ over all the jumps within some time interval
$I \subset [0,S]$ of length at most $R$ exceeds $r^4 (\log \tfrac{1}{r})^{7+11u}$ decays to $0$ as $r\to 0$ with a decay rate determined solely by $u$ and $p$ and faster than any power of $r$ provided $R' \leq S \leq r^{-p}$.

\emph{Step~3.} We are now going to deduce the result in the setting of $\bminflaw$ from the above estimates.  We will subsequently explain how to transfer the result to the setting of the Brownian disk in Steps~4 and 5 below.
Suppose that $(\CS,d,\nu,x,y)$ has distribution $\bminflaw$ and let $Y_t$ be the time-reversal of the boundary length process so that $Y_t$ is a $3/2$-stable CSBP excursion.
By the Lamperti transform~\eqref{eqn:lamperti_csbp_to_levy}, if we let $s(t) = \inf\{r' \geq 0 : \int_0^{r'} Y_{s'} ds' \geq t\}$ then we have that $\widetilde{Z}_t = Y_{s(t)}$ is a $3/2$-stable L\'evy excursion.
Let $\widetilde{E}_{r}$ denote the event that the sum of $v$ over all the jumps of $\widetilde{Z}$ within some time interval $I$ of length at most $R$ exceeds $r^4 (\log \tfrac{1}{r})^{7+11u}$.
For each $q,q'\in[0,\infty)$ with $0 \leq q'-q \leq r$, let $A_{[q,q']}$ denote the sum of the areas of the components which are cut off by the metric exploration within time interval $[q,q']$ and have area at most $cr^{4}(\log r^{-1})^{6+8u}$ each.
If such $q,q'$ satisfy $\sup_{s\in[q,q']}L_s \leq r^{2}(\log r^{-1})^{3+4u}$, then
the amount of $\widetilde{Z}_t$-time which corresponds to the metric exploration over time interval $[q,q']$ is
\[ \int_{q}^{q'} Y_{(d(x,y)-s)\vee 0} \, ds \leq (q'-q) r^2 (\log r^{-1})^{3+4u} \leq R,\]
and therefore
\begin{align}
\label{eq:Event-CutAreaLarge}
\bigcup_{\substack{q,q'\in[0,\infty)\\0\leq q'-q\leq r}}\biggl\{\sup_{s\in[q,q']}L_s \leq r^{2}(\log r^{-1})^{3+4u}, \, A_{[q,q']} > r^{4}(\log r^{-1})^{7+11u}\biggr\}
\subset \widetilde{E}_{r}.
\end{align}
On the other hand, since the distribution for the time length $\widetilde{\zeta}$ of $\widetilde{Z}$ is given by a constant times $s^{-5/3} ds$,
we see that $\bminflaw[\widetilde{E}_{r}]$ decays to $0$ as $r\to 0$ faster than any polynomial of $r$, by choosing arbitrarily large $p>0$ and integrating
our upper bound on $\bminflaw[\widetilde{E}_{r} \mid \widetilde{\zeta}=s]$ from Step~2
with respect to $s^{-5/3} \, ds$ on $(0,R')$, $[R',r^{-p})$ and $[r^{-p},\infty)$ separately.
In particular, the assertion of the lemma in the case of $\bminflaw$ follows from this estimate on $\bminflaw[\widetilde{E}_{r}]$ and~\eqref{eq:Event-CutAreaLarge}.

\emph{Step~4.} The result of Step~3 can be extended to the setting of an instance sampled from $\bdisklawweighted{\ell}$ (with the same exploration). Indeed,
let $Z'_t$ denote the Lamperti transform of the time-reversal of the boundary length process under $\bdisklawweighted{\ell}$, and let $E_r'$ denote the event that the sum of $v$ over all the jumps of $Z'$ within some time interval $I$ of length at most $R$ exceeds $r^4 (\log \tfrac{1}{r})^{7+11u}$,
so that~\eqref{eq:Event-CutAreaLarge} with $E'_{r}$ in place of $\widetilde{E}_{r}$ holds as events under $\bdisklawweighted{\ell}$.
If we start with a Brownian map instance and explore the filled metric ball until the first time $\tau$ it has boundary length $\ell$,
then by the strong Markov property of the metric exploration the conditional law of the complement $\CD_{\tau}$ given $\tau<\infty$ is $\bdisklawweighted{\ell}$,
and $\{\tau<\infty, \, \CD_{\tau}\text{ satisfies }E'_{r}\}\subset \widetilde{E}_{r}$ as events under $\bminflaw$.
Recall also that $\bdisklawweighted{\ell}[\nu(\CD)\geq c r^4 (\log r^{-1})^{6+8u}] \geq \ell/(3 c^{1/2} r^{2}(\log r^{-1})^{3+4u})$ by~\eqref{eqn:bdiskweighted_area_density} and $\ell \in (0, r^2 (\log r^{-1})^{3+4u}]$ and that $\bminflaw[\tau < \infty]=\bminflaw[\sup_{s\geq 0}L_s \geq \ell] = b_{0}/\ell$
for a constant $b_{0}>0$ as shown in the paragraph of~\eqref{eqn:m_ej_d}. Combining these facts, we get the following upper bound independent of $\ell$ on the probability of $E'_{r}$:
\begin{align}
&\bdisklawweighted{\ell}[E'_{r} \mid \nu(\CD)\geq c r^4 (\log r^{-1})^{6+8u}] \notag \\
&\leq \frac{\bminflaw\bigl[\indicator_{\{\tau<\infty\}}\bminflaw[\CD_{\tau}\text{ satisfies }E'_{r} \mid \tau<\infty] \bigr] }{ \bminflaw[\tau<\infty]\bdisklawweighted{\ell}[\nu(\CD)\geq c r^4 (\log r^{-1})^{6+8u}] } \notag \\
&\leq 3b_{0}^{-1}c^{1/2}r^{2}(\log r^{-1})^{3+4u} \bminflaw[\tau<\infty,\,\CD_{\tau}\text{ satisfies }E'_{r}] \notag \\
&\leq 3b_{0}^{-1}c^{1/2}r^{2}(\log r^{-1})^{3+4u} \bminflaw[\widetilde{E}_{r}]
\label{eq:Event-CutAreaLarge-BM-BDW}
\end{align}
for each $\ell \in (0,r^2 \log(r^{-1})^{3+4u})$.

\emph{Step~5.} On the basis of Steps~3 and 4, we can now obtain the result in the case of the center exploration of an instance $(\CD,d,\nu)$ with law $\bdisklaw{\ell}$ as follows.
Let $\CD_{t}$ denote the target component of the center exploration at time $t$ for each $t \geq 0$ and set $\tau = \inf\{t \geq 0: \nu(\CD_{t})<r^{4}(\log r^{-1})^{6+8u}\}$,
so that on the event $\{\tau>0\}$ we have $\nu(\CD_{\tau}) \leq r^{4}(\log \tfrac{1}{r})^{6+8u} \leq \nu(\CD_{\tau-})$, where $\CD_{\tau-}=\bigcap_{t\in[0,\tau)}\CD_{t}$.
Pick a random marked point $y \in \CD$ according to $\nu/\nu(\CD)$ independently of the center exploration. Then on the event $\{\tau>0\}$,
the conditional probability of $\{y\in\CD_{\tau-}\}$ given a realization of $\CD$ and the center exploration is at least $\nu(\CD_{\tau-})/\nu(\CD) \geq r^{4}(\log \tfrac{1}{r})^{6+8u}/\nu(\CD)$,
and on $\{\tau>0, \, y\in\CD_{\tau-}\}$ the center exploration agrees with the exploration targeted at $y$ over the time interval $[0,\tau)$.
Set $V_{r} = \{\nu(\CD) \geq cr^{4}(\log \tfrac{1}{r})^{6+8u}\} \subset \{\tau > 0\}$, and
recall that $d\bdisklawweighted{\ell} = \bigl( \int \nu(\CD) \, d\bdisklaw{\ell} \bigr)^{-1} \nu(\CD) \, d\bdisklaw{\ell}$,
that $\int \nu(\CD) \, d\bdisklaw{\ell} = \ell^{2}$ by~\eqref{eqn:bdisk_area_density}
and that $\bdisklawweighted{\ell}[V_{r}]/\bdisklaw{\ell}[V_{r}] \leq 9 \ell^{-2} cr^{4}(\log \tfrac{1}{r})^{6+8u}$
by~\eqref{eqn:bdiskweighted_area_density}, \eqref{eqn:bdisk_area_density} and $\ell \in (0, r^2 (\log \tfrac{1}{r})^{3+4u}]$.
Now, keeping writing $E_{r},A_{r}$ for the event and the sum of the areas as in the statement for the \emph{metric exploration targeted at $y$}
and letting $E^{\mathrm{cen}}_{r},A^{\mathrm{cen}}_{r}$ denote those as in the statement for the \emph{center exploration}, we see from~\eqref{eq:Event-CutAreaLarge}
with $E'_{r}$ in place of $\widetilde{E}_{r}$ for instances of $\bdisklawweighted{\ell}$ and from~\eqref{eq:Event-CutAreaLarge-BM-BDW} that the following holds:
\begin{align*}
& r^{4}(\log \tfrac{1}{r})^{6+8u} \bdisklaw{\ell}[V_{r} \cap E^{\mathrm{cen}}_{r} \cap \{A^{\mathrm{cen}}_{r} > r^{4}(\log\tfrac{1}{r})^{7+12u}\}] \\
&\leq \int_{V_{r} \cap E^{\mathrm{cen}}_{r} \cap \{A^{\mathrm{cen}}_{r} > r^{4}(\log r^{-1})^{7+12u}, \, y \in \CD_{\tau-}\}} \nu(\CD) \, d\bdisklaw{\ell} \\
&= \int \nu(\CD) \, d\bdisklaw{\ell} \cdot \bdisklawweighted{\ell}[V_{r} \cap E^{\mathrm{cen}}_{r} \cap \{A^{\mathrm{cen}}_{r} > r^{4}(\log \tfrac{1}{r})^{7+12u}, \, y \in \CD_{\tau-}\}] \\
&\leq \ell^{2} \bdisklawweighted{\ell}[V_{r} \cap E_{r} \cap \{A_{r} > r^{4}(\log \tfrac{1}{r})^{7+12u}, \, y \in \CD_{\tau-}, \, \tau > r\}] \\
&  \qquad + \ell^{2} \bdisklawweighted{\ell}[V_{r} \cap E^{\mathrm{cen}}_{r} \cap \{A^{\mathrm{cen}}_{r} > r^{4}(\log \tfrac{1}{r})^{7+12u}, \, y \in \CD_{\tau-}, \, \tau \leq r\}] \\
&\leq \ell^{2} \bdisklawweighted{\ell}[V_{r} \cap E_{r} \cap \{A_{r} > r^{4}(\log \tfrac{1}{r})^{7+11u}\}] \\
&  \qquad + \ell^{2} \bdisklawweighted{\ell}\biggl[V_{r} \cap \biggl\{\sup_{s\in[0,\tau]}L_s \leq r^{2}(\log \tfrac{1}{r})^{3+4u}, \, A_{[0,\tau]} > r^{4}(\log \tfrac{1}{r})^{7+11u}, \, \tau \leq r \biggr\} \biggr] \\
&\leq 2 \ell^{2} \bdisklawweighted{\ell}[V_{r} \cap E'_{r}] 
  = 2 \ell^{2} \bdisklawweighted{\ell}[V_{r}] \bdisklawweighted{\ell}[E'_{r} \mid V_{r}] \\
&\leq 18cr^{4}(\log \tfrac{1}{r})^{6+8u} \bdisklaw{\ell}[V_{r}]  \bdisklawweighted{\ell}[E'_{r} \mid V_{r}] \\
&\leq 54b_{0}^{-1}c^{3/2}r^{6}(\log \tfrac{1}{r})^{9+12u} \bdisklaw{\ell}[V_{r}] \bminflaw[\widetilde{E}_{r}];
\end{align*}
here, to bound the second term of the fourth line from above by that of the fifth line we used the following two facts.
First, $\sup_{s\in[0,\tau]}L_s = \sup_{s\in[0,\tau)}L_s$ on the event $\{\tau>0\}$ since $L_s$ has only downward jumps.
Second, on the event $\{A^{\mathrm{cen}}_{r} > r^{4}(\log \tfrac{1}{r})^{7+12u}, \, y \in \CD_{\tau-}, \, 0< \tau \leq r\}$,
the components cut off by the center exploration within the time interval $[\tau,r]$
contribute to $A^{\mathrm{cen}}_{r}$ by at most the sum of $\nu(\CD_{\tau})$
and the area of the component cut off at time $\tau$, where the latter is counted only if it is at most $cr^{4}(\log \tfrac{1}{r})^{6+8u}$,
and hence by $y \in \CD_{\tau-}$ and $0< \tau \leq r$,
\begin{align*}
A_{[0,\tau]}
&\geq A^{\mathrm{cen}}_{r} - \nu(\CD_{\tau}) - cr^{4}(\log \tfrac{1}{r})^{6+8u} \\
&> r^{4}(\log \tfrac{1}{r})^{7+12u} - (c+1) r^{4}(\log \tfrac{1}{r})^{6+8u}
  \geq r^{4}(\log \tfrac{1}{r})^{7+11u}.
\end{align*}
Consequently, we obtain
\begin{align*}
\bdisklaw{\ell}[E^{\mathrm{cen}}_{r} \cap \{A^{\mathrm{cen}}_{r} > r^{4}(\log\tfrac{1}{r})^{7+12u}\} \mid V_{r}]
\leq 54b_{0}^{-1}c^{3/2}r^{2}(\log \tfrac{1}{r})^{3+4u} \bminflaw[\widetilde{E}_{r}],
\end{align*}
which has been already shown in Step~3 to decay to $0$ as $r \to 0$ with a decay rate (independent of $\ell \in (0,r^2 \log(r^{-1})^{3+4u})$ and) faster than any polynomial of $r$.
\end{proof}

\begin{proof}[Proof of Theorem~\ref{thm:ball_concentration}, upper bound]
Suppose that $(\CS,d,\nu,x,y)$ has the law $\bminflaw$ conditioned on  having area at least $c r^{4}(\log \frac{1}{r})^{6+8u}$.
As we mentioned earlier, we will prove the upper bound by dominating the amount of area in $B(x,r)$ from above using a subcritical Galton-Watson tree.

Fix $c \geq 1$.  We will adjust its value later in the proof.
Let $E_{0,r}$ denote the event that any component of $\CS \setminus \overline{B(x,s)}$
has boundary length at most $r^2 (\log \frac{1}{r})^{3+4u}$ for any $s \in (0,r(\log \frac{1}{r})^{1+u}]$,
so that $\bminflaw[(E_{0,r})^{c}]$ decays to $0$ as $r \to 0$ faster than any polynomial of $r$ by Lemma~\ref{lem:component_boundary_length}.
Let $E_r$ be as in the statement of Lemma~\ref{lem:num_components}.
Moreover,  arguing as in the paragraph after~\eqref{eqn:m_ej_d} implies that
\begin{align*}
\bminflaw\big[ \nu(\CS) \geq c r^4 \log(r^{-1})^{6+8u}\big] = c_0 \int_{t \geq cr^4 \log(r^{-1})^{6+8u}} t^{-2}dt = c_0 c^{-1} r^{-4} \log(r^{-1})^{-6-8u}
\end{align*}
for some universal constant $c_0 \in (0,\infty)$.  Combining the above, we obtain that $\bminflaw\big[ (E_{0,r})^c \giv \nu(\CS) \geq cr^4 \log(r^{-1})^{6+8u} \big]$ decays to $0$ as $r \to 0$ faster than any polynomial of $r$.  Furthermore,  applying Lemma~\ref{lem:boundary_length} with $c = \log(r^{-1})^{3+4u}$ implies that $\bminflaw\big[ (E_r)^c \big] \leq m_0 \exp(-c_0 \log(r^{-1})^{3+4u}) r^{-2}$ for all $r \in (0,1)$ sufficiently small,  for some universal constants $m_0,  c_0 \in (0,\infty)$.  It follows that $\bminflaw\big[ (E_r)^c \giv \nu(\CS) \geq cr^4 \log(r^{-1})^{6+8u}\big] $ decays to $0$ as $r \to 0$ faster than any polynomial of $r$.  Next,  we consider the exploration described just above Lemma~\ref{lem:num_components} and let $\p$ be the probability measure given by conditioning $\bminflaw$ on the positive probability event $\{\nu(\CS) \geq cr^4 \log(r^{-1})^{6+8u}\}$.  Lemma~\ref{lem:num_components} implies that the number $N$ of components cut off by $B(x,r)$ with area at least $cr^4 \log(r^{-1})^{6+8u}$ satisfies $\p\big[ E_r \cap \{N \geq n\}\big] \leq (1-\delta) \wt{p}_c^{n-1}$ for each $n \in \mathbb{N}$,  $c \geq c_0$,  where $c_0 \geq 1,  \delta \in (0,1)$ are as in the statement of Lemma~\ref{lem:num_components}.  We assume that we have chosen $c$ sufficiently large so that $\sum_{n \geq 1} n (1-\delta) \wt{p}_c^{n-1} < 1$.  Note that each of these $N$ holes are conditionally independent given their boundary lengths.  If the boundary length of such a hole is $\ell$,  then we recall that its law is given by $\bdisklaw{\ell}$ conditioned on having area at least $cr^4 \log(r^{-1})^{6+8u}$.  We then branch the exploration into each of these holes by performing the center exploration and then proceed as in Lemma~\ref{lem:num_components}. Then Lemma~\ref{lem:num_components} implies that inside of each hole,  the number of additional holes which are cut off and have area at least $cr^4 \log(r^{-1})^{6+8u}$ is stochastically dominated by a random variable with mean strictly smaller than $1$ and with an exponential tail.  We have thus shown that the number of holes discovered in the entire branching exploration is dominated from above by a Galton-Watson process with offspring distribution given by a distribution with mean strictly smaller than $1$ and with an exponential tail.  By our choice of $c$,  this Galton-Watson process is subcritical,  so that the law of the total progeny exhibits exponential tails.  Therefore,  the probability that the Galton-Watson tree has more than $\log(r^{-1})^{1+u}$ nodes decays to $0$ as $r \to 0$ faster than any power of $r$.

Next,  we note that \cite[Lemma~4.12]{ms2015mapmaking} implies that the metric net $\cup_{r \geq 0} \partial \fb{x}{r}$ has $\nu$-measure zero for $\bminflaw$-a.e.\  instance of $(\CS,d,\nu,x,y)$.  We claim that the same is true if $(\CS,d,\nu,x,y)$ is sampled from $\bdisklawweighted{\ell}$ instead,  for each $\ell>0$.  Indeed,  we fix $r>0$ and perform the metric exploration from a sample $(\CS,d,\nu,x,y)$ from $\bminflaw$ starting from $x$ and targeted at $y$.  Then we know that the conditional law of $\CS \setminus \fb{x}{r}$ given $L_r$ is equal to $\bdisklawweighted{L_r}$.  Also,  the metric net of $\CS \setminus \fb{x}{r}$ is contained in the metric net of $(\CS,d,\nu,x,y)$ and so the claim follows for a sample from $\bdisklawweighted{L_r}$.  Combining with the rescaling property of the Brownian map,  we obtain that the claim is true for $\bdisklawweighted{\ell}$ for each $\ell>0$.  Furthermore,  observing that the probability measures $\bdisklawweighted{\ell}$ and $\bdisklaw{\ell}$ are mutually absolutely continuous for each $\ell>0$,  and combining with the scaling property of the Brownian disk,  we obtain that the metric net of a sample from $\bdisklaw{\ell}$ has area zero $\bdisklaw{\ell}$-a.e.\ for each $\ell>0$.  Therefore,  combining everything we obtain that on the event that $E_{0,r} \cap E_r$ holds and the Galton-Watson tree has at most $\log(r^{-1})^{1+u}$ nodes,  we see from Lemma~\ref{lem:num_components} that the total mount of area in $B(x,r)$ is at most $r^4 \log(r^{-1})^{8+13u}$ off an event whose probability decays to zero as $r \to 0$ faster than any power of $r$ under $\p$.  It follows that the probability that $\nu(B(x,r)) > cr^4\log(r^{-1})^{8+13u}$ under $\p$ decays to zero as $r \to 0$ faster than any power of $r$.  Hence,  by picking i.i.d.\  points from $\frac{\nu}{\nu(\CS)}$ and arguing as in the proof of the lower bound of the theorem,  we obtain that $\nu(B(z,r)) \leq cr^4 \log(r^{-1})^{8+14u}$ for each $z \in \CS$,  off an event whose probability under $\p$ decays to zero as $r \to 0$ faster than any power of $r$.  Therefore the proof is complete by observing that 
\begin{align*}
\p = \frac{\int_{t \geq cr^4 \log(r^{-1})^{6+8u}} t^{-3/2} \bmlaw{t} dt}{c^{-1} r^{-4} \log(r^{-1})^{6+8u}}
\end{align*}
and combining with the scaling property of the Brownian map.
\end{proof}

\begin{remark}
Alternatively, the upper bound in Theorem~\ref{thm:ball_concentration} can be deduced directly from a result in \cite{LG21}. In fact, for a ``typical point'' $x$,
meaning a point $x$ chosen uniformly according to the volume measure, it has been shown in
\cite[Proposition~11]{LG21} that the $k$-th moment of $\nu(B(x,r))$ is bounded above by $C_0^k\, k! \, r^{4k}$,
where $C_0$ is a constant. Hence, for any $\lambda \in (0, C_0)$ the expectation of $\exp(\lambda \nu(B(x,r)) / r^4 ))$
is finite, so the probability that $\nu(B(x,r))$ is at least a constant times $r^4 \log(r^{-1})$
decays to zero faster than any polynomial, and the upper bound in Theorem~\ref{thm:ball_concentration}
follows by a union bound. However, we decided to include the above proof based on a branching argument since it is different from the proof in \cite{LG21} and it does not rely on such precise moment estimates, so may be of independent interest.
\end{remark}

\section{Percolation exploration}
\label{sec:percolation-exploration}

Later in this work, we are going to use $\SLE_6$ chunks in percolation style arguments
as illustrated in Proposition~\ref{prop:good_chunks_percolate} below, the proof of which
is the purpose of this long section. As necessary preparations for its statement,
in Subsection~\ref{ssec:SLE6hull-MCPU} we introduce a suitable state space $\MCPU{2}$
for random quantum surfaces and certain $\MCPU{2}$-valued random variables defined through $\SLE_{6}$
explorations of quantum disks and wedges considered in Subsection~\ref{subsec:sle_explorations}.
In Subsection~\ref{ssec:statement-percolation-exploration} we state the main result of this
section (Proposition~\ref{prop:good_chunks_percolate}), which formulates a percolation
argument in the setting of a quantum disk weighted by its area, and its analog in the
simpler setting of a quantum wedge (Proposition~\ref{prop:good_chunks_percolate_half_plane}).
We first prove the latter in Subsection~\ref{ssec:good_chunks_percolate_half_plane}, and
then the former in Subsection~\ref{ssec:good_chunks_percolate_disk} on the basis of the latter.

\subsection{Preliminaries: $\SLE_{6}$ hulls as random curve-decorated quantum surfaces} \label{ssec:SLE6hull-MCPU}

Throughout, we consider a radial $\SLE_6$ process $\eta'$ on $\D$ targeted at $0$.
Let $t \in [0, \inf(\eta')^{-1}(0))$. In the same way as in Subsection~\ref{subsec:sle_explorations}
(see also Appendix~\ref{sec:SLE6-chunk-Jordan}), we define the \emph{hull} $K_{t}$ of $\eta'([0,t])$
as the complement in $\D$ of the $0$-containing component of $\D \setminus \eta'([0,t])$, and
we divide the boundary $\partial K_{t}$ of $K_{t}$ into the \emph{top} $\partial K_{t}\cap \D$ of $K_{t}$ and the
\emph{bottom} $\partial K_{t} \cap \partial \D$ of $K_{t}$. On the event $\{\partial K_{t} \cap \partial \D \not= \partial \D \}$,
i.e., that the bottom of $K_{t}$ is not the whole of the unit circle $\partial \D$, we can further divide the top (resp.\ bottom)
into its left and right sides: the \emph{left} (resp.\ \emph{right}) \emph{side}
of the top is the part which is to the left (resp.\ right) of $\eta'(t)$, and the \emph{left} (resp.\ \emph{right}) \emph{side}
of the bottom is the part which is to the left (resp.\ right) of $\eta'(0)$.

Moreover, we also consider below the space $\MCPU{2}$ of
\emph{curve-decorated quantum surfaces with two marked boundary points}, following \cite[Subsection 2.2.5]{gm2017charSLE}.
It is defined as the set of equivalence classes modulo conformal maps of quintuples $(D,\mu,\eta,x,y)$ of a simply connected domain
$D\subsetneq\mathbb{C}$, a Radon measure $\mu$ on $D$, a continuous map $\eta:[0,\infty]\to D\cup\widetilde{\partial}D$
with $\eta(0)\in\widetilde{\partial}D$, and $x,y\in\widetilde{\partial}D\setminus\{\eta(0)\}$ with $x\not=y$
and $\eta(0)\in[x,y]_{\widetilde{\partial}D}^{\ccw}$, where $\widetilde{\partial}D$ denotes the set of prime ends of $D$
and $[x,y]_{\widetilde{\partial}D}^{\ccw}$ the counterclockwise arc of $\widetilde{\partial}D$ from $x$ to $y$.
Noting that each equivalence class $\mathcal{K} \in \MCPU{2}$ has a unique representative of the form
$(\D,\mu_{\mathcal{K}},\eta_{\mathcal{K}},-\sqrt{-1},\sqrt{-1})$ with $\eta_{\mathcal{K}}(0)=1$ by the Riemann mapping theorem,
we equip $\MCPU{2}$ with the \emph{conformal Prokhorov-uniform metric} $\mymathbb{d}^{\mathrm{CPU}}_{2}$ given by
\begin{equation}\label{eq:dCPU2}
\mymathbb{d}^{\mathrm{CPU}}_{2}(\mathcal{K}_{1},\mathcal{K}_{2})
  :=\mymathbb{d}^{\mathrm{P}}_{\D}(\mu_{\mathcal{K}_{1}},\mu_{\mathcal{K}_{2}})+\mymathbb{d}^{\mathrm{U}}_{\D}(\eta_{\mathcal{K}_{1}},\eta_{\mathcal{K}_{2}}),
  \qquad \mathcal{K}_{1},\mathcal{K}_{2} \in \MCPU{2},
\end{equation}
where $\mymathbb{d}^{\mathrm{P}}_{\D}$ is a complete metric
on the space of Radon measures on $\D$ compatible with the vague topology and
$\mymathbb{d}^{\mathrm{U}}_{\D}(\eta_{1},\eta_{2})
  :=\sum_{n=1}^{\infty}2^{-n}\sup_{t\in[0,n]}|\eta_{1}(t)-\eta_{2}(t)|$,
so that $\MCPU{2}$ becomes a complete separable metric space.
Note that a quantum surface $(D,h,\eta,x,y)$ parameterized by a simply connected domain
$D\subsetneq\mathbb{C}$ and equipped with a continuous map $\eta:[0,\infty]\to D\cup\widetilde{\partial}D$
with $\eta(0)\in\widetilde{\partial}D$ and two marked boundary points
$x,y\in\widetilde{\partial}D\setminus\{\eta(0)\}$ with $x\not=y$ and $\eta(0)\in[x,y]_{\widetilde{\partial}D}^{\ccw}$
can be identified with an almost-surely defined random element $(D,\qmeasure{h},\eta,x,y)$ of $\MCPU{2}$,
since $h$ is a measurable function of $\qmeasure{h}$ by \cite[Theorem 1.1 and Remark 1.2]{BSS14equivLQGGFF}.

$\SLE_6$ explorations of quantum surfaces as introduced in Subsection~\ref{subsec:sle_explorations}
naturally define $\MCPU{2}$-valued random variables as follows. Let $\sigma$ be an
\emph{$\MCPU{2}$-stopping time}, i.e., a function $\sigma:\MCPU{2}\to[0,\infty]$
which is a stopping time with respect to the filtration generated by the
$\MCPU{2}$-valued stochastic process $\{\mathcal{K}_{t}\}_{t\geq 0}$ on $\MCPU{2}$
given by $\mathcal{K}_{t}(D,\mu,\eta,x,y):=(D,\mu,\eta(\cdot\wedge t),x,y)$.
We define a Borel subset $E_{\sigma}$ of $\MCPU{2}$, considered as an event for $\omega\in\MCPU{2}$, by
\begin{equation} \label{eq:exploration_Esigma}
E_{\sigma} := \Biggl\{ \omega=(D_{\omega},\mu_{\omega},\eta_{\omega},x_{\omega},y_{\omega}) \in \MCPU{2} \Biggm|
	\begin{minipage}{180pt}
	$\sigma(\omega) \in (0,t_{\eta_{\omega}})$,
	$\widetilde{\partial}K^{\omega}_{\sigma(\omega)}\cap\widetilde{\partial}D_{\omega} \not= \widetilde{\partial}D_{\omega}$,
	$\eta_{\omega}(\sigma(\omega)) \in \widetilde{\partial}D_{\omega}$
	\end{minipage}
	\Biggr\},
\end{equation}
where $t_{\eta_{\omega}}:=\inf\eta_{\omega}^{-1}(\eta_{\omega}(\infty))$ and, for $t \in [0,t_{\eta_{\omega}})$,
$K^{\omega}_{t}$ denotes the complement in $D_{\omega}$ of the component of
$D_{\omega}\setminus\eta_{\omega}([0,t])$ whose closure in $D_{\omega}\cup\widetilde{\partial}D_{\omega}$ contains $\eta_{\omega}(\infty)$ and
$\widetilde{\partial}K^{\omega}_{t}$ denotes its boundary in $D_{\omega}\cup\widetilde{\partial}D_{\omega}$.
Now let $\ell > 0$, suppose that $\CD = (\D,h,0)$ has law $\qdiskweighted{\ell}$, let
$\eta'$ be an independent radial $\SLE_6$ on $\CD$ starting from a uniformly random point
on $\partial \CD$, targeted at $0$ and parameterized by quantum natural time,
let $x_{\CD,\eta'(0)},y_{\CD,\eta'(0)} \in \partial \D\setminus\{\eta'(0)\}$ be such that $\eta'(0)\in[x_{\CD,\eta'(0)},y_{\CD,\eta'(0)}]_{\partial \D}^{\ccw}$
and $\qbmeasure{h}([x_{\CD,\eta'(0)},\eta'(0)]_{\partial \D}^{\ccw}) = \qbmeasure{h}([\eta'(0),y_{\CD,\eta'(0)}]_{\partial \D}^{\ccw}) = \ell/4$,
and set $\sigma_{\CD}:=\sigma(\D,\qmeasure{h},\eta',x_{\CD,\eta'(0)},y_{\CD,\eta'(0)})$.
Then we have $\eta'([t_{\eta'},\infty])=\{0\}$ and $t_{\eta'}=\inf(\eta')^{-1}(0)<\infty$ a.s., and
it follows from Proposition~\ref{prop:SLE6-chunk-Jordan-D} that, a.s.\ on the event
\begin{equation} \label{eq:exploration_Esigma_disk}
\begin{split}
E^{\CD}_{\sigma} &:= \{(\D,\qmeasure{h},\eta',x_{\CD,\eta'(0)},y_{\CD,\eta'(0)}) \in E_{\sigma}\} \\
	&= \bigl\{ \sigma_{\CD} \in (0,t_{\eta'}), \, \partial K_{\sigma_{\CD}}\cap \partial \CD \not= \partial \CD, \, \eta'(\sigma_{\CD}) \in \partial \CD \bigr\},
\end{split}
\end{equation}
$K_{\sigma_{\CD}}\setminus\partial K_{\sigma_{\CD}}$ is a Jordan domain in $\mathbb{C}$ with boundary $\partial K_{\sigma_{\CD}}$,
$\eta'([0,\sigma_{\CD}])\subset K_{\sigma_{\CD}}\cup\partial K_{\sigma_{\CD}}$, and the bottom
$\partial K_{\sigma_{\CD}}\cap \partial \CD$ of $K_{\sigma_{\CD}}$ is a compact interval in $\partial\CD$
containing $\eta'(0)$ in its interior. In particular, letting
$x_{\sigma_{\CD}}$ and $y_{\sigma_{\CD}}$ denote the endpoints of the left and right sides,
respectively, of the bottom of $K_{\sigma_{\CD}}$ other than $\eta'(0)$, we see that
\begin{equation} \label{eq:exploration_disk_N}
\CN^{\CD}_{\sigma} := \bigl( K_{\sigma_{\CD}}\setminus\partial K_{\sigma_{\CD}}, \qmeasure{h}|_{K_{\sigma_{\CD}}\setminus\partial K_{\sigma_{\CD}}}, \eta'(\cdot\wedge\sigma_{\CD}), x_{\sigma_{\CD}}, y_{\sigma_{\CD}} \bigr)
\end{equation}
is an $\MCPU{2}$-valued random variable defined a.s.\ on $E^{\CD}_{\sigma}$.

We remark that the construction in the last paragraph can be applied also to a weight-$2$ quantum wedge
$\CW = (\h,h,0,\infty)$, an independent chordal $\SLE_6$ $\eta'$ on $\h$ from $0$ to $\infty$ parameterized
by quantum natural time, and $x_{\CW},y_{\CW} \in \partial \h$ with $x_{\CW}<0<y_{\CW}$ and $\qmeasure{h}([x_{\CW},0])=\qmeasure{h}([0,y_{\CW}])=1$.
In this case we have $\eta'(\infty)=\infty$ and $t_{\eta'}=\inf(\eta')^{-1}(\infty)=\infty$ a.s.\ and,
as introduced in Subsection~\ref{subsec:sle_explorations}, the hull $K_{t}$ of $\eta'([0,t])$,
its top and bottom are defined for any $t \in [0, \infty)$ in the same way as above
with the $0$-containing component replaced by the unbounded component.
Also for each $\MCPU{2}$-stopping time $\sigma$, thanks to Proposition~\ref{prop:SLE6-chunk-Jordan-H}
an $\MCPU{2}$-valued random variable $\CN^{\CW}_{\sigma}$ is defined a.s.\ on the event
\begin{equation} \label{eq:exploration_Esigma_wedge}
E^{\CW}_{\sigma}:=\{(\h,\qmeasure{h},\eta',x_{\CW},y_{\CW})\in E_{\sigma}\}=\{\sigma_{\CW} \in (0,\infty), \, \eta'(\sigma_{\CW}) \in \partial \CW\}
\end{equation}
by~\eqref{eq:exploration_disk_N} with $\sigma_{\CW}$ in place of $\sigma_{\CD}$, where $\sigma_{\CW}:=\sigma(\h,\qmeasure{h},\eta',x_{\CW},y_{\CW})$.

\subsection{Statement of percolation exploration} \label{ssec:statement-percolation-exploration}

To state the main result of this section (Proposition~\ref{prop:good_chunks_percolate} below),
for each $\delta\in(0,\infty)$ we define an $\MCPU{2}$-stopping time $\sigma_{\delta}$ by
\begin{equation} \label{eq:sigma_delta_A}
\sigma_{\delta}(\omega) := \inf\bigl\{ t \in [\delta,\infty) \bigm|
	\text{$t < t_{\eta_{\omega}}$,
	$\widetilde{\partial}K^{\omega}_{t}\cap\widetilde{\partial}D_{\omega} \not= \widetilde{\partial}D_{\omega}$,
	$\eta_{\omega}(t) \in \widetilde{\partial}D_{\omega}$}
	\bigr\}
\end{equation}
for $\omega=(D_{\omega},\mu_{\omega},\eta_{\omega},x_{\omega},y_{\omega})\in\MCPU{2}$
(recall that $t_{\eta_{\omega}}:=\inf\eta_{\omega}^{-1}(\eta_{\omega}(\infty))$).

\begin{proposition}
\label{prop:good_chunks_percolate}
There exist $A_{0} \in [2,\infty)$ and $c_{0,\max} \in (0,\infty)$ such that for any
$A \in [A_{0},\infty)$ and any $c_{0} \in (0,c_{0,\max}]$ the following is true with
$\epsilon_{0} := c_{0} A^{-2/3}$. Let $\delta > 0$, set
$\sigma := \sigma_{\delta/A} \wedge \delta$ and let $E$ be any Borel subset of $\MCPU{2}$
such that $\qwedge{2}\bigl[ E^{\CW}_{\sigma} \cap \{ \CN^{\CW}_{\sigma} \in E\} \bigm| \sigma < \delta \bigr] \geq 1 - \epsilon_{0}$.
Suppose that $\CD = (\D,h,0)$ has law $\qdiskweighted{1}$,
and consider the following exploration of $\CD$ by radial $\SLE_6$ curves.
Set $D_{0} := \D$, $\CD_{0} := \CD$, let $\eta'_{0}$ be a radial $\SLE_6$ on $\CD_{0}$ starting
from a uniformly random point on $\partial \CD_0$ and targeted at $0$, and set $\Xi_{0}:=\emptyset$.
We then inductively define a sequence $\{(\CD_{j}=(D_{j},h|_{D_{j}},0),\eta'_{j},\Xi_{j})\}_{j\geq 0}$
of triples of a quantum disk $\CD_{j}$ weighted by its area, a radial $\SLE_{6}$ $\eta'_{j}$ on $\CD_{j}$
starting from a uniformly random point $\eta'_{j}(0)$ on $\partial \CD_{j}$ and targeted at $0$,
and a $2^{\mathbb{Z}\cap[0,j)}$-valued random variable $\Xi_{j}$, as follows.

\begin{itemize}
\item Set $\sigma_{j} := \sigma\bigl(D_{j},\qmeasure{h|_{D_{j}}},\eta'_{j},x_{\CD_{j},\eta'_{j}(0)},y_{\CD_{j},\eta'_{j}(0)}\bigr)$
and $E_{j} := E^{\CD_{j}}_{\sigma} \cap \{\sigma_{j}<\delta, \, \CN^{\CD_{j}}_{\sigma} \in E\}$.
If $\sigma_{j} \geq t_{\eta'_{j}}$, set $D_{j+1}:=D_{j}$, $\CD_{j+1}:=\CD_{j}$,
$\eta'_{j+1}:=\eta'_{j}$ and $\Xi_{j+1}:=\Xi_{j}$. If $\sigma_{j} < t_{\eta'_{j}}$,
let $D_{j+1}$ be the $0$-containing component of $D_{j}\setminus \eta'_{j}([0,\sigma_{j}])$,
set $K^{j}_{\sigma_{j}} := D_{j} \setminus D_{j+1}$, and let $\CN_{j}$ and $\CD_{j+1}$
be the quantum surfaces parameterized by $K^{j}_{\sigma_{j}} \setminus \partial K^{j}_{\sigma_{j}}$
and $D_{j+1}$, respectively. If $E_{j}$ occurs, we also say that \emph{$E$ occurs for $\CN_{j}$}.
\item If $\sigma_{j} < t_{\eta'_{j}}$, we take an independent radial $\SLE_6$ $\eta'_{j+1}$
on $\CD_{j+1}$ targeted at $0$, choosing its initial point $\eta'_{j+1}(0)$ and $\Xi_{j+1}$ according to the following rule:
\begin{enumerate}[(a)]
\item\label{it:disk_good_chunk}If $E_{j}$ occurs, let $\eta_{j+1}'(0)$ be the leftmost point of $\partial(\CD_{j} \setminus \CD_{j+1}) \cap \partial \CD_{j}$
  and set $\Xi_{j+1} := \{ j \} \cup \{i \in \Xi_{j} \mid \partial \CD_{j+1} \cap \partial \CN_{i} \not= \emptyset \}$.
\item\label{it:disk_bad_chunk}If $E_{j}$ does not occur,
  $\widetilde{\partial} K^{j}_{\sigma_{j}} \cap \widetilde{\partial} D_{j} \not= \widetilde{\partial} D_{j}$ and
  $\bigcup_{i \in \Xi_{j}} ( \partial \CD_{j+1} \cap \partial \CN_{i} ) \not= \emptyset$,
  noting that $[a_{j},b_{j}]_{\partial \CD_{j+1}}^{\ccw}=\bigcup_{i \in \Xi_{j}} ( \partial \CD_{j+1} \cap \partial \CN_{i} )$
  for unique $a_{j},b_{j} \in \partial \CD_{j+1}$,
  let $\eta'_{j+1}(0)$ be the first point on
  $[a_{j},b_{j}]_{\partial \CD_{j+1}}^{\ccw}$ from $a_{j}$ that belongs to
  \begin{equation} \label{eq:bad_chunks_initial_point_disk}
  \{ b_{j} \} \cup \bigcup\nolimits_{\text{$i \in \Xi_{j}$,\,$\partial \CD_{j+1} \cap \partial \CN_{i}$ has quantum length at least $\epsilon_{0}\delta^{2/3}$}}\partial\CN_{i},
  \end{equation}
  and set $\Xi_{j+1}:=\bigl\{ i \in \Xi_{j} \bigm| \emptyset \not= \partial \CD_{j+1} \cap \partial \CN_{i} \subset [\eta'_{j+1}(0),b_{j}]_{\partial \CD_{j+1}}^{\ccw} \bigr\}$,
  where $[b_{j},b_{j}]_{\partial \CD_{j+1}}^{\ccw} := \{b_{j}\}$.
\item\label{it:disk_bad_chunk_too_big}If $E_{j}$ does not occur,
  $\widetilde{\partial} K^{j}_{\sigma_{j}} \cap \widetilde{\partial} D_{j} \not= \widetilde{\partial} D_{j}$ and
  $\bigcup_{i \in \Xi_{j}} ( \partial \CD_{j+1} \cap \partial \CN_{i} ) = \emptyset$,
  let $\eta_{j+1}'(0)$ be the rightmost point of $\partial(\CD_{j} \setminus \CD_{j+1}) \cap \partial \CD_{j}$
  and set $\Xi_{j+1} := \emptyset$.
\item\label{it:disk_bad_chunk_disconnection}If $E_{j}$ does not occur and
  $\widetilde{\partial} K^{j}_{\sigma_{j}} \cap \widetilde{\partial} D_{j} = \widetilde{\partial} D_{j}$,
  let $\eta'_{j+1}(0):=\eta'_{j}(\sigma_{j})$ and set $\Xi_{j+1}:=\emptyset$.
\end{enumerate}
\end{itemize}
Noting for any $j \geq 0$ that $\bigl\{ \text{$\sigma_{j} < t_{\eta'_{j}}$, $\partial \CD_{j+1} \subset \bigcup_{ i \in \Xi_{j+1} } \partial \CN_{i}$} \bigr\} \subset E_{j}$
and that on $E_{j}$ the bottom left of $\CD_{j} \setminus \CD_{j+1}$ can be written as
$[x_{\sigma_{\CD_{j}}},\eta_{j}'(0)]_{\partial \CN_{j}}^{\ccw}$ with
$x_{\sigma_{\CD_{j}}}$ as in~\eqref{eq:exploration_disk_N}, set
\begin{equation} \label{eq:good_chunks_percolate_round_number}
N_{\delta} := \min\biggl\{ j \geq 0 \biggm|
\begin{minipage}{240.5pt}
$\sigma_{j} < t_{\eta'_{j}}$, $\partial \CD_{j+1} \subset \bigcup_{ i \in \Xi_{j+1} } \partial \CN_{i}$,
$[x_{\sigma_{\CD_{j}}},\eta_{j}'(0)]_{\partial \CN_{j}}^{\ccw} \cap \bigcup_{ i \in \Xi_{j} } ( \partial \CD_{j} \cap \partial \CN_{i} )$
has quantum length at least $\epsilon_{0} \delta^{2/3}$
\end{minipage}\biggr\}.
\end{equation}
Let $u>0$ and define an event $E_{u,\delta}$ by
\begin{equation} \label{eq:good_chunks_percolate_event_boundary_length}
E_{u,\delta} := \biggl\{
\begin{minipage}{306pt}
for any $j \in \mathbb{Z} \cap [0,\delta^{-2/3-u}]$ and any $t \in [0,\sigma_{j}]$,
the boundary length of the $0$-containing component of $D_{j} \setminus \eta'_{j}([0,t])$
is at least $\delta^{2/3-u}$
\end{minipage}
\biggr\}.
\end{equation}
Then there exist $c_{1},c_{2},a \in (0,\infty)$ determined solely by $A,c_{0},u$ such that the following hold:
\begin{enumerate}[(i)]
\item\label{it:good_chunks_percolate_round_number}
  $\mathbb{P}\bigl[ E_{u,\delta} \cap \{ N_{\delta} \geq \delta^{-2/3-u} \} \bigr] \leq c_{1}\exp(-c_{2}\delta^{-a})$.
\item\label{it:good_chunks_percolate_dist_boundary}
\begin{equation} \label{eq:good_chunks_percolate_dist_boundary}
\mathbb{P}\!\left[ E_{u,\delta} \setminus \left\{
\begin{minipage}{230pt}
for each $0 \leq i \leq N_{\delta}$ there exist $n \in \mathbb{Z}\cap[0,\delta^{-u}]$ and
$\{i_{j}\}_{j=1}^{n} \subset \{0,\ldots,N_{\delta}\}$ such that $\partial \CN_{i_{n}} \cap \partial \CD_{0} \not= \emptyset$,
$i_{1} = i$ and $\partial \CN_{i_{j}} \cap \partial \CN_{i_{j+1}} \not= \emptyset$ for each $1 \leq j < n$
\end{minipage}
\right\} \right] \leq c_{1}\exp(-c_{2}\delta^{-a}).
\end{equation}
\end{enumerate}

\end{proposition}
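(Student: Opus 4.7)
The plan is to deduce Proposition~\ref{prop:good_chunks_percolate} from its half-plane analog Proposition~\ref{prop:good_chunks_percolate_half_plane} by a round-by-round local coupling between the weighted quantum disk exploration and a weight-$2$ quantum wedge exploration. The crucial role of $E_{u,\delta}$ is that it forces the boundary length $L_{j}$ of $\partial\CD_{j}$ to remain at least $\delta^{2/3-u}$ during the first $\delta^{-2/3-u}$ rounds, so that each chunk---whose quantum natural time is at most $\delta$---probes only a small portion of $\partial\CD_{j}$ of characteristic scale much smaller than $L_{j}$. In this regime the law of $\CD_{j}$ near the uniformly chosen starting point $\eta'_{j}(0)$ is locally absolutely continuous with respect to the law of a weight-$2$ quantum wedge near its origin, with Radon-Nikodym derivatives bounded uniformly in $j$ on the event $E_{u,\delta}$.

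\textbf{Round-by-round coupling.} First I would prove the following coupling lemma: let $\mathscr{F}_{j-1}$ denote the $\sigma$-algebra generated by $\{(\CD_{i},\eta'_{i},\sigma_{i},\Xi_{i},\CN_{i-1})\}_{i\leq j-1}$, and let $E_{u,\delta}^{(j)}$ denote the analog of $E_{u,\delta}$ restricted to rounds $0,1,\ldots,j-1$; then conditionally on $\mathscr{F}_{j-1}$ and on $E_{u,\delta}^{(j)}$, the random variable $(\eta'_{j}(\cdot\wedge\sigma_{j}),\qmeasure{h}|_{K^{j}_{\sigma_{j}}\setminus\partial K^{j}_{\sigma_{j}}})$, viewed in $\MCPU{2}$, can be coupled with the wedge exploration $(\eta'(\cdot\wedge\sigma_{\CW}),\qmeasure{h^{\CW}}|_{\text{chunk}})$ so that the two agree with probability at least $1-o_{\delta}(1)$ (with a rate depending only on $A,c_{0},u$). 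The hypothesis $\qwedge{2}[E^{\CW}_{\sigma}\cap\{\CN^{\CW}_{\sigma}\in E\}\mid \sigma<\delta]\geq 1-\epsilon_{0}$ then yields
\begin{equation*}
\mathbb{P}[E_{j}\mid \mathscr{F}_{j-1},\,E_{u,\delta}^{(j)}]\geq 1-\epsilon_{0}-o_{\delta}(1).
\end{equation*}
This reduces everything to the following two deterministic book-keeping claims about the exploration driven by the i.i.d.\ sequence of ``good chunk'' indicators.

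\textbf{Proofs of (i) and (ii).} For assertion (i), I would track the progress of the exploration along the ``frontier'' of $\partial\CD_{j}$. Each successful round (i.e., when $E_{j}$ occurs) inserts a new good chunk into $\Xi_{j+1}$ that shares a boundary arc of quantum length at least $\epsilon_{0}\delta^{2/3}$ with the previous accessible portion, so that the cumulative quantum length covered by $\{\CN_{i}\}_{i\in\Xi_{j+1}}$ increases by a definite amount. Since the total initial boundary length is $1$, the exploration must terminate after at most $C\delta^{-2/3}$ successful rounds. The failed rounds---handled by rules (b), (c), (d)---occur independently (up to the $o_{\delta}(1)$ coupling error) with probability at most $\epsilon_{0}+o_{\delta}(1)<c_{0,\max}A^{-2/3}+o_{\delta}(1)$. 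A Chernoff bound, combined with the half-plane percolation estimate, shows that taking $A$ large and $c_{0,\max}$ small makes the number of bad rounds small enough that $N_{\delta}<\delta^{-2/3-u}$ with exponentially high probability. For assertion (ii), the updating rule (a) for $\Xi_{j+1}$ preserves a chain of good chunks back to $\partial\CD_{0}$: every $\CN_{i}$ with $i\in\Xi_{j+1}$ is either directly adjacent to $\partial\CD_{0}$ or to some $\CN_{i'}\in\Xi_{j+1}$ with $i'<i$, by construction. The length of the chain from $\CN_{i}$ to $\partial\CD_{0}$ is bounded by the number of bad rounds (which reset or truncate the chain via rules (b)--(d)) plus the number of good chunks along the frontier between round $i$ and $N_{\delta}$. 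Both counts are bounded by $O(\delta^{-u})$ with exponentially high probability by a Chernoff estimate on the number of bad rounds.

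\textbf{Main obstacle.} The main difficulty is establishing the round-by-round coupling lemma uniformly over the first $\delta^{-2/3-u}$ rounds. The geometry of $\CD_{j}$ is a complicated function of all previous exploration, so a naive union bound over $j$ would lose the desired constants. The key technical input will be a quantitative version of absolute continuity between a quantum disk weighted by its area (with appropriately large boundary length) and a quantum wedge near a uniform boundary point, of the type used to pass from Brownian disks to Brownian half-planes in \cite{gwynne2017scalinglimituniforminfinite}; this should give a total variation bound polynomial in $\delta/L_{j}^{3/2}\leq\delta^{3u/2}$, small enough to absorb into $\epsilon_{0}$ after tuning $c_{0},A,u$. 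Once this quantitative coupling is established, the remaining book-keeping is a direct translation of the argument used to prove Proposition~\ref{prop:good_chunks_percolate_half_plane}.
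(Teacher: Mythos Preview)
Your overall strategy—reduce to the half-plane result via a local comparison—matches the paper's, but the implementation diverges in an important way, and the sketch has a genuine gap.

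The paper does \emph{not} couple round by round. Instead it works in mesoscopic epochs of roughly $N\asymp\ell\,\delta^{-2/3-u/3}$ chunks at a time, and compares the \emph{joint} law of all $N$ chunks in an epoch on the disk versus on the wedge. The comparison is not by total variation but by an explicit Radon--Nikodym derivative (Lemmas~\ref{lem:boundary_length_rn_disk_weighted} and~\ref{lem:boundary_length_rn_disk}): the RN derivative of the disk (chordal) exploration with respect to the wedge exploration up to a stopping time is $(\ell_{\mathrm{end}}/\ell_{\mathrm{start}})^{-5/2}$, where $\ell_{\mathrm{start}},\ell_{\mathrm{end}}$ are the boundary lengths before and after. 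On the wedge event of Lemma~\ref{lem:boundary_length_disconnection_chordal} this ratio stays in $[1/2,2]$, so the RN derivative is uniformly bounded; a further explicit RN comparison (Lemma~\ref{lem:boundary_length_disconnection_radial_unit_disk}) with factor $(\tilde\ell/\ell)^{2}$ passes from chordal to radial. This yields a ``good epoch'' event of probability at least some $p_{0}>0$ uniform in $\delta$, and one then iterates over $\asymp\delta^{-u/3}$ epochs to get the stretched-exponential bound.

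Your round-by-round scheme has a difficulty you have not addressed. Even granting the conditional bound $\mathbb{P}[E_{j}\mid\mathscr{F}_{j-1},E_{u,\delta}^{(j)}]\geq 1-\epsilon_{0}-o_{\delta}(1)$, the subsequent ``deterministic book-keeping'' is not deterministic: the good/bad indicator of $\CN_{j}$ alone does not determine how much of $\partial\CD_{j}$ gets covered or how $\Xi_{j}$ evolves—that depends on the full chunk geometry (top and bottom boundary lengths), whose joint law on the disk is not transferred from the wedge by a per-chunk TV coupling. Proposition~\ref{prop:good_chunks_percolate_half_plane} itself is proved by tracking these boundary-length processes (parts~(\ref{it:move_left})--(\ref{it:good_chunks_initial_boundary})), not merely by counting good rounds, and your claim that ``the cumulative quantum length covered \ldots\ increases by a definite amount'' each good round is exactly the nontrivial content of part~(\ref{it:move_left}) there. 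To make your scheme work you would still need to transfer those boundary-length estimates to the disk exploration, which is precisely what the paper's RN-derivative argument accomplishes over an entire epoch at once; the per-round quantitative TV bound you propose does not obviously supply this, and even if it did, the error $\delta^{3u/2}$ summed over $\delta^{-2/3-u}$ rounds is not small for small $u$.
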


\begin{figure}[ht!]
\begin{center}
\includegraphics[scale=0.85]{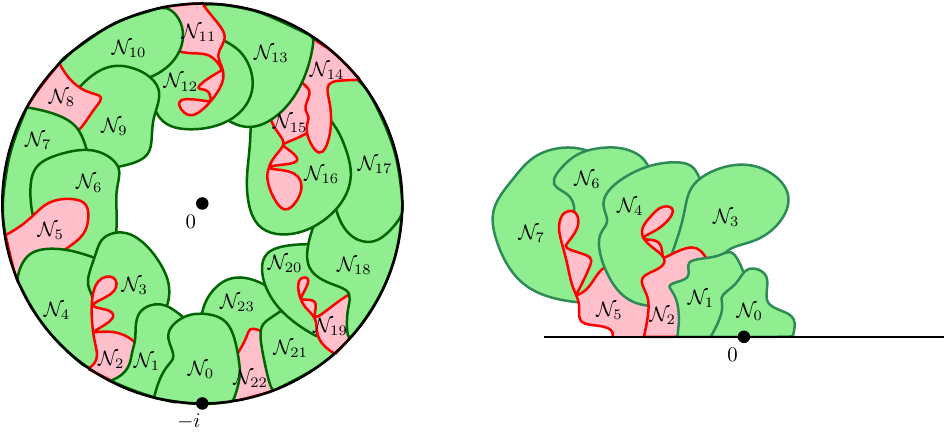}
\end{center}
\caption{\label{fig:exploration_disk_wedge} {\bf Left:} Illustration of the setup of Proposition~\ref{prop:good_chunks_percolate}.  The embedding of $\CD$ into $\D$ is taken so that the interior (resp.\ boundary) marked point is $0$ (resp.\ $-i$).  Chunks for which the event $E$ occurs (resp.\ does not occur) are shown in green (resp.\ red).  We will choose the event $E$ so that the $\CN_i$ for which it occurs are necessarily homeomorphic to $\D$, which is why the green chunks have this property while the red chunks do not.  Shown is the event that the chunks for which $E$ occurs disconnects $\partial \D$ from $0$.  {\bf Right:} Illustration of the setup for Proposition~\ref{prop:good_chunks_percolate_half_plane}, with the same color scheme as on the left.}
\end{figure}

The first step in the proof of Proposition~\ref{prop:good_chunks_percolate} is a related result in the setting of the half-plane.  This setting will be slightly easier to prove because the law of a weight-$2$ quantum wedge is invariant under the operation of exploring a chordal $\SLE_6$ curve for a given amount of quantum natural time.

\begin{proposition} \label{prop:good_chunks_percolate_half_plane}
There exist $A_{0} \in [2,\infty)$ and $c_{0,\max} \in (0,\infty)$ such that for any
$A\in[A_{0},\infty)$ and any $c_{0}\in(0,c_{0,\max}]$ the following is true with $\epsilon_{0}:=c_{0}A^{-2/3}$.
Set $\sigma:=\sigma_{1/A}\wedge 1$ and let $E$ be any Borel subset of $\MCPU{2}$ such that
$\qwedge{2}\bigl[E^{\CW}_{\sigma} \cap \{ \CN^{\CW}_{\sigma} \in E\} \bigm| \sigma < 1 \bigr] \geq 1-\epsilon_{0}$.
Suppose that $\CW = (\h,h,0,\infty)$ has law $\qwedge{2}$, and consider the following
exploration of $\CW$ by chordal $\SLE_6$ curves. Set $H_{0} := \h$, $\CW_{0} := \CW$ and
let $\eta'_0$ be a chordal $\SLE_6$ on $\CW_{0}$ from $0$ to $\infty$.
We then inductively define a sequence $\{(\CW_{j}=(H_{j},h|_{H_{j}},\eta'_{j}(0),\infty),\eta'_{j})\}_{j\geq 0}$
of pairs of a quantum wedge $\CW_{j}$ and a chordal $\SLE_{6}$ $\eta'_{j}$ on $\CW_{j}$ as follows.
\begin{itemize}
\item Set $\sigma_{j} := \sigma(H_{j},\qmeasure{h|_{H_{j}}},\eta'_{j},x_{\CW_{j}},y_{\CW_{j}})$
and $E_{j} := E^{\CW_{j}}_{\sigma} \cap \{ \sigma_{j} < 1, \, \CN^{\CW_{j}}_{\sigma} \in E\}$.
Let $H_{j+1}$ be the unbounded component of $H_{j}\setminus \eta'_{j}([0,\sigma_{j}])$,
set $K^{j}_{\sigma_{j}} := H_{j} \setminus H_{j+1}$, and let $\CN_{j}$ and $\CW_{j+1}$
be the quantum surfaces parameterized by $K^{j}_{\sigma_{j}} \setminus \partial K^{j}_{\sigma_{j}}$
and $H_{j+1}$, respectively. If $E_{j}$ occurs, we also say that \emph{$E$ occurs for $\CN_{j}$}.
\item Let $\eta_{j+1}'$ be an independent chordal $\SLE_6$ on $\CW_{j}$ to $\infty$
whose initial point $\eta'_{j+1}(0)$ is chosen according to the following rule.
If $E_{j}$ occurs, let $\eta_{j+1}'(0)$ be the leftmost point of
$\partial(\CW_{j} \setminus \CW_{j+1}) \cap \partial \CW_{j}$. If $E_{j}$ does not occur,
let $\eta'_{j+1}(0)$ be the first point on $\partial \CW_{j+1}$ that is to the right
of the rightmost point of $\partial(\CW_{j} \setminus \CW_{j+1}) \cap \partial \CW_{j}$
and belongs to
\begin{equation} \label{eq:bad_chunks_initial_point_half_plane}
(\partial \CW_{j+1} \cap \partial \CW_{0}) \cup
\bigcup\nolimits_{\text{$0\leq i<j$,\,$\partial \CW_{j+1} \cap \partial \CN_{i}$ has quantum length at least $\epsilon_{0}$}}\partial\CN_{i}.
\end{equation}
\end{itemize}
Let $L_{j}$ denote the quantum length of the top left of $\CW_{0}\setminus\CW_{j}$
(i.e., the part of $\partial(\CW_{0}\setminus\CW_{j}) \cap \CW_{0}$ which is to the left of $\eta'_{j}(0)$)
minus the quantum length of the bottom left of $\CW_{0}\setminus\CW_{j}$
(i.e., the part of $\partial(\CW_{0}\setminus\CW_{j}) \cap \partial \CW_{0}$ which is to the left of $0$).
Equivalently, set $L_{0}:=0$ and let $L_{j+1} - L_{j}$ be the quantum length of the part of $\partial \CW_{j+1}$
from $\eta'_{j+1}(0)$ to the leftmost point of $\partial(\CW_{j} \setminus \CW_{j+1}) \cap \partial \CW_{j}$
minus the quantum length of the bottom left of $\CW_{j} \setminus \CW_{j+1}$
(i.e., the part of $\partial(\CW_{j} \setminus \CW_{j+1}) \cap \partial \CW_{j}$ which is to the left of $\eta'_{j}(0)$).
Also let $R^{\mathrm{bot}}_{j}$ denote the quantum length of the bottom right of $\CW_{0}\setminus\CW_{j}$
(i.e., the part of $\partial(\CW_{0}\setminus\CW_{j}) \cap \partial \CW_{0}$ which is to the right of $0$).
Then the following hold:
\begin{enumerate}[(i)]
\item\label{it:move_left} There exist constants $c_{1},c_{2} \in (0,\infty)$ determined solely by $A,c_{0}$
  such that $\p[L_{N} \geq -c_{1} N] \leq \exp(-c_{2} N)$ for any $N \in \N$.
  More precisely, $c_{1}$ can be chosen as $c_{1}=\frac{1}{16}cA^{-2/3}\log A$
  with $c\in(0,\infty)$ the constant in Proposition~\ref{prop:running-inf-mean-diverge}.
\item\label{it:growth_right} For each $u\in(0,\frac{1}{2})$, there exists a constant $c_{3} \in (0,\infty)$ determined solely by $u$
  such that $\p[R^{\mathrm{bot}}_{N} \geq N/10] \leq c_{3} N^{u-1/2}$ for any $N \in \N$.
\item\label{it:bad_cluster_size} There exist constants $c_{4},c_{5},c_{6}\in(0,\infty)$
  determined solely by $A,c_{0}$ such that for any $u\in[2,\infty)$ and any $N \in \N$,
  \begin{equation}\label{eq:bad_cluster_size}
  \mathbb{P}\!\left[
    \begin{minipage}{235pt}
    for each $i \in \mathbb{Z}\cap[0,N]$ there exist $n \in \N \cap [1,c_{4}u^{2}]$ and
    $\{i_{j}\}_{j=1}^{n} \subset \mathbb{Z}\cap[0,N]$ such that $i_{1} = i$, $\partial \CN_{i_n} \cap \partial \CW_{0} \neq \emptyset$
    and $\partial \CN_{i_{j}} \cap \partial \CN_{i_{j+1}} \neq \emptyset$ for each $1 \leq j < n$
    \end{minipage}
  \right]
  \geq 1-c_{5}N^{2}\exp(-c_{6}u).
  \end{equation}
\item\label{it:good_chunks_initial_boundary}There exist constants $c_{7},c_{8},c_{9}\in(0,\infty)$
  determined solely by $A,c_{0}$ such that, with $\partial_{\mathrm{L}} \CW := (-\infty,0) \subset \partial \CW$,
  for any $u \in [2,\infty)$ and any $N\in\mathbb{N}\cap[c_{7}u^{3},\infty)$,
  \begin{equation}\label{eq:good_chunks_initial_boundary}
  \mathbb{P}\biggl[
    \begin{minipage}{245pt}
    for each $i \in \mathbb{Z}\cap[0,N]$ there exists $j \in \mathbb{Z}\cap[0,N)$
    such that $\lvert j-i \rvert \leq c_{7}u^{3}$, $\indicator_{E_{j}\cap E_{j+1}}=1$
    and $\partial \CN_{j} \cap \partial_{\mathrm{L}} \CW \not=\emptyset$
    \end{minipage}
  \biggr]
  \geq 1-c_{8}N^{2}\exp(-c_{9}u).
  \end{equation}
\end{enumerate}
\end{proposition}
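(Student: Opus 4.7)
The key structural observation I will rely on is that, by the scale-invariance of the weight-$2$ quantum wedge $\qwedge{2}$ under chordal $\SLE_{6}$ exploration stopped at times of the form $\sigma$, the sequence $\{(\sigma_j,L_{j+1}-L_j,R^{\mathrm{bot}}_{j+1}-R^{\mathrm{bot}}_j,\indicator_{E_j})\}_{j\geq 0}$ is i.i.d. This reduces the analysis of $L_N$ and $R^{\mathrm{bot}}_N$ to i.i.d.\ sums; the only subtleties are heavy ($3/2$-stable) tails and obtaining precise drift constants.

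For part~\ref{it:move_left}, set $\Delta L_j:=L_{j+1}-L_j$. On $E_j$, $\eta'_{j+1}(0)$ is by construction the leftmost point of $\partial(\CW_j\setminus\CW_{j+1})\cap\partial\CW_j$, so that $\Delta L_j=-B_j$ where $B_j=-\inf_{s\leq\sigma_j}L^{(j)}_s$ is the bottom-left quantum length of $\CN_j$ and $L^{(j)}$ is the spectrally-negative $3/2$-stable L\'evy left-length process of $\eta_j'$ in $\CW_j$. I would invoke Proposition~\ref{prop:running-inf-mean-diverge} to extract $\mathbb{E}\!\left[B_j\indicator_{E_j\cap\{\sigma_j<1\}}\right]\geq \tfrac18 c A^{-2/3}\log A$ for sufficiently small $c_0$. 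On $E_j^c$, which has probability $\leq\epsilon_0=c_0A^{-2/3}$, the increment $\Delta L_j$ is dominated by the top- and bottom-left quantum lengths at time $\sigma_j$ plus one repositioning jump, each controlled by a heavy-tailed quantity of the same scale; choosing $c_{0,\max}$ small makes this bad contribution negligible against the good drift, giving $\mathbb{E}[\Delta L_j]\leq -2c_1$. To upgrade to exponential concentration despite the $3/2$-stable tails, I would truncate at a threshold $M=M(A)$ that preserves the mean, apply Hoeffding (or Bernstein after centering) to the truncated sum, and control the excess via $\mathbb{P}[|\Delta L_j|>M]\lesssim M^{-3/2}$ combined with a union bound over $j\leq N$.

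For part~\ref{it:growth_right}, note that $\rho_j:=R^{\mathrm{bot}}_{j+1}-R^{\mathrm{bot}}_j\geq 0$ vanishes on $E_j$ and on $E_j^c$ is stochastically dominated by a heavy-tailed quantity with $3/2$-stable tail, so $\mathbb{E}[\rho_j^{1/2+u}]<\infty$ for any $u<1$; by $p$-subadditivity and the i.i.d.\ property, $\mathbb{E}\!\left[(R^{\mathrm{bot}}_N)^{1/2+u}\right]\leq N\,\mathbb{E}\!\left[\rho_0^{1/2+u}\right]$, and Markov's inequality yields the stated bound. For parts~\ref{it:bad_cluster_size} and~\ref{it:good_chunks_initial_boundary}, I would use a percolation-type union bound: the adjacency of each $\CN_i$ connects it to a bounded-cardinality set of prior indices, determined by the current boundary $\partial\CW_{j+1}$ and the quantum-length cutoff $\epsilon_0$ in~\eqref{eq:bad_chunks_initial_point_half_plane}; the conclusion fails at some $i\leq N$ only if either an anomalously long run of bad events occurs (probability $\epsilon_0^{cu^2}$ for~\ref{it:bad_cluster_size} and $\epsilon_0^{cu^3}$ for~\ref{it:good_chunks_initial_boundary}) or the leftward displacement of the process over a short window is atypically small, which is ruled out by~\ref{it:move_left}. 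Summing over $i\leq N$ produces the $N^2\exp(-c_6u)$ and $N^2\exp(-c_9u)$ bounds; the extra factor of $u$ in~\ref{it:good_chunks_initial_boundary} reflects the additional requirement $E_j\cap E_{j+1}$ together with adjacency to $\partial_{\mathrm L}\CW$.

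The main obstacle is part~\ref{it:move_left}: extracting the precise $A^{-2/3}\log A$ lower bound on the drift requires integrating the $\sigma^{2/3}$-scaling of the $3/2$-stable infimum against the power-law first-hitting density of $\sigma_{1/A}$ over $[A^{-1},1]$, and preserving exponential concentration in $N$ requires careful control of the $3/2$-stable tails via truncation. The other parts reduce to moment and union-bound arguments once~\ref{it:move_left} is in hand.
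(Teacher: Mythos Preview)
Your proposal contains a structural error at the outset: the sequence $\{L_{j+1}-L_j\}_{j\geq 0}$ is \emph{not} i.i.d. On the event $E_j^c$, the repositioning rule~\eqref{eq:bad_chunks_initial_point_half_plane} slides $\eta'_{j+1}(0)$ rightward past any prior good chunks whose remaining exposed top has quantum length below $\epsilon_0$; the length of this slide depends on the entire history of the exploration, not just on $\CN_j$. The paper makes this explicit in~\eqref{eq:SLE6_percolate_H_length_change}: the increment picks up an extra term $\sum_{i\in I^0_j}T^{i,j}$ indexed by a history-dependent set $I^0_j$. The fix (which you do not mention) is to construct a genuinely i.i.d.\ dominating sequence $L'_j$ via~\eqref{eqn:bad_length_change} and prove the pathwise comparison $L_N\leq L'_N$ by the counting argument~\eqref{eq:SLE6_percolate_H_length_comparison}; only then can one run a large-deviation bound. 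Incidentally, truncation plus Hoeffding is unnecessary: $(L'_{j+1}-L'_j)^+$ is bounded by $\epsilon_0+T^0$, which has a finite exponential moment by Proposition~\ref{prop:stable-reflected-exp-moment}, so the Cram\'er-type Lemma~\ref{lem:Cramer_explicit_rate} applies directly. Only the \emph{negative} part has a heavy $3/2$-stable tail, and that works in your favour.

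Your argument for~\eqref{it:growth_right} has two independent errors. First, $\rho_j:=R^{\mathrm{bot}}_{j+1}-R^{\mathrm{bot}}_j$ does \emph{not} vanish on $E_j$: the chunk $\CN_j$ has a positive bottom-right length $B^j_R$ regardless of whether $E_j$ holds, and this can extend the coverage of $(0,\infty)\subset\partial\CW_0$ (see~\eqref{eq:growth_right_comparison}). Second, with exponent $p=\tfrac12+u<1$ your Markov bound gives $\mathbb{P}[R^{\mathrm{bot}}_N\geq N/10]\lesssim N\cdot(N/10)^{-p}=C\,N^{1/2-u}\to\infty$, the wrong direction. The paper instead bounds $R^{\mathrm{bot}}_N\leq\sum_{k<N}B^k_R$ (an i.i.d.\ sum), uses Proposition~\ref{prop:running-inf-mean-upper-bound} to get $\mathbb{E}[B^0_R]\leq 1/20$ for large $A$, and then applies a quantitative weak law with $p\in(1,\tfrac32)$ to obtain $N^{1-p}=N^{u-1/2}$.

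For~\eqref{it:bad_cluster_size} and~\eqref{it:good_chunks_initial_boundary} your sketch is too schematic to be assessable. The actual arguments are delicate precisely because of the history-dependent slides: one must show (Lemma~\ref{lem:right-length-positive-prob-positive}) that with uniformly positive probability a given good chunk's top is never fully eaten by later chunks, then (Lemma~\ref{lem:run_good_chunks_decay_fast}) bound the length of runs of short-top good chunks via a tree construction, and only then assemble the stopping-time bounds~\eqref{eq:bad_cluster_size_conclusion}. The exponent $u^2$ in~\eqref{it:bad_cluster_size} arises from a product of two $u$-scale bounds (on run length and on gap length), not from $\epsilon_0^{cu^2}$ as you suggest.
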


\subsection{Proof of Proposition~\ref{prop:good_chunks_percolate_half_plane}} \label{ssec:good_chunks_percolate_half_plane}
The proof of Proposition~\ref{prop:good_chunks_percolate_half_plane} is long and divided
into several steps. Until the end of Subsection~\ref{ssec:good_chunks_percolate_half_plane},
we fix the situation of the statement of Proposition~\ref{prop:good_chunks_percolate_half_plane},
with $A \in [2,\infty)$ and $c_{0} \in (0,\infty)$ arbitrary and the way of choosing
$A_{0},c_{0,\max}$ specified in the course of the proof, and we also fix the following setting.
Recall that for each $j \geq 0$, $\CW_{j}$ is a weight-$2$ quantum wedge independent of
$\{(\CW_{i} \setminus \CW_{i+1},h|_{\CW_{i} \setminus \CW_{i+1}},\eta'_{i}|_{[0,\sigma_{i}]},\widetilde{L}_{i})\}_{0 \leq i < j}$
by the properties of quantum wedges described in Subsections~\ref{sssec:QD-QS-QW} and~\ref{subsec:sle_explorations},
where $\widetilde{L}_{i}$ denotes the quantum length of the part of $\partial \CW_{i+1}$
from $\eta'_{i+1}(0)$ to the leftmost point of $\partial(\CW_{i} \setminus \CW_{i+1}) \cap \partial \CW_{i}$.
Let $\{L^{j}_{t}\}_{t \geq 0}$ (resp.\ $\{R^{j}_{t}\}_{t \geq 0}$) be the left
(resp.\ right) boundary length process associated with $(\CW_{j},\eta'_{j})$
as introduced in Subsection~\ref{subsec:sle_explorations},
let $T^{j}$ denote the quantum length of the top $\partial(\CW_{j} \setminus \CW_{j+1}) \cap \CW_{j}$
of $\CW_{j} \setminus \CW_{j+1}$ and $B^{j}_{L}$ (resp.\ $B^{j}_{R}$) the quantum
length of the bottom left (resp.\ bottom right) of $\CW_{j} \setminus \CW_{j+1}$.
Then $(\{L^{j}_{t}\}_{t \geq 0},\{R^{j}_{t}\}_{t \geq 0})$ is a pair of independent
$3/2$-stable L\'{e}vy processes with only downward jumps and is independent of
$\{(\{L^{i}_{t}\}_{t \in [0,\sigma_{i}]},\{R^{i}_{t}\}_{t \in [0,\sigma_{i}]})\}_{0 \leq i < j}$ for each $j \geq 0$,
the sequence $\{(\{L^{j}_{t}\}_{t \in [0,\sigma_{j}]},\{R^{j}_{t}\}_{t \in [0,\sigma_{j}]})\}_{j=0}^{^\infty}$
is i.i.d., and we have
\begin{align} \label{eq:expression_top_length}
0 \leq T^{j} &= (L^{j}_{\sigma_{j}}-\inf\nolimits_{0\leq s\leq \sigma_{j}}L^{j}_{s}) + (R^{j}_{\sigma_{j}}-\inf\nolimits_{0\leq s\leq \sigma_{j}}R^{j}_{s}), \\
0 < B^{j}_{L} &= -\inf\nolimits_{0 \leq s \leq \sigma_{j}}L^{j}_{s},
  \qquad 0 < B^{j}_{R} = -\inf\nolimits_{0 \leq s \leq \sigma_{j}}R^{j}_{s},
  \label{eq:expression_bottom_lengths} \\
\sigma_{j} & = ( \inf\{ t \in [1/A,\infty) \mid \eta'_{j}(t) \in \partial \CW_{j} \} ) \wedge 1
  = ( \tau^{L}_{j} \wedge \tau^{R}_{j} ) \wedge 1
\label{eq:expression_sigma_running_inf}
\end{align}
(for~\eqref{eq:expression_sigma_running_inf} recall~\eqref{eq:sigma_delta_A}),
where $\tau^{L}_{j}:=\inf\{ t \in [1/A,\infty) \mid L^{j}_{t} = \inf_{0 \leq s \leq t}L^{j}_{s} \}$
and $\tau^{R}_{j}:=\inf\{ t \in [1/A,\infty) \mid R^{j}_{t} = \inf\nolimits_{0 \leq s \leq t}R^{j}_{s} \}$.
Also, following Appendix~\ref{sec:Levy_process_estimates}, let $X^{1}, X^{2}$ be i.i.d.\ $3/2$-stable L\'{e}vy
processes with only downward jumps and starting from $0$, and set $I^{j}_{t} := \inf_{0 \leq s \leq t} X^{j}_{s}$,
$\tau^{j} := \inf\{t \in [1,\infty) \mid X^{j}_{t} = I^{j}_{t}\}$ for $j=1,2$ and $\tau := \tau^{1} \wedge \tau^{2}$,
so that~\eqref{eq:expression_sigma_running_inf} and the scaling property of $X^{1},X^{2}$ imply that
$\bigl(\{L^{j}_{t/A}\}_{t \geq 0},\{R^{j}_{t/A}\}_{t \geq 0},A(\tau^{L}_{j} \wedge \tau^{R}_{j})\bigr)$ and
$\bigl(\{A^{-2/3}X^{1}_{t}\}_{t \geq 0},\{A^{-2/3}X^{2}_{t}\}_{t \geq 0},\tau\bigr)$
have the same law for any $j \geq 0$.

We first prove Proposition~\ref{prop:good_chunks_percolate_half_plane}-\eqref{it:growth_right},
which is an easy consequence of the properties mentioned in the previous paragraph,
Proposition~\ref{prop:running-inf-mean-upper-bound} and~\eqref{eqn:infbound}.

\begin{proof}[Proof of Proposition~\ref{prop:good_chunks_percolate_half_plane}-\eqref{it:growth_right}]
Setting $R^{\mathrm{bot}}_{0}:=0=:R^{\mathrm{top}}_{0}$ and letting $R^{\mathrm{top}}_{j}$
denote the quantum length of the part of $\partial \CW_{j}$ from $\eta'_{j}(0)$ to the rightmost
point of $\partial(\CW_{0} \setminus \CW_{j}) \cap \partial \CW_{0}$ for each $j \geq 1$,
we easily see from the definition of the exploration that for any $j \geq 1$,
\begin{equation} \label{eq:growth_right_comparison}
R^{\mathrm{bot}}_{j}
  =R^{\mathrm{bot}}_{j-1}+(B^{j-1}_{R}-R^{\mathrm{top}}_{j-1})^{+}
  \leq R^{\mathrm{bot}}_{j-1}+B^{j-1}_{R}
  \leq \sum_{k=0}^{j-1}B^{k}_{R}.
\end{equation}
As noted in the first paragraph of Subsection~\ref{ssec:good_chunks_percolate_half_plane},
$\bigl\{\bigl(\{R^{j}_{t/A}\}_{t \in [0,A\sigma_{j}]},A\sigma_{j}\bigr)\bigr\}_{j=0}^{\infty}$
is i.i.d.\ with the same law as $\bigl(\{A^{-2/3}X^{2}_{t}\}_{t \in [0,\tau\wedge A]},\tau\wedge A\bigr)$,
and therefore $\{B^{j}_{R}\}_{j=0}^{^\infty}$ is i.i.d.\ with the same law as
$-A^{-2/3}I^{2}_{\tau \wedge A}$ by~\eqref{eq:expression_bottom_lengths}. Moreover,
letting $p\in(1,\frac{3}{2})$, by the scaling property of $X^{2}$ and
\cite[Chapter VIII, Proposition 4]{bertoin1996levy} we have
\begin{equation} \label{eq:bottom_right_p-th-moment}
\mathbb{E}\bigl[(B^{0}_{R})^{p}\bigr]
  =\mathbb{E}\bigl[(A^{-2/3}|I^{2}_{\tau \wedge A}|)^{p}\bigr]
  \leq \mathbb{E}\bigl[(A^{-2/3}|I^{2}_{A}|)^{p}\bigr]
  = \mathbb{E}\bigl[|I^{2}_{1}|^{p}\bigr]
  <\infty,
\end{equation}
which further implies that for any $s \in (0,\infty)$,
\begin{align} \label{eq:bottom_right_WLLN_tail}
s\mathbb{P}[ B^{0}_{R} \geq s ]
  &\leq s^{1-p} \mathbb{E}\bigl[(B^{0}_{R})^{p}\bigr]
  \leq s^{1-p} \mathbb{E}\bigl[|I^{2}_{1}|^{p}\bigr], \\
\mathbb{E}[ B^{0}_{R} ] - \mathbb{E}\bigl[ B^{0}_{R} \indicator_{\{ B^{0}_{R} \leq s \}} \bigr]
  &= \mathbb{E}\bigl[ B^{0}_{R} \indicator_{\{ B^{0}_{R} > s \}} \bigr]
  \leq s^{1-p} \mathbb{E}\bigl[ (B^{0}_{R})^{p} \bigr]
  \leq s^{1-p} \mathbb{E}\bigl[|I^{2}_{1}|^{p}\bigr].
\label{eq:bottom_right_WLLN_mean_tail}
\end{align}
It follows from~\eqref{eq:bottom_right_WLLN_mean_tail} with $p=\frac{5}{4}$,
a version \cite[Exercise 1.2.11]{Str2011} of the weak law of large numbers with an explicit
remainder estimate and~\eqref{eq:bottom_right_WLLN_tail} that for any $N \in \mathbb{N}$
and any $s \in \mathbb{R}$ with $s > N^{-1/4} \mathbb{E}\bigl[|I^{2}_{1}|^{5/4}\bigr]$,
\begin{align}
&\mathbb{P}\Biggl[ \Biggl|\sum_{j=0}^{N-1} B^{j}_{R} - N \mathbb{E}[ B^{0}_{R} ]\Biggr| \geq Ns \Biggr] \notag \\
&\leq \mathbb{P}\Biggl[ \Biggl|\frac{1}{N}\sum_{j=0}^{N-1} B^{j}_{R} - \mathbb{E}\bigl[ B^{0}_{R} \indicator_{\{B^{0}_{R} \leq N\}} \bigr]\Biggr| \geq s - N^{-1/4} \mathbb{E}\bigl[|I^{2}_{1}|^{5/4}\bigr] \Biggr] \notag \\
&\leq 2N^{-1}\bigl(s - N^{-1/4} \mathbb{E}\bigl[|I^{2}_{1}|^{5/4}\bigr]\bigr)^{-2}\int_{0}^{N}t^{1-p} \mathbb{E}\bigl[|I^{2}_{1}|^{p}\bigr]\,dt + N^{1-p} \mathbb{E}\bigl[|I^{2}_{1}|^{p}\bigr] \notag \\
&= \Bigl( 2(2-p)^{-1}\bigl(s - N^{-1/4} \mathbb{E}\bigl[|I^{2}_{1}|^{5/4}\bigr]\bigr)^{-2} + 1\Bigr) \mathbb{E}\bigl[|I^{2}_{1}|^{p}\bigr] N^{1-p}.
\label{eq:bottom_right_WLLN_apply}
\end{align}
Finally, by Proposition~\ref{prop:running-inf-mean-upper-bound} and
\eqref{eqn:infbound}, as long as $A$ is large enough,
$\mathbb{E}[B^{0}_{R}]=\mathbb{E}[ -A^{-2/3}I^{2}_{\tau \wedge A} ] \leq 1/20$,
and for any such $A$ and any $N \in \mathbb{N}$ with $N \geq 40^{4}\mathbb{E}\bigl[ |I^{2}_{1}|^{5/4} \bigr]^{4}$
we see from~\eqref{eq:growth_right_comparison}, $\mathbb{E}[B^{0}_{R}]\leq 1/20$
and~\eqref{eq:bottom_right_WLLN_apply} with $s=1/20$ that
$\mathbb{P}[ R^{\mathrm{bot}}_{N} \geq N/10 ] \leq c_{3} N^{1-p}$
with $c_{3}:=(3200(2-p)^{-1}+1) \mathbb{E}\bigl[|I^{2}_{1}|^{p}\bigr]$,
completing the proof.
\end{proof}

\begin{figure}[ht!]
\begin{center}
\includegraphics[scale=0.85]{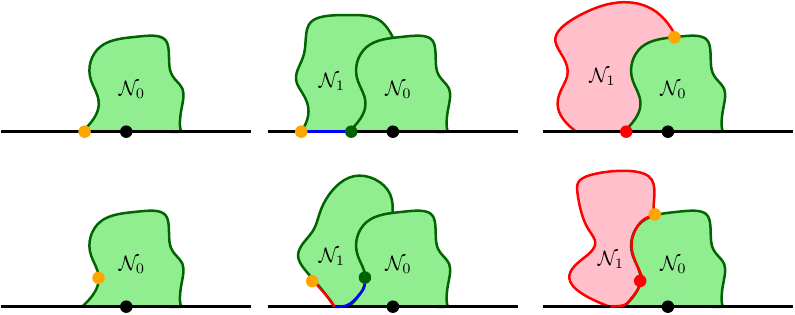}	
\end{center}
\caption{\label{fig:exploration_def} {\bf Top:} Shown on the left is the exploration in the statement of Proposition~\ref{prop:good_chunks_percolate_half_plane}
when the first chunk satisfies the event $E$. The exploration continues starting from the orange dot.
Shown on the top middle is the exploration when the second chunk also satisfies the event $E$ and the exploration continues from the orange dot.
On the top right, the second chunk does not satisfy $E$ and the exploration starts from the orange dot.
{\bf Bottom:} Shown is the exploration in the proof of Proposition~\ref{prop:good_chunks_percolate_half_plane},
which is more to the right than the exploration described in the statement of Proposition~\ref{prop:good_chunks_percolate_half_plane}.
Whenever a chunk for which $E$ occurs is discovered, the exploration continues from the point which is $\epsilon_0$ units
of boundary length from the leftmost intersection of the chunk with the surface boundary (orange dot, bottom middle).
Whenever a chunk for which $E$ does not occur is discovered, the exploration continues from the rightmost intersection
of the chunk with the surface boundary (orange dot, bottom right).}
\end{figure}

In order to prove Proposition~\ref{prop:good_chunks_percolate_half_plane}-\eqref{it:move_left},
we will consider a slightly modified exploration, illustrated in  Figure~\ref{fig:exploration_def},
for which the analog of  $\{L_{j}\}_{j=0}^{\infty}$, denoted by $\{L'_{j}\}_{j=0}^{\infty}$, dominates
$\{L_{j}\}_{j=0}^{\infty}$ in the sense that $L_N\leq L_N'$ for any $N\geq 1$. Moreover,
$L'_{N} = \sum_{j=0}^{N-1} (L'_{j+1} - L'_{j})$  and the increments $\{L'_{j+1} - L'_{j}\}_{j=0}^{\infty}$
form a sequence of i.i.d.\ random variables with negative mean and such that $(L'_{j+1} - L'_{j})^+$
has a finite exponential moment. Then Proposition~\ref{prop:good_chunks_percolate_half_plane}-\eqref{it:move_left}
can be derived from the following large deviation bound. This rather explicit large deviation estimate is required
in order to prove that the constants $c_{1},c_{2}$ in Proposition~\ref{prop:good_chunks_percolate_half_plane}-\eqref{it:move_left}
can be chosen to be dependent only on $A,c_{0}$.

\begin{lemma} \label{lem:Cramer_explicit_rate}
Let $\{ Y_{n} \}_{n=1}^{\infty}$ be i.i.d.\ real random variables, let
$\beta,\delta,K,M \in (0,\infty)$, $p \in (1,\infty)$ and assume that
$\mathbb{E}[e^{\beta Y_{1}^{+}}] \leq K$, $\mathbb{E}[ (Y_{1}^{-})^{p} ] \leq M$ and
$\mathbb{E}[ Y_{1} ] \leq -\delta$. Then there exists $\lambda \in (0,\infty)$
which is an explicit function of $\beta,\delta,K,M,p$ such that
\begin{equation} \label{eq:Cramer_explicit_rate}
\mathbb{P}\Biggl[ \sum_{j=1}^{n} Y_{j} \geq -\frac{1}{4} \delta n \Biggr] \leq e^{-\delta \lambda n/8}
  \qquad \text{for any $n \in \mathbb{N}$.}
\end{equation}
\end{lemma}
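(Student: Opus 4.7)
My plan is to apply the standard Cram\'er--Chernoff bound, with care to make the exponent $t$ explicit. Markov's inequality gives for any $t>0$
\[ \mathbb{P}\Biggl[\sum_{j=1}^{n}Y_{j}\geq-\tfrac{\delta n}{4}\Biggr]\leq e^{\delta tn/4}(\mathbb{E}[e^{tY_{1}}])^{n}, \]
so it suffices to produce $t^{*}>0$, explicit in $\beta,\delta,K,M,p$, such that $\mathbb{E}[e^{t^{*}Y_{1}}]\leq e^{-t^{*}\delta/2}$; the right-hand side above is then at most $e^{-t^{*}\delta n/4}$ and we may take $\lambda:=2t^{*}$.

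The heart of the proof will be an explicit bound on the MGF. I would write $\mathbb{E}[e^{tY_{1}}]=1+t\mathbb{E}[Y_{1}]+R(t)$ with $R(t):=\mathbb{E}[e^{tY_{1}}-1-tY_{1}]\geq 0$, and split $R(t)$ by the sign of $Y_{1}$. On $\{Y_{1}\geq 0\}$, the Taylor remainder estimate $e^{tY_{1}}-1-tY_{1}\leq\tfrac{1}{2}(tY_{1})^{2}e^{tY_{1}}$, combined with the elementary inequality $x^{2}\leq(16/\beta^{2})e^{\beta x/2}$ for $x\geq 0$ and the hypothesis $\mathbb{E}[e^{\beta Y_{1}^{+}}]\leq K$, gives for each $t\in(0,\beta/2]$
\[ \mathbb{E}\bigl[(e^{tY_{1}}-1-tY_{1})\indicator_{\{Y_{1}\geq 0\}}\bigr]\leq\frac{8K}{\beta^{2}}t^{2}. \]
On $\{Y_{1}<0\}$, setting $q:=\min(p,2)\in(1,2]$, a one-line case split at $|x|=1$ (using $|x|^{2}\leq|x|^{q}$ when $|x|\leq 1$, and $|x|\leq|x|^{q}$ when $|x|\geq 1$) yields $e^{x}-1-x\leq 2|x|^{q}$ for all $x\leq 0$; together with the Jensen bound $\mathbb{E}[(Y_{1}^{-})^{q}]\leq M^{q/p}$ this gives $\mathbb{E}\bigl[(e^{tY_{1}}-1-tY_{1})\indicator_{\{Y_{1}<0\}}\bigr]\leq 2M^{q/p}t^{q}$. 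Summing,
\[ \mathbb{E}[e^{tY_{1}}]\leq 1-t\delta+\frac{8K}{\beta^{2}}t^{2}+2M^{q/p}t^{q},\qquad t\in(0,\beta/2]. \]

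It then remains to choose $t^{*}$ so that each of the two correction terms is at most $t^{*}\delta/4$. An explicit choice is
\[ t^{*}:=\min\Biggl(\frac{\beta}{2},\;\frac{\delta\beta^{2}}{32K},\;\biggl(\frac{\delta}{8M^{q/p}}\biggr)^{\!1/(q-1)}\Biggr), \]
which gives $\mathbb{E}[e^{t^{*}Y_{1}}]\leq 1-t^{*}\delta/2\leq e^{-t^{*}\delta/2}$ and hence~\eqref{eq:Cramer_explicit_rate} with $\lambda:=2t^{*}$, manifestly an explicit function of $\beta,\delta,K,M,p$. No step is a genuine obstacle; the only place requiring slight attention is the case $p\in(1,2)$, where one must use the exponent $q=p<2$ in the negative-part bound (and hence also in the third entry of the minimum defining $t^{*}$), but the resulting $\lambda$ is still strictly positive.
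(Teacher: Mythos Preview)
Your proof is correct and follows the classical Chernoff route directly: you Taylor-expand the MGF of $Y_1$ itself, splitting the second-order remainder according to the sign of $Y_1$, and control the positive part via the exponential moment hypothesis and the negative part via the $p$-th moment hypothesis (using the exponent $q=\min(p,2)$). All the elementary inequalities you invoke check out, and the explicit choice of $t^*$ as a three-way minimum gives the required bound.

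The paper takes a different route. Rather than working with $Y_1$ directly, it first truncates from below: setting $a:=(2M/\delta)^{1/(p-1)}$ and $\widetilde{Y}_j:=(Y_j+a)^+-\mathbb{E}[(Y_1+a)^+]$, one obtains a mean-zero variable with $|\widetilde{Y}_1|\leq Y_1^++2a+\beta^{-1}K$, so that $\mathbb{E}[e^{\beta|\widetilde{Y}_1|}]$ is finite and explicitly bounded. A H\"older argument shows $\mathbb{E}[(Y_1+a)^+]-a\leq-\delta/2$, hence $\{\sum Y_j\geq-\delta n/4\}\subset\{\sum\widetilde{Y}_j\geq\delta n/4\}$, and the latter is estimated by a cruder Chernoff bound using only $\mathbb{E}[e^{\lambda\widetilde{Y}_1}]\leq 1+(\lambda/\beta)^2\mathbb{E}[e^{\beta|\widetilde{Y}_1|}]$. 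Your direct approach avoids the truncation step and the associated H\"older estimate, at the cost of the sign-splitting and the fractional-power bound on the negative side; the paper's approach avoids the latter at the cost of the former. Both yield explicit $\lambda$, and neither is strictly simpler.
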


\begin{proof}
Set $a := (2M/\delta)^{\frac{1}{p-1}}$ and $b := \mathbb{E}[ (Y_{1}+a)^{+} ]$, so that
$b \leq \mathbb{E}[ Y_{1}^{+} ] + a \leq \beta^{-1} K + a$. Then by H\"{o}lder's and Markov's inequalities,
\begin{align*}
b - a - \mathbb{E}[ Y_{1} ]
  = \mathbb{E}[ (Y_{1}+a)^{-} ]
  \leq \mathbb{E}\bigl[ Y_{1}^{-} \indicator_{ \{ Y_{1}^{-} \geq a \} } \bigr]
  &\leq \mathbb{E}\big[ (Y_{1}^{-})^{p} \big]^{1/p} \, \mathbb{P}\big[ Y_{1}^{-} \geq a \big]^{1-1/p} \\
&\leq a^{1-p} \,  \mathbb{E}\big[ (Y_{1}^{-})^{p} \big]
  \leq a^{1-p} M
  = \tfrac{1}{2} \delta,
\end{align*}
and hence $a-b \geq  - \mathbb{E}[ Y_{1} ] - \frac{1}{2} \delta \geq \frac{1}{2} \delta$.
Therefore, setting $\widetilde{Y}_{j} := (Y_{j}+a)^{+} - b$ for $j \in \mathbb{N}$ and noting that
$\mathbb{E}[ \widetilde{Y}_{1} ] = 0$, we see that for any $n \in \mathbb{N}$ and any $\lambda \in (0,\beta]$,
\begin{align}
\mathbb{P}\Biggl[ \sum_{j=1}^{n} Y_{j} \geq - \frac{1}{4} \delta n \Biggr]
  &\leq \mathbb{P}\Biggl[ \sum_{j=1}^{n} \widetilde{Y}_{j} \geq \Bigl( a - b - \frac{1}{4} \delta \Bigr) n \Biggr]
  \leq \mathbb{P}\Biggl[ \sum_{j=1}^{n} \widetilde{Y}_{j} \geq \frac{1}{4} \delta n \Biggr] \notag \\
&\leq e^{- \delta \lambda n/4} \,  \mathbb{E}\Biggl[ \exp\biggl( \lambda \sum_{j=1}^{n} \widetilde{Y}_{j} \biggr) \Biggr]
  = e^{- \delta \lambda n/4} \,  \bigl( \mathbb{E}[ e^{ \lambda \widetilde{Y}_{1} } ] \bigr)^{n} \notag \\
&\leq e^{- \delta \lambda n/4} \biggl( 1 + \sum_{k=2}^{\infty} \frac{ (\lambda/\beta)^{k} }{ k! } \mathbb{E}\bigl[ ( \beta | \widetilde{Y}_{1} | )^{k} \bigr] \biggr)^{n} \notag \\
&\leq e^{- \delta \lambda n/4} \Bigl( 1 + ( \lambda / \beta )^{2} \mathbb{E}\bigl[ e^{ \beta | \widetilde{Y}_{1} | } \bigr] \Bigr)^{n} \notag \\
&\leq \exp\Bigl( - \delta \lambda n/4 + n ( \lambda / \beta )^{2} \, \mathbb{E}\bigl[ e^{ \beta | \widetilde{Y}_{1} | } \bigr] \Bigr).
\label{eq:Cramer_explicit_rate_proof}
\end{align}
Now since $|\widetilde{Y}_{1}| \leq Y_{1}^{+} + a + b \leq Y_{1}^{+} + 2a + \beta^{-1} K$ and thus
$\mathbb{E}\bigl[ e^{ \beta | \widetilde{Y}_{1} | } \bigr] \leq K e^{2 \beta a+K}$,
\eqref{eq:Cramer_explicit_rate} follows by choosing
$\lambda := \bigl( \frac{1}{8} \delta \beta^{2} K^{-1} e^{-2 \beta a-K} \bigr) \wedge \beta$
in~\eqref{eq:Cramer_explicit_rate_proof}. 
\end{proof}

\begin{proof}[Proof of Proposition~\ref{prop:good_chunks_percolate_half_plane}-\eqref{it:move_left}]
We define $\{L'_{j}\}_{j=0}^{\infty}$ by $L'_{0}:=0$ and
\begin{equation} \label{eqn:bad_length_change}
L'_{j+1} - L'_{j} := (\epsilon_{0}-B^{j}_{L}) \indicator_{E_{j}} + (T^{j} - B^{j}_{L}) \indicator_{E_{j}^{c}}.
\end{equation}
Then since $\{(\CW_{j} \setminus \CW_{j+1},h|_{\CW_{j} \setminus \CW_{j+1}},\eta'_{j}|_{[0,\sigma_{j}]})\}_{j=0}^{\infty}$
is i.i.d.\ and $\indicator_{E_{j}},T^{j},B^{j}_{L},B^{j}_{R}$ are measurable with respect to
$(\CW_{j} \setminus \CW_{j+1},h|_{\CW_{j} \setminus \CW_{j+1}},\eta'_{j}|_{[0,\sigma_{j}]})$
by~\eqref{eq:exploration_Esigma_wedge}, \eqref{eq:exploration_disk_N}, \eqref{eq:expression_top_length}
and~\eqref{eq:expression_bottom_lengths} for any $j \geq 0$,
we have that $\{(\indicator_{E_{j}},T^{j},B^{j}_{L},B^{j}_{R})\}_{j=0}^{\infty}$ is i.i.d.\ and
thus $\{ L'_{j+1} - L'_{j} \}_{j=0}^{\infty}$ is also i.i.d.
Note that $\{L'_{j}\}_{j=0}^{\infty}$ defined by~\eqref{eqn:bad_length_change} has the law
of the analog of $\{L_{j}\}_{j=0}^{\infty}$ for another exploration in which $\eta'_{j+1}(0)$
is instead chosen on the event $E_{j}$ to be the point on $\partial \CW_{j+1}$ which is
$\epsilon_{0}$ boundary length units to the right of the leftmost point of
$\partial(\CW_{j} \setminus \CW_{j+1}) \cap \partial \CW_{j}$ and on the event $E_{j}^{c}$
to be the rightmost point of $\partial(\CW_{j} \setminus \CW_{j+1}) \cap \partial \CW_{j}$;
see Figure~\ref{fig:exploration_def} for an illustration.
This is because $\CW_{j}$ is a weight-$2$ quantum wedge independent of
$\{(\CW_{i} \setminus \CW_{i+1},h|_{\CW_{i} \setminus \CW_{i+1}},\eta'_{i}|_{[0,\sigma_{i}]},\widetilde{L}_{i})\}_{0 \leq i < j}$
for each $j \geq 0$ for either of the ways of choosing $\{\eta'_{j+1}(0)\}_{j=0}^{\infty}$, by the
properties of quantum wedges described in Subsections~\ref{sssec:QD-QS-QW} and~\ref{subsec:sle_explorations}.

We claim that $L_{N} \leq L'_{N}$ for any $N \geq 1$. Indeed, let $j \geq 0$, set
\begin{equation} \label{eq:SLE6_percolate_H-right-chunks}
I_{j} := \Biggl\{i \Biggm|
	\begin{minipage}{295pt}
	$0\leq i < j$, $\partial \CW_{j+1} \cap \partial \CN_{i}$ has at least two
	elements and is included in the part of $\partial \CW_{j+1}$ from the rightmost point of
	$\partial(\CW_{j} \setminus \CW_{j+1}) \cap \partial \CW_{j}$ to the rightmost
	point of $\partial (\CW_{0} \setminus \CW_{j+1}) \cap \partial \CW_{0}$
	\end{minipage}
	\Biggr\},
\end{equation}
let $T^{i,j}$ denote the quantum length of $\partial \CW_{j+1} \cap \partial \CN_{i}$
for $i \in I_{j}$, set $I^{0}_{j}:=\emptyset$ on $E_{j}$ and
\begin{equation} \label{eq:SLE6_percolate_H-right-chunks-skipped}
I^{0}_{j} := \biggl\{i \in I_{j} \biggm|
	\begin{minipage}{245pt}
	$\partial \CW_{j+1} \cap \partial \CN_{i}$ is included in the part of
	$\partial \CW_{j+1}$ from the rightmost point of
	$\partial(\CW_{j} \setminus \CW_{j+1}) \cap \partial \CW_{j}$ to $\eta'_{j+1}(0)$
	\end{minipage}
	\biggr\}
\end{equation}
on $E_{j}^{c}$.
Then we easily see from the definition of $\{(\CW_{i},\eta'_{i})\}_{i=0}^{\infty}$
that $\indicator_{E_{i}}=1$ for any $i \in I_{j}$,
that $I^{0}_{j} \subset \{i\in I_{j} \mid T^{i,j} < \epsilon_{0}\}$,
that $I^{0}_{j} \cap I^{0}_{k} = \emptyset$ for any $k >j$,
that the part of $\partial \CW_{j+1}$ from the rightmost point of
$\partial(\CW_{j} \setminus \CW_{j+1}) \cap \partial \CW_{j}$ to the rightmost point of
$\partial (\CW_{0} \setminus \CW_{j+1}) \cap \partial \CW_{0}$
(is a singleton or) consists of closed intervals $\{\partial \CW_{j+1} \cap \partial \CN_{i}\}_{i \in I_{j}}$
with disjoint interiors in $\partial \CW_{j+1}$, and thus that
\begin{equation} \label{eq:SLE6_percolate_H_length_change}
\begin{split}
L_{j+1} - L_{j}
	&= -B^{j}_{L} \indicator_{E_{j}} + \Bigl( T^{j} - B^{j}_{L} + \sum\nolimits_{i\in I^{0}_{j}}T^{i,j} \Bigr) \indicator_{E_{j}^{c}} \\
&\leq -B^{j}_{L} \indicator_{E_{j}} + ( T^{j} - B^{j}_{L} + \epsilon_{0} \# I^{0}_{j}) \indicator_{E_{j}^{c}};
\end{split}
\end{equation}
here $\# A$ denotes the number of elements of a set $A$. It follows from~\eqref{eqn:bad_length_change},
\eqref{eq:SLE6_percolate_H_length_change}, the disjointness of $\{I^{0}_{j}\}_{j=0}^{\infty}$ and
$I^{0}_{j} \subset \{ i \in \mathbb{Z}\cap[0,j) \mid \indicator_{E_{i}}=1 \}$ that for any $N \geq 1$,
\begin{align} \label{eq:SLE6_percolate_H_length_comparison}
L_{N}&-L'_{N} = \sum_{j=0}^{N-1} \bigl( (L_{j+1}-L_{j}) - (L'_{j+1}-L'_{j}) \bigr)
	\leq \sum_{j=0}^{N-1}( -\epsilon_{0} \indicator_{E_{j}} + \epsilon_{0} \# I^{0}_{j} ) \\
&\leq\epsilon_{0}\bigl(-\#\{j \in \mathbb{Z}\cap[0,N) \mid \indicator_{E_{j}}=1 \} + \#\{j \in \mathbb{Z}\cap[0,N-1) \mid \indicator_{E_{j}}=1 \}\bigr)
	\leq 0, \notag
\end{align}
proving that $L_{N} \leq L'_{N}$. It thus suffices to prove~\eqref{it:move_left}
for $\{L'_{j}\}_{j=0}^{\infty}$ instead of $\{L_{j}\}_{j=0}^{\infty}$.

Recall that the sequence $\bigl\{\bigl(\{L^{j}_{t/A}\}_{t \in [0,A\sigma_{j}]},\{R^{j}_{t/A}\}_{t \in [0,A\sigma_{j}]},A\sigma_{j}\bigr)\bigr\}_{j=0}^{\infty}$
is i.i.d.\ with the same law as $\bigl(\{A^{-2/3}X^{1}_{t}\}_{t \in [0,\tau\wedge A]},\{A^{-2/3}X^{2}_{t}\}_{t \in [0,\tau\wedge A]},\tau\wedge A\bigr)$.
It thus follows from~\eqref{eqn:bad_length_change}, \eqref{eq:expression_top_length},
\eqref{eq:expression_bottom_lengths}, Propositions~\ref{prop:running-inf-mean-diverge} and~\ref{prop:reflection_mean_before_tau} that
\begin{align}
&\mathbb{E}[L'_{1} - L'_{0}] \notag \\
&= \epsilon_{0}\mathbb{P}[E_{0}]+\mathbb{E}[T^{0}\indicator_{E_{0}^{c}}]-\mathbb{E}[B^{0}_{L}] \notag \\
&\leq \epsilon_{0} + \mathbb{E}\bigl[T^{0}\indicator_{\{\sigma_{0}<1\}\setminus E_{0}}\bigr] + \mathbb{E}\bigl[T^{0}\indicator_{\{\sigma_{0}=1\}}-B^{0}_{L}\bigr] \notag \\
&= \epsilon_{0} + \mathbb{E}\bigl[T^{0}\indicator_{\{\sigma_{0}<1\}\setminus E_{0}}\bigr] + A^{-2/3} \mathbb{E}\bigl[(X^{1}_{A}-I^{1}_{A}+X^{2}_{A}-I^{2}_{A})\indicator_{\{\tau \geq A\}}+I^{1}_{\tau \wedge A}\bigr] \notag \\
&\leq \epsilon_{0} + \mathbb{E}\bigl[T^{0}\indicator_{\{\sigma_{0}<1\}\setminus E_{0}}\bigr] + A^{-2/3} ( 2c' - c \log A),
\label{eq:length_change_mean_upper1}
\end{align}
where $c':=\sup_{A' \in [1,\infty)}\E[ (X^{1}_{A'} - I^{1}_{A'}) \indicator_{\{\tau \geq A'\}}] < \infty$
and $c$ is the constant in Proposition~\ref{prop:running-inf-mean-diverge}.
Moreover, setting $Z^{j}:=\sup_{0\leq t\leq 1}(X^{j}_{t}-I^{j}_{t})$ for $j=1,2$, we have
$\mathbb{E}\bigl[e^{\alpha T^{0}}\bigr] \leq \mathbb{E}\bigl[e^{\alpha(Z^{1}+Z^{2})}\bigr] = \mathbb{E}\bigl[e^{\alpha Z^{1}}\bigr]^{2} < \infty$
for some $\alpha\in(0,\infty)$ by~\eqref{eq:expression_top_length} and Proposition~\ref{prop:stable-reflected-exp-moment},
and therefore by Jensen's inequality and $\mathbb{P}[ E_{0}^{c} \mid \sigma_{0} < 1 ] \leq \epsilon_{0}$ we get
\begin{align}
\mathbb{E}\bigl[T^{0}\indicator_{\{\sigma_{0}<1\}\setminus E_{0}}\bigr]
&= \alpha^{-1}\mathbb{P}\bigl[\{\sigma_{0}<1\}\setminus E_{0}\bigr] \mathbb{E}\bigl[\alpha T^{0} \bigm| \{\sigma_{0}<1\}\setminus E_{0} \bigr] \notag \\
&\leq \alpha^{-1}\mathbb{P}\bigl[\{\sigma_{0}<1\}\setminus E_{0}\bigr] \log \mathbb{E}\bigl[e^{\alpha T^{0}} \bigm| \{\sigma_{0}<1\}\setminus E_{0} \bigr] \notag \\
&\leq \alpha^{-1}\mathbb{P}\bigl[\{\sigma_{0}<1\}\setminus E_{0}\bigr] \log \frac{\alpha'}{\mathbb{P}\bigl[\{\sigma_{0}<1\}\setminus E_{0}\bigr]} \notag \\
&\leq \alpha^{-1}\epsilon_{0}\log(\alpha'/\epsilon_{0})
\label{eq:length_change_mean_upper2}
\end{align}
provided $\epsilon_{0}\leq \alpha'/e$, where $\alpha':=\mathbb{E}\bigl[e^{\alpha Z^{1}}\bigr]^{2}$.
Combining~\eqref{eq:length_change_mean_upper1}, \eqref{eq:length_change_mean_upper2} and
$\epsilon_{0}=c_{0}A^{-2/3}$, we conclude that
\begin{equation} \label{eq:length_change_mean_negative}
\mathbb{E}[L'_{1} - L'_{0}]
\leq A^{-2/3} \biggl(c_{0} + \alpha^{-1}c_{0}\log\frac{\alpha'}{c_{0}A^{-2/3}} + 2c' - c \log A \biggr)
\end{equation}
provided $c_{0}A^{-2/3}\leq \alpha'/e$, and hence we obtain $\mathbb{E}[L'_{1} - L'_{0}] \leq - \frac{1}{4} c A^{-2/3} \log A < 0$
from~\eqref{eq:length_change_mean_negative} by choosing $c_{0}$ arbitrarily from $(0, c \alpha]$
and taking $A$ large enough so that $c \alpha A^{-2/3}\leq \alpha'/e$ and
$\frac{1}{12}\log A\geq \alpha+2c'/c+\log(\alpha'/(c \alpha))$. Finally, by
\eqref{eqn:bad_length_change} and \cite[Chapter VIII, Proposition 4]{bertoin1996levy} we also have
\begin{gather*}
\mathbb{E}\bigl[e^{\alpha (L'_{1} - L'_{0})^{+}}\bigr]
  \leq \mathbb{E}\bigl[ e^{\alpha(\epsilon_{0}+T^{0})} \bigr]
  \leq e^{\alpha c_{0}}  \,\mathbb{E}\bigl[e^{\alpha Z^{1}}\bigr]^{2}
  = e^{\alpha c_{0}} \alpha' < \infty, \\
\mathbb{E}\bigl[ ( (L'_{1} - L'_{0})^{-} )^{p} \bigr]
  \leq \mathbb{E}\bigl[ ( B^{0}_{L} )^{p} \bigr]
  \leq \mathbb{E}\bigl[ | I^{1}_{1} |^{p} \bigr] < \infty,
\end{gather*}
where $p \in (1,\frac{3}{2})$ is arbitrary.
Altogether, we thus have shown that the sequence $\{L'_{j+1} - L'_{j}\}_{j=0}^{\infty}$ of real random variables
is i.i.d.\ and satisfies the assumptions of Lemma~\ref{lem:Cramer_explicit_rate}
with $\beta := \alpha$, $\delta := \frac{1}{4} c A^{-2/3} \log A$, $K := e^{\alpha c_{0}} \alpha'$,
$M := \mathbb{E}\bigl[ | I^{1}_{1} |^{5/4} \bigr]$ and $p := \frac{5}{4}$,
which are all determined solely by (the law of $(X^{1},X^{2})$ and) $A,c_{0}$.
Therefore by Lemma~\ref{lem:Cramer_explicit_rate} there exists $\lambda \in (0,\infty)$
which is an explicit function of $\beta,\delta,K,M,p$ and hence determined solely by $A,c_{0}$
such that $L'_{N} = \sum_{j=0}^{N-1} (L'_{j+1} - L'_{j})$ satisfies
\begin{equation} \label{eq:move_left_Lprime}
\mathbb{P}\biggl[ L'_{N} \geq -\frac{1}{4} \delta N \biggr]
  \leq \exp\Bigl( - \frac{1}{8} \delta \lambda N \Bigr)
  \qquad\text{for any $N \in \mathbb{N}$,}
\end{equation}
which together with $L_{N} \leq L'_{N}$ proves Proposition~\ref{prop:good_chunks_percolate_half_plane}-\eqref{it:move_left}.
\end{proof}

We now turn to the proof of Proposition~\ref{prop:good_chunks_percolate_half_plane}-\eqref{it:bad_cluster_size}.
We briefly describe its main steps before proceeding to the actual proof.
First, in Lemma~\ref{lem:right-length-change-SLLN-run_good_chunks_qn0}
we show that the quantum length of the top right of $\CW_{0}\setminus\CW_{j}$ minus
the quantum length of the bottom right of $\CW_{0}\setminus\CW_{j}$, with
the influences of the slides of $\{ \eta'_{i}(0) \}_{0 \leq i < j}$ to the right caused by
\eqref{eq:bad_chunks_initial_point_half_plane} neglected, grows linearly in $j$
with high probability, on the basis of some moment analysis similar to the proof of
Proposition~\ref{prop:good_chunks_percolate_half_plane}-\eqref{it:move_left} above.
Next, in Lemma~\ref{lem:right-length-positive-prob-positive} we prove by using
Lemma~\ref{lem:right-length-change-SLLN-run_good_chunks_qn0} and a comparison argument
similar to~\eqref{eq:SLE6_percolate_H_length_comparison} that for any $j \geq 0$,
with some probability uniformly positive in $j$ the event $E_{j}$ occurs,
$\partial \CN_{j} \cap \partial \CW_{j+1}$ has quantum length greater than $3\epsilon_{0}$
and the chunks $\{ \CN_{k} \}_{k\geq j+1}$ never enter the rightmost interval of
quantum length $\epsilon_{0}$ in $\partial \CN_{j} \cap \partial \CW_{j+1}$
or the part of $\partial \CW_{j+1}$ to the right of this interval.
Then in Lemma~\ref{lem:run_good_chunks_decay_fast} we deduce from
Lemma~\ref{lem:right-length-positive-prob-positive} that,
with probability exponentially high in $u$, at most $u$ of the subintervals
$\{ \partial \CN_{j} \cap \partial \CW_{n+1} \mid \text{$0\leq j\leq n$, $E_{j}$ occurs} \}$
of $\partial \CW_{n+1}$ with quantum length in $(0,\epsilon_{0})$ can consecutively align
for any $0 \leq n \leq N$, so that the slides of $\{ \eta'_{i}(0) \}_{0 \leq j \leq N}$
to the right caused by~\eqref{eq:bad_chunks_initial_point_half_plane} have quantum lengths
at most $\epsilon_{0}u$. Finally, we combine
Lemmas~\ref{lem:right-length-positive-prob-positive} and~\ref{lem:run_good_chunks_decay_fast}
to show that, with probability exponentially high in $u$, for any $0 \leq j \leq N$ with
$\partial \CN_{j} \cap (-\infty,0) \not=\emptyset$ we have
$\bigl(\bigcup_{0<k\leq u^{2}}\partial \CN_{j+k}\bigr) \cap (-\infty,0) \not=\emptyset$,
which is easily seen to imply the property stated in
Proposition~\ref{prop:good_chunks_percolate_half_plane}-\eqref{it:bad_cluster_size}.

\begin{lemma} \label{lem:right-length-change-SLLN-run_good_chunks_qn0}
Let $j \geq 0$ and set $S_{j,j}:=0$ and
\begin{equation} \label{eq:right-length-change}
S_{j,k} := \sum_{i=j+1}^{k}\bigl( (T^{i}-B^{i}_{R}) \indicator_{E_{i}} - B^{i}_{R} \indicator_{E_{i}^{c}} \bigr)
  = \sum_{i=j+1}^{k}( T^{i} \indicator_{E_{i}} - B^{i}_{R} )
  \quad \text{for $k>j$.}
\end{equation}
Let $c',c,\alpha,\alpha'\in(0,\infty)$ be the constants as in the proof of
Proposition~\ref{prop:good_chunks_percolate_half_plane}-\eqref{it:move_left} above,
fix an arbitrary $A \in [2,\infty)$ satisfying $c \alpha A^{-2/3}\leq \alpha'/e$ and
$\frac{1}{12}\log A\geq 2c'/c+\log(\alpha'/(c \alpha))$, and for each $n_{0} \in \mathbb{N}$ set
\begin{equation} \label{eq:right-length-change-SLLN-run_good_chunks_qn0}
q_{n_{0}} := \mathbb{P}\biggl[ \inf_{ k \geq n_{0} } \frac{S_{j,j+k}}{k} \geq \frac{1}{12} c A^{-2/3} \log A \biggr],
\end{equation}
which is independent of $j \geq 0$. Then, as long as $c_{0} \in (0, c \alpha]$,
\begin{equation} \label{eq:right-length-change-SLLN-run_good_chunks_qn0_converge}
\lim_{n_{0}\to\infty}q_{n_{0}}=1 \qquad
  \begin{minipage}{200pt}
  with a rate of convergence determined solely by $A$
  (and thus independent of $c_{0}$ and $E$).
  \end{minipage}
\end{equation}
\end{lemma}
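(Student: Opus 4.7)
The plan is to apply a quantitative strong law of large numbers to the partial sums $S_{j,j+k}$ of the i.i.d.\ sequence $\{Y_{i} := T^{i}\indicator_{E_{i}} - B^{i}_{R}\}_{i \geq 0}$, whose i.i.d.\ structure follows from the same observation used in the proof of part~\eqref{it:move_left}: the triples $(\indicator_{E_{i}},T^{i},B^{i}_{R})$ are measurable with respect to $(\CW_{i} \setminus \CW_{i+1}, h|_{\CW_{i} \setminus \CW_{i+1}}, \eta'_{i}|_{[0,\sigma_{i}]})$, and those are i.i.d. The first step is to show the common mean
\begin{equation*}
\mu := \mathbb{E}[T^{0}\indicator_{E_{0}}] - \mathbb{E}[B^{0}_{R}] = \mathbb{E}[B^{0}_{R}] - \mathbb{E}[T^{0}\indicator_{E_{0}^{c}}]
\end{equation*}
is bounded below by $\tfrac{1}{6}cA^{-2/3}\log A$ uniformly in $c_{0}\in(0,c\alpha]$ and admissible $E$. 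The equality above uses the symmetry $\mathbb{E}[B^{0}_{R}] = \mathbb{E}[B^{0}_{L}]$ together with the identity $\mathbb{E}[T^{0}] = 2\mathbb{E}[B^{0}_{R}]$; the latter follows from the scaling identity preceding \eqref{eq:expression_top_length}, together with the fact that $X^{j}$ is a zero-mean martingale satisfying $\mathbb{E}[\sup_{0\leq s\leq A}|X^{j}_{s}|]<\infty$ (for which $\mathbb{E}[|I^{j}_{A}|]<\infty$ by \cite[Chapter VIII, Proposition~4]{bertoin1996levy} is enough), so that optional stopping at the bounded stopping time $\tau\wedge A$ yields $\mathbb{E}[X^{j}_{\tau\wedge A}]=0$ and hence $\mathbb{E}[L^{0}_{\sigma_{0}}]=\mathbb{E}[R^{0}_{\sigma_{0}}]=0$.

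With this reduction in hand, Proposition~\ref{prop:running-inf-mean-diverge} immediately gives $\mathbb{E}[B^{0}_{R}] \geq cA^{-2/3}\log A$. For the subtracted term, the Jensen-type argument of \eqref{eq:length_change_mean_upper2} combined with Proposition~\ref{prop:stable-reflected-exp-moment} yields $\mathbb{E}[T^{0}\indicator_{\{\sigma_{0}<1\}\setminus E_{0}}] \leq \alpha^{-1}\epsilon_{0}\log(\alpha'/\epsilon_{0})$, while $\mathbb{E}[T^{0}\indicator_{\{\sigma_{0}=1\}}] \leq 2c'A^{-2/3}$ exactly as in the proof of part~\eqref{it:move_left}. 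Expanding $\epsilon_{0}=c_{0}A^{-2/3}$, using $\alpha^{-1}c_{0}\leq c$, and noting that $\sup_{c_{0}\in(0,c\alpha]}|c_{0}\log c_{0}|$ is a finite universal constant, a short computation gives $\mathbb{E}[T^{0}\indicator_{E_{0}^{c}}] \leq (2c/3)A^{-2/3}\log A + O(A^{-2/3})$ uniformly in $c_{0}$ and $E$, and the stated hypothesis on $A$ then forces $\mu \geq \tfrac{1}{6}cA^{-2/3}\log A$.

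To pass from the mean lower bound to the claimed uniform tail estimate, note that $Y_{1}^{+}\leq T^{0}$ has all moments by Proposition~\ref{prop:stable-reflected-exp-moment}, while $Y_{1}^{-}\leq B^{0}_{R}$ has $p$-th moment for any $p\in[1,3/2)$ by \cite[Chapter VIII, Proposition~4]{bertoin1996levy}; fixing, say, $p=5/4$ gives $\|Y_{1}-\mu\|_{p}\leq C_{0}(A)$ with $C_{0}(A)$ depending only on $A$ (not on $c_{0}$ or $E$). Applying Doob's $L^{p}$ maximal inequality to the martingale $\{S_{j,j+k}-k\mu\}_{k\geq 0}$, the Marcinkiewicz--Zygmund inequality, Markov's inequality, and a dyadic decomposition over $k\in[2^{m-1},2^{m}]$ with $2^{m-1}\geq n_{0}$ then produces
\begin{equation*}
\mathbb{P}\biggl[\sup_{k\geq n_{0}}|S_{j,j+k}/k - \mu| > \mu/2\biggr] \leq C(A)n_{0}^{1-p},
\end{equation*}
on whose complement $\inf_{k\geq n_{0}}S_{j,j+k}/k \geq \mu/2 \geq \tfrac{1}{12}cA^{-2/3}\log A$, proving \eqref{eq:right-length-change-SLLN-run_good_chunks_qn0_converge} with rate determined solely by $A$. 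The principal obstacle is the second step: since $c_{0}$ may be arbitrarily small, the $-\log c_{0}$ contribution produced by expanding $\log(\alpha'/\epsilon_{0})$ must be absorbed uniformly, which is precisely where the specific hypothesis $\tfrac{1}{12}\log A\geq 2c'/c+\log(\alpha'/(c\alpha))$ on $A$ is used.
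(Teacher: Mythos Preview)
Your proof is correct. The mean-bound step is essentially identical to the paper's: the paper also uses optional stopping to obtain $\mathbb{E}[T^{0}-B^{0}_{R}]=A^{-2/3}\mathbb{E}[-I^{1}_{\tau\wedge A}]\geq cA^{-2/3}\log A$ (equivalently, your $\mathbb{E}[B^{0}_{R}]$ bound), and the same Jensen-type argument~\eqref{eq:length_change_mean_upper2} to control $\mathbb{E}[T^{0}\indicator_{E_{0}^{c}}]$. One minor remark: to verify $\mu\geq(c/6)A^{-2/3}\log A$ under \emph{precisely} the stated hypotheses it is cleanest to bound $c_{0}\mapsto c_{0}\log(\alpha' A^{2/3}/c_{0})$ by its value at $c_{0}=c\alpha$, using monotonicity on $(0,\alpha' A^{2/3}/e]$; the paper does this to get $\mathbb{E}[T^{0}\indicator_{E_{0}^{c}}]\leq(3c/4)A^{-2/3}\log A$ directly, whereas your $|c_{0}\log c_{0}|$ route gives an $O(A^{-2/3})$ remainder that would require a slightly stronger lower bound on $\log A$ to absorb.

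The LLN step genuinely differs. The paper decomposes $S_{0,k}=S^{1}_{k}-S^{2}_{k}$ with $S^{1}_{k}=\sum_{i\leq k}(T^{i}-B^{i}_{R})$ and $S^{2}_{k}=\sum_{i\leq k}T^{i}\indicator_{E_{i}^{c}}$, notes that the law of $T^{i}-B^{i}_{R}$ depends only on $A$ (so plain SLLN for $S^{1}$ has $A$-determined rate automatically), and controls $S^{2}$ via the fourth-moment bound $\mathbb{E}[(T^{0})^{4}]\leq 24\alpha^{-4}\alpha'$ together with Chebyshev. Your unified Doob--Marcinkiewicz--Zygmund/dyadic approach is more direct and yields an explicit $n_{0}^{-1/4}$ rate; the observation that makes it work---both the $p$-th moment of $Y_{1}$ (via $|Y_{1}|\leq T^{0}+B^{0}_{R}$) and $\mu^{-p}$ are bounded by functions of $A$ alone---is exactly right. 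The paper's splitting makes it transparent \emph{why} the rate is $E$-independent (the $E$-dependence sits only in $S^{2}$, controlled through moments of $T^{0}$ alone), whereas your argument achieves the same uniformity in one stroke.
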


\begin{proof}
$\{(\indicator_{E_{k}},T^{k},B^{k}_{R})\}_{k=0}^{\infty}$ is i.i.d.\ since
$\{(\CW_{k} \setminus \CW_{k+1},h|_{\CW_{k} \setminus \CW_{k+1}},\eta'_{k}|_{[0,\sigma_{k}]})\}_{k=0}^{\infty}$
is i.i.d.\ and $\indicator_{E_{k}},T^{k},B^{k}_{R}$ are measurable with respect to
$(\CW_{k} \setminus \CW_{k+1},h|_{\CW_{k} \setminus \CW_{k+1}},\eta'_{k}|_{[0,\sigma_{k}]})$
by~\eqref{eq:exploration_Esigma_wedge}, \eqref{eq:exploration_disk_N}, \eqref{eq:expression_top_length}
and~\eqref{eq:expression_bottom_lengths} for any $k \geq 0$, and therefore
$q_{n_{0}}$ is independent of $j$.

We first show that, as long as $c_{0} \in (0, c \alpha]$,
\begin{equation} \label{eq:right-length-change-SLLN-mean}
\mathbb{E}\bigl[ T^{0}-B^{0}_{R} \bigr] \in [cA^{-2/3}\log A,\infty)
  \quad \text{and} \quad
\mathbb{E}\bigl[ T^{0} \indicator_{E_{0}^{c}} \bigr] \leq \frac{3}{4} c A^{-2/3} \log A.
\end{equation}
Indeed, noting that $\epsilon_{0} = c_{0} A^{-2/3} \leq c \alpha A^{-2/3} \leq \alpha'/e$ and that
$\frac{1}{12}\log A\geq 2c'/c+\log(\alpha'/(c \alpha))$, we see from~\eqref{eq:length_change_mean_upper2}, which requires $\epsilon_{0}\leq \alpha'/e$,
and the calculations made in~\eqref{eq:length_change_mean_upper1} for $\mathbb{E}\bigl[T^{0}\indicator_{E_{0}^{c}}\bigr]$ that
\begin{align*}
\mathbb{E}\bigl[T^{0}\indicator_{E_{0}^{c}}\bigr]
  &= \mathbb{E}\bigl[T^{0}\indicator_{\{\sigma_{0}<1\}\setminus E_{0}}\bigr] + \mathbb{E}\bigl[T^{0}\indicator_{\{\sigma_{0}=1\}}\bigr]
  \leq \alpha^{-1}\epsilon_{0}\log(\alpha'/\epsilon_{0}) + 2c'A^{-2/3} \\
&\leq A^{-2/3} \bigl( \tfrac{2}{3}c\log A + c \log(\alpha'/(c \alpha)) + 2c' \bigr)
  \leq \tfrac{3}{4} c A^{-2/3} \log A.
\end{align*}

Next, for $\mathbb{E}\bigl[ T^{0}-B^{0}_{R} \bigr]$, note that
$T^{0}-B^{0}_{R}=L^{0}_{\sigma_{0}}+B^{0}_{L}+R^{0}_{\sigma_{0}}$
by~\eqref{eq:expression_top_length} and~\eqref{eq:expression_bottom_lengths} and that,
since $\bigl(\{L^{0}_{t/A}\}_{t \geq 0},\{R^{0}_{t/A}\}_{t \geq 0},A\sigma_{0}\bigr)$ and
$\bigl(\{A^{-2/3}X^{1}_{t}\}_{t \geq 0},\{A^{-2/3}X^{2}_{t}\}_{t \geq 0},\tau\wedge A\bigr)$
have the same law, so do $(L^{0}_{\sigma_{0}},B^{0}_{L},R^{0}_{\sigma_{0}},B^{0}_{R})$ and
$A^{-2/3} ( X^{1}_{\tau \wedge A}, -I^{1}_{\tau \wedge A}, X^{2}_{\tau \wedge A}, -I^{2}_{\tau \wedge A} )$.
Further, for $j=1,2$, for any $t\in[0,\infty)$ we have $\mathbb{E}[|X^{j}_{t}|]<\infty$
by \cite[Chapter~VII, Corollary~2-(i) and Chapter~VIII, Proposition~4]{bertoin1996levy}
and $\mathbb{E}[X^{j}_{t}]=0$ by the stationarity and the scaling property of $X^{j}$, and
hence the Markov property of $X^{j}$ (see, e.g., \cite[Chapter~I, Proposition~6]{bertoin1996levy})
implies that $X^{j}$ is a martingale with respect to the completed filtration generated by $X^{j}$.
It thus follows from the optional sampling theorem and the independence of $\{X^{j},\tau^{3-j}\}$
that $\mathbb{E}\bigl[ |X^{j}_{\tau \wedge A}| \bigr] < \infty$ and $\mathbb{E}\bigl[ X^{j}_{\tau \wedge A} \bigr] = 0$.
Also, recalling that $c$ is the constant from Proposition~\ref{prop:running-inf-mean-diverge},
by \cite[Chapter~VIII, Proposition~4]{bertoin1996levy}, the scaling property of $X^{1}$
and Proposition~\ref{prop:running-inf-mean-diverge} we have
\begin{equation*} 
\infty > A^{2/3} \mathbb{E}\bigl[ |I^{1}_{1}| \bigr]
  = \mathbb{E}\bigl[ |I^{1}_{A}| \bigr]
  \geq \mathbb{E}\bigl[ - I^{1}_{\tau \wedge A} \bigr]
  \geq \mathbb{E}\bigl[ - I^{1}_{\tau} \indicator_{ \{ \tau < A \} } \bigr]
  \geq c \log A.
\end{equation*}
Combining the facts mentioned above in this paragraph, we see that
$\mathbb{E}\bigl[ | T^{0} - B^{0}_{R} | \bigr] < \infty$ and that
$\mathbb{E}\bigl[ T^{0} - B^{0}_{R} \bigr]
  = A^{-2/3} \mathbb{E}\bigl[ - I^{1}_{\tau \wedge A} \bigr]
  \geq c A^{-2/3} \log A$,
proving~\eqref{eq:right-length-change-SLLN-mean}.

Turning to the proof of~\eqref{eq:right-length-change-SLLN-run_good_chunks_qn0_converge},
we define partial sums $\{S^{1}_{k}\}_{k\geq 1},\{S^{2}_{k}\}_{k\geq 1}$ of i.i.d.\ real
random variables by $S^{1}_{k}:=\sum_{i=1}^{k}(T^{i}-B^{i}_{R})$ and
$S^{2}_{k}:=\sum_{i=1}^{k}T^{i}\indicator_{E_{i}^{c}}$ for $k \in \mathbb{N}$,
so that $S_{0,k}=S^{1}_{k}-S^{2}_{k}$ by~\eqref{eq:right-length-change}. Then
since the law of $T^{0}-B^{0}_{R}$ is determined solely by $A$, the strong law of large
numbers together with the first half of~\eqref{eq:right-length-change-SLLN-mean} yields
\begin{equation} \label{eq:right-length-change-SLLN-run_good_chunks_qn0_converge_S1}
\lim_{n_{0}\to\infty}\mathbb{P}\biggl[ \inf_{ k \geq n_{0} } \frac{S^{1}_{k}}{k} \geq \frac{11}{12} c A^{-2/3} \log A \biggr]
  = 1 \qquad
  \begin{minipage}{125pt}
  with a rate of convergence determined solely by $A$.
  \end{minipage}
\end{equation}
On the other hand,
$\mathbb{E}\bigl[(T^{0})^{4}\bigr] \leq 24\alpha^{-4}\mathbb{E}\bigl[e^{\alpha T^{0}}\bigr] \leq 24\alpha^{-4}\alpha' < \infty$
by the choice of $\alpha,\alpha'$ specified just before and after~\eqref{eq:length_change_mean_upper2},
and hence for any $a\in(0,\infty)$ and any $k \in \mathbb{N}$, by Markov's and H\"{o}lder's inequalities we have
\begin{equation*}
\mathbb{P}\biggl[ \biggl| \frac{S^{2}_{k}}{k} - \mathbb{E}\bigl[ T^{0} \indicator_{E_{0}^{c}} \bigr] \biggr| \geq a \biggr]
  \leq \frac{\mathbb{E}\bigl[ | S^{2}_{k} - k \mathbb{E}[ T^{0} \indicator_{E_{0}^{c}} ] |^{4} \bigr]}{(ka)^{4}}
  \leq \frac{48}{k^{2}a^{4}} \mathbb{E}\bigl[(T^{0})^{4}\bigr]
  \leq \frac{1152\alpha'}{k^{2}a^{4}\alpha^{4}}.
\end{equation*}
From this inequality with $a = \frac{1}{12} c A^{-2/3} \log A$ and the second half of
\eqref{eq:right-length-change-SLLN-mean} we get
\begin{equation} \label{eq:right-length-change-SLLN-run_good_chunks_qn0_converge_S2}
\mathbb{P}\Biggl[ \bigcup_{k=n_{0}}^{\infty} \biggl\{ \frac{S^{2}_{k}}{k} \geq \frac{5}{6} c A^{-2/3} \log A \biggr\} \Biggr]
  \leq \frac{12^{7}\alpha'}{c^{4}\alpha^{4}} \frac{A^{8/3}}{(\log A)^{4}} \sum_{k=n_{0}}^{\infty} \frac{1}{k^{2}}.
\end{equation}
Now~\eqref{eq:right-length-change-SLLN-run_good_chunks_qn0_converge} follows by
$S_{0,k} = S^{1}_{k} - S^{2}_{k}$, \eqref{eq:right-length-change-SLLN-run_good_chunks_qn0_converge_S1}
and~\eqref{eq:right-length-change-SLLN-run_good_chunks_qn0_converge_S2}.
\end{proof}

\begin{lemma} \label{lem:right-length-positive-prob-positive}
Let $j \geq 0$, $l > j$, let $\{ S_{j,k} \}_{k=j}^{\infty}$ be as in
Lemma~\ref{lem:right-length-change-SLLN-run_good_chunks_qn0},
and define events $\widetilde{E}_{j,l},\widetilde{E}_{j}$ by
\begin{align} \label{eq:right-length-positive-finite}
\widetilde{E}_{j,l} &:= E_{j} \cap \{ T^{j} > 3 \epsilon_{0} \} \cap
  \bigcap\nolimits_{k=j+1}^{l} \{ S_{j,k-1} - B^{k}_{R} > (2(k-j)-3) \epsilon_{0} \}, \\
\widetilde{E}_{j} &:= E_{j} \cap \{ T^{j} > 3 \epsilon_{0} \} \cap
  \bigcap\nolimits_{k=j+1}^{\infty} \{ S_{j,k-1} - B^{k}_{R} > (2(k-j)-3) \epsilon_{0} \}.
\label{eq:right-length-positive}
\end{align}
Also let $z_{j} \in \partial(\CW_{j} \setminus \CW_{j+1}) \cap \partial \CW_{j+1}$ be such that the
quantum length of the part of $\partial(\CW_{j} \setminus \CW_{j+1}) \cap \partial \CW_{j+1}$
from its leftmost point to $z_{j}$ is $(2\epsilon_{0})\wedge T^{j}$. Then
\begin{align} \label{eq:right-boundary-avoided-finite}
\widetilde{E}_{j,l} &\subset \biggl\{
  \begin{minipage}{228pt}
  $\partial(\CW_{k} \setminus \CW_{k+1})$ does not intersect the part of
  $\partial \CW_{j+1}$ which is to the right of $z_{j}$ for any $k \in \mathbb{N}\cap(j,l]$
  \end{minipage}
  \biggr\}, \\
\widetilde{E}_{j} &\subset \biggl\{
  \begin{minipage}{215.2pt}
  $\partial(\CW_{k} \setminus \CW_{k+1})$ does not intersect the part of
  $\partial \CW_{j+1}$ which is to the right of $z_{j}$ for any $k>j$
  \end{minipage}
  \biggr\}
\label{eq:right-boundary-avoided}
\end{align}
and, provided $A^{-1} \vee c_{0}$ is small enough, there exists $\tilde{q} \in (0,1)$ determined solely by $A,c_{0}$ such that
\begin{equation} \label{eq:right-length-positive-prob-positive}
\mathbb{P}[\widetilde{E}_{j}] = \mathbb{P}[\widetilde{E}_{0}] \geq 1-\tilde{q}.
\end{equation}
\end{lemma}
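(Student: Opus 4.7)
The proof splits naturally into the deterministic inclusions~\eqref{eq:right-boundary-avoided-finite}--\eqref{eq:right-boundary-avoided} and the probability estimate~\eqref{eq:right-length-positive-prob-positive}.

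For the inclusions I plan to induct on $k$, carrying along on the event $\widetilde{E}_{j,l}$ the invariant that, writing $d_k$ for the quantum length from $\eta'_k(0)$ going right along $\partial \CW_k$ to $z_j$, one has
\begin{equation*}
d_k = 2\epsilon_0 + S_{j,k-1} \qquad \text{for each } k \in \{j+1,\ldots,l+1\},
\end{equation*}
together with the structural statement that this rightward path lies inside the union of the tops of $\CN_j$ and the good chunks $\{\CN_i : j<i<k,\, E_i\text{ holds}\}$. Granting this invariant, the condition $S_{j,k-1} - B^k_R > (2(k-j)-3)\epsilon_0$ rewrites as $d_k - B^k_R > (2(k-j)-1)\epsilon_0 > 0$, so the bottom of $\CN_k$ ends strictly to the left of $z_j$ on $\partial \CW_k$; since passing from $\partial \CW_k$ to $\partial \CW_{j+1}$ preserves the rightward ordering, the bottom of $\CN_k$ on $\partial \CW_{j+1}$ cannot overshoot $z_j$, giving~\eqref{eq:right-boundary-avoided-finite} (and~\eqref{eq:right-boundary-avoided} follows by intersecting over $l$). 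In the inductive step, if $E_k$ holds then $\eta'_{k+1}(0) = x_k^L$ and the top of $\CN_k$ of length $T^k$ is inserted into the rightward path, yielding $d_{k+1} = T^k + d_k - B^k_R = 2\epsilon_0 + S_{j,k}$; if $E_k$ fails, the rule~\eqref{eq:bad_chunks_initial_point_half_plane} places $\eta'_{k+1}(0) = x_k^R$, so that $d_{k+1} = d_k - B^k_R = 2\epsilon_0 + S_{j,k}$.

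For~\eqref{eq:right-length-positive-prob-positive} I would decompose $\widetilde{E}_j^{\,c}$ into $E_j^{\,c}$, $\{T^j \leq 3\epsilon_0\}$, and the tail event $\bigcup_{k>j}\{S_{j,k-1} - B^k_R \leq (2(k-j)-3)\epsilon_0\}$, and bound each. The first summand is at most $\epsilon_0 + \mathbb{P}[\sigma_j \geq 1]$, using the hypothesis on $E$ together with the scaling $A\sigma_j \wedge A \overset{d}{=} \tau \wedge A$ from the first paragraph of Subsection~\ref{ssec:good_chunks_percolate_half_plane}; this is small for small $c_0$ and large $A$. For $\mathbb{P}[T^j \leq 3\epsilon_0]$ one uses the identity $T^j = A^{-2/3}(X^1_{\tau\wedge A} - I^1_{\tau\wedge A} + X^2_{\tau\wedge A} - I^2_{\tau\wedge A})$ coming from~\eqref{eq:expression_top_length}: standard distributional estimates for $3/2$-stable L\'evy processes give that $\mathbb{P}[T^j \leq 3c_0 A^{-2/3}]$ can be made as small as desired by taking $c_0$ small. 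For the tail event I would split the union at some threshold $n_0$: for $k-j \geq n_0$, Lemma~\ref{lem:right-length-change-SLLN-run_good_chunks_qn0} yields $S_{j,k-1}/(k-j-1) > \tfrac{1}{12}cA^{-2/3}\log A > 2c_0 A^{-2/3} = 2\epsilon_0$ with probability $q_{n_0}$ close to $1$, provided $A$ is large enough that $\tfrac{c}{12}\log A > 2c_0$; for $k-j < n_0$, the moment bound~\eqref{eq:bottom_right_p-th-moment} on $B^k_R$ together with a union bound over finitely many indices gives the required control.

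The principal technical obstacle arises in the inductive step when $E_k$ fails: one must verify that $\eta'_{k+1}(0)$ actually coincides with $x_k^R$ rather than drifting strictly to its right. This reduces to showing that the good chunk on whose top $x_k^R$ sits still has boundary length at least $\epsilon_0$ on $\partial \CW_{k+1}$, so as to be eligible in the rule~\eqref{eq:bad_chunks_initial_point_half_plane}. A careful bookkeeping argument is needed: the condition defining $\widetilde{E}_{j,l}$ bounds the cumulative right-side consumption $\sum B^i_R$ in relation to the cumulative top creation $\sum T^i\indicator_{E_i}$, and this global control, propagated along the induction, rules out the erosion of any single hosting good chunk below the $\epsilon_0$ threshold.
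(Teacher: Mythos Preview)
Both halves of your proposal contain genuine gaps.

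\textbf{Deterministic inclusions.} Your claimed invariant $d_k = 2\epsilon_0 + S_{j,k-1}$ as an exact equality is false on $\widetilde{E}_{j,l}$, and your proposed fix --- that the good chunk hosting $x_k^R$ necessarily retains visible top length at least $\epsilon_0$ --- is also false. A concrete configuration compatible with the inequalities in~\eqref{eq:right-length-positive-finite}: take $T^{j+1}=10\epsilon_0$, $T^{j+2}=0.5\epsilon_0$, $T^{j+3}=100\epsilon_0$ with $B^{j+1}_R=B^{j+2}_R=B^{j+3}_R=0.1\epsilon_0$, all good, and then a bad $\CN_{j+4}$ with $B^{j+4}_R=99.7\epsilon_0$. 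All conditions in $\widetilde{E}_{j,l}$ are satisfied, yet after $\CN_{j+4}$ the visible top of $\CN_{j+3}$ is $0.3\epsilon_0$ and that of $\CN_{j+2}$ is $0.4\epsilon_0$; rule~\eqref{eq:bad_chunks_initial_point_half_plane} therefore slides $\eta'_{j+5}(0)$ past both, onto $\CN_{j+1}$, and $d_{j+5} = 2\epsilon_0 + S_{j,j+4} - 0.7\epsilon_0$. The paper does not try to prevent this slide; it \emph{allows} it and tracks the cumulative slippage $V_n$ over the bad steps $k_1<k_2<\cdots$, proving $V_n \leq (k_n - j - n)\epsilon_0$ by a disjointness argument (each skipped top contributes $<\epsilon_0$ and can be skipped at most once), and then carries the weaker invariant $d_{k_n+1} = 2\epsilon_0 + S_{j,k_n} - V_n$. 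The pigeonhole step you are missing is that the total visible length from $y_{k_n}$ to $z_j$ exceeds $\epsilon_0$ times the number of contributing chunks, so at least one of them is eligible and the slide terminates short of $z_j$.

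\textbf{Probability estimate.} Your union-bound strategy cannot work: already the $k=j+1$ term of the tail event is $\{B^{j+1}_R \geq \epsilon_0\}$, and since $\sigma_{j+1}\geq 1/A$ gives $B^{j+1}_R$ stochastically above $A^{-2/3}|I^2_1|$, one has $\mathbb{P}[B^{j+1}_R \geq \epsilon_0] \geq \mathbb{P}[|I^2_1|\geq c_0] = 1 - O(c_0^{1/2})$, which tends to $1$ as $c_0\downarrow 0$. So $\mathbb{P}[\widetilde{E}_j]$ is genuinely small --- the lemma only claims it is \emph{positive} with a constant depending on $A,c_0$. The paper obtains this by first factoring via independence into $\mathbb{P}[E_0\cap\{T^0>3\epsilon_0\}]$ times the probability of the tail condition, and then lower-bounding the tail probability not by complementation but by \emph{constructing} a sub-event: force $T^k\indicator_{E_k}>4\epsilon_0$ and $B^k_R<\epsilon_0$ for $k=1,\dots,n_1$ (each with probability $\gtrsim c_0$ by Proposition~\ref{prop:stable-X1-I1large-I2small}, giving $(c_1c_0/2)^{n_1}$ via independence), and then invoke the $q_{n_0}$-lemma for $k>n_1$. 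The resulting bound $\mathbb{P}[\widetilde{E}_0]\geq \tfrac14(c_1c_0/2)^{n_1}$ is tiny but strictly positive, which is all that is required.
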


\begin{proof}
It is clear that~\eqref{eq:right-boundary-avoided} follows from~\eqref{eq:right-boundary-avoided-finite}
by taking the intersection over $l \in \mathbb{Z}\cap(j,\infty)$. We show
\eqref{eq:right-boundary-avoided-finite} by a comparison argument similar to
\eqref{eq:SLE6_percolate_H_length_comparison}. Let $y_{k}$ denote the rightmost point of
$\partial(\CW_{k} \setminus \CW_{k+1}) \cap \partial \CW_{k}$ for each $k\geq 0$,
and fix any instance of the exploration for which $\widetilde{E}_{j,l}$ holds, so that the
quantum length of the part of $\partial(\CW_{j} \setminus \CW_{j+1}) \cap \partial \CW_{j+1}$
from $z_{j}$ to $y_{j}$ is $T^{j}-2\epsilon_{0} \in (\epsilon_{0},\infty)$.
Set $k_{0}:=j$, $V_{0}:=0$ and $k_{n}:=\inf\{ k > k_{n-1} \mid \indicator_{E_{k}} = 0 \}$
for $n \geq 1$. Also for each $n \geq 1$, set
\begin{equation} \label{eq:right-boundary-avoided-passed-chunks}
J_{n} := \biggl\{k \biggm|
	\begin{minipage}{303pt}
	$k \in \mathbb{Z} \cap (j,k_{n}) \setminus \{k_{i} \mid i \geq 1\}$,
	$\partial \CW_{k_{n}+1} \cap \partial \CN_{k}$ has at least two elements and is
	included in the part of $\partial \CW_{k_{n}+1}$ from $y_{k_{n}}$ to $\eta'_{k_{n}+1}(0)$
	\end{minipage}
	\biggr\}
\end{equation}
if $k_{n}<\infty$ and $J_{n}:=\emptyset$ if $k_{n}=\infty$,
let $T^{k,k_{n}}$ denote the quantum length of $\partial \CW_{k_{n}+1} \cap \partial \CN_{k}$
for $k \in J_{n}$, set $U_{n} := \sum_{k \in J_{n}} T^{k,k_{n}}$ and $V_{n} := \sum_{i=1}^{n}U_{i}$,
so that the definition of the exploration easily implies that
$T^{k,k_{n}} < \epsilon_{0}$ for any $k \in J_{n}$ by $\indicator_{E_{k_{n}}}=0$
(recall~\eqref{eq:bad_chunks_initial_point_half_plane}) and
that $J_{n} \cap J_{i} = \emptyset$ for any $i > n$. In particular,
$V_{n} \leq \epsilon_{0} \sum_{i=1}^{n} \# J_{i} \leq (k_{n}-j-n) \epsilon_{0}$
for any $n \geq 0$.

To show~\eqref{eq:right-boundary-avoided-finite} by induction, let $n \geq 0$ satisfy $k_{n}<l$
and suppose that $\partial(\CW_{k} \setminus \CW_{k+1})$ does not intersect the part of
$\partial \CW_{j+1}$ which is to the right of $z_{j}$ for any $k \in \mathbb{Z}\cap(j,k_{n}]$,
that $\eta'_{k_{n}+1}(0)$ is located to the left of $z_{j}$ in $\partial \CW_{k_{n}+1}$
and that the quantum length of the part of $\partial \CW_{k_{n}+1}$ from $\eta'_{k_{n}+1}(0)$ to $z_{j}$
is given by $2\epsilon_{0} + S_{j,k_{n}} - V_{n}$; note that this supposition holds
for $n=0$ since $\eta'_{j+1}(0)$ is the leftmost point of
$\partial(\CW_{j} \setminus \CW_{j+1}) \cap \partial \CW_{j}$ by $\indicator_{E_{j}}=1$.
Then an inductive argument on $k$ based on the definition of the exploration and the third part of
\eqref{eq:right-length-positive-finite} easily shows for any $k \in \mathbb{Z} \cap (k_{n},k_{n+1}\wedge l]$ that
$\partial(\CW_{k} \setminus \CW_{k+1})$ does not intersect the part of
$\partial \CW_{j+1}$ which is to the right of $z_{j}$ and that the quantum length
of the part of $\partial \CW_{k+1}$ from $y_{k}$ to $z_{j}$ is given by
$2\epsilon_{0} + S_{j,k-1} - B^{k}_{R} - V_{n}$ and hence greater than
$(2k-k_{n}-j+n-1) \epsilon_{0} \geq (n+1) \epsilon_{0}$ by~\eqref{eq:right-length-positive-finite}
and $V_{n} \leq (k_{n}-j-n)\epsilon_{0}$. Moreover,
suppose further that $k_{n+1} < l$. Then
since the definition of the exploration implies also that
the part of $\partial \CW_{k_{n+1}+1}$ from $y_{k_{n+1}}$ to $z_{j}$
involves only $\partial \CW_{k_{n+1}+1} \cap \partial \CN_{k}$ for
$k \in \mathbb{Z} \cap [j,k_{n+1}) \setminus \{k_{i} \mid i \geq 1\}$ and has quantum
length greater than $(2k_{n+1}-k_{n}-j+n-1) \epsilon_{0} \geq (k_{n+1}-j) \epsilon_{0}$,
there exists $j_{n+1} \in \mathbb{Z} \cap [j,k_{n+1}) \setminus \{k_{i} \mid i \geq 1\}$
such that $\partial \CW_{k_{n+1}+1} \cap \partial \CN_{j_{n+1}}$ has quantum length greater than
$\epsilon_{0}$. It thus follows from $\indicator_{E_{k_{n+1}}}=0$ and the way of
choosing $\eta'_{k_{n+1}+1}(0)$ (recall~\eqref{eq:bad_chunks_initial_point_half_plane})
that the quantum length of the part of $\partial \CW_{k_{n+1}+1}$ from $y_{k_{n+1}}$ to
$\eta'_{k_{n+1}+1}(0)$ is given by $U_{n+1}$, thereby that $\eta'_{k_{n+1}+1}(0)$
is located to the left of $z_{j}$ in $\partial \CW_{k_{n+1}+1}$ and that
the quantum length of the part of $\partial \CW_{k_{n+1}+1}$ from $\eta'_{k_{n+1}+1}(0)$ to $z_{j}$ is given by
$2\epsilon_{0} + S_{j,k_{n+1}-1} - B^{k_{n+1}}_{R} - V_{n} - U_{n+1} = 2\epsilon_{0} + S_{j,k_{n+1}} - V_{n+1}$.
This completes the induction and proves~\eqref{eq:right-boundary-avoided-finite} and thereby~\eqref{eq:right-boundary-avoided}.

\medskip

Next, for~\eqref{eq:right-length-positive-prob-positive} we begin by noting that,
since $\CW_{j},\CW_{0},\CW_{1}$ are weight-$2$ quantum wedges, $\CW_{1}$ is independent of 
$(\CW_{0} \setminus \CW_{1},h|_{\CW_{0} \setminus \CW_{1}},\eta'_{0}|_{[0,\sigma_{0}]})$
and $\indicator_{E_{0}},T^{0}$ are measurable with respect to
$(\CW_{0} \setminus \CW_{1},h|_{\CW_{0} \setminus \CW_{1}},\eta'_{0}|_{[0,\sigma_{0}]})$
in view of~\eqref{eq:exploration_Esigma_wedge}, \eqref{eq:exploration_disk_N} and
\eqref{eq:expression_top_length},
\begin{align}
\mathbb{P}[\widetilde{E}_{j}] &= \mathbb{P}[\widetilde{E}_{0}] \notag \\
&= \mathbb{P}\bigl[ E_{0} \cap \{ T^{0} > 3 \epsilon_{0} \} \bigr] \cdot
  \mathbb{P}\bigl[ \text{$S_{0,k-1} - B^{k}_{R} > (2k-3) \epsilon_{0}$ for any $k \geq 1$} \bigr].
\label{eq:right-length-positive-independent}
\end{align}
Therefore it suffices to prove that the last two probabilities in~\eqref{eq:right-length-positive-independent}
are bounded from below by positive constants determined solely by $A,c_{0}$
as long as $A^{-1} \vee c_{0}$ is small enough. To this end,
recall that $X^{1}, X^{2}$ are i.i.d.\ $3/2$-stable L\'{e}vy processes with only downward jumps
and starting from $0$, and that we have set $I^{j}_{t} := \inf_{0 \leq s \leq t} X^{j}_{s}$,
$\tau^{j} := \inf\{t \in [1,\infty) \mid X^{j}_{t} = I^{j}_{t}\}$ for $j=1,2$ and $\tau := \tau^{1} \wedge \tau^{2}$.
By $E_{0} \subset \{ \sigma_{0} < 1 \}$ and~\eqref{eq:expression_top_length},
\begin{equation} \label{eq:SLE6-chunk-top-long}
E_{0} \cap \{ T^{0} > 3 \epsilon_{0} \}
  = E_{0} \cap \bigl\{ L^{0}_{\sigma_{1/A,0}}-\inf\nolimits_{0\leq s\leq \sigma_{1/A,0}}L^{0}_{s} + R^{0}_{\sigma_{1/A,0}}-\inf\nolimits_{0\leq s\leq \sigma_{1/A,0}}R^{0}_{s} > 3 \epsilon_{0} \bigr\},
\end{equation}
where $\sigma_{1/A,0} := \sigma_{1/A}(\h,\qmeasure{h},\eta'_{0},0,\infty)$. Since
$\bigl(\{L^{0}_{t/A}\}_{t \geq 0},\{R^{0}_{t/A}\}_{t \geq 0},A\sigma_{1/A,0}\bigr)$ has the
same law as $\bigl(\{A^{-2/3}X^{1}_{t}\}_{t \geq 0},\{A^{-2/3}X^{2}_{t}\}_{t \geq 0},\tau\bigr)$,
we see in view of $\epsilon_{0}=c_{0}A^{-2/3}$ that
\begin{equation} \label{eq:SLE6-chunk-top-long-prob}
\begin{split}
&\mathbb{P}\bigl[ L^{0}_{\sigma_{1/A,0}}-\inf\nolimits_{0\leq s\leq \sigma_{1/A,0}}L^{0}_{s} + R^{0}_{\sigma_{1/A,0}}-\inf\nolimits_{0\leq s\leq \sigma_{1/A,0}}R^{0}_{s} > 3 \epsilon_{0} \bigr] \\
&=\mathbb{P}\bigl[ X^{1}_{\tau} - I^{1}_{\tau} + X^{2}_{\tau} - I^{2}_{\tau} > 3 c_{0} \bigr]
  \xrightarrow{c_{0} \downarrow 0} \mathbb{P}\bigl[ X^{1}_{\tau} - I^{1}_{\tau} + X^{2}_{\tau} - I^{2}_{\tau} > 0 \bigr]
  = 1,
\end{split}
\end{equation}
where the last equality follows by the independence of $X^{1},X^{2}$, \eqref{eq:Xj-Ij-fixed-time} and~\eqref{eqn:tautail}.
Moreover, the assumption on the Borel subset $E$ of $\MCPU{2}$ and~\eqref{eqn:tautail} together imply that
there exists $c'' \in (0,\infty)$ determined solely by the law of $(X^{1},X^{2})$ such that
\begin{equation} \label{eq:SLE6-chunk-E-prob-large}
\mathbb{P}[ E_{0} ]
  \geq ( 1 - \epsilon_{0} ) \mathbb{P}[ \sigma_{0}<1 ]
  = ( 1 - \epsilon_{0} ) \mathbb{P}[ \tau < A ]
  \geq ( 1 - c_{0} A^{-2/3}  ) ( 1 - c'' A^{-2/3} ).
\end{equation}
It thus follows that $\mathbb{P}\bigl[ E_{0} \cap \{ T^{0} > 3 \epsilon_{0} \} \bigr] \geq 1/2 > 0$,
by choosing first $c_{0} \in (0,1]$ small enough on the basis of~\eqref{eq:SLE6-chunk-top-long-prob}
so that $\mathbb{P}\bigl[ X^{1}_{\tau} - I^{1}_{\tau} + X^{2}_{\tau} - I^{2}_{\tau} > 3 c_{0} \bigr] \geq 3/4$,
then taking $A$ large enough on the basis of~\eqref{eq:SLE6-chunk-E-prob-large} so that
$\mathbb{P}[ E_{0} ] \geq 3/4$, and combining these with~\eqref{eq:SLE6-chunk-top-long}
and~\eqref{eq:SLE6-chunk-top-long-prob}.

To see that the last probability in~\eqref{eq:right-length-positive-independent} is bounded from below by a positive constant determined solely by $A,c_{0}$,
assume that $c_{0} \in (0, c \alpha]$ and that $A$ is large enough so that
$c \alpha A^{-2/3}\leq \alpha'/e$ and $\frac{1}{12}\log A\geq \bigl(2c'/c+\log(\alpha'/(c \alpha))\bigr)\vee(4\alpha)$.
Then $\frac{1}{12}cA^{-2/3}\log A \geq 4c\alpha A^{-2/3} \geq 4c_{0}A^{-2/3} = 4\epsilon_{0}$
and hence $q_{n_{0}}$ as in~\eqref{eq:right-length-change-SLLN-run_good_chunks_qn0} satisfies
$\mathbb{P}\bigl[ \inf_{ k \geq n_{0} } k^{-1} S_{0,k} > 3\epsilon_{0} \bigr] \geq q_{n_{0}}$
for any $n_{0} \in \mathbb{N}$. Also, since
$\mathbb{P}[ B^{k+1}_{R} \geq k^{9/10} ]
  \leq k^{-9/8} \mathbb{E}\bigl[ (B^{k+1}_{R})^{5/4} \bigr]
  \leq k^{-9/8} \mathbb{E}\bigl[ |I^{1}_{1}|^{5/4} \bigr]
  <\infty$
for any $k \in \mathbb{N}$ by~\eqref{eq:expression_bottom_lengths} and \cite[Chapter~VIII, Proposition~4]{bertoin1996levy},
for any $n_{0} \in \mathbb{N} \cap (\epsilon_{0}^{-10},\infty)$ we have
\begin{equation} \label{eq:right-length-change-bottom-right-small}
\mathbb{P}\biggl[ \sup_{k \geq n_{0}} \frac{B^{k+1}_{R}}{k} < \epsilon_{0} \biggr]
  \geq \mathbb{P}\Biggl[ \bigcap_{k \geq n_{0}} \{ B^{k+1}_{R} < k^{9/10} \} \Biggr]
  \geq 1 - \mathbb{E}\bigl[ |I^{1}_{1}|^{5/4} \bigr] \sum_{k=n_{0}}^{\infty}k^{-9/8}.
\end{equation}
Thus by~\eqref{eq:right-length-change-SLLN-run_good_chunks_qn0_converge} and
\eqref{eq:right-length-change-bottom-right-small} there exists $n_{0} \in \mathbb{N}$ determined solely by $A$ such that
$\mathbb{P}\bigl[ \inf_{ k \geq n_{0} } k^{-1} S_{0,k} > 3\epsilon_{0} \bigr] \geq q_{n_{0}} \geq \frac{7}{8}$
and $\mathbb{P}\bigl[ \sup_{k \geq n_{0}} k^{-1} B^{k+1}_{R} < \epsilon_{0} \bigr] \geq \frac{7}{8}$, 
and then since $\{ B^{k}_{R} \}_{k=0}^{\infty}$ is i.i.d.\ with the law determined solely by $A$,
we can further take $M \in (0,\infty)$ depending only on $A$ so that
$\mathbb{P}\bigl[ \max_{ 1 \leq k \leq n_{0} } k^{-1} \sum_{i=1}^{k} B^{i}_{R} \leq M \bigr] \geq \frac{3}{4}$.
It follows from these inequalities and $S_{0,k-1} - B^{k}_{R} \geq - \sum_{i=1}^{k} B^{i}_{R}$
for $k \in \mathbb{N}$ that
\begin{equation} \label{eq:right-length-change-SLLN1}
\mathbb{P}\biggl[ \inf_{ k \geq n_{0} } \frac{S_{0,k}}{k} > 3\epsilon_{0}, \,
  \sup_{ k \geq n_{0} } \frac{B^{k+1}_{R}}{k} < \epsilon_{0}, \,
  \min_{ 1 \leq k \leq n_{0} } \frac{ S_{0,k-1} - B^{k}_{R} }{ k } \geq -M \biggr]
    \geq \frac{1}{2}.
\end{equation}
On the other hand, let $c_{1} \in (0,\infty)$ be the constant from
Proposition~\ref{prop:stable-X1-I1large-I2small} and assume that
$A \geq (2/c_{1})^{3/2}$ and $c_{0} \leq 1$.
Then by $\epsilon_{0} = c_{0} A^{-2/3} \leq \frac{1}{2}c_{1}c_{0}$ and~\eqref{eq:stable-X1-I1large-I2small} we have
\begin{equation} \label{eq:stable-X1-I1large-I2small-SLLN-apply}
\begin{split}
&\mathbb{P}\bigl[ T^{0} \indicator_{E_{0}} > 4\epsilon_{0}, \, B^{0}_{R} < \epsilon_{0} \bigr] \\
&= \mathbb{P}\bigl[ E_{0} \cap \{ T^{0} > 4\epsilon_{0}, \, B^{0}_{R} < \epsilon_{0} \} \bigr] \\
&=\mathbb{P}\bigl[ \sigma_{0} < 1, \, T^{0} > 4\epsilon_{0}, \, B^{0}_{R} < \epsilon_{0} \bigr] 
  - \mathbb{P}\bigl[ \{ \sigma_{0} < 1, \, T^{0} > 4\epsilon_{0}, \, B^{0}_{R} < \epsilon_{0} \} \setminus E_{0} \bigr] \\
&\geq \mathbb{P}\bigl[ \tau < A, \, X^{1}_{\tau} - I^{1}_{\tau} > 4c_{0}, \, I^{2}_{\tau} > -c_{0} \bigr]
  - \mathbb{P}[ E_{0}^{c} \mid \sigma_{0} < 1] \\
&\geq c_{1}c_{0} - \epsilon_{0}
  \geq \tfrac{1}{2}c_{1}c_{0}.
\end{split}
\end{equation}
Setting $n_{1} := \min\bigl(\mathbb{N} \cap [(2\epsilon_{0}^{-1}M+4)n_{0},\infty)\bigr)$,
which is determined solely by $A,c_{0}$ as a function of $n_{0},M,\epsilon_{0}=c_{0}A^{-2/3}$,
and recalling~\eqref{eq:right-length-change} and that $\{(\indicator_{E_{k}},T^{k},B^{k}_{R})\}_{k=0}^{\infty}$ is i.i.d.,
from~\eqref{eq:right-length-change-SLLN1} and~\eqref{eq:stable-X1-I1large-I2small-SLLN-apply} now we obtain
\begin{align}
&\mathbb{P}\bigl[ \text{$S_{0,k-1} - B^{k}_{R} > (2k-3) \epsilon_{0}$ for any $k \geq 1$} \bigr] \notag \\
&\geq \mathbb{P}\!\!\left[
  \begin{minipage}{362pt}
    $\inf\limits_{ k \geq n_{0} } \dfrac{S_{n_{1},n_{1}+k}}{k} > 3\epsilon_{0}$,
    $\sup\limits_{ k \geq n_{0} } \dfrac{B^{n_{1}+k+1}_{R}}{k} < \epsilon_{0}$,
    $\min\limits_{ 1 \leq k \leq n_{0} } \dfrac{ S_{n_{1},n_{1}+k-1} -  B^{n_{1}+k}_{R} }{k} \geq -M$,
    $T^{k} \indicator_{E_{k}} > 4\epsilon_{0}$ and
    $B^{k}_{R} < \epsilon_{0}$ for any $k \in \{1,\ldots,n_{1}\}$
  \end{minipage}
  \right] \notag \\
&\geq \frac{(c_{1}c_{0}/2)^{n_{1}}}{2} > 0
\label{eq:right-length-positive-prob-positive-end-of-proof}
\end{align}
provided $A^{-1} \vee c_{0}$ is small enough. Here we have the first inequality in
\eqref{eq:right-length-positive-prob-positive-end-of-proof} since the event in the
first line of~\eqref{eq:right-length-positive-prob-positive-end-of-proof} is seen to
hold on the event in the second line as follows: for any $k \in \{1,\ldots,n_{1}\}$ we have
$S_{0,k-1} - B^{k}_{R} > 3\epsilon_{0}(k-1) - \epsilon_{0} \geq (2k-3)\epsilon_{0}$,
for any $k \in \mathbb{N}\cap(n_{1},n_{1}+n_{0}]$ we see from $n_{1} \geq (2\epsilon_{0}^{-1}M+4)n_{0}$ that
\begin{align*}
S_{0,k-1}-B^{k}_{R}
  &= S_{0,n_{1}} + S_{n_{1},k-1} - B^{k}_{R}
  > 3\epsilon_{0}n_{1} - M(k-n_{1}) \\
&\geq 2\epsilon_{0}n_{1}+\epsilon_{0}(2\epsilon_{0}^{-1}M+4)n_{0}-Mn_{0}
  >2(n_{1}+n_{0})\epsilon_{0}
  >(2k-3)\epsilon_{0},
\end{align*}
and for any $k \in \mathbb{N}\cap(n_{1}+n_{0},\infty)$ we obtain
\begin{align*}
S_{0,k-1}-B^{k}_{R}
  &= S_{0,n_{1}} + S_{n_{1},k-1} - B^{k}_{R}
  > 3\epsilon_{0}n_{1} + (3\epsilon_{0}-\epsilon_{0})(k-n_{1}-1)
  >(2k-3)\epsilon_{0}.
\end{align*}
Thus by~\eqref{eq:right-length-positive-independent}, \eqref{eq:right-length-positive-prob-positive-end-of-proof}
and the discussion following~\eqref{eq:SLE6-chunk-E-prob-large} we conclude that
$\mathbb{P}[\widetilde{E}_{j}] = \mathbb{P}[\widetilde{E}_{0}] \geq \frac{1}{4}(c_{1}c_{0}/2)^{n_{1}}$
provided $A^{-1} \vee c_{0}$ is small enough, proving~\eqref{eq:right-length-positive-prob-positive}.
\end{proof}

\begin{lemma} \label{lem:run_good_chunks_decay_fast}
Let $u \in [2,\infty)$. For each $N \in \mathbb{N}$, let $\widehat{E}_{N}=\widehat{E}_{u,N}$ be the event that there exists
$0 \leq n \leq N$ so that for some $m \in \mathbb{N} \cap (u,\infty)$
and $0 \leq j_{1} < \cdots < j_{m} \leq n$ the following hold:
\begin{enumerate}[(i)]
\item\label{it:chunks_good} $E_{j_{i}}$ occurs for any $1 \leq i \leq m$.
\item\label{it:tops_exposed} The boundary length of $\mathcal{I}_{j_{i},n}:=\partial \CN_{j_{i}} \cap \partial \CW_{n+1}$ is in $(0,\epsilon_0)$ for any $1 \leq i \leq m$.
\item\label{it:adjacent} $\mathcal{I}_{j_{i},n} \cap \mathcal{I}_{j_{i+1},n} \not= \emptyset$
  and $\mathcal{I}_{j_{i+1},n}$ is located to the left of $\mathcal{I}_{j_{i},n}$ in $\partial \CW_{n+1}$
  for any $1 \leq i \leq m-1$.
\end{enumerate}
(Note that $\{\mathcal{I}_{j_{i},n}\}_{i=1}^{m}$ are closed subintervals of $\partial \CW_{n+1}$
with disjoint interiors under~\eqref{it:chunks_good} and~\eqref{it:tops_exposed} by Proposition~\ref{prop:SLE6-chunk-Jordan-H}.)
Assume further that $A^{-1} \vee c_{0}$ is small enough so that
\eqref{eq:right-length-positive-prob-positive} holds, let
$\tilde{q}$ be as in~\eqref{eq:right-length-positive-prob-positive} and
set $\tilde{c} := \frac{1}{2}\log(\tilde{q}^{-1})$. Then for any $N \in \mathbb{N}$,
\begin{equation}\label{eq:run_good_chunks_decay_fast}
\mathbb{P}[\widehat{E}_{N}] \leq \tilde{q}^{-1} N \exp(-\tilde{c}u).
\end{equation}

\end{lemma}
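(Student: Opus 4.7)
The plan is to reduce $\widehat{E}_{N}$ to an index-counting event through the geometric content of Lemma~\ref{lem:right-length-positive-prob-positive} and then bound the count by iterating the strong Markov property on the i.i.d.\ sequence $\{(T^{j},B^{j}_{R},\indicator_{E_{j}})\}_{j\geq 0}$.

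First, I union-bound over the witness time, writing $\widehat{E}_{N}\subset\bigcup_{n=0}^{N}\widehat{E}_{N,n}$, where $\widehat{E}_{N,n}$ is the event that a chain with all the stated properties and witness index $n$ exists; this contributes the factor $N$ in the final bound. For fixed $n$, consider the finite-horizon event $\widetilde{E}_{j,l}$ defined in~\eqref{eq:right-length-positive-finite}. Since $\widetilde{E}_{j,n}\supset\widetilde{E}_{j}$, one has $\mathbb{P}[\widetilde{E}_{j,n}^{c}]\leq\mathbb{P}[\widetilde{E}_{j}^{c}]\leq\tilde{q}$ by Lemma~\ref{lem:right-length-positive-prob-positive}. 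Moreover, the geometric conclusion~\eqref{eq:right-boundary-avoided-finite} of that lemma shows that on $\widetilde{E}_{j,n}$ the portion of $\partial\CN_{j}\cap\partial\CW_{j+1}$ to the right of $z_{j}$, of quantum length $T^{j}-2\epsilon_{0}>\epsilon_{0}$, remains intact as a subset of $\mathcal{I}_{j,n}$. Since condition~(ii) in the definition of $\widehat{E}_{N}$ forces $|\mathcal{I}_{j_{i},n}|<\epsilon_{0}$ at every chain index $j_{i}$, each such $j_{i}$ must satisfy $E_{j_{i}}\cap\widetilde{E}_{j_{i},n}^{c}$, so that $\widehat{E}_{N,n}$ is contained in the event that strictly more than $u$ indices $j\in\{0,\ldots,n\}$ satisfy $E_{j}\cap\widetilde{E}_{j,n}^{c}$.

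The second step is to bound the probability that more than $u$ indices satisfy $E_{j}\cap\widetilde{E}_{j,n}^{c}$. The single-index estimate $\mathbb{P}[\widetilde{E}_{j,n}^{c}\mid\mathcal{F}_{j}]\leq\tilde{q}$, with $\mathcal{F}_{j}:=\sigma((T^{k},B^{k}_{R},\indicator_{E_{k}}):k<j)$, follows because $\widetilde{E}_{j,n}$ is measurable with respect to $\sigma((T^{k},B^{k}_{R},\indicator_{E_{k}}):j\leq k\leq n)$ and thus independent of $\mathcal{F}_{j}$. I iterate this by a strong Markov argument over successive stopping times: starting from $\tau_{0}:=-1$, let $\tau_{\ell+1}$ be one plus the smallest $k>\tau_{\ell}$ at which either $E_{k}\cap\{T^{k}\leq 3\epsilon_{0}\}$ holds or there exists some $j\in(\tau_{\ell},k]$ with $E_{j}$ and $S_{j,k-1}-B^{k}_{R}\leq(2(k-j)-3)\epsilon_{0}$. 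The event that a bad index is discovered in the step $\ell\mapsto\ell+1$ is, conditional on $\mathcal{F}_{\tau_{\ell}+1}$, bounded in probability by $\tilde{q}$ via the strong Markov property applied to the i.i.d.\ sequence; each such step records up to two bad indices (namely $j$ and possibly $k=\tau_{\ell+1}-1$ itself), so that accumulating more than $u$ bad indices requires at least $\lceil u/2\rceil$ successful steps, giving a cumulative bound of $\tilde{q}^{\lceil u/2\rceil}\leq\tilde{q}^{-1}\tilde{q}^{u/2}=\tilde{q}^{-1}\exp(-\tilde{c}u)$.

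The main obstacle is the overlap in tail information between $\widetilde{E}_{j,n}^{c}$ and $\widetilde{E}_{j',n}^{c}$ at different indices $j<j'$: both depend on the common interval $[j',n]$, so a direct product bound fails. The stopping-time construction above handles this by advancing past the witness index $k$ realizing the running-sum drop, so that $\mathcal{F}_{k+1}$ records the failure and the next step restarts cleanly via the strong Markov property; the factor of two loss in the exponent (and hence the choice $\tilde{c}=\tfrac{1}{2}\log\tilde{q}^{-1}$) is exactly what allows the witness $k$ to potentially coincide with another bad index. Summing the per-$n$ bound $\tilde{q}^{-1}\exp(-\tilde{c}u)$ over $n\in\{0,\ldots,N\}$ then yields~\eqref{eq:run_good_chunks_decay_fast}.
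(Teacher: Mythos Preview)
Your reduction in Step~2 is correct: if $j_{i}$ participates in a chain satisfying (i)--(iii) at witness time $n$, then indeed $E_{j_{i}}\cap\widetilde{E}_{j_{i},n}^{c}$ must hold, since on $\widetilde{E}_{j_{i},n}$ the portion of $\partial\CN_{j_{i}}$ to the right of $z_{j_{i}}$ survives in $\mathcal{I}_{j_{i},n}$ with length exceeding $\epsilon_{0}$. However, the passage from ``the chain has $>u$ elements'' to ``there are $>u$ bad indices in $[0,n]$'' discards the adjacency constraint~(iii), and the larger event you obtain simply does not have the claimed probability bound. Concretely, for every $j<n$ one has $\mathbb{P}[E_{j}\cap\{B^{j+1}_{R}\geq\epsilon_{0}\}]=\mathbb{P}[E_{0}]\,\mathbb{P}[B^{0}_{R}\geq\epsilon_{0}]>0$, and since this already forces $\widetilde{E}_{j,n}^{c}$ (the condition at $k=j+1$ fails), the expected number of bad indices in $[0,n]$ grows linearly in $n$. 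For $n$ of order $N$, the probability that more than $u$ bad indices occur is therefore close to $1$, not $\tilde{q}^{-1}e^{-\tilde{c}u}$.

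Your stopping-time construction does not repair this. First, the event $\{\tau_{\ell+1}<\infty\}$ is not contained in $\widetilde{E}_{\tau_{\ell}+1}^{c}$: even on $\widetilde{E}_{\tau_{\ell}+1}$, condition~(A) ($E_{k}\cap\{T^{k}\leq 3\epsilon_{0}\}$) can be triggered at indices $k>\tau_{\ell}+1$, and condition~(B) can be triggered with $j>\tau_{\ell}+1$. In fact $\tau_{\ell+1}<\infty$ a.s., so the ``success probability $\leq\tilde{q}$'' claim is false. Second, the claim that each step records at most two bad indices is also wrong: an index $j'\in(\tau_{\ell},\tau_{\ell+1}-1)$ can be bad via a witness $k'\geq\tau_{\ell+1}$, but your step-$\ell$ stopping rule only sees witnesses $k<\tau_{\ell+1}$, and subsequent steps only allow $j>\tau_{\ell+1}-1$; such $j'$ is never recorded.

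The paper's argument is structurally different and cannot be recovered from your setup: it union-bounds over the \emph{root} $j_{1}$ of the chain rather than over $n$, embeds any chain starting at $j$ into a tree $\mathcal{T}_{j}$ encoding the adjacency relations~(I)--(III), and shows that on $\widetilde{E}_{m_{i}+1}$ (where $m_{i}$ is the $(i{+}1)$-th vertex of $\mathcal{T}_{j}$) the tree is \emph{geometrically blocked} from growing past $\{m_{0},\ldots,m_{i},m_{i}+1\}$. This last step uses condition~(iii) in an essential way: the protected portion of $\partial\CN_{m_{i}+1}$ separates the tops of $\CN_{m_{0}},\ldots,\CN_{m_{i}}$ from all later chunks, so no $l\geq m_{i}+2$ can satisfy the adjacency condition with any earlier tree vertex. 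This is precisely the information you lose in Step~2.
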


\begin{proof}
Let $j \geq 0$. On the event $E_{j}$, we consider a random tree rooted at $j$ with the
set of vertices $\mathcal{T}_{j} \subset \mathbb{Z} \cap [j,\infty)$, defined as follows.
Given $k \in \mathcal{T}_{j}$, we say that $l \in \mathbb{N} \cap (k,\infty)$ is a child
of $k$ in the tree if the following three conditions are satisfied:
\begin{enumerate}[(I)]
\item\label{it:chunks_good_proof} $E_{l}$ occurs.
\item\label{it:tops_exposed_proof} The boundary length of $\mathcal{I}_{k,l}:=\partial \CN_{k} \cap \partial \CW_{l+1}$ is in $(0,\epsilon_0)$.
\item\label{it:adjacent_proof} $\mathcal{I}_{k,l} \cap \partial \CN_{l} \not= \emptyset$
  and $\partial \CN_{l}$ is located to the left of $\mathcal{I}_{k,l}$ in $\partial \CW_{l+1}$.
\end{enumerate}
Recall that $T^{j}$ denotes the quantum length of the top $\partial \CN_{j} \cap \CW_{j}$
of $\CW_{j} \setminus \CW_{j+1}$. We claim that, provided $A^{-1} \vee c_{0}$ is small enough
so that~\eqref{eq:right-length-positive-prob-positive} holds, the conditional law
of the cardinality $\# \mathcal{T}_{j}$ of $\mathcal{T}_{j}$ given $E_{j}$ and $T^{j}$ is
stochastically dominated by a geometric distribution whose success probability is explicit
in $\tilde{q}$, with $\tilde{q}$ as in~\eqref{eq:right-length-positive-prob-positive}.
Indeed, for each $k \geq 0$ let $\mathescr{G}_{k}$ denote the $\sigma$-algebra generated by
$\{(\CW_{i} \setminus \CW_{i+1},h|_{\CW_{i} \setminus \CW_{i+1}},\eta'_{i}|_{[0,\sigma_{i}]},\widetilde{L}_{i})\}_{0 \leq i \leq k}$,
and for $i \geq 0$ let $m_{i}$ denote the $(i+1)$-th smallest element of
$\mathcal{T}_{j} \subset \mathbb{Z} \cap [j,\infty)$ on the event $\{ \# \mathcal{T}_{j} > i \}$
and set $m_{i}:=\infty$ on $\{ \# \mathcal{T}_{j} \leq i \} \cup E_{j}^{c}$, so that $m_{i}$
is a $\{ \mathescr{G}_{k} \}$-stopping time. Let $i \geq 0$. Then setting
$\widetilde{E}_{m_{i}+1}:=\bigcup_{k \in \mathbb{N}\cup\{0\}}\bigl(\widetilde{E}_{k+1}\cap\{m_{i}=k\}\bigr)$,
on $\widetilde{E}_{m_{i}+1}$ we have $T^{m_{i}+1} > 3 \epsilon_{0}$ by~\eqref{eq:right-length-positive},
hence see from~\eqref{eq:right-boundary-avoided} and the definition of the exploration
that~\eqref{it:tops_exposed_proof} with $k=m_{i}+1$ and~\eqref{it:adjacent_proof} with any
$k\in\{m_{0},\ldots,m_{i}\}$ fail to hold for any $l\in\mathbb{N}\cap[m_{i}+2,\infty)$,
and thus obtain $\mathcal{T}_{j}\subset\{m_{0},\ldots,m_{i},m_{i}+1\}$ and in particular
$\# \mathcal{T}_{j} \leq i+2$. Therefore
\begin{equation} \label{eq:run_good_chunks_tree_size_bound_proof}
\begin{split}
\mathbb{P}[ m_{i+2} < \infty \mid \mathescr{G}_{m_{i}} ]
  &\leq \indicator_{ \{ m_{i} < \infty \} } - \mathbb{P}\bigl[ \widetilde{E}_{m_{i}+1} \bigm| \mathescr{G}_{m_{i}} \bigr] \\
&= \bigl( 1 - \mathbb{P}[\widetilde{E}_{0}] \bigr) \indicator_{ \{ m_{i} < \infty \} }
  \leq \tilde{q} \indicator_{ \{ m_{i} < \infty \} },
\end{split}
\end{equation}
where the equality holds by the facts that
$\widetilde{E}_{m_{i}+1} \cap \{m_{i}=k\} = \widetilde{E}_{k+1} \cap \{m_{i}=k\}$,
$\widetilde{E}_{k+1}$ is independent of $\mathescr{G}_{k}$ and
$\mathbb{P}[\widetilde{E}_{k+1}]=\mathbb{P}[\widetilde{E}_{0}]$ for any $k \geq 0$. It thus follows
by an induction on $i$ based on~\eqref{eq:run_good_chunks_tree_size_bound_proof} that for any $i \geq 0$,
\begin{equation} \label{eq:run_good_chunks_tree_size_bound}
\mathbb{P}[ \# \mathcal{T}_{j} > 2i \mid \mathescr{G}_{j} ] \indicator_{ E_{j} }
  = \mathbb{P}[ m_{2i} < \infty \mid \mathescr{G}_{j} ] \indicator_{ E_{j} }
  \leq \tilde{q}^{i} \indicator_{E_{j}},
\end{equation}
which implies the desired claim.

Now we can conclude the assertion as follows. Let $N \in \mathbb{N}$, and for each $j \geq 0$
let $\widehat{E}_{N,j}$ be the event that~\eqref{it:chunks_good}, \eqref{it:tops_exposed}
and~\eqref{it:adjacent} as in the statement hold for some $m \in \mathbb{N} \cap (u,\infty)$
and $j = j_{1} < \cdots < j_{m}\leq n$, so that $\widehat{E}_{N} \subset \bigcup_{j=0}^{N-1}\widehat{E}_{N,j}$.
Then since $\{ j_{1}, \ldots, j_{m} \} \subset \mathcal{T}_{j}$ for such $m$ and $j_{1},\ldots,j_{m},n$
by the definition of the tree $\mathcal{T}_{j}$ given at the beginning of the previous paragraph,
we have $\widehat{E}_{N,j} \subset E_{j} \cap \{ \# \mathcal{T}_{j} > u \}$, hence
$\mathbb{P}[ \widehat{E}_{N,j} ]
  \leq \mathbb{P}\bigl[ E_{j} \cap \{ \# \mathcal{T}_{j} > u \} \bigr]
  \leq \tilde{q}^{-1} \exp(-\tilde{c}u)$
by~\eqref{eq:run_good_chunks_tree_size_bound}, where $\tilde{c} := \frac{1}{2}\log(\tilde{q}^{-1})$, and thus
$\mathbb{P}[ \widehat{E}_{N} ]
  \leq \sum_{j=0}^{N-1} \mathbb{P}[ \widehat{E}_{N,j} ]
  \leq \tilde{q}^{-1} N \exp(-\tilde{c}u)$.
\end{proof}

\begin{proof}[Proof of Proposition~\ref{prop:good_chunks_percolate_half_plane}-\eqref{it:bad_cluster_size}]
Assume that $A^{-1} \vee c_{0}$ is small enough so that Proposition~\ref{prop:good_chunks_percolate_half_plane}-\eqref{it:move_left}
and~\eqref{eq:right-length-positive-prob-positive} hold with $c_{1},c_{2},\tilde{q}$ as stated in these places.
Recalling that $\CW = (\h,h,0,\infty)$ is the original quantum wedge on which the exploration is defined,
set $\partial_{\mathrm{L}} \CW := (-\infty,0) \subset \partial \CW$.
Let $\mathescr{G}_{k}$ denote the $\sigma$-algebra generated by
$\{(\CW_{i} \setminus \CW_{i+1},h|_{\CW_{i} \setminus \CW_{i+1}},\eta'_{i}|_{[0,\sigma_{i}]},\widetilde{L}_{i})\}_{0 \leq i \leq k}$
for each $k \geq 0$, set $m_{0} := 0$ and inductively
$m_{j} := \inf\{ i > m_{j-1} \mid \partial \CN_{i} \cap \partial_{\mathrm{L}} \CW \not= \emptyset \}$
for each $j \geq 1$, so that for any $j \geq 0$, $m_{j}$ is a $\{\mathescr{G}_{k}\}$-stopping time and
Proposition~\ref{prop:good_chunks_percolate_half_plane}-\eqref{it:move_left} easily implies that $m_{j}<\infty$ a.s.
Also let $N \in \mathbb{N}$ and set $J_{N} := \min\{ j \geq 0 \mid m_{j} \geq N \}$,
so that $J_{N} \leq N$. Then we easily see from the definition of the exploration that
for any $j \geq 0$ and any $i \in \mathbb{Z} \cap [ m_{j}, m_{j+1} ]$
there exist $i_{1},\ldots,i_{n} \in \mathbb{Z} \cap [ m_{j}, m_{j+1} ]$
with $n \leq m_{j+1} - m_{j}$ and $i_{1} = i$ such that
$\partial \CN_{i_{n}} \cap \partial_{\mathrm{L}} \CW \neq \emptyset$ and
$\partial \CN_{i_{k}} \cap \partial \CN_{i_{k+1}} \neq \emptyset$ for each $1 \leq k \leq n-1$,
i.e., the number of adjacent chunks necessary to get from $\CN_{i}$ to $\partial_{\mathrm{L}} \CW$
is at most $m_{j+1} - m_{j}$. It therefore suffices to bound
$\max_{ 0 \leq j < J_{N} }(m_{j+1}-m_{j})$.

Let $\widetilde{E}_{j}$ be as in~\eqref{eq:right-length-positive} for $j \geq 0$, let $u \in [2,\infty)$
and let $\widehat{E}_{N} = \widehat{E}_{u,N}$ be as in Lemma~\ref{lem:run_good_chunks_decay_fast}.
Let $j \geq 0$, define $\mathescr{G}_{m_{j}}$-measurable random variables $R_{j},t_{j}$
by $R_{j}:=0$ on $\{m_{j}=\infty\}$,
\begin{equation} \label{eq:run_good_chunks_max_length}
R_{j} := 1 + \max\biggl\{ m \geq 1 \biggm|
  \begin{minipage}{240.5pt}
 \eqref{it:chunks_good}, \eqref{it:tops_exposed} and~\eqref{it:adjacent}
  in the statement of Lemma~\ref{lem:run_good_chunks_decay_fast} hold with
  $n=m_{j}$ for some $0 \leq j_{1} < \cdots < j_{m} \leq m_{j}$
  \end{minipage}
  \biggr\}
\end{equation}
on $\{m_{j}<\infty\}$ ($\max \emptyset := 0$) and
$t_{j} := T^{m_{j}} \indicator_{E_{m_{j}}^{c}} := \sum_{k=0}^{\infty} T^{k} \indicator_{ E_{k}^{c} \cap \{ m_{j} = k \} }$,
and set $n_{u} := \max\bigl( \mathbb{N} \cap [ 1, u ] \bigr)$ and
$\widetilde{n}_{u} := (2\epsilon_{0}/c_{1})n_{u}$.

We claim that
\begin{equation} \label{eq:bad_cluster_size_claim1}
\mathbb{P}\bigl[ m_{j+1} - m_{j} > \widetilde{n}_{u} R_{j} \bigm| \mathescr{G}_{m_{j}} \bigr]
  \indicator_{ \{ m_{j} < N, \, t_{j} \leq \epsilon_{0} n_{u} \} }
  \leq (\tilde{q}^{n_{u}-1}+e^{-c_{2}\widetilde{n}_{u}}) \indicator_{ \{ m_{j} < N, \, t_{j} \leq \epsilon_{0} n_{u} \} }.
\end{equation}
To see this, set $\tau_{j,0} := m_{j} -1 $ and inductively
$\tau_{j,n} := \inf\bigl\{ k > \tau_{j,n-1} \bigm| \eta'_{k+1}(0) \not\in \bigcup_{i=m_{j}}^{k} \partial \CN_{i} \bigr\}$
for each $n \geq 1$, so that $\tau_{j,n}$ is a $\{ \mathescr{G}_{k} \}$-stopping time for any $n \geq 1$;
note that $\tau_{j,n}$ is the $n$-th smallest $k \geq m_{j}$ such that
$E_{k}$ does not occur and the process of deciding the location of $\eta'_{k+1}(0)$
according to~\eqref{eq:bad_chunks_initial_point_half_plane} involves skipping
some of the intervals $\{ \partial \CW_{k+1} \cap \partial \CN_{i} \}_{0 \leq i < m_{j}}$
in $\partial \CW_{k+1}$ whose quantum lengths are in $(0,\epsilon_{0})$.
Then for any $n \geq 1$ we have
$\{ \tau_{j,n+1} < \infty \} \subset \bigcup_{k=0}^{\infty}( \{ \tau_{j,n} = k \} \setminus \widetilde{E}_{k+1})$
by~\eqref{eq:right-length-positive} and~\eqref{eq:right-boundary-avoided}, hence
$\mathbb{P}[ \tau_{j,n+1} < \infty \mid \mathescr{G}_{\tau_{j,n}} ]
  \leq \tilde{q} \indicator_{ \{ \tau_{j,n} < \infty \} }$
by the independence of $\widetilde{E}_{k+1}$ and $\mathescr{G}_{k}$ and
\eqref{eq:right-length-positive-prob-positive} with $j=k+1$ for $k \geq 0$, thus
$\mathbb{P}[ \tau_{j,n} < \infty \mid \mathescr{G}_{m_{j}} ]
  \leq \tilde{q}^{n-1} \indicator_{ \{ m_{j} < \infty \} }$
and in particular
$\mathbb{P}[ \tau_{j,n_{u}} < \infty \mid \mathescr{G}_{m_{j}} ]
  \leq \tilde{q}^{n_{u}-1} \indicator_{ \{ m_{j} < \infty \} }$.
Therefore, to show~\eqref{eq:bad_cluster_size_claim1} it suffices to prove instead that
\begin{equation} \label{eq:bad_cluster_size_claim2}
\mathbb{P}\bigl[ m_{j+1} - m_{j} > \widetilde{n}_{u} R_{j}, \, \tau_{j,n_{u}} = \infty \bigm| \mathescr{G}_{m_{j}} \bigr] \indicator_{ \{ m_{j} < N, \, t_{j} \leq \epsilon_{0} n_{u} \} }
  \leq e^{-c_{2}\widetilde{n}_{u}} \indicator_{ \{ m_{j} < N, \, t_{j} \leq \epsilon_{0} n_{u} \} }.
\end{equation}
For this purpose, for each $n \geq 1$ let $s_{j,n}$ denote the quantum length of the part of
$\partial \CW_{\tau_{j,n}+1}$ between $\eta'_{\tau_{j,n}+1}(0)$ and the rightmost point of
$\bigcup_{i=m_{j}}^{\tau_{j,n}}\partial \CN_{i} \cap \partial \CW_{\tau_{j,n}+1}$,
so that $s_{j,n} \leq R_{j} \epsilon_{0}$ by~\eqref{eq:bad_chunks_initial_point_half_plane} and
\eqref{eq:run_good_chunks_max_length}. Then since, conditionally on $\{ m_{j} < \infty \}$,
\begin{equation*}
\bigl\{ \bigl( \CW_{i+1+m_{j}} \setminus \CW_{i+2+m_{j}}, h|_{\CW_{i+1+m_{j}} \setminus \CW_{i+2+m_{j}}}, \eta'_{i+1+m_{j}}|_{[0,\sigma_{i+1+m_{j}}]}, \widetilde{L}_{m_{j},i+1+m_{j}} \bigr) \bigr\}_{i=0}^{\infty},
\end{equation*}
where $\widetilde{L}_{m_{j},i+1+m_{j}}:=\widetilde{L}_{i+1+m_{j}}-\sum_{n=1}^{\infty}s_{j,n}\indicator_{\{\tau_{j,n}=i+1+m_{j}\}}$,
is independent of $\mathescr{G}_{m_{j}}$ and has the same law as
$\{(\CW_{i} \setminus \CW_{i+1},h|_{\CW_{i} \setminus \CW_{i+1}},\eta'_{i}|_{[0,\sigma_{i}]},\widetilde{L}_{i})\}_{i=0}^{\infty}$,
it follows that, conditionally on $\{ m_{j} < \infty \}$,
$\bigl\{ L_{k+1+m_{j}} - L_{1+m_{j}} - \sum_{n=1}^{\infty} s_{j,n} \indicator_{ \{ m_{j} < \tau_{j,n} \leq k+m_{j} \} } \bigr\}_{k=0}^{\infty}$
is independent of $\mathescr{G}_{m_{j}}$ and has the same law as $\{ L_{k} \}_{k=0}^{\infty}$.
On the other hand, noting that $t_{j} + s_{j,1} \indicator_{ \{ \tau_{j,1} = m_{j} \} }$ is the quantum length
$\widetilde{L}_{m_{j}}$ of the part of $\partial \CW_{1+m_{j}}$ between $\eta_{1+m_{j}}'(0)$ and
the leftmost point of $\partial(\CW_{m_{j}} \setminus \CW_{1+m_{j})} \cap \partial \CW_{m_{j}}$,
we see from the definition of $m_{j},m_{j+1}$ that the quantum length of
$\partial(\CW_{0} \setminus \CW_{1+m_{j}}) \cap \partial_{\mathrm{L}} \CW$
is given by $-(L_{1+m_{j}}-t_{j}-s_{j,1}\indicator_{\{\tau_{j,1}=m_{j}\}})$ and that
on the event $\{ m_{j+1} - m_{j} > \widetilde{n}_{u} R_{j} \}$ we have
$\partial(\CW_{0} \setminus \CW_{\widetilde{n}_{u}R_{j}+1+m_{j}}) \cap \partial_{\mathrm{L}} \CW
  = \partial(\CW_{0} \setminus \CW_{1+m_{j}}) \cap \partial_{\mathrm{L}} \CW$
and hence $L_{\widetilde{n}_{u}R_{j}+1+m_{j}} \geq L_{1+m_{j}} - t_{j} - s_{j,1} \indicator_{ \{ \tau_{j,1} = m_{j} \} }$.
Recalling the $\mathescr{G}_{m_{j}}$-measurability of $R_{j}$ and $t_{j} = T^{m_{j}} \indicator_{E_{m_{j}}^{c}}$,
from the facts in the last two sentences, $s_{j,n} \leq R_{j} \epsilon_{0}$ and
Proposition~\ref{prop:good_chunks_percolate_half_plane}-\eqref{it:move_left} we get
\begin{align*}
&\mathbb{P}\bigl[ m_{j+1} - m_{j} > \widetilde{n}_{u} R_{j}, \, \tau_{j,n_{u}} = \infty \bigm| \mathescr{G}_{m_{j}} \bigr] \indicator_{ \{ m_{j} < N, \, t_{j} \leq \epsilon_{0} n_{u} \} } \\
&\leq \mathbb{P}\bigl[ L_{\widetilde{n}_{u}R_{j}+1+m_{j}} \geq L_{1+m_{j}} - t_{j} - s_{j,1} \indicator_{ \{ \tau_{j,1} = m_{j} \} }, \, \tau_{j,n_{u}} = \infty \bigm| \mathescr{G}_{m_{j}} \bigr] \indicator_{ \{ m_{j} < N, \, t_{j} \leq \epsilon_{0} n_{u} \} } \\
&\leq \mathbb{P}\biggl[ \begin{minipage}{250pt} $L_{\widetilde{n}_{u}R_{j}+1+m_{j}} - L_{1+m_{j}} - \sum_{n=1}^{\infty} s_{j,n} \indicator_{ \{ m_{j} < \tau_{j,n} \leq \widetilde{n}_{u}R_{j}+m_{j} \} }$ \\ $\geq - R_{j}\epsilon_{0}n_{u} - t_{j}$ \end{minipage} \biggm| \mathescr{G}_{m_{j}} \biggr] \indicator_{ \{ m_{j} < N, \, t_{j} \leq \epsilon_{0} n_{u} \} }  \\
&= \bigl( \mathbb{P}[ L_{k} \geq - (\epsilon_{0}n_{u}/\widetilde{n}_{u}) k - t ]|_{ (k,t) = (\widetilde{n}_{u}R_{j},t_{j}) } \bigr) \indicator_{ \{ m_{j} < N, \, t_{j} \leq \epsilon_{0} n_{u} \} } \\
&\leq \bigl( \mathbb{P}[ L_{k} \geq - (2\epsilon_{0}n_{u}/\widetilde{n}_{u}) k ]|_{ k = \widetilde{n}_{u}R_{j} } \bigr) \indicator_{ \{ m_{j} < N, \, t_{j} \leq \epsilon_{0} n_{u} \} } \\
&= \bigl( \mathbb{P}[ L_{k} \geq - c_{1}k ]|_{ k = \widetilde{n}_{u}R_{j} } \bigr) \indicator_{ \{ m_{j} < N, \, t_{j} \leq \epsilon_{0} n_{u} \} }
  \leq e^{-c_{2}\widetilde{n}_{u}} \indicator_{ \{ m_{j} < N, \, t_{j} \leq \epsilon_{0} n_{u} \} },
\end{align*}
proving~\eqref{eq:bad_cluster_size_claim2} and thereby~\eqref{eq:bad_cluster_size_claim1}.

Finally, we give similar upper bounds on $\mathbb{P}\bigl[ m_{j} < N, \, R_{j} > u + 1 \bigr]$
and $\mathbb{P}\bigl[ m_{j} < N, \, t_{j} > \epsilon_{0}n_{u} \bigr]$ to conclude the proof.
Indeed, $\{ m_{j} < N, \, R_{j} > u + 1 \} \subset \widehat{E}_{N}$
by~\eqref{eq:run_good_chunks_max_length} and hence
\begin{equation} \label{eq:bad_cluster_size_claim3}
\mathbb{P}\bigl[ m_{j} < N, \, R_{j} > u + 1 \bigr]
  \leq \mathbb{P}[ \widehat{E}_{N} ]
  \leq \tilde{q}^{-1} N \exp(-\tilde{c}u)
\end{equation}
by Lemma~\ref{lem:run_good_chunks_decay_fast}, where $\tilde{c} := \frac{1}{2}\log(\tilde{q}^{-1})$.
For the latter, noting that $t_{j} = T^{m_{j}} \indicator_{E_{m_{j}}^{c}} \leq \max_{0 \leq k < N} T^{k}$
on the event $\{ m_{j} < N \}$, that $\{ T^{k} \}_{0 \leq k < N}$ is i.i.d.\ and that $T^{0}$ is
stochastically dominated by $Z^{1}+Z^{2}$ with $Z^{i}:=\sup_{0 \leq t \leq 1}(X^{i}_{t}-I^{i}_{t})$
for $i=1,2$ by~\eqref{eq:expression_top_length} and $\sigma_{0}\leq 1$,
we see from Proposition~\ref{prop:stable-reflected-exp-moment} that
\begin{equation} \label{eq:bad_cluster_size_claim4}
\begin{split}
&\mathbb{P}\bigl[ m_{j} < N, \, t_{j} > \epsilon_{0}n_{u} \bigr]
  \leq \mathbb{P}\bigl[ \max\nolimits_{0 \leq k < N} T^{k} > \epsilon_{0}n_{u} \bigr]
  \leq N\, \mathbb{P}\bigl[ T^{0} > \epsilon_{0}n_{u} \bigr] \\
&\leq N \, \mathbb{P}\bigl[ Z^{1} + Z^{2} > \epsilon_{0}n_{u} \bigr]
  \leq 2N \,  \mathbb{P}\bigl[ Z^{1} > \tfrac{1}{2}\epsilon_{0}n_{u} \bigr]
  \leq 2c_{3} N \exp( - \tfrac{1}{2} c_{4} \epsilon_{0} n_{u} )
\end{split}
\end{equation}
for some $c_{3},c_{4} > 0$ determined solely by the law of $X^{1}$.
Combining~\eqref{eq:bad_cluster_size_claim3} and~\eqref{eq:bad_cluster_size_claim4}
with~\eqref{eq:bad_cluster_size_claim1}, we now obtain
\begin{align}
&\mathbb{P}\bigl[ \max\nolimits_{ 0 \leq j < J_{N} }(m_{j+1}-m_{j}) > (3\epsilon_{0}/c_{1})u^{2} \bigr] \notag \\
&\leq \mathbb{P}\bigl[ \max\nolimits_{ 0 \leq j < J_{N} }(m_{j+1}-m_{j}) > \widetilde{n}_{u} ( u + 1 ) \bigr] \notag \\
&\leq \sum_{j=0}^{N-1} \mathbb{P}\bigl[ m_{j} < N, \, m_{j+1}-m_{j} > \widetilde{n}_{u} ( u + 1 ) \bigr] \notag \\
&\leq \sum_{j=0}^{N-1} \bigl( \mathbb{P}\bigl[ m_{j} < N, \, m_{j+1}-m_{j} > \widetilde{n}_{u} R_{j} \bigr] + \mathbb{P}\bigl[ m_{j} < N, \, R_{j} > u + 1 \bigr] \bigr) \notag \\
&\leq \sum_{j=0}^{N-1} \bigl( \mathbb{E}\bigl[ \indicator_{ \{ m_{j} < N, \, t_{j} \leq \epsilon_{0} n_{u}, \, m_{j+1}-m_{j} > \widetilde{n}_{u} R_{j} \} } \bigr] + \mathbb{P}\bigl[ m_{j} < N, \, t_{j} > \epsilon_{0} n_{u} \bigr] + \mathbb{P}[ \widehat{E}_{N} ] \bigr) \notag \\
&\leq N \bigl( \tilde{q}^{n_{u}-1}+e^{-c_{2}\widetilde{n}_{u}} + 2c_{3} N \exp( - \tfrac{1}{2} c_{4} \epsilon_{0} n_{u} ) + \tilde{q}^{-1} N \exp(-\tilde{c}u) \bigr) \notag \\
&\leq c_{5} N^{2} \exp(-c_{6}u),
\label{eq:bad_cluster_size_conclusion}
\end{align}
where $c_{5}:=\tilde{q}^{-2}+e^{2\epsilon_{0}c_{2}/c_{1}}+2c_{3}e^{c_{4}\epsilon_{0}/2}+\tilde{q}^{-1}$
and $c_{6}:=\min\bigl\{2\epsilon_{0}c_{2}/c_{1},\frac{1}{2}c_{4}\epsilon_{0},\tilde{c}\bigr\}$.
Then Proposition~\ref{prop:good_chunks_percolate_half_plane}-\eqref{it:bad_cluster_size}
follows from the first paragraph of this proof and~\eqref{eq:bad_cluster_size_conclusion}.
\end{proof}

Finally, in the rest of this subsection we prove
Proposition~\ref{prop:good_chunks_percolate_half_plane}-\eqref{it:good_chunks_initial_boundary},
on the basis of the following strategy. We first prove in Lemma~\ref{lem:WLLN_bottom_left}
that for each $j \geq 0$ and $n \in \mathbb{N}$, with probability tending to $1$ as $n\to\infty$,
the amount of quantum length occupied by
$\{ \partial \CN_{i} \mid \text{$j \leq i < j+n$, $\indicator_{E_{i}\cap E_{i+1}}=0$}\}$
in the part of $\partial \CW_{j}$ to the left of $\eta'_{j}(0)$ is at most a constant multiple of $A^{-2/3}n$.
This is expected to be much smaller than the quantum length of the part of
$\partial(\CW_{j} \setminus \CW_{j+n}) \cap \partial \CW_{j}$ to the left of $\eta'_{j}(0)$
provided $A$ is large enough, since Proposition~\ref{prop:good_chunks_percolate_half_plane}-\eqref{it:move_left}
indicates that the latter length should typically be at least $(\frac{1}{16}cA^{-2/3}\log A)n$.
While this expectation is not true with arbitrarily high probability because of the possible
slides of the exploration to the right caused by $\{ \CN_{i} \mid 0 \leq i < j \}$
(recall~\eqref{eq:bad_chunks_initial_point_half_plane}),
we show in Lemma~\ref{lem:good_chunks_initial_boundary} that it is true with \emph{some}
probability, thanks to the fact that with some probability $\{ \CN_{i} \mid 0 \leq i < j \}$
causes no slide to the right by Lemma~\ref{lem:right-length-positive-prob-positive}.
As a consequence, with some probability the part of
$\partial(\CW_{j} \setminus \CW_{j+n}) \cap \partial \CW_{j}$ to the left of $\eta'_{j}(0)$
cannot be covered by $\{ \partial \CN_{i} \mid \text{$j \leq i < j+n$, $\indicator_{E_{i}\cap E_{i+1}}=0$}\}$
and hence intersects $\partial \CN_{i}$ for some $i \in \mathbb{Z}\cap [j,j+n)$ with
$\indicator_{E_{i}\cap E_{i+1}}=1$. By taking $n$ large enough, we can further assume
that this $\partial \CN_{i}$ is sufficiently far away to the left from $\eta'_{j}(0)$,
and then $\partial \CN_{i}$ is going to intersect $\partial_{\mathrm{L}} \CW = (-\infty,0)$
provided the quantum length of the part of $\partial(\CW_{0} \setminus \CW_{j}) \cap \partial \CW_{j}$
to the left of $\eta'_{j}(0)$ is reasonably bounded. Now since, with very high probability,
this boundedness holds for any $j \in \mathbb{Z} \cap [0,N]$ with $\partial \CN_{j} \cap \partial_{\mathrm{L}} \CW \not=\emptyset$
by~\eqref{eq:bad_cluster_size_claim3} and~\eqref{eq:bad_cluster_size_claim4} and
such $j$ appears sufficiently often by~\eqref{eq:bad_cluster_size_conclusion},
we can make trials for sufficiently many $j \in \mathbb{Z} \cap [0,N]$,
each with some probability of success, to find $i \in \mathbb{Z} \cap [j,j+n)$ with
$\indicator_{E_{i}\cap E_{i+1}}=1$ and $\partial \CN_{i} \cap \partial_{\mathrm{L}} \CW \not=\emptyset$
for fixed $n$ large enough, which turns out to yield~\eqref{eq:good_chunks_initial_boundary}.

\begin{lemma} \label{lem:WLLN_bottom_left}
Let $j \geq 0$ and set
\begin{equation} \label{eq:bottom_left_sum}
S'_{j,k} := \sum_{i=j}^{k-1} B^{i}_{L} \indicator_{(E_{i}\cap E_{i+1})^{c}}
  \qquad \text{for $k>j$.}
\end{equation}
Then for any $p \in (1,\frac{3}{2})$, there exist $c_{p,1},c_{p,2} \in (0,\infty)$
determined solely by $p$ ($c_{p,1}$ can be chosen so as to be independent of $p$)
such that, as long as $c_{0} \in (0,1]$, for any $n\in\mathbb{N}$,
\begin{equation} \label{eq:WLLN_bottom_left}
\mathbb{P}\bigl[ S'_{j,j+n} \geq c_{p,1} A^{-2/3} n \bigr]
  = \mathbb{P}\bigl[ S'_{0,n} \geq c_{p,1} A^{-2/3} n \bigr]
  \leq c_{p,2} A^{2/3} n^{1-p}.
\end{equation}
\end{lemma}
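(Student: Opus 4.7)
The sequence $\{X_i := B^i_L \indicator_{(E_i \cap E_{i+1})^c}\}_{i \geq 0}$ is an identically distributed, $1$-dependent sequence of non-negative random variables, since each $X_i$ is a measurable function of the consecutive pair of chunks $(\CW_i \setminus \CW_{i+1}, \CW_{i+1} \setminus \CW_{i+2})$ and the underlying sequence of chunks is i.i.d.\ by the properties of the weight-$2$ quantum wedge recalled at the start of Subsection~\ref{ssec:good_chunks_percolate_half_plane}. My plan is to follow the strategy used for Proposition~\ref{prop:good_chunks_percolate_half_plane}-\eqref{it:growth_right}: combine moment estimates on $X_0$ with the quantitative weak law of large numbers with explicit remainder estimate from \cite[Exercise~1.2.11]{Str2011}, after first splitting $\{X_i\}$ into its even- and odd-indexed subsequences in order to remove the $1$-dependence and reduce to the standard i.i.d.\ form of the WLLN.

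The main step will be to bound $\mathbb{E}[X_0]$ and $\mathbb{E}[X_0^p]$ by constant multiples of $A^{-2/3}$. Using $(E_0 \cap E_1)^c \subset E_0^c \cup E_1^c$ together with the independence of $\CW_0 \setminus \CW_1$ from $\CW_1 \setminus \CW_2$, the contribution from $\indicator_{E_0 \cap E_1^c}$ factors as $\mathbb{E}[(B^0_L)^p \indicator_{E_0}]\,\mathbb{P}[E_1^c] = O(A^{-2/3})$, using $\mathbb{E}[(B^0_L)^p] \leq \mathbb{E}[|I^1_1|^p] < \infty$ by \cite[Chapter~VIII, Proposition~4]{bertoin1996levy} together with the scaling of $X^1$, and $\mathbb{P}[E_1^c] \leq (c_0 + c'')A^{-2/3}$ as in~\eqref{eq:SLE6-chunk-E-prob-large}. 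For the contribution from $\indicator_{E_0^c}$, I will split $E_0^c = \{\sigma_0 = 1\} \cup \bigl(\{\sigma_0 < 1\}\cap \{\CN^{\CW_0}_\sigma \notin E\}\bigr)$. On $\{\sigma_0 = 1\} = \{\tau \geq A\}$, the definition of $\tau^1$ forces $I^1_A = I^1_1$, so that $B^0_L = A^{-2/3}|I^1_1|$ and the contribution is at most $A^{-2p/3}\mathbb{E}[|I^1_1|^p]$; on $\{\sigma_0 < 1\} \cap \{\CN^{\CW_0}_\sigma \notin E\}$, whose probability is at most $c_0 A^{-2/3}$ by the hypothesis on $E$, I will use a truncation argument at a threshold $M$ chosen to optimize the balance between the trivial bound $M^p \mathbb{P}[E_0^c \cap \{\sigma_0 < 1\}]$ and the tail contribution $\mathbb{E}[(B^0_L)^p \indicator_{B^0_L > M}]$ controlled via $\mathbb{P}[B^0_L > t] \lesssim t^{-p}\mathbb{E}[|I^1_1|^p]$. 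The same decomposition, run without the $p$-th power, will yield the analogous bound on $\mathbb{E}[X_0]$.

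Once these moment bounds are in hand, I will apply the Stroock-type quantitative WLLN exactly as in~\eqref{eq:bottom_right_WLLN_apply}. Splitting $S'_{0,n}$ into its even- and odd-indexed subsums (each a sum of i.i.d.\ random variables) and choosing the truncation level $T = n$, the WLLN produces
\begin{equation*}
\mathbb{P}\biggl[\Bigl|\sum_{i=0}^{n-1} X_i - n \mathbb{E}[X_0]\Bigr| \geq ns\biggr] \leq C\, s^{-2}\, \mathbb{E}[X_0^p]\, n^{1-p},
\end{equation*}
up to a small correction (itself decaying as $\mathbb{E}[X_0^p]\, n^{1-p}$) coming from replacing the truncated mean by the full mean, just as in the passage from~\eqref{eq:bottom_right_WLLN_mean_tail} to~\eqref{eq:bottom_right_WLLN_apply}. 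Taking $s = c_{p,1} A^{-2/3}/2$ with $c_{p,1}$ chosen large enough to ensure $\mathbb{E}[X_0] \leq c_{p,1} A^{-2/3}/2$, and substituting the bound $\mathbb{E}[X_0^p] \leq C A^{-2/3}$ from the previous step, gives $\mathbb{P}[S'_{0,n} \geq c_{p,1} A^{-2/3} n] \leq C\, c_{p,1}^{-2}\, A^{4/3} \cdot A^{-2/3}\, n^{1-p} = c_{p,2}\, A^{2/3}\, n^{1-p}$, which is the desired bound.

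I expect the moment-bound step to be the main technical obstacle. The heavy-tailed nature of $B^0_L$ (only moments of order strictly less than $3/2$ are finite) combined with the fact that the bad event $E_0^c$ could a priori concentrate on large values of $B^0_L$ makes the truncation argument delicate, since the natural balance point for $M$ is at a scale (of order $A^{4/9}$ in a naive computation) that grows with $A$, in contrast with the proof of Proposition~\ref{prop:good_chunks_percolate_half_plane}-\eqref{it:growth_right} where all the analogous quantities are controlled by universal constants. Keeping the H\"older exponents bounded away from the critical stable index $3/2$, and exploiting the independence of $B^0_L$ from $E_1^c$ to separate off the most heavy-tailed contribution, will be essential to keep the implicit constants finite and to arrange the bound in the form stated.
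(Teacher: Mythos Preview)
Your overall architecture is right and matches the paper's: reduce to a $p$-th moment bound $\mathbb{E}[(B^0_L)^p\indicator_{(E_0\cap E_1)^c}]\leq cA^{-2/3}$, then feed this into the quantitative WLLN of \cite[Exercise 1.2.11]{Str2011} with $s\asymp A^{-2/3}$. Your handling of the $1$-dependence by splitting into even and odd indices is fine; the paper instead decomposes $S'_{0,n}=S^3_n+S^4_n$ with $S^3_n=\sum B^i_L\indicator_{E_i^c}$ genuinely i.i.d.\ and observes that for $S^4_n=\sum B^i_L\indicator_{E_i\cap E_{i+1}^c}$ the cross-covariances are $\leq 0$ (since $\indicator_{E_i\cap E_{i+1}^c}\indicator_{E_{i+1}\cap E_{i+2}^c}=0$), which already suffices for the variance bound in Stroock's inequality.

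The genuine gap is in the moment bound for $\mathbb{E}[(B^0_L)^p\indicator_{\{\sigma_0<1\}\setminus E_0}]$. Your truncation scheme, balancing $M^p\mathbb{P}[\{\sigma_0<1\}\setminus E_0]$ against a tail controlled only by $\mathbb{P}[B^0_L>t]\lesssim t^{-q}\mathbb{E}[|I^1_1|^q]$ for $q<3/2$, optimises at $M\asymp A^{2/(3q)}$ and produces only $O(A^{2p/(3q)-2/3})$, at best $O(A^{-2/9})$ as $q\uparrow 3/2$. You correctly flag this (your $A^{4/9}$ remark), but nothing you propose closes it: any argument using only the \emph{unconditional} moments $\mathbb{E}[(B^0_L)^q]\leq\mathbb{E}[|I^1_1|^q]$ hits this wall. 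Since $\mathbb{E}[X_0]$ then only satisfies $\mathbb{E}[X_0]\lesssim A^{-2/9}$, you cannot even choose $c_{p,1}$ independent of $A$ so that $c_{p,1}A^{-2/3}\geq 2\mathbb{E}[X_0]$, and the WLLN step collapses.

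What is missing is the observation that on $\{\sigma_0<1\}$ the random variable $B^0_L$ has law $A^{-2/3}|I^1_\tau|$ restricted to $\{\tau<A\}$, and Proposition~\ref{prop:running-inf-mean-upper-bound} gives the sharp moment bound $\mathbb{E}[|I^1_\tau|^{r}\indicator_{\{\tau<A\}}]\leq c_r A^{2(r-1)/3}$ for $r\in(1,\tfrac32)$, hence $\mathbb{E}[(B^0_L)^{r}\indicator_{\{\sigma_0<1\}}]\leq c_r A^{-2/3}$. The paper then applies H\"older with $q=\tfrac12+\tfrac{3}{4p}\in(1,\tfrac{3}{2p})$:
\[
\mathbb{E}[(B^0_L)^p\indicator_{\{\sigma_0<1\}\setminus E_0}]
\leq \mathbb{P}[\{\sigma_0<1\}\setminus E_0]^{1-1/q}\,\mathbb{E}[(B^0_L)^{pq}\indicator_{\{\sigma_0<1\}}]^{1/q}
\leq (c_0 A^{-2/3})^{1-1/q}(c_{pq}A^{-2/3})^{1/q},
\]
recovering exactly $A^{-2/3}$. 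Once you invoke Proposition~\ref{prop:running-inf-mean-upper-bound}, either this H\"older argument or your truncation (now with tail input $\mathbb{E}[(B^0_L)^{p'}\indicator_{\{\sigma_0<1\}}]\leq c_{p'}A^{-2/3}$) works; without it, neither does.
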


\begin{proof}
Since $\{(\indicator_{E_{i}},B^{i}_{L})\}_{i=0}^{\infty}$ is i.i.d.\ as noted after~\eqref{eqn:bad_length_change},
the equality in~\eqref{eq:WLLN_bottom_left} holds, and we may therefore assume that $j=0$.
We set $S^{3}_{n}:=\sum_{i=0}^{n-1} B^{i}_{L} \indicator_{E_{i}^{c}}$ and
$S^{4}_{n}:=\sum_{i=0}^{n-1} B^{i}_{L} \indicator_{E_{i}\cap E_{i+1}^{c}}$ for $n\in\mathbb{N}$,
so that $S'_{0,n}=S^{3}_{n}+S^{4}_{n}$, and we will apply to $\{ B^{i}_{L} \indicator_{E_{i}^{c}} \}_{i=0}^{n-1}$
and $\{ B^{i}_{L} \indicator_{E_{i}\cap E_{i+1}^{c}} \}_{i=0}^{n-1}$ separately a version
\cite[Exercise 1.2.11]{Str2011} of the weak law of large numbers with an explicit remainder estimate.

Let $p\in[1,\frac{3}{2})$. We first prove that, as long as $c_{0}\in(0,1]$,
\begin{equation} \label{eq:WLLN_bottom_left-p-moment}
\mathbb{E}[(B^{0}_{L})^{p}\indicator_{E_{0}^{c}}] \leq c_{p,3}A^{-2/3} \qquad\text{and}\qquad
\mathbb{E}[(B^{0}_{L})^{p}\indicator_{E_{0}\cap E_{1}^{c}}] \leq c_{p,3}A^{-2/3}
\end{equation}
for some $c_{p,3}\in(0,\infty)$ determined solely by $p$. Recall that
$\mathbb{E}[(B^{0}_{L})^{p}]\leq\mathbb{E}[\vert I^{1}_{1}\vert^{p}]<\infty$
by~\eqref{eq:expression_bottom_lengths}, \eqref{eq:expression_sigma_running_inf}
and \cite[Chapter VIII, Proposition 4]{bertoin1996levy}, that
$\mathbb{P}[E_{0}^{c}\mid\sigma_{0}<1]=1-\mathbb{P}[E_{0}\mid\sigma_{0}<1]\leq\epsilon_{0}=c_{0}A^{-2/3}$
by the assumption on the Borel subset $E$ of $\MCPU{2}$ and hence that
$\mathbb{P}[E_{1}^{c}]=\mathbb{P}[E_{0}^{c}]
  =\mathbb{P}[\sigma_{0}=1]+\mathbb{P}[\{\sigma_{0}<1\}\setminus E_{0}]
  \leq\mathbb{P}[\tau\geq A]+c_{0}A^{-2/3}\leq(c''+c_{0})A^{-2/3}$
for some $c''\in(0,\infty)$ determined solely by the law of $(X^{1},X^{2})$ by~\eqref{eqn:tautail}.
By the independence of $(B^{0}_{L})^{p}\indicator_{E_{0}},\indicator_{E_{1}^{c}}$ we have
\begin{equation} \label{eq:WLLN_bottom_left-p-moment-proof1}
\mathbb{E}[(B^{0}_{L})^{p}\indicator_{E_{0}\cap E_{1}^{c}}]
=
\mathbb{E}[(B^{0}_{L})^{p}\indicator_{E_{0}}]\mathbb{P}[E_{1}^{c}]
\leq
\mathbb{E}[\vert I^{1}_{1}\vert^{p}](c''+c_{0})A^{-2/3}.
\end{equation}
For $\mathbb{E}[(B^{0}_{L})^{p}\indicator_{E_{0}^{c}}]$, we decompose it as
\begin{equation} \label{eq:WLLN_bottom_left-p-moment-proof2}
\mathbb{E}[(B^{0}_{L})^{p}\indicator_{E_{0}^{c}}]
  =\mathbb{E}[(B^{0}_{L})^{p}\indicator_{\{\sigma_{0}=1\}}]
    +\mathbb{E}[(B^{0}_{L})^{p}\indicator_{\{\sigma_{0}<1\}\setminus E_{0}}],
\end{equation}
and then since $I^{1}_{A}=I^{1}_{1}$ a.s.\ on $\{\tau\geq A\}$, for the first term we have
\begin{equation} \label{eq:WLLN_bottom_left-p-moment-proof3}
\mathbb{E}[(B^{0}_{L})^{p}\indicator_{\{\sigma_{0}=1\}}]
=
\mathbb{E}[\vert A^{-2/3}I^{1}_{A}\vert^{p}\indicator_{\{\tau\geq A\}}]
\leq
A^{-2p/3}\mathbb{E}[\vert I^{1}_{1}\vert^{p}]
\leq
A^{-2/3}\mathbb{E}[\vert I^{1}_{1}\vert^{p}].
\end{equation}
For the last term in~\eqref{eq:WLLN_bottom_left-p-moment-proof2},
setting $q:=\frac{1}{2}+\frac{3}{4}p^{-1}\in(1,\frac{3}{2}p^{-1})$,
by H\"{o}lder's inequality, $\mathbb{P}[E_{0}^{c}\mid\sigma_{0}<1]\leq c_{0}A^{-2/3}$
and Proposition~\ref{prop:running-inf-mean-upper-bound} we get
\begin{align}
\mathbb{E}[(B^{0}_{L})^{p}\indicator_{\{\sigma_{0}<1\}\setminus E_{0}}]
&\leq
\mathbb{P}[\{\sigma_{0}<1\}\setminus E_{0}]^{1-1/q} \mathbb{E}[(B^{0}_{L})^{pq}\indicator_{\{\sigma_{0}<1\}}]^{1/q} \notag \\
&=
\mathbb{P}[\{\sigma_{0}<1\}\setminus E_{0}]^{1-1/q} \mathbb{E}[\vert A^{-2/3}I^{1}_{\tau}\vert^{pq}\indicator_{\{\tau<A\}}]^{1/q} \notag \\
&\leq
(c_{0}A^{-2/3})^{1-1/q} (A^{-2pq/3} c_{pq}A^{2(pq-1)/3})^{1/q} \notag \\
&=
c_{0}^{1-1/q}c_{pq}^{1/q}A^{-2/3},
\label{eq:WLLN_bottom_left-p-moment-proof4}
\end{align}
where $c_{pq}\in(0,\infty)$ is as in Proposition~\ref{prop:running-inf-mean-upper-bound}
with $pq$ in place of $p$. Thus~\eqref{eq:WLLN_bottom_left-p-moment} follows by combining
\eqref{eq:WLLN_bottom_left-p-moment-proof1}, \eqref{eq:WLLN_bottom_left-p-moment-proof2},
\eqref{eq:WLLN_bottom_left-p-moment-proof3} and~\eqref{eq:WLLN_bottom_left-p-moment-proof4}.

Now setting $E^{3}_{i}:=E_{i}^{c}$ and $E^{4}_{i}:=E_{i}\cap E_{i+1}^{c}$ for $i\geq 0$
and letting $p\in(1,\frac{3}{2})$, $n\in\mathbb{N}$, $l\in\{3,4\}$ and $s\in(0,\infty)$, we see from
\cite[Exercise 1.2.11]{Str2011} applied to $\{ B^{i}_{L}\indicator_{E^{l}_{i}} \}_{i=0}^{n-1}$
and~\eqref{eq:WLLN_bottom_left-p-moment} that
\begin{align}
\mathbb{P}\Bigl[\bigl\vert S^{l}_{n} - n \mathbb{E}[ B^{0}_{L}\indicator_{E^{l}_{0}\cap\{B^{0}_{L}\leq n\}}]\bigr\vert \geq ns \Bigr]
  \leq \frac{2}{ns^{2}}\int_{0}^{n}&t\mathbb{P}[ B^{0}_{L} \indicator_{E^{l}_{0}} \geq t ]\,dt + n\mathbb{P}[ B^{0}_{L} \indicator_{E^{l}_{0}} \geq n ] \notag \\
\leq \biggl(\frac{2}{ns^{2}}\int_{0}^{n}t^{1-p}\,dt + n^{1-p} \biggr) \mathbb{E}[ (B^{0}_{L})^{p} \indicator_{E^{l}_{0}} ]
  &\leq n^{1-p}(4s^{-2}+1)c_{p,3} A^{-2/3};
\label{eq:WLLN_bottom_left-proof1}
\end{align}
to be precise, $\{ B^{i}_{L}\indicator_{E^{4}_{i}} \}_{i=0}^{n-1}$ is not independent
as assumed in \cite[Exercise 1.2.11]{Str2011}, but the inequality in the first line of
\eqref{eq:WLLN_bottom_left-proof1} still holds since the covariance of
$B^{i}_{L}\indicator_{E^{4}_{i}\cap\{B^{i}_{L}\leq n\}},B^{k}_{L}\indicator_{E^{4}_{k}\cap\{B^{k}_{L}\leq n\}}$
is $0$ for $i,k\geq 0$ with $\vert i-k \vert\geq 2$ by the independence and
at most $0$ for $i,k\geq 0$ with $\vert i-k \vert=1$ by
$\indicator_{E^{4}_{i}\cap\{B^{i}_{L}\leq n\}}\indicator_{E^{4}_{k}\cap\{B^{k}_{L}\leq n\}}=0$
and $B^{i}_{L}\wedge B^{k}_{L}\geq 0$. Finally, noting that $S'_{0,n}=S^{3}_{n}+S^{4}_{n}$ and that
$\mathbb{E}[ B^{0}_{L}\indicator_{E^{l}_{0}\cap\{B^{0}_{L}\leq n\}}] \leq \mathbb{E}[ B^{0}_{L}\indicator_{E^{l}_{0}}] \leq c_{1,3}A^{-2/3}$
by~\eqref{eq:WLLN_bottom_left-p-moment}, we conclude from~\eqref{eq:WLLN_bottom_left-proof1} with $s=A^{-2/3}$ that
\begin{align*}
\mathbb{P}[S'_{0,n}\geq 2(c_{1,3}+1)A^{-2/3}n]
  &\leq \mathbb{P}[S^{3}_{n}\geq(c_{1,3}+1)A^{-2/3}n]+\mathbb{P}[S^{4}_{n}\geq(c_{1,3}+1)A^{-2/3}n] \\
&\leq 2n^{1-p}(4A^{4/3}+1)c_{p,3} A^{-2/3}
  \leq 9c_{p,3}A^{2/3}n^{1-p},
\end{align*}
proving~\eqref{eq:WLLN_bottom_left}.
\end{proof}

The essence of Proposition~\ref{prop:good_chunks_percolate_half_plane}-\eqref{it:good_chunks_initial_boundary}
consists in the following lemma.

\begin{lemma} \label{lem:good_chunks_initial_boundary}
Let $c\in(0,\infty)$ be the constant from Proposition~\ref{prop:running-inf-mean-diverge},
let $c_{5/4,1}\in(0,\infty)$ be as in Lemma~\ref{lem:WLLN_bottom_left} with $p=\frac{5}{4}$,
define $\{L'_{j}\}_{j=0}^{\infty}$ by $L'_{0}:=0$ and~\eqref{eqn:bad_length_change},
and let $\{S'_{j,k}\}_{j\geq 0,\,k>j}$ be as in~\eqref{eq:bottom_left_sum}.
Let $j \geq 0$, and for each $n\geq 2$, let $y_{j,n}\in\partial\CW_{j}$ be the point to
the left of $\eta'_{j}(0)$ in $\partial\CW_{j}$ such that the quantum length of
the part of $\partial\CW_{j}$ from $\eta'_{j}(0)$ to $y_{j,n}$ is $(\frac{1}{64}cA^{-2/3}\log A)n$,
and define an event $\widetilde{E}'_{j,n}$ by
\begin{equation} \label{eq:good_chunks_initial_boundary-event}
\widetilde{E}'_{j,n}:=\widetilde{E}_{j,j+n-1}\cap\biggl\{ \frac{L'_{j}-L'_{j+n}}{n} > \frac{1}{16}cA^{-2/3}\log A,\,\frac{S'_{j,j+n}}{n}<c_{5/4,1}A^{-2/3}\biggr\},
\end{equation}
where $\widetilde{E}_{j,j+n-1}$ is as defined in~\eqref{eq:right-length-positive-finite} with $l=j+n-1$.
Then, provided $A^{-1} \vee c_{0}$ is small enough, there exist $n_{1}\geq 2$ and
$\tilde{q}'\in(0,1)$ determined solely by $A,c_{0}$ such that for any $n\geq n_{1}$,
\begin{gather} \label{eq:good_chunks_initial_boundary_fixed_steps}
\widetilde{E}'_{j,n} \subset \biggl\{
  \begin{minipage}{260pt}
  there exists $l\in\mathbb{Z}\cap [j,j+n)$ such that $\indicator_{E_{l}\cap E_{l+1}}=1$ and
  $\partial \CN_{l}$ intersects the part of $\partial \CW_{j}$ which is to the left of $y_{j,n}$
  \end{minipage}
  \biggr\}, \\
\mathbb{P}[\widetilde{E}'_{j,n}] = \mathbb{P}[\widetilde{E}'_{0,n}] \geq 1-\tilde{q}'.
\label{eq:good_chunks_initial_boundary-prob-positive}
\end{gather}
\end{lemma}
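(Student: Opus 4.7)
The plan is to prove the inclusion~\eqref{eq:good_chunks_initial_boundary_fixed_steps} first by contradiction, then derive the probability bound~\eqref{eq:good_chunks_initial_boundary-prob-positive} by a union bound.

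For the inclusion, I will let $\rho$ denote the quantum length of the part of $\partial \CW_j$ left of $\eta'_j(0)$ lying in $\partial(\CW_j \setminus \CW_{j+n}) \cap \partial \CW_j$, i.e., the total leftward coverage of $\partial \CW_j$ by the chunks $\CN_l$ with $l \in [j, j+n)$. By adapting the comparison argument~\eqref{eq:SLE6_percolate_H_length_comparison} to the block of exploration steps $[j, j+n)$, and using $\widetilde{E}_{j, j+n-1}$ to control the slides occurring within the block (on this event the subsequent chunks stay left of $z_j$), I expect to obtain a lower bound of the form $\rho \geq (L'_j - L'_{j+n}) - C\epsilon_0 n$ for a constant $C$ depending only on the exploration rules. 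Combining this with $L'_j - L'_{j+n} > (c/16)A^{-2/3}\log A \cdot n$ from $\widetilde{E}'_{j,n}$ and $\epsilon_0 = c_0 A^{-2/3}$ then yields $\rho > (c/32)A^{-2/3}\log A \cdot n$ provided $A$ is sufficiently large. Suppose now that every chunk $\CN_l$ with $l \in [j, j+n)$ and $\indicator_{E_l \cap E_{l+1}} = 1$ fails to reach past $y_{j,n}$ on $\partial \CW_j$. Then any coverage beyond $y_{j,n}$ is contributed entirely by chunks with $\indicator_{E_l \cap E_{l+1}} = 0$, whose total left-bottom length is bounded by $\sum_{l \in [j, j+n),\ \indicator_{E_l \cap E_{l+1}} = 0} B^l_L = S'_{j, j+n} < c_{5/4, 1} A^{-2/3} n$. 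Together with the trivial cap $(c/64) A^{-2/3}\log A \cdot n$ on coverage up to $y_{j,n}$, this forces $\rho \leq (c/64)A^{-2/3}\log A \cdot n + c_{5/4, 1} A^{-2/3} n$, which contradicts the lower bound on $\rho$ once $A$ is large enough to make $(c/64)\log A > c_{5/4, 1}$.

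For~\eqref{eq:good_chunks_initial_boundary-prob-positive}, the equality $\mathbb{P}[\widetilde{E}'_{j,n}] = \mathbb{P}[\widetilde{E}'_{0,n}]$ is immediate from the i.i.d.\ property of $\{(\indicator_{E_i}, T^i, B^i_L, B^i_R)\}_{i \geq 0}$ noted right after~\eqref{eqn:bad_length_change}, as each of the three events defining $\widetilde{E}'_{j,n}$ depends only on indices in $[j, j+n]$. A union bound gives
\begin{equation*}
\mathbb{P}[(\widetilde{E}'_{0,n})^c] \leq \mathbb{P}[\widetilde{E}_{0,n-1}^c] + \mathbb{P}\!\left[L'_0 - L'_n \leq \tfrac{c}{16}A^{-2/3}\log A \cdot n\right] + \mathbb{P}\!\left[S'_{0,n} \geq c_{5/4,1} A^{-2/3} n\right].
\end{equation*}
The first term is at most $\tilde{q}$ via the inclusion $\widetilde{E}_0 \subset \widetilde{E}_{0,n-1}$ and~\eqref{eq:right-length-positive-prob-positive}; the second is at most $\exp(-\lambda n)$ for some $\lambda > 0$ by an application of~\eqref{eq:move_left_Lprime}, using the bound $\mathbb{E}[L'_1 - L'_0] \leq -(c/4) A^{-2/3}\log A$ established in the proof of~\eqref{it:move_left}; and the third is at most $c_{5/4,2} A^{2/3} n^{-1/4}$ by Lemma~\ref{lem:WLLN_bottom_left} with $p = 5/4$. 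Choosing $n_1$ large enough in terms of $A$ and $c_0$ forces the latter two terms to sum to less than $(1-\tilde{q})/2$, yielding~\eqref{eq:good_chunks_initial_boundary-prob-positive} with, e.g., $\tilde{q}' := (1+\tilde{q})/2 \in (0,1)$.

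The hardest step will be establishing the lower bound $\rho \geq (L'_j - L'_{j+n}) - C\epsilon_0 n$. The original comparison~\eqref{eq:SLE6_percolate_H_length_comparison} is global and a priori only bounds $L_N - L_0$, not the increment $L_{j+n} - L_j$, because slides at a step $l+1 \in (j, j+n)$ on $E_l^c$ could involve chunks from $[0, j)$. The key remedy is that on $\widetilde{E}_{j, j+n-1}$, all chunks produced after step $j$ stay left of $z_j$ in $\partial \CW_{j+1}$, so the only pre-$j$ chunks relevant for slides within the block are those intersecting the region of $\partial \CW_{j+1}$ to the left of $z_j$; these contributions will be quantified by a tracking argument analogous to the inductive computation carried out in the proof of Lemma~\ref{lem:right-length-positive-prob-positive}. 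A secondary technical point is that the bottoms of chunks $\CN_l$ for $l \geq j+1$ sit on $\partial \CW_l$, which may include tops of intermediate chunks rather than $\partial \CW_j$; however, whenever a chunk's bottom attaches to a previous top instead of $\partial \CW_j$, the earlier chunk already contributes to the coverage on $\partial \CW_j$, and a careful bookkeeping will show that the net contribution to $\rho$ remains comparable to $\sum_{i \in [j, j+n)} B^i_L$ minus a controllable slide correction of order $\epsilon_0 n$.
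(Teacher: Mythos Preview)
Your proposal is essentially correct and follows the same route as the paper, including the identical choice $\tilde{q}'=(1+\tilde{q})/2$ for the probability bound. However, you overcomplicate what you flag as the ``hardest step''. The paper observes that on $\widetilde{E}_{j,j+n-1}$ the remaining top of $\CN_j$ in $\partial\CW_{k+1}$ has quantum length at least $T^j-2\epsilon_0>\epsilon_0$ for every $k\in(j,j+n-1]$ (this is exactly what~\eqref{eq:right-boundary-avoided-finite} encodes), so every rightward slide at such a step $k$ stops at or before $\CN_j$; consequently $I^0_k\subset(j,k)$ for all $k\in[j,j+n-1]$, and the block analogue of~\eqref{eq:SLE6_percolate_H_length_comparison} gives directly $L_j-L_{j+n}\geq L'_j-L'_{j+n}$, with no $C\epsilon_0 n$ correction and no pre-$j$ chunks entering at all. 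Since the paper also notes that the leftward coverage $\rho$ of $\partial\CW_j$ satisfies $\rho\geq L_j-L_{j+n}$, your target inequality $\rho\geq L'_j-L'_{j+n}$ follows immediately, and your ``secondary technical point'' about bottoms landing on intermediate tops dissolves as well: the quantity $L_j-L_{j+n}$ is defined so as to track precisely the net leftward progress along $\partial\CW_j$.
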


\begin{proof}
Assume that $A\geq\exp(64c_{5/4,1}/c)$, that $c_{0} \in (0,1]$ and that $A^{-1} \vee c_{0}$
is small enough so that~\eqref{eq:move_left_Lprime} and~\eqref{eq:right-length-positive-prob-positive}
hold with $\delta=\frac{1}{4}cA^{-2/3}\log A,\lambda,\tilde{q}$ as stated in these places.
Let $c_{5/4,2}\in(0,\infty)$ be as in Lemma~\ref{lem:WLLN_bottom_left} with $p=\frac{5}{4}$, and set
\begin{equation} \label{eq:good_chunks_initial_boundary_minimum_steps}
n_{1}:=\min\biggl(\mathbb{N}\cap\biggl[\max\biggl\{2,\frac{8}{\delta\lambda}\log\frac{4}{1-\tilde{q}},\Bigl(c_{5/4,2}A^{2/3}\frac{4}{1-\tilde{q}}\Bigr)^{4}\biggr\},\infty\biggr)\biggr),
\end{equation}
so that $n_{1}$ is determined solely by $A,c_{0}$ and satisfies
$\exp(-\frac{1}{8}\delta\lambda n_{1})\leq\frac{1}{4}(1-\tilde{q})$ and
$c_{5/4,2}A^{2/3}n_{1}^{-1/4}\leq\frac{1}{4}(1-\tilde{q})$. Now let $n\geq n_{1}$.
Then since $\{(\indicator_{E_{k}},T^{k},B^{k}_{L},B^{k}_{R})\}_{k=0}^{\infty}$ is i.i.d.\ as noted just after
\eqref{eqn:bad_length_change}, the equality in~\eqref{eq:good_chunks_initial_boundary-prob-positive} holds,
and we easily see from~\eqref{eq:right-length-positive-prob-positive}, \eqref{eq:move_left_Lprime}
and~\eqref{eq:WLLN_bottom_left} with $p=\frac{5}{4}$ that 
$\mathbb{P}[\widetilde{E}'_{0,n}]\geq 1-\tilde{q}'$ with $\tilde{q}':=\frac{1}{2}(1+\tilde{q})$.

To show~\eqref{eq:good_chunks_initial_boundary_fixed_steps}, fix any instance of the
exploration for which $\widetilde{E}'_{j,n}$ holds, and for each $k \geq 0$,
as in the proof of Proposition~\ref{prop:good_chunks_percolate_half_plane}-\eqref{it:move_left}
define $I^{0}_{k}\subset\mathbb{Z}\cap[0,k)$ by $I^{0}_{k}:=\emptyset$ on $E_{k}$ and
by~\eqref{eq:SLE6_percolate_H-right-chunks-skipped} with $k$ in place of $j$ on $E_{k}^{c}$
(recall~\eqref{eq:SLE6_percolate_H-right-chunks} for $I_{k}$), so that
$I^{0}_{k}\subset\{i \in \mathbb{Z}\cap[0,k)\mid \indicator_{E_{i}}=1 \}$ and
$I^{0}_{k}\cap I^{0}_{l}=\emptyset$ for any $l>k$. Then since
$\bigcup_{k=j}^{j+n-1}I^{0}_{k}\subset\mathbb{Z}\cap(j,j+n-1)$
by~\eqref{eq:right-boundary-avoided-finite}, it follows from~\eqref{eq:SLE6_percolate_H_length_change}
in the same way as~\eqref{eq:SLE6_percolate_H_length_comparison} that
\begin{align}
&(L_{j+n}-L_{j})-(L'_{j+n}-L'_{j}) \notag \\
&= \sum_{k=j}^{j+n-1} \bigl( (L_{k+1}-L_{k}) - (L'_{k+1}-L'_{k}) \bigr)
  \leq \sum_{k=j}^{j+n-1}( -\epsilon_{0} \indicator_{E_{k}} + \epsilon_{0} \# I^{0}_{k} )
  \label{eq:good_chunks_initial_boundary_fixed_steps_comparison} \\
&\leq\epsilon_{0}\bigl(-\#\{k \in \mathbb{Z} \cap [j,j+n) \mid \indicator_{E_{k}}=1 \} + \#\{k \in \mathbb{Z} \cap (j,j+n-1) \mid \indicator_{E_{k}}=1 \}\bigr)
	\leq 0. \notag
\end{align}
Recalling that $\frac{1}{64}cA^{-2/3}\log A\geq c_{5/4,1}A^{-2/3}$ by $A\geq\exp(64c_{5/4,1}/c)$,
from~\eqref{eq:good_chunks_initial_boundary_fixed_steps_comparison} and the latter part of
\eqref{eq:good_chunks_initial_boundary-event} we obtain
\begin{align}
&L_{j}-L_{j+n} - \sum_{k=j}^{j+n-1} B^{k}_{L} \indicator_{(E_{k}\cap E_{k+1})^{c}} - \Bigl(\frac{1}{64}cA^{-2/3}\log A\Bigr)n \notag \\
&\geq L'_{j}-L'_{j+n} - S'_{j,j+n} - \Bigl(\frac{1}{64}cA^{-2/3}\log A\Bigr)n
  >\Bigl(\frac{1}{32}cA^{-2/3}\log A\Bigr)n
  >0. \label{eq:good_chunks_initial_boundary_fixed_steps_remaining_length}
\end{align}
Note that the part of $\partial(\CW_{j} \setminus \CW_{j+n}) \cap \partial \CW_{j}$
which is to the left of $\eta'_{j}(0)$ has quantum length at least $L_{j}-L_{j+n}$ and equals
$\bigcup_{k=j}^{j+n-1}(\text{the bottom left of $\CW_{k} \setminus \CW_{k+1}$}) \cap \partial \CW_{j}$.
Now these facts and~\eqref{eq:good_chunks_initial_boundary_fixed_steps_remaining_length}
together imply that $y_{j,n}$ belongs to the interior of this part of
$\partial(\CW_{j} \setminus \CW_{j+n}) \cap \partial \CW_{j}$ and that the part of
$\partial(\CW_{j} \setminus \CW_{j+n}) \cap \partial \CW_{j}$ which is to the left of $y_{j,n}$
has quantum length greater than $\sum_{k=j}^{j+n-1} B^{k}_{L} \indicator_{(E_{k}\cap E_{k+1})^{c}}$
and hence intersects $\bigcup_{j\leq k<j+n,\,\indicator_{E_{k}\cap E_{k+1}}=1}(\text{the bottom left of $\CW_{k} \setminus \CW_{k+1}$}) \cap \partial \CW_{j}$.
This last property means that the event in the right-hand side of
\eqref{eq:good_chunks_initial_boundary_fixed_steps} holds, proving
\eqref{eq:good_chunks_initial_boundary_fixed_steps}.
\end{proof}

\begin{proof}[Proof of Proposition~\ref{prop:good_chunks_percolate_half_plane}-\eqref{it:good_chunks_initial_boundary}]
Assume that $A^{-1} \vee c_{0}$ is small enough so that $c_{0} \in (0,1]$ and
Proposition~\ref{prop:good_chunks_percolate_half_plane}-\eqref{it:move_left},
\eqref{eq:right-length-positive-prob-positive}, \eqref{eq:good_chunks_initial_boundary_fixed_steps}
and~\eqref{eq:good_chunks_initial_boundary-prob-positive} hold with
$c_{1},c_{2},\tilde{q},n_{1},\tilde{q}'$ as stated in these places.
Let $u \in [2,\infty)$ and set $n_{u} := \max\bigl( \mathbb{N} \cap [ 1, u ] \bigr)$
and $\widehat{n}_{u}:=n_{1}\vee\min\bigl(\mathbb{N}\cap[256c^{-1}n_{u},\infty)\bigr)$, where
$c\in(0,\infty)$ is the constant from Proposition~\ref{prop:running-inf-mean-diverge}.
Following the proof of Proposition~\ref{prop:good_chunks_percolate_half_plane}-\eqref{it:bad_cluster_size},
set $\partial_{\mathrm{L}} \CW := (-\infty,0) \subset \partial \CW$,
let $\mathescr{G}_{k}$ denote the $\sigma$-algebra generated by
$\{(\CW_{i} \setminus \CW_{i+1},h|_{\CW_{i} \setminus \CW_{i+1}},\eta'_{i}|_{[0,\sigma_{i}]},\widetilde{L}_{i})\}_{0 \leq i \leq k}$
for each $k \geq 0$, set $\widetilde{m}_{0} := 0$ and inductively
$\widetilde{m}_{j} := \inf\{ i > \widetilde{m}_{j-1}+\widehat{n}_{u} \mid \partial \CN_{i} \cap \partial_{\mathrm{L}} \CW \not= \emptyset \}$
for each $j \geq 1$, so that $\widetilde{m}_{j}$ is a $\{\mathescr{G}_{k}\}$-stopping time for any $j \geq 0$.
Let $\widetilde{E}'_{j,\widehat{n}_{u}}$ be as in~\eqref{eq:good_chunks_initial_boundary-event}
with $n=\widehat{n}_{u}$ for $j\geq 0$, and set
$\widetilde{E}'_{\widetilde{m}_{j}+1,\widehat{n}_{u}}:=\bigcup_{k\in\mathbb{N}\cup\{0\}}\bigl(\widetilde{E}'_{k+1,\widehat{n}_{u}}\cap\{\widetilde{m}_{j}=k\}\bigr)$
for each $j\geq 0$. Then since $\widetilde{m}_{j}+\widehat{n}_{u}<\widetilde{m}_{j+1}<\infty$ for any $j\geq 0$ a.s.\ by
Proposition~\ref{prop:good_chunks_percolate_half_plane}-\eqref{it:move_left}
and $\widetilde{E}'_{k,\widehat{n}_{u}}$ is measurable with respect to
$\{(\indicator_{E_{i}},T^{i},B^{i}_{L},B^{i}_{R})\}_{i=k}^{k+\widehat{n}_{u}}$
and hence independent of $\mathescr{G}_{k-1}$ for any $k\geq 1$, we have
$\widetilde{E}'_{\widetilde{m}_{j}+1,\widehat{n}_{u}} \in \mathescr{G}_{\widetilde{m}_{j+1}}$
and therefore see from~\eqref{eq:good_chunks_initial_boundary-prob-positive} that
$\{\widetilde{E}'_{\widetilde{m}_{j}+1,\widehat{n}_{u}}\}_{j=0}^{\infty}$ is independent and that
$\mathbb{P}\bigl[\widetilde{E}'_{\widetilde{m}_{j}+1,\widehat{n}_{u}}\bigr]=\mathbb{P}\bigl[\widetilde{E}'_{0,\widehat{n}_{u}}\bigr]\geq 1-\tilde{q}'$
for any $j\geq 0$. It thus follows that, with $\tilde{c}':=\log(1/\tilde{q}')$, for any $N\in\mathbb{N}$,
\begin{equation} \label{eq:good_chunks_initial_boundary_failure_prob}
\mathbb{P}\Biggl[ \bigcup_{k=0}^{N-1}\bigcap_{j=kn_{u}}^{(k+1)n_{u}-1}\bigl(\widetilde{E}'_{\widetilde{m}_{j}+1,\widehat{n}_{u}}\bigr)^{c} \Biggr]
  \leq \sum_{k=0}^{N-1}\prod_{j=kn_{u}}^{(k+1)n_{u}-1}\mathbb{P}\bigl[\bigl(\widetilde{E}'_{\widetilde{m}_{j}+1,\widehat{n}_{u}}\bigr)^{c}\bigr]
  \leq Ne^{-\tilde{c}'n_{u}}.
\end{equation}
Furthermore let $m_{j},R_{j},t_{j}$ be as in the proof of
Proposition~\ref{prop:good_chunks_percolate_half_plane}-\eqref{it:bad_cluster_size}
for each $j \geq 0$, let $N \in \mathbb{N}$ and set $J_{N} := \min\{ j \geq 0 \mid m_{j} \geq N \}$,
so that $J_{N} \leq N$. Then by~\eqref{eq:bad_cluster_size_claim3} and~\eqref{eq:bad_cluster_size_claim4} we have
\begin{align}
&\mathbb{P}\bigl[\{\max\nolimits_{0\leq j <J_{N}}R_{j}>n_{u}+1\}\cup\{\max\nolimits_{0\leq j <J_{N}}t_{j}>\epsilon_{0}n_{u}\}\bigr] \notag \\
&\leq \mathbb{P}\bigl[\max\nolimits_{0\leq j <J_{N}}R_{j}>n_{u}+1\bigr] + \mathbb{P}\bigl[\max\nolimits_{0\leq j <J_{N}}t_{j}>\epsilon_{0}n_{u}\bigr] \notag \\
&\leq \sum_{j=0}^{N-1} \bigl( \mathbb{P}\bigl[m_{j}<N, \, R_{j}>n_{u}+1\bigr] + \mathbb{P}\bigl[m_{j}<N, \, t_{j}>\epsilon_{0}n_{u}\bigr] \bigr) \notag \\
&\leq N^{2}\bigl( \tilde{q}^{-1} \exp(-\tilde{c}u) + 2c_{3} \exp( - \tfrac{1}{2} c_{4} \epsilon_{0} n_{u} ) \bigr),
\label{eq:bad_cluster_size_claim3-claim4}
\end{align}
where $\tilde{c} := \frac{1}{2}\log(\tilde{q}^{-1})$ and $c_{3},c_{4}\in(0,\infty)$ are as in
\eqref{eq:bad_cluster_size_claim4}. Thus from~\eqref{eq:bad_cluster_size_conclusion},
\eqref{eq:good_chunks_initial_boundary_failure_prob} and~\eqref{eq:bad_cluster_size_claim3-claim4} we obtain
\begin{align}
&\mathbb{P}\Biggl[ \biggl\{ \begin{minipage}{170pt}$m_{j+1}-m_{j} \leq (3\epsilon_{0}/c_{1})u^{2}$, $R_{j}\leq n_{u}+1$ and $t_{j}\leq\epsilon_{0}n_{u}$ for any $j\in\mathbb{Z}\cap[0,J_{N})$\end{minipage} \biggr\}
  \cap \bigcap_{k=0}^{N-1}\bigcup_{j=kn_{u}}^{(k+1)n_{u}-1}\widetilde{E}'_{\widetilde{m}_{j}+1,\widehat{n}_{u}} \Biggr] \notag \\
&\geq 1-c_{5} N^{2} \exp(-c_{6}u) - Ne^{-\tilde{c}'n_{u}} - N^{2}\bigl( \tilde{q}^{-1} \exp(-\tilde{c}u) + 2c_{3} \exp( - \tfrac{1}{2} c_{4} \epsilon_{0} n_{u} ) \bigr) \notag \\
&\geq 1 - c_{8}N^{2}\exp(-c_{9}u),
\label{eq:good_chunks_initial_boundary_success_prob}
\end{align}
where $c_{5},c_{6}\in(0,\infty)$ are as in~\eqref{eq:bad_cluster_size_conclusion},
$c_{8}:=c_{5}+e^{\tilde{c}'}+\tilde{q}^{-1}+2c_{3}e^{c_{4}\epsilon_{0}/2}$ and
$c_{9}:=\min\bigl\{c_{6},\tilde{c}',\tilde{c},\frac{1}{2}c_{4}\epsilon_{0}\bigr\}$.

It therefore remains to verify that the event in~\eqref{eq:good_chunks_initial_boundary_success_prob}
is included in that in~\eqref{eq:good_chunks_initial_boundary} as long as $N\geq c_{7}u^{3}$,
for some $c_{7}\in(0,\infty)$ determined solely by $A,c_{0}$. To this end, fix any instance of
the exploration for which the event in~\eqref{eq:good_chunks_initial_boundary_success_prob} holds,
set $\widetilde{J}_{N}:=\min\{j \geq 0 \mid \widetilde{m}_{j} + \widehat{n}_{u} \geq N\}$ and
$\widetilde{J}'_{N}:=\max\bigl(\mathbb{Z}\cap[0,\widetilde{J}_{N}/n_{u}]\bigr)$, so that
$\widetilde{J}'_{N}\leq\widetilde{J}_{N}/n_{u}\leq\widetilde{J}_{N}\leq N$. Then since
$m_{j+1}-m_{j} \leq (3\epsilon_{0}/c_{1})u^{2}$ for any $j\in\mathbb{Z}\cap[0,J_{N})$, we easily obtain
\begin{equation} \label{eq:chunks_initial_boundary_frequency}
\widetilde{m}_{j+1}-\widetilde{m}_{j}\leq (3\epsilon_{0}/c_{1})u^{2}+\widehat{n}_{u}
  \qquad \text{for any $j\in\mathbb{Z}\cap[0,\widetilde{J}_{N})$.}
\end{equation}
Let $i \in \mathbb{Z}\cap[0,\widetilde{J}_{N})$, set $j:=\widetilde{m}_{i}+1\leq N$ and let
$y_{j,\widehat{n}_{u}} \in \partial \CW_{j}$ be as in Lemma~\ref{lem:good_chunks_initial_boundary}
with $n=\widehat{n}_{u}$. Then $\widetilde{m}_{i}=m_{k}$ for some $k \in \mathbb{Z}\cap[0,J_{N})$, hence
$R_{k} \leq n_{u}+1$, $t_{k} \leq \epsilon_{0}n_{u}$, and thus by
$\partial \CN_{j-1} \cap \partial_{\mathrm{L}} \CW \not=\emptyset$, \eqref{eq:bad_chunks_initial_point_half_plane},
\eqref{eq:run_good_chunks_max_length} and $t_{k} = T^{j-1} \indicator_{E_{j-1}^{c}}$,
the part of $\partial( \CW_{0} \setminus \CW_{j}) \cap \partial \CW_{j}$ to the left of
$\eta'_{j}(0)$ has quantum length at most $\epsilon_{0}(R_{k}-1)+t_{k} \leq 2\epsilon_{0}n_{u}$.
Moreover, $2\epsilon_{0}n_{u}$ in turn is less than $(\frac{1}{64}cA^{-2/3}\log A)\widehat{n}_{u}$,
the quantum length of the part of $\partial\CW_{j}$ from $\eta'_{j}(0)$ to $y_{j,\widehat{n}_{u}}$,
by $\epsilon_{0}=c_{0}A^{-2/3}$, $A\geq 2$, $c_{0}\leq 1$ and $\widehat{n}_{u}\geq 256c^{-1}n_{u}$.
It follows therefore that $y_{j,\widehat{n}_{u}} \in \partial_{\mathrm{L}} \CW$ and hence
by $\widehat{n}_{u}\geq n_{1}$ and~\eqref{eq:good_chunks_initial_boundary_fixed_steps} that,
\begin{equation} \label{eq:good_chunks_initial_boundary_verify_preliminary}
\begin{minipage}{225pt}
if $\indicator_{\widetilde{E}'_{\widetilde{m}_{i}+1,\widehat{n}_{u}}}=1$, then we have
$\indicator_{E_{l}\cap E_{l+1}}=1$ and $\partial \CN_{l} \cap \partial_{\mathrm{L}} \CW \not=\emptyset$
for some $l\in\mathbb{Z}\cap(\widetilde{m}_{i},\widetilde{m}_{i}+\widehat{n}_{u}]$.
\end{minipage}
\end{equation}
On the other hand, for any $k \in \mathbb{Z}\cap[0,\widetilde{J}'_{N})$, by
the latter part of the event in~\eqref{eq:good_chunks_initial_boundary_success_prob}
there exists $i \in \mathbb{Z}\cap[kn_{u},(k+1)n_{u})$ such that
$\indicator_{\widetilde{E}'_{\widetilde{m}_{i}+1,\widehat{n}_{u}}}=1$,
which together with $\mathbb{Z}\cap[kn_{u},(k+1)n_{u})\subset\mathbb{Z}\cap[0,\widetilde{J}_{N})$
and~\eqref{eq:good_chunks_initial_boundary_verify_preliminary} implies that
$\indicator_{E_{l}\cap E_{l+1}}=1$ and $\partial \CN_{l} \cap \partial_{\mathrm{L}} \CW \not=\emptyset$
for some
$l\in\mathbb{Z}\cap(\widetilde{m}_{i},\widetilde{m}_{i}+\widehat{n}_{u}]
  \subset \mathbb{Z}\cap(\widetilde{m}_{i},\widetilde{m}_{i+1})
  \subset \mathbb{Z}\cap(\widetilde{m}_{kn_{u}},\widetilde{m}_{(k+1)n_{u}})$.
Combining these observations with~\eqref{eq:chunks_initial_boundary_frequency},
we have thus proved the following:
\begin{equation} \label{eq:good_chunks_initial_boundary_verify_final}
\begin{minipage}{345pt}
for any $k \in \mathbb{Z}\cap[0,\widetilde{J}'_{N})$, 
$\widetilde{m}_{(k+1)n_{u}}-\widetilde{m}_{kn_{u}}\leq n_{u}\bigl((3\epsilon_{0}/c_{1})u^{2}+\widehat{n}_{u}\bigr)$,
and there exists $l\in\mathbb{Z}\cap(\widetilde{m}_{kn_{u}},\widetilde{m}_{(k+1)n_{u}})$ such that
$\indicator_{E_{l}\cap E_{l+1}}=1$ and $\partial \CN_{l} \cap \partial_{\mathrm{L}} \CW \not=\emptyset$.
\end{minipage}
\end{equation}
Suppose for the moment that $\widetilde{J}'_{N}\geq 2$, or equivalently, $\widetilde{J}_{N}\geq 2n_{u}$.
Then the left and right ends in $\mathbb{Z}$ of the sequence
$\bigl\{ \mathbb{Z}\cap (\widetilde{m}_{kn_{u}},\widetilde{m}_{(k+1)n_{u}}) \bigr\}_{k=0}^{\widetilde{J}'_{N}-1}$
of intervals appearing in~\eqref{eq:good_chunks_initial_boundary_verify_final} are given by
$\widetilde{m}_{0}=0$ and $\widetilde{m}_{\widetilde{J}'_{N}n_{u}}$, respectively, and we see from
$\widetilde{J}'_{N}+1>\widetilde{J}_{N}/n_{u}$, \eqref{eq:chunks_initial_boundary_frequency}
and $\widetilde{m}_{\widetilde{J}_{N}}+\widehat{n}_{u}\geq N$ that
\begin{equation} \label{eq:chunks_initial_boundary_frequency_right_end}
\begin{split}
N-\widetilde{m}_{\widetilde{J}'_{N}n_{u}}
  &\leq N-\widetilde{m}_{\widetilde{J}_{N}-n_{u}+1}
  \leq N-\widetilde{m}_{\widetilde{J}_{N}}+(n_{u}-1)\bigl((3\epsilon_{0}/c_{1})u^{2}+\widehat{n}_{u}\bigr) \\
&\leq \widehat{n}_{u} + (n_{u}-1)\bigl((3\epsilon_{0}/c_{1})u^{2}+\widehat{n}_{u}\bigr)
  < n_{u}\bigl((3\epsilon_{0}/c_{1})u^{2}+\widehat{n}_{u}\bigr).
\end{split}
\end{equation}
Moreover, for any $k \in \mathbb{Z}\cap[0,\widetilde{J}'_{N}-2]$ we also have
\begin{equation} \label{eq:chunks_initial_boundary_non_right_end_before_N}
\widetilde{m}_{(k+1)n_{u}}
  \leq \widetilde{m}_{(\widetilde{J}'_{N}-1)n_{u}}
  < \widetilde{m}_{\widetilde{J}'_{N}n_{u}-1}
  \leq \widetilde{m}_{\widetilde{J}_{N}-1}
  < N-\widehat{n}_{u}<N.
\end{equation}
Since $2n_{u}\bigl((3\epsilon_{0}/c_{1})u^{2}+\widehat{n}_{u}\bigr)\leq c_{7}u^{3}$
with $c_{7}:=6\epsilon_{0}/c_{1}+2(n_{1}\vee(256c^{-1}+1))$,
it follows from~\eqref{eq:good_chunks_initial_boundary_verify_final},
\eqref{eq:chunks_initial_boundary_frequency_right_end} and
\eqref{eq:chunks_initial_boundary_non_right_end_before_N} that with this choice of
$c_{7}$ the event in~\eqref{eq:good_chunks_initial_boundary} occurs for the present
instance of the exploration. Finally, we conclude this proof by deducing our supposition
$\widetilde{J}_{N}\geq 2n_{u}$ from~\eqref{eq:chunks_initial_boundary_frequency} and
the requirement $N\geq c_{7}u^{3}$; indeed, an induction on $j$ based on
$N\geq c_{7}u^{3}\geq 2n_{u}\bigl((3\epsilon_{0}/c_{1})u^{2}+\widehat{n}_{u}\bigr)$
and~\eqref{eq:chunks_initial_boundary_frequency} easily shows that
$\widetilde{m}_{j}\leq j\bigl((3\epsilon_{0}/c_{1})u^{2}+\widehat{n}_{u}\bigr)<N-\widehat{n}_{u}$
for any $j \in \mathbb{Z}\cap[0,2n_{u})$, whence
$\widetilde{m}_{2n_{u}-1}+\widehat{n}_{u}<N$, namely $\widetilde{J}_{N}\geq 2n_{u}$.
\end{proof}

\subsection{Proof of Proposition~\ref{prop:good_chunks_percolate}} \label{ssec:good_chunks_percolate_disk}

The proof of Proposition~\ref{prop:good_chunks_percolate} requires
Lemma~\ref{lem:boundary_length_rn_disk_weighted} below
in addition to Proposition~\ref{prop:good_chunks_percolate_half_plane}. For each $n \in \mathbb{N}$, we set
$D_{\mathbb{R}^{n}}:=\{\omega\mid\text{$\omega=(\omega_{1},\ldots,\omega_{n})\colon[0,\infty)\to\mathbb{R}^{n}$, $\omega$ is cadlag}\}$,
let $\{\mathescr{F}^{n}_{t}\}_{t\in[0,\infty)}$ denote the filtration in
$D_{\mathbb{R}^{n}}$ generated by the coordinate process, and set
$\mathescr{F}^{n}_{t+}:=\bigcap_{s\in(t,\infty)}\mathescr{F}^{n}_{s}$ for $t\in[0,\infty)$.

\begin{lemma} \label{lem:boundary_length_rn_disk_weighted}
Let $a \in (0,\infty)$, and 
let $Z$ be a $3/2$-stable L\'evy excursion with only upward jumps.  Set $\wt{Z}_t := Z_{(\zeta-t)^-}$ for each $0 \leq t \leq \zeta$,  i.e.,  the cadlag modification of the time-reversal of $Z$,  where $\zeta$ denotes the lifetime of $Z$.  We also set $T := \inf\{t \geq 0 : \wt{Z}_t \geq a\}$ and $Y_t = \wt{Z}_{t+T}$,  where we set $Y_t :=0$ for $t \in [\zeta_Y,\infty)$ with $\zeta_Y$ denoting the lifetime of $Y$.  Then, conditional on the event $\{T<\infty\}$,  we have that $\zeta_{Y}<\infty$ a.s., $Y_{t}>0$ for any $t\in[0,\zeta_{Y})$ a.s.\ and $\lim_{t\uparrow\zeta_{Y}}Y_{t}=0$ a.s.
Moreover, let $X$ be a $3/2$-stable L\'{e}vy process with only downward jumps with $X_{0} = a$,
and set $\tau_{0} := \inf\{ t \in [0,\infty) \mid X_{t} \leq 0 \}$.
Then for any $\{\mathescr{F}^{1}_{t+}\}_{t\in[0,\infty)}$-stopping time $\tau$,
\begin{equation} \label{eq:boundary_length_rn_disk_weighted}
\mathbb{P}[ Y \in d \omega, \ \tau(Y) < \zeta_{Y} ]|_{\mathescr{F}^{1}_{\tau+}}
  = \Bigl(\frac{\omega(\tau)}{a}\Bigr)^{-1/2} \mathbb{P}[ X \in d \omega, \ \tau(X) < \tau_{0} ]|_{\mathescr{F}^{1}_{\tau+}}.
\end{equation}
\end{lemma}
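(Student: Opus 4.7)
The plan is to identify the law of $Y$ conditional on $\{T<\infty\}$ with the Doob $h$-transform $X^h$ of $X$ (killed at $\tau_0$) by the harmonic function $h(x) = x^{-1/2}$; both the qualitative assertions and the density formula will then fall out as corollaries of this identification and of standard properties of $h$-transforms.

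First I would verify that $h(x) = x^{-1/2}$ is harmonic for $X$ killed at $\tau_0$, i.e., that $\{h(X_{t\wedge \tau_0})\indicator_{\{t<\tau_0\}}\}_{t\geq 0}$ is a martingale under the law of $X$ started from any $a>0$. This is a classical consequence of scale function theory for spectrally negative $\alpha$-stable L\'evy processes: with the standard normalization $\psi(\lambda)=\lambda^\alpha$, the scale function is $W(x)=x^{\alpha-1}/\Gamma(\alpha)$, so $W'(x)\propto x^{-1/2}$ for $\alpha=3/2$, and the martingale property of $\{W'(X_{t\wedge\tau_0})\indicator_{\{t<\tau_0\}}\}_{t\geq 0}$ is exactly the standard identity underlying the (formally degenerate, since $\alpha<2$) notion of conditioning $X$ to creep at~$0$. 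Equivalently, $h$ can be checked to be harmonic directly from the fractional generator of $X$ together with its scaling invariance.

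The heart of the argument is the identification: under $\mathbb{P}[\,\cdot\mid T<\infty]$, the process $Y$ has the law of $X^h$ started at $a$ and killed at $\tau_0$. This rests on the classical time-reversal duality for L\'evy processes: reversing a $3/2$-stable spectrally positive excursion past its last-passage time $U_a=\sup\{t:Z_t\geq a\}$ through level $a$ yields a process distributed as the spectrally negative dual started at $a$ and Doob-conditioned by $W'$. I would derive this either directly, using the time-reversal identities for stable L\'evy excursions of the type developed by Bertoin and Chaumont, or via the Lamperti transform, by working with $3/2$-stable CSBP excursions and exploiting their explicit decomposition at the first-passage time through $a$ of the time-reversed excursion (this is the route most consonant with the CSBP/Brownian-map framework used elsewhere in the paper). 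Once this distributional identity is in place, the lemma follows mechanically: the $h$-transform $X^h$ lives on $(0,\infty)$, hits $0$ in finite time, and does so continuously, so the qualitative properties $\zeta_Y<\infty$, $Y_t>0$ on $[0,\zeta_Y)$ and $\lim_{t\uparrow\zeta_Y}Y_t=0$ transfer to $Y$; the density formula is then nothing but the defining Radon--Nikodym derivative of the $h$-transform,
\[
\frac{d\mathbb{P}[Y\in\,\cdot\,,\,\tau(Y)<\zeta_Y]}{d\mathbb{P}[X\in\,\cdot\,,\,\tau(X)<\tau_0]}\bigg|_{\mathescr{F}^{1}_{\tau+}}(\omega) \;=\; \frac{h(\omega(\tau))}{h(a)} \;=\; \Bigl(\frac{\omega(\tau)}{a}\Bigr)^{-1/2}.
\]
The main obstacle is making the time-reversal duality precise at the level of the infinite excursion measure, in particular tracking the normalization so that the conditional law given $\{T<\infty\}$ yields an honest probability measure whose density against the law of $X$ is exactly $(X_\tau/a)^{-1/2}\indicator_{\{\tau<\tau_0\}}$.
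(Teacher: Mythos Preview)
Your approach is essentially the paper's: both identify the law of $Y$ with the Doob $h$-transform of $X$ killed at $\tau_0$ by $h(x)=x^{-1/2}$, obtained from Chaumont's time-reversal result for stable excursions (the paper cites it directly for the transition density $p_t(x,y)=(x/y)^{1/2}q_t(x,y)$ rather than reproving the duality). The only place the paper does explicit work that you defer to ``general $h$-transform theory'' is the passage from fixed times to $\{\mathescr{F}^1_{t+}\}$-stopping times: it observes that $M_t=(X_t/a)^{-1/2}\indicator_{\{t<\tau_0\}}$ is a nonnegative supermartingale under the law of $X$, approximates $\tau$ from above by dyadic stopping times $\tau_n$, and uses a backwards-submartingale uniform integrability argument to take the limit.
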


\begin{proof}
Let $Z,\wt{Z}$ be as in the statement of the lemma and let $n$ denote the law of $Z$.  Let also $\wt{n}$ denote the law of $\wt{Z}$ under $n$.  Then \cite[Theorem~4, part 2]{Ch96} implies that $\wt{n}[\wt{Z}_t \in A, t < \zeta \giv \CF_{s^+}^1] = \int_A p_{t-s}(\wt{Z}_s,x)dx$ $\wt{n}$-a.e.\  on $\CF_{s^+}^1$,  for each $0 \leq s \leq t,  A \in \CF_{t^+}^1$,  where $p_t(x,y) = (\frac{x}{y})^{1/2} q_t(x,y)$ for each $t,x,y>0$,  and $q_t$ denotes the semigroup of $X$ killed at the first time that it exits $(0,\infty)$,  where $X$ is as in the statement of the lemma.  Let $\mu$ (resp.  $\nu$) denote the law of $Y$ (resp.  $X$).  It follows that 
\begin{align*}
\mu\big[ Y \in A,  t < \zeta_Y \big]=\int_{A} p_t(a,x) dx
=\int_{A} \Bigl(\frac{\omega(t)}{a}\Bigr)^{-1/2} p_t(a,x) dx
\end{align*}
for each $A \in \CF_{t^+}^1$.  This proves~\eqref{eq:boundary_length_rn_disk_weighted} for deterministic times.  To prove the result for general stopping times,  we first note that it is easy to see that $M_t := \bigl(\frac{\omega(t)}{a}\bigr)^{-1/2} \indicator_{\{t < \tau_0\}}$ is a non-negative supermartingale under $\nu$.  Fix an $(\CF_{t^+}^1)$-stopping time $\tau$ as in the statement of the lemma,  and set $\tau_n := \inf((2^{-n} \mathbb{Z}) \cap (\tau,\infty))$ for each $n \in \mathbb{N}$.  Note that $\tau_n$ decreases to $\tau$ as $n \to \infty$ $\nu$-a.e.  Moreover,  we have that $\mu\big[ Y \in A,  \tau_n(Y) < \zeta_Y \big] = \int_{A} M_{\tau_n} d\nu$ for each $A \in \CF_{\tau_n^+}^1,  n \in \mathbb{N}$.  Note also that the optional stopping theorem implies that $\E\big[ M_{\tau_n} \giv \CF_{\tau_n^+}^1\big] \leq M_{\tau_m}$ $\nu$-a.e.,  for each $1 \leq m \leq n$,  and so $(-M_{\tau_n})$ is a backwards submartingale under $(\CF_{\tau_n^+}^1)_{n \geq 1}$.  Also,  $\E\big[ -M_{\tau_n} \big] \geq -M_0 = -a$,  and so \cite[Problem~3.11]{KS1991} implies that $M_{\tau_n}$ is a uniformly integrable martingale under $\nu$.  It follows that for each $A \in \CF_{\tau^+}^1$,  we have that
\begin{align*}
\mu\big[ Y \in A,  \tau(Y) < \zeta_Y \big] = \lim_{n \to \infty} \mu\big[ Y \in A,  \tau_n(Y) < \zeta_Y \big] = \int_A M_{\tau} d\nu
\end{align*}
and this completes the proof.
\end{proof}

We need the following lemma to prove Lemma~\ref{lem:top_length_tail_disk} below.

\begin{lemma}[{\cite[Theorem~1.2]{gm2018sledisk}}] \label{lem:boundary_length_rn_disk}
Let $\ell \in (0,\infty)$, $a \in (0,1)$, suppose that $\CD = (\D,h)$ has law $\qdisk{\ell}$,
let $x \in \partial \CD$ be chosen uniformly from the boundary measure $\qbmeasure{h}$,
and let $y$ be the point of $\partial \CD$ so that $\qbmeasure{h}([x,y]_{\partial \CD}^{\ccw})=(1-a)\ell$.
Let $\eta'$ be an independent chordal $\SLE_{6}$ on $\CD$ from $x$ to $y$ parameterized according to quantum
natural time, set $t_{\eta'}:=\inf(\eta')^{-1}(y)$, $L_{t}:=0=:R_{t}$ for $t \in [t_{\eta'},\infty)$,
and for each $t \in [0,t_{\eta'})$ let $L_{t}$ (resp.\ $R_{t}$) denote the boundary length of the clockwise (resp.\ counterclockwise)
arc of $\widetilde{\partial} \D_{y,t}$ from $\eta'(t)$ to $y$, where $\D_{y,t}$ denotes the
component of $\D \setminus \eta'([0,t])$ with $y \in \partial \D_{y,t}$.
Then $t_{\eta'}<\infty$ a.s.\ and $\lim_{t\uparrow t_{\eta'}}(L_{t},R_{t})=(0,0)$ a.s.
Moreover, let $X^{1},X^{2}$ be independent $3/2$-stable L\'{e}vy processes with only
downward jumps and with $X^{1}_{0}=a\ell=\ell-X^{2}_{0}$, and set
$\tau_{0}:=\inf\{ t \in [0,\infty) \mid X^{1}_{t} \wedge X^{2}_{t} \leq 0 \}$.
Then for any $\{\mathescr{F}^{2}_{t+}\}_{t\in[0,\infty)}$-stopping time $\tau$,
\begin{equation} \label{eq:boundary_length_rn_disk}
\begin{split}
&\mathbb{P}\bigl[ (L_{\cdot},R_{\cdot}) \in d (\omega_{1},\omega_{2}), \ \tau(L_{\cdot},R_{\cdot}) < t_{\eta'} \bigr]\big|_{\mathescr{F}^{2}_{\tau+}} \\
&= \Bigl(\frac{\omega_{1}(\tau)+\omega_{2}(\tau)}{\ell}\Bigr)^{-5/2} \mathbb{P}\bigl[ (X^{1},X^{2}) \in d(\omega_{1},\omega_{2}), \ \tau(X^{1},X^{2}) < \tau_{0} \bigr]\big|_{\mathescr{F}^{2}_{\tau+}}.
\end{split}
\end{equation}
\end{lemma}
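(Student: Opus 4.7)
My plan is to derive this from the analogous known statement on a weight-$2$ quantum wedge via a Doob $h$-transform / disintegration argument. Recall from Subsection~\ref{subsec:sle_explorations} that on a weight-$2$ quantum wedge $\CW$, a chordal $\SLE_{6}$ from the origin to $\infty$ parameterized by quantum natural time has its left/right boundary length pair $(L,R)$ distributed as two independent $3/2$-stable L\'evy processes with only downward jumps starting from $0$, and that this structure is preserved under stopping at times when the tip lies on $\partial \CW$: the past of the exploration is independent of the remainder, which is again a weight-$2$ quantum wedge. The plan is to identify the quantum disk exploration with a suitably conditioned wedge exploration, and then to recognize the conditioning as an explicit $h$-transform of the pair $(X^{1},X^{2})$.

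The identification step would proceed as follows. Start with a weight-$2$ quantum wedge on which the $\SLE_{6}$ has been run until the pair $(L,R)$ first reaches $(a\ell,(1-a)\ell)$; after this stopping time the surface being explored is (independently of the past) a weight-$2$ quantum wedge with tip flanked by left/right boundary arcs of quantum lengths $a\ell$ and $(1-a)\ell$. By the mating-of-trees decomposition of \cite{dms2014mating}, a quantum disk of boundary length $\ell$ with two marked boundary points is encoded by the further conditioning of this wedge exploration on the event that the boundary length pair first exits the open positive quadrant precisely at the origin $(0,0)$, i.e.\ that the tip returns to the base before either side of the boundary is swallowed. At the level of the process $(L,R)$ this is exactly the Doob $h$-transform of the independent pair $(X^{1},X^{2})$ conditioning them to hit $(0,0)$ before either coordinate hits $0$ individually, and by general theory the Radon--Nikodym derivative at an $\{\mathescr{F}^{2}_{t+}\}$-stopping time $\tau$ is the ratio of the $h$-function at $(\omega_{1}(\tau),\omega_{2}(\tau))$ to the $h$-function at $(a\ell,(1-a)\ell)$, where $h(x,y)$ is proportional to the density of first hitting $(0,0)$ starting from $(x,y)$.

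To conclude I would then identify $h(x,y)$ explicitly and obtain the $(\omega_{1}(\tau)+\omega_{2}(\tau)/\ell)^{-5/2}$ factor. Since $X^{1}$ and $X^{2}$ are independent $3/2$-stable L\'evy processes with only downward jumps, their sum $X^{1}+X^{2}$ is (up to a deterministic rescaling) again a $3/2$-stable L\'evy process with only downward jumps; combining excursion theory for such processes \cite[Chapter VIII]{bertoin1996levy} with the Lamperti transform yields asymptotics for the first hitting time of $0$ of the sum process starting from $s>0$, and a two-dimensional analysis using the scaling symmetry $(X^{1}_{t},X^{2}_{t}) \mapsto (cX^{1}_{c^{-3/2}t},cX^{2}_{c^{-3/2}t})$ together with time-reversal pins down $h$ to be proportional to $(x+y)^{-5/2}$. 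The statements $t_{\eta'}<\infty$ a.s.\ and $\lim_{t\uparrow t_{\eta'}}(L_{t},R_{t})=(0,0)$ a.s.\ are then standard consequences of the excursion description. The main obstacle is the final computation of the $-5/2$ exponent: a one-dimensional computation alone would not pin it down, and one genuinely needs to combine the $3/2$-stable scaling with the two-dimensional exit structure; this is precisely what is carried out carefully in \cite[Theorem~1.2]{gm2018sledisk}, so in practice I would cite that result for this last step.
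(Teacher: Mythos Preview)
Your proposal is essentially a sketch of the proof of the cited result \cite[Theorem~1.2]{gm2018sledisk} itself, rather than an application of it. The paper's proof is much more direct: it simply invokes that theorem (together with the resampling property \cite[Proposition~A.8]{dms2014mating} for the uniform boundary point) to obtain \eqref{eq:boundary_length_rn_disk} for deterministic $\tau$, and then extends to general $\{\mathescr{F}^{2}_{t+}\}$-stopping times by exactly the same argument as in the proof of Lemma~\ref{lem:boundary_length_rn_disk_weighted}: one checks that $M_{t}:=\bigl((\omega_{1}(t)+\omega_{2}(t))/\ell\bigr)^{-5/2}\indicator_{\{t<\tau_{0}\}}$ is a nonnegative supermartingale under the law of $(X^{1},X^{2})$, approximates $\tau$ from above by dyadic stopping times $\tau_{n}$, and passes to the limit using backwards-submartingale uniform integrability.

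Your $h$-transform framing is the correct conceptual explanation for why the Radon--Nikodym factor takes this form, and if made rigorous it would indeed yield the stopping-time statement automatically. Two caveats, though. First, the conditioning on $(X^{1},X^{2})$ ``hitting $(0,0)$ before either coordinate hits $0$'' is a zero-probability event, so converting your heuristic into a proof requires exactly the excursion-theoretic/scaling-limit work that constitutes \cite[Theorem~1.2]{gm2018sledisk}. Second, your determination of the exponent $-5/2$ from scaling is incomplete: scaling fixes the form $(x+y)^{-\alpha}$ but not the value of $\alpha$, and your appeal to ``asymptotics for the first hitting time of $0$ of the sum process'' does not by itself identify $\alpha$. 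Since you end up citing \cite{gm2018sledisk} for this step anyway, the cleaner route---and the one the paper takes---is to cite it at the outset for constant $\tau$ and then supply only the extension to stopping times.
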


\begin{proof}
The first assertion and~\eqref{eq:boundary_length_rn_disk} for constant $\tau$ follow from 
\cite[Theorem~1.2]{gm2018sledisk} combined with the resampling property of the quantum disk \cite[Proposition~A.8]{dms2014mating}.  Then~\eqref{eq:boundary_length_rn_disk} for general $\tau$ can be
verified in exactly the same way as in the proof of Lemma~\ref{lem:boundary_length_rn_disk_weighted} above.
\end{proof}

Combining Lemma~\ref{lem:boundary_length_rn_disk} with Proposition~\ref{prop:stable-reflected-exp-moment},
we obtain the following lemma.

\begin{lemma} \label{lem:top_length_tail_disk}
Let $\ell \in (0,\infty)$, $a\in(0,1)$, suppose that $\CD = (\D,h,0)$ has law $\qdiskweighted{\ell}$,
let $x \in \partial \CD$ be chosen uniformly from the boundary measure $\qbmeasure{h}$,
and let $y$ be the point of $\partial \CD$ so that $\qbmeasure{h}([x,y]_{\partial \CD}^{\ccw})=(1-a)\ell$.
Let $\eta'$ be an independent radial $\SLE_{6}$ on $\CD$ from $x$ targeted at $0$
parameterized according to quantum natural time, set $t_{\eta'}:=\inf(\eta')^{-1}(0)$,
let $T_{t}$ denote the quantum length of $\partial K_{t} \cap \D$
for $t \in [0,t_{\eta'})$, set $T_{t}:=0$ for $t \in [t_{\eta'},\infty)$, and set
$\tau:=\inf\bigl\{ t \in [0,t_{\eta'}) \bigm| y \in \overline{K_{t}} \bigr\}$,
where $K_{t}$ denotes the complement in $\D$ of the $0$-containing component of $\D \setminus \eta'([0,t])$.
Then $\tau < t_{\eta'}$ a.s., and there exist constants $c_{1},c_{2},c_{3} \in (0,\infty)$
independent of $\ell,a$ such that for any $\delta,u \in (0,\infty)$,
as long as $\ell \leq \delta^{2/3} \exp( c_{3} \delta^{-u} )$ or $a \in [1/10,9/10]$,
\begin{equation} \label{eq:top_length_tail_disk}
\mathbb{P}\biggl[\sup_{t\in[0,\tau\wedge\delta]}T_{t}\geq\delta^{2/3-u}\biggr]
  \leq c_{1} \exp(-c_{2}\delta^{-u}).
\end{equation}
\end{lemma}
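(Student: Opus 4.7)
The plan is to reduce the probability to a L\'evy-process expectation via Lemma~\ref{lem:boundary_length_rn_disk} and then to exploit the exponential moment of Proposition~\ref{prop:stable-reflected-exp-moment}. The assertion $\tau < t_{\eta'}$ a.s.\ is essentially a byproduct: by $\SLE_{6}$ locality, the radial $\SLE_{6}$ from $x$ to $0$ agrees in law with the chordal $\SLE_{6}$ from $x$ to $y$ up to time $\tau$; Lemma~\ref{lem:boundary_length_rn_disk} then makes the left--right boundary-length pair $(L_{t},R_{t})$ absolutely continuous with respect to a pair of independent $3/2$-stable spectrally negative L\'evy processes $(X^{1},X^{2})$ with $X^{1}_{0}=a\ell$ and $X^{2}_{0}=(1-a)\ell$, and the first hitting time of $0$ by the minimum of $X^{1}$ and $X^{2}$ is a.s.\ finite.

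For the quantitative estimate, set $\tau^{*} := \inf\{t \geq 0 : T_{t} \geq \delta^{2/3-u}\}$ ($\inf\emptyset := \infty$). Realizing $\qdiskweighted{\ell}$ as the area-weighted $\qdisk{\ell}$ equipped with an interior marked point sampled from $\mu_{h}/\mu_{h}(\D)$, combining this with $\SLE_{6}$ locality (so that integration over the candidate interior target $z$ collapses to integrating $\indicator_{z \in \D_{y,\tau^{*}}}$ against $\mu_{h}$), and using the Markov property of the quantum disk and~\eqref{eqn:bdisk_area_density} to compute the conditional mean $(L_{\tau^{*}}+R_{\tau^{*}})^{2}$ of $\mu_{h}(\D_{y,\tau^{*}})$ given its boundary length, I would obtain
\[
\mathbb{P}_{\qdiskweighted{\ell}}\bigl[\sup\nolimits_{t \in [0,\tau \wedge \delta]} T_{t} \geq \delta^{2/3-u}\bigr]
  = \ell^{-2}\,\mathbb{E}_{\qdisk{\ell}}\bigl[\indicator_{\tau^{*} \leq \tau \wedge \delta}(L_{\tau^{*}}+R_{\tau^{*}})^{2}\bigr].
\]
Applying Lemma~\ref{lem:boundary_length_rn_disk} at the stopping time $\tau^{*}$ and using $2-5/2=-1/2$, this becomes
\[
\ell^{1/2}\,\mathbb{E}\bigl[\indicator_{\tau^{*} \leq \tau_{0} \wedge \delta}\,(X^{1}+X^{2})^{-1/2}_{\tau^{*}}\bigr],
\]
where now $\tau^{*}$ is read on the L\'evy side as the first time $(X^{1}_{t}-I^{1}_{t})+(X^{2}_{t}-I^{2}_{t}) \geq \delta^{2/3-u}$, with $I^{j}_{t}:=\inf_{s\leq t}X^{j}_{s}$ and $\tau_{0}:=\inf\{t : X^{1}_{t}\wedge X^{2}_{t}\leq 0\}$.

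To bound this L\'evy expectation by $c_{1}\exp(-c_{2}\delta^{-u})$, I would split according to whether $(X^{1}+X^{2})_{\tau^{*}} \geq \ell/2$ or not. On the former event the Radon--Nikodym factor $\ell^{1/2}(X^{1}+X^{2})^{-1/2}_{\tau^{*}}$ is bounded by $\sqrt{2}$, and Proposition~\ref{prop:stable-reflected-exp-moment} combined with $3/2$-stable scaling gives $\mathbb{P}[\tau^{*} \leq \delta] \leq c\exp(-c'\delta^{-u})$, handling this part. On the complementary event, $X^{1}+X^{2}$ has decreased by more than $\ell/2$ within time $\delta$; scaling and the standard $r^{-3/2}$ tail estimate for the infimum of a spectrally negative $3/2$-stable L\'evy process bound this event by $c(\delta^{2/3}/\ell)^{3/2}$, and the hypotheses on $\ell$ or on $a$ are precisely what is needed to absorb the potentially singular factor $(X^{1}+X^{2})^{-1/2}_{\tau^{*}}$ against this polynomial decay: the bound $\ell \leq \delta^{2/3}\exp(c_{3}\delta^{-u})$ caps how badly the blow-up can compete with the exponential, while $a \in [1/10,9/10]$ keeps each $X^{j}_{0} \geq \ell/10$, making it uniformly unlikely for either coordinate to become small by time $\delta$ regardless of $\ell$. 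The main obstacle will be this last step: since $\tau^{*}$ is defined through the reflected processes and is not a first-passage time of $X^{1}+X^{2}$, the strong Markov property does not apply directly. The plan is to decompose further at $\sigma := \inf\{t : (X^{1}+X^{2})_{t} \leq \ell/2\}$, apply the strong Markov property at $\sigma$, and combine H\"older's inequality with the integrability of $(X^{1}+X^{2})^{-p/2}_{\tau^{*}}$ for some $p \in (1,2)$.
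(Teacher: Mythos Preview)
Your identity $\mathbb{P}_{\qdiskweighted{\ell}}[\cdot] = \ell^{1/2}\,\mathbb{E}\bigl[(X^{1}{+}X^{2})_{\tau^{*}}^{-1/2}\indicator_{\tau^{*} < \tau_{0}\wedge\delta}\bigr]$ is correct and is a genuinely different route from the paper's. Two remarks. First, a notational slip: on the $\qdisk{\ell}$-side there is no interior marked point and hence no $\tau$; the indicator should read $\indicator_{\tau^{*}\leq\delta}$ (and $\tau^{*}<t_{\widetilde\eta'}$ is automatic since $\widetilde T_{t}\to 0$ as $t\uparrow t_{\widetilde\eta'}$). Second, the step $\mathbb{E}[\mu_{h}(\D_{y,\tau^{*}})\mid\text{explored}]=(L_{\tau^{*}}{+}R_{\tau^{*}})^{2}$ is the Markov property of the chordal $\SLE_{6}$ exploration of $\qdisk{\ell}$; it is standard (it underlies \cite{gm2018sledisk} and can be recovered from Lemma~\ref{lem:boundary_length_rn_disk} together with the conditional independence of the cut-off disks) but is not stated in the paper, so you should make it explicit.

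Once the identity is in hand, your case split and the strong-Markov manoeuvre are unnecessary: H\"older with exponent $p=5$ (not $p\in(1,2)$) finishes in one line. Indeed $\mathbb{E}\bigl[(X^{1}{+}X^{2})_{\tau^{*}}^{-5/2}\indicator_{\tau^{*}<\tau_{0}}\bigr]=\ell^{-5/2}\,\qdisk{\ell}[\tau^{*}<t_{\widetilde\eta'}]\leq\ell^{-5/2}$ by Lemma~\ref{lem:boundary_length_rn_disk}, whence
\[
\ell^{1/2}\,\mathbb{E}\bigl[(X^{1}{+}X^{2})_{\tau^{*}}^{-1/2}\indicator_{\tau^{*}<\tau_{0}\wedge\delta}\bigr]
  \leq \ell^{1/2}\cdot\ell^{-1/2}\cdot\mathbb{P}[\tau^{*}\leq\delta]^{4/5}
  \leq c\,\exp\bigl(-c'\delta^{-u}\bigr).
\]
This proves the lemma \emph{without} any hypothesis on $\ell$ or $a$, so the conditions you describe as ``precisely what is needed'' are actually artefacts of the paper's route, not of the problem.

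For comparison: the paper first H\"olders the area-weighting to pass from $\qdiskweighted{\ell}$ to $(\qdisk{\ell})^{1/5}$, then applies the Radon--Nikodym derivative of Lemma~\ref{lem:boundary_length_rn_disk} and the pointwise bound $(X^{1}{+}X^{2})_{\tau^{*}}\geq\delta^{2/3-u}$ on $\{\tau^{*}<\tau_{0}\}$. This picks up an $\ell^{1/2}\delta^{-1/3}$ prefactor, forcing the restriction $\ell\leq\delta^{2/3}\exp(c_{3}\delta^{-u})$; the regime $a\in[1/10,9/10]$ with large $\ell$ is then handled by a separate first-passage argument involving $\tau':=\inf\{t:L_{t}\wedge R_{t}<\ell/20\}$. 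Your approach, by retaining the area as the factor $(L_{\tau^{*}}{+}R_{\tau^{*}})^{2}$ before applying the Radon--Nikodym derivative, lands on the milder exponent $-1/2$ and thereby unifies both regimes.
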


\begin{proof}
Note that by the locality of $\SLE_{6}$ (see, e.g., \cite[Theorem 3]{SchrammWilson2005}),
$\eta'|_{[0,\tau]}$ has the same law as $\widetilde{\eta}'|_{[0,\widetilde{\tau}]}$
for an independent chordal $\SLE_{6}$ $\widetilde{\eta}'$ on $\CD$ from $x$ to $y$
parameterized according to quantum natural time and
$\widetilde{\tau}:=\inf\bigl\{ t \in [0,t_{\widetilde{\eta}'}) \bigm| 0 \in \widetilde{K}_{t} \bigr\}$,
where $t_{\widetilde{\eta}'}:=\inf(\widetilde{\eta}')^{-1}(y)$ and
$\widetilde{K}_{t}$ denotes the complement in $\D$ of the component of
$\D \setminus \widetilde{\eta}'([0,t])$ whose boundary contains $y$.
Then since $\widetilde{\tau} < t_{\widetilde{\eta}'}$ a.s.\ by \cite[Proposition 11.7]{BN16LectSLE}
and $0 \not\in \widetilde{\eta}'([0,\widetilde{\tau}])$ a.s.\ by \cite[Theorem 11.2-(b)]{BN16LectSLE},
we have $0 \not\in \eta'([0,\tau])$ a.s.\ and thus $\tau < t_{\eta'}$ a.s.

To prove~\eqref{eq:top_length_tail_disk}, let $\widetilde{T}_{t}$ denote the quantum
length of $\partial \widetilde{K}_{t} \cap \D$ for $t \in [0,t_{\widetilde{\eta}'})$,
set $\widetilde{T}_{t}:=0$ for $t \in [t_{\widetilde{\eta}'},\infty)$,
and let $L_{t},R_{t},X^{1}_{t},X^{2}_{t},\tau_{0}$ be as in Lemma~\ref{lem:boundary_length_rn_disk}
with $\widetilde{\eta}'$ in place of $\eta'$. Let $\delta,u\in(0,\infty)$.
Then by $T_{\tau\wedge\delta}\leq\lim_{t\uparrow\tau\wedge\delta}T_{t}$ (since both of $L$ and $R$ have downward jumps),
the locality of $\SLE_{6}$ mentioned above, and~\eqref{eqn:bdisk_area_density},
\begin{align} \label{eq:top_length_tail_disk_proof1}
&\qdiskweighted{\ell}\biggl[\sup_{t\in[0,\tau\wedge\delta]}T_{t}>\delta^{2/3-u}\biggr] \\
&= \qdiskweighted{\ell}\biggl[\sup_{t\in[0,\tau\wedge\delta)}T_{t}>\delta^{2/3-u}\biggr]
  = \qdiskweighted{\ell}\biggl[\sup_{t\in[0,\widetilde{\tau}\wedge\delta)}\widetilde{T}_{t}>\delta^{2/3-u}\biggr] \notag \\
&\leq \qdiskweighted{\ell}\biggl[\sup_{t\in[0,\delta]}\widetilde{T}_{t}>\delta^{2/3-u}\biggr]
  = \int_{\{\sup_{t\in[0,\delta]}\widetilde{T}_{t}>\delta^{2/3-u}\}} \frac{\qmeasure{h}(\CD)}{\int \qmeasure{h}(\CD) \, d\qdisk{\ell}} \, d\qdisk{\ell} \notag \\
&\leq \qdisk{\ell}\biggl[\sup_{t\in[0,\delta]}\widetilde{T}_{t}>\delta^{2/3-u}\biggr]^{\frac{1}{5}} \frac{\bigl( \int \qmeasure{h}(\CD)^{5/4} \, d\qdisk{\ell} \bigr)^{4/5}}{\int \qmeasure{h}(\CD) \, d\qdisk{\ell}}
  = c_{4} \qdisk{\ell}\biggl[\sup_{t\in[0,\delta]}\widetilde{T}_{t}>\delta^{2/3-u}\biggr]^{\frac{1}{5}}, \notag
\end{align}
where $c_{4}:=\bigl(\int_{0}^{\infty}(2\pi)^{-1/2}s^{-5/4}e^{-1/(2s)}\,ds\bigr)^{4/5}$.

Since~\eqref{eq:top_length_tail_disk} is obvious for $\delta\in[1,\infty)$, we may assume that $\delta\in(0,1)$.
Noting that $\widetilde{T}_{t}=L_{t}-\inf_{s\in[0,t]}L_{s}+R_{t}-\inf_{s\in[0,t]}R_{s}$ for any $t\in[0,\infty)$,
define an $\{ \mathescr{F}^{2}_{t+} \}_{t\in[0,\infty)}$-stopping time $\tau_{\delta}$ by
\begin{equation*}
\tau_{\delta}(\omega_{1},\omega_{2}):=\inf\biggl\{ t \in [0,\infty) \biggm| \omega_{1}(t) - \inf_{s\in[0,t]}\omega_{1}(s) + \omega_{2}(t) - \inf_{s\in[0,t]}\omega_{2}(s)  > \delta^{2/3-u} \biggr\}.
\end{equation*}
Then by Lemma~\ref{lem:boundary_length_rn_disk} with $\tau=\tau_{\delta}$, the scaling
property of $X^{1},X^{2}$ and Proposition~\ref{prop:stable-reflected-exp-moment}, with the constants
$c_{1},c_{2}\in(0,\infty)$ as in Proposition~\ref{prop:stable-reflected-exp-moment} we have
\begin{align}
&\qdisk{\ell}\biggl[ \sup_{t\in[0,\delta]}\widetilde{T}_{t}>\delta^{2/3-u} \biggr]
  = \qdisk{\ell}[ \tau_{\delta}(L_{\cdot},R_{\cdot}) < \delta, \, \tau_{\delta}(L_{\cdot},R_{\cdot}) < t_{\widetilde{\eta}'} ] \notag \\
&=\mathbb{E}\bigl[ \ell^{5/2} \bigl( X^{1}_{\tau_{\delta}(X^{1},X^{2})}+X^{2}_{\tau_{\delta}(X^{1},X^{2})} \bigr)^{-5/2} \indicator_{ \{ \tau_{\delta}(X^{1},X^{2}) < \delta, \, \tau_{\delta}(X^{1},X^{2}) < \tau_{0} \} } \bigr] \notag \\
&\leq \ell^{5/2} ( \delta^{2/3-u} )^{-5/2} \mathbb{P}[ \tau_{\delta}(X^{1},X^{2}) < \delta ] \notag \\
&\leq \ell^{5/2} \delta^{-5/3} \mathbb{P}\biggl[ \sup_{t\in[0,1]}\biggl(X^{1}_{t} - \inf_{s\in[0,t]}X^{1}_{s} + X^{2}_{t} - \inf_{s\in[0,t]}X^{2}_{s} \biggr) > \delta^{-u} \biggr] \notag \\
&\leq 2c_{1} \ell^{5/2} \delta^{-5/3} \exp(-c_{2}\delta^{-u}/2).
\label{eq:top_length_tail_disk_proof2}
\end{align}
It thus follows from~\eqref{eq:top_length_tail_disk_proof1} and~\eqref{eq:top_length_tail_disk_proof2}
that~\eqref{eq:top_length_tail_disk} with $(2c_{1},c_{2}/3)$ in place of $(c_{1},c_{2})$
holds as long as $\ell \leq \delta^{2/3} \exp(c_{2}\delta^{-u}/15)$.

Next, assume that $\ell \geq \delta^{2/3} \exp(c_{2}\delta^{-u}/15)$ and that
$a \in [1/10,9/10]$, and define an $\{ \mathescr{F}^{2}_{t+} \}_{t\in[0,\infty)}$-stopping time $\tau'$ by
\begin{equation} \label{eq:top_length_tail_disk_proof3}
\tau'(\omega_{1},\omega_{2}):=\inf\{ t \in [0,\infty) \mid \omega_{1}(t) \wedge \omega_{2}(t) < \ell/20 \}.
\end{equation}
Then $\tau'(L_{\cdot},R_{\cdot}) < t_{\widetilde{\eta}'}$ a.s.\ since
$\lim_{t\uparrow t_{\widetilde{\eta}'}}(L_{t},R_{t})=(0,0)$ a.s.\ by Lemma~\ref{lem:boundary_length_rn_disk}, and we also have
$X^{1}_{\tau'(X^{1},X^{2})} + X^{2}_{\tau'(X^{1},X^{2})} \geq X^{1}_{\tau'(X^{1},X^{2})} \vee X^{2}_{\tau'(X^{1},X^{2})} \geq \ell/20$
a.s.\ on $\{ \tau'(X^{1},X^{2}) < \tau_{0} \}$ by noting that
$(X^{1}_{t}-\lim_{s\uparrow t}X^{1}_{s})(X^{2}_{t}-\lim_{s\uparrow t}X^{2}_{s})=0$
for any $t\in(0,\infty)$ a.s.\ by the independence of $X^{1},X^{2}$. Therefore by
Lemma~\ref{lem:boundary_length_rn_disk} with $\tau=\tau',\tau_{\delta}$,
the last inequality in~\eqref{eq:top_length_tail_disk_proof2}, $X^{1}_{0}\wedge X^{2}_{0} \geq \ell/10$,
the scaling property of $X^{1},X^{2}$, \cite[Chapter~VIII, Proposition~4]{bertoin1996levy}
and $\delta^{-2/3} \ell \geq \exp(c_{2}\delta^{-u}/15)$ we obtain
\begin{align}
&\qdisk{\ell}\biggl[ \sup_{t\in[0,\delta]}\widetilde{T}_{t}>\delta^{2/3-u} \biggr] \notag \\
&\leq \qdisk{\ell}[ \tau'(L_{\cdot},R_{\cdot}) \leq \delta ]
  + \qdisk{\ell}\biggl[ \sup_{t\in[0,\delta]}\widetilde{T}_{t}>\delta^{2/3-u}, \, \tau'(L_{\cdot},R_{\cdot}) > \delta \biggr] \notag \\
&\leq \qdisk{\ell}[ \tau'(L_{\cdot},R_{\cdot}) \leq \delta ]
  + \qdisk{\ell}\bigl[ \tau_{\delta}(L_{\cdot},R_{\cdot}) < \delta\wedge t_{\widetilde{\eta}'}, \, L_{\tau_{\delta}(L_{\cdot},R_{\cdot})} + R_{\tau_{\delta}(L_{\cdot},R_{\cdot})} \geq \ell/20 \bigr] \notag\\
&\leq 20^{5/2} \bigl( \mathbb{P}[ \tau'(X^{1},X^{2}) \leq \delta ] + \mathbb{P}[ \tau_{\delta}(X^{1},X^{2}) < \delta ] \bigr) \notag \\
&\leq 20^{5/2} \biggl( \mathbb{P}\biggl[ \min_{k\in\{1,2\}}\inf_{t\in[0,1]}(X^{k}_{t}-X^{k}_{0}) \leq -\delta^{-2/3}\frac{\ell}{20} \biggr] + 2c_{1}\exp(-c_{2}\delta^{-u}/2) \biggr) \notag \\
&\leq 20^{5/2} \bigl( c_{5} (\delta^{-2/3}\ell/20)^{-3/2} + 2c_{1}\exp(-c_{2}\delta^{-u}/2) \bigr)
  \leq c_{6}\exp(-c_{2}\delta^{-u}/10)
\label{eq:top_length_tail_disk_proof4}
\end{align}
for some constants $c_{5},c_{6}\in(0,\infty)$ determined solely by the law of $X^{1}-X^{1}_{0}$.
Thus~\eqref{eq:top_length_tail_disk_proof1} and~\eqref{eq:top_length_tail_disk_proof4}
yield~\eqref{eq:top_length_tail_disk} when $\ell \geq \delta^{2/3} \exp(c_{2}\delta^{-u}/15)$
and $a \in [1/10,9/10]$, completing the proof.
\end{proof}

Suppose now that we have the setup of Proposition~\ref{prop:good_chunks_percolate_half_plane}.  We scale time by $\delta \in (0,1)$ and lengths by $\delta^{2/3}$ so that each chunk $\CN_j$ is drawn by a chordal $\SLE_6$ curve stopped at a stopping time which is at most $\delta$ and note that the setup is scale invariant.  For every $j \in \N_0$,  we let $(L^j,R^j)$ be the pair of $3/2$-stable L\'evy processes describing the boundary length evolution of the quantum surface parameterized by $\CN_j$.  We also define a process $X = (X_t)_{t \geq 0}$ as follows.  First,  we set $X_t = L_t^0 + R_t^0$ for each $t \in [0,\sigma_0]$.  Fix $j \in \N$ and suppose that we have defined $X_t$ for each $t \in [0,\sigma_{j-1}]$.  Then we set $X_t = X_{\sigma_{j-1}} + L_t^j + R_t^j$ for each $t \in [\sigma_{j-1},\sigma_j]$ and we proceed inductively.  Since both of $L^j$ and $L^j$ are $3/2$-stable L\'evy processes,  the sequences $(L^j)_{j \geq 0},  (R^j)_{j \geq 0}$ are i.i.d.\  and the exploration is constructed in a Markovian way,  we obtain that $X$ has the law of a $3/2$-stable L\'evy process.  We note that $X$ has only downward jumps and a jump occurs whenever a curve $\eta_j'$ either finishes tracing an $\SLE_6$ bubble whose boundary is bounded away from $\partial \h$ or it disconnects points of $\partial \h$ from $\infty$.  In either case,  we have that the size of the jump of $X$ is equal to the quantum boundary length of the quantum surface parameterized by the region that $\eta_j'$ disconnects from $\infty$.  Moreover,  we define inductively a collection of quantum surfaces $\wt{\CW} = (\wt{\CW}_t)_{t \geq 0}$ as follows.  First,  for $t \in [0,\sigma_0]$,  we let $\wt{\CW}_t$ be the quantum surface parameterized by the unbounded connected component of $\h \setminus \eta_0'([0,t])$.  Fix $j \in \N$ and suppose that we have defined $\wt{\CW}_t$ for $t \in [0,\sigma_{j-1}]$.  Then for $t \in [\sigma_{j-1},\sigma_j]$,  we let $\wt{\CW}_t$ be the quantum surface parameterized by the unbounded connected component of $\h \setminus \left(\eta_j'([0,t]) \cup (\cup_{i=0}^{j-1}\CN_i)\right)$.  Then for each $t \geq 0$,  we have that $X_t$ is equal to the boundary length of $\partial \wt{\CW}_t \setminus \partial \wt{\CW}_0$ minus the boundary length of $\partial \wt{\CW}_0 \setminus \partial \wt{\CW}_t$.

Let us now briefly describe the main strategy for proving Proposition~\ref{prop:good_chunks_percolate}.  First,  we will show in Lemma~\ref{lem:boundary_length_disconnection_chordal} that with positive probability (uniform in $\ell>0,\delta \in (0,1)$),  we have that the chordal exploration in Proposition~\ref{prop:good_chunks_percolate_half_plane} disconnects from $\infty$ at least $\ell/2$ units of quantum boundary length in $(-\infty,0)$ starting from $0$ during the first $\lfloor \ell \delta^{-2/3 - u/3} \rfloor + 1$ number of steps and using only good chunks,  without disconnecting from $\infty$ too many units of quantum area.  Next,  in Lemma~\ref{lem:boundary_length_disconnection_chordal_unit_disk},  we show that the same is true if we consider the chordal $\SLE_6$ exploration on top of a quantum disk instead.  For this,  we are going to use Lemma~\ref{lem:boundary_length_rn_disk} in order to compare the laws of the two explorations on top of the quantum wedge and the quantum disk respectively.  Moreover,  using Lemma~\ref{lem:boundary_length_rn_disk_weighted},  we show in Lemma~\ref{lem:boundary_length_disconnection_radial_unit_disk} that the statement of Lemma~\ref{lem:boundary_length_disconnection_chordal_unit_disk} still holds if we consider the radial $\SLE_6$ exploration instead.  Finally,  we complete the proof of Proposition~\ref{prop:good_chunks_percolate} by applying Lemma~\ref{lem:boundary_length_disconnection_radial_unit_disk} iteratively.

\begin{lemma}\label{lem:boundary_length_disconnection_chordal}
Suppose that we have the setup of Proposition~\ref{prop:good_chunks_percolate_half_plane} and the setup described in the previous paragraphs.  Fix $0<u_1<u<1/3$,  $\ell \in [\delta^{2/3 - u}, \delta^{-u_1}]$ and set $N = \lfloor \ell \delta^{-2/3 - u/3} \rfloor +1$.  Let $b>0$ (resp.  $\wt{b}>0$) be such that $\nu_h([-b,0]) = \ell / 2$ (resp.  $\nu_h([-\wt{b},0]) = 3\ell / 4)$.  We let $E$ be the event that the following hold.
\begin{enumerate}[(i)]
\item There exists $1 \leq j \leq N/2$ such that the first $j$ number of chunks in the exploration disconnect $[-b,0]$ but not $[-\wt{b},0]$ from $\infty$,  and $\partial \CW_j$ intersects the boundaries of only good chunks.
\item We have that $\sup_{0 \leq t \leq 2\ell \delta^{1/3 - u/3}} |X_t| < \ell \delta^{u/200}$ and $\inf_{0 \leq t \leq \delta} L_t^0 > -\ell \delta^{u/200}$.  Moreover,  the total amount of quantum area disconnected from $\infty$ by the first $N$ number of chunks is at most a constant times $\delta^{u/18}$ where the implicit constant depends only on $c_0,A,u$ and $u_1$.
\item The chunk $\CN_0$ is good and the top of $\partial \CN_0$ is not disconnected from $\infty$ by the first $N$ number of chunks of the exploration.  Moreover,  the boundary length of the part of $\partial \CN_0$ contained in the boundary of the unbounded connected component of $\h \setminus \cup_{i=0}^N \CN_i$ is at least $\epsilon_0 \delta^{2/3}$.
\item For each $i \in \Z \cap [0,N]$,  there exists $n \in [1,\log(\delta^{-1})^3] \cap \N$ and $\{i_j\}_{j=1}^n \subseteq \Z \cap [0,N]$ such that $i_1=i>i_2>\cdots >i_n,  \partial \CN_{i_n} \cap \partial \CW_0 \neq \emptyset$ and $\partial \CN_{i_j} \cap \partial \CN_{i_{j+1}} \neq \emptyset$,  for each $1 \leq j<n$.
\end{enumerate}
Then there exist $\wt{q}$, $\delta_0 \in (0,1)$ depending only on $A,c_0,u$ and $u_1$ such that for each $\delta \in (0,\delta_0)$,  we have that $\p[E] \geq 1-\wt{q}$.
\end{lemma}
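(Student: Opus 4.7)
The plan is to verify each of the four conditions (i)--(iv) in the statement with sufficient probability and intersect the resulting events. The main inputs are the four parts of Proposition~\ref{prop:good_chunks_percolate_half_plane}, together with self-similarity and tail estimates for the $3/2$-stable L\'evy processes $\{(L^j,R^j)\}_j$ (scaled by $\delta^{2/3}$ in the rescaled coordinates) and the aggregate process $X$.

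For (i), I will let $\tau := \inf\{j \geq 1 : -L_j \geq \ell/2\}$ and argue that $\tau \leq N/2$ with $-L_\tau < 3\ell/4$. By Proposition~\ref{prop:good_chunks_percolate_half_plane}-\eqref{it:move_left} in the rescaled setting, $\p[L_k \geq -c_1 \delta^{2/3} k] \leq \exp(-c_2 k)$; choosing $k = \lceil \ell/(c_1 \delta^{2/3})\rceil$, which satisfies $k \leq N/2$ as soon as $\delta^{u/3} \leq c_1$, forces $\tau \leq k \leq N/2$ off an exponentially small event. To bound the overshoot, I use~\eqref{eqn:infbound}: each one-step $\inf L^j$ has tail $\lesssim \delta x^{-3/2}$, so a union bound gives that the maximum one-step downward jump over $N$ steps is at most $(N\delta)^{2/3}(\log \delta^{-1})^\alpha$ with high probability. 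A short computation using $N\delta \leq \ell \delta^{1/3-u/3}$ and $\ell \geq \delta^{2/3-u}$ shows this is at most $\ell \delta^{u/9}(\log \delta^{-1})^\alpha \ll \ell/4$, hence $-L_\tau < 3\ell/4$. Proposition~\ref{prop:good_chunks_percolate_half_plane}-\eqref{it:good_chunks_initial_boundary} applied with $u$ replaced by a constant multiple of $\log \delta^{-1}$ then guarantees, off a polynomially small event, that every chunk whose boundary meets $\partial \CW_\tau$ is good.

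For (ii), the supremum bound on $X$ is immediate from its $3/2$-self-similarity: $\sup_{t\leq T}|X_t| \asymp T^{2/3}$ in distribution, and $(2\ell \delta^{1/3-u/3})^{2/3} \ll \ell \delta^{u/200}$ reduces to the elementary inequality $u/9 - u/200 > 0$ once the bound $\ell^{-1/3} \leq \delta^{u/3-2/9}$ (from $\ell \geq \delta^{2/3-u}$) is substituted. The bound on $\inf_{t \leq \delta} L^0_t$ is analogous. For the area estimate, the expected total quantum area of bubbles disconnected by $\SLE_6$ on a weight-$2$ wedge run for quantum natural time $\delta$ is $O(\delta^{4/3})$ (the typical scaling of $\sum s^2$ over downward jumps of a $3/2$-stable L\'evy process in time $\delta$), so the expected total over $N$ chunks is $O(N\delta^{4/3}) = O(\ell \delta^{2/3-u/3}) = O(\delta^{2/3-u/3-u_1})$ under $\ell \leq \delta^{-u_1}$. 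Since $u_1 < u < 1/3$ forces $2/3 - u/3 - u_1 > u/18$, Markov's inequality yields the required $\delta^{u/18}$ control.

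Conditions (iii) and (iv) are quicker. The chunk $\CN_0$ is good with probability at least $1-\epsilon_0$ by the hypothesis on $E$, and Lemma~\ref{lem:right-length-positive-prob-positive} supplies a $\delta$-independent lower bound $1-\tilde q$ on the event $\widetilde E_0$ that all subsequent chunks stay strictly to the left of the rightmost $2\epsilon_0\delta^{2/3}$-segment of $\partial \CN_0 \cap \partial \CW_1$; on $\widetilde E_0$ that segment survives on $\partial \wt\CW_N$ and provides the required $\epsilon_0 \delta^{2/3}$ of boundary length for (iii). Condition (iv) is a direct application of Proposition~\ref{prop:good_chunks_percolate_half_plane}-\eqref{it:bad_cluster_size} with $u$ of order $\log \delta^{-1}$, rendering the failure probability polynomially small. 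The main obstacle I anticipate is bookkeeping: the probability in (iii) is the only one that does not vanish with $\delta$, so the resulting $\wt q$ is essentially $\tilde q$ plus $o(1)$ contributions from the other conditions. I also need to verify joint compatibility of the four events, but this is not a real difficulty because the leftward-drift event driving (i) concerns $L$ while the rightward-preservation event of (iii) lives on the $R$-side, and $L^j, R^j$ are independent at each step.
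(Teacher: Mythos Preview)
Your overall architecture matches the paper's proof: (i) via the leftward drift estimate plus an overshoot bound, (ii) via stable-process scaling, (iii) via Lemma~\ref{lem:right-length-positive-prob-positive}, and (iv) via Proposition~\ref{prop:good_chunks_percolate_half_plane}-\eqref{it:bad_cluster_size}. Parts (iii) and (iv) are correctly handled. However, there are two genuine gaps.

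\textbf{Gap in (i): the ``only good chunks'' clause.} You write that Proposition~\ref{prop:good_chunks_percolate_half_plane}-\eqref{it:good_chunks_initial_boundary} ``guarantees that every chunk whose boundary meets $\partial \CW_\tau$ is good.'' That is not what part~\eqref{it:good_chunks_initial_boundary} says: it only produces, for each index $i$, a nearby $j$ with $\indicator_{E_j \cap E_{j+1}}=1$ and $\partial \CN_j \cap \partial_{\mathrm L}\CW \neq \emptyset$. It says nothing about $\partial \CW_\tau$, and your stopping time $\tau$ need not even satisfy $E_\tau$. The paper's argument is different: starting from the first disconnection time $i$, it uses part~\eqref{it:good_chunks_initial_boundary} to find $j>i$ with $j-i \leq 2\delta^{-u/200}$, $E_j$ occurring, and $\partial \CN_j \cap \partial_{\mathrm L}\CW \neq \emptyset$; it then invokes the exploration rules themselves (not the proposition) to conclude that every $\CN_m$ with $\partial \CN_m \cap \partial \CW_j \neq \emptyset$ is good. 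A separate one-step-infimum bound (each $\inf_{t\leq\delta}L^j_t > -\ell\delta^{u/100}/16$ with high probability) controls the extra leftward travel over the $O(\delta^{-u/200})$ steps from $i$ to $j$, keeping the total below $3\ell/4$. You need both the switch from $\tau$ to this specific $j$ and the exploration-rule observation; neither appears in your sketch.

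\textbf{Gap in (ii): the area bound.} Your claim that ``the expected total quantum area of bubbles disconnected by $\SLE_6$ run for quantum natural time $\delta$ is $O(\delta^{4/3})$'' is false: the bubbles are quantum disks whose areas have mean equal to the square of their boundary lengths, and $\E\bigl[\sum_i s_i^2\bigr] = c\delta \int_0^\infty s^{2-5/2}\,ds = \infty$ for the $3/2$-stable jump measure. Hence your Markov-inequality step has no content. The paper fixes this by first ruling out (with probability $O(\delta^{u/6})$) any jump of size exceeding $\ell$ in the relevant time window, and only then computing the conditional expected area $\asymp \ell^{3/2}\delta^{1/3-u/3}$ from the truncated integral $\int_0^\ell s^{-1/2}\,ds$; Markov's inequality then gives the $\delta^{u/18}$ bound. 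Your scaling heuristic $\delta^{4/3}$ is the right order of magnitude for a \emph{truncated} chunk, but you cannot skip the truncation.

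Finally, your closing remark that (i) and (iii) decouple via the $L$/$R$ independence is not needed and not quite right (the event $E_j$ involves the whole chunk, not just $L^j$ or $R^j$); the paper simply intersects the events, with (iii) contributing the only probability that stays bounded away from~$1$.
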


\begin{proof}
\emph{Step~1.  (i) holds with high probability}.  First we note that part~(\ref{it:move_left}) of Proposition~\ref{prop:good_chunks_percolate_half_plane} implies that there exist constants $c_1,c_2>0$ depending only on $A,c_0$ such that $\p\big[L_{\lfloor N/2 \rfloor} \geq -c_1 N \delta^{2/3}  \big] \leq e^{-c_2 N}$,  where $L_j$ is defined as in the statement of Proposition~\ref{prop:good_chunks_percolate_half_plane}.  Note that if $L_{\lfloor N/2 \rfloor} < -c_1 N \delta^{2/3}$,  then we have that the first $\lfloor N/2 \rfloor$ number of chunks of the exploration have disconnected from $\infty$ at least $c_1 N \delta^{2/3}$ units of quantum boundary length in $(-\infty , 0)$ starting from $0$.  Note also that $c_1 N \delta^{2/3} \geq c_1 \ell \delta^{-u/3} > \ell / 2$ for each $\delta \in (0,1)$ sufficiently small.  In particular,  the first $\lfloor N/2 \rfloor$ chunks of the exploration disconnect $[-b,0]$ from $\infty$ off an event with probability at most $e^{-c_2 N}$.  Moreover,  \cite[Chapter~VIII,  Proposition~4]{bertoin1996levy} implies that $\p\big[ \inf_{0 \leq t \leq \delta} L_t^j < -\ell \delta^{u/100}/16 \big] \lesssim \ell^{-3/2} \delta^{1-3u/200}$ for each $j$,  with the implicit constant being universal.  It follows by taking a union bound over all $0 \leq j \leq N$ and since $\ell \geq \delta^{2/3 - u}$ that
\begin{align*}
\p\big[ \inf_{0 \leq t \leq \delta} L_t^j < -\ell \delta^{u/100} /16\,\,\text{for some}\,\,0 \leq j \leq N \big]\lesssim \delta^{a(u)}
\end{align*}
where $a(u) > 0$ depends only on $u$ and the implicit constant is universal.  Also,  part~(\ref{it:good_chunks_initial_boundary}) of Proposition~\ref{prop:good_chunks_percolate_half_plane} implies that there exist constants $c_3,c_4>0$ depending only on $A,c_0$ such that off an event with probability at most $c_3 N^2 \exp(-c_4 \delta^{-u/200})$ the following holds.  For each $i \in \Z \cap [0,N]$ there exists $j \in \Z \cap [0,N)$ such that $|j-i| \leq \delta^{-u/200},  \indicator_{E_j \cap E_{j+1}} = 1$ and $\partial \CN_j \cap \partial \CW_{\text{L}} \neq \emptyset$.  

Suppose that all of the events described in the previous paragraph hold and let $i$ be the smallest $i \in \Z \cap [0,N/2]$ such that the first $i$ number of chunks of the exploration disconnect $[-b,0]$ from $\infty$.  Then there exists $j>i$ such that $|j-i| \leq 2\delta^{-u/200},  \indicator_{E_j \cap E_{j+1}} = 1$ and $\partial \CN_j \cap \partial \CW_{\text{L}} \neq \emptyset$.  Since $\inf_{0 \leq t \leq \delta} L_t^i > -\ell \delta^{u/100} / 16 > -\ell / 16$,  we obtain that $\partial \CN_i \cap \partial \CW_{\text{L}} \subseteq [-b_1,0]$,  where $b_1>0$ is such that $\nu_h([-b_1,0]) = \ell / 2 + \ell / 16$.  Furthermore,  the boundary length of $\partial(\CW_i \setminus \CW_j) \cap \partial_{\text{L}} \CW$ is at most $-\sum_{m=i+1}^j \inf_{0 \leq t \leq \delta} L_t^m \leq  \ell \delta^{u/200} / 8 < \ell / 8$,  and so we obtain that $\partial \CN_j \cap \partial_{\text{L}} \CW \subseteq [-b_2,0]$ where $b_2>0$ is such that $\nu_h([-b_2,0]) = 11\ell / 16 < 3\ell / 4$.  It follows that there exist $a>0,\delta_0 \in (0,1)$ depending only on $A,c_0$ and $u$ such that for each $\delta \in (0,\delta_0)$,  we have off an event with probability at most $\delta^a$ that there exists $1 \leq j \leq N/2$ such that $E_j$ occurs and the chunks discovered during the first $j$ steps of the exploration disconnect $[-b,0]$ but not $[-\wt{b},0]$ from $\infty$.  Moreover,  it follows from the way that we have defined the exploration that $\CN_m$ is good for each $0 \leq m\leq j$ such that $\partial \CN_m \cap \partial \CW_j \neq \emptyset$.  Therefore (i) holds with high probability.

\emph{Step~2.  (ii) holds with high probability}.  Fix $p \in (1,3/2)$.  Then \cite[Chapter~VII,Corollary~2]{bertoin1996levy} combined with \cite[Chapter~VIII,  Proposition~4]{bertoin1996levy} imply that $\E\big[ \sup_{0 \leq t \leq 1} |X_t|^p \big] < \infty$.  Thus,  the maximal inequality for martingales combined with scaling imply that
\begin{align*}
\E\left[ \sup_{0 \leq t \leq 2 \ell \delta^{1/3 - u/3}} |X_t| \right]& \leq \E\left[ \sup_{0 \leq t \leq 2\ell \delta^{1/3 - u/3}} |X_t|^p \right] ^{1/p}\\
&=(2\ell \delta^{1/3 - u/3})^{2/3} \E\left[ \sup_{0 \leq t \leq 1} |X_t|^p \right]^{1/p} \lesssim \ell^{2/3} \delta^{2/9 - 2u/9}.
\end{align*}
Hence,  Markov's inequality implies that 
\begin{align*}
\p\left[ \sup_{0 \leq t \leq 2\ell^{1/3 - u/3}} |X_t| \geq \ell \delta^{u/200} \right] \lesssim \ell^{-1} \delta^{-u/200} \ell^{2/3} \delta^{2/9 - 2u/9} \lesssim \delta^{u/9 - u/200}
\end{align*}
since we also have that $\ell \geq \delta^{2/3 - u}$. We note that during the first $N$ steps of the exploration,  we have that at most $2\ell \delta^{1/3 - u/3}$ units of time elapsed for $X$.  Also,  the expectation of the sums of squares of the jumps of $X$ of size at most $\ell$ made in the time interval $[0,2\ell \delta^{1/3 - u/3}]$,  is given by $c2\ell \delta^{1/3 - u/3} \int_{x=0}^{\ell} x^2 x^{-5/2} dx = 4c \ell^{3/2} \delta^{1/3 - u/3}$,  where $c>0$ is a universal constant.  Furthermore,  the number of jumps of size larger than $\ell$ made by $X$ in $[0,2\ell \delta^{1/3 - u/3}]$ has the law of a Poisson random variable with mean $2c\ell \delta^{1/3 - u/3} \int_{x=\ell}^{\infty} x^{-5/2}dx = 4c\ell^{-1/2}\delta^{1/3 - u/3} / 3$.  It follows that the probability of having a jump of size at least $\ell$ in $[0,2\ell \delta^{1/3 - u/3}]$ is given by $1-\exp(-4c\ell^{-1/2} \delta^{1/3-u/3} / 3)$ which is at most $4c \delta^{u/6} / 3$ since recall that $\ell \geq \delta^{2/3 - u}$.  Suppose that we are working on the event that we don't have a jump of size at least $\ell$ in $[0,2\ell \delta^{1/3 - u/3}]$.  Then \cite[Theorem~1.16]{dms2014mating} implies that the conditional expectation of the total quantum area disconnected from $\infty$ by the first $N$ number of chunks given the sizes of the jumps of $X$ in $[0,2\ell \delta^{1/3 - u/3}]$ is at most $4c \ell^{3/2} \delta^{1/3 - u/3}$.  It follows from the Markov property that the conditional probability that more than $\ell^{3/2} \delta^{1/3 - u/2 + u/9}$ units of quantum area is disconnected from $\infty$ by the first $N$ number of chunks of the exploration is at most $4c\ell^{-3/2} \delta^{-1/3 + u/2 -u/9} \ell^{3/2} \delta^{1/3 - u/3}/3 = 4c\delta^{u/18} / 3$.  Note that $\ell^{3/2} \delta^{1/3 - u/2 + u/9} \leq \delta^{-3u_1 / 2 + 1/3 - 7u / 8}$ since $\ell \leq \delta^{-u_1}$. Therefore,  possibly by taking the constant $a$ in Step 1 to be smaller,  we can assume that (ii) holds off an event with probability at most $\delta^a$ for all $\delta \in (0,1)$ sufficiently small (depending only on $A,c_0,u_1$ and $u$).

\emph{Step 3.  Conclusion of the proof}.  We note that Lemma~\ref{lem:right-length-positive-prob-positive} implies that provided $A^{-1} \vee c_0$ is small enough,  we have that there exists $\wt{q} \in (0,1)$ depending only on $A,c_0$ such that with probability at least $1-\wt{q}$,  the following holds.  The chunk $\CN_0$ is good and the top of $\partial \CN_0$ is never disconnected from $\infty$ by the exploration.  Note also that if the latter occurs,  we have that the boundary length of the part of $\partial \CN_0$ contained in the boundary of the unbounded connected component of $\h \setminus \cup_{i=0}^N \CN_i$ is at least $\epsilon_0 \delta^{2/3}$.  This proves (iii).  Finally,  for (iv),  we note that part~(\ref{it:bad_cluster_size}) of Proposition~\ref{prop:good_chunks_percolate_half_plane} combined with the fact that $\ell \leq \delta^{-u_1}$ imply that there exist constants $c_5,c_6>0$ depending only on $A,c_0,u$ and $u_1$ such that for all $\delta \in (0,1)$ sufficiently small (depending only on $A,c_0,u$ and $u_1$),  we have that off an event with probability at most $c_5 \exp(-c_6 \log(\delta^{-1})^{3/2})$ the claim in part (iv) of the statement of the lemma holds.  Therefore,  possibly by taking $\wt{q} \in (0,1)$ to be smaller and for all $\delta \in (0,1)$ sufficiently small (depending only on $A,c_0,u$ and $u_1$),  we can assume that parts (i)-(iv) hold simultaneously with probability at least $\wt{q}$.  This completes the proof of the lemma.
\end{proof}

Next,  we will prove that the statement of Lemma~\ref{lem:boundary_length_disconnection_chordal} still holds if we perform the analogous exploration using chordal $\SLE_6$ chunks on top of a quantum disk instead.  This is the content of the following lemma.
\begin{lemma}\label{lem:boundary_length_disconnection_chordal_unit_disk}
Fix $0<u_1<u<1/3$ and $\delta^{2/3 - u} \leq \ell \leq \delta^{-u_1}$ for $\delta \in (0,1)$.  Suppose that $\CD = (\D,h)$ is a sample from $\qdisk{\ell}$ and let $x \in \partial \D$ be chosen uniformly according to $\nu_h$.  Let also $y$ be the point which is antipodal to $x$ (with respect to quantum boundary length),  $z$ be the point on the clockwise arc of $\partial \D$ from $x$ to $y$ such that the boundary length of the clockwise arc of $\partial \D$ from $x$ to $z$ is equal to $\ell / 4$,  and let $w$ (resp.  $\wt{z}$) be the point on the clockwise (resp.  counterclockwise) arc of $\partial \D$ from $x$ to $y$ such that the boundary length of the clockwise (resp.  counterclockwise) arc of $\partial \D$ from $x$ to $w$ (resp.  from $x$ to $\wt{z}$) is equal to $3\ell/8$ (resp.  $\ell / 4$).  Suppose that we perform the exploration on top of $\CD$ in the same way that we did on top of the quantum wedge except that each chunk is formed by a chordal $\SLE_6$ starting from the marked point of the chunk and targeted at $y$.  We stop the exploration at the first time that we discover a chunk which contains $y$,  so that the corresponding chordal $\SLE_6$ curve hits $y$ before the chunk is formed.  Let $E_1$ be the event that the following hold.  
\begin{enumerate}[(i)]
\item There exists $1 \leq j \leq N/2$ such that the first $j$ number of chunks of the exploration disconnect from booth $y$ and $0$ the clockwise arc of $\partial \D$ from $x$ to $z$ without disconnecting from $y$ either the clockwise arc of $\partial \D$ from $x$ to $w$ or the counterclockwise arc of $\partial \D$ from $x$ to $\wt{z}$.  Also,  if $\wt{\CN}_0,\ldots,\wt{\CN}_j$ are the corresponding chunks of the exploration,  we have that the boundary of the connected component of $\D \setminus \cup_{i=0}^j \wt{\CN}_i$ containing $0$ intersects the boundaries of only good chunks.
\item $ \wt{\CN}_0$ is good and the top of $\partial \wt{\CN}_0$ is not disconnected either from $0$ or $y$ and the boundary length of the part of $\partial \wt{\CN}_0$ contained in the boundary of the connected component of $\D \setminus \cup_{i=0}^j \wt{\CN}_i$ containing $0$ is in $[\epsilon_0 \delta^{2/3},\ell \delta^{u/200}]$. 
\item For each $0 \leq i \leq N/2$,  we let $D_i$ be the connected component of $\D \setminus \cup_{m=0}^i \wt{\CN}_m$ whose boundary contains $y$.  Then,  we have that the boundary length of $\D \cap \partial D_i$ is at most $\ell / 100$ plus the boundary length of $\partial \D \setminus \partial D_i$. 
\item For each $0 \leq i \leq j$,  there exist $\{i_j\}_{j=1}^n \subseteq \Z \cap [0,j]$ and $1 \leq n \leq \log(\delta^{-1})^3$,  such that $i = i_1>i_2>\cdots>i_n$, $\partial \wt{\CN}_{i_n} \cap \partial \D$,  and $\partial \wt{\CN}_{i_{m+1}} \cap \partial \wt{\CN}_{i_m} \neq \emptyset$ for each $0 \leq m \leq n-1$.
\end{enumerate}
Then,  there exist $p_1,  \delta_0 \in (0,1)$ depending only on $A,c_0,u_1$ and $u$,  such that for each $\delta \in (0,\delta_0)$,  we have that $\p[E_1] \geq p_1$.
\end{lemma}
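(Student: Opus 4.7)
The plan is to transfer the estimate from Lemma~\ref{lem:boundary_length_disconnection_chordal} (chordal exploration on a quantum wedge) to the disk setting using the Radon--Nikodym identity provided by Lemma~\ref{lem:boundary_length_rn_disk}. After rescaling areas by $\delta^{-4/3}$ and boundary lengths by $\delta^{-2/3}$, the setup of Lemma~\ref{lem:boundary_length_disconnection_chordal_unit_disk} will match, up to the disk-versus-wedge distinction, the setup of Lemma~\ref{lem:boundary_length_disconnection_chordal}. Since $y$ is antipodal to $x$, the parameter $a$ of Lemma~\ref{lem:boundary_length_rn_disk} equals $1/2$, so under $\qdisk{\ell}$ the pair of boundary length processes $(L,R)$ along the chordal $\SLE_6$ from $x$ to $y$ is mutually absolutely continuous with respect to two independent $3/2$-stable L\'evy processes with only downward jumps, both starting from $\ell/2$, with Radon--Nikodym derivative $\bigl(\frac{L_\tau+R_\tau}{\ell}\bigr)^{-5/2}$ on $\{\tau<t_{\eta'}\}$.

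First I would reformulate the multi-chunk exploration on the disk as a single chordal $\SLE_6$ from $x$ to $y$ whose branching rule at each chunk boundary is adapted to $(L,R)$; this is justified by the locality of $\SLE_6$ together with the fact that the rule for choosing $\eta'_{j+1}(0)$ depends only on the filtration of boundary lengths and the marginal structure of the already-explored chunks, so the analog of Lemma~\ref{lem:boundary_length_disconnection_chordal} for this branching exploration on the L\'evy reference measure produces a wedge event $E$ satisfying $\mathbb{P}_{\mathrm{wedge}}[E]\geq 1-\wt q$. Under the coupling provided by Lemma~\ref{lem:boundary_length_rn_disk}, the disk event $E_1$ in the statement is then obtained from $E$ by replacing cumulative left/right boundary length changes by the increments of the tilted processes $(L,R)$. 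Condition (iv) of $E_1$ transfers directly from condition (iv) of Lemma~\ref{lem:boundary_length_disconnection_chordal}. Conditions (i), (ii) translate verbatim once the antipodal matching of $y$ with ``infinity'' of the wedge is set up, since the boundary length thresholds ($\ell/2$, $3\ell/4$, $\ell\delta^{u/200}$, $\epsilon_0\delta^{2/3}$) are preserved by the scaling. Condition (iii) will follow because $\partial D_i\cap \D$ minus $\partial D_i\cap\partial \D$ equals (modulo $\ell$-constant) the cumulative process $X_{t_i}$ that is controlled by condition (ii) of Lemma~\ref{lem:boundary_length_disconnection_chordal}.

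Next I would introduce an $\{\mathescr F^{2}_{t+}\}$-stopping time $\tau$ adapted to $(L,R)$ designed to capture the end of the relevant part of the exploration (whichever of the disconnection event or a failure mode occurs first, but no later than the natural time $2\ell\delta^{1/3-u/3}$). On the wedge event $E$, condition (ii) forces $\sup_{0\leq t\leq 2\ell\delta^{1/3-u/3}}|X_t|<\ell\delta^{u/200}$, which in the L\'evy reference coupling translates to $L_\tau+R_\tau\in\bigl(\ell(1-\delta^{u/200}),\ell(1+\delta^{u/200})\bigr)$. Consequently the Radon--Nikodym derivative $\bigl(\frac{L_\tau+R_\tau}{\ell}\bigr)^{-5/2}$ is bounded below by $(1+\delta^{u/200})^{-5/2}$, which is at least $1/2$ for $\delta$ small enough (depending only on $u$). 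Integrating against $\indicator_E$ then yields
\[
\mathbb{P}[E_1]\;\geq\;\tfrac{1}{2}\,\mathbb{P}_{\mathrm{wedge}}[E]\;\geq\;\tfrac{1}{2}(1-\wt q),
\]
with $\tfrac{1}{2}(1-\wt q)$ playing the role of $p_1$.

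The main obstacle will be the careful bookkeeping needed to match the wedge event $E$ with the disk event $E_1$: not just identifying the antipodal point $y$ with the ``infinity direction'' of the wedge, but also showing that the small-cumulative-change condition in $E$ implies each of the length comparisons encoded in $E_1$ (especially (iii), which compares the interior and exterior parts of $\partial D_i$). A secondary technical point will be verifying that Lemma~\ref{lem:boundary_length_rn_disk} can be applied at the relevant stopping time $\tau$, which requires $\tau<t_{\eta'}$ on the event under consideration; this follows since on $E$ the exploration stops strictly before $y$ is hit, because condition (i) guarantees the chunks containing $y$ are not among the first $\lfloor N/2\rfloor$.
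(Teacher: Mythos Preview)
Your Radon--Nikodym transfer from the wedge to the disk via Lemma~\ref{lem:boundary_length_rn_disk} is indeed the paper's first move, and your observation that on the wedge event $E$ the derivative $\bigl(\frac{L_\tau+R_\tau}{\ell}\bigr)^{-5/2}$ stays between universal constants is correct and matches the paper. However, the claim that conditions (i) and (ii) of $E_1$ ``translate verbatim'' is where the argument breaks down. Each of these conditions refers to the connected component containing the \emph{interior} point $0\in\D$, and there is no analog of this point in the wedge picture: matching $y$ with $\infty$ leaves $0$ unaccounted for. Under the chordal exploration toward $y$ it is not automatic that the $y$-containing component coincides with the $0$-containing component---the hull could swallow $0$---so the disconnection-from-$0$ and good-chunk conditions relative to the $0$-component require a separate argument that you have not supplied.

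The paper closes this gap as follows. After establishing the wedge-type event $\wt E$ on the disk via the Radon--Nikodym step (essentially your argument), it samples an auxiliary interior point $w$ from $\mu_h/\mu_h(\D)$ and re-embeds so that $w\mapsto 0$; this produces a sample from $\qdiskweighted{\ell}$. The quantum-area bound in part~(ii) of Lemma~\ref{lem:boundary_length_disconnection_chordal}---which you invoked for bookkeeping but did not actually use---guarantees that the total area swept out by the first $j_0$ chunks is at most of order $\ell^{3/2}\delta^{1/3-u/2+u/9}$, while $\mu_h(\D)\gtrsim\ell^{2}\delta^{u/18}$ with high probability, so the conditional probability that $w$ lands in the explored region is at most $\delta^{u/18}$. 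On the complement, $w$ lies in the $y$-component, hence after re-embedding the $y$-component and the $0$-component coincide and all the ``$0$'' clauses of $E_1$ follow from the corresponding ``$y$'' clauses of $\wt E$. Finally, since the target law is $\qdisk{\ell}$ rather than $\qdiskweighted{\ell}$, the paper applies H\"older's inequality against $\mu_h(\D)$ (whose moments under $\qdisk{\ell}$ are explicit from~\eqref{eqn:bdisk_area_density}) to pass back, at the cost of a fifth power in the probability bound. Without this interior-point-plus-area step your argument only yields the intermediate event $\wt E$, not $E_1$.
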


\begin{proof}
Suppose that we have the setup of Lemma~\ref{lem:boundary_length_disconnection_chordal}.  Let $\wt{E}$ be the event defined in the same way as $E$ but with $\ell / 2$ in place of $\ell$ and for the exploration with respect to the quantum disk $\CD$ instead and the marked points $0$ and $\infty$ replaced by $x$ and $y$ respectively.  Fix $j \in \N$.  Then Lemma~\ref{lem:boundary_length_rn_disk} implies that conditional on the event that the exploration in $\CD$ has not ended during the first $j-1$ steps and on the boundary length $\ell_{j-1}^{\text{L}}$ (resp.  $\ell_{j-1}^{\text{R}}$) of the clockwise (resp.  counterclockwise) arc of $\partial D_{j-1}$ from $\eta_j'(0)$ to $y$,  we have that the Radon-Nikodym derivative of the law of $\wt{\CN}_j$ with respect to the law of $\CN_j$ (when both viewed as quantum surfaces) is given by $\Bigl(\frac{L_{\sigma_j}^j + R_{\sigma_j}^j}{\ell_{j-1}^{\text{L}}+\ell_{j-1}^{\text{R}}} +1 \Bigr)^{-5/2} \indicator_{\{\sigma_j <S^j\}}$,  where $S^j$ is the first time $t$ such that either $L_t^j \leq -\ell_{j-1}^{\text{L}}$ or $R_t^j \leq -\ell_{j-1}^{\text{R}}$.  Let $j_0 \in \N$ be the smallest integer for which condition (i) of Lemma~\ref{lem:boundary_length_disconnection_chordal} is satisfied. Therefore,  the Radon-Nikodym derivative of the law of the quantum surfaces $(\wt{\CN}_0,\ldots,\wt{\CN}_{j_0})$ with respect to the law of the quantum surfaces $(\CN_0,\ldots,\CN_{j_0})$ is given by $\Bigl(\frac{\ell_{j_0}^{\text{L}}+\ell_{j_0}^{\text{R}}}{\ell}\Bigr)^{-5/2}$.  Note that $\ell_{j_0}^{\text{L}}+\ell_{j_0}^{\text{R}} < 2\ell$ and $\ell_{j_0}^{\text{L}}+ \ell_{j_0}^{\text{R}} > \ell / 2$ if $E$ occurs which implies that $\Bigl(\frac{\ell_{j_0}^{\text{L}}+\ell_{j_0}^{\text{R}}}{\ell}\Bigr)^{-5/2} \asymp 1$ with the implicit constants being universal.  Thus,  combining with Lemma~\ref{lem:boundary_length_disconnection_chordal}, there exist $\delta_0, \wt{p} \in (0,1)$ depending only on $A,c_0,u$ and $u_1$ such that $\p\big[ \wt{E} \big] \geq \wt{p}$ for each $\delta \in (0,\delta_0)$.

Next,  conditionally on $h$,  we sample $w \in \D$ independently according to the probability measure $\frac{\mu_h}{\mu_h(\D)}$ on $\D$,  and set $\wt{h}:=h \circ \phi^{-1} + Q \log |(\phi^{-1})'|$ where $\phi : \D \to \D$ is the conformal transformation such that $\phi(w) = 0$ and $\phi'(w) > 0$.  Then the marginal law of $(\D,\wt{h})$ is given by $\qdiskweighted{\ell}$.  Suppose that $\wt{E}$ occurs.  Then we have that the quantum area of $\cup_{i=0}^{j_0} \wt{\CN}_i$ with respect to $h$ is at most $\ell^{3/2} \delta^{1/3 - u/2 +u/9}$.  Also,  by scaling,  we have that the probability that $\mu_h(\D)$ is at least $\ell^2 \delta^{u/18}$ tends to $1$ as $\delta \to 0$,  at a rate which is uniform in $\delta$.  Hence,  by possibly taking $\wt{p},\delta_0 \in (0,1)$ to be smaller,  we can assume that the probability of $\wt{E} \cap \{\mu_h(\D) \geq \ell^2 \delta^{u/18} \}$ under $\qdisk{\ell}$ is at least $\wt{p}$.  Note that if $\wt{E} \cap \{\mu_h(\D) \geq \ell^2 \delta^{u/18}\}$ occurs,  then we have that conditional on $h$,  the probability that $w$ lies in $\cup_{i=0}^{j_0} \wt{\CN}_i$ is at most $\frac{\ell^{3/2} \delta^{1/3 - u/2+u/9}}{\ell^2 \delta^{u/18}} \leq \delta^{u/18}$. Therefore,  by possibly taking $\wt{p},\delta_0 \in (0,1)$ to be smaller,  we can assume that $\qdiskweighted{\ell}\big[\wt{E}_1\big] \geq \wt{p}$,  where $\wt{E}_1$ is the event defined in the same way as the event $E_1$ of Lemma~\ref{lem:boundary_length_disconnection_chordal} except that we consider the exploration with respect to $\wt{h}$ instead of $h$.  It follows that
\begin{align*}
\wt{p} \leq \qdiskweighted{\ell}\big[ \wt{E}_1\big] \leq \qdisk{\ell}\big[ E_1\big]^{1/5} \frac{(\int \mu_h(\D)^{5/4} d \qdisk{\ell} )^{4/5}}{\int \mu_h(\D) d\qdisk{\ell}}
\lesssim \qdisk{\ell}\big[E_1\big]^{1/5},
\end{align*}
where the implicit constant is universal.  This completes the proof of the lemma.
\end{proof}

Now that we have stated and proved Lemma~\ref{lem:boundary_length_disconnection_chordal_unit_disk},  we can state and prove the analogous version for the exploration using radial $\SLE_6$ chunks instead.
\begin{lemma}\label{lem:boundary_length_disconnection_radial_unit_disk}
Fix $0<u_1<u<1/3$ and $\delta^{2/3 - u} \leq \ell \leq \delta^{-u_1}$ for $\delta \in (0,1)$.  Suppose that $\CD = (\D,h,0)$ has law given by $\qdiskweighted{\ell}$.  Suppose also that we perform the exploration using radial $\SLE_6$ chunks as in the statement of Proposition~\ref{prop:good_chunks_percolate}.  Let $\wt{E}_1$ be the event defined in the same way as the event $E_1$ of Lemma~\ref{lem:boundary_length_disconnection_chordal_unit_disk} except that we consider the radial $\SLE_6$ exploration.  Then,  there exist $p_0,  \delta_0 \in (0,1)$ depending only on $A,c_0,u_1$ and $u$ such that $\p\big[ \wt{E}_1 \big] \geq p_0$ for each $\delta \in (0,\delta_0)$.
\end{lemma}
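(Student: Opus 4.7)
The plan is to transfer the positive-probability lower bound from Lemma~\ref{lem:boundary_length_disconnection_chordal_unit_disk} (chordal exploration on $\qdisk{\ell}$) to the radial exploration on $\qdiskweighted{\ell}$ via an absolute-continuity comparison based on Lemma~\ref{lem:boundary_length_rn_disk_weighted}, in direct analogy with the way in which that proof itself transferred the wedge-setting Lemma~\ref{lem:boundary_length_disconnection_chordal} to the disk setting by means of Lemma~\ref{lem:boundary_length_rn_disk}.

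The first step is to identify the law of the total quantum boundary length $\{M_t\}_{t \geq 0}$ of the target component $\CD_j$ throughout the radial exploration, parameterized by cumulative quantum natural time across all chunks. Using the description of the radial $\SLE_6$ boundary length on $\qdiskweighted{L}$ recalled in Subsection~\ref{subsec:sle_explorations} (via \cite[Proposition~6.4]{ms2015spheres}), together with the strong Markov property and the fact that the boundary length evolution is invariant under the choice of starting point for each new chunk, $\{M_t\}_{t\geq 0}$ evolves as the cadlag time-reversal of a $3/2$-stable L\'evy excursion, taken from its first passage time through the level $\ell$.

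The second and key step is to apply Lemma~\ref{lem:boundary_length_rn_disk_weighted} chunk-by-chunk: conditionally on the boundary length $M_{\tau_{j-1}}$ of $\CD_j$ at the start of chunk $j$ (where $\tau_{j} := \sigma_{0} + \cdots + \sigma_{j}$), the Radon-Nikodym derivative of the law of $\wt{\CN}_j$ (as a quantum surface equipped with its radial $\SLE_6$ decoration) with respect to the law of the analogous wedge-chunk $\CN_j$ from Proposition~\ref{prop:good_chunks_percolate_half_plane} is $(M_{\tau_j}/M_{\tau_{j-1}})^{-1/2}\indicator_{\{\sigma_j < S^j\}}$, where $S^j$ denotes the time at which the target would be exhausted during chunk $j$. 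Telescoping over $j = 0, \ldots, N$ gives a total Radon-Nikodym derivative of $(M_{\tau_N}/\ell)^{-1/2}$ between the first $N$ chunks of the radial exploration on $\qdiskweighted{\ell}$ and of a chordal exploration on a quantum wedge; composing with the chordal-on-disk-vs-wedge Radon-Nikodym derivative from Lemma~\ref{lem:boundary_length_rn_disk} (used in the proof of Lemma~\ref{lem:boundary_length_disconnection_chordal_unit_disk}) yields a comparison with the chordal exploration on $\qdisk{\ell}$. On the event $\wt{E}_1$, conditions~(i)--(iii) together force $M_{\tau_N} \in [c^{-1}\ell, c\ell]$ for a universal $c > 1$, so this Radon-Nikodym derivative is bounded above and below by positive universal constants. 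Combined with Lemma~\ref{lem:boundary_length_disconnection_chordal_unit_disk}, this yields $\p[\wt{E}_1] \geq p_0$ for some $p_0, \delta_0 \in (0,1)$ depending only on $A, c_0, u, u_1$.

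The main technical obstacle is justifying the per-chunk Radon-Nikodym computation of the second step: Lemma~\ref{lem:boundary_length_rn_disk_weighted} is stated purely for the boundary length process, whereas we need a Radon-Nikodym derivative for the joint law of the chunks as quantum surfaces with $\SLE_6$ decorations. As in the proof of Lemma~\ref{lem:boundary_length_disconnection_chordal_unit_disk}, this requires invoking the mating-of-trees description from \cite{dms2014mating, ms2015spheres}: conditionally on the boundary length process, the chunks are recovered from independent auxiliary randomness (encoding the disconnected bubbles as conditionally independent quantum disks of the appropriate boundary lengths), whose conditional law is the same in the radial and wedge settings. Once this structural identification is in place, the Radon-Nikodym derivative for the chunks coincides with that of the boundary length processes, and the rest of the argument proceeds exactly as in Lemma~\ref{lem:boundary_length_disconnection_chordal_unit_disk}.
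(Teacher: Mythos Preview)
Your approach is correct and essentially the same as the paper's: both combine Lemmas~\ref{lem:boundary_length_rn_disk_weighted} and~\ref{lem:boundary_length_rn_disk} to obtain a Radon--Nikodym derivative between the radial exploration on $\qdiskweighted{\ell}$ and the chordal exploration on $\qdisk{\ell}$ that is bounded on the good event, and then invoke Lemma~\ref{lem:boundary_length_disconnection_chordal_unit_disk}. The paper computes the per-chunk derivative directly as $(\ell_j/\ell_{j-1})^{2}$, telescoping to $(\wt\ell/\ell)^{2}\asymp 1$; your detour through the wedge gives the same thing once you divide $(M_{\tau_N}/\ell)^{-1/2}$ by $(M_{\tau_N}/\ell)^{-5/2}$.

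Where the paper is more explicit is precisely your technical obstacle. Lemma~\ref{lem:boundary_length_rn_disk_weighted} concerns only the \emph{single} boundary-length process $M$, whereas the chunk as a decorated quantum surface depends on the \emph{pair} $(L,R)$ recording on which side of the curve each cut-out bubble lies. The paper bridges this by (i) coupling each radial $\SLE_6$ with a chordal $\SLE_6$ targeted at the antipodal point $y$ via locality, so the two agree until $y$ is disconnected from $0$, and (ii) invoking \cite[Theorem~7.3]{ms2015spheres}: the radial exploration is measurably determined by $M$, the conditionally independent cut-out quantum disks, and i.i.d.\ boundary orientations, and these orientations let one reconstruct $(L,R)$ from $M$. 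Since in the wedge the left/right assignment of each downward jump of $L+R$ is likewise i.i.d.\ uniform (by independence of $L,R$), the chunk-level Radon--Nikodym derivative reduces to that of $M$ versus a single L\'evy process. Your mating-of-trees paragraph is pointing in this direction but stops short of naming the orientation mechanism explicitly.
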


\begin{proof}
First,  we note that the radial $\SLE_6$ curves used to construct the chunks $(\wh{\CN}_j)$ for the radial exploration can be coupled with chordal $\SLE_6$ curves starting from the same point so that they agree up until the first time that they disconnect $y$ from $0$.  Moreover,  let $X$ be the overall boundary length process for the radial exploration.  Then,  by \cite[Theorem~7.3]{ms2015spheres},  the exploration is determined by $X$,  the conditionally independent family of quantum disks which are cut out,  and the orientations of their boundaries.  We note that the boundaries are oriented i.i.d.\  with equal probability $\frac{1}{2}$ given $X$.  We can use these orientations in order to determine the boundary length process $L$ (resp.  $R$) from the tip of the exploration clockwise (resp.  counterclockwise) to $y$,  up until the first time that the radial exploration disconnects $y$ from $0$.  Indeed,  this is because $L$ (resp.  $R$) is equal to the process which is formed by applying the deterministic function which recovers a $3/2$-stable L\'evy process from its jumps to the downward jumps of $X$ which correspond to quantum disks whose boundaries have a counterclockwise (resp.  clockwise) direction.  This implies that the following is true.  Fix $j \in \N$ and suppose that we are working on the event that the first $j-1$ chunks of the exploration do not disconnect $y$ from $0$.  Let also $\ell_{j-1}$ be the boundary length of the component containing $0$ after $j-1$ steps of radial exploration.  Then,  it follows by combining Lemmas~\ref{lem:boundary_length_rn_disk_weighted} and~\ref{lem:boundary_length_rn_disk} that the Radon-Nikodym derivative of the law of $\wh{\CN}_j$ on the event that $\wh{\CN}_j$ does not disconnect $y$ from $0$ with respect to the law of $\wt{\CN}_j$ where $\wt{\CN}_j$ is as in Lemma~\ref{lem:boundary_length_disconnection_chordal_unit_disk},  is given by $\left(\frac{\ell_j}{\ell_{j-1}}\right)^2$,  where $\ell_j$ is the boundary length of the connected component of $\D \setminus \wt{\CN}_j$ containing $0$.  It follows that on the events $\wt{E}_1,E_1$,  if $j_0$ is as in Lemma~\ref{lem:boundary_length_disconnection_chordal_unit_disk},  then the Radon-Nikodym derivative of the law of $(\wh{\CN}_0,\ldots,\wh{\CN}_{j_0})$ with respect to the law of $(\wt{\CN}_0,\ldots,\wt{\CN}_{j_0})$ is given by $\left(\frac{\wt{\ell}}{\ell}\right)^2$,  where $\wt{\ell}$ is the boundary length of the component of $\D \setminus \cup_{i=0}^{j_0} \wt{\CN}_i$ containing $0$.  Since $\frac{\wt{\ell}}{\ell} \asymp 1$ on $E_1$ with the implicit constants being universal,  the proof of the lemma is complete.
\end{proof}

Now we are ready to prove Proposition~\ref{prop:good_chunks_percolate}.  The main idea of the proof is to apply Lemma~\ref{lem:boundary_length_disconnection_radial_unit_disk} iteratively up until we find the desired chain of good chunks as in the statement of Proposition~\ref{prop:good_chunks_percolate}.   The conditions in the definition of the event $\wt{E}_1$ in Lemma~\ref{lem:boundary_length_disconnection_radial_unit_disk} will guarantee that it is possible to construct the desired chain with high probability since the probability of $\wt{E}_1$ is bounded from below by a constant which is uniform in $\delta$.

Now,  we proceed to the details of the proof.  Suppose that we have the setup of the statement of Proposition~\ref{prop:good_chunks_percolate}. Fix $u_1 \in (0,u/3)$ and let $\wt{E}_{u_1,\delta}$ be the event that for each $j \in \Z \cap [0,\delta^{-2/3 - u}]$ and each $t \in [0,\sigma_j]$,  the boundary length of the $0$-containing connected component of $D_j \setminus \eta_j'([0,t])$ is at most $\delta^{-u_1}$.  We define sequences of marked points $\{\wt{x}_j\}_{j \geq 0},  \{\wt{y}_j\}_{j \geq 0}, \{\wt{z}_j\}_{j \geq 0}$ and domains $\{\wt{D}_j\}_{j \geq 0}$ as follows.  First,  we pick $x \in \partial \D$ uniformly according to the boundary length measure and let $y$ be the point on $\partial \D$ which is antipodal to $x$ with respect to the boundary length measure..  Let also $z$ be the point on the clockwise arc of $\partial \D$ from $x$ to $y$ such that the clockwise arc of $\partial \D$ from $x$ to $z$ has boundary length equal to $1/4$. Then we set $\wt{x}_0 = x,\wt{y}_0 = y$ and $\wt{z}_0=z$.  We also set $\wt{D}_0 = \D = D_0$.  Fix $j \in \N$ and suppose that we have defined marked points $\{(\wt{x}_i,\wt{y}_i,\wt{z}_i)\}_{i=0}^j$ and domains $\{\wt{D}_i\}_{i=0}^j$.  Let $\ell_j$ be the boundary length of $\partial \wt{D}_j$ and set $N_j = \lfloor \ell_j \delta^{-2/3 -u/3}\rfloor +1$.  Let also $F_j$ be the event that the event $\wt{E}_1$ defined in Lemma~\ref{lem:boundary_length_disconnection_radial_unit_disk} occurs for the quantum surface parameterized by $\wt{D}_j$.  We also let $\wt{w}_j$ be the point on the clockwise arc of $\partial \wt{D}_j$ from $\wt{x}_j$ to $\wt{y}_j$ with boundary length distance on $\partial \wt{D}_j$ from $\wt{x}_j$ equal to $3\ell_j / 8$.  Suppose that $F_j$ occurs.  Then,  we let $j_0 \in \N_0$ be the first $0 \leq m \leq N_j$ for which we can find the desired sequence of good chunks in the definition of $F_j$ during the first $m$ steps of the exploration and starting after the last chunk discovered during the formation of $\partial \wt{D}_j$.  Then,  we let $\wt{x}_{j+1}$ be the marked point of the exploration in $\wt{D}_j$ after we have performed it for $j_0$ times.  Suppose that $F_j$ does not occur. Then,  at least one of the following has to occur for the exploration in $\wt{D}_j$ before $\wt{w}_j$ is disconnected from $0$ for the first time.
\begin{enumerate}[(i)]
\item\label{it:b1} There is a chunk which cannot be connected to $\partial \wt{D}_j$ by at most $\log(\delta^{-1})^3$ number of chunks discovered before that chunk.
\item\label{it:b2} $\wt{z}_j$ is not disconnected from $0$ by good chunks.
\item\label{it:b3} The first chunk is not good.
\item\label{it:b4} The first chunk is good but it is disconnected from $0$ before $\wt{z}_j$ is disconnected from $0$.
\item\label{it:b5} $N_j$ number of chunks have been explored.
\item\label{it:b6} $\wt{z}_j$ and $\wt{w}_j$ are disconnected from $0$ simultaneously.
\end{enumerate}
If~\eqref{it:b1} occurs,  we let $\wt{x}_{j+1}$ be the marked point of the exploration after we discover the first chunk which cannot be connected to $\partial \D$ by at most $\log(\delta^{-1})^3$ number of chunks discovered before that chunk.  If either~\eqref{it:b2} or~\eqref{it:b6} occurs,  we let $\wt{x}_{j+1}$ be the marked point of the exploration after we discover the first chunk $\CN$ which disconnects $\wt{z}_j$ from $0$.  If~\eqref{it:b3} occurs,  we let $\wt{x}_{j+1}$ be the marked point of the exploration after we discover the first chunk of the exploration in $\wt{D}_j$.  If~\eqref{it:b4} occurs,  we let $\wt{x}_{j+1}$ be the marked point of the exploration after we discover the first chunk which disconnects from $0$ the first chunk of the exploration in $\wt{D}_j$.  Finally,  if~\eqref{it:b5} occurs,  we let $\wt{x}_{j+1}$ be the marked point of the exploration in $\wt{D}_j$ after $N_j$ number of steps.  In any case,  we let $\wt{D}_{j+1}$ be the connected component containing $0$ in the exploration in $\wt{D}_j$ whose boundary contains $\wt{x}_{j+1}$.  Also,  we let $\wt{y}_{j+1}$ be the point on $\partial \wt{D}_{j+1}$ with boundary length distance from $\wt{x}_{j+1}$ in $\partial \wt{D}_{j+1}$ equal to $\ell_{j+1} / 2$,  where $\ell_{j+1}$ is the boundary length of $\partial \wt{D}_{j+1}$.  Moreover,  we let $\wt{z}_{j+1}$ (resp.  $\wt{w}_{j+1}$) be the point on the clockwise arc of $\partial \wt{D}_{j+1}$ from $\wt{x}_{j+1}$ to $\wt{y}_{j+1}$ with boundary length distance from $\wt{x}_{j+1}$ equal to $\ell_{j+1} / 4$ (resp.  $3\ell_{j+1} / 8$),  and set $N_{j+1} := \lfloor \ell_{j+1} \delta^{-2/3 - u/3} \rfloor + 1$.  We then let $F_{j+1}$ be the event that the event $\wt{E}_1$ defined in Lemma~\ref{lem:boundary_length_disconnection_radial_unit_disk} occurs for the quantum surface parameterized by $\wt{D}_{j+1}$.

\begin{proof}[Proof of Proposition~\ref{prop:good_chunks_percolate}]
Suppose that we have the setup described in the above paragraphs.  For each $j \in \N_0$,  we let $G_j^1$ be the event that the boundary length of $\wt{D}_j$ is at least $\delta^{-2/3 - u}$ and $G_j^2$ the event that the boundary length of $\partial \wt{D}_j$ is at most $\delta^{-u_1}$.  Let also $\CF_j$ be the $\sigma$-algebra generated by the chunks discovered up until $\partial \wt{D}_j$ is formed.  Lemma~\ref{lem:boundary_length_disconnection_radial_unit_disk} implies that there exists $p_0 \in (0,1)$ depending only on $A,c_0,u$ and $u_1$ such that $\p\big[ F_j \giv \CF_j \big] \indicator_{G_j} \geq p_0 \indicator_{G_j}$ for each $j \in \N$ a.s.,  where $G_j = G_j^1 \cap G_j^2$.  Thus,  by iterating and possibly taking $p_0 \in (0,1)$ to be smaller (depending only on $A,c_0,u$ and $u_1$),  we can assume that $\p\big[ \wt{F}_j \giv \CF_j \big] \indicator_{G_j} \geq p_0 \indicator_{G_j}$ for each $j \in \N$ a.s.,  where $\wt{F}_j = \cup_{i=j}^{j+4} F_i$.  Suppose that $\wt{F}_j \cap E_{u,\delta} \cap \wt{E}_{u_1,\delta}$ occurs for some $0 \leq j \leq \delta^{-u/3}$.  Then,  we will show that the required events in the statement of Proposition~\ref{prop:good_chunks_percolate} occur as well.  Indeed,  first we note that it is easy to see that the exploration disconnects $\wt{x}_j$ from $0$ using only good chunks.  Also,  since $F_j$ occurs,  if $\CN$ is the first chunk discovered in the exploration in $\wt{D}_j$,  then the boundary length of the part of $\CN$ contained in $\wt{D}_j$ is in $[\epsilon_0 \delta^{2/3} ,  \ell_j \delta^{u/200}]$. Thus,  possibly by taking $p_0 \in (0,1)$ to be smaller (depending only on $A,c_0,u$ and $u_0$),  we can assume that the aforementioned part of $\partial \CN$ is disconnected from $0$ by the exploration in $\wt{D}_j$.  It follows that both of the conditions in the definition of $N_{\delta}$ hold and that $N_{\delta} \leq 5(1+\delta^{-2/3 - u_0 -u/3}) \leq \delta^{-2/3-u}$ for all $\delta \in (0,1)$ sufficiently small (depending only on $A,c_0,u$ and $u_1$).  Furthermore,  every chunk discovered in the exploration in $\wt{D}_j$ up until $\partial \wt{D}_{j+1}$ is formed can be connected to $\partial \wt{D}_j$ using at most $\log(\delta^{-1})^3$ number of chunks.  Since the definition of the $\wt{D}_i$'s implies that for each $i$,  we have that every chunk discovered in the exploration in $\wt{D}_i$ up until $\partial \wt{D}_{i+1}$ is formed can be connected to $\partial \wt{D}_i$ using at most $2 \log(\delta^{-1})^3$ number of chunks,  we obtain that every chunk discovered in up until $\partial \wt{D}_{i+1}$ is formed can be connected to $\partial \D$ using at most $10\delta^{-u/3} \log(\delta^{-1})^3$ number of chunks and the latter is at most $\delta^{-u}$ for $\delta \in (0,1)$ sufficiently small (depending only on $A,c_0,u_1$ and $u$). Therefore,  the events in~\eqref{it:good_chunks_percolate_round_number} and~\eqref{it:good_chunks_percolate_dist_boundary} occur.

Finally,  to complete the proof,  we $\wt{F}$ be the event that $F_{5j}$ occurs for some $0 \leq j \leq \delta^{-u/3} / 5 -1$.  It follows from the previous paragraph that there exist constants $c_1,c_2>0$ depending only on $p_0$ such that $\p\big[ \wt{F}^c \cap E_{u,\delta} \cap \wt{E}_{u_1,\delta} \big] \leq c_1 \exp(-c_2 \delta^{-u/3})$.  Therefore,  it suffices to give appropriate upper bounds for $\p\big[ \wt{E}_{u_1,\delta}^c\big]$.  But the latter follows by combining Lemma~\ref{lem:boundary_length_rn_disk_weighted} with Proposition~\ref{prop:stable-reflected-exp-moment}.  In particular,  we obtain that there exist universal constants $c_3,c_4>0$ such that $\p\big[ \wt{E}_{u_1,\delta}^c \big] \leq c_3 \exp(-c_4 \delta^{-u_1})$.  This completes the proof.
\end{proof}

\section{Quenched bounds for the expected exit time from a metric ball} \label{sec:exit-time-bounds}

In this section we will  prove one main ingredient which is used to prove the upper bound in the heat kernel estimate, namely  quenched upper and lower bounds for the exit time of a Liouville Brownian motion from a ball.  For a set $A \subseteq \CS$, we let $\tau_A$ be the exit time of the Liouville Brownian motion from $A$.  Let $\qsphereinflaw$ denote the law of the infinite quantum sphere.

\begin{theorem}
\label{thm:bm_exit}
There exists a deterministic constant $\kappa > 0$ so that the following is true.  For $\qsphereinflaw$-a.e.\ instance $(\CS,h,x,y)$ there exists $r_0 > 0$ random such that for every $z \in \CS$ and $r \in (0,r_0)$ we have that
\begin{equation}
\label{eqn:lbm_exit_time}
 r^4 (\log r^{-1})^{-\kappa} \leq E_z[ \tau_{\qball{h}{z}{r}} ] \leq r^4 (\log r^{-1})^{\kappa},
\end{equation}
where the expectation is over just the Brownian motion and the Brownian map instance is fixed.
\end{theorem}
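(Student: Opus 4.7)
The starting point is the standard time-change identity
\[
E_z\bigl[\tau_{\qball{h}{z}{r}}\bigr] = \int_{\qball{h}{z}{r}} g_{\qball{h}{z}{r}}(z,y) \, d\qmeasure{h}(y),
\]
which holds because the LBM is a time change of the spherical Brownian motion via a PCAF with Revuz measure $\qmeasure{h}$, so the two processes share the same Green functions for bounded open sets. By the two-sided H\"older continuity of $\qdistnoarg{h}$ with respect to the spherical metric \cite[Theorem~1.2]{ms2016qle2}, there exist deterministic exponents $0 < \alpha_{1} < \alpha_{2}$ and a random $r_{1} > 0$ such that the inclusions $\qball{h}{z}{r} \subseteq B_{\mathrm{S}}(z, r^{1/\alpha_{2}})$ and $B_{\mathrm{S}}(z, r^{1/\alpha_{1}}) \subseteq \qball{h}{z}{r}$ hold simultaneously for all $z \in \CS$ and $r \in (0, r_{1})$ (up to multiplicative constants). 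The first inclusion and monotonicity of Green functions together with the explicit bound $g_{B_{\mathrm{S}}(z,R)}(z,y) \leq C \log(R/|z-y|) + O(1)$ for a small spherical ball reduce the upper bound to controlling the integral $\int_{\qball{h}{z}{r}} \log|z-y|^{-1} \, d\qmeasure{h}(y)$. I would decompose $\qball{h}{z}{r}$ dyadically \emph{in the $\sqrt{8/3}$-LQG metric} into annuli $A_{k} = \qball{h}{z}{2^{-k}} \setminus \qball{h}{z}{2^{-k-1}}$ for $k \geq \log_{2} r^{-1}$; on $A_{k}$ the reverse H\"older inclusion gives the pointwise lower bound $|z-y| \geq c\, 2^{-(k+1)/\alpha_{1}}$, hence $\log|z-y|^{-1} \lesssim k$, while Theorem~\ref{thm:ball_concentration} gives $\qmeasure{h}(A_{k}) \leq C 2^{-4k}k^{\kappa}$. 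Summing in $k$ produces the clean bound $r^{4}(\log r^{-1})^{\kappa+1}$, which combined with a similar estimate on the first term yields the upper half of~\eqref{eqn:lbm_exit_time} after slightly enlarging $\kappa$.

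\textbf{Lower bound.} The lower bound is obtained via Proposition~\ref{prop:good_chunks_percolate}. I fix a scale $\delta$ chosen to be a small polylogarithmic perturbation of $r^{3}$, so that an $\SLE_{6}$ chunk of quantum natural time of order $\delta$ typically has boundary length $\sim \delta^{2/3}$ and $\sqrt{8/3}$-LQG diameter just below $r$. I then declare a chunk $\CN \in \MCPU{2}$ to be \emph{good} if, under the LBM started from $\eta'(0)$, one has $P[\tau_{\CN} \geq c \delta^{4/3}] \geq p_{0}$ for universal constants $c, p_{0} > 0$; this is a measurable property of the (curve-decorated) quantum surface and it reflects the natural scaling, since the LBM takes time $\sim \delta^{4/3}$ to exit an object of LQG diameter $\delta^{1/3}$. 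Granted that good chunks have probability at least $1 - \epsilon_{0}$ under $\qwedge{2}[\,\cdot \mid \sigma < \delta/A\,]$ (see the obstacle below), Proposition~\ref{prop:good_chunks_percolate}, applied to (a suitable embedding as a weighted quantum disk of) $\qball{h}{z}{2r}$, produces with very high probability a chain of good chunks which separates $z$ from $\partial \qball{h}{z}{r}$. The LBM started from $z$ must therefore cross at least one good chunk in the chain before exiting $\qball{h}{z}{r}$, and by definition of \emph{good} it spends at least time $c\delta^{4/3}$ inside that chunk with conditional probability $\geq p_{0}$, yielding
\[
E_z\bigl[\tau_{\qball{h}{z}{r}}\bigr] \geq c p_{0} \delta^{4/3} \gtrsim r^{4}(\log r^{-1})^{-\kappa}
\]
for a suitable $\kappa$. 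Uniformity in $z \in \CS$ is obtained by running this argument at a polynomially dense net of points and using H\"older continuity to extend to all $z$, the quantitative failure probability in Proposition~\ref{prop:good_chunks_percolate} being more than enough for a union bound.

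\textbf{Main obstacle.} The delicate step is verifying the probability hypothesis that good chunks have probability $\geq 1 - \epsilon_{0}$ under $\qwedge{2}[\,\cdot \mid \sigma < \delta/A\,]$. Since \emph{good} is an LBM-based property, a direct verification would require a quenched first-moment-type control of LBM exit times \emph{uniform} over the (very broad) class of quantum surfaces that appear as $\SLE_{6}$ chunks, which is itself of a similar nature to the result we are trying to prove. My plan is to replace \emph{good} by a stronger but purely \emph{geometric} condition on the chunk which implies it and which can be shown to hold with probability $\geq 1 - \epsilon_{0}$ via moment estimates and the scaling of $\SLE_{6}$: a natural candidate is that $\CN$ contain a quantum subball of area $\gtrsim \delta^{4/3}$ centered at $\sqrt{8/3}$-LQG distance $\gtrsim \delta^{1/3}$ from $\partial \CN$. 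The LBM exit-time lower bound inside such a chunk then follows from the Green function representation $E_{w}[\tau_{\CN}] \geq \int g_{\CN}(w, \cdot) \, d\qmeasure{h|_{\CN}}$ combined with a local Green function estimate in a conformal uniformization of $\CN$, in a manner analogous to (but more delicate than) the upper bound argument above.
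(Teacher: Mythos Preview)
Your upper bound is essentially the paper's argument and is correct.

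For the lower bound, your broad strategy---define a purely geometric ``good'' event for chunks, apply Proposition~\ref{prop:good_chunks_percolate} to obtain a percolating annulus $\CA$ of good chunks separating $z$ from $\partial\qball{h}{z}{r}$, and deduce the exit-time lower bound from the LBM having to cross $\CA$---is exactly what the paper does. You have also correctly identified that an LBM-based definition of ``good'' would be circular and that one must use a geometric surrogate.

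However, there is a genuine gap in your crossing argument. You assert that the LBM, upon crossing a good chunk $\CN$, spends time $\gtrsim\delta^{4/3}$ there with uniformly positive probability; but nothing in your proposed good event (``$\CN$ contains a quantum subball of area $\gtrsim\delta^{4/3}$ at LQG distance $\gtrsim\delta^{1/3}$ from $\partial\CN$'') forces the LBM to come anywhere near that subball. The LBM could cross $\CN$ while staying in a thin neighborhood of $\partial\CN$. Your final sentence presupposes the LBM starts from a point well inside $\CN$, which is precisely what requires proof. The paper closes this gap with a substantially more elaborate good event (six conditions, Subsection~\ref{subsec:event_def}) together with two further ingredients that you are missing entirely: (a) the equivalence between the hull of planar Brownian motion and that of whole-plane $\SLE_6$, combined with a quantitative ``$\SLE_6$ is not too skinny'' lemma (Lemma~\ref{lem:sle_6_cannot_be_skinny}) showing that the hull disconnects $\gtrsim r^{4}(\log r^{-1})^{-q}$ units of quantum area while crossing $\CA$; and (b) an upper bound on the quantum area of the $\epsilon$-neighborhoods of the boundaries of \emph{all} chunks in $\CA$ (condition~\eqref{it:ngbd_vollume_ubd}), which forces the Brownian motion out of those neighborhoods and into the interior of some chunk. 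Only then does a conformal-regularity condition (condition~\eqref{it:mass_bound}: LQG-distance $r$ from $\partial\CN$ implies Euclidean distance $\geq r^{M}$ from $\partial\D$ in the uniformization) allow a Green function lower bound of the type you sketch.

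A secondary issue: Proposition~\ref{prop:good_chunks_percolate} is applied not to $\qball{h}{z}{2r}$ (which is not a quantum disk) but to the complement of a filled metric ball $\CS\setminus\qfb{h}{z_j}{z_i}{\tau}$, which is a weighted quantum disk by the Markov property of the metric exploration; see the stopping-time construction in the proof of Lemma~\ref{lem:filled_metric_ball_exit_time}.
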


We note that~\eqref{eqn:lbm_exit_time} is equivalent to proving that
\begin{equation}
\label{eqn:lbm_exit_time_green}
r^4 (\log r^{-1})^{-\kappa} \leq \int_{\mathcal{S}} G_{\qball{h}{z}{r}}(z,y) \, d\qmeasure{h}(y) \leq r^4 (\log r^{-1})^{\kappa}
\end{equation}
where $G_{\qball{h}{z}{r}}$ is the Green's function on the ball $\qball{h}{z}{r}$.  To establish~\eqref{eqn:lbm_exit_time} we will in fact establish~\eqref{eqn:lbm_exit_time_green}. 
Also,  we note that \cite[Theorem~1.2]{ms2016qle2} implies that there exists a deterministic constant $\alpha \in (0,1)$ such that $\qsphereinflaw$-a.e.\  there exists a random constant $C\geq 1$ such that for all $u,v \in \CS$ we have that
\begin{equation}
\label{eq:comp_metric}
C^{-1} d(u,v)^{1/\alpha} \leq \qdist{h}{u}{v} \leq C d(u,v)^\alpha,
\end{equation}
where we recall that $d$ denotes the Euclidean metric on $\s^2$ and we assume that $(\mathcal{S},x,y)$ is parameterized by $\s^2$.

The proof of the upper bound in Theorem~\ref{thm:bm_exit} is straightforward and short and given in Subsection~\ref{subsec:exit_time_ubd}.  The proof of the lower bound is much more involved and given in the remainder of this section; see Figure~\ref{fig:exit_time_proof_illustration} for an illustration of the proof.

\subsection{Proof of the exit time upper bound}
\label{subsec:exit_time_ubd}

The upper bound in Theorem~\ref{thm:bm_exit} follows from~\eqref{eq:comp_metric} and the upper volume growth estimate in Theorem~\ref{thm:ball_concentration} by the following argument.

\begin{proof}[Proof of Theorem~\ref{thm:bm_exit}, upper bound]
Let $B=B_h(z,r)$ be fixed. By~\eqref{eq:comp_metric} we have $B \subset B\big(z, c_1 r^\alpha\big)$ for some finite random constant $c_1$.   Therefore,
\begin{align*}
E_{z}\big[ \tau_{B} \big] \; = \; \int_B G_B(z,u) \, d\qmeasure{h}(u)
\; \leq \;
 \int_B G_{B(z,2 c_1 r^\alpha)}(z,u) \,d\qmeasure{h}(u). 
\end{align*}
On the other hand, recall that by~\eqref{eq:killed_green}
\begin{align*}
G_{B(0,  1/2)}(u,v) \; = \; \frac 1 \pi \log \frac{1}{d(u,v)}+ F(u,v)
\end{align*}
for some continuous function $F: B(0,1/2) \times B(0,1/2) \rightarrow \mathbb{R}$. Hence, for all $u\in B$,
\begin{align*}
  G_{B(z,2c_1r^\alpha)}(z,u)
 &\; = \; 
  G_{B(0,2 c_1 r^\alpha)}(0,u-z)
 \;=\;
   G_{B(0,1/2)}\big(0,\tfrac{u-z}{4 c_1r ^\alpha}\big) \\
   &\; = \; 
   \frac 1 \pi \log \frac{4 c_1 r^\alpha}{d(z,u)}+ 
   F\big(0,\tfrac{u-z}{4 c_1 r^\alpha}\big) \\
   &\; \leq \;  
      \frac 1 \pi \log \frac{ c r^\alpha}{ d_h(z,u)^{1/\alpha}}+ \sup_{B(0,1/4 )} F(0,\cdot) 
   \;  = \; 
   \frac  1 \pi \log\!\big( r^\alpha d_h(z,u)^{-1/\alpha} \big) + \wt{c},
\end{align*}
for some random constants $c,\wt{c}$, where we used the second inequality in~\eqref{eq:comp_metric} in the fourth step. For abbreviation we introduce the sets $A_n:= B_h(z,2^{-n}r)\setminus B_h(z,2^{-n-1}r)$, $n\geq 0$. Then the upper estimate on the volume growth in Theorem~\ref{thm:ball_concentration} gives that there exists a deterministic constant $\kappa>0$ such that $\qsphereinflaw$-a.e.\  there exists random $r_0>0$ such that for each $z \in \CS,  r \in (0,r_0)$,  we have that
\begin{align*}
\qmeasure{h}(A_n)  \leq  \big(2^{-n}r \big)^4 \big( \log\big(2^{n}/r \big)\big)^{\kappa}  \leq   2^{-4n}  r^4  \big(n \log 2 + \log(r^{-1}) \big)^{\kappa}.
\end{align*}
By combining the above estimates we obtain that
\begin{align*}
E_{z}\big[ \tau_{B} \big]  & \; \leq \; \frac 1  \pi \int_B \log\!\big( r^\alpha d_h(z,u)^{-1/\alpha} \big) \, d\qmeasure{h}(u) + \wt{c} \, \qmeasure{h}(B) \\
& \; \leq \; 
\frac 1  \pi \sum_{n=0}^\infty \int_{A_n\cap B} \log\!\big( r^\alpha d_h(z,u)^{-1/\alpha} \big) \,d\qmeasure{h}(u) + \wt{c} \, \qmeasure{h}(B)
\\
& \; \leq \; 
\frac 1  \pi \sum_{n=0}^\infty \log\!\big( r^\alpha (2^{-n-1}r)^{-1/\alpha} \big) \, \qmeasure{h}(A_n\cap B) + \wt{c} \,\qmeasure{h}(B) \\
& \; \leq \;
  \frac {1-\alpha^2}{\pi \alpha} \log(r^{-1}) \, \qmeasure{h}(B)  + \wt{c}\, \qmeasure{h}(B)  + \frac {\log 2}{\alpha\pi}\sum_{n=0}^\infty (n+1) \,\qmeasure{h}(A_n) 
  \\
& \; \leq \;  \wh{c} \, \log(r^{-1}) \,  \qmeasure{h}(B) +
\wh{c} \,r^4 \, \log(r^{-1})^\kappa,
\end{align*}
for some random constant $\wh{c}$ and the claim follows from the upper estimate in Theorem~\ref{thm:ball_concentration}.
\end{proof}

\subsection{Definition of the good event}
\label{subsec:event_def}

\begin{figure}
\includegraphics[scale=0.85]{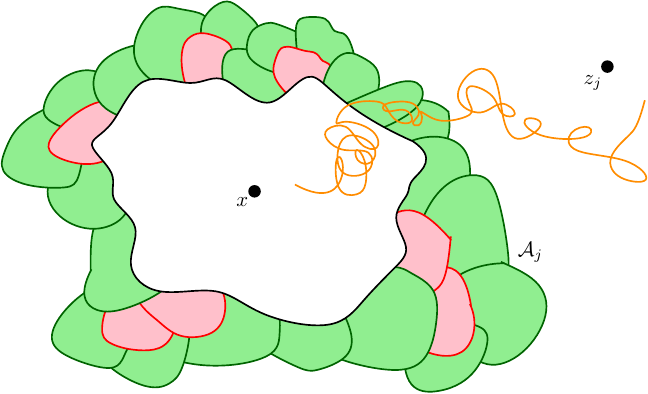}
\caption{\label{fig:exit_time_proof_illustration} Illustration of the main step in the proof of the exit time lower bound from Theorem~\ref{thm:bm_exit}.  We will construct annuli consisting of ``good'' $\SLE_6$ chunks (green) so that whenever a Liouville Brownian motion passes through such an annulus it is likely to take at least a certain amount of time to do so.  The definition of the good chunk (Subsection~\ref{subsec:event_def}) includes a lower bound on the amount of quantum mass that it has disconnected away from its boundary.  We will control the size of the good annuli in Subsection~\ref{subsec:good_annulus_size_bounds}.  The purpose of Subsection~\ref{subsec:sle6_not_too_skinng} is to show that whenever a Liouville Brownian motion passes through such an annulus, it is very likely to enter the interior of such a good chunk and hence take a certain amount of time to pass through.  The proof is completed in Subsection~\ref{subsec:lbd_proof}, where we show that these good annuli are likely to occur all over our quantum sphere.}
\end{figure}

Throughout this section, we will make use of the notation introduced in Subsections~\ref{ssec:SLE6hull-MCPU} and~\ref{ssec:statement-percolation-exploration} some of which we now recall. Suppose that we have a quantum surface $\CD$ which is homeomorphic to $\D$.
For $x,y \in \partial \CD$, we let $[x,y]_{\partial \CD}^\cw$ (resp.\ $[x,y]_{\partial \CD}^\ccw$) denote the clockwise (resp.\ counterclockwise) arc of $\partial \CD$ from $x$ to $y$.

We are now going to make a particular choice of the event $E$ in the context of Proposition~\ref{prop:good_chunks_percolate}, where $E$ is considered as a Borel subset of $\MCPU{2}$.  Fix $\delta \in (0,1)$ and $A\geq A_0$ with $A_0\in[2,\infty)$ as in Proposition~\ref{prop:good_chunks_percolate}. 
Suppose that we consider one of the following two quantum surfaces. Either we let $\CD = (\D,h,0)$ have law $\qdiskweighted{\ell}$ and $\eta'$ be an independent radial $\SLE_6$ starting from a uniformly random point on the boundary and targeted at $0$, or we let
$\CW = (\h,h,0,\infty)$ have law $\qwedge{2}$ and $\eta'$ be an independent chordal $\SLE_6$  on $\h$ from $0$ to $\infty$. Recall the definition of $\sigma_{\delta/A}$ in~\eqref{eq:sigma_delta_A},
and set $\sigma = \delta \wedge \sigma_{\delta/A}$.  Let $\CN=\CN_\sigma^\CD$ or $\CN=\CN_\sigma^\CW$, respectively, be the quantum surface disconnected from $0$ by $\eta'([0,\sigma])$, namely that parameterized by the interior of the hull $K_{\sigma}$ of $\eta'([0,\sigma])$.
Recall that the top (resp.\ bottom) of $\partial \CN$ is given by $\partial \CN \cap \CD$ (resp.\ $\partial \CN \cap \partial \CD$).
The left (resp.\ right) side of the top is the part of the top which is to the left (resp.\ right) of $\eta'(\sigma)$.  Similarly, the left (resp.\ right) side of the bottom is the part of the bottom which is to the left (resp.\ right) of $\eta'(0)$.
Fix $M \geq 1$ and $u, p > 0$. Conditioned on $\sigma = \sigma_{\delta/A}$ (which implies that $\CN$ is simply connected and either the top left or the top right of $\CN$ has zero length),  let $E$ be the event that  the following additional properties hold.
\begin{enumerate}[(I)]
\item\label{it:boundary_length} The length of the top, bottom left, and bottom right of $\CN$ are all at least $\delta^{2/3}/M$.
\item\label{it:diameter_bound} The $d_{h|_\CN}$-diameter of $\CN$ is at most $M \delta^{1/3}$.
\item\label{it:mass_bound} Let $\varphi \colon \CN \to \D$ be the unique conformal transformation which takes the bottom left point to $-1$, the bottom middle point $\eta'(0)$ to $-i$, and the bottom right point to $1$ and consider the embedding of $\CN$ into $\D$ induced by $\varphi$.  Then the quantum mass assigned to $\eball{0}{1/2}$ is at least $\delta^{4/3} / M$.  Also, for each $r \in (0,M^{-1})$, every point with quantum metric distance at least $\delta^{1/3} r$ from~$\partial \D$ (with respect to the field $h|_{\CN} \circ \varphi^{-1} + Q \log |(\varphi^{-1})'|$) has Euclidean distance at least $r^{M}$ from~$\partial \D$.
\item\label{it:reverse_holder_continuity} For every $\epsilon \in (0,\delta^{2/3}/M)$ and points $x,y \in \partial \CN$ such that both $[x,y]_{\partial \CN}^\cw$ and $[x,y]_{\partial \CN}^\ccw$ have boundary length at least $\epsilon$ we have that their $\qdistnoarg{h}$-distance in $\CN$ is at least $\epsilon^{M}$.  If either $[x,y]_{\partial \CN}^\cw$ or $[x,y]_{\partial \CN}^\ccw$ has boundary length at most $\epsilon$ then their $\qdistnoarg{h}$-distance in $\CN$ is at most $\epsilon^{1/M}$.
\item\label{it:boundary_volume_lbd} For every $\epsilon \in (0,\delta^{1/3}/M)$ and $x,y \in \partial \CN$, the $\epsilon$-neighborhood (with respect to $\qdistnoarg{h}$ in $\CN$) of $[x,y]_{\partial \CN}^\ccw$ has quantum mass at least $\epsilon^{2+u}/M$ times the length of $[x,y]_{\partial \CN}^\ccw$.  The same is also true with $[x,y]_{\partial \CN}^\cw$ in place of $[x,y]_{\partial \CN}^\ccw$.
\item\label{it:ngbd_vollume_ubd} For every $\epsilon \in (0,\delta^{1/3} / M)$,  the quantum area of the $\epsilon$-neighborhood of $\partial \CN$ is at most $\epsilon^p \delta^p$.
\end{enumerate}

\begin{proposition}
\label{prop:good_sle_chunks}
Let $A_0\in[2,\infty)$ and $c_{0,\mathrm{max}}\in (0,\infty)$ be as in Proposition~\ref{prop:good_chunks_percolate}. For each $A\geq A_0$ and $c_0\in (0, c_{0,\mathrm{max}}]$ there exist $M_0\in [1,\infty)$ and $p_0 > 0$, depending only $A$, $c_0$ and $u$, such that for all $M\geq M_0$ and $p \in (0,p_0)$, $\qwedge{2}\bigl[ E^{\CW}_{\sigma} \cap \{ \CN \in E\} \bigm| \sigma < \delta \bigr] \geq 1 - c_0 A^{-2/3}$.
\end{proposition}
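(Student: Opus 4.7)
The plan is to reduce, by the scale invariance of $\qwedge{2}$ (adding a constant to $h$ rescales areas, lengths, distances, and quantum natural times by matching powers, so that the thresholds in~(I)--(VI) all become order $1$ or $M$), to the case $\delta=1$, and then to bound the unconditional probability of each of the six bad events by a small multiple of $c_0 A^{-2/3}$. Since $\qwedge{2}[\sigma<1]\geq 1-c'' A^{-2/3}$ for some constant $c''$ (as used in the proof of Lemma~\ref{lem:right-length-positive-prob-positive}), the conditional failure probability of each item is at most a constant times its unconditional failure probability, and a union bound over the six items completes the argument, provided $M_0=M_0(A,c_0,u)$ is chosen large enough and $p_0=p_0(A,c_0,u)$ small enough.

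Properties~(I) and~(II) will follow from facts already used in Section~\ref{sec:percolation-exploration}. After the rescaling $(L^0_{t/A},R^0_{t/A})\mapsto A^{-2/3}(X^1_t,X^2_t)$ employed in Subsection~\ref{ssec:good_chunks_percolate_half_plane}, the bottom lengths of $\CN$ are distributed as $A^{-2/3}|I^j_{\tau\wedge A}|$ for $j=1,2$, and the top length as $A^{-2/3}\sum_{j=1,2}(X^j_{\tau\wedge A}-I^j_{\tau\wedge A})$; each has a continuous density near any point of $(0,\infty)$, so is at least $1/M$ except on an event of small probability as $M\to\infty$. The $\qdistnoarg{h}$-diameter bound~(II) will come from~\eqref{eq:comp_metric} combined with standard hull size bounds for $\SLE_6$ stopped at a bounded quantum-natural-time stopping time. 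Properties~(III)--(V) concern the embedded geometry of $\CN$ after the welding map $\varphi\colon\CN\to\D$: for~(III) I will invoke positive-moment GMC bounds for the field $h|_\CN\circ\varphi^{-1}+Q\log|(\varphi^{-1})'|$, which is locally absolutely continuous with respect to a free-boundary GFF with a log singularity at a boundary point, together with~\eqref{eq:comp_metric} for the Euclidean/quantum metric comparison; for~(IV) I will combine~\eqref{eq:comp_metric} with a matching H\"older estimate for the welding map $\varphi^{-1}$ to obtain two-sided quantitative regularity between boundary length and $\qdistnoarg{h}$-distance on $\partial\CN$; and for~(V) a GMC lower bound on the quantum mass of a $\qdistnoarg{h}$-neighborhood of a boundary arc follows from local absolute continuity at typical boundary points and standard ball-mass moment estimates.

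The most delicate item is property~(VI), the polynomial upper bound $\qmeasure{h}(\{x\in\CN:\qdist{h}{x}{\partial\CN}<\epsilon\})\leq\epsilon^p\delta^p$, since it demands quantitative mass control very close to the $\SLE_6$ interface, uniform in $\epsilon$ and in the conditioning $\sigma=\sigma_{1/A}<1$. My approach will be to cover $\partial\CN$ by finitely many $\qdistnoarg{h}$-balls of radius of order $\epsilon$ (the number controlled via~(I) and~(II)) and to bound the GMC mass in each ball by upper-tail moment estimates for small LQG metric balls, which yield an exponent $p>0$ independent of $\epsilon$ at the price of forcing $p_0$ small. Making this estimate uniform in the conditioning event and compatible with the other five items will be the main technical hurdle; once it is in place, the union bound yields the proposition.
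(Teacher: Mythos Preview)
Your reduction to $\delta=1$ by scaling is correct and matches the paper. However, several of your sketches for the individual conditions have genuine gaps, and you are also missing a structural reduction that the paper exploits to make the whole argument work.

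\textbf{The missing reduction.} The stopping time $\sigma=\sigma_{1/A}\wedge 1$ is random and depends on $\eta'$ in a complicated way, so it is not obvious how to import results stated for fixed targets. The paper observes that on $\{\sigma=\sigma_{1/A}\}$ one has $\sigma=\tau_x$ for some $x$ in a countable dense set, where $\tau_x$ is the first time $\eta'$ disconnects a \emph{fixed} boundary point $x$ from $\infty$. By locality, $\eta'|_{[0,\tau_x]}$ has the same law as a chordal $\SLE_6$ from $0$ to $x$ stopped at disconnection, whose outer boundary is an explicit $\SLE_{8/3}(\rho_1;\rho_2)$ curve. This gives direct access to the quantum-wedge decomposition of the region across the boundary (weight $\gamma^2/2=4/3$), and hence to Proposition~\ref{prop:good_quantum_disk}. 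The argument finishes by a union bound over a finite net of target points. Without this reduction, your per-item estimates have to hold uniformly over the random $\sigma$, which you do not justify.

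\textbf{Condition~(II).} You propose to bound the $\qdistnoarg{h|_\CN}$-diameter via~\eqref{eq:comp_metric} and Euclidean hull size bounds. But~\eqref{it:diameter_bound} asks for the \emph{interior-internal} metric $\qdistnoarg{h|_\CN}$, not the ambient $\qdistnoarg{h}$. These can differ dramatically: two points close in $\CS$ may be far in $\CN$ if every path between them in $\CN$ must wind around a fjord. The paper (Lemma~\ref{lem:diameter_bound}) constructs explicit paths inside $\CN$ by routing through the quantum disk $U$ cut off on one side and using the finiteness of its interior-internal diameter; this does not follow from ambient H\"older bounds.

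\textbf{Condition~(IV).} Your plan is to combine~\eqref{eq:comp_metric} with H\"older regularity of $\varphi^{-1}$. This again only controls ambient $\qdistnoarg{h}$-distances, not $\qdistnoarg{h|_\CN}$, and the reverse inequality (that small arc length forces small interior-internal distance) is not a consequence of conformal-map H\"older bounds at all. The paper's proof (Lemmas~\ref{lem:reverse_holder_continuity_top_boundary} and~\ref{lem:reverse_holder_continuity}) is the most delicate part: it uses the $\SLE_{8/3}$ description of $\partial\CN\cap\h$, compares the field near that boundary to a weight-$4/3$ wedge, pulls the question back to quantum-disk regularity (Proposition~\ref{prop:good_quantum_disk} and Lemma~\ref{lem:disk_boundary_reverse_holder}), and patches the top and bottom cases using time-reversal and the explicit law of the disconnected component $U$.

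\textbf{Condition~(VI).} Your covering argument needs a bound on the number of $\epsilon$-balls covering $\partial\CN$ in the interior-internal metric, which essentially presupposes~(IV); as written it is circular. The paper instead obtains~(V) and~(VI) simultaneously (Lemma~\ref{lem:boundary_volume_lbd_ngbd_vollume_ubd}) by the same local comparison with a quantum disk, for which the boundary-neighborhood area bounds are Lemmas~\ref{lem:boundary_area_lbd} and~\ref{lem:metric_ngbd_quantum_area_ubd}.

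In short, the scaling reduction and the union-bound scaffolding are fine, but the substance of~(II), (IV), and~(VI) requires the $\SLE_{8/3}$ boundary description and the weight-$4/3$ wedge comparison that you have not invoked; the ambient H\"older bound~\eqref{eq:comp_metric} does not see the interior-internal metric and cannot substitute for them.
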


In order to start to prove Proposition~\ref{prop:good_sle_chunks}, we first need to recall the following lower bound regarding the amount of mass near the boundary for a quantum disk \cite[Lemma~3.4]{gm2019gluing}.

\begin{lemma}
\label{lem:boundary_area_lbd}
Suppose that $\CD = (\D,h,0)$ has law $\qdiskweighted{\ell}$.  For each $u > 0$ there a.s.\ exists $c > 0$ such that for each $\epsilon \in (0,1)$ and $x,y \in \partial \D$ the LQG area of the $\epsilon$-neighborhood of $[x,y]_{\partial \CD}^\ccw$ is at least $c \epsilon^{2+u}$ times the length of $[x,y]_{\partial \CD}^\ccw$.  The same holds with $[x,y]_{\partial \CD}^\cw$ in place of $[x,y]_{\partial \CD}^\ccw$.	
\end{lemma}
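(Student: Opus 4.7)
The plan is to prove the statement by reducing to a quantum wedge, then tiling a given boundary arc with a quantum-length net of boundary points, showing each net point has a definite amount of LQG area within LQG distance $\epsilon/2$, and summing the contributions using pairwise LQG separation between net points. The exponent $4+u'$ for a single ball times the number $\asymp L\epsilon^{-2}$ of net points in an arc of length $L$ will produce the desired $c L \epsilon^{2+u}$ after redefining $u$. The reduction to a weight-$2$ quantum wedge $\CW = (\h, h, 0, \infty)$ uses the fact that the marked boundary points of a sample from $\qdiskweighted{\ell}$ are uniform in quantum boundary length \cite[Proposition~A.8]{dms2014mating}, together with mutual absolute continuity between the disk field near a uniformly-sampled boundary point and the weight-$2$ wedge field near its origin: both are given by explicit Bessel processes in Subsection~\ref{sssec:QD-QS-QW}, and direct comparison of the radial parts shows the Radon-Nikodym derivative has finite moments of all orders on a bounded Euclidean neighborhood.

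For the wedge, by scale invariance (shifting $h$ by a constant $C$ scales areas by $e^{\gamma C}$ and distances by $e^{\gamma C/4}$, preserving the weight-$2$ wedge law), the ball $B(r):=\qball{h}{0}{r}$ satisfies $r^{-4}\qmeasure{h}(B(r)) \stackrel{d}{=} \qmeasure{h}(B(1))$, and the right-hand side is a.s.\ strictly positive. Combining Gaussian concentration of the GFF circle-average process with the H\"older bound of $\qdistnoarg{h}$ in terms of the Euclidean metric from \cite[Theorem~1.2]{ms2016qle2} (which forces every LQG ball to contain a Euclidean ball of definite size modulo the circle-average fluctuations), I would extract a polynomial lower tail
\[ \mathbb{P}\!\left[ \qmeasure{h}(B(r)) < r^{4+v} \right] \leq C_v\, r^{a(v)}, \quad r\in(0,1), \]
with $a(v)\to\infty$ as $v\downarrow 0$. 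Since the weight-$2$ wedge is invariant under shifting its root by a deterministic quantum length along $\partial \h$ (see Subsection~\ref{sssec:QD-QS-QW}), the same bound holds at every boundary point in place of $0$.

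To assemble the arc estimate: at dyadic $\epsilon = 2^{-k}$, place a net $\{z_i\}$ along a bounded arc of $\partial \h$ with consecutive points separated by quantum length $2\epsilon^2$, giving $O(\epsilon^{-2})$ points. By the reverse H\"older estimate (two boundary points separated by quantum length $\delta$ have $\qdistnoarg{h}$-distance at least $\delta^M$, obtained by combining \cite[Theorem~1.2]{ms2016qle2} with H\"older regularity of $\qbmeasure{h}$), the balls $\qball{h}{z_i}{\epsilon/4}$ are pairwise disjoint once $\epsilon$ is small. Choosing $v=u/2$ so that $a(v)>2$, a union bound together with Borel--Cantelli will show that a.s.\ for all small dyadic $\epsilon$ every $z_i$ satisfies $\qmeasure{h}(\qball{h}{z_i}{\epsilon/4}) \geq (\epsilon/4)^{4+u/2}$; general $\epsilon$ is handled by interpolating between dyadic scales. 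For any subarc $I'$ with $\qbmeasure{h}(I') \geq \epsilon^2$, each $\qball{h}{z_i}{\epsilon/4}$ with $z_i \in I'$ sits inside the $\epsilon$-LQG-neighborhood of $I'$; summing yields total $\qmeasure{h}$-mass at least $c\, \qbmeasure{h}(I')\, \epsilon^{2+u/2}$. Subarcs shorter than $\epsilon^2$ are handled by applying the pointwise bound at a single endpoint, and transferring back to the disk via the reduction finishes the proof.

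The main obstacle will be the quantitative polynomial lower tail used in the second step: a merely positive-probability lower bound on $\qmeasure{h}(B(1))$ is not enough, since we must beat the $\epsilon^{-2}$ loss from the union bound over the quantum-length net, which forces us to produce explicit polynomial decay of the left tail with arbitrarily large exponent $a(v)$ as $v\downarrow 0$. This requires a careful combination of Gaussian concentration for the GFF with the Euclidean-versus-LQG H\"older estimates of \cite[Theorem~1.2]{ms2016qle2}. A secondary technical difficulty is verifying the reverse H\"older lower bound on LQG distances between quantum-length-spaced boundary points uniformly in position along $\partial \h$; this is again handled by appealing to the two-sided estimates in \cite{ms2016qle2}, but the uniformity across the wedge boundary must be argued via translation invariance in quantum length.
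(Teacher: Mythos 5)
The paper supplies no proof of this lemma: it is explicitly recalled as \cite[Lemma~3.4]{gm2019gluing} and cited without argument, so your proposal constructs a proof from scratch where the paper simply defers to prior work.

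The central gap is in the disjointness of the balls $\qball{h}{z_i}{\epsilon/4}$. You invoke a reverse H\"older estimate of the form ``quantum boundary-length separation $\delta$ implies $\qdistnoarg{h}$-separation $\geq\delta^M$'' and conclude that for net points at quantum-length spacing $\asymp\epsilon^2$ the $\epsilon/4$-balls are pairwise disjoint. But the version of this estimate available (Lemma~\ref{lem:disk_boundary_reverse_holder}) asserts only the existence of \emph{some} deterministic exponent $\beta$, extracted from the Euclidean/LQG H\"older exponent $\alpha$ of~\eqref{eq:comp_metric} and the boundary-measure H\"older exponent of Lemma~\ref{lem:qbmeasure_holder_cont}, and there is no reason for this $\beta$ to be near $1/2$. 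Disjointness of the $\epsilon/4$-balls at quantum spacing $\epsilon^2$ would require $(\epsilon^2)^{\beta}\gtrsim\epsilon$, i.e., $\beta\leq 1/2$, which you do not have; replacing ``disjoint'' by a bounded-overlap count gives an overlap multiplicity of order $\epsilon^{1/\beta-2}$, which diverges for $\beta>1/2$. Propagating this loss through the sum, the argument only yields a lower bound $\gtrsim L\,\epsilon^{4+v-1/\beta}$ and hence only establishes the lemma for $u\geq 2-1/\beta>0$, a fixed positive gap, rather than for all $u>0$. Spacing the net more coarsely (say at quantum length $\epsilon^{1/\beta}$) to force genuine disjointness merely shifts the identical loss into the ball count. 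To close the gap you would need a reverse H\"older bound with exponent $1/2+o(1)$ uniformly over boundary points and scales, which is not delivered by \cite[Theorem~1.2]{ms2016qle2} or anything else invoked. A secondary concern is that the asserted polynomial lower tail $\mathbb{P}[\qmeasure{h}(\qball{h}{0}{r})<r^{4+v}]\leq C_v r^{a(v)}$ with $a(v)\to\infty$ as $v\downarrow 0$ is stated without a concrete derivation, yet this quantitative rate is exactly what must defeat the union bound over the $\asymp\epsilon^{-2}$ net points; it cannot be left as a sketch.
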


We also need the following upper bound for distances between points on the boundary of a quantum disk established in \cite[Lemma~3.2]{gm2019gluing}.
\begin{lemma}
\label{lem:boundary_length_regularity}
Fix $\ell > 0$ and let $\CD = (\D,h)$ have law $\qdisk{\ell}$ or $\qdiskweighted{\ell}$.  For each $\zeta > 0$ there a.s.\ exists $C > 0$ so that for all $x,y \in \partial \D$ we have that
\[ \qdist{h}{x}{y} \leq C \qbmeasure{h}([x,y]_{\partial \CD}^\ccw)^{1/2}\left( | \log \qbmeasure{h}([x,y]_{\partial \CD}^\ccw)| + 1\right)^{7/4+\zeta}\]
and the same is true with $[x,y]_{\partial \CD}^\cw$ in place of $[x,y]_{\partial \CD}^\ccw$.
If we let $C$ be the smallest constant for which this is satisfied, then for $A > 1$ we have that $\p[ C > A]$ decays to $0$ as $A \to \infty$ faster than any negative power of $A$. 
\end{lemma}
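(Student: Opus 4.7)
\medskip

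\noindent\textbf{Proof proposal for Lemma~\ref{lem:boundary_length_regularity}.}
The plan is to establish the bound by a dyadic chaining argument along $\partial\CD$, where the links of the chain are controlled via moment estimates for quantum distances between boundary points in a standard quantum disk. First I would reduce to the case $\ell=1$: by the scaling property of $\qdisk{\ell}$ and $\qdiskweighted{\ell}$ (areas scale as $\ell^{2}$, boundary lengths as $\ell$, distances as $\ell^{1/2}$), the desired inequality for general $\ell$ follows from the one with $\ell=1$ by rescaling, and the tail estimate on the optimal $C$ is preserved. One also reduces $\qdiskweighted{\ell}$ to $\qdisk{\ell}$ using that the Radon--Nikodym derivative $\mu_{h}(\CD)/\E[\mu_{h}(\CD)]$ is bounded in every $L^{p}$, so tail decay faster than any polynomial transfers.

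Once $\ell=1$ is fixed, for each $k\geq 0$ I would mark the $2^{k}$ points $\{z^{k}_{i}\}_{i=0}^{2^{k}-1}\subset\partial\CD$ that partition $\partial\CD$ into arcs of quantum boundary length $2^{-k}$, and set $D_{k,i}:=\qdist{h}{z^{k}_{i}}{z^{k}_{i+1}}$ (indices mod $2^{k}$). The main analytic input would be a moment bound of the form
\begin{equation}\label{eq:chaining_moment}
\E[D_{k,i}^{p}] \leq C_{p}\, 2^{-kp/2}, \qquad \text{for all } p\in[1,p_{0}),
\end{equation}
for some $p_{0}>2$. To prove~\eqref{eq:chaining_moment}, I would embed a neighborhood of the arc $[z^{k}_{i},z^{k}_{i+1}]_{\partial\CD}^{\ccw}$ into a quantum disk of boundary length of order $2^{-k}$ obtained by exploring a chordal $\SLE_{6}$ started from the ``antipodal'' point and stopped at the first time it cuts off the arc from the target; conditionally on its boundary length, the cut-off component has the law of a Brownian/quantum disk, so by scaling the $p$-th moment of the quantum diameter of that component is $2^{-kp/2}$ times a universal constant, provided this universal constant is finite. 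Finiteness of the $p$-th moment of the diameter of a unit boundary length quantum disk for $p<p_{0}$ is either classical from the Brownian disk construction, or would be extracted from the explicit boundary length / area scaling relations; this is the main input I would need to cite or verify.

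Given~\eqref{eq:chaining_moment}, a standard Kolmogorov-type chaining step finishes the proof. By Markov's inequality and a union bound over the $2^{k}$ endpoints,
\begin{equation*}
\p\!\left[\max_{0\leq i<2^{k}} D_{k,i} \geq 2^{-k/2}\lambda_{k}\right] \leq 2^{k}C_{p}\lambda_{k}^{-p},
\end{equation*}
so for $\lambda_{k}=A\,k^{7/4+\zeta}$ and $p=2+\eta$ with $\eta=\eta(\zeta)>0$ small, the sum over $k$ converges and is bounded by a constant times $A^{-p}$. By Borel--Cantelli, almost surely there exists a (random) $C_{0}=C_{0}(h)$ with tail $\p[C_{0}>A]\lesssim A^{-p}$ for every $p<p_{0}$, such that $D_{k,i}\leq C_{0}\,2^{-k/2}k^{7/4+\zeta}$ for all $k,i$. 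Any two points $x,y\in\partial\CD$ can then be connected along $\partial\CD$ by a dyadic path: if $L=\qbmeasure{h}([x,y]_{\partial\CD}^{\ccw})$ and $k_{0}=\lceil\log_{2}(1/L)\rceil$, one decomposes $[x,y]_{\partial\CD}^{\ccw}$ into $O(1)$ arcs at level $k_{0}$ plus finitely many smaller dyadic arcs at levels $k_{0}+1,k_{0}+2,\ldots$, and summing $D_{k,i}\leq C_{0}\,2^{-k/2}k^{7/4+\zeta}$ yields
\begin{equation*}
\qdist{h}{x}{y} \leq C_{1}C_{0}\, L^{1/2}(\lvert\log L\rvert+1)^{7/4+\zeta},
\end{equation*}
with a deterministic $C_{1}$, and symmetrically for $[x,y]_{\partial\CD}^{\cw}$. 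The tail on $C_{0}$ directly gives the claimed super-polynomial decay for the optimal~$C$.

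The main obstacle I anticipate is obtaining the moment bound~\eqref{eq:chaining_moment} with a sharp enough $p_{0}$ and with the explicit $\ell^{1/2}$ scaling: one must control not only a typical diameter of a boundary-length-$2^{-k}$ sub-disk but the diameter of the particular arc endpoints $z^{k}_{i},z^{k}_{i+1}$, which are fixed on the boundary rather than sampled uniformly. I would handle this by either re-rooting (using the uniform law of the marked boundary point given the quantum surface) or by an explicit disintegration of the quantum disk along an $\SLE_{6}$ cutting the two endpoints from a third reference point, at the price of a harmless polynomial factor. The precise value $7/4$ of the log exponent is then determined by how large $p_{0}$ we can push in~\eqref{eq:chaining_moment}; getting it down to $7/4+\zeta$ rather than a larger number is the delicate quantitative piece of the argument.
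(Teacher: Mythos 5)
The paper does not prove this lemma; it cites \cite[Lemma~3.2]{gm2019gluing}. Your dyadic chaining skeleton is the same strategy used there, and your reductions to $\ell=1$ and from $\qdiskweighted{\ell}$ to $\qdisk{\ell}$ are both fine. The problem is the analytic heart of the argument.

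The moment bound you propose, $\E[D_{k,i}^{p}]\leq C_{p}\,2^{-kp/2}$ for $p\in[1,p_{0})$ with some finite $p_{0}>2$, is not strong enough, and the chaining step as written is simply incorrect. With $\lambda_{k}=A\,k^{7/4+\zeta}$ and a fixed $p<p_{0}$, the bound after the union is $\p\bigl[\max_{i}D_{k,i}\geq 2^{-k/2}\lambda_{k}\bigr]\leq 2^{k}C_{p}\bigl(A\,k^{7/4+\zeta}\bigr)^{-p}$, and $\sum_{k}2^{k}k^{-p(7/4+\zeta)}$ diverges for any fixed $p$ because the geometric factor $2^{k}$ always beats a polynomial in $k$. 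Kolmogorov's lemma with a finite $p_{0}$ slightly above $2$ would only give a H\"older exponent slightly above $0$, nothing close to $s^{1/2}(\log s^{-1})^{7/4}$. Moreover your conclusion that $\p[C_{0}>A]\lesssim A^{-p}$ ``for every $p<p_{0}$'' falls short of the claim in the lemma, which requires decay faster than \emph{any} polynomial.

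What is actually needed is a stretched-exponential tail for the normalized link, i.e.\ something of the form $\p\bigl[D_{k,i}\geq 2^{-k/2}\lambda\bigr]\leq c_{1}\exp(-c_{2}\lambda^{\gamma})$, or equivalently $\E[D_{k,i}^{p}]\leq a^{p}\,\Gamma(1+p/\gamma)\,2^{-kp/2}$ for all $p\geq 1$ with controlled growth in $p$. Only then does the union over $2^{k}$ endpoints at level $k$ cost you $\lambda_{k}\gtrsim k^{1/\gamma}$, the sum over $k$ converges, and the optimal constant $C$ inherits a super-polynomial tail. The exponent $7/4$ in the log-correction is exactly $1/\gamma$; it is not a free parameter you can tune by sharpening $p_{0}$, but is dictated by the Gaussian fluctuation structure of the Brownian snake encoding the Brownian disk (the $(\log)^{3/4}$ modulus of the snake head in tree time composed with a $(\log)^{1}$ modulus relating boundary length to tree time). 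Any proof that does not engage with this Gaussian structure will not produce the right power of the logarithm, so the moment bound~\eqref{eq:chaining_moment} as you stated it is not merely missing a citation but is genuinely insufficient as the key input.
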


\begin{lemma}\label{lem:qbmeasure_holder_cont}
There exists a deterministic constant $\wt{\beta}>0$ such that the following is true.  Suppose that $\CD = (\CD,h,0)$ has law $\qdiskweighted{1}$.  Then,  a.s.\  under $\qdiskweighted{1}$,  the quantum boundary length measure $\nu_h$ is $\wt{\beta}$-H\"older continuous with respect to the Euclidean metric.
\end{lemma}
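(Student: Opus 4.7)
The plan is to establish the claimed H\"older continuity of $\nu_{h}$ by combining moment estimates for boundary Gaussian multiplicative chaos with the Kolmogorov continuity criterion. Since $\nu_{h}$ is supported on $\partial\D$, which is bounded away from the marked interior point $0\in\D$, no interior log-singularity causes trouble; the only issue is relating the field $h$ under $\qdiskweighted{1}$ to a standard free-boundary GFF near $\partial\D$.

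First, I would show that the restriction of $h$ to a fixed annular neighborhood $\{1-\rho<|z|<1\}$ of $\partial\D$ is absolutely continuous with respect to (the corresponding restriction of) a free-boundary GFF on $\D$ plus a random continuous function, with Radon--Nikodym derivative having finite moments of all orders. This follows from the explicit description of a quantum disk in Subsection~\ref{sssec:QD-QS-QW}: on the strip $\strip$, the field decomposes as $\tfrac{2}{\gamma}\log Z$ on $\CH_{1}(\strip)$ (with $Z$ a Bessel excursion) plus an independent free-boundary GFF projection on $\CH_{2}(\strip)$; weighting by $\mu_{h}(\strip)$ and adding an interior marked point sampled from $\mu_{h}$ to obtain $\qdiskweighted{1}$, then conformally mapping $\strip\to\D$ so that this new interior point is sent to $0$ and applying the LQG coordinate change~\eqref{eqn:q_surface_equiv}, gives a field on $\D$ whose law, restricted to a neighborhood of $\partial\D$, has the stated absolute continuity (using that the conformal factor $Q\log|\varphi'|$ is a bounded continuous function on $\{1-\rho<|z|<1\}$ for some random $\rho>0$, and the Bessel log process and area weighting have well-controlled tails).

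Second, I would invoke the standard moment bound for boundary GMC: if $h_{0}$ denotes a free-boundary GFF on $\D$, then for every $q\in(1,4/\gamma^{2})=(1,3/2)$ there is a constant $C_{q}$ so that
\begin{equation*}
\mathbb{E}\bigl[\nu_{h_{0}}(I)^{q}\bigr] \leq C_{q}\,|I|^{\xi(q)},
\qquad \xi(q)=q\bigl(1+\tfrac{\gamma^{2}}{4}\bigr)-q^{2}\tfrac{\gamma^{2}}{4}
=\tfrac{5q}{3}-\tfrac{2q^{2}}{3}.
\end{equation*}
Since $\xi(1)=1$ and $\xi'(1)=1/3>0$, one has $\xi(q)>1$ for $q$ slightly larger than $1$. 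By H\"older's inequality together with the absolute continuity from the first step, the same type of bound, with a possibly different (random but almost surely finite) constant, holds for $\nu_{h}(I)$ for arcs $I\subset\partial\D$; a covering argument reduces the case of a general small arc to that of arcs in a fixed reference parameterization.

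Finally, I would apply Kolmogorov's continuity criterion to the process $t\mapsto\nu_{h}(I_{a,t})$, where $I_{a,t}$ is the Euclidean arc of length $t$ on $\partial\D$ starting at a fixed basepoint $a$, to conclude H\"older continuity with any exponent $\wt{\beta}<(\xi(q)-1)/q$; taking $q$ close to $1$ gives $\wt{\beta}>0$ deterministic. The main obstacle I anticipate is the first step, namely setting up the absolute continuity between the field of $\qdiskweighted{1}$ and a free-boundary GFF with sufficiently integrable Radon--Nikodym derivative: one must track carefully how the conformal map $\strip\to\D$ (which depends on the location of the sampled interior point) and the area-weighting interact with the Bessel and GFF components, and ensure that no additional log-singularity is created on $\partial\D$. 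Once this is done, the moment estimate and Kolmogorov criterion are standard.
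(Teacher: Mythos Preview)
Your overall strategy — moment bounds for boundary GMC under a free-boundary GFF, then Kolmogorov, then transfer to the quantum disk — matches the paper's. The difference, and the genuine gap in your proposal, is in the transfer step.

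You claim in Step~1 that the restriction of the $\qdiskweighted{1}$ field to an annular neighborhood of $\partial\D$ is absolutely continuous with respect to a free-boundary GFF plus a random continuous function, with an $L^p$ Radon--Nikodym derivative. You acknowledge this is the main obstacle, and you do not actually verify it. The difficulty is real: the quantum disk carries two marked boundary points (the images of $\pm\infty$ in the strip), and near those points the $\CH_1(\strip)$-part of the field is $(2/\gamma)\log Z$ for a Bessel excursion $Z$, which tends to $-\infty$; this is \emph{not} a bounded continuous perturbation of the lateral part of a free-boundary GFF, and after the conformal map to $\D$ it is not clear that the resulting singularity can be absorbed into an $L^p$ change of measure. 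Your parenthetical that the conformal factor $Q\log|\varphi'|$ is bounded on a random annulus does not address this, since the issue is the Bessel contribution, not the Jacobian.

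The paper avoids this question entirely by invoking \cite[Theorem~1.2]{ang2022integrabilitysleconformalwelding}: after reweighting the infinite quantum disk measure by $\nu_h(\partial\h)^{-2}$ and applying a random conformal automorphism sampled from Haar measure on $\mathrm{conf}(\h)$, the field has \emph{exactly} the law of a free-boundary GFF on $\h$ plus $-2Q\log|\cdot|_+$ plus an independent additive constant. Pulling back to $\D$ via $\psi(z)=(z-i)/(z+i)$, the correction $-2Q\log|\psi^{-1}(\cdot)|_+ + Q\log|(\psi^{-1})'|$ is $O(1)$ on $\partial\D$, so one is reduced to free-boundary GFF plus bounded function, where Kahane's inequality and the moment bound from \cite{robert2008gaussianmultiplicativechaosrevisited} give H\"older continuity. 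Since H\"older continuity of $\nu_h$ on $\partial\D$ is invariant under the coordinate change (M\"obius maps of $\D$ are smooth on $\partial\D$), one gets it for the original embedding, and then disintegrates on boundary length and uses absolute continuity between $\qdisk{1}$ and $\qdiskweighted{1}$.

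So the missing ingredient in your proposal is precisely a substitute for the Ang--Holden--Sun identity; without it, your Step~1 is an unproven assertion.
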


\begin{proof}
First,  we note that if $\wt{h}$ has the law of a free boundary GFF on $\D$ with some fixed normalization,  then \cite[Proposition~3.7]{robert2008gaussianmultiplicativechaosrevisited} implies that for every $p \in (1,p_{*})$,  there exists $C_p < \infty$ such that $\E\big[ \nu_{\wt{h}}(A)^p \big] \leq C_p \diam(A)^p$ for each $A \subseteq \partial \CD$ Borel,  where $\zeta_p = (2+\frac{\gamma^2}{4})p - \frac{\gamma^2 p^2}{4}$,  $p_{*}$ is the unique $p_{*}>1$ such that $\zeta_{p_{*}}=2$,  and $\diam$ denotes Euclidean diameter.  Hence,  by choosing $p \in (1,p_{*})$ such that $\zeta_p>1$ and combining with Kolmogorov's criterion,  we obtain that there exists $\wt{\beta}>0$ deterministic such that $\nu_{\wt{h}}$ is a.s.\  $\wt{\beta}$-H\"older continuous with respect to the Euclidean metric.

Now,  we recall some results from \cite{ang2022integrabilitysleconformalwelding}.  Suppose that $f$ is sampled from the group $\text{conf}(\h)$ of conformal automorphisms of $\h$ when the latter is endowed with the Haar measure,  and let $h$ be sampled from the infinite measure of a weight-$2$ quantum disk with $\gamma = \sqrt{8/3}$ (see \cite[Section~4.5]{dms2014mating}) weighted by $\nu_{h}(\partial \h)^{-2}$.  Then,  \cite[Theorem~1.2]{ang2022integrabilitysleconformalwelding} implies that there exists a constant $C$ such that the law of $h \circ f^{-1} + Q \log |(f^{-1})'|$ is given by $C$ times the law of $\wh{h}:= \wt{h} - 2Q\log |\cdot|_{+} +c$,  where $\wt{h}$ is a free boundary GFF on $\h$ and $c$ is sampled from the infinite measure on $\R$ given by $\exp(-Q c) dc$.  Consider the conformal transformation $\psi : \h \to \D$ given by $\psi(z) = \frac{z-i}{z+i}$.  Note that $\wt{h} \circ \psi^{-1}$ is a free boundary GFF on $\D$ with some fixed normalization and $-2Q\log \max\{|\psi^{-1}(\cdot)|,1\} + Q \log |(\psi^{-1})'(\cdot)| + c = O(1)$ uniformly in $\partial \D$.  Hence,  combining with Kahane's convexity inequality,  we obtain that the quantum boundary length of $\wh{h} \circ \psi^{-1} + Q \log |(\psi^{-1})'|$ is a.e.\  $\wt{\beta}$-H\"older continuous on $\partial \D$ with respect to the Euclidean metric.  Therefore,  the same is a.e.\  true for $h \circ f^{-1} + Q \log |(f^{-1})'|$ and since the event that the quantum boundary length is $\wt{\beta}$-H\"older continuous with respect to the Euclidean metric is invariant under the coordinate change formula for quantum surfaces,  we obtain that a sample from the infinite measure of a weigh-$2$ quantum disk satisfies the above property a.e. Thus, combining with disintegration with respect to the total boundary length (see \cite[Section~4.5]{dms2014mating}),  we obtain that if $(\D,h)$ is sampled from $\qdisk{1}$,  then its boundary length is a.s.\  $\wt{\beta}$-H\"older continuous with respect to the Euclidean metric.  Therefore,  the same is true for a sample $(\D,h,0)$ from $\qdiskweighted{1}$ by absolute continuity.
\end{proof}

\begin{lemma}
\label{lem:disk_boundary_reverse_holder}
There exists a constant $\beta > 0$ so that the following is true.  Suppose that $\ell > 0$ and $\CD = (\D,h,0)$ has law $\qdiskweighted{\ell}$.  There is a.s.\ $\epsilon_0 > 0$ so that for all $\epsilon \in (0,\epsilon_0)$ and $a,b \in \partial \CD$ with both $\qbmeasure{h}([a,b]_{\partial \CD}^\cw) \geq \epsilon$ and $\qbmeasure{h}([a,b]_{\partial \CD}^\ccw) \geq \epsilon$ we have that $\qdist{h}{a}{b} \geq \epsilon^\beta$.	
\end{lemma}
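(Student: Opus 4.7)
The plan is to reduce the lower bound on $\qdist{h}{a}{b}$ to two steps: first convert the assumption that both boundary arcs from $a$ to $b$ have $\nu_h$-mass at least $\epsilon$ into a lower bound on the Euclidean distance $|a-b|$, and then upgrade this to a lower bound on the quantum distance via a reverse-Hölder estimate for the LQG metric on $\overline{\D}$.

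For the first step, I would invoke Lemma~\ref{lem:qbmeasure_holder_cont} to obtain a deterministic $\wt{\beta} > 0$ and an a.s.\ positive random constant $C_1$ with $\nu_h(A) \leq C_1 (\diam A)^{\wt{\beta}}$ for every Borel $A \subseteq \partial \D$; the lemma is stated for $\qdiskweighted{1}$, but extends to $\qdiskweighted{\ell}$ via the scaling relation between $h$ and $\ell$. Given $a,b \in \partial \D$ satisfying the hypotheses of the lemma, the shorter of the two arcs $[a,b]_{\partial\CD}^\ccw$ and $[a,b]_{\partial\CD}^\cw$ has angular length at most $\pi$, so its Euclidean diameter equals $|a-b|$, while its $\nu_h$-mass is at least $\epsilon$. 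The Hölder bound then yields $|a-b| \geq (\epsilon/C_1)^{1/\wt{\beta}}$.

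For the second step, I would use that the $\sqrt{8/3}$-LQG metric $\qdistnoarg{h}$ is reverse-Hölder continuous with respect to $|\cdot|$ on $\overline{\D}$: there exist a deterministic $\alpha \in (0,1)$ and an a.s.\ positive random constant $C_2$ so that $\qdist{h}{x}{y} \geq C_2^{-1} |x-y|^{1/\alpha}$ for all $x,y \in \overline{\D}$ with $|x-y|$ sufficiently small (an analog on the disk of the sphere estimate~\eqref{eq:comp_metric}). Combining with the previous step yields $\qdist{h}{a}{b} \geq C_2^{-1} (\epsilon/C_1)^{1/(\alpha\wt{\beta})}$, and choosing any deterministic $\beta > 1/(\alpha \wt{\beta})$ and taking $\epsilon_0 > 0$ small enough (depending on $C_1$ and $C_2$) to absorb the random constants into the difference of exponents gives $\qdist{h}{a}{b} \geq \epsilon^\beta$ for all $\epsilon \in (0,\epsilon_0)$, as required.

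The main obstacle is justifying the reverse-Hölder estimate on $\overline{\D}$ under $\qdiskweighted{\ell}$, and in particular extending it to points on $\partial \D$, since~\eqref{eq:comp_metric} is stated only for the quantum sphere. On compact subsets of the interior of $\D$ the estimate follows from the sphere statement via the local absolute continuity of $h$ under $\qdiskweighted{\ell}$ with respect to a whole-plane GFF. To handle boundary points, one natural route is to realize the quantum disk as the quantum surface parameterizing a complementary component $\CS \setminus \fb{x}{r}$ of a filled metric ball in a sample from $\qsphereinflaw$ (using the inside--outside independence recalled in Subsection~\ref{subsec:bread_first}) and then apply~\eqref{eq:comp_metric} within the ambient quantum sphere. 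An alternative is to compare locally with a free boundary GFF on the upper half-plane and appeal to the Hölder continuity results for the LQG metric near the boundary developed in \cite{gm2019local}.
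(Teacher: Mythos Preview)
Your overall plan is correct: once the reverse-H\"older bound $\qdist{h}{a}{b} \gtrsim |a-b|^{1/\alpha}$ holds uniformly on $\overline{\D}$, combining it with Lemma~\ref{lem:qbmeasure_holder_cont} as in your Step~1 immediately gives the lemma. Your route~(2), via comparison with a free boundary GFF, does establish this estimate; the paper proves exactly this statement later as Lemma~\ref{lem:quantum_disk_metric_holder}, and that proof does not use the present lemma, so there is no circularity. In that sense your argument is valid but effectively reorders the paper's development.

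The paper's own proof takes a genuinely different route and avoids needing the full bi-H\"older estimate on $\overline{\D}$. It argues by contradiction: assuming $\qdist{h}{a}{b} \leq \epsilon^p$ for large $p$, it realizes the disk inside a quantum sphere, uses~\eqref{eq:comp_metric} to deduce that the sphere-Euclidean distance between the preimages $\wt{a},\wt{b}$ is $\lesssim \epsilon^{\alpha p}$, and observes that the short $\qdistnoarg{h}$-geodesic from $a$ to $b$ disconnects one boundary arc from the interior marked point inside a small sphere-Euclidean ball. The Beurling estimate then bounds the harmonic measure of that arc by $\lesssim \epsilon^{\alpha p/2}$; conformal invariance converts this into a bound on disk-Euclidean arc length in $\partial\D$, and Lemma~\ref{lem:qbmeasure_holder_cont} gives $\nu_h$-mass $\lesssim \epsilon^{\alpha p \wt{\beta}/2}$, contradicting the hypothesis $\geq \epsilon$ once $p$ is large.

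Your route~(1), as you describe it, has a gap worth flagging: applying~\eqref{eq:comp_metric} in the ambient sphere yields a bound in terms of the \emph{sphere}-Euclidean distance $d(\wt{a},\wt{b})$, whereas your Step~1 controls the \emph{disk}-Euclidean distance $|a-b|$, and relating the two would require H\"older regularity of the uniformizing map on $\partial\D$, which is not available a priori for the boundary of a filled metric ball. The paper's Beurling argument is exactly the device that bypasses this, trading small sphere-Euclidean diameter directly for small disk-Euclidean arc length via conformal invariance of harmonic measure, with no boundary regularity of the conformal map required.
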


\begin{proof}
First,  we note that a sample from $\qdiskweighted{\ell}$ can be obtained by starting with a sample from $\qdiskweighted{1}$ and then multiplying lengths with $\ell$,  distances with $\ell^{1/2}$ and areas by $\ell^2$.  Thus,  it suffices to prove the claim in the case that $\ell = 1$.  Suppose that $(\CS,x,y)$ has distribution $\qsphereinflaw$ which we can assume that it is parameterized by the Euclidean sphere $\s^2$.  Recall that the metric $\qdistnoarg{h}$ on $\s^2$ is H\"older continuous with respect to the Euclidean metric $d$ on $\s^2$,  i.e.,  there exists $\alpha \in (0,1)$ deterministic and $C\geq 1$ random such that~\eqref{eq:comp_metric} holds for each $u,v \in \s^2$.  Let $\tau$ be the smallest $r>0$ such that the boundary length of $\qfb{h}{y}{x}{r}$ is equal to $1$.  Conditional on $\tau<\infty$,  the quantum surface $\mathcal{D}$ parameterized by $\CS \setminus \qfb{h}{y}{x}{r}$ and marked by $y$ has law $\qdiskweighted{1}$.  Now,  suppose that we conformally map $\CD$ to $\D$ with $y$ sent to $0$,  and consider the surface parameterized by $\D$.  In particular,  we consider the conformal transformation $\phi$ mapping $\CD$ onto $\D$ such that $\phi(y) = 0$ and $\phi'(y) > 0$,  and set $\wt{h}:= h \circ \phi^{-1} + Q \log |(\phi^{-1})'|$.  Note that there a.s.\  exists $A>0$ such that $\dist_d (y,\partial \CD) \geq A$.  Fix $p>1$ sufficiently large and deterministic (to be chosen) and let $a,b  \in \partial \D$ such that both $\qbmeasure{\wt{h}}([a,b]_{\partial \D}^\cw) \geq \epsilon$ and $\qbmeasure{\wt{h}}([a,b]_{\partial \D}^\ccw) \geq \epsilon$.  Then,  we have that both $\qbmeasure{h}([\wt{a},\wt{b}]_{\partial \CD}^\cw) \geq \epsilon$ and $\qbmeasure{h}([\wt{a},\wt{}b]_{\partial \CD}^\ccw) \geq \epsilon$ with $\wt{a} = \phi^{-1}(a)$ and $\wt{b} = \phi^{-1}(b)$.  Suppose that $\qdist{h}{\wt{a}}{\wt{b}} \leq \qdist{\wt{h}}{a}{b} \leq \epsilon^p$.  Then,  we have that $d(\wt{a},\wt{b}) \leq C^{\alpha} \epsilon^{\alpha p} < A$ for $\epsilon > 0$ sufficiently small.  Moreover,  since $\qdistnoarg{\wt{h}}$ is a.s.\  equivalent to the Euclidean metric on $\D$,  we obtain that it is a.s.\  the case that $B_{\qdistnoarg{\wt{h}}}(z,\epsilon) \subseteq \overline{\D} \cap B_d(z,1/2)$ for all $z \in \partial \D$ and all $\epsilon>0$ sufficiently small.  In particular,  for $\epsilon>0$ sufficiently small,  we have that the geodesic $\gamma$ with respect to $\qdistnoarg{\wt{h}}$ from $a$ to $b$ disconnects from $0$ either $[a,b]_{\partial \D}^\cw$ or $[a,b]_{\partial \D}^\ccw$,  and so $\phi^{-1}(\gamma)$ disconnects from $y$ either $[\wt{a},\wt{b}]_{\partial \CD}^\cw$ or $[\wt{a},\wt{b}]_{\partial \CD}^\ccw$.  Note that $\qdist{h}{\wt{a}}{z} \leq \epsilon^p$ for all $z \in \phi^{-1}(\gamma)$,  since $\phi^{-1}(\gamma)$ is the geodesic in $\CD$ connecting $\wt{a}$ to $\wt{b}$ with respect to the interior-internal metric $\qdistnoarg{h}|_{\CD}$.  It follows that $B_d(\wt{a},C^{\alpha} \epsilon^{\alpha p})$ disconnects from $y$ either $[\wt{a},\wt{b}]_{\partial \CD}^\cw$ or $[\wt{a},\wt{b}]_{\partial \CD}^\ccw$ for all $\epsilon>0$ sufficiently small.  We can assume that the latter holds.  Then,  the Beurling estimate implies that for all $\epsilon>0$ sufficiently small,  the probability that a Brownian motion starting from $y$ exits $\CD$ in $[\wt{a},\wt{b}]_{\partial \CD}^\ccw$ is $\lesssim\epsilon^{\alpha p /2}$,  where the implicit constant depends only on $C^{\alpha}$ and $A$.  This implies that $[\wt{a},\wt{b}]_{\partial \CD}^\ccw$ gets mapped to an arc in $\partial \D$ with Euclidean length at most $O(\epsilon^{\alpha p /2})$.  Finally,  to complete the proof,  we let $\wt{\beta}$ be the constant of Lemma~\ref{lem:qbmeasure_holder_cont} and choose $p>1$ such that $\alpha p \wt{\beta} /2 >1$.  Then,  the $\qdiskweighted{1}$-a.e.\  $\wt{\beta}$-H\"older continuity of $\nu_{\wt{h}}$ with respect to $d$ shown in Lemma~\ref{lem:qbmeasure_holder_cont} implies that the quantum length of $[a,b]_{\partial \D}^\ccw$ is at most $O(\epsilon^{\alpha p \wt{\beta} /2})$ for all $\epsilon>0$ sufficiently small.  But that is a contradiction since $\nu_{\wt{h}}([a,b]_{\partial \D}^\ccw) \geq \epsilon$.  Hence,  $\qdist{\wt{h}}{a}{b} \geq \epsilon^p$ and this completes the proof.
\end{proof}

\begin{lemma}\label{lem:metric_ngbd_quantum_area_ubd}
Fix $\ell>0, u\in (0,2)$ and let $\CD = (\CD,h,0)$ be a sample from $\qdiskweighted{\ell}$.  Then,  $\qdiskweighted{\ell}$-a.e., there exist random constants $C>0,\delta_0 \in (0,1)$ such that the quantum area of the $\delta$-neighborhood of $\partial \D$ with respect to $\qdistnoarg{h}$ is at most $C \delta^{2-u}$ for all $\delta \in (0,\delta_0)$.
\end{lemma}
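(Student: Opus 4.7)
The plan is to bound the quantum area of the $\qdistnoarg{h}$-neighborhood of $\partial\D$ by covering this neighborhood with a controlled number of $\qdistnoarg{h}$-balls centered at boundary points, and estimating the quantum area of each such ball using the existing volume bounds for the Brownian map.

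First I would use Lemma~\ref{lem:boundary_length_regularity} with $\zeta = 1/4$: there a.s.\ exists a random constant $C = C(h) < \infty$ so that for all $x,y\in\partial\D$,
\begin{equation*}
\qdist{h}{x}{y} \leq C \qbmeasure{h}([x,y]_{\partial\D}^{\ccw})^{1/2}\bigl(|\log \qbmeasure{h}([x,y]_{\partial\D}^{\ccw})|+1\bigr)^{2},
\end{equation*}
and similarly with $[x,y]_{\partial\D}^{\cw}$ in place of $[x,y]_{\partial\D}^{\ccw}$. For each small $\delta>0$, I would then pick points $x_1,\ldots,x_N$ on $\partial\D$ equispaced in $\qbmeasure{h}$ with spacing $\epsilon = c\delta^{2}(\log\delta^{-1})^{-5}$, choosing $c$ small enough (depending on $C$) to ensure that consecutive $x_i$ have $\qdistnoarg{h}$-distance at most $\delta/2$. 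Then every point of $\partial\D$ lies within $\qdistnoarg{h}$-distance $\delta/2$ of some $x_i$, with $N \leq C_1\ell\delta^{-2}(\log\delta^{-1})^{5}$. By the triangle inequality, the $\delta$-neighborhood (with respect to $\qdistnoarg{h}$) of $\partial\D$ is then contained in $\bigcup_{i=1}^{N}\qball{h}{x_i}{2\delta}$.

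The next ingredient is a uniform volume upper bound on these balls: for any fixed $u'\in(0,u)$, a.s.\ there exists $\delta_{1}>0$ so that $\mu_{h}(\qball{h}{z}{2\delta})\leq (2\delta)^{4-u'}$ for all $\delta\in(0,\delta_{1})$ and all $z\in\overline{\D}$. I would obtain this by realizing the sample from $\qdiskweighted{\ell}$ as a complementary component of a filled metric ball in an instance of the Brownian map (conditioned on boundary length and weighted by area), using the exploration described in Subsection~\ref{subsubsec:boundary_length}. Since the interior-internal metric $\qdistnoarg{h}$ on $\D$ dominates the ambient Brownian map metric restricted to $\D$, each $\qdistnoarg{h}$-ball is a subset of the corresponding ambient metric ball. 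The polynomial volume upper bound of Lemma~\ref{lem:brownian_map_volume_estimates} (applied with exponent $u'$ to the ambient Brownian map instance) then transfers to yield the required uniform bound, valid simultaneously for all $z \in \overline\D$, including boundary points.

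Combining the cover with the volume bound gives
\begin{equation*}
\mu_{h}\bigl(\{z\in\overline{\D}\mid \qdist{h}{z}{\partial\D}\leq\delta\}\bigr)
\leq N(2\delta)^{4-u'}
\leq C_{2}\ell\delta^{2-u'}(\log\delta^{-1})^{5},
\end{equation*}
which for any $u' \in (0,u)$ can be absorbed into $C\delta^{2-u}$ once $\delta$ is taken small enough, completing the proof. The main technical point is ensuring that the volume upper bound applies uniformly over boundary centers $z$; the essential observation is that the interior-internal metric on the disk is always at least as large as the ambient Brownian map metric, so the monotonicity of balls under this comparison converts Brownian-map volume estimates into quantum-disk volume estimates, without any degeneration as $z$ approaches $\partial\D$.
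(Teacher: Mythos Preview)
Your proof is correct and follows essentially the same strategy as the paper: cover $\partial\D$ by roughly $\delta^{-2}$ metric balls of radius $O(\delta)$ using Lemma~\ref{lem:boundary_length_regularity}, then bound each ball's quantum area by $\delta^{4-u'}$ and sum. The only difference is the source of the ball-volume bound: the paper cites \cite[Lemma~3.3]{gm2019gluing} directly for balls centered on $\partial\CD$, whereas you embed the disk as the complement of a filled metric ball in a Brownian map instance and invoke Lemma~\ref{lem:brownian_map_volume_estimates} via the domination of the ambient metric by the interior-internal metric---both are valid, and your route is arguably more self-contained within the paper.
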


\begin{proof}
We assume that we have the setup of the proof of Lemma~\ref{lem:boundary_length_regularity} given in \cite{gm2019gluing}.  In particular,  we let $p:[0,1] \to \CD$ be the quotient map introduced in \cite[Section~3.1]{gm2019gluing}.  Then,  we know from \cite[Section~3.1]{gm2019gluing} that $\partial \CD = p(\{T_r : r \in [0,\ell]\})$,  where $T_r = \inf\{t \geq 0 : B_t = -r\}$ and $B$ is a standard Brownian motion coupled with $\CD$.  Fix $\wt{u} \in (0,1/2)$ sufficiently small (to be chosen).  Then,  Lemma~\ref{lem:boundary_length_regularity} implies that there a.s.\  exists a constant $C>0$ such that $\qdist{h}{x}{y} \leq C \qbmeasure{h}([x,y]_{\partial \CD}^\cw)^{\frac{1}{2}-\wt{u}}$ for all $x,y \in \partial \CD$.  Moreover,  by possibly taking $C$ to be larger,  we can assume using \cite[Lemma~3.3]{gm2019gluing} that $\mu_h(B_{\qdistnoarg{h}}(z,\delta)) \leq C \delta^{4-\wt{u}}$ for all $z \in \partial \CD$,  $\delta \in (0,1)$.  Let $\delta_0 \in (0,1/2)$ be sufficiently small such that $\wt{\delta} \in (0,1/2)$ where $\wt{\delta}>0$ is such that $C \wt{\delta}^{1/2 - \wt{u}} = \delta$.  Then,  we have that $p(\{T_r : r \in [(k-1)\wt{\delta},k\wt{\delta}]\}) \subseteq B_{\qdistnoarg{h}}(T((k-1)\wt{\delta}),\delta)$ for all $1 \leq k \leq \wt{\delta}^{-1}$
.  It follows that the $\delta$-neighborhood of $\partial \CD$ with respect to $\qdistnoarg{h}$ is contained in $\cup_{k=1}^{\wt{\delta}^{-1}} B_{\qdistnoarg{h}}(p(T((k-1)\wt{\delta})),2\delta)$.  Therefore,  a union bound implies that the quantum area of the $\delta$-neighborhood of $\partial \CD$ with respect to $\qdistnoarg{h}$ is at most $\lesssim \delta^{4-\wt{u} - \frac{2}{1-2\wt{u}}}$,  and so the proof of the lemma is complete by choosing $\wt{u}$ such that $4-\wt{u}-\frac{2}{1-2\wt{u}} > 2-u$.
\end{proof}

\begin{proposition}\label{prop:good_quantum_disk}
Fix $\epsilon \in (0,1)$ and let $\CD = (\D,h)$ be sampled from $\qdiskweighted{1}$.  Define the event $E(h)$ that conditions~\eqref{it:diameter_bound}-\eqref{it:ngbd_vollume_ubd} hold for the quantum surface $\CD$ instead of $\CN$ and with $\delta = 1$.  Then,  we have that $\qdiskweighted{1}\big[ E(h) \big] \geq 1-\epsilon$ provided $M$ is large enough and $p$ is small enough.
\end{proposition}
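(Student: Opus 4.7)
The plan is to verify each of conditions~\eqref{it:diameter_bound}--\eqref{it:ngbd_vollume_ubd} individually with $\qdiskweighted{1}$-probability at least $1-\epsilon/5$, provided $M$ is taken sufficiently large (and $p$ sufficiently small for~\eqref{it:ngbd_vollume_ubd}), and then to conclude by a union bound. Each of these conditions has the schematic form that some $\qdiskweighted{1}$-a.s.\ finite random quantity $X(h)$ is at most $M$ (or some a.s.\ positive random quantity is at least $1/M$). Since the corresponding laws do not charge $\{\infty\}$ (resp.\ $\{0\}$), each of these $\qdiskweighted{1}$-probabilities tends to $1$ as $M \to \infty$, and it will suffice to identify $X(h)$ in each case.

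For condition~\eqref{it:diameter_bound} with $\delta=1$, we need $\mathrm{diam}_{\qdistnoarg{h}}(\CD)\leq M$, which holds a.s.\ for large $M$ because the $\sqrt{8/3}$-LQG metric is a.s.\ a continuous metric on $\overline{\D}$ (equivalently, via the identification of the quantum disk with the Brownian disk). For the first half of condition~\eqref{it:mass_bound}, the conformal map $\varphi\colon\CD\to\D$ sending the three prescribed boundary points to $-1,-i,1$ is uniquely determined a.s., and the pushforward of $\qmeasure{h}$ under $\varphi$ a.s.\ assigns positive mass to $\overline{\eball{0}{1/2}}$ since the Liouville measure has full support. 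For the second half, we use H\"older continuity of $\qdistnoarg{h}$ with respect to the Euclidean metric on $\overline{\D}$, with some random exponent: this holds for a sample from $\qsphereinflaw$ by \cite[Theorem~1.2]{ms2016qle2} and transfers to a sample from $\qdiskweighted{1}$ via the local absolute continuity between the two laws on a filled-metric-ball complement.

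For condition~\eqref{it:reverse_holder_continuity}, the upper bound $\qdist{h}{x}{y}\leq\epsilon^{1/M}$ in the small-arc case follows from Lemma~\ref{lem:boundary_length_regularity} (taking $M>2$ absorbs the logarithmic correction for $\epsilon$ small), and the lower bound $\qdist{h}{x}{y}\geq\epsilon^M$ in the case that both arcs have boundary length at least $\epsilon$ is exactly Lemma~\ref{lem:disk_boundary_reverse_holder} (for $M\geq\beta$). Condition~\eqref{it:boundary_volume_lbd} is immediate from Lemma~\ref{lem:boundary_area_lbd}. For condition~\eqref{it:ngbd_vollume_ubd}, we fix $u'\in(0,2)$ and apply Lemma~\ref{lem:metric_ngbd_quantum_area_ubd} to obtain random $C,\delta_0>0$ such that the quantum area of the $\epsilon$-neighborhood of $\partial\CD$ is at most $C\epsilon^{2-u'}$ for $\epsilon\in(0,\delta_0)$; then any $p\in(0,2-u')$ works, taking $M$ large enough that $C\leq M$, $\delta_0\geq 1/M$, and $M^{u'-2+p}\leq 1$ on an event of probability at least $1-\epsilon/5$.

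The main point requiring care is the Euclidean-to-quantum metric comparison in the second half of~\eqref{it:mass_bound}: one must transfer the known quantitative H\"older estimate from the quantum sphere setting of \cite{ms2016qle2} to the embedded quantum disk. This is standard via local absolute continuity of the fields in bulk regions, but needs a little bookkeeping because the embedding $\varphi$ is conformally fixed by boundary data rather than intrinsically defined. Aside from this, the proof is just a union bound over the five individually-controlled conditions.
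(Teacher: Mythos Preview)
Your approach is essentially the same as the paper's: a union bound over conditions~\eqref{it:diameter_bound}--\eqref{it:ngbd_vollume_ubd}, invoking Lemma~\ref{lem:boundary_length_regularity} and Lemma~\ref{lem:disk_boundary_reverse_holder} for~\eqref{it:reverse_holder_continuity}, Lemma~\ref{lem:boundary_area_lbd} for~\eqref{it:boundary_volume_lbd}, and Lemma~\ref{lem:metric_ngbd_quantum_area_ubd} for~\eqref{it:ngbd_vollume_ubd}. The one place where the paper is more explicit than you is the second half of~\eqref{it:mass_bound}: rather than appealing to local absolute continuity, the paper realizes $\CD$ concretely as the complement $U_\tau$ of a filled metric ball in a sample from $\qsphereinflaw$ (at the first time the boundary length hits $1$), then combines the sphere's H\"older continuity \cite[Theorem~1.2]{ms2016qle2} with the H\"older continuity of the uniformizing map $\psi\colon U_\tau\to\D$ coming from \cite[Theorem~5.2]{rohde2011basic} (since $\partial U_\tau$ is an $\SLE_6$-type boundary). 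You correctly flagged this step as the one needing care, but your ``local absolute continuity'' phrasing undersells what is used---it is an equality in law, and the nontrivial input is the H\"older regularity of the conformal map, which you did not name.
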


\begin{proof}
Recall that a sample from $\qdiskweighted{1}$ can be produced as follows.  Let $(\CS,h,x,y)$ be a sample from $\qsphereinflaw$ and let $\eta'$ be an independent whole-plane $\SLE_6$ in $\CS$ from $x$ to $y$ parameterized by quantum natural time.  Let also $L$ be the process describing the boundary length evolution of the connected component $U_t$ of $\CS \setminus \eta'([0,t])$ containing $y$.  We let $\tau$ be the first time $t$ that $L_t = 1$.  Then,  conditional on $\tau<\infty$,  we let $\CD$ be the surface obtained by conformally mapping $U_{\tau}$ onto $\CD$ using the conformal map $\psi: U_{\tau} \to \CD$ such that $\psi(y) = 0$ and $\psi'(y) > 0$ and then applying the coordinate change formula for quantum surfaces.  Since $\CD$ is a metric space of finite diameter a.s.,  we can arrange so that part~\eqref{it:diameter_bound} holds with probability as close to $1$ as we want by taking $M$ sufficiently large.  Moreover,  since $\mu_h(B(0,1/2)) > 0$ $\qdiskweighted{1}$-a.s.\  and there exists a deterministic constant $\alpha \in (0,1)$ such that the metric in $\CS$ is $\alpha$-H\"older continuous with respect to the Euclidean metric and $\psi$ is H\"older continuous with some fixed and deterministic exponent (see \cite[Theorem~5.2]{rohde2011basic}),  we obtain that part~\eqref{it:mass_bound} holds with probability as close to $1$ as we want by taking $M$ sufficiently large.  Also,  it follows by combining Lemmas~\ref{lem:boundary_length_regularity} and~\ref{lem:disk_boundary_reverse_holder} that part~\eqref{it:reverse_holder_continuity} holds with arbitrarily high probability provided we choose $M$ sufficiently large.  Furthermore,  by arguing in the same way but using Lemma~\ref{lem:boundary_area_lbd} instead,  we obtain that part~\eqref{it:boundary_volume_lbd} holds with arbitrarily high probability if we choose $M$ sufficiently large.  Finally,  Lemma~\ref{lem:metric_ngbd_quantum_area_ubd} implies that part~\eqref{it:ngbd_vollume_ubd} holds with arbitrarily high probability as well if we choose $M$ large enough and $p$ small enough. This completes the proof.
\end{proof}

Next,  we focus on proving Proposition~~\ref{prop:good_sle_chunks}.  First,  we will prove that condition~\eqref{it:reverse_holder_continuity} holds with high probability provided $M$ is sufficiently large.  We begin by proving that the lower bound on quantum distances in condition~\eqref{it:reverse_holder_continuity} holds with high probability.  Since the proof of the lower bound will be technical,  we will give its proof in the next two lemmas.  First,  we will deal with the case that both of the boundary points $x$ and $y$ lie on $\h \cap \partial \CN$.  This is the content of the following lemma.

\begin{figure}
\includegraphics[scale=0.85,page=1]{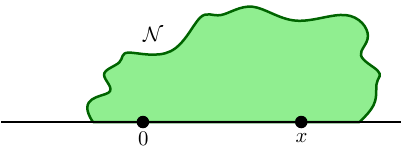} \hspace{0.025\textwidth} \includegraphics[scale=0.85,page=2]{figures/chunk_retargeting.pdf}

\vspace{0.025\textheight}

\includegraphics[scale=0.85,page=3]{figures/chunk_retargeting.pdf}
\caption{Illustration of the proof of Lemma~\ref{lem:reverse_holder_continuity_top_boundary}.  {\bf Top left}: The chunk $\CN$.  {\bf Top right:} An $\SLE_6$ process $\wt{\eta}'$ in $\h$ from $0$ to $x$ coupled to agree with $\eta'$ up until they first separate $x$ from $\infty$, shown on the event that the hull that it separates from $\infty$ parameterizes $\CN$.  {\bf Bottom:} The right (standing from $x$ looking towards $0$) boundary of $\wt{\eta}'$ is $\wt{\eta}$, which we view as a curve from $x$ to $0$.}
\end{figure}

\begin{lemma}\label{lem:reverse_holder_continuity_top_boundary}
Let $\mathcal{W} = (\h,h,0,\infty)$ have law $\qwedge{2}$ and let $\eta'$ be a chordal $\SLE_6$ in $\h$ from $0$ to $\infty$ which is independent of $\mathcal{W}$.  Fix $\ell>0$ and let $x>0$ be such that $\qbmeasure{h}([0,x]) = \ell$.  Let also $\tau$ be the first time that $\eta'$ disconnects $x$ from $\infty$.  Moreover,  let $(K_t)$ denote the hulls of $\eta'$ and let $\CN$ be the quantum surface parameterized by the interior of $K_{\tau}$.  Then,  there a.s.\  exist $\epsilon_0 \in (0,1)$ and $M \in (1,\infty)$ such that for all $\epsilon \in (0,\epsilon_0)$ the following holds.  Let $a,b  \in  \h \cap \partial \CN$ be such that $\qbmeasure{h}([a,b]_{\partial \CN}^\cw) \geq \epsilon$ and $\qbmeasure{h}([a,b]_{\partial \CN}^\ccw) \geq \epsilon$.  Then,  we have that $\qdistnoarg{h|_{\CN}}(a,b) \geq \epsilon^M$,  where $\qdistnoarg{h|_{\CN}}$ denotes the interior-internal metric on $\CN$ with respect to $h$. 
\end{lemma}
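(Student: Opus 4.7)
The approach is to reduce the claim to the analogous reverse H\"older continuity statement for quantum disks, Lemma~\ref{lem:disk_boundary_reverse_holder}, via a retargeting argument based on the locality property of $\SLE_6$. First, I would couple $\eta'$ with a chordal $\SLE_6$ curve $\wt{\eta}'$ in $\h$ from $0$ to $x$ so that $\wt{\eta}' = \eta'$ on the interval $[0,\tau]$; this is exactly what locality of $\SLE_6$ provides, since on this interval neither curve has yet felt the distinction between the targets $x$ and $\infty$. On the event $G$ that $\wt{\eta}'$ terminates at $x$ by accumulating there at time $\tau$ (which is the generic situation, $\SLE_6$ being boundary-hitting), the filled hull of $\eta'$ at $\tau$ equals the filled hull of $\wt{\eta}'$ at its terminal time, so the top boundary $\h\cap\partial\CN$ is exactly the trace of $\wt{\eta}'$ inside $\h$ and the bottom is the closed segment $[0,x]$.

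Next, I would invoke the mating-of-trees description of $\SLE_6$ on a weight-$2$ quantum wedge from \cite[Section~10]{dms2014mating} (see also \cite[Proposition~6.4]{ms2015spheres}): the quantum surface obtained as the filled hull of a chordal $\SLE_6$ running from the origin of a weight-$2$ wedge to a boundary point $x$ with $\qbmeasure{h}([0,x])=\ell$ is, conditionally on its total quantum boundary length $\ell'=\ell+L$, a sample from $\qdisk{\ell'}$; here $L$ denotes the quantum length of the $\SLE_6$ trace (i.e.\ of the top of $\CN$), and the two boundary arcs of length $L$ and $\ell$ are identified respectively with the top arc $\h\cap\partial\CN$ and the bottom arc $[0,x]$ of $\partial\CN$. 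In particular, after conformally embedding $\CN$ in $\D$, the pair consisting of the interior-internal metric $\qdistnoarg{h|_{\CN}}$ and the boundary length measure $\qbmeasure{h}$ on $\partial\CN$ is distributed as the quantum-disk metric and boundary-length measure on $\qdisk{\ell'}$.

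With this identification in hand, Lemma~\ref{lem:disk_boundary_reverse_holder} applied to $\CN$ yields a random $\epsilon_{0}>0$ and a deterministic $\beta>0$ such that for all $\epsilon\in(0,\epsilon_{0})$ and all $a,b\in\partial\CN$ with $\qbmeasure{h}([a,b]_{\partial\CN}^{\cw})\wedge\qbmeasure{h}([a,b]_{\partial\CN}^{\ccw})\geq \epsilon$, one has $\qdistnoarg{h|_{\CN}}(a,b)\geq \epsilon^{1/\beta}$; restricting to $a,b\in\h\cap\partial\CN$ then gives the conclusion for any $M\geq 1/\beta$. The main technical obstacle is carrying out the retargeting identification cleanly: one must verify that under the locality coupling the conditional law of $\CN$ given $\ell'$ really is $\qdisk{\ell'}$ (and not merely locally absolutely continuous with respect to it), and one must handle the complementary event $G^{c}$ on which $\wt{\eta}'$ has not terminated by time $\tau$. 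The latter can be addressed by iterating the coupling with the help of the strong Markov property of $\SLE_6$ at $\tau$, or alternatively by noting that on $G^{c}$ the hull $K_{\tau}$ is strictly larger than the filled hull of $\wt{\eta}'$ up to $\tau$, so that distances between top-boundary points in $\CN$ dominate the corresponding distances inside the latter smaller hull, which is itself a quantum disk to which Lemma~\ref{lem:disk_boundary_reverse_holder} applies directly.
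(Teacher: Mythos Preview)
Your proposal has a genuine gap at its central step: the assertion that the filled hull $\CN$ is, conditionally on its boundary length, a sample from $\qdisk{\ell'}$. Neither of the references you cite establishes this. \cite[Proposition~6.4]{ms2015spheres} identifies the \emph{unexplored} region (the $y$-containing complementary component) as a weighted quantum disk, and the mating-of-trees results in \cite{dms2014mating} identify the individual \emph{bubbles} cut off by $\SLE_6$ as independent quantum disks. The hull $K_\tau$, however, is the union of the trace and all these bubbles glued along the trace; while its interior is a Jordan domain, there is no standard result asserting that this glued object has the law of a single quantum disk. If such an identification were available the paper would surely have used it, since it would collapse the four-step argument into a one-line application of Lemma~\ref{lem:disk_boundary_reverse_holder}.

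The paper's actual route is substantially more indirect precisely because this shortcut is not available. It retargets $\eta'$ to $x$ (as you do) but then works with the \emph{outer boundary} $\wt\eta$ of the retargeted curve, which is an $\SLE_{8/3}(-\tfrac{4}{3};-\tfrac{4}{3})$ from $x$ to $0$. The key fact invoked (Step~3) is that cutting a weight-$2$ wedge by an $\SLE_{8/3}(-\tfrac{4}{3};-\tfrac{4}{3})$ curve targeted at $\infty$ produces a weight-$\gamma^2/2$ quantum wedge on one side; the paper transfers the reverse H\"older bound from this wedge to a portion of $\partial\CN$ by absolute continuity of the curve laws on compact time intervals. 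A time-reversal argument (Step~4) then covers the remaining portion of the top boundary, and the two pieces are patched together.

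Your treatment of the event $G^c$ is also confused: under the locality coupling the laws of $\eta'|_{[0,\tau]}$ and $\wt\eta'$ stopped at its first disconnection time agree exactly, so there is no residual event to handle by iteration or domination.
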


\begin{proof}
\emph{Step 1.  Overview and setup.} First we note that the locality property of $\SLE_6$ implies that we can couple $\eta'$ with an $\SLE_6$ $\wt{\eta}'$ in $\h$ from $0$ to $x$ such that $\eta'$ and $\wt{\eta}'$ agree up until the first time that they disconnect $x$ from $\infty$.  From now on,  we assume that we are working with this coupling.  Let $\wt{\eta}$ be the left outer boundary of $\wt{\eta}'$ and for $\rho \geq 0$,  we let $\wt{\tau}_{\rho}$ be the first time $t$ that $\dist(\wt{\eta}(t), \R_-) \leq \rho$.  Note that $\wt{\tau}_{\rho} < \infty$ a.s.\  for all $\rho \geq 0$.  Let also $\wt{D}_{\rho}$ be the unbounded connected component of $\h \setminus \wt{\eta}([0,\wt{\tau}_{\rho}])$.  Moreover,  we fix $0<\rho_2<\rho_1<x$ and let $\wt{I}$ be the counterclockwise segment traced by $\wt{\eta}$ between times $\wt{\sigma}$ and $\wt{\tau}_{\rho_1}$,  where $\wt{\sigma}$ is the last time before time $\wt{\tau}_{\rho_1}$ that $\wt{\eta}$ intersects $[x,\infty)$ (when viewed as a set of prime ends on the left side of $\wt{\eta}$).  We also set $\wt{J} = \wt{\eta}([\wt{\sigma},\wt{\tau}_{\rho_2}])$.  The proof of the lemma consists of three steps.  In Step 2,  we will prove that the lower bound on quantum distances in condition~\eqref{it:reverse_holder_continuity} holds for a quantum wedge $\wh{\mathcal{W}}$ parameterized by $\h$ of weight $\frac{\gamma^2}{2} = \frac{4}{3}$  for points on compact intervals on $\R$ which are bounded away from $0$.  Since the law of the field $h$ restricted to a small neighborhood of $\wt{I}$ in $\CN$ is absolutely continuous with  a quantum wedge of weight $\frac{4}{3}$,  we will deduce in Step 3 the claim of the lemma for $a,b  \in \wt{I} \cap \partial \CN$ and the field $h|_{\h \setminus \wt{J}}$ in place of $h|_{\CN}$.  Finally,  we will complete the proof in Step 4 using the time-reversal invariance of the law of $\wt{\eta}$.

\emph{Step 2.  Proof of the claim for a quantum wedge of weight $\frac{\gamma^2}{2} = \frac{4}{3}$.} Let $\wt{h}$ be a free boundary GFF on $\h$ with the additive constant taken so that its average on $\h \cap \partial \D$ is equal to $0$.  Then Proposition~\ref{prop:good_quantum_disk} combined with the argument in Lemma~\ref{lem:qbmeasure_holder_cont} imply that for every fixed $-\infty < a < b < \infty$,  there a.s.\  exist $\epsilon_0 \in (0,1),  M \in (1,\infty)$ such that the following is true.  For all $\epsilon \in (0,\epsilon_0)$ and all $z,w \in [a,b]$ such that $\qbmeasure{\wt{h}}([z,w]) \geq \epsilon$,  we have that $\qdistnoarg{\wt{h}}(z,w) \geq \epsilon^M$,  where $\qdistnoarg{\wt{h}}$ denotes the interior-internal metric on $\h$ with respect to $\wt{h}$.

Fix $0<a<b$.  Then the above implies that there a.s.\  exists $\epsilon_0 \in (0,1)$ such that for all $\epsilon \in (0,\epsilon_0)$ and all $a \leq z < w \leq b$ such that $\qbmeasure{\wt{h}}([z,w]) \geq \epsilon$,  we have that $\qdistnoarg{\wt{h}|_{A_{r,R}}}(z,w) \geq \epsilon^M$,  for all $0<r<a<b<R$,  where $A_{r,R}:= B(0,R) \setminus \overline{B(0,r)}$.  Note that if we fix such $r,R$,  the above event is determined by $\wt{h}|_{A_{r,R}}$.  Suppose that $\wh{\mathcal{W}} = (\h,\wh{h},0,\infty)$ has the law of a weight $\frac{\gamma^2}{2} = \frac{4}{3}$ quantum wedge with the circle average embedding.  Then,  for fixed $0<r<a<b<R$,  the laws of $\wh{h}|_{A_{r,R}}$ and $\wt{h}|_{A_{r,R}}$ are mutually absolutely continuous and so it is a.s.\  the case that there exist $\epsilon_0 \in (0,1),  M \in (1,\infty)$ such that for all $\epsilon \in (0,\epsilon_0)$,  if $z,w \in [a,b]$ are such that $\qbmeasure{\wh{h}}([z,w]) \geq \epsilon$,  we have that $\qdistnoarg{\wh{h}|_{A_{r,R}}}(z,w) \geq \epsilon^M$.  Note that $\qdistnoarg{\wh{h}}$ a.s.\  induces the Euclidean topology (see \cite[Theorem~1.3]{hughes2024equivalencemetricgluingconformal}).  Moreover,  the function $t \to \qbmeasure{\wh{h}}([0,t])$ is a homeomorphism on $\R_+$ with respect to the Euclidean topology (and hence with respect to the topology induced by $\qdistnoarg{\wh{h}}$ on $\R_+$),  which implies that it is a.s.\  the case that there exists $\delta_0 \in (0,1)$ such that $\qdistnoarg{\wh{h}}(z,w) \geq \delta_0$ for all $z,w \in [a,b]$ such that $\qbmeasure{\wh{h}}([z,w]) \geq \epsilon_0$.  Also,  since $\dist_{\qdistnoarg{\wh{h}}}([a,b],\partial A_{r,R}) > 0$ a.s.,  possibly by taking $\epsilon_0$ to be smaller,  we can assume that $\qdistnoarg{\wh{h}}(z,w) = \qdistnoarg{\wh{h}|_{A_{r,R}}}(z,w)$ for all $z,w \in [a,b]$ such that $\qbmeasure{\wh{h}}([z,w]) \leq \epsilon_0$.  Combining,  we obtain that possibly by taking $M$ to larger,  we have that $\qdistnoarg{\wh{h}}(z,w) \geq \epsilon^M$ for all $a\leq z < w \leq b$ such that $\qbmeasure{\wh{h}}([z,w]) \geq \epsilon$,  and all $\epsilon \in (0,\epsilon_0)$.  The same holds for any $-\infty < a < b < 0$ by symmetry.

\emph{Step 3.  Proof of the claim for the field $h|_{\h \setminus \wt{J}}$.} Next,  we note that It follows from \cite[Section~2]{mw2017intSLE} that $\wt{\eta}$ has the law of an $\SLE_{\kappa}(\frac{\kappa}{2}-2 ;  \kappa-4)$ process in $\h$ from $x$ to $0$,  where $\kappa = \frac{8}{3}$ and the force points are located at $x^-$ and $x^+$ respectively.  Let also $\eta$ be an $\SLE_{\kappa}(\frac{\kappa}{2}-2;\kappa-4)$ process in $\h$ from $x$ to $\infty$ with the force points located at $x^-$ and $x^+$ respectively,  which is independent of $\mathcal{W}$.  Then,  combining \cite[Proposition~1.7]{sheffield2015conformalweldingsrandomsurfaces} with \cite[Theorem~1.2]{dms2014mating},  we obtain the following description of the quantum surface parameterized by the region to the left of $\eta$.  Let $\phi$ be the conformal transformation mapping the connected component of $\h \setminus \eta$ lying to the left of $\eta$ onto $\h$ such that $\phi(x) = 0,  \phi(\infty) = \infty$ and $\phi(0) = -1$.  Then,  if we parameterize the field $\check{h} := h \circ \phi^{-1} + Q \log |(\phi^{-1})'|$ by the circle average embedding,  it has the same law with the field obtained when we parameterize $\wh{\mathcal{W}}$ by the circle average embedding.  For $\rho \geq 0$,  we let $\tau_{\rho}$ be the first time $t$ that $\dist(\eta(t) ,  \R_-) \leq \rho$.  Suppose that we are working on the event that $\tau_{\rho_2} < \infty$,  where $0<\rho_2 < \rho_1 <x$ are small but fixed.  Let $I$ be the counterclockwise segment traced by $\eta$ between the last time before $\tau_{\rho_1}$ that it intersects $[x,\infty)$ and time $\tau_{\rho_1}$ (seen as set of prime ends on the left side of $\eta$).  Then,  if we apply the results of Step 2 for $\wh{\mathcal{W}}$ on the time-interval $\phi(I)$,  we obtain that there a.s.\  exist $\epsilon_0 \in (0,1),  M \in (1,\infty)$ such that for all $\epsilon \in (0,\epsilon_0)$,  the following is true.  Let $a,b \in I$ be such that $\eta$ hits $a$ before it hits $b$ and $\qbmeasure{h}([a,b]) \geq \epsilon$.  Then,  it holds that $\qdistnoarg{\check{h}}(\phi(a),\phi(b)) \geq \epsilon^M$.  Suppose that $\epsilon_0 \in (0,1)$ is chosen such that $\epsilon_0 < \dist_{\qdistnoarg{\check{h}}}(\phi(I) ,  \phi(\eta([\tau_{\rho_2},\infty))))$.  We claim that if we further assume that $\qbmeasure{h}([a,b]) \leq \epsilon_0$,  we have that $\qdistnoarg{h|_{D_{\rho_2}}}(a,b) = \qdistnoarg{h|_{\phi^{-1}(\h)}}(a,b)$,  where $D_{\rho}$ denotes the unbounded connected component of $\h \setminus \eta([0,\tau_{\rho}])$.  Indeed,  clearly we have that $\qdistnoarg{h|_{D_{\rho_2}}}(a,b) \leq \qdistnoarg{h|_{\phi^{-1}(\h)}}(a,b)$.  Let $\gamma$ be a $\qdistnoarg{h|_{D_{\rho_2}}}$-geodesic path in $D_{\rho_2}$ from $a$ to $b$.  Suppose that $\gamma$ intersects $\eta((\tau_{\rho_2},\infty))$,  and let $t$ be the last time that $\gamma$ hits $\eta((\tau_{\rho_2},\infty))$,  when $\gamma$ is parameterized by $[0,1]$ and $\gamma(0) = a,  \gamma(1) = b$.  Then,  we have that the $\qdistnoarg{h}$-length of $\gamma|_{[t,1]}$ is at least $\dist_{\qdistnoarg{\check{h}}}(\phi(I),\phi(\eta([\tau_{\rho_2},\infty)))) \geq \epsilon_0$ and so $\qdistnoarg{h|_{D_{\rho_2}}}(a,b) \geq \dist_{\qdistnoarg{\check{h}}}(\phi(I),\phi(\eta([\tau_{\rho_2},\infty)))) \geq \epsilon_0$,  but that is a contradiction.  Thus,  it follows that $\qdistnoarg{h|_{D_{\rho_2}}}(a,b) = \qdistnoarg{h|_{\phi^{-1}(\h)}}(a,b)$.  Combining everything,  we obtain that the following is true a.s.\  on the event that $\tau_{\rho_2} < \infty$.  There exists $\epsilon_0 \in (0,1),  M \in (1,\infty)$ such that for all $\epsilon \in (0,\epsilon_0)$,  the following holds.  Let $a,b \in I$ be such that $\eta$ hits $a$ before $b$ and $\epsilon \leq \qbmeasure{h}([a,b]_{\partial \CN}^\ccw) \leq \epsilon_0$.  Then $\qdistnoarg{h|_{D_{\rho_2}}}(a,b) \geq \epsilon^M$.  Also,  it follows from \cite[Theorem~6]{SchrammWilson2005} that the law of $\wt{\eta}|_{[0,\wt{\tau}_{\rho_2}]}$ is absolutely continuous with respect to the law of $\eta|_{[0,\tau_{\rho_2}]}$ when the latter is restricted to the event that $\tau_{\rho_2} < \infty$.  It follows that there a.s.\  exist $\epsilon_0 \in (0,1),  M \in (1,\infty)$ such that for all $\epsilon \in (0,\epsilon_0)$,  the following is true.  Let $a,b \in \wt{I}$ be such that $a$ is hit before $b$ by $\wt{\eta}$ and $\epsilon \leq \qbmeasure{h}([a,b]) \leq \epsilon_0$.  Then,  it holds that $\qdistnoarg{h|_{\wt{D}_{\rho_2}}}(a,b) \geq \epsilon^M$. We claim that $\qdistnoarg{h|_{\h \setminus \wt{J}}}(a,b) \geq \epsilon^M$ for such points as well,  possibly by taking $\epsilon_0$ to be smaller.  Indeed,  let $\gamma$ be a path in $\h \setminus \wt{J}$ connecting $a$ to $b$ with $\gamma : (0,1) \to \h \setminus \wt{J}$ and $\gamma(0) = a,  \gamma(1) = b$.  If $\gamma$ is contained in $\wt{D}_{\rho_2}$,  then clearly the $\qdistnoarg{h}$-length of $\gamma$ is at least $\epsilon^M$ since $\qdistnoarg{h|_{\wt{D}_{\rho_2}}}(a,b) \geq \epsilon^M$.  Suppose that $\gamma$ intersects $\partial \wt{D}_{\rho_2} \setminus \partial (\h \setminus \wt{J})$.  Then we have that $\gamma$ intersects $\wt{\eta}([0,\wt{\sigma}])$ and let $t$ be the last time that this occurs.  Then we have that $\gamma|_{(t,1)}$ is a path in $\wt{D}_{\rho_2}$ connecting $\gamma(t)$ to $b$ and the boundary length of the counterclockwise arc of $\partial \wt{D}_{\rho_2}$ from $\gamma(t)$ to $b$ is at least $\epsilon$.  Hence,  the $\qdistnoarg{h}$-length of $\gamma|_{(t,1)}$ is at least $\epsilon^M$ and so the $\qdistnoarg{h}$-length of $\gamma$ is at least $\epsilon^M$ in any case.  It follows that $\qdistnoarg{h|_{\h \setminus \wt{J}}}(a,b) \geq \epsilon^M$.

\emph{Step 4.  Conclusion of the proof.} Now,  let $\wh{\eta}$ be the time-reversal of $\wt{\eta}$ and note that  $\wh{\eta}$ has the law of an $\SLE_{\kappa}(\kappa-4;\frac{\kappa}{2}-2)$ process in $\h$ from $0$ to $x$,  where the force points are located at $0^-$ and $0^+$ respectively \cite{ms2016ig2}.  Similarly, for $\rho \in [0,1)$,  we set $\wh{\tau}_{\rho} = \inf\{t \geq 0 : \dist(\wh{\eta}(t),[x,\infty)) < \rho\}$ and let $\wh{\sigma}$ be the last time before time $\wh{\tau}_{\rho_1}$ that $\wh{\eta}$ hits $\R_-$.  Set also $\wh{J} = \wh{\eta}([\wh{\sigma},\wh{\tau}_{\rho_2}])$.  Similarly with Step 3,  possibly by taking $\epsilon_0 \in (0,1)$ to be smaller and $M \in (1,\infty)$ to be larger,  we have that the following holds for all $\epsilon \in (0,\epsilon_0)$.  Let $a,b$ be points on $\wh{J}$ viewed as prime ends on the right side of $\wh{\eta}$ such that $\wh{\eta}$ hits $a$ before it hits $b$ and $\epsilon \leq \qbmeasure{h}([a,b]) \leq \epsilon_0$.  Then we have that $\qdistnoarg{h|_{\h \setminus \wh{J}}}(a,b) \geq \epsilon^M$.  Suppose that these two events hold.  Note that we can choose $0<\rho_2<\rho_1<x$ sufficiently small such that $\wt{\eta}([\wt{\tau}_{\rho_1},\infty)) \cap \wh{\eta}([\wh{\tau}_{\rho_1},\infty)) = \emptyset$ with as high probability as we want.  Suppose that this occurs as well.  Note also that in that case,  we have that $\wt{J} \cup \wh{J}  = \h \cap \partial \CN$.  Moreover,  possibly by taking $\epsilon_0 \in (0,1)$ to be smaller,  we can assume that the boundary length distance in $\partial \CN$ between $\wt{\eta}([\wt{\tau}_{\rho_1},\infty)) \cap \partial \CN$ and $\wh{\eta}([\wh{\tau}_{\rho_1},\infty)) \cap \partial \CN$ is at least $\epsilon_0$.  Fix $a,b \in \h \cap \partial \CN$ such that $\epsilon \leq \min\{\qbmeasure{h}([a,b]_{\partial \CN}^\cw),  \qbmeasure{h}([a,b]_{\partial \CN}^\ccw)\} \leq \epsilon_0$.  Without loss of generality,  we can assume that $[a,b]_{\partial \CN}^\ccw \subseteq \h$.  Suppose that $a \in \wt{\eta}([0,\wt{\tau}_{\rho_1}])$.  If $b \in \wt{\eta}([0,\wt{\tau}_{\rho_1}])$,  then we have that $\qdistnoarg{h|_{\CN}}(a,b) \geq \qdistnoarg{h|_{\h \setminus \wt{J}}}(a,b) \geq \epsilon^M$.  If $b \notin \wt{\eta}([0,\wt{\tau}_{\rho_1}])$,  then we must have that $b \in \wh{\eta}([0,\wh{\tau}_{\rho_1}])$.  Suppose that $a \notin \wh{\eta}([0,\wh{\tau}_{\rho_1}])$.  If $b \in \wh{\eta}([0,\wh{\tau}_{\rho_1}])$,  then clearly we have that $\qdistnoarg{h|_{\CN}}(a,b) \geq \qdistnoarg{h|_{\h \setminus \wh{J}}}(a,b) \geq \epsilon^M$.  If $b \notin \wh{\eta}([0,\wh{\tau}_{\rho_1}])$,  then we must have that the boundary length distance in $\partial \CN$ between $a$ and $b$ is at least $\epsilon_0$,  and that is a contradiction.  Therefore,  in any case,  we have that $\qdistnoarg{h|_{\CN}}(a,b) \geq \epsilon^M$.  Moreover,  following similar arguments as the ones given in the previous paragraphs,  we obtain that there a.s.\  exists $\delta_0 > 0$ such that $\qdistnoarg{h|_{\CN}}(a,b) \geq \delta_0$ for all $a,b \in \partial \CN$ such that $[a,b]_{\partial \CN}^\ccw \subseteq \partial \CN$ and $\qbmeasure{h}([a,b]_{\partial \CN}^\ccw) \geq \epsilon_0$.  Combining everything,  we obtain that there a.s.\  exist $\epsilon_0 \in (0,1) ,  M \in (1,\infty)$ such that for all $\epsilon \in (0,\epsilon_0)$,  the following holds.  Let $a,b \in \h \cap  \partial \CN$ be such that $\qbmeasure{h}([a,b]_{\partial \CN}^\cw) \geq \epsilon$ and $\qbmeasure{h}([a,b]_{\partial \CN}^\ccw) \geq \epsilon$.  Then we have that $\qdistnoarg{h|_{\CN}}(a,b) \geq \epsilon^M$.  This completes the proof of the lemma.
\end{proof}

Now we are ready to prove that condition~\eqref{it:reverse_holder_continuity} holds with high probability in the context of Proposition~\ref{prop:good_sle_chunks}.

\begin{lemma}\label{lem:reverse_holder_continuity}
Suppose that we have the setup of Lemma~\ref{lem:reverse_holder_continuity_top_boundary}.  Then there a.s.\  exist $\epsilon_0 \in (0,1)$ and $M \in (1,\infty)$ such that for all $\epsilon \in (0,\epsilon_0)$ the following hold.  Let $a,b  \in  \partial \CN$ be such that $\qbmeasure{h}([a,b]_{\partial \CN}^\cw) \geq \epsilon$ and $\qbmeasure{h}([a,b]_{\partial \CN}^\ccw) \geq \epsilon$.  Then,  we have that $\qdistnoarg{h|_{\CN}}(a,b) \geq \epsilon^M$.  Also,  we have that if $z,w \in \partial \CN$ are such that either $\qbmeasure{h}([z,w]_{\partial \CN}^\cw) \leq \epsilon$ or $\qbmeasure{h}([z,w]_{\partial \CN}^\ccw) \leq \epsilon$,  then $\qdistnoarg{h|_{\CN}}(z,w) \leq \epsilon^{1/M}$.
\end{lemma}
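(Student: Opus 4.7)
My plan is to establish the upper and lower bounds separately, extending the preceding lemma from the top-top boundary case to cover all remaining configurations.

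For the upper bound $\qdistnoarg{h|_{\CN}}(z,w) \leq \epsilon^{1/M}$, I would construct an explicit path in $\CN$ that shadows the shorter boundary arc (say $[z,w]_{\partial\CN}^{\ccw}$ of $\qbmeasure{h}$-length at most $\epsilon$) and bound its $\qdistnoarg{h}$-length by a chaining argument. The individual distances between consecutive points on the arc are controlled using a quantum-wedge analog of Lemma~\ref{lem:boundary_length_regularity}, obtained via the local absolute continuity of the weight-$2$ quantum wedge field $h$ with respect to a free boundary GFF on $\h$ (plus $-\gamma\log|\cdot|$). Summing then yields the required bound provided $M$ is taken large enough.

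For the lower bound, Lemma~\ref{lem:reverse_holder_continuity_top_boundary} already handles the case $a,b \in \h \cap \partial\CN$. Two cases remain: (i) both $a,b \in \partial\CN \cap \R$ (bottom-bottom), and (ii) one of $a,b$ is on the top $\h\cap\partial\CN$ and the other is on the bottom $\partial\CN\cap\R$ (mixed). Case (i) would proceed in parallel with the top-top argument. The bottom of $\CN$ lies in a compact subinterval of $\R$, and the restriction of $h$ to a bounded neighborhood in $\h$ of any such subinterval is locally absolutely continuous with respect to a free boundary GFF on $\h$. Step~2 of the proof of Lemma~\ref{lem:reverse_holder_continuity_top_boundary} already gives a reverse H\"older estimate for the free boundary GFF on compact boundary intervals, and this transfers to $h$ by absolute continuity. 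A geodesic-confinement argument as in Step~3 of the previous proof then confines the relevant geodesics to the region of comparison for $\epsilon$ small, yielding the desired bound on $\qdistnoarg{h|_{\CN}}$.

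Case (ii) is the main obstacle. Assume $a$ lies on the top and $b$ on the bottom with both boundary arcs between them of $\qbmeasure{h}$-length at least $\epsilon$. I would argue by contradiction: if $\qdistnoarg{h|_{\CN}}(a,b) < \epsilon^M$, then $b$ lies in the metric ball $B_{\qdistnoarg{h|_{\CN}}}(a, \epsilon^M)$. The plan is to show that for $M$ sufficiently large, this ball meets $\partial\CN$ only in a connected boundary arc around $a$ whose two halves (on either side of $a$) each have $\qbmeasure{h}$-length strictly less than $\epsilon$; this contradicts the assumption on $a,b$. The key technical input is an upper bound on the boundary length contained in a small metric ball around a boundary point of $\CN$, which must handle the ``corner'' points where the top meets the bottom. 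This corner analysis would use the absolute continuity of the wedge field near a corner with respect to a weight-$\gamma^2/2$ quantum wedge (analogous to Step~3 of the preceding proof) together with Lemma~\ref{lem:qbmeasure_holder_cont}-type regularity for the boundary length measure, in order to glue the top-top and bottom-bottom estimates across each corner.
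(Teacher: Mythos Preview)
Your outline for the upper bound and for the bottom--bottom case of the lower bound is sound and close to the paper's, though for bottom--bottom the paper takes a slightly different route: it first obtains the reverse H\"older estimate for a sample of $\qdiskweighted{1}$ (via the fixed-area disintegration and Proposition~\ref{prop:good_quantum_disk}), and then transfers it to the weight-$2$ quantum wedge using the local coupling of Lemma~\ref{lem:weighted_browniab_disk_and_brownian_half_plane} together with scale invariance of $\CW$.

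The mixed case is where your plan diverges from the paper and where there is a gap. Your contradiction argument hinges on showing that a small $\qdistnoarg{h|_{\CN}}$-ball about a top point $a$ meets $\partial\CN$ only in a short connected arc; for this you propose a ``corner analysis'' based on local absolute continuity with a weight-$\gamma^2/2$ quantum wedge, citing Step~3 of the proof of Lemma~\ref{lem:reverse_holder_continuity_top_boundary}. But the weight-$\gamma^2/2$ wedge appearing there is the surface to the \emph{left} of the outer boundary curve $\eta$, with its origin at the starting point $x$ of $\eta$; it does not describe the local structure of $(\CN,h|_\CN)$ near the corners of $\CN$, where the $\SLE$ boundary arc meets $\R$. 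No such identification is established in the paper (nor is one obviously true), so your gluing step is not supported as written.

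The paper instead avoids any direct corner analysis. It introduces $U$, the bounded component of $\h\setminus\eta'([0,\tau])$ whose boundary contains $x$, and invokes a structural result (from \cite{dms2014mating} and \cite{ang2023conformal}) asserting that $(U,h|_U)$ has the law of a weight-$2$ quantum disk conditioned on a positive-probability event. Proposition~\ref{prop:good_quantum_disk} then furnishes both upper and lower H\"older-type bounds along $\partial U$. Writing $\partial U\cap\R=[a,b]$ with $b$ a corner of $\CN$, the argument for $z\in[c,b]$ (with $c=(a+b)/2$) and $w\in\h\cap\partial\CN$ splits into two sub-cases: if $\qbmeasure{h}([z,b])\leq\epsilon^{M^2}$, the upper bound in $U$ gives $\qdistnoarg{h|_\CN}(z,b)\leq C\epsilon^{M^2/3}$, and the triangle inequality combined with Lemma~\ref{lem:reverse_holder_continuity_top_boundary} applied to $(b,w)$ yields the lower bound; if $\qbmeasure{h}([z,b])>\epsilon^{M^2}$, any path in $\CN$ from $w$ to $z$ must last exit $\h\cap\partial U$, and the lower bound inside $U$ controls the remaining segment. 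The interval $[0,c]$ is handled trivially using $\dist_{\qdistnoarg{h|_\CN}}([0,c],\h\cap\partial\CN)>0$, and a re-rooting/symmetry argument (retargeting $\eta'$ to a point in $\R_-$ at the appropriate boundary-length offset) takes care of the left half of the bottom.
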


\begin{proof}
We will only prove the first claim of the lemma (lower bound) since the second claim (upper bound) follows from similar arguments.  Lemma~\ref{lem:reverse_holder_continuity_top_boundary} implies that it suffices to prove the claim in the case that either both $a$ and $b$ lie on $\R \cap \partial \CN$ or one of the points lies on $\R \cap \partial \CN$ and the other one lies on $\h \cap \partial \CN$.  We will first prove the claim in the case that $\{a,b\} \subseteq \R \cap \partial \CN$.  Recall the decomposition $\qdiskweighted{1}[\,\cdot \,] = \int_0 ^{\infty} \qdiskweightedfixedarea{\alpha}{1}[\,  \cdot \,] \frac{1}{\sqrt{2\pi \alpha^3}} e^{-1/(2\alpha)} d\alpha$.  Also,  Proposition~\ref{prop:good_quantum_disk} implies that the following is true for $\qdiskweighted{1}$-a.e.\  instance $(\h,\check{h})$.  There exist $\epsilon_0 \in (0,1),  M \in (1,\infty)$ such that the following holds for all $\epsilon \in (0,\epsilon_0)$.  Let $a<b \in \R$ be such that $\qbmeasure{\check{h}}([a,b]) \geq \epsilon$ and $\qbmeasure{\check{h}}(\R \setminus [a,b]) \geq \epsilon$.  Then we have that $\qdistnoarg{\check{h}}(a,b) \geq \epsilon^M$.  Thus,  combining with the above disintegration,  we obtain that the same is a.e.\  true if we replace $\qdiskweighted{1}$ by $\qdiskweightedfixedarea{\alpha}{1}$ for all $\alpha > 0$.  In particular,  it holds when $\alpha = 1$.  Let $(\h,\check{h})$ be a sample from $\qdiskweightedfixedarea{1}{1}$ and let $y \in \partial \h$ be sampled uniformly according to $\qbmeasure{\check{h}}$.  Fix $p \in (0,1)$.  Then Lemma~\ref{lem:weighted_browniab_disk_and_brownian_half_plane} implies that there exists $\wt{a} >0$ depending only on $p$ and a coupling between $(\h,\check{h})$ and $(\h,h)$ such that with probability at least $1-\frac{p}{2}$,  we have that the metric spaces $\qball{\check{h}}{0}{\wt{a}}$ and $\qball{h}{0}{\wt{a}}$ agree in the sense of Lemma~\ref{lem:weighted_browniab_disk_and_brownian_half_plane}.  Thus,  combining with the scale invariance of the law of $\mathcal{W}$ (see \cite[Proposition~4.7]{dms2014mating}),  we obtain that for $R>0$ fixed,  there exist $\epsilon_0 \in (0,1),  M \in (1,\infty)$ depending only on $p$ and $R$,  such that the following holds with probability at least $1-\frac{p}{2}$.  For all $\epsilon \in (0,\epsilon_0)$,  we have that if $z<w \in \R$ are such that $[z,w] \subseteq \qball{h}{0}{R}$ and $\qbmeasure{h}([z,w]) \geq \epsilon$,  we have that $\qdistnoarg{h}(z,w) \geq \epsilon^M$.  Also,  we can choose $R>0$ sufficiently large such that $K_{\tau} \subseteq \qball{h}{0}{R}$ with probability at least $1-\frac{p}{2}$.  Combining,  we obtain that there exist $\epsilon_0 \in (0,1),  M \in (1,\infty)$ depending only on $p$,  such that the following holds with probability at least $1-p$.  For all $\epsilon \in (0,\epsilon_0)$ and all $z<w \in \R \cap K_{\tau}$ such that $\qbmeasure{h}([z,w]) \geq \epsilon$,  we have that $\qdistnoarg{h|_{\CN}}(z,w) \geq \qdistnoarg{h}(z,w) \geq \epsilon^M$.  This completes the proof of the lower bound in the case that both points lie on $\R \cap \partial \CN$ since $p \in (0,1)$ was arbitrary.

Now,  we prove the lower bound in the case that one of the points lies on $\R \cap \partial \CN$ and the other one lies on $\h \cap \partial \CN$.  Let $U$ be the connected component of $\h \setminus \eta'([0,\tau])$ whose boundary contains $x$.  Then combining \cite[Theorem~1.18]{dms2014mating} with \cite[Corollary~4.3]{ang2023conformal},  we obtain that the quantum surface $(U,h|_U)$ can be sampled as follows.  Let $\mathcal{D}_1,  \mathcal{D}$ be two independent samples from the infinite measures on quantum disks of weights $W = \gamma^2-2 = \frac{2}{3}$ and $\gamma^2 - W = 2$ respectively,  conditioned on the event that $\ell_1<\ell<\ell_1+\ell_2$,  where $\ell_1,\ell_2$ are the right boundary lengths of $\mathcal{D}_1$ and $\mathcal{D}$ respectively.  Note that both of $\mathcal{D}_1$ and $\mathcal{D}$ are equipped with two marked points.  Then the quantum surface $(U,h|_U)$ equipped with the first and last boundary point hit by $\eta$ has the same law as the marginal of $\mathcal{D}$ under the above conditional law.  It follows by combining with Proposition~\ref{prop:good_quantum_disk} that it is a.s.\  the case that possibly by taking $\epsilon_0 \in (0,1)$ to be smaller and $M \in (1,\infty)$ to be larger,  we have that for all $\epsilon \in (0,\epsilon_0)$,  if $z,w \in \partial U$ are such that $\min\{\qbmeasure{h}([z,w]_{\partial U}^\ccw),\qbmeasure{h}([z,w]_{\partial U}^\cw)\} \geq \epsilon$,  then $\qdistnoarg{h|_U}(z,w) \geq \epsilon^M$.  Moreover,  Lemma~\ref{lem:boundary_length_regularity} combined with absolute continuity imply that there a.s.\  exists $C<\infty$ such that $\qdistnoarg{h|_U}(z,w) \leq C (\min\{\qbmeasure{h}([z,w]_{\partial U}^\ccw),\qbmeasure{h}([z,w]_{\partial U}^\cw)\})^{1//3}$ for all $z,w \in \partial U$.  Furthermore,  we can assume that $\dist_{\qdistnoarg{h|_{\CN}}}([0,c] ,  \h \cap \partial \CN) \geq \epsilon_0$,  where $c = \frac{a+b}{2}$ and $a$ (resp.  $b$) is the first (resp.  last) point of $\partial U$ visited by $\eta'$,  since $\dist_{\qdistnoarg{h|_{\CN}}}([0,c] ,  \h \cap \partial \CN) > 0$ a.s.  Now,  fix $\epsilon \in (0,\epsilon_0)$ and let $z \in [c,b] ,  w \in \h \cap \partial \CN$ be such that $\qbmeasure{h}([z,w]_{\partial \CN}^\ccw) \geq \epsilon$ and $\qbmeasure{h}([z,w]_{\partial \CN}^\cw) \geq \epsilon$.  Suppose that $\qbmeasure{h}([z,b]) \leq \epsilon^{M^2}$.  Then,  we have that $\qdistnoarg{h|_{\CN}}(z,b) \leq \qdistnoarg{h|_U}(z,b) \leq C \epsilon^{M^2 / 3}$ and so combining with triangle inequality and Lemma~\ref{lem:reverse_holder_continuity_top_boundary},  we obtain that $\qdistnoarg{h|_{\CN}}(z,w) \geq \epsilon^M / 2$ possibly by taking $\epsilon_0>0$ to be smaller.  Suppose that $\qbmeasure{h}([z,b]) > \epsilon^{M^2}$.  Then,  we have that $\dist_{\qdistnoarg{h|_U}}(z,\h \cap \partial U) \geq \epsilon^{M^3}$ and note that any path $P$ in $\CN$ connecting $w$ to $z$ has to intersect $\partial U$.  Suppose that $P$ is parameterized by $[0,1]$ and $P(0) = w,  P(1) = z$.  Let $t$ be the last time that $P$ hits $\h \cap \partial U$.  Then $P|_{(t,1)}$ is a path in $U$ connecting some point on $\h \cap \partial U$ to $z$ and so the $\qdistnoarg{h}$-length of $P|_{(t,1)}$ is at least $\epsilon^{M^3}$.  It follows that $\qdistnoarg{h|_{\CN}}(z,w) \geq \epsilon^{M^3}$ since $P$ was arbitrary.  Combining,  we obtain that there a.s.\  exist $\epsilon_0 \in (0,1),  M \in (1,\infty)$ such that the following holds for all $\epsilon \in (0,\epsilon_0)$.  Let $z \in [0,b] ,  w \in \h \cap \partial \CN$ be such that $\min\{\qbmeasure{h}([z,w]_{\partial \CN}^\ccw),\qbmeasure{h}([z,w]_{\partial \CN}^\cw)\} \geq \epsilon$.  Then we have that $\qdistnoarg{h|_{\CN}}(z,w) \geq \epsilon^M$. 

Note that since the law of the outer boundary of $\eta'$ when targeted at $x$ has time-reversal symmetry and the law of $\mathcal{W}$ is invariant under translating horizontally by a fixed number of quantum length units (see \cite[Proposition~1.7]{sheffield2015conformalweldingsrandomsurfaces}),  we obtain that the law of the quantum surface parameterized by $\CN$ is the same with the law of the quantum surface parameterized by $\wt{\CN}$,  where $\wt{\CN}$ is defined as follows.  We let $y \in \R_-$ be such that $\qbmeasure{h}([y,0]) = \ell$ and draw $\eta'$ up until the first time that it disconnects $y$ from $\infty$.  Then $\wt{\CN}$ is the quantum surface induced by the restriction of $h$ to the hull of the above curve stopped at  the above time.  Then,  arguing as in the previous paragraph,  we obtain that there a.s.\  exist $\epsilon_0 \in (0,1),  M \in (1,\infty)$ such that the following is true for all $\epsilon \in (0,\epsilon_0)$.  Let $z,w \in \partial \wt{\CN}$ be such that $z \in \h \cap \partial \wt{\CN},  w \in \R_- \cap \partial \wt{\CN}$ and 
 $\min\{\qbmeasure{h}([z,w]_{\partial \wt{\CN}}^\ccw),\qbmeasure{h}([z,w]_{\partial \wt{\CN}}^\cw)\} \geq \epsilon$.  Then it holds that $\qdistnoarg{h|_{\wt{\CN}}}(z,w) \geq \epsilon^M$.  Therefore,  combining with the result of the previous paragraph,  we complete the proof of the lower bound in the lemma statement in the case that one of the points lies on $\h \cap \partial \CN$ and the other on $\R \cap \partial \CN$.  This completes the proof of the lemma.
\end{proof}

Next, we state that conditions~\eqref{it:boundary_volume_lbd} and~\eqref{it:ngbd_vollume_ubd} hold with high probability in the context of Proposition~\ref{prop:good_sle_chunks} provided $M$ is large enough and $p$ is small enough.  We will not give a detailed proof since it follows from a combination of Proposition~\ref{prop:good_quantum_disk} with the arguments presented in the proofs of Lemmas~\ref{lem:reverse_holder_continuity_top_boundary} and~\ref{lem:reverse_holder_continuity}.

\begin{lemma}\label{lem:boundary_volume_lbd_ngbd_vollume_ubd}
Fix $u > 0$ and let $p > 0$ be a constant (corresponding to condition~\eqref{it:ngbd_vollume_ubd}) which is small enough as specified in Proposition~\ref{prop:good_quantum_disk}. Suppose that we have the setup of Lemmas~\ref{lem:reverse_holder_continuity_top_boundary} and~\ref{lem:reverse_holder_continuity}.  Then there a.s.\  exist $\epsilon_0 \in (0,1)$ and $M \in (1,\infty)$ such that for all $\epsilon \in (0,\epsilon_0)$ the following hold.  For all $x, y \in \partial \CN$,  the $\epsilon$-neighborhood of $[x,y]_{\partial \CN}^\ccw$ with respect to $\qdistnoarg{h|_{\CN}}$ has quantum area at least $\frac{\epsilon^{2+u}}{M}$.  The same is also true with $[x,y]_{\partial \CN}^\cw$ in place of $[x,y]_{\partial \CN}^\ccw$.  Moreover,  the quantum area of the $\epsilon$-neighborhood of $\partial \CN$ with respect to $\qdistnoarg{h|_{\CN}}$ is at most $\epsilon^p$.
\end{lemma}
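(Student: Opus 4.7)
The plan is to transfer conditions~\eqref{it:boundary_volume_lbd} and~\eqref{it:ngbd_vollume_ubd} from the quantum disk setting, where they are established by Proposition~\ref{prop:good_quantum_disk} and Lemma~\ref{lem:metric_ngbd_quantum_area_ubd}, to the chunk $\CN$ via local absolute continuity arguments analogous to those developed in the proofs of Lemma~\ref{lem:reverse_holder_continuity_top_boundary} and Lemma~\ref{lem:reverse_holder_continuity}. As in those proofs, I would split $\partial \CN$ into its top part $\partial \CN \cap \h$ and its bottom part $\partial \CN \cap \R$, and treat each case separately.

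For the lower bound on the quantum area of the $\epsilon$-neighborhood of $[x,y]_{\partial \CN}^\ccw$, I would consider three cases according to the location of $x,y$. When both points lie on the top, I would couple $\eta'$ with an $\SLE_6$ targeted at $x$ as in Lemma~\ref{lem:reverse_holder_continuity_top_boundary}, use locality, and identify the field restricted to a suitable neighborhood of the relevant boundary arc as being locally absolutely continuous with respect to a weight $\gamma^2/2 = 4/3$ quantum wedge; via a further absolute continuity with respect to a free boundary GFF plus a logarithmic singularity, the area lower bound established in Proposition~\ref{prop:good_quantum_disk}-\eqref{it:boundary_volume_lbd} for $\qdiskweighted{1}$ transfers to this local region, and the path comparison argument appearing in Step~3 of the proof of Lemma~\ref{lem:reverse_holder_continuity_top_boundary} shows that the $\epsilon$-neighborhood in $\CN$ agrees with the $\epsilon$-neighborhood in the larger reference surface for $\epsilon$ small enough. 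When both points lie on the bottom, I would use the disintegration of $\qdiskweighted{1}$ over its total quantum area together with Lemma~\ref{lem:weighted_browniab_disk_and_brownian_half_plane} to compare the field near a uniform boundary point in $\qdiskweightedfixedarea{\alpha}{1}$ with the field near $0$ in the weight-$2$ quantum wedge $\CW$, transferring the bound to $\CN$ exactly as in the analogous paragraph of the proof of Lemma~\ref{lem:reverse_holder_continuity}. When $x$ lies on the top and $y$ on the bottom, I would invoke the description from \cite[Theorem~1.18]{dms2014mating} combined with \cite[Corollary~4.3]{ang2023conformal} of the quantum surface parameterized by the unbounded component $U$ of $\h\setminus\eta'([0,\tau])$ as (a marginal of) a weight-$2$ quantum disk, and apply Proposition~\ref{prop:good_quantum_disk}-\eqref{it:boundary_volume_lbd} directly to this disk, then pass the estimate to $\CN$ using $\qdistnoarg{h|_{\CN}} \leq \qdistnoarg{h|_{U}}$ on $U$.

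For the upper bound on the quantum area of the $\epsilon$-neighborhood of $\partial \CN$, I would run the same covering argument as in Lemma~\ref{lem:metric_ngbd_quantum_area_ubd}, which combines the boundary length regularity of Lemma~\ref{lem:boundary_length_regularity} with a near-boundary quantum ball volume bound, and transfer it piecewise to $\CN$ using the absolute continuity couplings described above: the wedge comparison for the top, the Brownian half-plane coupling for the bottom, and the weight-$2$ quantum disk description of $U$ near the two corner points where top and bottom meet. Choosing $u' \in (0,1)$ with $2 - u' > p$, this yields that a.s., for all $\epsilon$ sufficiently small, the quantum area of the $\epsilon$-neighborhood of $\partial \CN$ is bounded above by a random constant times $\epsilon^{2-u'}$, which is at most $\epsilon^p$ for all sufficiently small $\epsilon$.

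The main obstacle is the mixed case in the lower bound, in which the arc $[x,y]_{\partial \CN}^{\ccw}$ spans from the top to the bottom of $\partial \CN$: neither the wedge comparison nor the Brownian half-plane comparison alone produces a single reference surface with respect to which the field is absolutely continuous on a neighborhood of the entire arc. The cleanest resolution is to use $U$, whose weight-$2$ quantum disk description lets us simultaneously view both endpoints as marked boundary points of a single reference surface; but we must verify that $\qdistnoarg{h|_{\CN}}$ agrees with the interior-internal metric in $U$ on a sufficiently small neighborhood of the relevant portion of $\partial U$, which is done by arguing as at the end of Step~3 in the proof of Lemma~\ref{lem:reverse_holder_continuity_top_boundary} that geodesics realizing short distances on the boundary cannot cross $\eta'([0,\tau])$ except locally near their endpoints.
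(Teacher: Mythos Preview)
Your proposal is correct and matches the paper's approach exactly: the paper's proof is a single sentence saying that the two claims follow from the arguments of Lemmas~\ref{lem:reverse_holder_continuity_top_boundary} and~\ref{lem:reverse_holder_continuity} combined with Proposition~\ref{prop:good_quantum_disk}, and you have faithfully unpacked what that means. Two minor remarks: the component $U$ you invoke from the proof of Lemma~\ref{lem:reverse_holder_continuity} is a \emph{bounded} component of $\h\setminus\eta'([0,\tau])$ (the one whose boundary contains the target point $x$), not the unbounded one; and your ``main obstacle'' in the mixed case for the lower bound dissolves more simply than you suggest, since an arc spanning top and bottom splits into a top sub-arc and a bottom sub-arc, one of which carries at least half the total boundary length, so the pure-case bounds already give the mixed case after absorbing a factor of $2$ into $M$.
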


\begin{proof}
The two claims of the lemma essentially follow from the same arguments presented in Lemmas~\ref{lem:reverse_holder_continuity_top_boundary} and~\ref{lem:reverse_holder_continuity} combined with Proposition~\ref{prop:good_quantum_disk}.
\end{proof}

Now we prove that conditions~\eqref{it:diameter_bound} and~\eqref{it:mass_bound} hold with high probability if we choose $M$ sufficiently large.  We first prove in the following lemma that condition~\eqref{it:diameter_bound} holds with high probability provided we choose $M$ sufficiently large.

\begin{lemma}\label{lem:diameter_bound}
Suppose that we have the setup of Lemmas~\ref{lem:reverse_holder_continuity_top_boundary}-\ref{lem:boundary_volume_lbd_ngbd_vollume_ubd}.  Then we have that the $\qdistnoarg{h|_{\CN}}$-diameter of $\CN$ is finite a.s.
\end{lemma}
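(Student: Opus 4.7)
The plan is to bound separately (i) the interior-internal diameter of $\partial \CN$ and (ii) the maximum interior-internal distance from a point of $\CN$ to $\partial \CN$.

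For (i), I would apply the upper-bound part of Lemma~\ref{lem:reverse_holder_continuity}: there a.s.\ exist $\epsilon_0 \in (0,1)$ and $M \in (1,\infty)$ such that any $z,w \in \partial \CN$ with
\[
\min\bigl\{\qbmeasure{h}([z,w]_{\partial \CN}^\cw),\,\qbmeasure{h}([z,w]_{\partial \CN}^\ccw)\bigr\} \leq \epsilon \in (0,\epsilon_0)
\]
satisfy $\qdistnoarg{h|_\CN}(z,w) \leq \epsilon^{1/M}$. Since the total quantum length $L$ of $\partial \CN$ is a.s.\ finite, covering $\partial \CN$ by at most $\lceil L/\epsilon_0 \rceil + 1$ overlapping sub-arcs of quantum length $\leq \epsilon_0$ and summing the resulting bounds over consecutive endpoints produces an a.s.\ finite upper bound on the interior-internal diameter of $\partial \CN$.

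For (ii), I would identify $\CN$ as a quantum surface whose LQG metric is a.s.\ locally finite. On the event $\{\sigma = \sigma_{\delta/A}\}$, which is the only case where condition~\eqref{it:diameter_bound} is required, $\CN$ is a Jordan domain by Proposition~\ref{prop:SLE6-chunk-Jordan-H}. Uniformizing by a conformal map $\varphi: \CN \to \D$ and pulling back the field yields a random field on $\D$ whose associated LQG metric equals the interior-internal metric of $\CN$ through $\varphi$. Using the conformal welding / mating-of-trees description of $\SLE_6$ exploration on a weight-$2$ wedge (\cite[Theorem~1.18]{dms2014mating}, \cite{sheffield2015conformalweldingsrandomsurfaces}) and a boundary-length Radon--Nikodym comparison via Lemma~\ref{lem:boundary_length_rn_disk_weighted} (in the spirit of Step~2 of the proof of Lemma~\ref{lem:reverse_holder_continuity_top_boundary}), the law of this field is locally absolutely continuous with respect to that of a weighted quantum disk. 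For $\gamma = \sqrt{8/3}$, a weighted quantum disk is equivalent as a random metric measure space to the Brownian disk (\cite{ms2016qle2, ms2015mapmaking}), which is a.s.\ compact, so every interior point of the disk is a.s.\ at finite LQG-distance from its boundary. Transferring through the absolute continuity gives the same statement for $\CN$.

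The main obstacle is making step (ii) fully rigorous: diameter finiteness is a global property, and one must ensure that the absolute continuity applies in a way that transfers this global a.s.\ finiteness, rather than only local metric statements on fixed subregions. A conceptually cleaner alternative would be to invoke directly the LQG metric regularity theory on simply connected quantum surfaces (\cite{hughes2024equivalencemetricgluingconformal, gm2019conf}), which implies that the LQG metric a.s.\ induces the Euclidean topology and is locally bounded on such surfaces. Then on $\{\sigma = \sigma_{\delta/A}\}$ the Euclidean-compact set $\overline{\CN}$ is also LQG-compact and therefore has a.s.\ finite interior-internal diameter.
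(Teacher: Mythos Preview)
Your two-part decomposition is sound, and Part~(i) is correct: the upper bound in Lemma~\ref{lem:reverse_holder_continuity}, summed along a finite chain of boundary sub-arcs of quantum length $\leq\epsilon_0$, bounds the $d_{h|_{\CN}}$-diameter of $\partial\CN$. The gap is in Part~(ii). Your Approach~B correctly observes that $\overline{\CN}$ is compact in the ambient wedge metric $d_h$ (since $d_h$ is locally H\"older with respect to the Euclidean metric on $\overline{\h}$; the paper establishes exactly this in Step~2 of the proof of Lemma~\ref{lem:mass_bound} via \cite[Proposition~1.8]{hughes2024equivalencemetricgluingconformal} and translation invariance). But $d_h$-compactness of $\overline{\CN}$ does not by itself bound the \emph{interior-internal} metric $d_{h|_{\CN}}$, which only admits paths staying in $\CN$ and is in general strictly larger. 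The missing step is short once seen: for $z\in\CN$ and $\epsilon>0$, any path $\gamma$ in $\overline{\h}$ from $z$ to $\partial\CN$ of $d_h$-length at most $d_h(z,\partial\CN)+\epsilon$, stopped at its first exit from the open set $\CN$, lands on $\partial\CN$ and is admissible for $d_{h|_{\CN}}$ with no greater length; hence $d_{h|_{\CN}}(z,\partial\CN)=d_h(z,\partial\CN)\leq d_h\text{-}\diam(\overline{\CN})<\infty$, uniformly in $z$. Combined with Part~(i) this gives the finite diameter. Approach~A runs into exactly the global-versus-local obstacle you flag and is not needed.

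The paper proceeds differently and does not appeal to Lemma~\ref{lem:reverse_holder_continuity}. It reuses the $\SLE_{8/3}$ outer-boundary decomposition from the proof of Lemma~\ref{lem:reverse_holder_continuity_top_boundary}: via absolute continuity with a weight-$\gamma^2/2$ wedge it shows that $\CN$ has finite diameter in the internal metrics of $\h\setminus\wt{J}$ and $\h\setminus\wh{J}$ for suitable sub-arcs $\wt{J},\wh{J}$ of the top of $\partial\CN$, records that the cut-off bubble $U$ is a conditioned quantum disk with a.s.\ finite internal diameter, and then for each $z\in\partial\CN$ explicitly constructs a path from $0$ to $z$ inside $\CN$ by taking a path in $\h\setminus\wt{J}$ or $\h\setminus\wh{J}$ and detouring through $U$ whenever that path would enter $\overline{U}$. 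This is more intricate; once the stopping observation above is supplied, your route is both valid and shorter.
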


\begin{proof}
Suppose that we have the setup of the proof of Lemma~\ref{lem:reverse_holder_continuity_top_boundary}.  Fix $p_1 \in (0,1)$ and let $\rho_1>0$ be chosen such that with probability at least $1-\frac{p_1}{2}$,  we have that $\wh{\eta}([0,\wh{\tau}_{\rho_1}]) \cap \partial U \neq \emptyset$,  where both $\wh{\eta},\wh{\tau}_{\rho}$ are defined in Step 4 in the proof of Lemma~\ref{lem:reverse_holder_continuity_top_boundary} and $U$ is the connected component of $\h \setminus \eta'([0,\tau])$ whose boundary contains $x$.  Let also $\check{h}$ be the random field introduced in Step 3 in the proof of Lemma~\ref{lem:reverse_holder_continuity_top_boundary}.  Since the $\qdistnoarg{\check{h}}$-diameter of any compact set $K \subseteq \overline{\h}$ is finite a.s.\  (see \cite[Theorem~1.3]{hughes2024equivalencemetricgluingconformal}),  arguing as in Step 3 in the proof of Lemma~\ref{lem:reverse_holder_continuity_top_boundary} gives that the $\qdistnoarg{h|_{\h \setminus \wt{\eta}([\wt{\sigma},\wt{\tau}_{\rho_1}])}}$-diameter of $\CN$ is finite a.s.,  where $\wt{\eta},\wt{\tau}_{\rho}$ and $\wt{J}$ are defined in Step 1 in the proof of Lemma~\ref{lem:reverse_holder_continuity_top_boundary}.  Also,  since the quantum surface parameterized by $U$ has the law of a quantum disk conditioned on a positive probability event (see the proof of Lemma~\ref{lem:reverse_holder_continuity}),  we obtain that the $\qdistnoarg{h|_U}$-diameter of $U$ is finite a.s.  Hence,  there exists $M>1$ sufficiently large such that with probability at least $1-\frac{p_1}{2}$,  both of the $\qdistnoarg{h|_{\h \setminus \wh{\eta}([\wh{\sigma},\wh{\tau}_{\rho_1}])}}$-diameter of $\CN$ and the $\qdistnoarg{h|_U}$-diameter of $U$ are at most $M$.  Suppose that we are working on the event that the above holds and that $\wh{\eta}([0,\wh{\tau}_{\rho_1}]) \cap \partial U \neq \emptyset$.  Similarly,  we can assume in addition that the $\qdistnoarg{h|_{\h \setminus \wt{\eta}([\wt{\sigma},\wt{\tau}_{\rho_1}])}}$-diameter of $\CN$ is at most $M$ and that $\wt{\eta}([\wt{\tau}_{\rho_1},\infty)) \cap \wh{\eta}([\wh{\tau}_{\rho_1},\infty)) = \emptyset$.  Let $t$ be the first time that $\wh{\eta}$ intersects $\partial U$.  Fix $z \in \partial \CN \cap \wh{\eta}([0,t))$ and let $P : (0,1) \to \h \setminus \wh{\eta}([\wh{\sigma},\wh{\tau}_{\rho_1}])$ be a path such that $P(0) = 0,  P(1) = z$ and such that the $\qdistnoarg{h}$-length of $P$ is at most $M$.  If $P$ doesn't intersect $\overline{U}$,  then it stays in $\CN$ and so we have that $\qdistnoarg{h|_{\CN}}(0,z) \leq M$.  Suppose that $P \cap \overline{U} \neq \emptyset$,  and let $s_1$ be the last time that $P$ intersects $\partial U$.  Then we have that $P|_{(s_1,1)}$ is a path in $\CN$ from $P(s_1)$ to $z$ with $\qdistnoarg{h}$-length at most $M$.  Let also $s_2$ be the first time that $P$ intersects $\partial U$ and let $Q$ be a path in $U$ from $P(s_2)$ to $P(s_1)$ with $\qdistnoarg{h}$-length at most $M$.  Then the concatenation $\wt{P}$ of $P|_{(0,s_2)},  Q$ and $P|_{(s_1,1)}$ is a path in $\CN$ from $0$ to $z$ with $\qdistnoarg{h}$-length at most $3M$.  Hence $\qdistnoarg{h|_{\CN}}(0,z) \leq 3M$.  Next,  we fix $z \in \partial \CN \cap \wh{\eta}((t,\infty))$ and note that $z \in \partial \CN \cap \wt{\eta}([0,\wt{\tau}_{\rho_1}])$.  Let $A$ be a path in $\h \setminus \wh{\eta}([\wh{\sigma},\wh{\tau}_{\rho_1}])$ from $0$ to $z$ with $\qdistnoarg{h}$-length at most $M$ and let $s_1$ be the first time that $A$ intersects $\partial U$ (it has to intersect $\partial U$ in order to eventually hit $z$).  Let also $\wt{A}$ be a path in $\h \setminus \wt{\eta}([\wt{\sigma},\wt{\tau}_{\rho_1}])$ from $A(s_1)$ to $z$ with $\qdistnoarg{h}$-length at most $M$ and let $s_2$ be the last time that $\wt{A}$ intersects $\partial U$. Then the path $\wt{A}|_{(s_2,1)}$ has to stay in $\CN$ and has $\qdistnoarg{h}$-length at most $M$.  Let also $B$ be a path in $U$ from $A(s_1)$ to $\wt{A}(s_2)$ with $\qdistnoarg{h}$-length at most $M$,  and let $\wh{A}$ be the concatenation of $A|_{(0,s_1)},  B$,  and $\wt{A}|_{(s_2,1)}$.  Then $\wh{A}$ is a path in $\CN$ from $0$ to $z$ with $\qdistnoarg{h}$-length at most $3M$.  It follows that $\qdistnoarg{h|_{\CN}}(0,z) \leq 3M$ for all $z \in \wh{\eta}([\wh{\sigma},t) \cup (t,\infty)) \cap \partial \CN$ and the same bound holds for $z = \wh{\eta}(t)$ by the continuity of the quantum metric with respect to the Euclidean metric.  Therefore,  we obtain that the $\qdistnoarg{h|_{\CN}}$-diameter of $\CN$ is at most $6M$ with probability at least $1-p_1$.  The proof is then complete since $p_1 \in (0,1)$ was arbitrary.
\end{proof}

The final step before proving Proposition~\ref{prop:good_sle_chunks}             is to prove that condition~\eqref{it:mass_bound} holds with high probability if $M$ is large enough. This is the content of the following lemma.

\begin{lemma}\label{lem:mass_bound}
Suppose that we have the setup of Lemmas~\ref{lem:reverse_holder_continuity_top_boundary}-\ref{lem:diameter_bound} and let $\varphi \colon \CN \to \D$ be the conformal transformation as in condition~\eqref{it:mass_bound}.  Then there a.s.\  exists $M>1$ such that for all $r \in (0,M^{-1})$ the following hold.  The quantum area assigned to $B(0,1/2)$ with respect to the field $h|_{\CN} \circ \varphi^{-1} + Q \log |(\varphi^{-1})'|$ is at least $M^{-1}$.  Moreover,  every point with quantum distance at least $r$ from $\partial \D$ (with respect to the field $h|_{\CN} \circ \varphi^{-1} + Q \log |(\varphi^{-1})'|$) has Euclidean distance at least $r^M$ from $\partial \D$.  
\end{lemma}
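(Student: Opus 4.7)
My plan is to prove the two conclusions separately, in each case reducing to known properties of quantum disks via absolute continuity, in the same spirit as Lemmas \ref{lem:reverse_holder_continuity_top_boundary} and \ref{lem:reverse_holder_continuity}.

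For the first conclusion (quantum mass of $B(0,1/2)$), Proposition \ref{prop:SLE6-chunk-Jordan-H} implies that $\partial \CN$ is a Jordan curve, so by Carath\'eodory's theorem $\varphi$ extends to a homeomorphism $\bar\CN \to \bar\D$. Hence $\varphi^{-1}(B(0,1/2))$ is a compact subset of the interior of $\CN$, and the LQG coordinate change gives $\mu_{\tilde h}(B(0,1/2)) = \mu_{h|_\CN}(\varphi^{-1}(B(0,1/2)))$, where $\tilde h := h|_\CN \circ \varphi^{-1} + Q\log|(\varphi^{-1})'|$. Since the Gaussian multiplicative chaos measure $\mu_h$ has full topological support a.s., this quantity is almost surely strictly positive, giving the first conclusion with some random threshold $M$.

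For the second conclusion, I plan to establish the stronger statement that there exist random constants $C,\chi>0$ such that $d_{\tilde h}(z,w) \leq C|z-w|^\chi$ for all $z,w \in \bar\D$, where $d_{\tilde h}$ denotes the interior-internal quantum metric on $\D$ associated with $\tilde h$. Assuming this, if $z \in \D$ satisfies $\mathrm{dist}_{\mathrm{Eucl}}(z,\partial\D) < r^M$ then picking a nearest $w \in \partial\D$ gives $\mathrm{dist}_{d_{\tilde h}}(z,\partial\D)\le d_{\tilde h}(z,w)\le C r^{M\chi} < r$ whenever $M\chi > 1$ and $r$ is small, which establishes the claim once $M$ is chosen large enough to absorb both thresholds. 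The H\"older continuity of $d_{\tilde h}$ on compact subsets of the \emph{open} disk is a standard consequence of the local H\"older continuity of the $\sqrt{8/3}$-LQG metric with respect to the Euclidean metric (as in \cite[Theorem~1.2]{ms2016qle2}). Upgrading this to continuity up to the boundary will be the main step: I will argue, following the local absolute continuity strategy of Step~3 of Lemma~\ref{lem:reverse_holder_continuity_top_boundary}, that near any boundary point of $\CN$ the field is mutually absolutely continuous with the field of a suitably embedded quantum disk (or weight-$\tfrac{4}{3}$ wedge in the vicinity of the three marked bottom points). Since the quantum disk metric is known to be H\"older equivalent to the Euclidean metric up to the boundary (see \cite{hughes2024equivalencemetricgluingconformal}), this transfers to $\tilde h$ in each local neighborhood; a compactness argument on $\partial\D$ then produces random but uniform $C,\chi$.

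The main technical obstacle is the boundary behavior of the conformal map $\varphi^{-1}$ near the three marked boundary points (the start and tip of the $\SLE_6$ chunk, and the disconnection point), where $\partial\CN$ is generally not smooth and the term $Q\log|(\varphi^{-1})'|$ may blow up. To handle these, I plan to work intrinsically on $\CN$ and use the boundary-to-boundary reverse H\"older estimate already proved in Lemma~\ref{lem:reverse_holder_continuity}, together with the upper bound from condition~\eqref{it:reverse_holder_continuity} to control short quantum-length boundary arcs; together with the Jordan property of $\partial\CN$ and classical conformal modulus-of-continuity bounds, these give a quantitative two-sided comparison between the Euclidean metric on $\bar\D$ and the interior-internal metric on $\bar\CN$, from which the uniform H\"older estimate for $d_{\tilde h}$ up to $\partial\D$ follows.
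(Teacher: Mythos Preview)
Your treatment of the first conclusion (positive quantum mass in $B(0,1/2)$) is exactly the paper's one-line argument: the LQG measure has full support, so the image under $\varphi^{-1}$ of any precompact set in $\D$ has positive mass.

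For the second conclusion your route diverges from the paper's, and your plan runs into a real difficulty. You propose to prove a uniform H\"older bound $d_{\tilde h}(z,w)\le C|z-w|^{\chi}$ on all of $\bar\D$. The obstacle is not just at the three marked points but the fact that $\varphi^{-1}:\D\to\CN$ is a map to a domain whose boundary is an $\SLE$ arc glued to a real segment; Rohde--Schramm gives H\"older continuity for conformal maps \emph{from} $\D$ into $\SLE$ complements, not for maps into hulls, and there is no off-the-shelf result controlling $|(\varphi^{-1})'|$ near the two junction points between the $\SLE$ arc and $\partial\h$. Your last paragraph appeals to ``classical conformal modulus-of-continuity bounds'' for Jordan domains, but Jordan alone only yields continuous extension, not a H\"older exponent; one would need to know $\CN$ is a H\"older/John domain, which would itself require a nontrivial argument. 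So the step ``upgrading this to continuity up to the boundary'' is where the proposal is genuinely incomplete.

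The paper bypasses this entirely. It never proves the global H\"older bound for $d_{\tilde h}$. Instead it works on $\CN\subset\h$ in the ambient Euclidean metric, where the H\"older continuity of the $\sqrt{8/3}$-metric on compacts of $\overline\h$ immediately gives $B(z,r^{M})\subset B_{h|_\CN}(z,r)$ for points $z$ with quantum distance $\ge r$ from $\partial\CN$. It then decomposes $\CN$ into three overlapping simply connected pieces $U,V,G$ (bounded by the left boundary of the chordal $\SLE_6$, its right boundary, or neither), each admitting a conformal map to $\D$ whose inverse \emph{is} $\alpha$-H\"older by Rohde--Schramm. In each piece, the Euclidean ball $B(z,r^{M})$ transfers to a lower bound of order $r^{M/\alpha}$ on the harmonic measure of a fixed compact set (say $\psi^{-1}(B(0,1/2))$) seen from $z$. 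Conformal invariance of Brownian motion carries this harmonic-measure lower bound over to $\D$ via $\varphi$, and finally the Beurling estimate converts it into a lower bound on the Euclidean distance from $\varphi(z)$ to $\partial\D$. The point is that Beurling and harmonic measure are insensitive to the corner singularities of $\varphi^{-1}$, so the argument never needs regularity of $\varphi^{-1}$ itself.
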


We will describe the setup of the proof of Lemma~\ref{lem:mass_bound} before proceeding with its proof.  Note that it is a.s.  the case that on the event that $\sigma = \sigma_{\delta / A}$,  we have that $\sigma = \tau_x$ for some $x \in \Q \setminus \{0\}$,  where $\tau_x$ denotes the first time that $\eta'$ disconnects $x$ from $\infty$.  Hence it suffices to prove the claim of the lemma in the case that $\sigma = \tau_x$ for some $x \in \Q \setminus \{0\}$ fixed.  Without loss of generality we can assume that $x>0$.

Let $(K_t)$ denote the family of hulls of $\eta'$.  Note that the locality property of $\SLE_6$ implies that $\eta'$ can be coupled with a chordal $\SLE_6$ $\wt{\eta}'$ in $\h$ from $0$ to $x$ stopped at the first time $\wt{\tau}_x$ that $\wt{\eta}'$ disconnects $x$ from $\infty$ such that $\eta'|_{[0,\tau_x]} = \wt{\eta}'|_{[0,\wt{\tau}_x]}$.  Let also $\wt{\eta}$ denote the left outer boundary of $\wt{\eta}'$ and note that \cite[Theorem~1.4]{ms2016ig1} implies that $\wt{\eta}$ has the law of an $\SLE_{\frac{8}{3}}(\frac{8}{3}-2 ; \frac{8}{3}-4)$ process in $\h$ from $x$ to $0$ with the force points located at $x^-$ and $x^+$ respectively.  Let $U$ be the connected component whose boundary contains $0$ of the complement in $\h$ of the curve $\wt{\eta}$ stopped at the first time that it disconnects $0$ from $\infty$.  Similarly we let $V$ be the connected component whose boundary contains $x$ of the complement in $\h$ of the time-reversal of $\wt{\eta}$ stopped at the first time that it disconnects $x$ from $\infty$.  Moreover we let $G$ be the connected component of $\h \setminus \wt{\eta}$ lying to the left of $\wt{\eta}$.  Let $f : U \to \D,  g : V \to \D$ and $\psi : G \to \D$ be conformal transformations defined in some arbitrary but fixed way.  

We note that all of the maps $f,g$ and $\psi$ are a.s.  well-defined due to Proposition~\ref{prop:SLE6-chunk-Jordan-D} and the time-reversal symmetry of $\wt{\eta}$ (see \cite[Theorem~1.1]{ms2016ig2}).  Recall that \cite[Theorem~5.2]{rohde2011basic} implies that there exists deterministic constant $\alpha \in (0,1)$ such that all of the maps $f^{-1},g^{-1}$ and $\psi^{-1}$ are $\alpha$-H\"older continuous a.s.

\begin{proof}[Proof of Lemma~\ref{lem:mass_bound}]
\emph{Step 1.  Outline.} 
Suppose that we have the setup described in the paragraphs just after the statement of Lemma~\ref{lem:mass_bound}.
The first claim of the lemma (lower bound on the quantum mass assigned to $B(0,1/2)$) follows since the quantum area measure assigns positive mass to every open set a.s.  Hence,  we will focus on proving the second claim of the lemma.  In Step 2,  we will prove that it is a.s.  the case that there exist $r_0 \in (0,1),  M>1$ such that for all $r \in (0,r_0)$ and all $z \in \CN$ such that $\text{dist}_{d_{h|_{\CN}}}(z,\partial \CN) \geq r$,  we have that 
\begin{align*}
\eball{z}{r^M} \subseteq \qball{h|_{\CN}}{z}{r} \subseteq \CN.
\end{align*}
Then in Step 3,  we will show that possibly by taking $M$ to be larger and $r_0$ to be smaller,  we have that there a.s.  exists (random) $c_0>0$ such that if $z,r$ are as above,  we have that the probability that a complex Brownian motion starting from $z$ intersects $\psi^{-1}(\eball{0}{1/2})$ before exiting $\CN$ for the first time is at least $c_0 r^M$,  and then conclude the proof of the lemma.

\emph{Step 2.  $\eball{z}{r^M} \subseteq \qball{h|_{\CN}}{z}{r}$.}
First,  we note that \cite[Proposition~1.8]{hughes2024equivalencemetricgluingconformal} implies that there exists a deterministic constant $\wt{\beta} \in (0,1)$ such that $\qdistnoarg{\wt{h}}|_K$ is a.s.\  $\wt{\beta}$-H\"older continuous with respect to the Euclidean metric for all $K \subseteq \overline{\h}$ compact,  where $\wt{h}$ is a free boundary GFF on $\h$ with the additive constant taken so that the average of $\wt{h}$ on $\h \cap \partial \D$ is equal to zero.  Then,  arguing as in Step 2 in the proof of Lemma~\ref{lem:reverse_holder_continuity_top_boundary},  we obtain that for all $0<r<R$ fixed,  $d_{h}|_{A_{r,R}}$ is a.s.\  $\wt{\beta}$-H\"older continuous with respect to the Euclidean metric,  where $A_{r,R} = \h \cap (B(0,R) \setminus \overline{B(0,r)})$.  Moreover,  combining with the invariance of the law of $\mathcal{W}$ when translating horizontally by fixed number of units of boundary length (see \cite[Proposition~1.7]{sheffield2015conformalweldingsrandomsurfaces}),  we obtain that $d_h|_{\h \cap B(0,R)}$ is a.s.\  $\wt{\beta}$-H\"older continuous with respect to the Euclidean metric for all $R>0$.  Let $R>0$ be such that $\CN \subseteq \h \cap B(0,R)$.  Note that if $z \in \CN$ is such that $\dist_{\qdistnoarg{h|_{\CN}}}(z,\partial \CN) > r$,  we have that $\qball{h|_{\CN}}{z}{r} = \qball{h}{z}{r}$.  Therefore,  combining with the H\"older continuity of $h|_{\h \cap B(0,R)}$ with respect to the Euclidean metric,   we obtain that it is a.s.\  the case that there exists $M>1$ large and $r_0>0$ small such that for all $z \in \CN$ and all $r \in (0,r_0)$ such that $\dist_{\qdistnoarg{h|_{\CN}}}(z,\partial \CN)  > r$,  we have that $\eball{z}{r^M} \subseteq \qball{h|_{\CN}}{z}{r}$. In particular,  the Euclidean distance of $z$ from $\partial \CN$ is at least $r^M$.  

\emph{Step 3.  Conclusion of the proof.}
Let $r_0 \in (0,1),  M>1$ be as in Step 2 and fix $r \in (0,r_0),  z \in \CN$ such  that $\text{dist}_{d_{h|_{\CN}}}(z,\partial \CN) > r$.  Then Step 2 implies that
\begin{align*}
\eball{z}{r^M} \subseteq \qball{h|_{\CN}}{z}{r} \subseteq \CN.
\end{align*}
We will show that possibly by taking $M$ to be larger,  we have that the probability that a complex Brownian motion starting from $z$ intersects $\psi^{-1}(\eball{0}{1/2})$ before exiting $\CN$ for the first time is at least $r^M$. 

Let $\wt{I}$ be the arc traced by $\wt{\eta}$ up until the last time that it intersects $\R_+$ and let $\wt{J}$ be the arc traced by the time-reversal of $\wt{\eta}$ up until the last time that it intersects $\R_-$.  Then we have the following cases.

\emph{Case 1.  $\qball{h|_{\CN}}{z}{r} \cap (\wt{I} \cup \wt{J}) = \emptyset$.}
Then we have that $\eball{z}{r^M} \subseteq G$.  Note that there exists $C>1$ such that
\begin{align*}
|\psi^{-1}(x) - \psi^{-1}(y)| \leq C |x-y|^{\alpha} \quad \text{for all} \quad x,y \in \D.
\end{align*}
In particular we have that $\text{dist}(\psi(z),\partial \D) \gtrsim r^{M / \alpha}$ and so the probability that a complex Brownian motion starting from $\psi(z)$ intersects $\eball{0}{1/2}$ before exiting $\D$ for the first time is $\gtrsim r^{M/\alpha}$.  By conformal invariance we obtain that the probability that a complex Brownian motion starting from $z$ intersects $\psi^{-1}(\eball{0}{1/2})$ before exiting $\D$ for the first time is $\gtrsim r^{M / \alpha}$.

\emph{Case 2.  $\qball{h|_{\CN}}{z}{r} \cap \wt{I} \neq \emptyset$.}
Possibly by taking $r_0 \in (0,1)$ to be smaller,  we can assume that $\qball{h|_{\CN}}{z}{r} \cap \wt{J} = \emptyset$.  Hence we have that $\qball{h|_{\CN}}{z}{r} = \qball{h|_V}{z}{r}$ and so $\eball{z}{r^M} \subseteq V$.  As in Case 1,  we let $C>1$ be such that
\begin{align*}
|g^{-1}(x) - g^{-1}(y)| \leq C |x-y|^{\alpha} \quad \text{for all} \quad x,y \in \D,
\end{align*}
and then we have that
\begin{align*}
\eball{g(z)}{C^{-1/\alpha} r^{M / \alpha}} \subseteq g(\eball{z}{r^M}) \subseteq \D.
\end{align*}
Therefore arguing as in Case 1,  we have that the probability that a complex Brownian motion starting from $z$ intersects $\psi^{-1}(\eball{0}{1/2})$ before exiting $\CN$ for the first time is $\gtrsim r^{M / \alpha}$.

\emph{Case 3.  $\qball{h|_{\CN}}{z}{r} \cap \wt{J} \neq \emptyset$.}
Arguing as in Cases 1 and 2,  we obtain that the probability that a complex Brownian motion starting from $z$ intersects $\psi^{-1}(\eball{0}{1/2})$ before exiting $\CN$ for the first time is $\gtrsim r^{M / \alpha}$.

Combining Cases 1,2 and 3,  we obtain that there a.s.  exists $c_0>0$ such that the probability that a complex Brownian motion starting from $z$ intersects $\psi^{-1}(\eball{0}{1/2})$ before exiting $\CN$ for the first time is at least $c_0 r^{M / \alpha}$.   It follows that the probability that a complex Brownian motion starting from $\varphi(z)$ intersects $\varphi(\psi^{-1}(\eball{0}{1/2}))$ before exiting $\D$ for the first time is at least $c_0 r^{M / \alpha}$.  Set $d:=\text{dist}(\varphi(\psi^{-1}(\eball{0}{1/2})),\partial \D) >0$ and suppose that $\text{dist}(\varphi(z),\partial \D) < r^{3M / \alpha}$.  Then the Beurling estimate implies that the probability that a complex Brownian motion starting from $\varphi(z)$ intersects $\varphi(\psi^{-1}(\eball{0}{1/2}))$ before exiting $\D$ for the first time is at most a universal constant times $(r^{3M / \alpha} / d)^{1/2}$.  In particular we have that $r^{M / \alpha} \lesssim r^{3M / (2\alpha)}$ and so we obtain a contradiction possibly by taking $r_0 \in (0,1)$ to be smaller.  It follows that
\begin{align*}
\text{dist}(\varphi(z),  \partial \D) \gtrsim r^{3M / \alpha}
\end{align*}
and so this completes the proof of the lemma.
\end{proof}

\begin{proof}[Proof of Proposition~\ref{prop:good_sle_chunks}]
First we note that when the quantum natural time is scaled by $\delta$,  we have that the quantum boundary length,  the quantum metric distance and the quantum area are scaled by $\delta^{2/3},  \delta^{1/3}$ and $\delta^{4/3}$ respectively.  Hence,  combining with the scale invariance of the law of $\mathcal{W}$ (\cite[Proposition~4.7]{dms2014mating}),  we obtain that it suffices to prove the claim of the proposition when $\delta = 1$.  Hence,  from now on,  we assume that $\delta = 1$.  Fix $p_1 \in (0,1)$.  For all $k \in \N$ and $N \in \N$, we let $x_{k,N}$ be the point on $\R_+$ such that $\qbmeasure{h}([0,x_{k,N}]) = \frac{k}{N}$.  For all $k \in \Z_-$ and $N \in \N$ we also let $x_{k,N}$ be the point on $\R_-$ such that $\qbmeasure{h}([x_{k,N},0]) = \frac{-k}{N}$.  For $x \in \R \setminus \{0\}$,  we let $\eta_x'$ be a chordal $\SLE_6$ in $\h$ from $0$ to $x$ which is independent of $\mathcal{W}$.  Then by \cite[Section~4.2]{sheffield2009exploration},  we obtain that there exists a coupling of $(\mathcal{W},\eta', ( \eta_x')_{x \in \R \setminus \{0\}})$ such that for all $x \in \R \setminus \{0\}$,  the curves $\eta',\eta_x'$ agree up until the first time that they disconnect $x$ from $\infty$.  Also,  we can choose $M_1,N \in \N$ large enough such that with probability at least $1-\frac{p_1}{2}$,  we have that $\CN = \CN_{x_{k,N}}$ for some $k \in [-N M_1 ,  N M_1] \cap \Z$,  where $\CN_{x_{k,N}}$ denotes the quantum surface parameterized by the hull of $\eta_{x_{k,N}}'$ stopped at the first time that it disconnects $x_{k,N}$ from $\infty$.  Let also $E_{x_{k,N}}$ be the event defined in the same way as $E$ but with $\CN_{x_{k,N}}$ in place of $\CN$.  Note that the quantum boundary lengths of the top,  bottom left,  and bottom right of $\CN_{x_{k,N}}$ are all positive a.s.  Therefore,  combining with Lemmas~\ref{lem:reverse_holder_continuity_top_boundary}-\ref{lem:mass_bound},  we obtain that we can choose $M \in (1,\infty)$ large enough and $p > 0$ small enough such that conditions~\eqref{it:boundary_length}-\eqref{it:ngbd_vollume_ubd} all hold with probability at least $1-\frac{p_1}{4NM_1}$. Thus,  taking a union bound over all $k \in [-NM_1,NM_1] \cap \Z$ gives that $E$ holds with probability at least $1-p_1$ for the above choice of $M,p$.  This completes the proof of the proposition.
\end{proof}

\subsection{Size bounds for the disconnecting good annulus}
\label{subsec:good_annulus_size_bounds}

Fix $\delta > 0$ and suppose that we perform the exploration as in Proposition~\ref{prop:good_chunks_percolate} until the first time that $0$ is disconnected from $\partial \CD$ by chunks for which $E$ occurs.  Let $\CA$ be quantum surface parameterized by the cluster of chunks $\CN$ for which $E$ occurs with the property that there exist chunks $\CN_{i_1},\ldots,\CN_{i_n}$ which are discovered by the exploration for which $E$ occurs for all of them with $\CN_{i_1} = \CN$, $\CN_{i_n}$ on the boundary of the connected component which contains $0$, and with $\partial \CN_{i_j} \cap \partial \CN_{i_{j+1}} \neq \emptyset$ for each $1 \leq j \leq n-1$.  We define the inner boundary of $\CA$ to be the boundary of the connected component of $\C \setminus \CA$ which contains $0$.  We define the outer boundary of $\CA$ to be the boundary of the unbounded connected component of $\C \setminus \CA$. 

\begin{proposition}
\label{prop:good_band_contains}
Suppose that $\CD = (\D,h,0)$ has law $\qdiskweighted{\ell}$.
Fix $\delta \in(0,1)$.  Suppose that we perform the exploration as in Proposition~\ref{prop:good_chunks_percolate} until the first time that $0$ is disconnected from $\partial \CD$ by chunks for which $E$ occurs and let $\CA$ be as above.  Fix $\wt{u} \in (0,4)$,  $s_0 >0$ and suppose that we are working on the event that $\qmeasure{h}(\qball{h}{z}{s}) < s^{4-\wt{u}}$ for all $s \in (0,s_0),  z \in \CD$,  and $\qmeasure{h}(\qball{h}{z}{s}) > s^{4+\wt{u}}$ for all $s \in (0,s_0),  z \in \CD$ such that $\dist_{\qdistnoarg{h}}(z,\partial \CD) > s$.  Then,  there exist a constant $p_1>0$ depending only on $u,\wt{u}$ and $M$,  and a constant $\delta_0 \in (0,1)$ depending only on $u,\wt{u},M$ and $s_0$ such that the distance (with respect to the interior-internal metric on $\CA$) between the inner and outer boundary of $\CA$ is at least $\delta^{p_1}$.  Moreover,  there exists a constant $p_0>0$ depending only on $u$ such that the following holds.  There exist constants $c_1,c_2,\alpha>0$ depending only on $\ell,u$ such that for all $\delta \in (0,1)$ sufficiently small (depending only on $\ell,u$),  on the event $E_{u,\delta}$ defined in the statement of Proposition~\ref{prop:good_chunks_percolate},  off an event with probability at most $c_1 \exp(-c_2 \delta^{-\alpha})$,  we have that $\CA$ is contained in the $\delta^{p_0}$-neighborhood of $\partial \CD$ with respect to $\qdistnoarg{h}$.
\end{proposition}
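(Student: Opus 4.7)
The plan is to address the two claims separately, using the structural properties of the good chunk event $E$ (conditions (I)--(VI) in Subsection~\ref{subsec:event_def}) together with the probabilistic control provided by Proposition~\ref{prop:good_chunks_percolate}.

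For the lower bound on the interior-internal distance between the inner and outer boundaries of $\CA$, I will argue that any path $\gamma$ in $\CA$ connecting these two boundaries must, at some point, make a non-trivial ``radial crossing'' of a single good chunk $\CN^\ast \in \CA$. More precisely, I will show that $\gamma$ contains a subarc with both endpoints $x,y$ on $\partial \CN^\ast$ such that both arcs of $\partial \CN^\ast$ from $x$ to $y$ have quantum boundary length at least a constant multiple of $\delta^{2/3}/M$. This is a topological statement about how a radial path traverses an annulus composed of good chunks, exploiting the fact that each good chunk has a top, bottom left, and bottom right each of quantum length at least $\delta^{2/3}/M$ (condition~(I)). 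Once this is established, the reverse H\"older condition~(IV) immediately yields $\qdist{h|_{\CN^\ast}}{x}{y} \geq (\delta^{2/3}/M)^M$, giving a lower bound on the length of $\gamma$ of the form $\delta^{p_1}$ with $p_1$ depending only on $M$ and hence, through the choice of $M$, on $u$ and $\wt{u}$.

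For the second claim that $\CA$ lies in a $\delta^{p_0}$-neighborhood of $\partial \CD$, the key input is Proposition~\ref{prop:good_chunks_percolate}-(ii). On the event $E_{u,\delta}$, off an event of probability at most $c_1 \exp(-c_2\delta^{-a})$, every chunk in the exploration (and in particular every chunk in $\CA$) can be reached from $\partial \CD$ by a chain of at most $\delta^{-u}$ adjacent chunks. Each good chunk in this chain has $\qdistnoarg{h}$-diameter at most $M\delta^{1/3}$ by condition~(II). For the possibly bad chunks appearing in the chain, I will use standard $\SLE_6$ diameter estimates together with the fact that each chunk corresponds to at most $\delta$ units of quantum natural time to show, off a further exponentially small event, that every chunk has $\qdistnoarg{h}$-diameter at most $\delta^{1/3-\varepsilon}$. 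Summing the diameters along the chain, each chunk of $\CA$ lies at $\qdistnoarg{h}$-distance at most $\delta^{-u}\cdot\delta^{1/3-\varepsilon} = \delta^{1/3-u-\varepsilon}$ from $\partial \CD$, so I can take $p_0 = 1/3 - u - \varepsilon$ for any small $\varepsilon > 0$, which is positive provided $u$ is chosen smaller than $1/3$.

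The main obstacle is the topological/combinatorial argument in the first claim: rigorously showing that a minimizing path across $\CA$ always produces the required non-trivial crossing of a single good chunk, despite possibly complicated chunk adjacency patterns along the annulus. One must carefully track how $\gamma$ moves between chunks via shared boundary arcs and verify that the arcs where consecutive cluster chunks glue cannot simultaneously be short enough to trivialize every crossing via the second half of~(IV). If this direct topological argument turns out to be too delicate, my backup plan is to combine the neighborhood area upper bound~(VI) with the chunk mass lower bound~(III) to produce, inside each chunk, a point at interior-internal distance at least $c\delta^{4/(3p)-1}$ from its boundary, and then use a covering argument together with the quenched volume upper bound $\qmeasure{h}(\qball{h}{z}{s}) \leq s^{4-\wt{u}}$ to force any short path in $\CA$ to miss these deep interior points and thus contradict the area contribution of at least one chunk of $\CA$.
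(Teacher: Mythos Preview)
Your plan for the upper bound is essentially the paper's approach: use Proposition~\ref{prop:good_chunks_percolate}-(ii) to get short chains of chunks back to $\partial\CD$, bound the outer-boundary diameter of each chunk (the paper does this via a separate Lemma~\ref{lem:chunk_boundary_diameter}, valid for good and bad chunks alike), and sum. Your phrasing in terms of full chunk diameters is slightly imprecise since bad chunks need not satisfy condition~(II), but adjacent chunks meet only along their outer boundaries, so an outer-boundary diameter bound suffices, and that is exactly what Lemma~\ref{lem:chunk_boundary_diameter} supplies.

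For the lower bound, your primary plan does not work as stated, and the paper resolves the obstacle differently from your backup. The issue you flag is real: a path $\gamma$ from $\partial^{\mathrm{in}}\CA$ to $\partial^{\mathrm{out}}\CA$ can enter a chunk $\CN_i$ at a point on its inner boundary arc arbitrarily close (in boundary length) to the interface $I_{i-1}=\partial\CN_{i-1}\cap\partial\CN_i$ and immediately cross into $\CN_{i-1}$ through a nearby point of $I_{i-1}$. One of the two arcs of $\partial\CN_i$ between those two points is then short, so condition~(IV) gives no useful lower bound inside $\CN_i$ alone. Your backup via conditions~(III) and~(VI) also does not close: a small interior-internal distance between $\partial^{\mathrm{in}}\CA$ and $\partial^{\mathrm{out}}\CA$ only says some single geodesic is short, not that $\CA$ is globally thin, so deep interior points in chunks can coexist with it without contradiction.

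The paper's fix is a two-step argument on pairs and triples of adjacent inner-boundary chunks. First (Lemma~\ref{lem:chunk_intersection}), it uses condition~(V) together with the assumed quenched volume upper bound $\qmeasure{h}(\qball{h}{z}{s})<s^{4-\wt{u}}$ to show each interface $I_{i-1}$ has $\qdistnoarg{h}$-diameter at least $\delta^{q}$: the $\delta$-neighborhood of $I_{i-1}$ in $\CN_{i-1}$ carries area $\gtrsim\delta^{8/3+u}$, which cannot fit inside a metric ball of radius $\delta^{q}$ for suitable $q$. Second, it runs a dichotomy for a putative short geodesic between the endpoints $a_{i-1},b_{i-1}$ of $I_{i-1}$ inside $\overline{\CN}_{i-1}\cup\overline{\CN}_i$: either the geodesic meets every subarc of $I_{i-1}$ of boundary length $\leq\delta^{q}$ (then the second half of~(IV) forces its length to dominate the $\qdistnoarg{h}$-diameter of $I_{i-1}$, a contradiction), or it misses some such subarc, producing a subpath in a single chunk whose endpoints are separated by boundary-length $\geq\delta^{q}$ on both sides, to which the first half of~(IV) applies. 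This yields a lower bound $\gtrsim\delta^{qM}$ across each interface, and a further case analysis over the triple $\CN_{i-1}\cup\CN_i\cup\CN_{i+1}$ handles exactly the slip-through near $a_{i-1}$ that breaks your single-chunk argument, giving the final bound $\delta^{qM^{3}}$.
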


We will begin by establishing the lower bound in Proposition~\ref{prop:good_band_contains}, the main input of which is the next lemma.  Suppose that we have two $\SLE_6$ chunks $\CN_1$, $\CN_2$ from the exploration.  We say that $\CN_1$ comes before $\CN_2$, or $\CN_2$ comes after $\CN_1$, if $\CN_1$ is discovered by the exploration before $\CN_2$.  We say that $\CN_1$ is adjacent to $\CN_2$ if the following is true.  First, $E$ occurs for both $\CN_1$ and $\CN_2$.  Second, $\CN_2$ is the first chunk discovered after $\CN_1$ for which $E$ occurs whose boundary has non-empty intersection with $\partial \CN_1$.  Then by the way that the exploration is defined, we know that an interval on the right bottom of $\CN_2$ (whose left endpoint is the initial point of the $\SLE_6$ in $\CN_2$) is contained in the top of $\CN_1$ if both of $\partial \CN_1$ and $\partial \CN_2$ intersect the inner boundary of $\CA$.

\begin{lemma}
\label{lem:chunk_intersection}

Suppose that we have the setup described in Proposition~\ref{prop:good_chunks_percolate}.  Fix $\wt{u} \in (0,4)$ and $s_0>0$ and suppose that we are working on the event that $\qmeasure{h}(\qball{h}{z}{s}) < s^{4-\wt{u}}$ for all $s \in (0,s_0),  z \in \CD$,  and $\qmeasure{h}(\qball{h}{z}{s}) > s^{4+\wt{u}}$ for all $s \in (0,s_0),  z \in \CD$ such that $\dist_{\qdistnoarg{h}}(z,\partial \CD) > s$.  Then,  there exist a constant $q>0$ depending only on $u$ and $\wt{u}$ and a constant $\delta_0 \in (0,1)$ depending only on $u,\wt{u},M$,  and $s_0$ such that the following holds for all $\delta \in (0,\delta_0)$.  Suppose that $\CN_1,\CN_2$ are two radial $\SLE_6$ chunks which are adjacent to each other with $\CN_2$ coming after $\CN_1$.  Moreover,  we assume that both of $\partial \CN_1$ and $\partial \CN_2$ intersect the inner boundary of $\CA$.  Then,  we have that the diameter of $\partial \CN_1 \cap \partial \CN_2$ with respect to $\qdistnoarg{h}$ is at least $\delta^q$.
\end{lemma}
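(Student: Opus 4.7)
I would split the proof into two main steps: first a lower bound on the quantum boundary length $\ell := \qbmeasure{h}(\partial \CN_1 \cap \partial \CN_2)$ of the form $\ell \geq \delta^{q_0}$ for some $q_0>0$ depending only on $u$, $\wt{u}$ and $M$; and second, the conversion of this quantum length bound into a $\qdistnoarg{h}$-diameter bound via condition (IV) of the good event~$E$.

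For the quantum length bound, I would use the structural fact recalled just before the statement: the intersection $I := \partial \CN_1 \cap \partial \CN_2$ is an interval on the right bottom of $\CN_2$ whose left endpoint is the initial point $\eta'_2(0)$ of the $\SLE_6$ that produced $\CN_2$, and by condition (I) of $E$ applied to $\CN_2$ the right bottom has quantum length at least $\delta^{2/3}/M$. The plan is to argue by contradiction: if $\ell<\delta^{q_0}$ for $q_0$ sufficiently large, then by condition~(IV) of $E$ for $\CN_2$ the $\qdistnoarg{h|_{\CN_2}}$-diameter of $I$ is at most $\ell^{1/M}$, so $I$ sits inside a $\qdistnoarg{h|_{\CN_2}}$-ball of very small radius around $\eta'_2(0)$. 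Using condition~(V) of $E$ for $\CN_2$ (the lower bound $\epsilon^{2+u}/M$ times length on the area of the $\epsilon$-neighborhood of any boundary arc) together with the hypothesized ball volume upper bound $\qmeasure{h}(\qball{h}{w}{s})<s^{4-\wt{u}}$ for $s\in(0,s_0)$, I would cover a suitable $\epsilon$-neighborhood of the long complement arc on the right bottom of $\CN_2$ by $\qdistnoarg{h}$-balls and optimize over $\epsilon$: the lower bound forces a large quantum area, while the covering combined with the diameter control forces a small one, leading to a contradiction. Condition~(VI) (upper bound $\epsilon^{p}\delta^{p}$ on the area of the full $\epsilon$-neighborhood of $\partial \CN_2$) is used to prevent pathological configurations in which the long arc sits in an unexpectedly small $\qdistnoarg{h}$-region.

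Given $\ell\geq \delta^{q_0}$, the second step is short. For $\delta$ small enough that $\ell\leq \delta^{2/3}/M$, the complement of $I$ on $\partial \CN_2$ has quantum length at least $\ell$ by condition~(I), so condition~(IV) for $\CN_2$ gives $\qdistnoarg{h|_{\CN_2}}(a,b)\geq \ell^M$, where $a,b$ are the endpoints of $I$; the same inequality holds with $\CN_1$ in place of $\CN_2$ by applying~(IV) to $\CN_1$. To transfer from the interior-internal metrics to $\qdistnoarg{h}$, I would argue that any $\qdistnoarg{h}$-path from $a$ to $b$ near $I$ must run through the interior of $\CN_1\cup \CN_2$ (the complement of $\CN_1\cup \CN_2$ is separated from the relative interior of $I$ in $\CD$), so $\qdistnoarg{h}(a,b)\geq \tfrac{1}{2}\ell^M$ and hence the $\qdistnoarg{h}$-diameter of $I$ is at least $\delta^{q_0 M}$, giving the claim with $q = q_0 M$ plus a constant.

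The main obstacle is the contradiction argument in Step~1. The delicate aspect is that $I$ is produced at the interface of two independently-behaving good chunks, possibly with many intervening bad chunks, so its quantum length is not directly controlled by any single condition in the definition of $E$. The argument must therefore combine the reverse-Hölder continuity in $\CN_2$ (condition~(IV)), the boundary-neighborhood area lower bound (condition~(V)) and the boundary-neighborhood area upper bound (condition~(VI)) of the good event, with the global two-sided volume regularity hypothesized in the lemma, and carefully optimize over the radius $\epsilon$ so that the resulting two-sided bounds become inconsistent for $\ell<\delta^{q_0}$.
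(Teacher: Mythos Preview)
Your proposal works far too hard in Step~1 and then runs into a genuine obstacle in Step~2, because you miss the key structural observation that the paper uses.

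\textbf{Step~1 is unnecessary.} Under the hypothesis that both $\partial\CN_1$ and $\partial\CN_2$ meet the inner boundary of $\CA$, the interval $I=\partial\CN_1\cap\partial\CN_2$ is not merely a subinterval of the right bottom of $\CN_2$: it \emph{is} the entire right bottom of $\CN_2$. Hence condition~\eqref{it:boundary_length} immediately gives $\qbmeasure{h}(I)\geq \delta^{2/3}/M$. No contradiction argument is needed. Your elaborate plan combining~\eqref{it:reverse_holder_continuity}, \eqref{it:boundary_volume_lbd}, \eqref{it:ngbd_vollume_ubd} and the volume hypotheses is solving a problem that is not there.

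\textbf{Step~2 uses the wrong ingredient.} You propose to go from the boundary-length lower bound to a diameter bound via condition~\eqref{it:reverse_holder_continuity}. But~\eqref{it:reverse_holder_continuity} controls distances in the \emph{interior-internal} metric of $\CN_1$ or $\CN_2$, not the ambient $\qdistnoarg{h}$-metric on $\CD$ that the lemma asks about. Your proposed transfer (``any $\qdistnoarg{h}$-path from $a$ to $b$ near $I$ must run through the interior of $\CN_1\cup\CN_2$'') is not justified: nothing prevents an ambient geodesic from leaving $\CN_1\cup\CN_2$ through bad chunks or other parts of $\CA$. The paper sidesteps this entirely by using condition~\eqref{it:boundary_volume_lbd} instead: the $\delta$-neighborhood of $I$ inside $\CN_1$ (with respect to the interior-internal metric of $\CN_1$) has quantum area at least $\delta^{2+u}\cdot|I|/M\geq \delta^{8/3+u}/M$; since interior-internal distance dominates ambient distance, this set sits inside any ambient $\qdistnoarg{h}$-ball containing $I$ enlarged by $\delta$. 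The volume upper bound hypothesis then says such a ball of radius $\delta^q$ with $q=(8/3+3u)/(4-\wt u)$ has area at most $\delta^{8/3+3u}<\delta^{8/3+u}/M$ for $\delta$ small, giving the diameter bound directly.

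Finally, note that your exponent $q=q_0M$ would depend on $M$, whereas the lemma asserts $q=q(u,\wt u)$ only; the paper's choice $q=(8/3+3u)/(4-\wt u)$ achieves this, with all $M$-dependence absorbed into $\delta_0$.
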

\begin{proof}

First,  we note that since both of $\partial \CN_1$ and $\partial \CN_2$ intersect the inner boundary of $\CA$,  we have that $I:=\partial \CN_1 \cap \partial \CN_2$ is an interval contained in in the right bottom of $\CN_2$ and at the top of $\CN_1$,  and such that its left endpoint is the initial point of the radial $\SLE_6$ in $\CN_2$.  In particular,  we have that $I$ is the bottom right of $\CN_2$.  Hence,  it follows from part~\eqref{it:boundary_length} that the quantum boundary length of $I$ is at least $\delta^{2/3} / M$.  Note that if $\delta_0 \in (0,1)$ is such that $\delta_0 < M^{-3/2}$,  we have that $\delta < \delta^{1/3} / M$,  and so part~\eqref{it:boundary_volume_lbd} in the definition of $E$ implies that the amount of volume in $\CN_1$ which has distance in $\CN_1$ at most $\delta$ to $I$,  is at least $\delta^{2+u}$ times the length of $I$,  i.e.,  at least $\delta^{8/3 + u} / M$.  Let $q = \frac{8/3 + 3u}{4-\wt{u}} > 0$.  We pick $\delta_0$ sufficiently small so that we also have $\delta_0^q < s_0$.  Then,  we have that the amount of area in any ball in $\CD$ with respect to $\qdistnoarg{h}$ of radius $\delta^q$ is at most $\delta^{8/3 + 3u}$,  for all $\delta \in (0,\delta_0)$.  Therefore,  if we further choose $\delta_0 \in (0,1)$ such that $\delta_0 < M^{-1/(2u)}$,  then we have that $I$ cannot be contained in such a ball,  which implies that the $\qdistnoarg{h}$-diameter of $I$ is at least $\delta^q$.
\end{proof}

\begin{figure}
\includegraphics[scale=0.85]{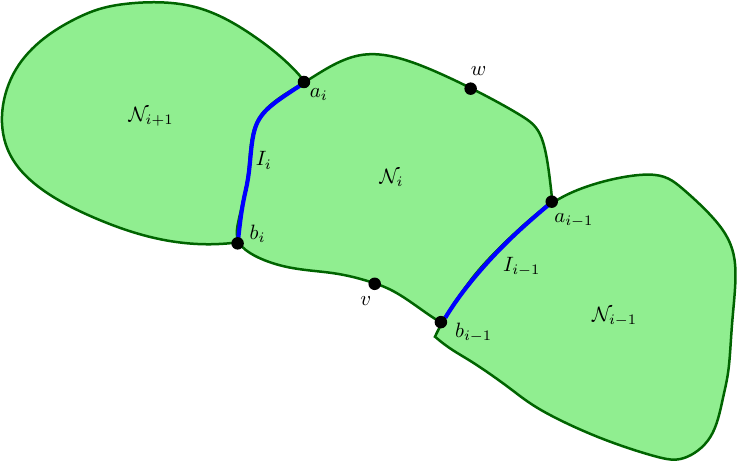}
\caption{\label{fig:intersecting_chunks} Illustration of the setup of the proof of the lower bound of Proposition~\ref{prop:good_band_contains}.}
\end{figure}

\begin{proof}[Proof of Proposition~\ref{prop:good_band_contains}, lower bound]

Suppose that $\delta_0$ and $q$ are as in Lemma~\ref{lem:chunk_intersection}.  Let $\CN_1,\CN_2$ be two adjacent radial $\SLE_6$ chunks with $\CN_2$ coming after $\CN_1$ and such that both of $\partial \CN_1$ and $\partial \CN_2$ intersect the inner boundary of $\CA$.  We will show that the diameter of the interval $I:=\partial \CN_1 \cap \partial \CN_2$ with respect to the interior-internal metric of $\overline{\CN}_1\cup \overline{\CN}_2$ is at least $3\delta^{qM}$ by possibly taking $q$ to be larger (depending only on $u,\wt{u}$ and $M$) and $\delta_0 \in (0,1)$ to be smaller (depending only on $u,\wt{u},M$ and $s_0$).  Indeed,  let $a$ and $b$ be the two endpoints of $I$ with $a$ lying on the inner boundary of $\CA$.  Let also $\gamma$ be the geodesic in $\overline{\CN}_1 \cup \overline{\CN}_2$ from $a$ to $b$ with respect to the interior-internal metric.  Suppose that the length of $\gamma$ (with respect to the interior-internal metric) is at most $\delta^{qM}$.  We can assume that $q>2/3$ and by possibly taking $\delta_0$ to be smaller (depending only on $u,\wt{u},M$ and $s_0$),  we can assume that $\delta^q < \delta^{2/3} / M$.  Then, either $\gamma$ intersects every subinterval of $I$ with boundary length at most $\delta^q$ or it does not.  Suppose that the former holds.  Fix $z \in I$ and let $x,y \in I$ be such that $z \in [x,y]_{\partial \CN_1}^\ccw$ and $\qbmeasure{h}([x,y]_{\partial \CN_1}^\ccw) < \delta^q$.  Fix also $w \in \gamma \cap [x,y]_{\partial \CN_1}^\ccw$.  By condition~\eqref{it:reverse_holder_continuity} of $E$ we obtain that the distance between $z$ and $w$ with respect to the interior-internal metric in $\CN_1$ is at most $\delta^{q/M}$ and so the same holds for the distance between $z$ and $w$ with respect to the interior-internal metric in $\overline{\CN}_1 \cup \overline{\CN}_2$.  Hence,  the distance between $a$ and $z$ with respect to the interior-internal metric in $\overline{\CN}_1 \cup \overline{\CN}_2$ is at most $3\delta^{qM} + \delta^{q/M}$ by the triangle inequality.  Since $z$ was arbitrary,  it follows that the diameter of $I$ with respect to $\qdistnoarg{h}$ is at most $6\delta^{qM} + 2\delta^{q/M}$.  But this is a contradiction due to Lemma~\ref{lem:chunk_intersection} by possibly taking $q>0$ to be larger (depending only on $u,\wt{u}$ and $M$) and $\delta_0 \in (0,1)$ to be smaller (depending only on $u,\wt{u},M$ and $s_0$).     Thus,  there exists a subinterval $J$ of $I$ with boundary length at least $\delta^q$ and such that $\gamma \cap J = \emptyset$.  This implies that there exists a segment of $\gamma$ which connects two points $a',b' \in I$ with boundary length distance at least $\delta^q$ and this segment stays in either $\CN_1$ or $\CN_2$.  Then,  condition~\eqref{it:reverse_holder_continuity} of $E$ implies that the length of the aforementioned segment with respect to the interior-internal metric in $\overline{\CN}_1 \cup \overline{\CN}_2$ is at least $\delta^{qM}$.  Therefore, combining everything and possibly taking $q$ to be larger (depending only on $u,\wt{u}$ and $M$),  we have that the distance between $a$ and $b$ with respect to the interior-internal metric in $\overline{\CN}_1 \cup \overline{\CN}_2$ is at least $3\delta^{qM}$.

See Figure~\ref{fig:intersecting_chunks} for an illustration of the notation for what follows. Now,  let $\CN_1,\ldots,\CN_n$ be the chunks intersecting the inner boundary of $\CA$ such that for all $1 \leq i \leq n-1$,  we have that $\CN_i$ is adjacent to $\CN_{i+1}$ and comes before $\CN_{i+1}$,  and set $I_i := \partial \CN_i \cap \partial \CN_{i+1}$.  Set also $\CN_0 = \CN_n$ and $I_0 = \partial \CN_n \cap \partial \CN_1$.  Let $w$ be a point lying in the inner boundary of $\CA$.  Then,  there exists $1 \leq i \leq n$ such that $w \in \partial \CN_i \setminus (I_{i-1} \cup I_i)$.  Let $v \in \partial \CN_{i-1} \cup \partial \CN_i \setminus I_{i-1}$ be such that $v$ does not lie in the inner boundary of $\CA$.  Let also $a_{i-1},b_{i-1}$ be the two endpoints of $I_{i-1}$ such that $a_{i-1}$ lies in the inner boundary of $\CA$.  Suppose that the boundary length distance of $w$ from $a_{i-1}$ is at most $\delta^{qM^2}$.  Then,  condition~\eqref{it:reverse_holder_continuity} of $E$ implies that the distance between $a_{i-1}$ and $b_{i-1}$ with respect to the interior-internal metric in $\overline{\CN}_{i-1} \cup \overline{\CN}_i$ is at most $\delta^{qM}$.  If the boundary length distance of $v$ from $b_{i-1}$ is at most $\delta^{qM^2}$,  then combining with the triangle inequality,  we obtain that the distance between $w$ and $v$ with respect to the interior-internal metric in $\overline{\CN}_{i-1} \cup \overline{\CN}_i$ is at least $\delta^{qM}$.  Suppose that the boundary length distance of $w$ from $a_{i-1}$ is at least $\delta^{qM^2}$.  Then,  condition~\eqref{it:reverse_holder_continuity} of $E$ implies that the distance between $I_{i-1}$ and $w$ with respect to the interior-internal metric in $\overline{\CN}_1 \cup \overline{\CN}_2$ is at least $\delta^{qM^3}$.  If $v \in \partial \CN_{i-1}$,  then the distance between $w$ and $v$ with respect to the metric in $\overline{\CN}_1 \cup \overline{\CN}_2$ is at least $\delta^{qM^3}$.  Suppose that $v \in \partial \CN_i$.  Note that the boundary lengths of $I_{i-1}$ and $I_i$ are both at least $\delta^{2/3} / M$ and so condition~\eqref{it:reverse_holder_continuity} implies that the distance between the component of $\partial \CN_i \setminus (I_{i-1} \cup I_i)$ intersecting the inner boundary of $\CA$ and the component of $\partial \CN_i \setminus (I_{i-1} \cup I_i)$ not intersecting the inner boundary of $\CA$ is at least $\delta^{qM}$ with respect to the metric in $\overline{\CN}_i$.  Moreover,  the way that the exploration is defined implies that the boundary length of the counterclockwsie arc of $\partial \CN_i$ from $a_{i-1}$ to $a_i$ is at least $\epsilon_0 \delta^{2/3}$ and so combining with~\eqref{it:reverse_holder_continuity},  we obtain that the distance between $a_{i-1}$ and $I_i$ with respect to the interior-internal metric in $\CN_i$ is at least $\epsilon_0^M \delta^{2M/3}$.  In particular,  the triangle inequality implies that the distance between $w$ and $I_i$ with respect to the metric in $\CN_i$ is at least $\epsilon_0^M \delta^{2M/3} / 2$.  It follows that the distance between $w$ and $v$ in $\overline{\CN}_{i-1} \cup \overline{\CN}_i$ is at least $\delta^{qM^3}$.  Note that if $w$ has boundary length distance at least $\delta^{qM}$ from both of $I_{i-1}$ and $I_i$,  then~\eqref{it:reverse_holder_continuity} implies that the $\qdistnoarg{h}$-distance in $\CN_i$ between $w$ and the part of $\partial \CN_i$ not contained in the inner boundary of $\CA$ is at least $\delta^{qM^3}$.  Combining everything,  we obtain that the following is true.  If $w$ lies in the part of $\partial \CN_i$ contained in the inner boundary of $\CA$ with boundary length distance from $I_i$ at least $\delta^{qM}$,  and $v$ lies in the part of $\partial (\overline{\CN}_{i-1} \cup \overline{\CN}_i \cup \overline{\CN}_{i+1})$ not contained in the inner boundary of $\CA$,  then the distance between $w$ and $v$ with respect to the metric in $\overline{\CN}_{i-1} \cup \overline{\CN}_i \cup \overline{\CN}_{i+1}$ is at least $\delta^{qM^3}$.  A similar argument shows that if $w$ and $v$ are as above but $w$ has boundary length distance from $I_{i-1}$ at least $\delta^{qM}$ instead,  then the distance between $w$ and $v$ in $\overline{\CN}_{i-1} \cup \overline{\CN}_i \cup \overline{\CN}_{i+1}$ is at least $\delta^{qM^3}$.  It follows that if $w$ lies in the part of $\partial \CN_i$ contained in the inner boundary of $\CA$ and $v$ lies in the part of $\partial (\overline{\CN}_{i-1} \cup \overline{\CN}_i \cup \overline{\CN}_{i+1})$ not contained in the inner boundary of $\CA$,  then we have that the distance in $\overline{\CN}_{i-1} \cup \overline{\CN}_i \cup \overline{\CN}_{i+1}$ between $w$ and $v$ is at least $\delta^{qM^3}$.

To finish the proof,  we let $\partial^{\text{in}} \CA$ (resp.  $\partial^{\text{out}} \CA$) be the inner (resp.  outer) boundary of $\CA$.  Fix $w\in \partial^{\text{in}}\CA,  v \in \partial^{\text{out}} \CA$ and let $\gamma$ be a geodesic in $\CA$ from $w$ to $v$ with respect to the interior-internal metric in $\CA$.  Then,  there exists $1 \leq i \leq n$ such that $w \in \partial \CN_i$,  and let $\wt{v}$ be the last point of $\partial (\overline{\CN}_{i-1} \cup \overline{\CN}_i \cup \overline{\CN}_{i+1}) \setminus \partial^{\text{in}} \CA$ hit by $\gamma$.  Then,  the results of the previous paragraph imply that the $\qdistnoarg{h}$-length of the part of $\gamma$ from the last time that it hits $\wt{v}$ to the first time that it hits $w$ is at least $\delta^{qM^3}$ and so the distance between $w$ and $v$ with respect to the interior-internal metric in $\CA$ is at least $\delta^{qM^3}$.  This completes the proof of the lower bound.
\end{proof}

We now work towards proving the upper bound in Proposition~\ref{prop:good_band_contains}.  First,  we state the following version of Lemma~\ref{lem:boundary_length_regularity}.
\begin{lemma}\label{lem:boundary_length_weak_regulatiry}
Fix $\ell>0,  u \in (0,1/2)$ and let $\CD = (\D,h,0)$ be a sample from $\qdiskweighted{\ell}$.  Then,  there a.s.\  exists $C>0$ such that for all $x,y \in \partial \D$ we have that $\qdist{h}{x}{y} \leq C \qbmeasure{h}([x,y]_{\partial \CD}^\ccw)^{1/2-u}$
and the same is true with $[x,y]_{\partial \CD}^\cw$ in place of $[x,y]_{\partial \CD}^\ccw$.  Moreover,  there exist constants $c_1,c_2>0$ depending only on $\ell$ and a constant $\alpha>0$ depending only on $u$ such that if $C>0$ is the smallest constant for which the above bound holds,  then we have that $\p\big[ C>A \big] \leq c_1 \exp(-c_2 A^{\alpha})$ for all $A>1$.
\end{lemma}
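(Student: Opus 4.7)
The plan is to first reduce to the case $\ell = 1$ by the scaling properties of $\qdiskweighted{\ell}$: if $(\D,h,0)$ has law $\qdiskweighted{\ell}$, then rescaling boundary lengths by $\ell^{-1}$, distances by $\ell^{-1/2}$ and areas by $\ell^{-2}$ produces a sample from $\qdiskweighted{1}$. Under this rescaling, if the bound holds with constant $C'$ for $\ell = 1$, then it holds for general $\ell$ with $C = \ell^{u} C'$, so a stretched exponential tail on $C'$ transfers to one on $C$ with $\ell$ entering only the prefactors $c_1, c_2$ but not the exponent $\alpha$.

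For $\ell = 1$, I would exploit the equivalence of the $\sqrt{8/3}$-LQG quantum disk with the Brownian disk of perimeter $1$ \cite{bm2017disk,lg2019disksnake}. In the Brownian-snake encoding, $\partial \D$ admits a parameterization $s\mapsto p(s)$ from $[0,1]$ which pushes Lebesgue measure forward to the quantum boundary length, and the quantum distance between two boundary points is dominated by the oscillation of a centered Gaussian-type process $Y$ (the snake on the ``boundary arc'' of the tree) whose increments on intervals of size $|s-t|$ have variance of order $|s-t|$. By standard Gaussian moment bounds, $\mathbb{E}[|Y(s)-Y(t)|^{p}] \leq (Kp)^{p/2}|s-t|^{p/2}$ for all $p \geq 2$, and a Kolmogorov--Chentsov / Garsia--Rodemich--Rumsey argument (or the Lévy modulus for Gaussian processes) produces a random constant $C''$ with Gaussian tail $\mathbb{P}[C''>A]\leq c\exp(-c'A^{2})$ such that $|Y(s)-Y(t)|\leq C''|s-t|^{1/2-u}$ uniformly on $[0,1]$. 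Taking $d_h(p(s),p(t))\leq 4C''|s-t|^{1/2-u}$ yields the desired Hölder bound on the quantum distance with a Gaussian-tailed constant, under $\qdisk{1}$.

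To pass from $\qdisk{1}$ to $\qdiskweighted{1}$, I would use that the two measures are mutually absolutely continuous with Radon--Nikodym derivative equal to a constant times $\mu_{h}(\D)$, which under $\qdisk{1}$ has finite moments of all orders by the explicit density~\eqref{eqn:bdisk_area_density}; an application of Hölder's inequality then converts the $\qdisk{1}$-Gaussian tail into a $\qdiskweighted{1}$-stretched-exponential tail, with a possibly reduced exponent $\alpha\in(0,2)$ depending only on $u$. The main technical obstacle will be setting up the Brownian-snake upper bound on boundary distances precisely enough to obtain the moment estimate $\mathbb{E}[|Y(s)-Y(t)|^{p}]\leq (Kp)^{p/2}|s-t|^{p/2}$ with Gaussian-type dependence in $p$; once this is established, the remainder is a routine application of Kolmogorov's criterion together with the standard absolute-continuity argument for transferring tail bounds from $\qdisk{1}$ to $\qdiskweighted{1}$.
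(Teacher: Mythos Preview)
Your approach is correct and is essentially the same as what the paper intends: the paper's proof is a single line deferring to the argument of Lemma~\ref{lem:boundary_length_regularity} (which in turn is \cite[Lemma~3.2]{gm2019gluing}), and that argument is precisely the Brownian-snake encoding of the Brownian disk boundary combined with Gaussian moment bounds and Kolmogorov--Chentsov, exactly as you outline. Your scaling reduction to $\ell=1$ and the H\"older transfer from $\qdisk{1}$ to $\qdiskweighted{1}$ via the Radon--Nikodym derivative $\mu_h(\D)$ (which has finite moments of order $p<3/2$ under $\qdisk{1}$ by~\eqref{eqn:bdisk_area_density}) are both correct and fill in details the paper omits.
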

\begin{proof}
It follows from an argument which is similar to the one given in the proof of Lemma~\ref{lem:boundary_length_regularity}.
\end{proof}

Next,  we state and prove the following lemma which gives an upper bound with high probability on the diameter of the outer boundary of a radial $\SLE_6$ curve drawn for at most $\delta$ units of quantum natural time on top of a weighted quantum disk.
\begin{lemma}\label{lem:chunk_boundary_diameter}
Fix $u \in (0,1/3)$.  Then,  there exists $\wt{u} \in (0,u)$ such that the following holds.  Suppose that $\CD = (\D,h,0)$ has law $\qdiskweighted{\ell}$ with $\ell \leq \delta^{-\wt{u}/2}$.  Let $\eta'$ be a radial $\SLE_6$ curve in $\D$,  independent of $\CD$,  starting from a point chosen uniformly at random on $\partial \D$ with respect to $\nu_h$ and targeted at $0$.  Fix $\delta \in (0,1)$ and suppose that $\sigma$ is a stopping time for $\eta'$ which is a.s.\  at most $\delta$.  Then,  there exist constants $c_1,c_2,\alpha>0$ depending only on $u$ such that on the event that the boundary length of the component of $\D \setminus \eta'([0,t])$ containing $0$ is at least $\delta^{2/3 -u}$ for all $t \in [0,\sigma]$,  we have that off an event with probability at most $c_1 \exp(-c_2 \delta^{-\alpha})$,  the $\qdistnoarg{h}$-diameter of the outer boundary of $\eta'([0,\sigma])$ is at most $\delta^{1/3 - u}$.
\end{lemma}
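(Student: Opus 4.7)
The plan is to decompose the outer boundary of $\eta'([0,\sigma])$ into its top $\CT := \partial K_\sigma \cap \D$ and bottom $\CB := \partial K_\sigma \cap \partial \D$, where $K_\sigma$ is the hull of $\eta'([0,\sigma])$, and to bound the quantum diameter of each piece separately. Since $Y_t \geq \delta^{2/3-u} > 0$ for all $t \in [0,\sigma]$ on the conditioning event, $\eta'([0,\sigma])$ does not surround $0$, so $K_\sigma$ is a genuine hull and $\CB$ is a single arc of $\partial \D$ containing $\eta'(0)$.

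For $\CT$, I would apply Lemma~\ref{lem:top_length_tail_disk} with parameter $u/4$ in place of $u$. Choosing $\wt u$ small enough relative to $u$, the bound $\ell \leq \delta^{-\wt u/2}$ implies $\ell \leq \delta^{2/3}\exp(c_3\delta^{-u/4})$ where $c_3$ is the constant in Lemma~\ref{lem:top_length_tail_disk}, so the lemma applies and yields that off an event of probability at most $c_1\exp(-c_2\delta^{-u/4})$, the quantum length of $\CT$, and hence its quantum diameter (which is bounded above by its quantum length), is at most $\delta^{2/3-u/4}$, which is much smaller than $\delta^{1/3-u}$.

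For $\CB$, its quantum length equals $|\CB| = \ell - Y_\sigma + T_\sigma$, where $Y_t$ is the boundary length of the $0$-containing component. I would invoke Lemma~\ref{lem:boundary_length_rn_disk_weighted} to compare $Y$ with the $3/2$-stable L\'evy process $X$ with only downward jumps starting from $\ell$, whose Radon-Nikodym derivative against $Y$ is $(X_\sigma/\ell)^{-1/2}$ on $\{X_\sigma > 0\}$. On the conditioning event, $X_\sigma \geq \delta^{2/3-u}$, so this factor is bounded by $(\ell/\delta^{2/3-u})^{1/2} \leq \delta^{-\wt u/4 - 1/3 + u/2}$; combined with the scaling and tail estimates for $X$, this yields an upper bound on $\ell - Y_\sigma$ off a small-probability event. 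Finally, Lemma~\ref{lem:boundary_length_weak_regulatiry} bounds the quantum diameter of the bottom arc $\CB$ by $C|\CB|^{1/2 - u''}$ for arbitrarily small $u''$, so tuning $\wt u$ and $u''$ appropriately in terms of $u$ gives a bottom diameter of at most $\delta^{1/3-u}/2$.

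The main obstacle will be matching the claimed stretched-exponential failure probability $c_1\exp(-c_2\delta^{-\alpha})$ for the bottom bound. The naive scaling argument for the infimum of a spectrally negative $3/2$-stable process yields only polynomial decay, since the lower tail of the infimum decays like $y^{-3/2}$. To upgrade to stretched-exponential decay one likely needs to combine the Radon-Nikodym factor with a decomposition of $\eta'([0,\sigma])$ into excursions away from $\partial \D$, controlling the cumulative bottom increment over all excursions using the exponential moment bound on the reflected process from Proposition~\ref{prop:stable-reflected-exp-moment}. With $\wt u$, $u''$, and the threshold separating short from long excursions all tuned appropriately in terms of $u$, the required failure-probability bound should follow.
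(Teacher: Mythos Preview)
Your top-and-bottom decomposition is natural, but the appeal to Lemma~\ref{lem:top_length_tail_disk} has a genuine gap. That lemma only bounds $\sup_{t\in[0,\tau\wedge\delta]}T_t$, where $\tau$ is the first time $\eta'$ disconnects a \emph{fixed} boundary target $y$ from $0$. There is no reason for $\sigma\le\tau$ to hold with stretched-exponentially high probability: on the conditioning event $Y_t\ge\delta^{2/3-u}$, the antipodal point $y$ can certainly be swallowed before time $\sigma$ while $Y$ stays large. So you get no direct control on $T_\sigma$ once $\sigma>\tau$, and your bound on the top fails. (Separately, the claim ``quantum diameter $\le$ quantum length'' is simply false: boundary length and metric length are different objects, and one must go through Lemma~\ref{lem:boundary_length_weak_regulatiry} to get a $\nu_h^{1/2-u''}$-type bound on distances.)

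The paper's proof is built precisely to circumvent this. It sets $\sigma_0=0$ and, inductively, takes $y_j$ antipodal on $\partial D_{j-1}$ and $\sigma_j$ the first time $\eta'$ disconnects $y_j$ from $0$; then Lemma~\ref{lem:top_length_tail_disk} \emph{does} apply on each stage $[\sigma_{j-1},\sigma_j]$ with $a=1/2$, giving top length $\le\delta^{2/3-\wt u}$ per stage, which Lemma~\ref{lem:boundary_length_weak_regulatiry} converts to a diameter bound of order $\delta^{1/3-O(\wt u)}$. The number of stages $N$ before $\sigma$ is reached is stochastically dominated by a geometric variable (since on the conditioning event each $\ell_j\ge\delta^{2/3-u}$, so $\sigma_j-\sigma_{j-1}>\delta$ with uniformly positive probability), hence $N<\delta^{-\wt u}$ off a stretched-exponentially small event. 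Summing the stage diameters then gives the result. This iterative scheme simultaneously handles the top, the bottom, and the failure probability; your suggested excursion decomposition for the bottom is a different (and vaguer) mechanism and would still leave the top problem unresolved.
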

\begin{proof}
Fix $\wt{u} \in (0,1/3)$ small (to be chosen and depending only on $u$).  Suppose that $x_1 \in \partial \D$ is chosen uniformly at random from the quantum boundary measure and $y_1 \in \partial \D$ is the unique point on $\partial \D$ such that the boundary lengths of $[x_1,y_1]_{\partial \D}^\cw$ and $[x_1,y_1]_{\partial \D}^\ccw$ are both equal to $\ell / 2$.  We let $\sigma_1$ be the first time that $\eta'$ disconnects $y_1$ from $0$.  Suppose that we have defined marked points $x_1,\ldots,x_j,y_1,\ldots,y_j$ and stopping times $\sigma_1,\ldots,\sigma_j$.  Then,  we set $x_{j+1} = \eta'(\sigma_j)$ and let $y_{j+1}$ be the point on the boundary of the component $D_j$ of $\D \setminus \eta'([0,\sigma_j])$ containing $0$ which is antipodal to $x_{j+1}$ with respect to quantum boundary length.  Then,  we let $\sigma_{j+1}$ be the first time that $\eta'$ disconnects $y_{j+1}$ from $0$.  We set $N:=\inf\{j \in \N : \sigma \leq \sigma_j\}$.  Suppose that the boundary length $\ell_j$ of $\partial D_j$ is at least $\delta^{2/3 - u}$.  We note that when scaling the boundary length by $\ell_j^{-1}$,  we have that the quantum natural time is scaled by $\ell_j^{-3/2}$ and $\ell_j^{3/2} \geq \delta^{1-3u/2} > \delta$.  Hence,  it follows that there exists a universal constant $p \in (0,1)$ such that $\p\big[ \sigma_{j+1}-\sigma_j > \delta \giv \eta'|_{[0,\sigma_j]} \big] \geq 1-p$ a.s.  It follows that on the event that the boundary length of $\D \setminus \eta'([0,t])$ is at least $\delta^{2/3 - u}$ for all $t \in [0,\sigma]$,  we have that $N < \delta^{-\wt{u}}$ off an event with probability at most $\exp(\log(p) \delta^{-\wt{u}})$.  From now on,  we assume that we are working on the event that $N < \delta^{-\wt{u}}$.  

Combining Lemma~\ref{lem:boundary_length_rn_disk_weighted} with \cite[Chapter~VII,  Corollary~2]{bertoin1996levy} gives that on the event that the boundary length of the $0$-containing connected component of $\D \setminus \eta'([0,t])$ is at least $\delta^{2/3 - u}$ for all $t \in [0,\sigma]$,  we have that off an event with probability at most $\lesssim \delta^{-1/3 +u/2} \exp(-\delta^{-2/3 - \wt{u}})$ (with the implicit constant being universal),  the boundary length of the $0$-containing connected component of $\D \setminus \eta'([0,t])$ is at most $\delta^{-\wt{u}}$ for all $t \in [0,\sigma]$.  Suppose that this event occurs as well.  Moreover,  Lemma~\ref{lem:top_length_tail_disk} implies that there exist universal constants $c_1,c_2>0$ such that off an event with probability at most $c_1 \exp(-c_2 \delta^{-\wt{u}})$,  we have that for all $1 \leq j \leq N$,  the boundary length of the part of $\eta'|_{[\sigma_{j-1},\sigma_j]}$ contained in $D_{j-1}$ is at most $\delta^{2/3 - \wt{u}}$,  where $\sigma_0 = 0$.  Suppose that this event occurs as well.  Furthermore,  Lemma~\ref{lem:boundary_length_weak_regulatiry} implies that there exists a constant $\alpha > 0$ depending only on $\wt{u}$ such that by possibly taking $c_1$ to be larger and $c_2$ to be smaller (depending only on $\wt{u}$),  we have that for all $j \in \N$,  off an event with probability at most $c_1 \exp(-c_2 \delta^{-\alpha})$,  
$\ell_j^{-1/2} \qdist{h}{x}{y} \leq \delta^{-\wt{u}} (\ell_j^{-1} \qbmeasure{h}([x,y]_{\partial D_j}^\ccw))^{1/2 - \wt{u}}$
for all $x,y \in \partial D_j$ and the same is true with $[x,y]_{\partial D_j}^\cw$ in place of $[x,y]_{\partial D_j}^\ccw$.  Therefore,  combining everything,  we obtain that the following is true.  There exist constants $c_3,c_4,\alpha>0$ depending only on $\wt{u}$ and $u$ such that on the event that the boundary length of the $0$-containing connected component of $\D \setminus \eta'([0,t])$ is at least $\delta^{2/3 - u}$ for all $t \in [0,\sigma]$,  off an event with probability at most $c_3 \exp(-c_4 \delta^{-\alpha})$,  it holds that $N < \delta^{-\wt{u}},  \ell_j \leq \delta^{-\wt{u}}$,  the boundary length of the part of $\eta'|_{[\sigma_j,\sigma_{j+1}]}$ contained in $D_j$ is at most $\delta^{2/3 - \wt{u}}$,  and $\ell_j^{-1/2} \qdist{h}{x}{y} \leq \delta^{-\wt{u}} (\ell_j^{-1} \qbmeasure{h}([x,y]_{\partial D_j}^\ccw))^{1/2 - \wt{u}}$
for all $x,y \in \partial D_j$ and the same is true with $[x,y]_{\partial D_j}^\cw$ in place of $[x,y]_{\partial D_j}^\ccw$,  for all $1 \leq j \leq N$.  In particular,  we have that the $\qdistnoarg{h}$-diameter of the part of $\eta'|_{[\sigma_j,\sigma_{j+1}]}$ contained in $D_j$ is at most $\delta^{-\wt{u}} \ell_j^{\wt{u}} \delta^{(1/2 - \wt{u}) ( 2/3 - \wt{u})} \leq \delta^{-\wt{u}-\wt{u}^2} \delta^{(1/2 - \wt{u}) (2/3 - \wt{u})}$.  It follows by a union bound that the $\qdistnoarg{h}$-diameter of the outer boundary of $\eta'([0,\sigma])$ is at most $\delta^{-2\wt{u}-\wt{u}^2} \delta^{(1/2 - \wt{u})(2/3 - \wt{u})}$.  So the proof is complete if we choose $\wt{u}>0$ sufficiently small (depending only on $u$).
\end{proof}

\begin{proof}[Proof of Proposition~\ref{prop:good_band_contains}, upper bound]
Let $\wt{u} \in (0,u/3)$ be as in Lemma~\ref{lem:chunk_boundary_diameter},  fix $u_0 \in (0,1)$ (to be chosen),  and let $\wt{E}_{\wt{u},\delta}$ be the event that for all $j \in \Z \cap [0,\delta^{-2/3 - u}],  t \in [0,\sigma_j]$,  the boundary length of the $0$-containing connected component of $D_j \setminus \eta_l'([0,t])$ is at most $\delta^{-\wt{u}}$.  Then,  combining Proposition~\ref{prop:good_chunks_percolate} with Lemma~\ref{lem:chunk_boundary_diameter},  we obtain that there exist constants $c_1,c_2,\alpha>0$ depending only on $\ell$ and $u$ such that for all $\delta \in (0,1)$,  the following holds.  On the event $E_{u,\delta} \cap \wt{E}_{\wt{u},\delta}$,  off an event with probability at most $c_1 \exp(-c_2 \delta^{-\alpha})$,  the following conditions hold.  The set $\CA$ consists of at most $\delta^{-2/3 - u}$ number of chunks and disconnects $0$ from $\partial \D$,  and for every chunk $\CN$ in $\CA$,  there exists $1 \leq n \leq \delta^{-u_0}$ and chunks $\CN_1,\ldots,\CN_n$ discovered during the first $\delta^{-2/3 - u}$ number of steps in the exploration such that $\partial \CN_n \cap \partial \D \neq \emptyset,  \CN = \CN_1$,  and $\partial \CN_j \cap \partial \CN_{j+1} \neq \emptyset$,  for all $1 \leq j \leq n-1$.  Moreover,  if $\CN$ is a chunk discovered during the first $\delta^{-2/3-u}$ steps of the exploration and $\eta'$ is the corresponding radial $\SLE_6$ curve which makes $\CN$,  then we have that the $\qdistnoarg{h}$-diameter of the outer boundary of $\eta'$ in $\CN$ is at most $\delta^{1/3 - u}$.  Suppose that the above hold.  Fix $z \in \CA$ and let $\CN$ be the chunk in $\CA$ such that $z \in \CA$.  Let also $\CN_1,\ldots,\CN_n$ be the chunks in $\CA$ such that $\CN = \CN_1,  \partial \CN_n \cap \partial \D \neq \emptyset$,  and $\partial \CN_j \cap \partial \CN_{j+1} \neq \emptyset$,  for all $1 \leq j \leq n-1$.  Note that condition~\eqref{it:diameter_bound} of $E$ implies that the $\qdistnoarg{h}$-distance of $z$ from $\partial \CA$ is at most $M \delta^{1/3}$.  Since $\partial \CN_j \cap \partial \CN_{j+1}$ consists only of points lying in the outer boundary of both $\eta_j'([0,\sigma_j])$ and $\eta_{j+1}'([0,\sigma_{j+1}])$,  and $\partial \CN_n \cap \partial \D \neq \emptyset$,  a union bound gives that the $\qdistnoarg{h}$-distance of $z$ from $\partial \D$ is at most $M \delta^{1/3} + \delta^{1/3 - u -u_0}$,  where we choose $u_0>0$ such that $u_0 < 1/3 - u$.  Furthermore,  the proof of Proposition~\ref{prop:good_chunks_percolate} implies that by possibly taking $c_1$ to be larger and $c_2,\alpha>0$ to be smaller (depending only on $\ell,u$),  we can assume that $\p\big[ \wt{E}_{\wt{u},\delta} \big] \leq c_1 \exp(-c_2 \delta^{-\alpha})$.  Combining everything,  we complete the proof.
\end{proof}

\subsection{$\SLE_6$ hull cannot be too skinny}
\label{subsec:sle6_not_too_skinng}

The main purpose of this subsection is to prove Lemma~\ref{lem:sle_6_cannot_be_skinny} which roughly states that with very high probability,  we have that whenever the whole-plane $\SLE_6$ drawn on top of an independent sample from $\qsphereinflaw$ and parameterized by quantum natural time travels quantum distance at least $\delta^{1/3} \log(\delta^{-1})^{\kappa}$,  then at least $\delta$ units of quantum natural time have elapsed.  The main component of the proof of Lemma~\ref{lem:sle_6_cannot_be_skinny} is Lemma~\ref{lem:sle6_chunk_diameter_upd} which is similar to Lemma~\ref{lem:chunk_boundary_diameter} except that it provides us with a better bound on the diameter of the outer boundary of the radial $\SLE_6$ curve of the form $\delta^{1/3} \log(\delta^{-1})^{\kappa}$.  We will first prove Lemma~\ref{lem:sle_6_cannot_be_skinny} using Lemma~\ref{lem:sle6_chunk_diameter_upd} and then prove Lemma~\ref{lem:sle6_chunk_diameter_upd}.

\begin{lemma}
\label{lem:sle_6_cannot_be_skinny}
There exists $\kappa > 0$ so that the following is true.  Suppose that $(\CS,x,y)$ has distribution $\qsphereinflaw$ and $\eta'$ is an independent whole-plane $\SLE_6$ from $x$ to $y$ which is parameterized by quantum natural time.  Suppose that $\delta > 0$.  The $\qsphereinflaw$ measure of the event that there exists $k \in \N$ so that the $\qdistnoarg{h}$-diameter of the outer boundary of $\eta'([(k-1) \delta,k \delta])$ is at least $\delta^{1/3}(\log \delta^{-1})^\kappa$ decays to $0$ as $\delta \to 0$ faster than any power of $\delta$.  Moreover, the $\qsphereinflaw$ measure of the event that $\eta'$ travels $\qdistnoarg{h}$-distance $\delta^{1/3}$ without disconnecting at least $\delta^{4/3}(\log \delta^{-1})^{-\kappa}$ units of quantum mass from $y$ decays to $0$ as $\delta \to 0$ faster than any power of $\delta$.
\end{lemma}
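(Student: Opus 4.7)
The plan is to first establish a sharper version of Lemma~\ref{lem:chunk_boundary_diameter} in which the factor $\delta^{1/3-u}$ is replaced by $\delta^{1/3}(\log\delta^{-1})^{\kappa}$ for some fixed $\kappa$, then deduce the first assertion by decomposing $\eta'$ into successive chunks of quantum natural time length $\delta$ and applying this chunk estimate with a union bound, and finally deduce the second assertion from the first by combining with a concentration argument for the quantum mass cut off by $\eta'$.

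\textbf{Step 1 (chunk estimate with polylogarithmic correction).} Following the proof of Lemma~\ref{lem:chunk_boundary_diameter}, there are three places where a polynomial loss $\delta^{-\wt u}$ is incurred: the geometric tail on the number $N$ of sub-intervals generated by the recursive splitting; the upper tail $\ell_{j}\leq\delta^{-\wt u}$ on the boundary length of the target component, obtained from Lemma~\ref{lem:boundary_length_rn_disk_weighted} and \cite[Chapter~VII, Corollary~2]{bertoin1996levy}; and the top-length and reverse H\"older bounds coming from Lemmas~\ref{lem:top_length_tail_disk} and~\ref{lem:boundary_length_weak_regulatiry}. In each of these estimates the underlying concentration is in fact stretched exponential or exponential (Poisson jump counts, stable L\'evy functionals with finite exponential moments, and conformal H\"older continuity of the GFF); so an inspection of the proofs shows that each $\delta^{-\wt u}$ may be replaced by $(\log\delta^{-1})^{c}$ at the cost of a failure probability which decays faster than any power of $\delta$. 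Putting these improvements together yields: for any radial $\SLE_{6}$ $\eta'$ targeted at $0$ of an independent sample from $\qdiskweighted{\ell}$ with $\ell\leq\delta^{-1}$, and any $\{\mathescr{F}^{1}_{t+}\}$-stopping time $\sigma\leq\delta$, on the event that the boundary length of the $0$-containing component stays at least $\delta^{2/3}(\log\delta^{-1})^{-\kappa_{0}}$ throughout $[0,\sigma]$, the $\qdistnoarg{h}$-diameter of the outer boundary of $\eta'([0,\sigma])$ is at most $\delta^{1/3}(\log\delta^{-1})^{\kappa}$ off an event of probability decaying in $\delta$ faster than any power of $\delta$.

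\textbf{Step 2 (first assertion).} I use the $\SLE_{6}$/quantum sphere exploration from Subsection~\ref{subsec:sle_explorations}: at each $\{\mathescr{F}^{\mathrm{S}}_{t+}\}$-stopping time $\tau<t_{\eta'}$, conditional on $\mathescr{F}^{\mathrm{S}}_{\tau+}$ the $y$-containing component has law $\qdiskweighted{L_{\tau}}$ and $\eta'|_{[\tau,\cdot)}$ is an independent radial $\SLE_{6}$ targeted at $y$ on it. Applying this at the stopping times $\tau=(k-1)\delta$ for $k=1,2,\ldots$, we reduce the first assertion to an estimate on radial $\SLE_{6}$ chunks on weighted quantum disks. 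The tail $\qsphereinflaw[t_{\eta'}\geq \delta^{-q}]$ is polynomial in $\delta$ for any fixed $q$ since the lifetime of the $3/2$-stable L\'evy excursion has density $\propto t^{-5/3}$, and similarly $\sup_{t}L_{t}\leq \delta^{-q}$ outside a polynomial-in-$\delta$ set; moreover the $3/2$-stable CSBP boundary length process typically does not get too small, so with polynomial control one may also assume that in each chunk the boundary length of the target component stays above $\delta^{2/3}(\log\delta^{-1})^{-\kappa_{0}}$. On the complement of these polynomial-tail events only $\delta^{-q-1}$ chunks arise, and Step~1 applied to each chunk combined with a union bound concludes.

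\textbf{Step 3 (second assertion).} Apply the first assertion with $\delta'=\delta/(\log\delta^{-1})^{3\kappa+3}$ in place of $\delta$: off a set of measure decaying faster than any power of $\delta$, every chunk of quantum natural time length $\delta'$ has outer-boundary $\qdistnoarg{h}$-diameter at most $\delta^{1/3}(\log\delta^{-1})^{-1}$. Consequently, to travel $\qdistnoarg{h}$-distance $\delta^{1/3}$ the curve $\eta'$ must use at least $(\log\delta^{-1})\delta'$ units of quantum natural time. It then remains to show that with high probability this much quantum natural time cuts off at least $\delta^{4/3}(\log\delta^{-1})^{-\kappa}$ units of quantum mass. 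For this I combine the Lamperti description (the boundary length process is a $3/2$-stable L\'evy excursion and its downward jumps, of intensity $\propto \ell^{-5/2}\,d\ell$, correspond to independent $\bdisklaw{\ell}$-bubbles whose areas are supported on $(\ell^{2}/\log\delta^{-1},\ell^{2}\log\delta^{-1})$ with overwhelming probability by~\eqref{eqn:bdisk_area_density}) with the boundary length lower bound from Step~2. This yields that the expected mass cut off in time $s$ starting from boundary length of typical order is $\asymp s^{4/3}$, and a second moment computation in the Poissonian random measure together with the boundary length control gives concentration of the cut-off mass around this value at polylogarithmic precision.

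\textbf{Main obstacle.} The principal technical difficulty is Step~1: sharpening Lemma~\ref{lem:chunk_boundary_diameter} from a polynomial error $\delta^{-u}$ to a polylogarithmic error $(\log\delta^{-1})^{\kappa}$. This requires revisiting each appeal to stable L\'evy tail estimates in the proof of Lemma~\ref{lem:chunk_boundary_diameter} and replacing the Markov-type bound used there by the available stretched-exponential tails, along with the corresponding polylog improvement in Lemma~\ref{lem:boundary_length_weak_regulatiry} (which itself requires the precise multiplicative chaos moment bounds). A secondary obstacle is the lower tail in Step~3, since the natural bounds on mass cut off in a quantum-natural-time interval are one-sided upper bounds; the lower bound needs to be extracted from a Paley--Zygmund style second moment argument conditionally on the boundary length process.
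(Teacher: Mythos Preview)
Your overall architecture---sharpen the chunk diameter estimate to polylogarithmic precision, apply it chunk-by-chunk via the Markov property of the exploration with a union bound, then deduce the mass lower bound---matches the paper's, and your identification of the main difficulty (Step~1) is accurate. The paper indeed packages the sharpened chunk estimate as a separate statement, Lemma~\ref{lem:sle6_chunk_diameter_upd}.

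There is, however, a gap in your handling of small boundary lengths. Your Step~1 chunk estimate is stated under the hypothesis that the boundary length of the target component stays above $\delta^{2/3}(\log\delta^{-1})^{-\kappa_0}$ throughout the chunk, and in Step~2 you write that this can be arranged ``with polynomial control.'' But polynomial failure probability is not good enough here (you need superpolynomial decay), and in any case the boundary length process of the exploration is the time-reversal of a $3/2$-stable CSBP excursion and does tend to $0$ at the end, so a global lower bound of the form you want genuinely fails. The paper avoids this assumption altogether: in Lemma~\ref{lem:sle6_chunk_diameter_upd} the recursive splitting is stopped either when a sub-chunk has duration $\geq\delta$ \emph{or} when the boundary length drops below $\delta^{2/3}$ and the remaining area is below $M\delta^{4/3}$; in the second case the remaining region is a quantum disk of boundary length $<\delta^{2/3}$ and small area, whose $\qdistnoarg{h}$-diameter is bounded by $\delta^{1/3}(\log\delta^{-1})^{\kappa}$ via a separate estimate (Lemma~\ref{lem:quantum_disk_diameter_upd}, based on \cite[Lemma~4.25]{ms2015mapmaking}). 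This makes the chunk lemma robust enough to be applied at every $(k-1)\delta$ regardless of how small $L_{(k-1)\delta}$ is, so in Step~2 only the \emph{upper} bound $\sup_t L_t\leq(\log\delta^{-1})^2$ is needed (which does hold off a superpolynomially small event).

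For Step~3, your route through scaling and a conditional Paley--Zygmund argument is more involved than necessary. The paper argues directly on the boundary length process: its time-reversal is a $3/2$-stable L\'evy excursion with only upward jumps, so in each interval of quantum natural time length $\delta/3$ the number of jumps of size at least $\delta^{2/3}(\log\delta^{-1})^{-\kappa}$ is Poisson with mean of order $(\log\delta^{-1})^{3\kappa/2}$; Poisson concentration~\eqref{eqn:poisson_below_mean} then gives at least $(\log\delta^{-1})^{\kappa}$ such jumps in every $\delta$-interval, and since the corresponding quantum disks are conditionally independent with uniformly positive probability of having area $\geq\delta^{4/3}(\log\delta^{-1})^{-2\kappa}$, binomial concentration finishes the job. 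This sidesteps the second-moment issue you flag.
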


\begin{lemma}\label{lem:sle6_chunk_diameter_upd}
Fix $u>0$.  Then,  there exists a constant $\kappa>0$ depending only on $u$ such that for all $\delta \in (0,1)$, $0 < \ell \leq \log(\delta^{-1})^u$,  the following holds.  Let $\CD = (\D,h,0)$ be a sample from $\qdiskweighted{\ell}$ and let $\eta'$ be a radial $\SLE_6$ on $\D$,  independent of $\CD$,  starting from a point chosen uniformly at random on $\partial \D$ according to $\qbmeasure{h}$ and targeted at $0$.  Then,  off an event with probability decaying to $0$ as $\delta \to 0$ faster than any power of $\delta$,  we have that the $\qdistnoarg{h}$-diameter of the outer boundary of $\eta'([0,\delta])$ is at most $\delta^{1/3} \log(\delta^{-1})^{\kappa}$.
\end{lemma}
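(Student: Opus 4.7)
I would follow the multi-stage strategy of Lemma~\ref{lem:chunk_boundary_diameter}, with parameters tuned to produce polylogarithmic rather than polynomial corrections. The first step will be a reduction to $\ell = 1$ via the quantum-coordinate-change scaling $h \mapsto h + \tfrac{2}{\gamma}\log \ell^{-1}$, which rescales quantum lengths by $\ell^{-1}$, quantum distances by $\ell^{-1/2}$, and quantum natural times by $\ell^{-3/2}$. Writing $\delta' := \delta\ell^{-3/2}$, the target bound $\delta^{1/3}\log(\delta^{-1})^\kappa$ becomes $(\delta')^{1/3}\log(\delta^{-1})^\kappa$ in the scaled disk; since $\ell \leq \log(\delta^{-1})^u$ forces $\log((\delta')^{-1}) \asymp \log(\delta^{-1})$, the reduction is admissible after enlarging $\kappa$. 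The very small $\ell$ case ($\ell \leq \delta^{2/3}$, equivalently $\delta' \geq 1$) I would handle separately: the SLE typically disconnects $0$ before time $\delta$, and the outer boundary of $\eta'([0,\delta])$ is then contained in $\partial K_{t_{\eta'}}$, whose $\qdistnoarg{h}$-diameter is at most $\ell^{1/2}\leq \delta^{1/3}$ times a polylog diameter bound for $\qdiskweighted{1}$ deduced from Lemma~\ref{lem:boundary_length_regularity}. Thus the main work is for $\ell = 1$ and $\delta \in (0,1)$.

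Next I would decompose $\eta'|_{[0,\delta]}$ into stages $[\sigma_{j-1},\sigma_j]$, where $\sigma_j$ is either $\delta$ or the first time the SLE disconnects the antipodal (in $\qbmeasure{h}$) point $y_j \in \partial D_{j-1}$ from $0$, with $D_{j-1}$ the $0$-component of $\D \setminus \eta'([0,\sigma_{j-1}])$. By the Markov property of radial $\SLE_6$ and the rotational symmetry of $\qdiskweighted{\ell_{j-1}}$ conditional on its boundary length $\ell_{j-1}$, each stage is distributed as a fresh radial $\SLE_6$ on a weighted quantum disk from a uniform boundary point to the antipodal target. I will simultaneously establish three polylog controls, each holding off an event of probability decaying faster than any power of $\delta$: (i) the stagewise boundary-length fluctuation $\sup_{s\in[0,\delta]}|\ell_{\sigma_{j-1}+s}-\ell_{\sigma_{j-1}}|$ is at most $\delta^{2/3}\log(\delta^{-1})^{\kappa_0}$, via Lemma~\ref{lem:boundary_length_rn_disk_weighted} combined with a stable-process exponential-moment estimate analogous to Proposition~\ref{prop:stable-reflected-exp-moment}, so that $\ell_j \in [\tfrac12,2]$ for all $j\leq N$; (ii) the number of stages $N$ satisfies $N\leq \log(\delta^{-1})^{u_1}$ by scaling, since each stage consumes time at least $c\ell_{j-1}^{3/2}$ with a positive probability uniform in $j$; (iii) applying Lemma~\ref{lem:top_length_tail_disk} with exponent $u_\star = \kappa_\star\log\log(\delta^{-1})/\log(\delta^{-1})$, chosen so that $\delta^{-u_\star} = \log(\delta^{-1})^{\kappa_\star}$, gives at each stage the top length $T^j \leq \delta^{2/3}\log(\delta^{-1})^{\kappa_\star}$ with failure probability $c_1\exp(-c_2\log(\delta^{-1})^{\kappa_\star})$, the validity condition $\ell\leq \delta^{2/3}\exp(c_3\delta^{-u_\star})$ being trivially satisfied. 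Union-bounding over the $\log(\delta^{-1})^{u_1}$ stages preserves super-polynomial decay as long as $\kappa_\star>1$.

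On the intersection of these high-probability events, the outer boundary of $\eta'([0,\delta])$ is a concatenation over $j$ of ``bottom'' arcs on $\partial D_{j-1}$ (the portion of the boundary consumed at stage $j$) and ``top'' arcs on $\partial D_j$ (the new portion traced by $\eta'|_{(\sigma_{j-1},\sigma_j]}$), each of quantum length at most $\delta^{2/3}\log(\delta^{-1})^{\kappa_1}$ for some $\kappa_1 = \kappa_1(u)$. Applying Lemma~\ref{lem:boundary_length_regularity} separately to $D_{j-1}$ and to $D_j$ gives a $\qdistnoarg{h}$-diameter bound $C_j \cdot \delta^{1/3}\log(\delta^{-1})^{\kappa_1/2+7/4+\zeta}$ for each arc, where $C_j$ is the Lemma~\ref{lem:boundary_length_regularity} constant for the relevant disk. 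Because adjacent stage pieces share endpoints, the triangle inequality then yields a total outer-boundary diameter bound of $N\cdot \max_j C_j \cdot \delta^{1/3}\log(\delta^{-1})^{O(1)}$, which is of the claimed form for $\kappa = \kappa(u)$ sufficiently large.

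The main obstacle I foresee is the control of $\max_j C_j$. Lemma~\ref{lem:boundary_length_regularity} provides only that $\mathbb{P}[C>A]$ decays faster than any negative power of $A$; taking $A = \log(\delta^{-1})^{\kappa_C}$ yields failure probability $o(\log(\delta^{-1})^{-M})$ for any $M$, which is polynomially small in $\log(\delta^{-1})^{-1}$ and thus \emph{not} faster than any power of $\delta$. Closing this gap will require either extracting a stronger super-polynomial-in-$\log A$ tail from the Gaussian-multiplicative-chaos proof of \cite[Lemma~3.2]{gm2019gluing} (which its Kahane-convexity moment machinery should readily supply), or replacing the direct use of Lemma~\ref{lem:boundary_length_regularity} by a dyadic chaining along intermediate boundary points, where each link uses only the weaker power-law bound of Lemma~\ref{lem:boundary_length_weak_regulatiry} (whose constant has genuinely exponential tail) with the cumulative correction still fitting inside the target polylog factor.
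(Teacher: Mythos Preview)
Your stage decomposition and top-length control via Lemma~\ref{lem:top_length_tail_disk} with a $\delta$-dependent exponent are exactly the paper's strategy, and you correctly identify that converting boundary lengths to $\qdistnoarg{h}$-diameters via Lemma~\ref{lem:boundary_length_regularity} is the crux. The paper resolves your obstacle precisely by the second of your two suggested fixes: it invokes Lemma~\ref{lem:quantum_boundary_length_partition}, which partitions $\partial D_{j}$ into arcs of quantum length $\ell_{j}n^{-1/2}$ and $\qdistnoarg{h}$-diameter at most $8\ell_{j}^{1/2}n^{-1/4}\log(k)^{7/4+\zeta}$ off an event of probability $c_{1}n^{1/2}\exp(-c_{2}\log(\log k)\log k)$. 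Taking $n=\lfloor\delta^{-4/3}\ell_{j}\rfloor$ and $k=\lfloor\delta^{-1}\rfloor$ makes this failure probability decay faster than any power of $\delta$ and gives each stage top diameter $\lesssim\delta^{1/3}\log(\delta^{-1})^{17/4}$.

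There is, however, a second gap you have not flagged: your step (i) asserts that the two-sided boundary-length fluctuation $\sup_{s}|\ell_{\sigma_{j-1}+s}-\ell_{\sigma_{j-1}}|$ is at most $\delta^{2/3}\log(\delta^{-1})^{\kappa_{0}}$ with super-polynomial failure probability. Proposition~\ref{prop:stable-reflected-exp-moment} controls only the \emph{reflected} process $X_{t}-\inf_{s\leq t}X_{s}$, not $|X_{t}|$. The process $\ell_{t}$ is absolutely continuous (via Lemma~\ref{lem:boundary_length_rn_disk_weighted}) with respect to a spectrally negative $3/2$-stable L\'{e}vy process, whose running infimum has only $p$-th moments for $p<3/2$; consequently $\mathbb{P}[\inf_{[0,\delta']}\ell_{t}\leq 1/2]$ is of order $\delta'$, which is merely polynomial. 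Your conclusion $\ell_{j}\in[\tfrac12,2]$ therefore fails with the wrong rate, and with it your argument for bounding the number $N$ of stages. The paper sidesteps this by defining $N_{\delta}$ to be the first $j$ at which \emph{either} $\sigma_{j}-\sigma_{j-1}\geq\delta$ \emph{or} $\ell_{j}<\delta^{2/3}$ together with $\qmeasure{h}(D_{j})<M\delta^{4/3}$; a uniform-in-$j$ positive probability for this disjunction (argued by scaling) yields geometric domination and hence $N_{\delta}<\log(\delta^{-1})^{2}$ with the correct tail. On the terminal event that the boundary length has dropped, the paper then invokes Lemma~\ref{lem:quantum_disk_diameter_upd} to bound the $\qdistnoarg{h}$-diameter of the entire remaining disk $D_{N_{\delta}}$ by $\delta^{1/3}\log(\delta^{-1})^{\kappa}$, which absorbs whatever the SLE does thereafter. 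Your reduction to $\ell=1$ handles only the \emph{initial} small-$\ell$ case, not this in-flight drop.
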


\begin{proof}[Proof of Lemma~\ref{lem:sle_6_cannot_be_skinny}]
\emph{Step 1.  The boundary length process cannot be too large.} Let $\kappa>0$ be the constant of Lemma~\ref{lem:sle6_chunk_diameter_upd} with $u=2$.  First,  we will show that if $(L_t)$ is the boundary length process corresponding to $\eta'$,  then $\sup_{0 \leq t \leq T} L_t \leq \log(\delta^{-1})^2$ off an event whose $\qsphereinflaw$ measure decays to $0$ as $\delta \to 0$ faster than any power of $\delta$ (where $T$ denotes the time duration of the excursion $L$).  Indeed,  Lemma~\ref{lem:boundary_length_rn_disk_weighted} combined with \cite[Chapter~VII,  Corollary~2]{bertoin1996levy} implies that there exist universal constants $c_1,c_2>0$ such that conditional on $\tau:=\inf\{t \geq 0 : L_t = 1\} < \infty$,  the measure under $\qsphereinflaw$ of the event that $(L_{t+\tau})$ exceeds $\log(\delta^{-1})^2$ is at most $c_1 \exp(-c_2 \log(\delta^{-1})^2)$.  Hence,  since $\qsphereinflaw\big[ \tau<\infty \big] \in (0,\infty)$,  we obtain that possibly by taking $c_1$ to be larger and $c_2$ to be smaller,  $\qsphereinflaw\big[ \sup_{0 \leq t \leq T} L_t > \log(\delta^{-1})^2 \big] \leq c_1 \exp(-c_2 \log(\delta^{-1})^2)$ for all $\delta \in (0,1)$.

\emph{Step 2.  Proof of the first claim of the lemma.} We will start by proving the $k=1$ case first.  For each $t \geq 0$,  we let $U_t$ be the connected component of $\CS \setminus \eta'([0,t])$ containing $y$ and set $\wt{U}_t = \CS \setminus \overline{U}_t$.  Let $\wt{\tau}_{\delta}$ be the first time $t$ that the distance between $x$ and $\partial \wt{U}_t$ with respect to the interior-internal metric in $\wt{U}_t$ is at least $\delta$.  Note that the interior-internal metric in $\wt{U}_t$ is determined by the restriction to $\wt{U}_t$ of the field $h$ generating $(\CS,x,y)$.  It follows from the proof of \cite[Theorem~1.2]{ms2015spheres} that on the event $\{T > t\}$,  the path-decorated quantum surface $(\wt{U}_t,h|_{\wt{U}_t},\eta'|_{[0,t]})$ is a.s.\  determined by the ordered sequence of oriented marked components cut out by $\eta'|_{[0,t]}$ from $y$ viewed as quantum surfaces.  It follows that $\wt{\tau}_{\delta}$ is a stopping time with respect to the filtration $(\CF_t)$,  where $\CF_t$ is the $\sigma$-algebra generated by the collection of quantum disks that $\eta'|_{[0,t]}$ has separated from $y$,  each marked by the last point on their boundary visited by $\eta'$ and oriented by the direction in which $\eta'$ has traced their boundary.  Set $\tau_{\delta} = \wt{\tau}_{\delta} \wedge \delta$.  Then,  \cite[Proposition~6.4]{ms2015spheres} implies that conditional on $\CF_{\tau_{\delta}}$,  the conditional law of the surface parameterized by $U_{\tau_{\delta}}$ is that of $\qdiskweighted{L_{\tau_{\delta}}}$.  If $\tau_{\delta} = \delta$,  this implies that the $\qdistnoarg{h}$-diameter of $\eta'([0,\delta])$ is at most $\delta^{1/3}$ and so this handles the case that $k = 1$.  If $\tau_{\delta} < \delta$,  then we can apply Lemma~\ref{lem:sle6_chunk_diameter_upd} to obtain that conditional on $\tau_{\delta} < \delta$,  the $\qdistnoarg{h}$-diameter of the outer boundary of $\eta'([\tau_{\delta},\delta])$ is at most $\delta^{1/3} \log(\delta^{-1})^{\kappa}$ off an event whose $\qsphereinflaw$ measure tends to $0$ as $\delta \to 0$ faster than any power of $\delta$.  Hence,  by possibly taking $\kappa$ to be larger,  we can assume that the $\qdistnoarg{h}$-diameter of the outer boundary of $\eta'([0,\delta])$ is at most $\delta^{1/3} \log(\delta^{-1})^{\kappa}$ off an event whose $\qsphereinflaw$ measure tends to $0$ as $\delta \to 0$ faster than any power of $\delta$.  For general $k \in \N$,  we note that conditional on $\CF_{k\delta}$ and on the event that $T \geq k \delta$,  the law of the surface parameterized by $U_{k \delta}$ is that of $\qdiskweighted{L_{k\delta}}$,  and so by applying again Lemma~\ref{lem:sle6_chunk_diameter_upd},  we obtain that the $\qdistnoarg{h}$-diameter of the outer boundary of $\eta'([(k-1)\delta ,  k \delta])$ is at most $\delta^{1/3} \log(\delta^{-1})^{\kappa}$ off an event whose $\qsphereinflaw$ measure tends to $0$ as $\delta \to 0$ faster than any power of $\delta$.  Since $\qsphereinflaw\big[ T \geq k \delta \big]$ is at most a constant times $\delta^{-2/3} k^{-2/3}$,  (recall the discussion at the end of Subsection~\ref{subsec:sle_explorations}), the first assertion follows by taking a union bound over $k \in \N$.

\emph{Step 3.  Proof of the second claim of the lemma.} We now turn to the second assertion of the lemma.  The time-reversal of $L_t$ is a $3/2$-stable L\'evy excursion with only upward jumps.  This implies that for each $k \in \N, a>0$,  conditional on $\{T \geq k \delta\}$,   the number of jumps in $[k\delta/3 , (k+1)\delta/3]$ with size at least $a$ has the Poisson distribution with mean given by a constant times $(\delta/3) \int_a^\infty s^{-5/2} ds$ which, in turn, is equal to a constant times $\delta a^{-3/2}$.  By~\eqref{eqn:poisson_below_mean}, \eqref{eqn:poisson_above_mean}, the conditional probability given $\{T \geq k\delta\}$ that the time-reversal of $L$ makes fewer than $(\log \delta^{-1})^{\kappa}$ jumps in $[k\delta / 3 , (k+1)\delta / 3]$ of size at least $\delta^{2/3}(\log \delta^{-1})^{-\kappa}$ decays as $\delta \to 0$ faster than any power of $\delta$. By applying a union bound over integer multiples of $\delta/3$, we see that this holds for all such multiples of $\delta/3$ simultaneously off an event whose $\qsphereinflaw$ measure tends to $0$ as $\delta \to 0$ faster than any power of $\delta$.
 
We now consider $L_t$ in the forward time direction again.  Then each interval of length $\delta$ contains at least one interval (for the time-reversal of $L_t$) whose endpoints are an integer multiple of $\delta/3$.  Therefore the $\qsphereinflaw$ measure of the event that there exists an interval of length $\delta$ in which $L$ makes fewer than $(\log \delta^{-1})^\kappa$ downward jumps of size at least $\delta^{2/3}(\log \delta^{-1})^{-\kappa}$ decays to $0$ as $\delta \to 0$ faster than any power of $\delta$.  Each downward jump of $L_t$ corresponds to a quantum disk whose boundary length is given by the size of the downward jump.  Moreover, these quantum disks are conditionally independent given $L$.  The probability that a quantum disk with boundary length at least $\delta^{2/3}(\log \delta^{-1})^{-\kappa}$ has area at least $\delta^{4/3}(\log \delta^{-1})^{-2\kappa}$ is positive uniformly in $\delta$.  Therefore by binomial concentration, the probability that fewer than a fixed fraction of these disks have area at least $\delta^{4/3}(\log \delta^{-1})^{-2\kappa}$ decays to $0$ as $\delta \to 0$ faster than any power of $\delta$. Combining these observations with scaling and the first assertion of the lemma implies the second assertion (up to a redefinition of $\kappa$). 
\end{proof}

Next,  it remains to prove Lemma~\ref{lem:sle6_chunk_diameter_upd}.  First,  we will state and prove the following lemma which gives an upper bound on the diameter of a radial $\SLE_6$ chunk with high probability.

\begin{lemma}\label{lem:quantum_disk_diameter_upd}
Fix $a>0,L_0 < \infty$.  Then,  there exists a constant $\kappa>0$ depending only on $a,L_0$ such that for all $\ell \in (0,L_0)$,  the following is true.  Let $\CD=(\D,h)$ be a sample from $\qdiskweighted{\ell}$.  Then,  on the event that the quantum area of $\CD$ is at most $a$,  off an event with probability decaying to $0$ as $\delta \to 0$ faster than any power of $\delta$,  we have that the $\qdistnoarg{h}$-diameter of $\CD$ is at most $\log(\delta^{-1})^{\kappa}$.
\end{lemma}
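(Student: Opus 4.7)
The plan is to establish the uniform stretched-exponential tail
\begin{equation*}
\mathbb{P}_{\qdiskweighted{\ell}}\bigl[\diam(\CD,\qdistnoarg{h}) \geq R,\, \mu_{h}(\CD) \leq a\bigr] \leq C\exp\bigl(-cR^{4/3}/a^{1/3}\bigr)
\end{equation*}
for all $\ell \in (0,L_{0})$, with universal $C, c > 0$. Setting $\kappa > 3/4$ and $R = \log(\delta^{-1})^{\kappa}$ then yields $C\exp(-ca^{-1/3}\log(\delta^{-1})^{4\kappa/3})$, which decays faster than any polynomial in $\delta$ since $4\kappa/3 > 1$.

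The key technical input is a stretched-exponential tail for the diameter of a Brownian disk with fixed area: under $\qdiskweighted{1}$ conditional on $\{\mu_{h}(\CD) = a_{0}\}$,
\begin{equation}\label{eq:snake_diameter_tail_plan}
\mathbb{P}\bigl[\diam(\CD,\qdistnoarg{h}) \geq R \bigm| \mu_{h}(\CD) = a_{0}\bigr] \leq C\exp\bigl(-cR^{4/3}/a_{0}^{1/3}\bigr).
\end{equation}
This follows from the Bettinelli-Miermont Brownian snake construction of the Brownian disk \cite{bm2017disk}: the Brownian disk with boundary length $1$ and area $a_{0}$ is encoded by a first-passage bridge $F$ on $[0, a_{0}]$ together with a snake label process $Y$ which, conditionally on $F$, is a centered Gaussian process with increment variances bounded by $2\sup|F|$; the $\qdistnoarg{h}$-diameter is at most $2\sup|Y|$. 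The Borell-TIS inequality conditionally on $F$ gives $\mathbb{P}[\sup|Y| \geq R \mid F] \leq 2\exp(-R^{2}/(2\sup|F|))$, while $\sup|F|$ itself satisfies a sub-Gaussian tail $\mathbb{P}[\sup|F| \geq t] \leq C\exp(-ct^{2}/a_{0})$. Splitting on whether $\sup|F|$ exceeds $t = (R^{2}a_{0})^{1/3}$ and optimizing yields~\eqref{eq:snake_diameter_tail_plan}.

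Next, by the scaling property of quantum disks, the shift $h \mapsto h + (2/\gamma)\log\ell$ maps $\qdiskweighted{1}$ to $\qdiskweighted{\ell}$ while scaling distances by $\sqrt{\ell}$ and areas by $\ell^{2}$, so
\begin{equation*}
\mathbb{P}_{\qdiskweighted{\ell}}[\diam \geq R,\, \mu \leq a] = \mathbb{P}_{\qdiskweighted{1}}[\diam \geq R/\sqrt{\ell},\, \mu \leq a/\ell^{2}].
\end{equation*}
Conditioning on $\mu$ on the right-hand side, applying~\eqref{eq:snake_diameter_tail_plan}, bounding $a_{0} \leq a/\ell^{2}$ in the exponent, and observing that the resulting ratio $(R/\sqrt{\ell})^{4/3}/(a/\ell^{2})^{1/3}$ equals $R^{4/3}/a^{1/3}$ and is thus independent of $\ell$, gives the uniform bound over $\ell \in (0, L_{0})$.

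\emph{Main obstacle.} The technical heart is~\eqref{eq:snake_diameter_tail_plan}: extracting the precise conditional tail from the Brownian snake encoding of the Bettinelli-Miermont disk, including uniform control over the boundary length parameter which enters via first-passage bridge adjustments. This is morally parallel to the well-known diameter tail estimates for the Brownian map~\cite{lg2010geodesics} and should follow from the same Gaussian structural arguments; once~\eqref{eq:snake_diameter_tail_plan} is in hand, the scaling reduction is immediate.
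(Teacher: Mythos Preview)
Your approach differs from the paper's. The paper does not go through the snake encoding directly; instead it bounds $\diam(\CD) \leq 2d^* + \diam_{d_h}(\partial\CD)$ where $d^* = \sup_{z \in \CD}\dist_{d_h}(z,\partial\CD)$, and handles the two pieces by citation: \cite[Lemma~4.25]{ms2015mapmaking} already packages the $\exp(-cR^{4/3})$-type tail for $d^*$ on the event $\{\mu_h(\CD)\le a\}$ (after a H\"older comparison between $\qdisk{\ell}$ and $\qdiskweighted{\ell}$), and Lemma~\ref{lem:boundary_length_weak_regulatiry} supplies a stretched-exponential tail for the boundary H\"older constant, hence for $\diam_{d_h}(\partial\CD)$. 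Your scaling observation that $(R/\sqrt{\ell})^{4/3}/(a/\ell^2)^{1/3}=R^{4/3}/a^{1/3}$ is independent of $\ell$ is neat and, if the argument went through, would give $\kappa$ depending only on $a$, slightly sharper than the lemma as stated.

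That said, your Borell--TIS step is not correct as written. The inequality you quote, $\mathbb{P}[\sup|Y|\ge R\mid F]\le 2\exp(-R^2/(2\sup|F|))$, is not what Borell--TIS says: the actual statement controls the deviation of $\sup|Y|$ from its conditional mean $\mathbb{E}[\sup|Y|\mid F]$, and you give no bound on that mean. One can bound it (for instance via Dudley's entropy integral over the tree, giving roughly $(\sup|F|)^{1/2}$ up to logarithms, which is absorbable), but this is a separate non-trivial argument you omit. There is also a problem with your conditional tail for fixed area $a_0$: with boundary length fixed at $1$, the first-passage bridge $F$ on $[0,a_0]$ has $\inf F=-1$, so the tree height is at least $1$ regardless of $a_0$; hence for small $a_0$ the best conditional tail one can extract is Gaussian, $\exp(-cR^2)$, not $\exp(-cR^{4/3}/a_0^{1/3})$, and indeed the boundary of the disk has diameter bounded away from zero so the latter cannot hold uniformly as $a_0\to 0$. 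This does not break the unconditional bound you ultimately need (the area density under $\qdiskweighted{1}$ suppresses the small-$a_0$ contribution), but your conditional formulation requires correction. The paper's decomposition avoids both issues by deferring the hard analysis to \cite[Lemma~4.25]{ms2015mapmaking}.
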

\begin{proof}
Fix $\kappa>0$ sufficiently large (to be chosen).  Then,  \cite[Lemma~4.25]{ms2015mapmaking} combined with H\"older's inequality to compare the laws $\qdiskweighted{\ell}$ and $\qdisk{\ell}$ imply that there exists a constant $c \in [1,\infty)$ depending only on $a,L_0$ such that if $d^* = \sup_{z \in \D} \dist_{d_h}(z,\partial \D)$,  then on the event that $\qmeasure{h}(\D) \leq a$,  we have that $d^* \leq \log(\delta^{-1})^{\kappa}$ off an event with probability at most $c \exp(-c^{-1} \log(\delta^{-1})^{4\kappa / 3})$.  Fix $u \in (0,1/2)$ and consider the field $\wt{h}$ obtained by scaling lengths by $\ell$,  distances by $\ell^{1/2}$ and areas by $\ell^2$.  Then,  Lemma~\ref{lem:boundary_length_weak_regulatiry} combined with scaling imply that there exist universal constants $c_1,c_2$ and a constant $\beta>0$ depending only on $u$ such that off an event with probability at most $c_1 \exp(-c_2 \log(\delta^{-1})^{\kappa \beta})$,  we have that $\qdist{h}{x}{y} \leq \log(\delta^{-1})^{\kappa} \ell^{1+u} \leq L_0^{1+u} \log(\delta^{-1})^{\kappa}$ for all $x ,  y \in \partial \D$.  Thus,  if we choose $\kappa$ such that $\kappa > \max\{\beta^{-1}, 1\}$,  then on the event that $\qmeasure{h}(\D) \leq a$,  off an event with probability decaying to $0$ as $\delta \to 0$ faster than any power of $\delta$,  we have that $\qdist{h}{x}{y} \leq L_0^{1+u} \log(\delta^{-1})^{\kappa}$ for all $x,y \in \partial \D$ and $\dist_{\qdistnoarg{h}}(z,\partial \D) \leq \log(\delta^{-1})^{\kappa}$ for all $z \in \D$,  which implies that $\qdist{h}{x}{y} \leq 2 \log(\delta^{-1})^\kappa + L_0^{1+u} \log(\delta^{-1})^\kappa$ for all $z,w \in \D$.  This completes the proof.
\end{proof}

Next,  we mention the following useful result whose proof is essentially the same with the proof of Lemma~\ref{lem:boundary_length_regularity}.

\begin{lemma}\label{lem:quantum_boundary_length_partition}
There exist universal constants $c_1,c_2>0$ such that the following is true.  Fix $\ell>0,\zeta \in (0,1)$ and let $\CD = (\D,h,0)$ be a sample from $\qdiskweighted{\ell}$.  Then,  for all $k,n \in \N$ such that $n\ell^{-1} \geq 1,  k \geq e$,  off an event with probability at most $c_1 n^{1/2} \exp(-c_2 \log(\log(k)) \log(k))$,  the following is true.  There exist points $x_1, \ldots,  x_N$ on $\partial \D$ such that the intervals $[x_j,x_{j+1}]_{\partial \D}^\ccw$ form a partition of $\partial \D$ and the boundary length of $[x_j,x_{j+1}]_{\partial \D}^\ccw$ is given by $\ell n^{-1/2}$ and the $\qdistnoarg{h}$-diameter of $[x_j,x_{j+1}]_{\partial \D}^\ccw$ is at most $8 \ell^{1/2} n^{-1/4} \log(k)^{7/4 + \zeta}$ for all $1 \leq j \leq N-1$.
\end{lemma}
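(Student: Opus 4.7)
The plan is to partition $\partial\D$ into $N = \lceil n^{1/2}\rceil$ consecutive arcs of quantum boundary length $\ell/N \le \ell n^{-1/2}$, apply a quantitative version of Lemma~\ref{lem:boundary_length_regularity} to each arc individually, and then union-bound. The condition $n\ell^{-1}\ge 1$ guarantees $\ell n^{-1/2}\le n^{1/2}$, so each arc is well below the total boundary length. After possibly discarding one small leftover piece we obtain $N-1$ arcs of length exactly $\ell n^{-1/2}$, as in the statement.

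First, by the scaling of $\qdiskweighted{\ell}$---a sample is obtained from $\qdiskweighted{1}$ by multiplying boundary lengths by $\ell$, distances by $\ell^{1/2}$ and areas by $\ell^2$---we reduce to the case $\ell=1$. Under this rescaling an arc of quantum boundary length $\ell n^{-1/2}$ becomes an arc of length $s:=n^{-1/2}$, and the target bound $8\ell^{1/2}n^{-1/4}\log(k)^{7/4+\zeta}$ becomes $8 n^{-1/4}\log(k)^{7/4+\zeta}$. Next, for a single arc $I$ of quantum boundary length $s$, I would establish the quantitative tail
\[
\mathbb{P}\Bigl[\diam_{\qdistnoarg{h}}(I) > A\, s^{1/2}(|\log s|+1)^{7/4+\zeta}\Bigr]
\le c\exp\!\bigl(-c'\log(A)\log\log(A)\bigr)
\]
for $A$ large, by reinspecting the proof of Lemma~\ref{lem:boundary_length_regularity} given in \cite[Lemma~3.2]{gm2019gluing}: that argument proceeds via moment bounds of Gaussian type on LQG boundary distances, and tracking the decay rate carefully produces the stated super-polynomial tail in $A$. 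Taking $A = 8\log(k)^{7/4+\zeta}/(|\log s|+1)^{7/4+\zeta}$ and absorbing the mild factor $(|\log s|+1)^{7/4+\zeta}=(\tfrac{1}{2}\log n+1)^{7/4+\zeta}$ into the constants yields the per-arc bound $c_1'\exp(-c_2\log\log(k)\log(k))$ on the event that $I$ fails the diameter estimate.

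A union bound over the $N\le 2n^{1/2}$ arcs in the partition then gives
$c_1 n^{1/2}\exp(-c_2\log\log(k)\log(k))$
for the probability that at least one arc $[x_j,x_{j+1}]^{\ccw}_{\partial\D}$ has $\qdistnoarg{h}$-diameter larger than $8\ell^{1/2}n^{-1/4}\log(k)^{7/4+\zeta}$, after a final readjustment of constants. The main technical obstacle is the second step: one must extract from \cite[Lemma~3.2]{gm2019gluing} a sufficiently sharp quantitative tail for $\diam_{\qdistnoarg{h}}(I)$ (i.e.\ a rate faster than any polynomial, and in fact at least $\exp(-c\log A\log\log A)$), rather than only the qualitative statement that the random constant $C$ of Lemma~\ref{lem:boundary_length_regularity} has super-polynomial tail. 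Everything else is a direct combination of scale invariance, the moment bound, and a discrete union bound.
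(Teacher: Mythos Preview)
Your approach---scale to $\ell=1$, partition $\partial\D$ into $N\asymp n^{1/2}$ arcs of quantum length $n^{-1/2}$, apply a quantitative per-arc tail from the proof of \cite[Lemma~3.2]{gm2019gluing}, and union-bound---is exactly what the paper does (its proof is the single sentence ``It follows from the argument used to prove Lemma~\ref{lem:boundary_length_regularity}''), and you have correctly identified the one nontrivial step as extracting the sharp per-arc tail from that reference.

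There is one point to tighten. Your per-arc bound is stated as
\[
\mathbb{P}\bigl[\diam_{\qdistnoarg{h}}(I) > A\, s^{1/2}(|\log s|+1)^{7/4+\zeta}\bigr] \le c\exp\!\bigl(-c'\log(A)\log\log(A)\bigr),
\]
and you then take $A = 8(\log k)^{7/4+\zeta}/(|\log s|+1)^{7/4+\zeta}$ and propose to ``absorb the mild factor $(|\log s|+1)^{7/4+\zeta}$ into the constants''. This does not work as written: since $|\log s|=\tfrac12\log n$, the factor depends on $n$, and in the nontrivial regime where $\log n$ is comparable to (or larger than) $\log k$ your $A$ is bounded, so $\exp(-c'\log A\log\log A)$ does not decay. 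The fix is to formulate the per-arc estimate with the threshold expressed directly in terms of $\log k$, i.e.\ to show from the moment bounds in \cite[Lemma~3.2]{gm2019gluing} that for a single arc $I$ of quantum length $s\le 1$ one has
\[
\mathbb{P}\bigl[\diam_{\qdistnoarg{h}}(I) > 8\, s^{1/2}(\log k)^{7/4+\zeta}\bigr] \le c\exp\!\bigl(-c'\log(k)\log\log(k)\bigr),
\]
and then union-bound over the $N\le 2n^{1/2}$ arcs. This is what the argument in \cite{gm2019gluing} actually yields once one tracks the $p$th-moment growth, and it gives precisely $c_1 n^{1/2}\exp(-c_2\log k\log\log k)$.
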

\begin{proof}
It follows from the argument used to prove Lemma~\ref{lem:boundary_length_regularity}.
\end{proof}

\begin{proof}[Proof of Lemma~\ref{lem:sle6_chunk_diameter_upd}]
Fix $u>0$.  We define marked points $(x_j),(y_j)$ and stopping times $(\sigma_j)$ as in the proof of Lemma~\ref{lem:chunk_boundary_diameter}.  We let $N_{\delta}$ be the first $j \in \N$ such that either $\sigma_j - \sigma_{j-1} \geq \delta$ or $\ell_j < \delta^{2/3}$ and $\qmeasure{h}(\D) < M \delta^{4/3}$,  where $\ell_j$ denotes the boundary length of the $0$-containing connected component of $\D \setminus \eta'([0,\sigma_j])$,  and $M>1$ is a universal constant (to be chosen).  We claim that there exists a universal constant $p \in (0,1)$ such that conditional on $\eta'|_{[0,\sigma_{j-1}]}$,  a.s.\  we have that either $\sigma_j - \sigma_{j-1} \geq \delta$ or $\ell_j < \delta^{2/3}$ and $\qmeasure{h}(\D) < M \delta^{4/3}$,  with probability at least $p$.   Indeed,  suppose that we are working on the event that $\ell_j \geq \delta^{2/3}$.  Note that by arguing as in the proof of Lemma~\ref{lem:chunk_boundary_diameter},  we obtain that there exists a universal constant $p \in (0,1)$ such that $\p\big[ \sigma_j - \sigma_{j-1} \geq \delta \giv \eta'|_{[0,\sigma_{j-1}]} \big] \geq p$ a.s.\  on $\{\ell_j \geq \delta^{2/3} / 2\}$.  Suppose that we are working on the event $\ell_j < \delta^{2/3}$.  By possibly decreasing $p$,  taking $M$ sufficiently large and applying scaling,  we have that $\p\big[ \qbmeasure{h}(\partial D_j) < \ell_j,  \qmeasure{h}(D_j) < M \ell_j^2 \giv \eta'|_{[0,\sigma_{j-1}]}\big] \geq p$.  This proves the claim.  It follows that $N_{\delta} < \log(\delta^{-1})^2$ off an event whose probability decays to $0$ as $\delta \to 0$ faster than any power of $\delta$.  The proof of Lemma~\ref{lem:chunk_boundary_diameter} implies that off an event whose probability decays to $0$ as $\delta \to 0$ faster than any power of $\delta$,  we have that the boundary length of the $0$-containing connected component of $\D \setminus \eta'([0,t])$ is at most $2 \log(\delta^{-1})^2$ for all $t \in [0,\delta]$,  and the boundary length of the part of $\eta'|_{[\sigma_{j-1},\sigma_j]}$ contained in $D_{j-1}$ is at most $\delta^{2/3} \log(\delta^{-1})^2$ for all $1 \leq j \leq \log(\delta^{-1})^2$. 

Let $\kappa>0$ be the constant of Lemma~\ref{lem:quantum_disk_diameter_upd} with $L_0 = 1$ and $a = M$.  Then,  Lemma~\ref{lem:quantum_disk_diameter_upd} combined with scaling imply that a.s.\  on the event that $\sigma_{N_{\delta}} - \sigma_{N_{\delta}-1} < \delta$,  we have off an event whose probability decays to $0$ as $\delta \to 0$ faster than any power of $\delta$ that the $\qdistnoarg{h}$-diameter of $D_{N_{\delta}}$ is at most $\delta^{1/3} \log(\delta^{-1})^{\kappa}$.  Furthermore,  Lemma~\ref{lem:quantum_boundary_length_partition} applied with $n = \lfloor \delta^{-4/3} \ell_j \rfloor,  \zeta = 1$ and $k = \lfloor\delta^{-1} \rfloor$ implies that for $j \in \N$ fixed,  off an event whose probability tends to $0$ as $\delta \to 0$ faster than any power of $\delta$,  we have that there exists a partition $[x_i,x_{i+1})_{\partial D_j}^{\ccw},  1 \leq i \leq N_j-1$ of $\partial D_j$ such that the boundary length of $[x_i,x_{i+1})_{\partial D_j}^{\ccw}$ is given by $\ell_j n^{-1/2}$ and its $\qdistnoarg{h}$-diameter is at most $8 \ell_j^{1/2} n^{-1/4} \log(\delta^{-1})^{9/4}$ for all $1 \leq i \leq N_j-1$.  Let $\wt{x}_j$ (resp.  $\wt{y}_j$) be the leftmost (resp.  rightmost) point of $\eta_j'([\sigma_{j-1},\sigma_j]) \cap \partial D_j$ and let $1 \leq i \leq N_j-1$ be such that $\wt{x}_j \in [x_i,x_{i+1})_{\partial D_j}^{\ccw}$.  Note that by the end of the previous paragraph,  we have that the boundary length of the counterclockwise arc of $\partial D_j$ from $\wt{x}_j$ to $\wt{y}_j$ is at most $\delta^{2/3} \log(\delta^{-1})^2$ off an event whose probability tends to zero as $\delta \to 0$ faster than any power of $\delta$. Then,  since $\ell_j n^{-1/2} \geq \delta^{2/3}$,  it follows that at most $\log(\delta^{-1})^2$ intervals of the form $[x_m,x_{m+1})_{\partial \D}^{\ccw}$ are needed to cover the counterclockwise arc of $\partial D_j$ from $\wt{x}_j$ to $\wt{y}_j$,  and so the latter has $\qdistnoarg{h}$-diameter at most $\lesssim \delta^{1/3} \log(\delta^{-1})^{17/4}$,  where the implicit constant is universal.  Combining everything,  we obtain that there exists a universal constant $c>0$ such that off an event whose probability tends to $0$ as $\delta \to 0$ faster than any power of $\delta$,  the following hold.
\begin{enumerate}[(i)]
\item $N_{\delta} < \log(\delta^{-1})^2$.
\item The $\qdistnoarg{h}$-diameter of $D_{N_{\delta}}$ is at most $\delta^{1/3} \log(\delta^{-1})^{\kappa}$.
\item The $\qdistnoarg{h}$-diameter of the part of $\eta'|_{[\sigma_{j-1},\sigma_j]}$ contained in $D_j$ is at most $c \delta^{1/3} \log(\delta^{-1})^2$ for all $1 \leq j \leq \log(\delta^{-1})^2$.
\end{enumerate}
Then,  the proof of the lemma is complete by taking a union bound and possibly taking $\kappa$ to be larger.
\end{proof}

\subsection{Proof of the exit time lower bound}
\label{subsec:lbd_proof}

Now we focus on proving the lower bound of Theorem~\ref{thm:bm_exit}. The main ingredient of the proof of the lower bound is Lemma~\ref{lem:filled_metric_ball_exit_time} which roughly states that if $(\mathcal{S},h,x,y)$ is a sample from $\qsphereinflaw$ and we truncate on an event whose complement has small $\qsphereinflaw$ measure,  then the desired lower bound holds for the metric balls centered at the quantum typical point $x$.

The main idea behind the proof of Lemma~\ref{lem:filled_metric_ball_exit_time} is that Propositions~\ref{prop:good_chunks_percolate} and~\ref{prop:good_band_contains} allow us to construct sufficiently many annuli $\mathcal{A}$ centered at $x$ which have the following property.  If we start a complex Brownian motion $B$ from $x$ which is independent from $(\mathcal{S},h,x,y)$,  then $B$ has to disconnect from $\infty$ a sufficiently large amount of quantum area while making a crossing of such an annulus $\mathcal{A}$ between its inner and outer boundaries.

The proof of the lower bound in Theorem~\ref{thm:bm_exit} will be complete by combining with the fact that off an event whose $\qsphereinflaw$ measure tends to $0$ as $r \to 0$ faster than any power of $r$,  we can cover $\mathcal{S}$ by at most $r^{-A}$ many metric balls of radius $r$ and centered at quantum typical points,  for some finite and deterministic constant $A$.

\begin{lemma}
\label{lem:filled_metric_ball_exit_time}
There exists a constant $\kappa>0$ such that the following is true.  Fix $u \in (0,1/3),r_0>0$ and suppose that we are working on the event that $r^4 \log(r^{-1})^{-6-u} \leq \qmeasure{h}(\qball{h}{z}{r})) \leq r^4 \log(r^{-1})^{8+u}$,  $\frac{\qmeasure{h}(\qball{h}{z}{r}))}{\qmeasure{h}(\CS)} \geq r^{4+u}$, and $\diam(\CS) \geq 6r$ for all $z \in \CS$,  $r \in (0,r_0)$,  where $(\CS,h,x,y)$ has law $\qsphereinflaw$.  Then,  we have that
\begin{align*}
E_x\bigl[ \tau_{\qball{h}{x}{r}} \bigr] \geq r^4 \log(r^{-1})^{-\kappa}
\end{align*}
off an event whose $\qsphereinflaw$ measure tends to $0$ as $r \to 0$ faster than any power of $r$,  where the expectation is over the Brownian motion with $\CS$ fixed.
\end{lemma}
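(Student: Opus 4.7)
The plan is to use Proposition~\ref{prop:good_chunks_percolate} to build a good annulus $\CA$ of $\SLE_{6}$ chunks separating $x$ from $\partial B_{h}(x,r)$, and then lower bound the Green's function representation $E_{x}[\tau_{B_{h}(x,r)}]=\int G_{B_{h}(x,r)}(x,y)\,d\qmeasure{h}(y)$ by exploiting the quantum mass and conformal structure of a single such chunk. By the resampling invariance of $\qsphereinflaw$ in its two marked points, I may explore from $y$ toward $x$ and consider the quantum disk $\CD$ given by the interior of the filled metric ball $\qfb{h}{y}{x}{(1-c)r}$ for a small constant $c>0$; conditionally on its boundary length $L$, $\CD$ has law $\qdiskweighted{L}$ with $x$ as marked interior point. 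I choose $\delta$ of order $r^{3}(\log r^{-1})^{-\kappa'}$ for a sufficiently large $\kappa'$ and apply Proposition~\ref{prop:good_chunks_percolate} to $\CD$ at scale $\delta$, with the event $E$ of Proposition~\ref{prop:good_sle_chunks} (with parameters $A$, $c_{0}$, $M$, $u$, $p$ chosen so that the hypothesis on $\qwedge{2}[\,\cdot\mid\sigma<\delta]$ holds). This produces, off an exceptional event of probability decaying faster than any polynomial of $r$, a good annulus $\CA$ separating $x$ from $\partial\CD$; by Proposition~\ref{prop:good_band_contains}, $\CA$ lies in the $\delta^{p_{0}}$-neighborhood of $\partial\CD$, hence in $B_{h}(x,r)$.

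Now fix any chunk $\CN\in\CA$ and let $\varphi_{\CN}\colon\CN\to\D$ be its normalizing conformal map; write $\CN^{0}=\varphi_{\CN}^{-1}(\overline{\eball{0}{1/2}})$ and $\CN^{00}=\varphi_{\CN}^{-1}(\overline{\eball{0}{1/4}})$. Condition (III) of $E$ gives $\qmeasure{h}(\CN^{0})\geq\delta^{4/3}/M$, and a Koebe-distortion argument combined with the lower boundary-mass estimate in condition (V) yields $\qmeasure{h}(\CN^{00})\geq c\,\delta^{4/3}$. Conformal invariance of the Euclidean Green's function gives $G_{\CN}(z,y)=G_{\D}(\varphi_{\CN}(z),\varphi_{\CN}(y))$, and the explicit formula $G_{\D}(a,b)=\pi^{-1}\log|(1-\bar{a}b)/(b-a)|$ is bounded below by some universal $c_{0}>0$ for $|a|=1/2$ and $|b|\leq 1/4$. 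By the strong Markov property at the first entry time to $\CN^{0}$ and domain monotonicity $G_{B_{h}(x,r)}\geq G_{\CN}$ (since $\CN\subset B_{h}(x,r)$), for each $y\in\CN^{00}$
\begin{equation*}
G_{B_{h}(x,r)}(x,y)\ \geq\ c_{0}\,P_{x}\bigl[\text{BM from $x$ enters }\CN^{0}\text{ before }\tau_{B_{h}(x,r)}\bigr].
\end{equation*}
Integrating over $\CN^{00}$ and using $\qmeasure{h}(\CN^{00})\gtrsim\delta^{4/3}\asymp r^{4}(\log r^{-1})^{-\kappa}$ gives the desired bound, provided the probability on the right is bounded below by a positive constant independent of $r$.

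The main obstacle is this last harmonic-measure bound: one must show that Brownian motion from $x$ enters the central region of at least one chunk of $\CA$ with uniformly positive probability. The difficulty is that the chunks have a priori wild Euclidean shapes, so there is no obvious uniform control on the harmonic measure of any individual chunk. I expect to handle this by exploiting the spatial homogeneity of the percolation construction — many chunks are roughly equidistributed around the annulus — combined with a Beurling-projection-style estimate and the Euclidean-to-quantum distance comparison in condition (III), to conclude that at least one chunk has non-degenerate harmonic measure from $x$. Once this is in place, the lower bound $E_{x}[\tau_{B_{h}(x,r)}]\geq c\,r^{4}(\log r^{-1})^{-\kappa}$ follows directly from the calculation above.
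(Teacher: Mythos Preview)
Your overall architecture is right --- build a good annulus via Proposition~\ref{prop:good_chunks_percolate}, use the Green's function representation, and lower-bound the expected time via the quantum mass in the core of a chunk together with a harmonic-measure estimate. But the step you yourself flag as ``the main obstacle'' is genuinely unresolved, and your proposed resolution (``spatial homogeneity\dots Beurling-projection-style estimate'') is too vague to work directly: the chunks have uncontrolled Euclidean geometry, and nothing in the good-event definition gives you a uniform harmonic-measure lower bound on any individual chunk's core as seen from $x$. Also, aiming for a probability ``bounded below by a positive constant independent of $r$'' is too strong; a polylogarithmic lower bound in $r$ is what you should expect and is all that is needed.

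The paper closes this gap by a quite different and non-obvious route. It invokes the fact (\cite[Proposition~6.32]{law2005conformally}) that the hull of a planar Brownian motion from $x$, run until exiting the filled ball, has the \emph{same law} as the hull of a whole-plane $\SLE_{6}$ from $x$. This $\SLE_{6}$ must cross the annulus $\CA$, and Lemma~\ref{lem:sle_6_cannot_be_skinny} guarantees that in doing so it disconnects at least $r^{4}(\log r^{-1})^{-q}$ units of quantum area. Then condition~(VI) of the good event (small area near chunk boundaries) forces the Brownian path to reach a point with quantum distance at least $r(\log r^{-1})^{-\beta}$ from $\partial\CN$ for some chunk $\CN$; finally the second clause of condition~(III) converts this to a Euclidean distance lower bound after the normalizing map $\varphi_{\CN}$, yielding a harmonic-measure bound of order $(\log r^{-1})^{-\tilde q}$ for hitting $\eball{0}{1/4}$. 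So the missing idea is the Brownian-hull/$\SLE_{6}$-hull coupling together with the ``$\SLE_{6}$ cannot be too skinny'' lemma; without it there is no mechanism forcing the Brownian motion into the bulk of any chunk.
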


\begin{proof}
\emph{Step 1.  Setup and overview of the proof strategy.} Suppose that we have the setup of Proposition~\ref{prop:good_chunks_percolate} with the above choice of $u$ and the event $E$ defined in Subsection~\ref{subsec:event_def}.  Proposition~\ref{prop:good_sle_chunks} implies that we can choose $M>1$ sufficiently large such that the statement of Proposition~\ref{prop:good_chunks_percolate} applies.  Fix $r>0$ and suppose that the event in the statement of the lemma holds,  and set $N_r = r^{-4-u}$.  Let $(z_j)$ be a sequence of points chosen i.i.d.\  from $\qmeasure{h}$.  Then,  the proof of the lower bound of Theorem~\ref{thm:ball_concentration} implies that $\CS \subseteq \cup_{j=1}^{N_r} \qball{h}{z_j}{r}$ off an event whose $\qsphereinflaw$ measure tends to $0$ as $r \to 0$ faster than any power of $r$.  The main idea of the proof of the lemma is roughly the following.  First,  we will show that off an event whose $\qsphereinflaw$ measure tends to $0$ as $r \to 0$ faster than any power of $r$,  the following holds.  Fix $1 \leq  j \leq N_r$ such that $\qdistnoarg{h}(x,z_j) \geq r$.  Then,  we can find an annulus $\CA_j$ contained in $\qball{h}{x}{r}$ which consists of radial $\SLE_6$ chunks and disconnects $\qball{h}{x}{r/2}$ from $z_j$ as in the statement of Proposition~\ref{prop:good_band_contains}.  Moreover,  the $\qdistnoarg{h}$-distance with respect to the interior-internal metric in $\CA_j$ between the inner and outer boundaries of $\CA_j$ is at least $r \log(r^{-1})^{-p}$ for some constant $p$.  We will show this in Step 2.  Next,  in Step 3,  we will show that the following holds with high probability.  If we start a Brownian motion from $x$,  then it will disconnect at least $r \log(r^{-1})^{-q}$ units of quantum area while crossing $\CA_j$ and before exiting $\qball{h}{x}{r}$ for some constant $q>0$ and some $1 \leq j \leq N_r$.  Then,  we will conclude the proof in Step 4 arguing as in the proof of the upper bound of Theorem~\ref{thm:bm_exit}.

\emph{Step 2.  Constructing good annuli with high probability.} Let $\wt{u} \in (0,1/3)$ be the constant in Lemma~\ref{lem:chunk_boundary_diameter}.  Fix $a_1,a_2>0$ (to be chosen) and for $1 \leq j \leq N_r$,  let $\tau_1^j$ be the first time after $r/2$ that $L^j$ goes above $r^2 \log(r^{-1})^{-a_1}$,  where $L^j$ is the process describing the boundary length evolution of the metric exploration from $x$ to $z_j$.   Given that we have defined $\tau_1^j,\ldots,\tau_{m-1}^j$,  we let $\tau_m^j$ be the first time after $\tau_{m-1}^j + r \log(r^{-1})^{-a_2}$ that $L^j$ goes above $r \log(r^{-1})^{-a_1}$.  Lemma~\ref{lem:boundary_length} implies that $\sup_{0 \leq s \leq r} L_s^j < r^2 \log(r^{-1})^2$ for all $1 \leq j \leq N_r$,  off an event whose $\qsphereinflaw$ measure tends to $0$ as $r \to 0$ faster than any power of $r$.  Thus,  from now on,  we can assume that we are working on that event.  Note that there exists a universal constant $p_0 \in (0,1)$ such that if $Y$ is the time-reversal of a $3/2$-stable CSBP excursion starting from $1$,  then $Y$ hits $0$ before time $1$ with probability at least $p_0$.  It follows that the $\qsphereinflaw$ measure of the event that $\tau_1^j > \frac{r}{2} + r \log(r^{-1})^{-a_2}$ is at most $\exp(\log(1-p_0) \log(r^{-1})^{-a_2 + a_1/2})$.  Thus,  if we choose $a_1,a_2$ such that $a_2 < a_1 / 2 -1$,  we have that $\tau_1^j \leq \frac{r}{2} + r \log(r^{-1})^{-a_2}$ for all $1 \leq j \leq N_r$,  off an event whose $\qsphereinflaw$ measure tends to $0$ as $r \to 0$ faster than any power of $r$.  Similarly,  conditionally on $\tau_{m-1}^j < \infty$,  we have that $\tau_m^j - \tau_{m-1}^j \leq 2 r \log(r^{-1})^{-a_2}$ off an event whose $\qsphereinflaw$ measure tends to $0$ as $r \to 0$ faster than any power of $r$.  Fix $1<a_3<a_2$.  Then,  we have that $\tau_{\log(r^{-1})^{a_3}}^j < \infty$ off an event whose $\qsphereinflaw$ measure tends to $0$ as $r \to 0$ faster than any power of $r$.  Fix also $a_4 >1$ (to be chosen).  Let $\CD_1^j$ be the surface parameterized by $\CS \setminus \qfb{h}{z_j}{x}{\tau_1^j}$.  Suppose that we perform the exploration in $\CD_1^j$ as in Proposition~\ref{prop:good_chunks_percolate} up until either the boundary length of the $z_j$-containing component does not lie in $[ L_{\tau_1^j}^j/2,  L_{\tau_1^j}^j \log(r^{-1})^{-a_4 \wt{u}}]$ or we have discovered a chunk which cannot be connected to $\partial \CD_1^j$ by a sequence of at most $\log(r^{-1})^{a_4 u}$ number of chunks which have already been discovered or we have discovered $\log(r^{-1})^{a_4(2/3 + u)}$ number of chunks.  Let $\CN_1^{1,j},\ldots,\CN_N^{1,j}$ be the chunks that have been completely discovered up until that point.  Lemma~\ref{lem:chunk_boundary_diameter} combined with scaling imply that conditionally on $\tau_1^j < \infty$,  the probability that there exists $1 \leq i \leq N$ such that the $\qdistnoarg{h}$-diameter of the outer boundary of $\CN_i^{1,j}$ is at least $\log(r^{-1})^{-a_4 (1/3 - u)} (L_{\tau_1^j}^j)^{1/2}$ is at most $c_1 \log(r^{-1})^{a_4 (2/3 + u)} \exp(-c_2 \log(r^{-1})^{\alpha a_4})$,  where $c_1,c_2,\alpha>0$ depend only on $u$.  Thus,  if we choose $a_4 > \alpha^{-1}$,  then we have that the above probability tends to $0$ as $r \to 0$ faster than any power of $r$.  Also,  by construction,  we have that if the above event occurs,  then for all $1 \leq i \leq N$,  every point on the outer boundary of $\CN_i^{1,j}$ has $\qdistnoarg{h}$-distance from $\qball{h}{x}{\tau_1^j}$ at most $r \log(r^{-1})^{a_4 (2u - 1/3)+1} $.  In particular,  if we choose $a_4$ so that $1+(2u-1/3)a_4 < -a_2$,  we have that the outer boundary of $\CN_i^{1,j}$ is contained in $\qball{h}{x}{\tau_1^j + r \log(r^{-1})^{-a_2}}$ for all $1 \leq i \leq N$.

Now,  we perform the exploration in $\CD_1^j$ as in Proposition~\ref{prop:good_chunks_percolate} with $\delta = \log(r^{-1})^{-a_4}$ after we scale lengths by $(L_{\tau_1^j}^j)^{-1}$,  distances by $(L_{\tau_1^j}^j)^{-1/2}$ and quantum natural time by $(L_{\tau_1^j}^j)^{-3/2}$.  Let $A_1^j$ be the event that the elements of $\CN_1^{1,j},\ldots,\CN_N^{1,j}$ for which the event $E$ in Proposition~\ref{prop:good_sle_chunks} occurs disconnect $x$ from $z_j$.  We also let $B_1^j$ be the intersection of $A_1^j$ with the event that the annulus $\CA$ formed by the $\CN_i^{1,j}$'s is contained in the $r \log(r^{-1})^{1-p_0 a_4}$ $\qdistnoarg{h}$-neighborhood of $\partial \CD_1^j$,  where $p_0$ is the constant in Proposition~\ref{prop:good_band_contains}.  We pick $a_4$ so that $a_4 > p_0^{-1}$.  Combining Propositions~\ref{prop:good_chunks_percolate} and~\ref{prop:good_band_contains},  we obtain that there exists a constant $q>0$ depending only on $u$ such that $\qsphereinflaw\big[ B_1^j \giv \tau_1^j  <\infty \big] \geq q$.  Note that the choices of $a_1,a_2,a_4$ imply that $\CN_1^{1,j},\ldots,\CN_N^{1,j}$ are all contained in $\qfb{h}{z_j}{x}{\tau_1^j + r \log(r^{-1})^{-a_2}}$,  if we further assume that $1-p_0 a_4 < -a_2$.  Moreover,  it is easy to see that the conditions in the statement of the lemma imply that the conditions in Proposition~\ref{prop:good_band_contains} hold for the field in $\CD_1^j$ obtained by rescaling $h$.  Hence,  it follows from Proposition~\ref{prop:good_band_contains} that the distance with respect to $\qdistnoarg{h}$ between the inner and outer boundaries of $\CA$ is at least $r \log(r^{-1})^{-p_1 a_4 - a_2 / 2}$,  where $p_1$ is the constant in the statement of Proposition~\ref{prop:good_band_contains}.   Furthermore,  for all $i \in \N$ such that $\tau_i^j < \infty$,  we define events $A_i^j,  B_i^j$,  and $\SLE_6$ chunks $\CN_k^{i,j}$ as above.  The same analysis as above implies that given $\tau_i^j < \infty$,  the conditional probability that $B_i^j$ occurs is positive uniformly.  Let $I_j$ be the smallest $i$ such that $B_i^j$ occurs.  Then,  the above analysis implies that the $\qsphereinflaw$ measure of the event that $I_j > \log(r^{-1})^{a_3}$ for some $1 \leq j \leq N_r$ such that $\qdist{h}{x}{z_j} \geq r$ tends to $0$ as $r \to 0$ faster than any power of $r$.  Note that if $\CA_j$ is the annulus corresponding to $I_j$,  then the choice of the constants $a_1,a_2,a_3$ and $a_4$ implies that $\CA_j \subseteq \qfb{h}{z_j}{x}{r} \setminus \qfb{h}{z_j}{x}{r}$.

\emph{Step 3.  A Brownian motion starting from $x$ disconnects $r \log(r^{-1})^{-q}$ units of quantum area while crossing $\CA_j$ for the first time with high probability}.  Suppose that we are working on the event that $I_j \leq \log(r^{-1})^{a_3}$ for all $1 \leq j \leq N_r$.  Let $D_1,\ldots,D_m$ denote the connected components of $\mathcal{S} \setminus \qball{h}{x}{r}$ with the property that they contain a point whose $d_h$-distance from the boundary of the component is at least $r$.  Then $z_j \in \cup_{i=1}^m D_i$ for some $1 \leq j \leq N_r$.  Fix $i,j$ such that $z_j \in D_i$.  Let $K$ be the set of points disconnected from $z_j$ by a Brownian motion starting from $x$ and run until the first time that it exits $\qfb{h}{z_j}{x}{r}$.  Then,  \cite[Proposition~6.32]{law2005conformally} implies that $K$ has the same law as the hull of a whole-plane $\SLE_6$ which is independent of $(\CS,x,y)$,  starting from $x$ and stopped upon hitting $\qfb{h}{z_j}{x}{r}$ since we further have that $\qfb{h}{z_j}{x}{r}$ is a Jordan domain (see \cite[Theorem~1.1]{ms2015mapmaking}).  Then,  we have that the $\SLE_6$ hull has to pass through the annulus $\CA_j$ and upon doing so,  it has to make a crossing in $\CA_j$ from the outer to the inner boundary of $\CA_j$.  In particular,  it has to travel $\qdistnoarg{h}$-distance at least $r \log(r^{-1})^{-a_4p_1 - a_1 / 2}$.  Then,  Lemma~\ref{lem:sle_6_cannot_be_skinny} implies that there exists a constant $q>0$ depending only in $a_1,a_4$ such that off an event whose $\qsphereinflaw$ measure decays to $0$ as $r \to 0$ faster than any power of $r$,  we have that the whole-plane $\SLE_6$ disconnects from $z_j$ at least $r^4 \log(r^{-1})^{-q}$ units of quantum area while crossing $\CA_j$ for the the first time.  

\emph{Step 4.  Conclusion of the proof}. Fix $\beta >1$ large (to be chosen and depending only on $a_1,a_2,a_3,a_4$ and $q$) and suppose that $r$ is sufficiently small such that $\log(r^{-1})^{-\beta} < \log(r^{-1})^{-a_4 / 3} / M$.  Then,  condition~\eqref{it:ngbd_vollume_ubd} implies that the $\left(L_{\tau_{I_j}^j}^j \right)^{1/2} \log(r^{-1})^{-\beta}$ $\qdistnoarg{h}$-neighborhood of the boundary of a good chunk $\CN$ of $\CA_j$ has area at most $r \log(r^{-1})^{1-p\beta -pa_4 /3}$,  where $p$ is the constant in~\eqref{it:ngbd_vollume_ubd}.  Since $L_{\tau_{I_j}^j}^j \geq r \log(r^{-1})^{-a_1}$,  possibly by taking $\beta$ to be larger,  we can assume that the $r \log(r^{-1})^{-\beta}$ $\qdistnoarg{h}$-neighborhood of the boundary of a good chunk $\CN$ of $\CA_j$ has area at most $r \log(r^{-1})^{1-p \beta - p a_4 / 3}$.  It follows that by taking $\beta$ sufficiently large (depending only on $a_1,a_2,a_3,a_4$ and $q$),  we can assume that the union of the $r^{1/2} \log(r^{-1})^{-\beta}$ $\qdistnoarg{h}$-neighborhoods of the boundaries of the chunks in $\CA_j$ has area at most $r \log(r^{-1})^{-q}$.  Therefore,  we obtain that the hull of the whole-plane $\SLE_6$ must exit the $r^{1/2} \log(r^{-1})^{-\beta}$ $\qdistnoarg{h}$-neighborhood of the boundaries of the chunks and so the same is true for the Brownian motion.  Let $\CN$ be the first chunk in $\CA_j$ that the Brownian motion hits the part of $\CN$ which has $\qdistnoarg{h}$-distance at least $r^{1/2} \log(r^{-1})^{-\beta}$ from $\partial \CN$ and let $w$ be the first such point it hits.  Let $\phi:\CN \to \D$ be the conformal transformation as in~\eqref{it:mass_bound}.  Then,  condition~\eqref{it:mass_bound} implies that $\phi(w)$ has Euclidean distance at least $\log(r^{-1})^{-\wt{q}}$ from $\partial \D$,  where $\wt{q} = M(\beta +1 - a_3 / 3)$.  This implies that the probability that the Brownian motion (conformally mapped) hits $B(0,1/4)$ before leaving $\D$ is at least a constant times $\log(r^{-1})^{-\wt{q}}$.  Moreover,  it follows from~\eqref{it:mass_bound} that the quantum mass assigned to $B(0,1/2)$ with respect to the embedding of $\CN$ into $\D$ induced by $\phi$ is at least $\log(r^{-1})^{-4 a_4 / 3} / M$.  Since $G_{B(0,1/2)}(z,w) \gtrsim 1$ for all $z,w \in B(0,1/2)$ with the implicit constant being universal, \eqref{eq:defG_U} implies that the total amount of time in $B(0,1/2)$ that the LBM starting from $w \in \partial B(0,1/4)$ spends up until the first time that it exits $B(0,1/2)$ is at least a universal constant times $\log(r^{-1})^{-4 a_3 / 3} / M$.  Combining everything,  we obtain that $E_x\bigl[ \tau_{\qball{h}{x}{r}} \bigr]$ is at least a universal constant times $\log(r^{-1})^{-\wt{q}-4 a_3 / 3} / M$.  This completes the proof of the lemma.
\end{proof}

\begin{proof}[Proof of Theorem~\ref{thm:bm_exit}, lower bound]
Suppose that $(\CS,x,y)$ has distribution $\qsphereinflaw$.  Since the metric in $\CS$ is bi-H\"older continuous with respect to the Euclidean metric,  we know that there exists $\alpha \in (0,1)$  deterministic such that $\qsphereinflaw$-a.e.,  there exists $r_0 \in (0,1)$ such that $\qball{h}{z}{r^{1 / \alpha^2}} \subseteq \eball{z}{r^{1 / \alpha}} \subseteq \qball{h}{z}{r}$ for all $z \in \CS,  r \in(0,r_0)$.  Fix $r_0 \in (0,1)$ and suppose that we are working on the event that for all $r \in (0,r_0)$,  the above holds and and in addition we have that $\diam(\CS) \geq 6r,  r^4 \log(r^{-1})^{-6-u} \leq \qmeasure{h}(\qball{h}{z}{r}) \leq r^4 \log(r^{-1})^{8+u}$ and $\frac{\qmeasure{h}(\qball{h}{z}{r})}{\qmeasure{h}(\CS)} \geq r^{4+u}$ for all $z \in \CS$,  where $u \in (0,1/3)$ is fixed.  Note that by Theorem~\ref{thm:ball_concentration} and since $\mathcal{S}$ is a finite metric space $\qsphereinflaw$-a.e.,  we obtain that $\qsphereinflaw$-a.e.,  there exists $r_0 \in (0,1)$ satisfying these properties.

Let $(z_j)$ be a sequence of points in $\CS$ chosen i.i.d.\  from $\qmeasure{h}$ normalized to be a probability measure.  	With $N_r = r^{-u-4 / \alpha^2}$,  we know from the proof of Lemma~\ref{lem:filled_metric_ball_exit_time} that $\CS \subseteq \cup_{j=1}^{N_r} \qball{h}{z_j}{(r/4)^{1 / \alpha^2}}$ off an event whose $\qsphereinflaw$ measure tends to $0$ as $r \to 0$ faster than any power of $r$.  Then,  Lemma~\ref{lem:filled_metric_ball_exit_time} implies that there exists a universal constant $\kappa>0$ such that off an event whose $\qsphereinflaw$ measure tends to $0$ as $r \to 0$ faster than any power of $r$,  the following holds for all $1 \leq j \leq N_r$ simultaneously.  The conditional expectation given $\CS$ of the amount of time that the LBM starting from $z_j$ spends in $\qball{h}{z_j}{r} \setminus \qball{h}{z_j}{r/2}$ before leaving $\qball{h}{z_j}{r}$ is at least $r^4 \log(r^{-1})^{-\kappa}$.  Fix $1 \leq j \leq N_r$ and $z \in \eball{z_j}{(r/4)^{1/\alpha}} \subseteq \qball{h}{z_j}{r/2}$.  We note that the Radon-Nikodym derivative between harmonic measure on $\partial \eball{z_j}{(r/4)^{1/\alpha}}$ as seen from $z$ and from $z_j$ is bounded from above and below by universal constants.  By integrating over their first hitting point of $\partial \eball{z_j}{(r/4)^{1/\alpha}}$, we thus have that if $B^z$ (resp.\ $B^{z_j}$) is an LBM starting from $z$ (resp.\ $z_j$) then the expected amount of time that $B^z$ spends in $\qball{h}{z_j}{r} \setminus \qball{h}{z_j}{r/2}$ before exiting $\qball{h}{z_j}{r}$ is comparable to the expected amount of time that $B^{z_j}$ spends in $\qball{h}{z_j}{r} \setminus \qball{h}{z_j}{r/2}$.  Hence,  the proof is complete possibly by taking $\kappa$ to be larger since $\CS \subseteq \cup_{j=1}^{N_r} \eball{z_j}{(r/4)^{1/\alpha}}$.
\end{proof}

\section{Bounds on the Liouville heat kernel}
\label{sec:LBM-HK-bounds}

In this section we prove Theorems~\ref{thm:ubd} and~\ref{thm:lbd}. We start with Theorem~\ref{thm:ubd}, which is now a direct consequence of the results in the previous sections.

\begin{proof}[Proof of Theorem~\ref{thm:ubd}]
First,  we will show the desired upper bound for the LBM killed upon exiting bounded domains in $\C$ whose closure does not contain $0$.  Also,  by scaling and disintegration with respect to area,  it suffices to prove the desired upper bound when $h$ has the law of the unit area LQG sphere.  Fix $U \subseteq \C$ non-empty and open and such that $0 \notin \overline{U}$.  Let also $D \subseteq \C$ be a Jordan domain such that $\overline{U} \subseteq D$ and $0 \notin \overline{D}$.  Note that \cite[Lemma~3.9]{aru2020first} implies that the law of a zero-boundary GFF on $D$ is mutually absolutely continuous with respect to the law of the restriction to $D$ of a massive GFF on $\R^2$ with constant positive mass.  Hence,  combining \cite[Theorem~1.2]{AK16} with absolute continuity,  we obtain that there a.s.\  exists a random constant $C_1<\infty$ such that $\wt{p}_t(x,y) = \wt{p}_t(y,x) \leq C_1 t^{-1} \log(t^{-1})$ for all $x,y \in D$,  $t \in (0,1/2]$,  where $\wt{p}_t(\cdot,\cdot)$ denotes the heat kernel of the LBM killed at the first time that it exits $D$,  with respect to a zero-boundary GFF $h^0$ on $D$.  Note that the definition of the law of $h$ given in \cite[Section~4.5]{dms2014mating} combined with the proof of \cite[Proposition~3.4]{ms2016ig1} imply that the laws of $h|_D$ and $h^0|_D$ are mutually absolutely continuous.  Hence,  it follows that for a.e.\  instance of $h$,  there exists a random constant $C<\infty$ such that
$p_t^D(x,y) = p_t^D(y,x) \leq C t^{-1} \log(t^{-1})$ for all $x,y \in D$,  $t \in (0,1/2]$,
where $p_t^D(\cdot,\cdot)$ denotes the Liouville heat kernel for the LBM with respect to $h$ when killed at the first time that it exits $D$.  Thus,  it follows that for all $(x_0,r) \in D \times (0,2^{-1/4})$ with $\qball{h}{x_0}{2^{1/4}r} \subseteq D$,  $t \in (0,r^4) \subseteq (0,1/2)$, $A \subseteq \qball{h}{x_0}{r}$ Borel and $x \in \qball{h}{x_0}{r}$,  we have that
\begin{align*}
P_x \big[ X_t \in A,  t < \tau_{\qball{h}{x_0}{r}} \big] &= \int_A p_t^{\qball{h}{x_0}{r}} (x,y) \,
 d \qmeasure{h}(y)\\
&\leq \int_A p_t^D(x,y) \, d\qmeasure{h}(y) \leq C t^{-1} \log(e + t^{-1}) \qmeasure{h}(A).
\end{align*}
In particular,  condition $\mathrm{(DU)}$ holds for $h$.

Next,  we proceed to prove that condition $\mathrm{(E)}$ holds.  Indeed,  Theorem~\ref{thm:bm_exit} implies that~\eqref{eq:exit_est-simple} holds with $\beta = 4$ and $\kappa_{\mathrm{el}}=\kappa_{\mathrm{eu}} = \kappa$ for all $(x,r) \in D \times (0,1)$ such that $\qball{h}{x}{r} \subseteq D$,  where $\kappa$ is the constant in the statement of Theorem~\ref{thm:bm_exit}.  Therefore,  by further setting $\alpha_1 = \alpha_2 = \beta_1 = \beta_2 =4$,  Lemma~\ref{lem:Phikappa_Phi0} combined with Theorem~\ref{thm:uhk} and since $\dist_{\qdistnoarg{h}}(\overline{U},\s^2 \setminus D)>0$,  we obtain that the desired upper bound holds for all $(t,x,y) \in (0,1/2] \times \s^2 \times U$.  Now,  suppose that conditional on $(\s^2,h)$,  we choose independently a point $z$ from $\qmeasure{h}$  and let $\phi : \s^2 \to \s^2$ be a conformal map such that $\phi(0) = 0$ and $\phi(z) = \infty$.  Then,  it follows from \cite[Proposition~A.8]{dms2014mating} that $\wt{h}:=h \circ \phi^{-1} + Q \log |(\phi^{-1})'|$ has the same law with $h$ modulo a scaling factor,  where $h$ and $\wt{h}$ are considered to have the embedding introduced in \cite[Section~4.5]{dms2014mating}.  Note that $z \notin \{0,\infty\}$ a.s. Hence,  combining the fact that the desired upper bound on the heat kernel holds in $U$ with \cite[Theorem~1.3]{Be15},  we obtain that there a.s.\  exist open neighborhoods $U_1$ and $U_2$ of $0$ and $\infty$ respectively,  and random constants $C_1,C_2$ such that
\begin{equation}\label{eqn:heat_kernel_upd}
p_t(u,v) \; \leq \; \frac{C_1 (\log t^{-1})^\kappa}{t} \, \exp\Biggl( - C_2 \biggl(\frac{\qdist{h}{u}{v}^4}{t} \biggr)^{\!\frac 1 3} \biggl( \log\biggl( e + \frac{\qdist{h}{u}{v}}{t} \biggr) \biggr)^{\!-\kappa} \Biggr)
\end{equation}
for all $t \in (0,1/2],  (x,y) \in (\s^2 \times U_1) \cup (\s^2 \times U_2)$.  Finally,  we have already shown that~\eqref{eqn:heat_kernel_upd} holds with $t \in (0,1/2]$ and $x,y$ both lying in a bounded domain in $\C$ with positive distance from $0$.  Combining,  we complete the proof.
\end{proof}

As the first step in proving the lower bound in Theorem~\ref{thm:lbd} we record
an on-diagonal heat kernel lower bound as a further consequence of the results in the previous sections.

\begin{lemma} \label{lem:on_diagonal_lbd}
There exists a deterministic constant $\kappa > 0$ such that for $\qsphereinflaw$-a.e.\ instance $(\CS,x,y)$ there exists $C \geq 1$ such that for all $u \in \CS$ and $t\in (0,\frac{1}{2}]$,
\begin{equation}
\label{eqn:on_diagonal_lbd}
p_t(u,u) \geq \frac{1}{C t (\log t^{-1})^\kappa}.
\end{equation}
\end{lemma}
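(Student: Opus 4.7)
The plan is to deduce Lemma~\ref{lem:on_diagonal_lbd} as an immediate consequence of the general on-diagonal lower bound Theorem~\ref{thm:heat-kernel-simple}-(ii), applied to the metric measure space $(\mathcal{X},d,\mu) = (\mathcal{S},\qdistnoarg{h},\qmeasure{h})$ equipped with the $\sqrt{8/3}$-Liouville Brownian motion, taking $\alpha = \beta = 4$. The three hypotheses of Theorem~\ref{thm:heat-kernel-simple}-(ii) are: $\qmeasure{h}$-symmetry of $X$, which is immediate from the construction of LBM reviewed in Section~\ref{sec:bm-LQG}; the volume upper bound $\mathrm{(V)}_{\leq}$ with $\kappa_{\mathrm{vu}} = 8+u$ for any small $u > 0$, supplied by Theorem~\ref{thm:ball_concentration} through the equivalence of $\sqrt{8/3}$-LQG and the Brownian map; and the mean exit time estimate $\mathrm{(E)}$ with $\kappa_{\mathrm{el}} = \kappa_{\mathrm{eu}}$ equal to the constant $\kappa$ of Theorem~\ref{thm:bm_exit}.

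The one minor technical wrinkle is that Theorem~\ref{thm:heat-kernel-simple} requires $\mathrm{(V)}_{\leq}$ and $\mathrm{(E)}$ to hold for every $r$ in $(0, a_{\rmv}^{-1}\diam\mathcal{S}]$ and $(0, a_{\rme}^{-1}\diam\mathcal{S}]$, respectively, whereas Theorems~\ref{thm:ball_concentration} and~\ref{thm:bm_exit} provide them only for $r$ below a random threshold $r_0 > 0$. Since $(\mathcal{S},\qdistnoarg{h})$ is almost surely a compact metric space of finite diameter, this is resolved by choosing the (random) constants $a_{\rmv}, a_{\rme} \in [1,\infty)$ (permitted by the setup of Subsection~\ref{ssec:setting_general}) large enough that $a_{\rmv}^{-1}\diam\mathcal{S}$ and $a_{\rme}^{-1}\diam\mathcal{S}$ are both smaller than $r_0$.

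With all three hypotheses in place, Theorem~\ref{thm:heat-kernel-simple}-(ii) yields
\begin{equation*}
p_t(u,u) \geq c_3\, t^{-1} \bigl(\log(e+t^{-1})\bigr)^{-\kappa_{\mathrm{dl}}}
\qquad \text{for every $(t,u) \in (0, (\diam\mathcal{S})^4] \times \mathcal{S}$,}
\end{equation*}
with $\kappa_{\mathrm{dl}} = \kappa_{\mathrm{vu}} + \kappa_{\mathrm{u}}$ and $\kappa_{\mathrm{u}} = (2+\beta)(\kappa_{\mathrm{el}} + \kappa_{\mathrm{eu}}) = 12\kappa$. For the remaining range $t \in (0,1/2] \cap ((\diam\mathcal{S})^4, \infty)$ (nonempty only when $\diam\mathcal{S} < 2^{-1/4}$), the trivial bound $p_t(u,u) \geq 1/\qmeasure{h}(\mathcal{S})$, obtained from semigroup self-adjointness and $T_s\one = \one$ exactly as in the final paragraph of the proof of Theorem~\ref{thm:heat-kernel-simple}, provides a constant lower bound. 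Since $\log(e+t^{-1}) \asymp \log t^{-1}$ uniformly on $(0,1/2]$, absorbing all these estimates into a single random constant $C = C(h) \in [1,\infty)$ produces~\eqref{eqn:on_diagonal_lbd}. The substantive content has already been placed into Theorems~\ref{thm:ball_concentration} and~\ref{thm:bm_exit}; the main (modest) obstacle is simply to track that the polylogarithmic exponents of these two inputs feed correctly into the abstract mechanism of Section~\ref{sec:slD}.
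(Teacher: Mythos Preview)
Your proposal is correct and follows essentially the same approach as the paper: both verify the hypotheses of Theorem~\ref{thm:heat-kernel-simple}-(ii) using Theorems~\ref{thm:ball_concentration} and~\ref{thm:bm_exit} for $\mathrm{(V)}_{\leq}$ and $\mathrm{(E)}$, respectively, together with the $\qmeasure{h}$-symmetry and the existence of a continuous heat kernel reviewed in Section~\ref{sec:bm-LQG}. You are slightly more explicit than the paper about the range-of-$r$ issue and the case $t > (\diam\CS)^{4}$, but these are minor bookkeeping points already implicit in the proof of Theorem~\ref{thm:heat-kernel-simple} itself; note also that for $\mathrm{(V)}_{\leq}$ in the simple form of Theorem~\ref{thm:heat-kernel-simple} there is no parameter $a_{\rmv}$, but one may equivalently absorb the range $r\in[r_{0},\diam\CS]$ into the constant $C_{\mathrm V}$ using $\qmeasure{h}(\CS)<\infty$.
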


\begin{proof}
First,  we note that assumption~\eqref{eq:transition-density} holds for a.e.\  instance $(\CS,x,y)$ of a sample from the fixed area Brownian map,  since~\eqref{eq:transition-density} is a.s.\  true for the restriction of the GFF on $\R^2$ to every open and bounded subset of $\R^2$,  and the law of the latter is locally absolutely continuous with respect to the law of the fixed area Brownian map as explained in the proof of Theorem~\ref{thm:ubd}.  Hence,  it follows by scaling that assumption~\eqref{eq:transition-density} holds for $\qsphereinflaw$-a.e.\  instance $(\CS,x,y)$.  Moreover,  assumptions $\mathrm{(V)}_{\leq}$ and $\mathrm{(E)}$ have been established for $\qsphereinflaw$ in Theorems~\ref{thm:ball_concentration}
and~\ref{thm:bm_exit}, respectively.  Therefore,  the proof is complete by combining with Theorem~\ref{thm:heat-kernel-simple}.
\end{proof}

In order to prove the off-diagonal lower bound, we want to construct a chain out of order $(d(u,v)^4/t)^{1/3}$ sets which connect $u$ to $v$, each of diameter of order $(t/d(u,v))^{1/3}$ between which the Brownian motion can move with positive probability.  The sets that we will use will be given by annuli of $\SLE_6$ chunks using a certain good event as in the exit time lower bound.  In what follows, we will define the good event in Subsection~\ref{subsec:good_event}.  We will then establish various properties of the corresponding good annuli in Subsection~\ref{subsec:good_annuli}.  We will complete the proof of Theorem~\ref{thm:lbd} in Subsection~\ref{subsec:rest_of_proof}.

\subsection{The good event}
\label{subsec:good_event}

We are now going to give the definition of the good event that we will use for the chunks of $\SLE_6$.

Fix $\delta \in (0,1]$ and $M > 1$, fix a constant $p > 0$ (corresponding to condition~\eqref{it:ngbd_vollume_ubd} in Subsection~\ref{subsec:event_def}) which is small enough as specified in Proposition~\ref{prop:good_quantum_disk}, and let $\CD = (\D,h,0)$ be a sample from $\qdiskweighted{\ell}$.  Let also $\eta'$ be a radial $\SLE_6$ in $\D$ started from the point $x \in \partial \D$ which is sampled uniformly from $\qbmeasure{h}$ (normalized to be a probability measure) and targeted at $0$,  and such that $\eta'$ is independent of $\CD$.  Let $y$ be the point on $\partial \D$ which is antipodal to $x$ with respect to $\qbmeasure{h}$ and let $\wt{\sigma}$ be the first time after $\delta/M$ that the curve $\eta'$ is in the boundary and let $\sigma = \wt{\sigma} \wedge \delta$.  In the case that $\mathcal{W} = (\h,h,0,\infty)$ is a sample from $\qwedge{2}$,  we let $\eta'$ be a chordal $\SLE_6$ in $\h$ from $0$ to $\infty$ which is independent of $\mathcal{W}$ and it is parameterized by quantum natural time with respect to $h$.  We define stopping times $\sigma,\wt{\sigma}$ analogously.  In either case, we let $E$ be the event that $\sigma = \wt{\sigma}$ and the following conditions hold for the quantum surface $\CN$ disconnected by $\eta'([0,\sigma])$.  (Note that $\sigma = \wt{\sigma}$ implies that $\CN$ is homeomorphic to $\D$.)
\begin{enumerate}[(I)]
\item\label{it:hitting_bound} For every $r \in (0, \delta^{1/3} M^{-1})$ and every $x,y \in \CN$ with $d_h$-distance at least $r$ from $\partial \CN$ there exists $t \in [M^{-M} \delta^{4/3}/ (\log \tfrac{1}{r})^M,M^M \delta^{4/3}(\log \tfrac{1}{r})^M]$ so that
\[ \int_{\qball{h}{y}{r}} p_t^\CN(x,w) \, d\qmeasure{h}(w) \geq r^M,\]
where $p^\CN$ denotes the Liouville heat kernel on $\CN$.
\item\label{it:ballnearby} For every $z \in \ol{\CN}$ and $r \in (0,\delta^{1/3}/M)$ there exists $w \in \CN$ with $\qdist{h}{z}{w} \leq r$ and $\qball{h}{w}{2r^M} \subseteq \CN$.
\item\label{it:old_conditions} The conditions in the event $E$ used for the exit time lower bound, described in Subsection~\ref{subsec:event_def}, hold with the parameter $M$. 
\end{enumerate}

The rest of this subsection is devoted to the proof of the following statement.
\begin{proposition}
\label{prop:good_event_happens}
For every $p_0 \in (0,1)$ there exists $M \geq 1$ depending only on $p_0$ so that $\qwedge{2}\bigl[ E^{\CW}_{\sigma} \cap \{E \,\,\text{holds for}\,\, \CN \} \bigm| \sigma < \delta \bigr] \geq 1 - p_0$ for all $\delta \in (0,1]$.
\end{proposition}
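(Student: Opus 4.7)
The plan is to reduce to the case $\delta=1$ by the scale invariance of $\qwedge{2}$ under the shift/rescaling that rescales quantum natural time by $\delta$, and then to verify the three defining properties~(\ref{it:hitting_bound})--(\ref{it:old_conditions}) of $E$ separately, each holding with $\qwedge{2}$-probability at least $1-p_0/3$. A union bound then completes the argument. Property~(\ref{it:old_conditions}) is supplied directly by Proposition~\ref{prop:good_sle_chunks} (with its $c_0$ set to a small enough constant determined by $p_0$), leaving (\ref{it:hitting_bound}) and (\ref{it:ballnearby}) to be handled.

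For~(\ref{it:ballnearby}), I work on $E^{\CW}_{\sigma}\cap\{\sigma=\widetilde\sigma\}$, on which $\overline\CN$ is a Jordan domain by Proposition~\ref{prop:SLE6-chunk-Jordan-H}. Given $z\in\overline\CN$ and $r\in(0,1/M)$, I pick $w\in\CN$ on a hyperbolic geodesic in $\CN$ from $z$ towards the distinguished interior point $\varphi^{-1}(0)$ from~(\ref{it:mass_bound}), chosen with $\qdist{h}{z}{w}\approx r$. A deterministic H\"older estimate for the inverse conformal map $\varphi^{-1}\colon\D\to\CN$ \cite[Theorem~5.2]{rohde2011basic} combined with the argument in Step~3 of the proof of Lemma~\ref{lem:mass_bound} gives $\dist(\varphi(w),\partial\D)\gtrsim r^{C_1}$ for some deterministic $C_1$; combining this with a H\"older upper bound for the quantum metric in conformal coordinates (analogous to \cite[Proposition~1.8]{hughes2024equivalencemetricgluingconformal} via Kahane's convexity inequality) then yields $\qball{h}{w}{2r^M}\subset\CN$ for $M$ taken large enough in terms of $C_1$, which is the required condition.

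Property~(\ref{it:hitting_bound}) is the main obstacle. Conditionally on the boundary length of $\partial\CN$ on the event $\{\sigma=\widetilde\sigma\}$, $\CN$ has the law of a quantum disk with marked boundary points, which via the metric exploration of the Brownian map is locally absolutely continuous with respect to the law of a complement of a filled metric ball in an instance of $\qsphereinflaw$. Consequently the volume estimates of Theorem~\ref{thm:ball_concentration} and the exit time estimates of Theorem~\ref{thm:bm_exit} apply inside $\CN$ with overwhelming probability, and Proposition~\ref{prop:lower-ondiag} yields an on-diagonal lower bound $p_t^{\CN}(w,w)\gtrsim t^{-1}(\log t^{-1})^{-\kappa}$ for interior $w$ and appropriate $t$. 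To upgrade this to the integrated off-diagonal bound required in~(\ref{it:hitting_bound}), I chain: given interior $x,y\in\CN$ each at $\qdistnoarg{h}$-distance at least $r$ from $\partial\CN$, I construct a chain of quantum balls of geometrically growing radii going from $x$ up to a fixed ``central'' ball $B_0\subset\CN$ of uniformly positive quantum radius (which exists by properties~(\ref{it:mass_bound}) and~(\ref{it:ballnearby})), and then back down to $y$. Such a chain uses $O(\log r^{-1})$ balls, each with crossing probability bounded below uniformly in $r$ by the near-diagonal heat kernel lower bound coming from the exit time and on-diagonal estimates, and has total elapsed time $O(1)$ up to polylog corrections. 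Iterating the strong Markov property at the successive exit times and combining with the on-diagonal lower bound inside the central ball then yields a bound of the form $\int_{\qball{h}{y}{r}}p_t^{\CN}(x,w)\,d\mu_h(w)\geq c^{O(\log r^{-1})}\cdot\mu_h(\qball{h}{y}{r})\geq r^{C_2+4}\geq r^M$ for $M$ large enough. The delicate point is propagating the polylogarithmic corrections in the elapsed time uniformly through the chaining, which is precisely the reason~(\ref{it:hitting_bound}) permits $t$ to lie in the wide window $[M^{-M}/(\log r^{-1})^{M},\,M^{M}(\log r^{-1})^{M}]$ rather than being pinned to a single value.
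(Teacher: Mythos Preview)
Your reduction to $\delta=1$ by scaling and your handling of property~(\ref{it:old_conditions}) via Proposition~\ref{prop:good_sle_chunks} match the paper. The treatment of~(\ref{it:hitting_bound}) and~(\ref{it:ballnearby}), however, has real gaps.

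For~(\ref{it:hitting_bound}), the premise that ``conditionally on the boundary length of $\partial\CN$, $\CN$ has the law of a quantum disk'' is not correct: $\CN$ is the \emph{hull} of an $\SLE_6$ segment, not a single complementary component, so it is neither a quantum disk nor locally absolutely continuous with respect to a filled-ball complement in the Brownian map in the way you claim. This blocks the direct import of Theorems~\ref{thm:ball_concentration} and~\ref{thm:bm_exit} into $\CN$. More seriously, your chaining step invokes a ``near-diagonal heat kernel lower bound coming from the exit time and on-diagonal estimates,'' but exit-time and on-diagonal bounds alone do not give $P_x[X_t\in \qball{h}{y}{s}]\geq c$ when $y$ is at distance comparable to the ball radius from $x$; that would need a parabolic Harnack inequality, which is not available here. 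The paper avoids chaining entirely: it first proves a Green's-function lower bound $E_x[T_{\qball{h}{y}{r}}]\gtrsim r^C$ directly from the conformal structure of the chunk (Lemma~\ref{lem:lower_bound_on_mean_time_spent_by_LBM}, decomposing $\CN$ into the three regions $U,V,G$ with H\"older uniformizing maps and using planar-Brownian-motion hitting estimates in the image), and then \emph{localizes in time} by subtracting the contributions from $t\notin I_{M,r}$ using the already-proved off-diagonal upper bound Theorem~\ref{thm:ubd} for small $t$ and exit-time tail bounds for large $t$ (Lemma~\ref{lem:hitting_bound}). Averaging over $I_{M,r}$ then yields a good $t$.

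For~(\ref{it:ballnearby}), moving from $z\in\overline{\CN}$ towards the center along a hyperbolic geodesic by quantum distance $\approx r$ does not, by itself, guarantee that the resulting point $w$ is at positive \emph{quantum} distance from $\partial\CN$ (think of a thin neck). The Step~3 argument of Lemma~\ref{lem:mass_bound} you invoke takes as hypothesis that $\dist_{d_{h|_\CN}}(w,\partial\CN)\geq r$, which is precisely what you have not yet established; and the final containment $\qball{h}{w}{2r^M}\subset\CN$ requires the \emph{lower} H\"older bound on the quantum metric (Euclidean $\lesssim$ quantum$^{\beta}$), not the upper one you cite. The paper instead proves~(\ref{it:ballnearby}) first for a full quantum disk (Lemma~\ref{lem:metric_non_boundary_tracing}, via the bi-H\"older continuity of Lemma~\ref{lem:quantum_disk_metric_holder}), and then transfers to the chunk $\CN$ on a wedge by reusing the $\SLE_{8/3}$ boundary-curve decomposition from Steps~3--4 of Lemma~\ref{lem:reverse_holder_continuity_top_boundary} together with the disk-to-wedge local comparison in Lemma~\ref{lem:reverse_holder_continuity}.
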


First we note that by scaling,  it suffices to prove that conditions~\eqref{it:hitting_bound}--\eqref{it:old_conditions} hold with high probability (if $M$ is sufficiently large) when $\delta = 1$.  From now on,  we will assume that $\delta = 1$.  Also the fact that condition~\eqref{it:old_conditions} holds with as high probability as we want (provided we choose $M$ sufficiently large) follows from Proposition~\ref{prop:good_sle_chunks}.

Next we focus on proving that condition~\eqref{it:ballnearby} holds with high probability if we choose $M$ large enough.  The main idea in order to prove the claim is to prove that condition~\eqref{it:ballnearby} holds with high probability when $\CN$ is the radial $\SLE_6$ chunk drawn on top of an independent sample from $\qdiskweighted{1}$.  Then the claim will follow by arguing in the same way as in Steps 3 and 4 in the proof of Lemma~\ref{lem:reverse_holder_continuity_top_boundary} and the second paragraph  of the proof of Lemma~\ref{lem:reverse_holder_continuity}. We believe that this approach will make it easier for the reader to understand the proof of Proposition~\ref{prop:good_event_happens} (instead of proving directly the claim for the quantum wedge without comparing the law of the latter with that of a quantum disk),  since similar arguments have been presented in Section~\ref{sec:exit-time-bounds}.

Let us now prove that condition~\eqref{it:ballnearby} holds with high probability when the radial $\SLE_6$ chunk $\CN$ is drawn on top of an independent sample from $\qdiskweighted{1}$.  This is the content of the following lemma.

\begin{lemma}\label{lem:metric_non_boundary_tracing}
Suppose that $\CD = (\D,h,0)$ has law $\qdiskweighted{\ell}$ for $\ell>0$ fixed.  Then,  there exists a deterministic constant $M>1$ such that $\qdiskweighted{\ell}$-a.s.\  there exists (random) $ r_0 \in (0,1)$ such that for all $z \in \overline{\D},  r \in (0,r_0)$,  there exists $w \in \D$ with $\qdist{h}{z}{w} \leq r$ and $\qball{h}{w}{2r^M} \subseteq \D$.
\end{lemma}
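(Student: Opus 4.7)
My plan is to prove the lemma by a volume-comparison argument. Given $z \in \overline{\D}$ and $r > 0$ small, I will compare the quantum mass of the quantum ball $\qball{h}{z}{r}$ with the quantum mass of the quantum $2r^M$-neighborhood of $\partial \D$, and show that the ball has strictly more mass. This forces the existence of $w \in \qball{h}{z}{r}$ with $\qdist{h}{w}{\partial \D} > 2r^M$, and any such $w$ automatically satisfies both $\qdist{h}{z}{w} \leq r$ and $\qball{h}{w}{2r^M} \subseteq \D$.

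The upper bound on the quantum mass of the $\delta$-quantum neighborhood of $\partial \D$, namely $\qmeasure{h}(\{w \in \D : \qdist{h}{w}{\partial \D} \leq \delta\}) \leq C \delta^{2-v}$ for every small $v > 0$, is exactly Lemma~\ref{lem:metric_ngbd_quantum_area_ubd}. The matching lower bound on $\qmeasure{h}(\qball{h}{z}{r})$ is the non-trivial input, and has to hold uniformly in $z \in \overline{\D}$. When $\qdist{h}{z}{\partial \D} > r$, it follows from the sphere volume lower bound in Theorem~\ref{thm:ball_concentration} together with the absolute continuity between a quantum sphere and a weighted quantum disk (realized as the complement of a filled metric ball of a given boundary length in a quantum sphere). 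When $\qdist{h}{z}{\partial \D} \leq r/2$, including in particular the case $z \in \partial \D$, I will pick a point $z_\partial \in \partial \D$ within quantum distance $r/2$ of $z$ and construct a boundary arc $I \subseteq \partial \D$ containing $z_\partial$ whose quantum length is at least $r^{2+\zeta}$ but whose quantum diameter is at most $r/8$; such an $I$ exists for some deterministic $\zeta > 0$ by Lemma~\ref{lem:boundary_length_regularity}. Invoking Lemma~\ref{lem:boundary_area_lbd}, combined if necessary with the H\"older comparison between the Euclidean and quantum metrics (of the type in~\eqref{eq:comp_metric}) to translate Euclidean neighborhoods into quantum ones, the quantum $(r/8)$-neighborhood of $I$ has quantum mass at least $c r^\beta$ for some deterministic exponent $\beta < \infty$. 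Since this neighborhood is contained in $\qball{h}{z_\partial}{r/4} \subseteq \qball{h}{z}{r}$, we obtain the required polynomial lower bound $\qmeasure{h}(\qball{h}{z}{r}) \geq c r^\beta$ for every $z \in \overline{\D}$.

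Finally, I choose a deterministic $M$ with $M(2-v) > \beta + 1$. Then for all sufficiently small $r$,
\[
\qmeasure{h}\bigl( \{ w \in \D : \qdist{h}{w}{\partial \D} \leq 2 r^M \} \bigr) \leq C (2 r^M)^{2-v} \ll c r^\beta \leq \qmeasure{h}(\qball{h}{z}{r}),
\]
so the set $\qball{h}{z}{r} \setminus \{ w : \qdist{h}{w}{\partial \D} \leq 2 r^M \}$ is non-empty, and any of its elements is an admissible $w$.

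The main obstacle is the uniform volume lower bound at boundary points. The boundary-arc argument above is natural, but makes the resulting exponent $\beta$ noticeably larger than the interior value $4+o(1)$; this is harmless because the lemma asks only for the existence of a deterministic $M$ and we are free to absorb any constant exponent into $M$. Some care is also needed to make sure that the random constants in Lemmas~\ref{lem:boundary_length_regularity} and~\ref{lem:boundary_area_lbd} can be applied uniformly in $z_\partial \in \partial \D$ and in $r$, but in both references the random constants depend only on the field $h$ and not on the points involved, so this is straightforward.
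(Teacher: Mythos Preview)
Your volume-comparison approach is sound and would yield the lemma, but it is considerably more elaborate than the route the paper takes. The paper's proof rests entirely on Lemma~\ref{lem:quantum_disk_metric_holder}, which gives a deterministic $\beta\in(0,1)$ and a random $C>1$ with $C^{-1}|z-w|^{1/\beta}\le\qdist{h}{z}{w}\le C|z-w|^{\beta}$ on $\overline{\D}$. From this, one fixes $\wt{M}$ with $\wt{M}\beta>1$ so that $\D\cap\eball{z}{r^{\wt{M}}}\subseteq\qball{h}{z}{r}$; elementary Euclidean geometry then produces $w\in\D$ with $\eball{w}{r^{\wt{M}}/2}\subseteq\D\cap\qball{h}{z}{r}$, and choosing $M>\wt{M}/\beta$ gives $\qball{h}{w}{r^{M}}\subseteq\eball{w}{r^{\wt{M}}/2}\subseteq\D$. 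That is the whole proof.

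By contrast, your argument assembles Lemma~\ref{lem:metric_ngbd_quantum_area_ubd} for the boundary-layer upper bound, Theorem~\ref{thm:ball_concentration} (transported from the sphere to the disk) for interior-ball lower bounds, and the combination of Lemmas~\ref{lem:boundary_length_regularity} and~\ref{lem:boundary_area_lbd} for near-boundary lower bounds. Each piece is available, and the measure-theoretic conclusion---that the ball cannot be entirely contained in the thin boundary layer---is clean. Two small remarks: your two cases $\qdist{h}{z}{\partial\D}>r$ and $\qdist{h}{z}{\partial\D}\le r/2$ do not cover the range $(r/2,r]$, but this is trivially patched by applying the interior bound with radius $r/2$ there. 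More notably, your hedge about ``combining if necessary with the H\"older comparison of the type in~\eqref{eq:comp_metric}'' points toward the disk version of that statement, which is exactly Lemma~\ref{lem:quantum_disk_metric_holder}; once you have that lemma in hand, the paper's three-line argument becomes available and supersedes the volume comparison. So while your method is independently valid, it is the bi-H\"older input that does the real work in either approach, and the paper exploits it directly.
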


The main ingredient in the proof of Lemma~\ref{lem:metric_non_boundary_tracing} is the following lemma which states that the LQG distance with respect to a sample from $\qdiskweighted{\ell}$ for $\ell>0$ (when parameterized by $\D$) is bi-H\"older continuous with respect to the Euclidean metric (with deterministic exponents) a.s.

\begin{lemma}
\label{lem:quantum_disk_metric_holder}
Suppose that we have the setup of Lemma~\ref{lem:metric_non_boundary_tracing}. Then,  there exists a deterministic constant $\beta \in (0,1)$ such that $\qdiskweighted{\ell}$-a.s.\  there exists $C>1$ such that 
\begin{align*}
C^{-1} |z-w|^{1/\beta} \leq \qdist{h}{z}{w} \leq C |z-w|^{\beta} \quad \text{for all} \quad z,w \in \D.
\end{align*}
\end{lemma}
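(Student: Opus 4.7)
The plan is to reduce the claim to the bi-Hölder regularity of the $\sqrt{8/3}$-LQG metric on the sphere recorded in~\eqref{eq:comp_metric} (from \cite{ms2016qle2}). By the standard scaling properties of $\qdiskweighted{\ell}$ (areas scale as $\ell^{2}$, distances as $\ell^{1/2}$, boundary lengths as $\ell$) it suffices to treat $\ell=1$. Sample $(\CS,h_{0},x,y)$ from $\qsphereinflaw$ and set $\tau:=\inf\{t\ge 0:L_{t}=1\}$. By Subsection~\ref{subsubsec:boundary_length} the event $\{\tau<\infty\}$ has positive and finite $\qsphereinflaw$-mass, and conditionally on it the quantum surface $\mathcal U:=\CS\setminus\fb{x}{\tau}$ marked by $y$ has law $\qdiskweighted{1}$. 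By \cite{ms2015mapmaking} the set $\mathcal U$ is a Jordan subdomain of $\s^{2}$, so there is a conformal uniformisation $\phi:\mathcal U\to\D$ with $\phi(y)=0$; writing $h$ for the pushforward field on $\D$, the lemma reduces to (a) bi-Hölder regularity of the interior-internal metric $d_{h_{0}|_{\mathcal U}}$ with respect to the spherical metric $d_{\s^{2}}$ on $\mathcal U$ and (b) Hölder regularity of $\phi$ and $\phi^{-1}$ up to the boundary, both with deterministic exponents.

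For (a), the lower bound follows from $d_{h_{0}|_{\mathcal U}}(u,v)\ge d_{h_{0}}(u,v)\ge C^{-1}d_{\s^{2}}(u,v)^{1/\alpha}$ by~\eqref{eq:comp_metric}, transferred to $\D$ through $\phi^{-1}$ assuming its Hölder continuity. For the upper bound, given $u,v\in\mathcal U$ with $d_{\s^{2}}(u,v)=\varepsilon$ I would pick a ``shield'' point $w^{\ast}\in\mathcal U$ satisfying $d_{\s^{2}}(w^{\ast},\{u,v\})\le\varepsilon^{1/2}$ and $d_{\s^{2}}(w^{\ast},\partial\mathcal U)\ge\varepsilon^{1/3}$; the existence of such a $w^{\ast}$ for all small $\varepsilon$ is equivalent to the John-type property of $\mathcal U$ discussed below. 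Concatenating ambient $d_{h_{0}}$-geodesics $u\to w^{\ast}\to v$ keeps the path in $\mathcal U$ because by the upper bound in~\eqref{eq:comp_metric} each leg has $d_{h_{0}}$-length at most $C\varepsilon^{\alpha/2}$, hence spherical diameter at most $(C\varepsilon^{\alpha/2})^{\alpha}\ll\varepsilon^{1/3}$ once $\varepsilon$ is small. The upper bound in~\eqref{eq:comp_metric} then bounds the length of the concatenation by $2C\varepsilon^{\alpha/2}$, and transferring through $\phi$ with Hölder exponent $M$ yields the claimed $\qdist{h}{z}{w}\le C'|z-w|^{\alpha/(2M)}$.

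The main obstacle is verifying (b), i.e.\ that $\mathcal U$ is a John domain in $\s^{2}$ with John constants bounded in a manner that furnishes a deterministic Hölder exponent for $\phi$ and $\phi^{-1}$: classical theorems of Pommerenke and N\"akki--V\"ais\"al\"a then give deterministic Hölder exponents depending only on the John constant, and the John property also produces the shield points above. The boundary $\partial\fb{x}{\tau}$ is a rough $\SLE_{6}$-type loop which can come close to itself, so Johnness is not automatic. I plan to establish it by running an independent whole-plane $\SLE_{6}$ from $x$ to $y$: the good chunks developed in Subsection~\ref{subsec:event_def} supply candidate segments of John curves inside $\mathcal U$, the percolation statement Proposition~\ref{prop:good_chunks_percolate} ensures every point of $\mathcal U$ is reachable from $y$ by a chain of good chunks whose length is polylogarithmically controlled, and the chunk regularity conditions defining the good event, together with the skinny-chunk bound Lemma~\ref{lem:sle_6_cannot_be_skinny} and the upper volume bound Theorem~\ref{thm:ball_concentration}, convert the chain into a curve along which the distance to $\partial\mathcal U$ is comparable to the remaining arclength. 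The deterministic character of the John constant, and hence of $\beta$, is what should ultimately come out of the scale-invariance of these chunk estimates under the $\sqrt{8/3}$-LQG scaling.
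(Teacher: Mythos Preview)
Your approach is structurally quite different from the paper's and contains a genuine gap at exactly the point you flag as the ``main obstacle''. The paper never touches the geometry of filled metric ball boundaries; instead it first proves the bi-H\"older estimate for a free boundary GFF $\widehat{h}$ on $\D$ by pulling back \cite[Proposition~1.8]{hughes2024equivalencemetricgluingconformal} (which gives bi-H\"older continuity of $d_{\widetilde h}$ on compact subsets of $\overline{\h}$ for a free boundary GFF $\widetilde h$ on $\h$) through the M\"obius map $F:\h\to\D$, patching the single bad boundary point via the rotational symmetry $\widehat h(z)\overset{d}{=}\widehat h(-z)$. It then transfers the estimate to $\qdisk{1}$ (and hence $\qdiskweighted{\ell}$) using \cite[Theorem~1.2]{ang2022integrabilitysleconformalwelding} exactly as in the proof of Lemma~\ref{lem:qbmeasure_holder_cont}: after averaging over a Haar-random re-embedding, the quantum disk field agrees in law (up to a weight) with a free boundary GFF plus a function that is bounded on $\D$, and bi-H\"older continuity is invariant under the coordinate change formula. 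This route produces a deterministic $\beta$ directly from the deterministic exponent in \cite{hughes2024equivalencemetricgluingconformal}.

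Your route, by contrast, requires that the complement $\mathcal U=\CS\setminus\fb{x}{\tau}$ be a John domain in $\s^{2}$ with a John constant bounded by a \emph{deterministic} number (and, for the other direction of the bi-H\"older, an analogous condition on $\fb{x}{\tau}$ itself, which you do not address). This is a strong statement about the regularity of QLE$(8/3,0)$ hulls which is not established in the paper. Your plan to obtain it via the $\SLE_6$ chunk percolation of Proposition~\ref{prop:good_chunks_percolate} and the good events of Subsection~\ref{subsec:event_def} is not workable as stated: those tools are built to explore a quantum disk by radial $\SLE_6$ curves and to control the resulting chunk decomposition, whereas $\partial\fb{x}{\tau}$ is a filled metric ball boundary produced by QLE, not an $\SLE_6$ hull, and the percolation argument says nothing about the John geometry of its complement. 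Moreover, scale invariance of the chunk estimates only tells you that the putative John constant has the same law at every scale; it does not by itself force it to be almost surely bounded by a fixed number, which is what you would need to extract a deterministic H\"older exponent from Pommerenke/N\"akki--V\"ais\"al\"a. In short, part~(b) is a genuine obstacle rather than a technicality, and the paper avoids it entirely by never leaving the GFF setting.
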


\begin{proof}
First we will show the claim of the lemma for a free boundary GFF $\wh{h}$ on $\mathbb{D}$ normalized so that the value of its harmonic part at $0$ is equal to zero and then use the same argument as in the proof of Lemma~\ref{lem:qbmeasure_holder_cont} to deduce the claim for the weighted quantum disk.

\emph{Step 1.  Proof of the claim for a free boundary GFF on $\D$.}
We note that $\wh{h}$ can be sampled as follows.  Let $\wt{h}$ be a free boundary GFF on $\mathbb{H}$ such that its average on $\mathbb{H} \cap \partial \mathbb{D}$ is equal to zero and consider the conformal transformation $F : \mathbb{H} \to \mathbb{D}$ such that $F(z) = -\frac{z-i}{z+i}$.  Let also $\wt{\Fh}$ denote the harmonic part of $\wt{h}$.  Then we have that $\wh{h}$ can be sampled as $\wh{h} = \wt{h} \circ F^{-1} -\wt{\Fh}(i)$.  Moreover \cite[Proposition~1.8]{hughes2024equivalencemetricgluingconformal} implies that there exists deterministic constant $\beta \in (0,1)$ such that the following is true a.s.  For every compact set $K \subseteq \overline{\mathbb{H}}$,  there exists (random) constant $C>1$ such that
\begin{align*}
C^{-1} |z-w|^{1/\beta} \leq \qdist{\wt{h}}{z}{w} \leq C |z-w|^{\beta} \quad \text{for all} \quad z , w \in K.
\end{align*}
Thus the same is true with $\wt{h}-\wt{\Fh}(i)$ in place of $\wt{h}$ since
\begin{align*}
\qdist{\wt{h}-\wt{\Fh}(i)}{z}{w} = \exp\left(-\wt{\Fh}(i) / \sqrt{6} \right) \qdist{\wt{h}}{z}{w} \quad \text{for all} \quad z ,  w \in \h.
\end{align*}
Furthermore it holds that $\qdist{\wt{h} \circ F^{-1} + Q \log |(F^{-1})'|}{z}{w} = \qdist{\wt{h} - \wt{\Fh}(i)}{F^{-1}(z)}{F^{-1}(w)}$ and there exists deterministic constant $M>1$ such that $M^{-1} \leq |(F^{-1})'(w)| \leq M$ for all $w \in \D \setminus \eball{-1}{1/4}$.  It follows that there a.s.  exists a random constant $C>1$ such that
\begin{align}\label{eqn:metric_bounds_away_from_-1}
C^{-1} |z-w|^{1/\beta} \leq \qdist{\wh{h}}{z}{w} \leq C |z-w|^{\beta} \quad \text{for all} \quad z ,w  \in \overline{\D} \setminus B(-1,1/4).
\end{align}
Note that the random fields $h(z)$ and $h(-z)$ have the same law and so combining with~\eqref{eqn:metric_bounds_away_from_-1},  we obtain that it is a.s.  the case that there exists $C>1$ such that
\begin{align}\label{eqn:metric_bounds_near_-1}
C^{-1} |z-w|^{1 / \beta} \leq \qdist{\wh{h}}{z}{w} \leq C |z-w|^{\beta} \quad \text{for all} \quad z,w  \in \D \cap B(-1,1/4).
\end{align}

Fix $z \in \D \setminus \overline{B(-1,1/4)}$, $w \in B(-1,1/4) \cap \D$ and let $y$ be the point of intersection between $\D \cap \partial B(-1,1/4)$ and the segment $[z,w]$.  Then combining~\eqref{eqn:metric_bounds_away_from_-1} and~\eqref{eqn:metric_bounds_near_-1} we obtain that 
\begin{align*}
\qdist{\wh{h}}{z}{w} \leq \qdist{\wh{h}}{z}{y} + \qdist{\wh{h}}{y}{w} \leq 2 C |z-w|^{\beta}.
\end{align*}
To show the lower bound for $\qdist{\wh{h}}{z}{w}$,  we fix $\epsilon>0$.  Since $d_{\wh{h}}$ is a length metric,  we obtain that there exists a path $P : [0,1] \to \D$ such that $P(0) = w,  P(1) = z$ and the $d_{\wh{h}}$-length of $P$ is at most $\qdist{\wh{h}}{z}{w}+\epsilon$.  Let $x$ be the first point of $\D \cap \partial B(-1,1/4)$ that $P$ intersects.  Then either $|z-x| \geq |z-w| / 2$ or $|w-x| \geq |z-w| / 2$.  Suppose that the former case holds.  Then~\eqref{eqn:metric_bounds_away_from_-1} implies that
\begin{align*}
\qdist{\wh{h}}{z}{w} + \epsilon \geq \qdist{\wh{h}}{z}{x} \geq C^{-1} |z-x|^{1/\beta} \geq C^{-1} 2^{-1/\beta} |z-w|^{1 / \beta}.
\end{align*}
Similarly if the latter case holds, \eqref{eqn:metric_bounds_near_-1} implies that
\begin{align*}
\qdist{\wh{h}}{z}{w} + \epsilon \geq \qdist{\wh{h}}{w}{x} \geq C^{-1} |w-x|^{1/\beta} \geq C^{-1} 2^{-1/\beta} |z-w|^{1/\beta}.
\end{align*}
Hence since $\epsilon>0$ was arbitrary,  we obtain that 
\begin{align*}
\qdist{\wh{h}}{z}{w} \geq C^{-1} 2^{-1/\beta} |z-w|^{1/\beta}
\end{align*}
in either case.  Combining we obtain that it is a.s.  the case that there exists $C>1$ such that
\begin{align}\label{eqn:metric_bounds_for_free_boundary_gff}
C^{-1} |z-w|^{1/\beta} \qdist{\wh{h}}{z}{w} \leq C |z-w|^{\beta} \quad \text{for all} \quad z ,w  \in \D.
\end{align}

\emph{Step 2.  Conclusion of the proof.}
Next we will combine Step 1 with the argument in the proof of Lemma~\ref{lem:qbmeasure_holder_cont} to complete the proof of the lemma.  

Recall that \cite[Theorem~1.2]{ang2022integrabilitysleconformalwelding} implies that the following is true a.s.  Suppose that $f$ is sampled from the group $\text{conf}(\h)$ of conformal automorphisms of $\h$ when the latter is endowed with the Haar measure,  and let $h_1$ be sampled from the infinite measure of a weight-2 quantum disk with $\gamma = \sqrt{8/3}$ weighted by $\nu_{h_1}(\partial \h)^{-2}$.  Then there exists a deterministic constant $C>0$ such that the law of the field $h_1 \circ f^{-1} + Q \log |(f^{-1})'|$ is given by $C$ times the law of $\wt{h} -2Q \log |\cdot|_+ + c$ where $c$ is sampled independently from the infinite measure on $\R$ given by $\exp(-Q c) dc$,  and recall that $\wt{h}$ denotes a free boundary GFF on $\h$ normalized so that its average on $\h \cap \partial \D$ is equal to zero.  It follows that that the field $h_1 \circ f^{-1} \circ F^{-1} + Q \log |(F \circ f)^{-1})'|$ has the same law as $C$ times the law of the field
\begin{align*}
\wt{h} \circ F^{-1} -2Q \log | F^{-1}(\cdot)|_+ + Q \log |(F^{-1})'| + c
\end{align*}
where $\log|x|_+ = \log \max(|x|,1)$.

Since $d_{\wt{h} \circ F^{-1}} = \exp\left(\wt{\Fh}(i) / \sqrt{6} \right) d_{\wh{h}}$,  we have by~\eqref{eqn:metric_bounds_for_free_boundary_gff} that it is a.s.\ the case that there exists $M>1$ such that
\begin{align*}
M^{-1} |z-w|^{1/\beta} \leq \qdist{\wt{h} \circ F^{-1}}{z}{w} \leq M |z-w|^{\beta} \quad \text{for all} \quad z , w \in \D. 
\end{align*}
Moreover as explained in the proof of Lemma~\ref{lem:qbmeasure_holder_cont},  we have that
\begin{align*}
-2Q   \log| F^{-1}(\cdot)|_+ + Q \log |(F^{-1})'(\cdot)| = O(1)
\end{align*}
uniformly in $\D$.  Thus it follows that almost everywhere,  there exists $M>1$ such that
\begin{align*}
M^{-1} |z-w|^{1/\beta} \leq \qdist{h_1 \circ (F \circ f)^{-1} + Q \log |((F\circ f)^{-1})'|}{z}{w} \leq M |z-w|^{\beta} \quad \text{for all} \quad z,w \in \D.
\end{align*}

Combining with the fact that the event in the lemma statement is invariant under the coordinate change formula of quantum surfaces with disintegration with respect to the total boundary length of a quantum disk sampled from the infinite measure,  we obtain the lemma statement for a sample from $\qdisk{\ell}$.  Therefore,  it also holds a.s.\  for a sample from $\qdiskweighted{\ell}$ since the measures $\qdiskweighted{\ell}$ and $\qdisk{\ell}$ are mutually absolutely continuous.  This completes the proof of the lemma.
\end{proof}

\begin{proof}[Proof of Lemma~\ref{lem:metric_non_boundary_tracing}.]
By absolute continuity, it suffices to prove the claim for $\qdisk{\ell}$ instead.  
Let $\beta \in (0,1)$ be the constant of Lemma~\ref{lem:quantum_disk_metric_holder}.  Then,  Lemma~\ref{lem:quantum_disk_metric_holder} implies that $\qdisk{\ell}$-a.e.,  there exists a random constant $C>1$ such that $C^{-1} |z-w|^{1 / \beta} \leq \qdist{h}{z}{w} \leq C |z-w|^{\beta}$ for all $z,w \in \D$.  Pick $\wt{M}>1$ deterministic such that $\wt{M} \beta >1$.  Then,  we have that $\D \cap \eball{z}{r^{\wt{M}}} \subseteq \qball{h}{z}{r}$ for all $z \in \overline{\D}$ and all $r>0$ sufficiently small,  which implies that there exists $w \in \D$ such that $\eball{w}{r^{\wt{M}} / 2} \subseteq \D \cap \qball{h}{z}{r}$.  Fix $M > \beta^{-1} \wt{M}$.  Then,  we have that $\qball{h}{w}{r^M} \subseteq \eball{w}{r^{\wt{M}}/2} \subseteq \D \cap \qball{h}{z}{r}$ for all $r>0$ sufficiently small (independent of $z,w$) and this completes the proof.
\end{proof}

Now we proceed on proving that condition~\eqref{it:hitting_bound} holds with high probability for $M$ large enough.  As in the first paragraph of the proof of Lemma~\ref{lem:reverse_holder_continuity},  the main idea is to compare locally the laws of a quantum disk and a quantum wedge of weight 2,  and then deduce the claim by proving that condition~\eqref{it:hitting_bound} holds with high probability when the surface $\CN$ is drawn on top of a sample from $\qdiskweighted{1}$.  The purpose of the following lemma is to show the latter claim.

\begin{lemma}\label{lem:hitting_bound}
Suppose that we have the same setup as in the definition of the good event just before the statement of Proposition~\ref{prop:good_event_happens},  where the surface $\CN$ is drawn on top of a sample $\mathcal{D} = (\D,h,0)$ from $\qdiskweighted{1}$.  Then,  it is a.s.\  the case that there exists $\mathcal{M}_0 \geq 1$ such that condition~\eqref{it:hitting_bound} holds for all $M \geq \mathcal{M}_0$ when $\delta=1$.
\end{lemma}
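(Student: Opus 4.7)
\medskip

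The overall strategy is to exploit the time-change identity $X_t^{\CN} = B_{F^{-1}(t)}^{\CN}$ between the LBM on $\CN$ and the Euclidean Brownian motion on $\CN$, together with the identity $G_{\CN}^{\mathrm{LBM}}(x,y)\,\qmeasure{h}(dy) = g_{\CN}^{\mathrm{eucl}}(x,y)\,dy$ from~\eqref{eq:defG_U}--\eqref{eq:killed_green_U}, in order to reduce the hitting bound to classical estimates for Euclidean BM in a Jordan domain. Reduce to $\delta=1$ by scaling and work on the high-probability event $G$ that (a) conditions~\eqref{it:diameter_bound}--\eqref{it:ngbd_vollume_ubd} and~\eqref{it:ballnearby} hold for $\CN$ with some large constant $M$; (b) $\CN$ is a Jordan domain (by Proposition~\ref{prop:SLE6-chunk-Jordan-D}); (c) the uniform polylog volume and exit-time estimates of Theorems~\ref{thm:ball_concentration} and~\ref{thm:bm_exit} apply to $\CN$; and (d) $\qdistnoarg{h}|_{\CN}$ is bi-H\"older continuous with respect to the Euclidean metric on $\overline{\CN}$ with a deterministic exponent (a chunk-level version of Lemma~\ref{lem:quantum_disk_metric_holder}, obtained via local absolute continuity of $h|_{\CN}$ with respect to a free boundary GFF). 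Because the conditions on $r \in (0,M^{-1})$, on the time window $[M^{-M}/(\log r^{-1})^M,\, M^{M}(\log r^{-1})^M]$, and on the required lower bound $r^M$ all become weaker as $M$ grows, it suffices to establish~\eqref{it:hitting_bound} for one sufficiently large $\mathcal{M}_0$.

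The argument then splits into two regimes. In \emph{Case A}, when $\qdist{h}{x}{y}\leq r/2$, we have $\qball{h}{x}{r/2}\subseteq \qball{h}{y}{r}$, so it suffices to show $P_x[\tau^{\mathrm{LBM}}_{\qball{h}{x}{r/2}} > t]\geq r^M$ for some $t$ in the window. Take $t\approx r^{4}/(2C(\log r^{-1})^{\kappa})$ (which lies in the window for $M$ large). Theorem~\ref{thm:bm_exit} provides both $E_x[\tau] \geq (r/2)^{4}(\log 2/r)^{-\kappa}$ and $\sup_{z\in \qball{h}{x}{r/2}}E_z[\tau]\leq (r/2)^{4}(\log 2/r)^{\kappa}$, and combining these via the strong-Markov inequality $P_x[\tau>t]\geq (E_x[\tau]-t)/(t+\sup_z E_z[\tau])$ yields $P_x[\tau>t]\geq c(\log r^{-1})^{-2\kappa}\geq r^M$ for $r$ small and $M$ large. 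In \emph{Case B}, when $\qdist{h}{x}{y}>r/2$, combine condition~\eqref{it:ballnearby} with the bi-H\"older continuity to locate a Euclidean ball $B_e(w,\rho)\subseteq\qball{h}{y}{r/2}$ with $\rho\geq r^{M_1}$ for a deterministic $M_1$, such that $x$ also lies at Euclidean distance $\geq r^{M_1}$ from $\partial\CN$. Standard Green-function / harmonic-measure estimates in a Jordan domain (using condition~\eqref{it:mass_bound} for quantitative control on the conformal geometry) give
\[
P_x\bigl[\text{Euclidean BM hits }B_e(w,\rho)\text{ before exiting }\CN\bigr]\geq \frac{c}{\log(1/\rho)}\geq \frac{c'}{\log r^{-1}}.
\]
Since $E_x[\tau^{\mathrm{LBM}}_{\CN}]=O((\log r^{-1})^{\kappa})$ by the exit-time upper bound applied to a ball of radius $\diam\CN$, Markov's inequality turns this into: with polylog probability, the LBM has hit $\qball{h}{y}{r/2}$ by some LBM time $t^{*}=O((\log r^{-1})^{\kappa})$. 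Applying the strong Markov property at this hitting time and invoking the Case~A argument to remain in $\qball{h}{y}{r}$ for an additional time $\epsilon\approx r^{4}/(\log r^{-1})^{\kappa}$ produces
\[
P_x\bigl[X^{\CN}_{t^{*}+\epsilon}\in\qball{h}{y}{r}\bigr]\geq \frac{c''}{(\log r^{-1})^{\kappa_2}}\geq r^{M}
\]
for $r$ small and $M$ large, with $t^{*}+\epsilon$ in the prescribed time window.

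The principal obstacle is establishing in a quantitative, uniform form the bi-H\"older continuity of $\qdistnoarg{h}|_{\CN}$ with respect to the Euclidean metric on $\CN$ with a deterministic exponent: this is what reduces the LBM hitting estimate to a Euclidean BM hitting estimate of order $1/\log r^{-1}$. A secondary but related hurdle is the uniformity of the Euclidean Green-function lower bound over random Jordan domains $\CN$; the hope is that the conformal-geometry control provided by conditions~\eqref{it:mass_bound}--\eqref{it:reverse_holder_continuity} is strong enough to make these constants bounded below by explicit polylog functions of $r^{-1}$ on the good event. Matching up the time scales so that $t^{*}+\epsilon$ lands in the window $[M^{-M}/(\log r^{-1})^M,\,M^{M}(\log r^{-1})^M]$ is routine given the wide polylog slack built into~\eqref{it:hitting_bound}, and the small-$r$ range $r\in[r_{0},M^{-1})$ is handled separately by compactness of the state space combined with continuity of $p_t^{\CN}$.
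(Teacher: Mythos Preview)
Your overall architecture (split on near/far, route through Euclidean hitting estimates, strong Markov) is reasonable in spirit, but there is a genuine gap in both cases that stems from a mismatch between the time scales you produce and the time window required by condition~\eqref{it:hitting_bound}.

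\textbf{Case A.} With $\delta=1$, the time window is $[M^{-M}(\log r^{-1})^{-M},\,M^{M}(\log r^{-1})^{M}]$. Your choice $t\approx r^{4}(\log r^{-1})^{-\kappa}$ is \emph{not} in this window for small $r$: since $r^{4}(\log r^{-1})^{M}\to 0$ polynomially, we have $r^{4}\ll M^{-M}(\log r^{-1})^{-M}$ once $r$ is small. At any time $t$ actually inside the window, the exit-time upper bound you invoke shows that the LBM has long since left $\qball{h}{x}{r/2}$ (the mean exit time is only $\sim r^{4}$), so the ``stay in the ball'' argument gives nothing at admissible times.

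\textbf{Case B.} Here the issue is the passage from a random hitting time $\sigma\leq t^{*}$ to a deterministic time. You assert $P_{x}[X^{\CN}_{t^{*}+\epsilon}\in\qball{h}{y}{r}]\gtrsim(\log r^{-1})^{-\kappa_{2}}$, but this does not follow from ``hit by time $\sigma\leq t^{*}$ and then stay for time $\epsilon$'': at the deterministic time $t^{*}+\epsilon$ one has waited $t^{*}+\epsilon-\sigma$ after entry, and this is typically of order $(\log r^{-1})^{\kappa}$, vastly exceeding the mean exit time $\sim r^{4}$ from $\qball{h}{y}{r}$. So the process has almost surely left.

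What is missing is exactly the mechanism the paper supplies. One must work with the \emph{expected total time spent} $E_{x}[T_{\qball{h}{y}{r}}]$ (which Lemma~\ref{lem:lower_bound_on_mean_time_spent_by_LBM} bounds below by $r^{\alpha+M/4}$ via Green's function and conformal-geometry estimates on $\CN$), then show that the contributions to $\int_{0}^{\infty}\int_{\qball{h}{y}{r}}p^{\CN}_{t}(x,w)\,d\qmeasure{h}(w)\,dt$ from $t$ outside the window are $o(r^{M})$. The large-$t$ tail is handled by exponential decay of $P_{x}[\tau_{\CN}>t]$, and---crucially---the small-$t$ tail requires the \emph{off-diagonal heat kernel upper bound} of Theorem~\ref{thm:ubd} to show that $p_{t}(x,w)$ is negligible when $t$ is below the window and $w$ is at distance $\gtrsim M^{-1}$ from $x$ (this is why the paper's case split is on the \emph{sphere} distance and routes through a far-away point in Case~2). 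Once both tails are removed, averaging over the window produces the desired deterministic $t$. Your exit-time and hitting-probability estimates alone cannot remove the small-$t$ tail; the heat kernel upper bound is the essential missing ingredient.
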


The main idea of the proof of Lemma~\ref{lem:hitting_bound} is the following.  Suppose that we have the same setup as in the statement of Lemma~\ref{lem:hitting_bound}.  Fix $M>1$ sufficiently large and let $x,y \in \CN$ be such that $\text{dist}_{d_h}(\{x,y\} ,  \partial \CN) \geq r$,  where $r \in (0,M^{-1})$.  Let also $T_{\qball{h}{y}{r}}$ denote the total amount of time that the Liouville Brownian motion starting from $x$ with respect to $h$ and killed upon exiting $\CN$ spends in $\qball{h}{y}{r}$.  Then we will show that if $M$ is sufficiently large,  it is very likely that  there exists $t \in I_{M,r}$ with
\begin{align}\label{eqn:average_heat_kernel_lower_bound}
\frac{1}{|I_{M,r}|} \int_{\qball{h}{y}{r}} p_t^{\CN}(x,w) d\qmeasure{h}(w) \gtrsim E_x[T_{\qball{h}{y}{r}}],
\end{align}
where 
\begin{align*}
I_{M,r}:=[M^{-M/2} \log(r^{-1})^{-M} ,  M^{M/2} \log(r^{-1})^M].
\end{align*}
Moreover we will show that with high probability (if $M$ is large enough),  we have that
\begin{align}\label{eqn:total_time_lower_bound}
E_x[T_{\qball{h}{y}{r}}] \gtrsim r^{M/2}.
\end{align}
Therefore combining~\eqref{eqn:average_heat_kernel_lower_bound} with~\eqref{eqn:total_time_lower_bound},  the proof of Lemma~\ref{lem:hitting_bound} will be complete.

We start with proving a lower bound for $E_x[T_{\qball{h}{y}{r}}]$.  This is the content of the following lemma.

\begin{lemma}\label{lem:lower_bound_on_mean_time_spent_by_LBM}
Suppose that we have the same setup as in the definition of the good event just before the statement of Proposition~\ref{prop:good_event_happens},  where the surface $\CN$ is drawn on top of a sample $\mathcal{D} = (\D,h,0)$ from $\qdiskweighted{1}$.
Let $\varphi \colon \CN \to \D$ be the conformal transformation as introduced in \eqref{it:mass_bound} of Subsection~\ref{subsec:event_def}, and set $\wt{h}:=h|_{\CN} \circ \varphi^{-1} + Q \log |(\varphi^{-1})'|$.
Then there exists a deterministic constant $\alpha > 0$ such that it is a.s.\ the case that there exist $M>1,  c_0>0$ such that the following is true for all $r \in (0,M^{-1})$ and all $x,y \in \D$ such that $\text{dist}_{d_{\wt{h}}}(\{x,y\} ,  \partial \D) \geq r$.  If $T_A$ denotes the amount of time that the Liouville Brownian motion in $\D$ with respect to $\wt{h}$ spends in the set $A \subseteq \D$ before exiting $\D$,  then we have that 
\begin{align*}
E_x[T_{\qball{\wt{h}}{y}{r}}] \geq c_0 r^{\alpha + M/4}.
\end{align*}
\end{lemma}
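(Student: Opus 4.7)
The plan is to use the Green function identity~\eqref{eq:defG_U}, specialized to the domain $\D$, which gives
\begin{equation*}
E_x[T_{\qball{\wt h}{y}{r}}] = \int_{\qball{\wt h}{y}{r}} g_\D(x,w)\,d\qmeasure{\wt h}(w),
\end{equation*}
and to produce the lower bound by isolating a well-embedded subball of $\qball{\wt h}{y}{r}$ on which both the quantum mass and the Euclidean Green function admit polynomial-in-$r$ lower bounds.  The first ingredient is an absolute-continuity transfer, paralleling the proof of Lemma~\ref{lem:reverse_holder_continuity_top_boundary}: since $\CN$ is cut out of a $\qdiskweighted{1}$ surface by an independent $\SLE_6$, the field $\wt h$ restricted to regions that avoid $\partial\D$ is locally absolutely continuous with respect to a free-boundary GFF plus a log singularity, so the a.s.\ conclusions of Lemma~\ref{lem:quantum_disk_metric_holder} (bi-$\beta$-Hölder comparison between $d_{\wt h}$ and the Euclidean metric on $\D$, $\beta$ deterministic), of Lemma~\ref{lem:metric_non_boundary_tracing} (every point of $\overline\D$ admits an arbitrarily close $w$ for which $\qball{\wt h}{w}{2s^{M_0}}\subseteq\D$, with deterministic $M_0\geq 1$), and of the quantum-disk analog of Theorem~\ref{thm:ball_concentration} (giving $\qmeasure{\wt h}(\qball{\wt h}{z}{s})\geq s^{4+u'}$ for $s$ below a random threshold and some deterministic $u'>0$) all pass over to $\wt h$.

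With those three inputs in hand the rest is essentially explicit.  Apply the transferred Lemma~\ref{lem:metric_non_boundary_tracing} with $z=y$ and $s=r/2$ to get $y^*\in\D$ with $d_{\wt h}(y,y^*)\leq r/4$ and $\qball{\wt h}{y^*}{r^{M_0}}\subseteq\qball{\wt h}{y}{r}$ lying at $d_{\wt h}$-distance at least $r^{M_0}$ from $\partial\D$.  By the transferred Hölder comparison, $x$ sits at Euclidean distance $\geq c\,r^{1/\beta}$ from $\partial\D$, and every $w\in\qball{\wt h}{y^*}{r^{M_0}}$ sits at Euclidean distance $\geq c\,r^{M_0/\beta}$ from $\partial\D$.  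Feeding this into the explicit Green-function identity
\begin{equation*}
g_\D(x,w) = \frac{1}{2\pi}\log\!\left(1+\frac{(1-|x|^2)(1-|w|^2)}{|x-w|^2}\right),
\end{equation*}
together with $|x-w|\leq 2$ and $\log(1+t)\geq t/2$ for $t\leq 1$, yields $g_\D(x,w)\geq c'\,r^{(1+M_0)/\beta}$ uniformly on $\qball{\wt h}{y^*}{r^{M_0}}$, while the transferred volume lower bound gives $\qmeasure{\wt h}(\qball{\wt h}{y^*}{r^{M_0}})\geq c''\,r^{(4+u')M_0}$.  Multiplying these and integrating over the subball then gives $E_x[T_{\qball{\wt h}{y}{r}}]\geq c_0\,r^{(1+M_0)/\beta+(4+u')M_0}$, which is of the required form $c_0\, r^{\alpha+M/4}$ once one takes $\alpha$ to be a suitable deterministic constant not exceeding the exponent just obtained and absorbs the various random thresholds (for the Hölder comparison, for Lemma~\ref{lem:metric_non_boundary_tracing}, and for the volume lower bound) into a single random $M$ with $r<M^{-1}$.

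The main technical obstacle will be carrying out the absolute-continuity transfer of the three lemmas to $\wt h$ cleanly, because $\wt h$ is obtained by first restricting the disk's field to the $\SLE_6$ chunk $\CN$ and then conformally mapping back to $\D$.  Here one appeals to the DMS description \cite[Theorem~1.18]{dms2014mating} of the quantum surface parameterized by $\CN$ in terms of independent quantum disks of known weights, and then argues locally in exactly the manner of the proof of Lemma~\ref{lem:reverse_holder_continuity_top_boundary}; a secondary point is ensuring that the deterministic exponent $u'>0$ in the volume lower bound survives the transfer, which is consistent with how Theorem~\ref{thm:ball_concentration} is proved from $\qsphereinflaw$ and then specialized by absolute continuity.
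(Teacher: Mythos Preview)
Your overall architecture---use the Green identity $E_x[T_{\qball{\wt h}{y}{r}}]=\int g_\D(x,w)\,d\qmeasure{\wt h}(w)$, isolate a well-embedded subball, and multiply a Green-function lower bound by a quantum-volume lower bound---is reasonable, and indeed the paper also finishes its argument with a Green-function integral over a small Euclidean ball. But the load-bearing step of your sketch, the ``absolute-continuity transfer'' of Lemma~\ref{lem:quantum_disk_metric_holder} to $\wt h$, does not go through as you describe, and this is precisely the difficulty that the paper's proof is designed to overcome.

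The quantum surface $(\CN,h|_\CN)$ is \emph{not} a quantum disk: it is the hull of an $\SLE_6$ run for a positive amount of quantum natural time, a fractal region whose interior contains the entire trace together with all the bubbles cut off. Your appeal to \cite[Theorem~1.18]{dms2014mating} describes only the individual bubbles, not the hull as a whole, so it does not place $\wt h$ in any known class of quantum surfaces for which bi-H\"older continuity with respect to the Euclidean metric on $\D$ is available. Absolute continuity of $\wt h$ with a GFF on compact subsets of the interior of $\D$ is true but irrelevant: the bound you need, namely that $\mathrm{dist}_{d_{\wt h}}(x,\partial\D)\geq r$ forces $\mathrm{dist}_{\mathrm{Euc}}(x,\partial\D)\geq r^{1/\beta}$, is a statement about behaviour arbitrarily close to $\partial\D$, and it is exactly there that the law of $\wt h$ differs substantially from that of a quantum disk (because $\varphi^{-1}$ flattens the fractal $\SLE_6$ boundary onto $\partial\D$).

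The paper handles this by working in the ambient disk $(\D,h)$ rather than in the $\varphi$-image, and by decomposing $\CN$ into three overlapping Jordan pieces $U,V,G$ bounded by segments of the $\SLE_{8/3}$ outer boundary $\widehat\eta$ and arcs of $\partial\D$. Each of $f^{-1},g^{-1},\psi^{-1}$ is deterministically $\alpha$-H\"older by \cite[Theorem~5.2]{rohde2011basic}, so inside each piece one can pass between Euclidean scales on the target $\D$ and Euclidean scales in the ambient disk, where Lemma~\ref{lem:quantum_disk_metric_holder} genuinely applies. Rather than lower-bounding $g_\D(x,w)$ directly, the paper shows (via a case analysis on which piece contains $\qball{h|_{\wt\CN}}{x}{r}$) that Brownian motion from $x$ reaches the fixed ``core'' $\psi^{-1}(B(0,1/2))$ before exiting $\CN$ with probability $\gtrsim r^{1/(\alpha\beta)}$, and symmetrically that from the core it reaches a Euclidean ball inside $\qball{h}{y}{r}$ with the same probability; only then is the Green-function identity applied, on that small Euclidean ball where the kernel is bounded below by a universal constant. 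This two-hop routing is what replaces your missing global bi-H\"older bound.

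In short: your Green-function scheme would work if one had the implication ``$d_{\wt h}$-distance $\geq r$ from $\partial\D$ $\Rightarrow$ Euclidean distance $\geq r^{\text{power}}$ from $\partial\D$'', but establishing that implication is essentially the content of the paper's Lemma~\ref{lem:mass_bound} and requires the same $U,V,G$ decomposition; it is not a consequence of local absolute continuity.
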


Let us first briefly describe the setup of the proof of Lemma~\ref{lem:lower_bound_on_mean_time_spent_by_LBM} before proceeding with its proof.  The setup is similar to the setup of the proof of Lemma~\ref{lem:mass_bound}.  Let $I \subseteq \partial \D$ be a fixed countable and dense subset of $\partial \D$.  Then it is a.s.  the case on the event that $\sigma = \wt{\sigma}$,  that there exist $z,w \in I$ such that $\eta'([0,\sigma]) = \eta'([0,\tau])$,  where $\tau$ is the first time that $\eta'$ disconnects $z$ from $w$. Also we have that $z$ lies in the boundary of the connected component of $\D \setminus \eta'([0,\tau])$ which contains $0$.  Note that $\tau$ is also the first time that $\eta'$ disconnects $w$ from $0$ and so $\eta'|_{[0,\tau]}$ has the same law as a chordal $\SLE_6$ process in $\D$ from $x$ to $w$,  stopped at the first time at which it disconnects $w$ from $0$,  and the latter time is the same time at which the chordal process disconnects $w$ from $z$.  Moreover the locality property of $\SLE_6$ implies that the latter chordal process has the same law as a chordal $\SLE_6$ process $\wt{\eta}'$ in $\D$ from $x$ to $z$, stopped at the first time $\wt{\tau}$ at which it disconnects $z$ from $w$.  In particular we have that $\CN$ has the same law with the hull of $\wt{\eta}'|_{[0,\wt{\tau}]}$.  Combining with the rotational invariance of the law of radial $\SLE_6$,  we obtain that it suffices to prove the claim of the lemma when $\CN$ is replaced by the hull of $\wt{\eta}'|_{[0,\wt{\tau}]}$,  where $\wt{\eta}'$ is a chordal $\SLE_6$ in $\D$ from $-i$ to $z$ stopped at the first time $\wt{\tau}$ that it disconnects $z$ from $w$,  where $z,w$ are fixed and distinct points in $I$ and $\wt{\eta}'$ is independent from $\mathcal{D}$.

\begin{proof}[Proof of Lemma~\ref{lem:lower_bound_on_mean_time_spent_by_LBM}.]

\emph{Step 1.  Outline and setup.}
Suppose that we have the same setup as in the paragraph just after the statement of the lemma and let $(\wt{K}_t)$ denote the family of hulls of $\wt{\eta}'$. Again by the locality property,  we have that $\wt{\eta}'$ can be coupled with a chordal $\SLE_6$ $\wh{\eta}'$ in $\D$ from $-i$ to $w$ stopped at the first time $\wh{\tau}$ that $\wh{\eta}'$ disconnects $z$ from $w$  such that $\wt{\eta}'|_{[0,\wt{\tau}]} = \wh{\eta}'|_{[0,\wh{\tau}]}$. 

Without loss of generality,  we can assume that $w$ lies in the counterclockwise arc of $\partial \D$ from $-i$ to $z$.  Let $\wh{\eta}$ denote the left outer boundary of $\wh{\eta}'$ when viewed as a curve from $w$ to $-i$.  It follows from \cite[Theorem~1.4]{ms2016ig1} that $\wh{\eta}$ has the law of an $\SLE_{\frac{8}{3}}(\frac{8}{3}-2 ; \frac{8}{3}-4)$ process in $\D$ from $w$ to $-i$ with the force points located at $w^-$ and $w^+$ respectively.  Let $U$ be the connected component whose boundary contains $-i$ of the complement in $\D$ of the curve $\wh{\eta}$ stopped at the first time that it disconnects $-i$ from $z$.  Similarly we let $V$ be the connected component whose boundary contains $w$ of the complement in $\D$ of the time-reversal of $\wh{\eta}$ stopped at the first time that it disconnects $w$ from $z$.  Let also $G$ be the connected component of $\D \setminus \wh{\eta}$ lying to the left of $\wh{\eta}$.  Note that Proposition~\ref{prop:SLE6-chunk-Jordan-D} implies that $\text{int}(\wt{K}_{\wt{\tau}}),U,V,G$ are all Jordan domains such that $U \cup V \cup G \subseteq \text{int}(\wt{K}_{\wt{\tau}})$ and let $\phi : \text{int}(\wt{K}_{\wt{\tau}}) \to \D ,  f : U \to \D,  g : V \to \D$ and $\psi : G \to \D$ be conformal transformations chosen in some arbitrary but fixed way.  Let also $\wh{I}$ be the arc traced by $\wh{\eta}$ up until the last time that it hits the counterclockwise arc of $\partial \D$ from $-i$ to $z$ and let $\wh{J}$ be the arc traced by the time-reversal of $\wh{\eta}$ stopped at the last time that it hits the clockwise arc of $\partial \D$ from $-i$ to $z$.  Note that $\wh{I} \cap \wh{J} = \emptyset$. Note also that \cite[Theorem~5.2]{rohde2011basic} combined with the time-reversal symmetry of the law of $\wh{\eta}$ (see \cite[Theorem~1.1]{ms2016ig2})) implies that there exists a deterministic constant $\alpha \in (0,1)$ such that all of the maps $f^{-1},g^{-1}$ and $\psi^{-1}$ are $\alpha$-H\"older continuous. 

Fix $M \in (1,\infty)$ sufficiently large (to be chosen) and let $r \in (0,M^{-1})$.  Let $x ,  y \in \wt{\CN}:=\text{int}(\wt{K}_{\wt{\tau}})$ be such that $\text{dist}_{d_{h|_{\wt{\CN}}}}(\{x,y\} ,  \partial \wt{\CN}) \geq r$.  In Step 2,  we will show that the probability that a complex Brownian motion starting from $x$ intersects $\psi^{-1}(\eball{0}{1/2})$ before exiting $\wt{\CN}$ for the first time is $\gtrsim r^{1/(\alpha \beta)}$,  where the implicit constant is independent of $r$ and $\beta \in (0,1)$ is the constant in the statement of Lemma~\ref{lem:quantum_disk_metric_holder}.  Then we will conclude the proof in Step 3 as follows.  Suppose that we are working on the event that the Brownian motion intersects $\psi^{-1}(\eball{0}{1/2})$ before exiting $\wt{\CN}$ for the first time.  Using the $\alpha$-H\"older continuity of the maps $\psi^{-1},f^{-1},g^{-1}$ and $\phi^{-1}$,  we will show that the probability that a Brownian motion starting from a point $u \in \psi^{-1}(\eball{0}{1/2})$ intersects $\qball{h}{y}{r}$ before exiting $\wt{\CN}$ for the first time is $\gtrsim r^{1/(\alpha \beta)}$,  where the implicit constant is uniform in $u$ and $r$.  Therefore the proof will be complete by combining with the Markov property of the Brownian motion and the fact that $\qmeasure{h}(\qball{h}{y}{r}) \geq r^{M/4}$ if $M$ is sufficiently large (see \cite[Lemma~3.3]{gm2019gluing}).

\emph{Step 2.  Lower bound on the probability that the Brownian motion hits $\psi^{-1}(\eball{0}{1/2})$ before leaving $\wt{\CN}$.}
We have the following cases.

\emph{Case 1.  $\qball{h|_{\wt{\CN}}}{x}{r} \cap (\wh{I} \cup \wh{J}) = \emptyset$.} 
In that case,  we have that $\qball{h|_{\wt{\CN}}}{x}{r} \subseteq G$ and so $\qball{h|_{\wt{\CN}}}{x}{r} = \qball{h|_G}{x}{r}$.  Lemma~\ref{lem:quantum_disk_metric_holder} implies that there exists deterministic constant $\beta \in (0,1)$ such that possibly by taking $M$ to be larger,  we have that
\begin{align}\label{eqn:metric_balls_inclusion}
B(x,r^{1/\beta}) \subseteq \qball{h|_G}{x}{r} \subseteq B(x,r^{\beta}).
\end{align}

Let $C>0$ be such that $|\psi^{-1}(u)-\psi^{-1}(v)| \leq C |u-v|^{\alpha}$ for all $u,v \in \D$.  Then combining with~\eqref{eqn:metric_balls_inclusion}  we obtain that 
\begin{align*}
B(\psi(x) ,  C^{-1/\alpha} r^{1/(\alpha \beta)}) \subseteq \qball{h|_G \circ \psi^{-1} + Q \log |(\psi^{-1})'|}{\psi(x)}{r}.
\end{align*}
Note that the probability that starting from $\psi(x)$ the Brownian motion intersects $B(0,1/2)$ before leaving $\D$ is at least a constant times $-\log |\psi(x)|$.  In particular,  combining with conformal invariance,  we obtain that there exists a (random) constant $c_0>0$ such that the probability that a Brownian motion starting from $x$ intersects $\psi^{-1}(B(0,1/2))$ before exiting $\wt{\CN}$ for the first time is at least $c_0 r^{1/(\alpha \beta)}$.

\emph{Case 2.  $\qball{h|_{\wt{\CN}}}{x}{r} \cap \wh{I} \neq \emptyset$.}
Possibly by taking $M$ to be larger,  we can assume that $\qball{h|_{\wt{\CN}}}{x}{r} \cap \wh{J} = \emptyset$ and hence $\qball{h|_{\wt{\CN}}}{x}{r} \subseteq V$,  which implies that $\qball{h|_{\wt{\CN}}}{x}{r} = \qball{h|_V}{x}{r}$.  Similarly to Case 1,  we have that
\begin{align*}
B(g(x) ,  C^{-1/\alpha} r^{1/(\alpha \beta)}) \subseteq \qball{h|_V \circ g^{-1} + Q \log |(g^{-1})'|}{g(x)}{r},
\end{align*}
where $C>0$ is such that
\begin{align*}
|g^{-1}(u)-g^{-1}(v)| \leq C |u-v|^{\alpha} \quad \text{for all} \quad u,v \in \D.
\end{align*}
In particular,  we have that $\text{dist}(g(x) ,  \partial \D ) \geq C^{-1/\alpha} r^{1/(\alpha \beta)}$ and so possibly by taking the constant $c_0 > 0$ in Case 1 to be smaller and combining with conformal invariance,  we can assume that the probability that a Brownian motion starting from $x$ intersects $\psi^{-1}(B(0,1/2))$ before exiting $\wt{\CN}$ is at least $c_0 r^{1/(\alpha \beta)}$.

\emph{Case 3.  $\qball{h|_{\wt{\CN}}}{x}{r} \cap \wh{J} \neq \emptyset$.}
As in Case 2,  possibly by taking $M$ to be larger,  we can assume that $\qball{h|_{\wt{\CN}}}{x}{r} \cap \wh{I} = \emptyset$ and so $\qball{h|_{\wt{\CN}}}{x}{r} \subseteq U$,  which implies that $\qball{h|_U}{x}{r} = \qball{h|_{\wt{\CN}}}{x}{r}$.  Since $f^{-1}$ is $\alpha$-H\"older continuous,  by arguing as in Case 2,  we obtain that possibly by taking the constant $c_0>0$ to be smaller,  we have that the probability that a Brownian motion starting from $x$ intersects $\psi^{-1}(B(0,1/2))$ before exiting $\wt{\CN}$for the first time is at least $c_0 r^{1/(\alpha \beta)}$.

It follows that in every case,  there exists a (random) constant $c_0>0$ such that the probability that a Brownian motion starting from $x$ intersects $\psi^{-1}(B(0,1/2))$ before exiting $\wt{\CN}$ for the first time is at least $c_0 r^{1/(\alpha \beta)}$.

\emph{Step 3.  Conclusion of the proof.}
Next we set $A:=\qball{h|_{\wt{\CN}}}{y}{r}$.  Recall that Lemma~\ref{lem:quantum_disk_metric_holder} implies that possibly by taking $M$ to be larger,  we can assume that
\begin{align*}
B(u,r^{1/\beta}) \subseteq \qball{h}{u}{r} = \qball{h|_{\wt{\CN}}}{u}{r} \subseteq B(u,r^{\beta})
\end{align*}
for all $u \in \D$ such that $\text{dist}_{d_h}(u,\partial \D) \geq r$.

Suppose first that $\qball{h}{y}{r} \cap (\wh{I} \cup \wh{J}) = \emptyset$.  Then arguing as in Case 1,  we obtain that there exists a constant $C>1$ such that
\begin{align*}
B(\psi(y) ,  C^{-1/\alpha} (r/2)^{1/(\alpha \beta)}) \subseteq \psi(B(y,(r/2)^{1/\beta})).
\end{align*}
Then \cite[Exercise~2.7]{law2005conformally} implies that there exists a (random) constant $c>0$ such that for all $u \in B(0,1/2)$,  the probability that a Brownian motion starting from $u$ intersects $B(\psi(y),C^{-1/\alpha} (r/2)^{1/(\alpha \beta)})$ before exiting $\D$ for the first time is at least $c r^{1/(\alpha \beta)}$.  Therefore combining with Step 2 and the Markov property of the Brownian motion,  we obtain that the probability that a Brownian motion starting from $x$ intersects $A$ before exiting $\wt{\CN}$ for the first time is at least $c c_0 r^{2/(\alpha \beta)}$.

Note that \cite[Lemma~3.3]{gm2019gluing} implies that possibly by taking $M$ to be larger,  we have that
\begin{align*}
\qmeasure{h}(\eball{y}{(r/4)^{1\beta}}) \geq \qmeasure{h}(\qball{h}{y}{(r/4)^{1/\beta^2}}) \geq r^{M/4}.
\end{align*}
It follows that
\begin{align*}
E_x[T_A] &\geq c c_0 r^{2/(\alpha \beta)} \inf_{u \in \partial B(y,(r/2)^{1/\beta})} E_u[T_{\eball{y}{(r/4)^{1/\beta}}}]\\
&\geq c c_0 r^{2/(\alpha \beta)} \inf_{u \in \partial \eball{y}{(r/2)^{1/\beta}}} \int_{\eball{y}{(r/4)^{1/\beta}}} G_{\eball{y}{r^{1/\beta}}}(u,v) d \qmeasure{h}(v) \gtrsim r^{\frac{2}{\alpha \beta} + \frac{M}{4}}
\end{align*}
where $G_{\eball{y}{r^{1/\beta}}}$ denotes the Green's function on $\eball{y}{r^{1/\beta}}$ and in the latter inequality we also used that $G_{\eball{y}{r^{1/\beta}}}(u,\cdot)$ is bounded from below on $\eball{y}{(r/4)^{1/\beta}}$ by a universal constant which is uniform on $u \in \partial \eball{y}{(r/2)^{1/\beta}}$.

By arguing in the same way in the cases that either $\qball{h}{y}{r} \cap \wh{I} \neq \emptyset$ or $\qball{h}{y}{r} \cap \wh{J} \neq \emptyset$,  we obtain that 
\begin{align*}
E_x[T_A] \gtrsim r^{\frac{2}{\alpha \beta} + \frac{M}{4}}
\end{align*}
in both cases and so this completes the proof of the lemma.
\end{proof}

\begin{proof}[Proof of Lemma~\ref{lem:hitting_bound}.]
\emph{Step 1.  Outline and setup.}
Let $\alpha>0$ be the deterministic constant in Lemma~\ref{lem:lower_bound_on_mean_time_spent_by_LBM} and let $M>0$ be sufficiently large such that the statement of Lemma~\ref{lem:lower_bound_on_mean_time_spent_by_LBM} holds.  Set $\wt{h}:=h|_{\CN} \circ \varphi^{-1} + Q \log |(\varphi^{-1})'|$ and fix $x ,  y \in \D ,  r \in (0,M^{-1})$ such that $\text{dist}_{d_{\wt{h}}}(\{x,y\} ,  \partial \D) \geq r$.  Set also $A:=\qball{\wt{h}}{x}{r}$ and as in Lemma~\ref{lem:lower_bound_on_mean_time_spent_by_LBM},  we let $T_A$ denote the amount of time that the Liouville Brownian motion in $\D$ with respect to $\wt{h}$ spends in $A$.  Then Lemma~\ref{lem:lower_bound_on_mean_time_spent_by_LBM} implies that there exists a (random) constant $c_0 > 0$ depending only on $M$ such that
\begin{align}\label{eqn:lower_bound_on_time_spent_by_LBM}
E_x[T_A] \geq c_0 r^{\alpha + \frac{M}{4}}.
\end{align}
Moreover \cite[Lemma~3.3]{gm2019gluing} implies that it is a.s.  the case that possibly by taking $M$ to be larger,  we have that
\begin{align}\label{eqn:quantum_area_lower_bound}
\qmeasure{h}(\qball{h}{u}{r}) \geq r^{\frac{M}{4}} \quad \text{for all} \quad u \in \D \quad \text{such that} \quad \text{dist}_{d_h}(u,\partial \D) \geq r.
\end{align}

Set
\begin{align*}
&f_1(M,r):=\int_{\D} \int_{M^{M/2} \log(r^{-1})^M} p_t^{\CN}(x,w) dt d\qmeasure{\wt{h}}(w)\\
&f_2(M,r):=\int_A \int_0^{M^{-M/2} \log(r^{-1})^{-M}} p_t^{\CN}(x,w) dt d\qmeasure{\wt{h}}(w).
\end{align*}
In Step 2,  we will show that $f_1(M,r) \to 0$ as $r \to 0$ faster than any positive power of $r$ while in Step 3 we will complete the proof of the lemma by bounding $f_2(M,r)$ from above and combining with~\eqref{eqn:lower_bound_on_time_spent_by_LBM}.

\emph{Step 2.  $f_1(M,r)$ tends to zero as $r \to 0$ faster than any positive power of $r$.}
First we note that $u \mapsto E_u[\tau_\D] = \int_{\D} G(u,w) \, d\qmeasure{h}(w)$ is an a.s.\ continuous function on $\ol{\D}$ and is therefore a.s.\ bounded,  where $G$ denotes the Green's function on $\D$.  By Markov's inequality, we therefore have that
\[ P_u[ \tau_{\D} \geq t] \leq \frac{E_u[\tau_\D]}{t} \to 0 \quad\text{as}\quad t \to \infty\]
uniformly in $u \in \D$.  In particular, by increasing the value of $M$ if necessary, we have that that $P_u[ \tau_{\D} \geq M] \leq 1/2$ for all $u \in \D$.  By the Markov property, we therefore have that  $P_u[ \tau_{\D} \geq t M] \leq 2^{-t}$ for all $u \in \D$ and $t \geq 1$.  We therefore have by applying the Markov property again that
\begin{align*}
 f_1(M,r) &\leq  \int_{\CD} \int_{M^{M/2} (\log \tfrac{1}{r})^M}^\infty p_t^\CD(\varphi^{-1}(x),w) \, dt \, d\qmeasure{h}(w)\\
 & =E_{\varphi^{-1}(x)}\big[ (\tau_{\D}-M^{M/2} \log(r^{-1})^M) \indicator_{\{\tau_{\D} > M^{M/2} \log(r^{-1})^M\}}\big]\\
& \leq \sup_{z \in \D} E_z\big[ \tau_{\D} \big] P_{\varphi^{-1}(x)}\big[ \tau_{\D} > M^{M/2} \log(r^{-1})^M \big] \leq \sup_{z \in \D} E_z\big[ \tau_{\D} \big] \left(\frac{1}{2}\right)^{M^{M/2-1} \log(r^{-1})^M}
 \end{align*}
which tends to zero as $r\to 0$ faster than any positive power of $r$, provided $M$ is sufficiently large.

\emph{Step 3.  Conclusion of the proof.}
Now we will complete the proof of the lemma by bounding from above the term $f_2(M,r)$.

First we note that a sample from $\mathcal{D}$ can be produced as follows.  Let $(\s^2,\wh{h},0,\infty)$ be a doubly marked quantum sphere and let $\wh{\eta}'$ be a whole-plane $\SLE_6$ in $\s^2$ from $0$ to $\infty$ parameterized by quantum natural time with respect to $\wh{h}$.  Let $\tau$ be the first time $t$ that the quantum boundary length of the $\infty$-containing connected component of $\s^2 \setminus \wh{\eta}'([0,t])$ is equal to $1$ and we condition on the event that $\tau < \infty$.  Let $U$ be that component and let $\psi : U \to \D$ be the conformal transformation such that $\psi(\infty)=0$ and $\psi'(\infty) > 0$.  Then conditional on $\tau<\infty$,  we set
\begin{align*}
h:=\wh{h}|_U \circ \psi^{-1} + Q \log |(\psi^{-1})'| \quad \text{and} \quad \qdist{h}{z}{w} = \qdist{\wh{h}|_U}{\psi^{-1}(z)}{\psi^{-1}(w)} \quad \text{for all} \quad z,w \in \D.
\end{align*}

Set $\wt{x}:=\varphi^{-1}(x),  \wt{y}:=\varphi^{-1}(y)$.  Then we have that either $\qdist{\wh{h}}{\psi^{-1}(\wt{x})}{\psi^{-1}(\wt{y})} \geq 2M^{-1}$ or $\qdist{\wh{h}}{\psi^{-1}(\wt{x})}{\psi^{-1}(\wt{y})} <2M^{-1}$.  We will prove the claim of the lemma in each different case.

\emph{Case 1.  $\qdist{\wh{h}}{\psi^{-1}(\wt{x})}{\psi^{-1}(\wt{y})} \geq 2M^{-1}$.}
Suppose that $\qdist{\wh{h}}{\psi^{-1}(\wt{x})}{\psi^{-1}(\wt{y})} \geq 2M^{-1}$.  Let also $\wh{p}_t(u,v)$ (resp.  $\wh{p}_t^U(u,v)$) denote the heat kernel for the LBM on $\s^2$ (resp.  $U$) with respect to $\wh{h}$ (resp.  $\wh{h}|_{U}$).  Then Theorem~\ref{thm:ubd} implies that there exists a deterministic constant $\kappa>0$ and there a.s.\  exist random constants $c_1,c_2$ such that
\begin{equation}\label{eqn:heat_kenrl_bd_upd}
\wh{p}_t^U(u,v) \leq \wh{p}_t (u,v)  \leq  \frac{c_1 (\log t^{-1})^\kappa}{t} \, \exp\Biggl( - c_2 \biggl(\frac{\qdist{
\wh{h}}{u}{v}^4}{t} \biggr)^{\!\frac 1 3} \biggl( \log\biggl( e + \frac{\qdist{\wh{h}}{u}{v}}{t} \biggr) \biggr)^{\!-\kappa} \Biggr)
\end{equation}
for all $u,v \in \s^2,  t \in (0,1/2]$.  Moreover,  by \cite[Theorem~1.3]{Be15},  $p_t^{\mathcal{D}}(u,v) = \wh{p}_t^{U}(\psi^{-1}(u),\psi^{-1}(v))$ for all $u,v \in \D,t>0$.  It follows that for $M$ sufficiently large and since $\psi^{-1}(\varphi^{-1}(A)) = \qball{\wh{h}}{\psi^{-1}(\wt{y})}{r}$,  we have that
\begin{align*}
 &f_2(M,r) \leq \int_{\varphi^{-1}(A)}  \int_{0}^{M^{-M/2} / (\log \tfrac{1}{r})^M} p_t^\CD(\wt{x},w) \, dt \, d\qmeasure{h}(w)\\
 &\leq c_1 \qmeasure{h}(\D) \int_0^{M^{-M/2} \log(r^{-1})^{-M}} \frac{\log(t^{-1})^{\kappa}}{t} \exp\left( - c_2 ((2M)^{-4} t^{-1})^{1/3} \log(e+(2Mt)^{-1})^{-\kappa}\right)dt 
\end{align*}
and the right-hand side tends to $0$ as $r \to 0$ faster than any positive power of $r$.  Letting $f := f_1+ f_2$ and $I_{M,r}:=[M^{-M/2} / (\log \tfrac{1}{r})^M,M^{M/2} (\log \tfrac{1}{r})^M]$, we thus have that for $M$ sufficiently large,
\[ f(M,r) + \int_A \int_{I_{M,r}} p_t^{\CN}(x,w) \, dt \, d\qmeasure{\wt{h}}(w) \geq E_x[ T_{A} ],\]
where $f(M,r) \to 0$ as $r \to 0$ faster than any positive power of $r$ by combining with Step 2.  Since $|I_{M,r}|$ is of order $M^{M/2} (\log \tfrac{1}{r})^M$,  by dividing both sides by $|I_{M,r}|$ and combining with~\eqref{eqn:lower_bound_on_time_spent_by_LBM} and~\eqref{eqn:quantum_area_lower_bound}, we see that for $r\in (0,M^{-1})$,
\begin{align*}
\frac 1 {| I_{M,r}|} \int_{I_{M,r}} \int_A p_t^{\CN}(x,w) \, d\qmeasure{\wt{h}}(w)   \, dt  \geq  \frac{ r^{\alpha +M/2}}{M^{M/2} (\log \tfrac{1}{r})^M} \geq r^{\alpha+3M/4} (\log \tfrac{1}{r})^{-M} \geq r^M,
\end{align*}
provided $M$ is sufficiently large.  Thus,  there exists $t \in I_{M,r}$ such that the claimed lower bound in the statement of the lemma holds.

\emph{Case 2.   $\qdist{\wh{h}}{\psi^{-1}(\wt{x})}{\psi^{-1}(\wt{y})} < 2 M^{-1}$.}
Suppose now that $\qdist{\wh{h}}{\psi^{-1}(\wt{x})}{\psi^{-1}(\wt{y})} < 2 M^{-1}$.  Note that we have already shown in Step 2 of the proof of Lemma~\ref{lem:lower_bound_on_mean_time_spent_by_LBM} that there exists a deterministic constant $\beta \in (0,\infty)$ and there a.s.  exist (random) $z_0 \in \D,  c_0,s>0$ such that $\eball{z_0}{s} \subseteq \D$ and the probability that a complex Brownian motion starting from $x$ intersects $\eball{z_0}{s/4}$ before exiting $\D$ for the first time is at least $c_0 r^{\beta}$.

Let $(X_t^{\D})$ denote the Liouville Brownian motion on $\D$ with respect to $\wt{h}$,  killed when it first exits $\D$.  Suppose first that $x \in \eball{z_0}{s/3}$ and set $T:=\inf\{t \geq 0 : X_t^{\D} \in \partial \eball{z_0}{s/2}\}$.  Possibly by taking $M$ to be larger,  we can assume that
\begin{align*}
\text{dist}_{d_{\wh{h}}}(\psi^{-1}(\varphi^{-1}(\eball{z_0}{s/3})),\psi^{-1}(\varphi^{-1}(\eball{z_0}{s/2}))) \geq 100 M^{-1}.
\end{align*}
Then we have that $\qdist{\wh{h}}{\psi^{-1}(\varphi^{-1}(X_T^{\D}))}{\psi^{-1}(\wt{y})} \geq 2 M^{-1}$.  Similarly if $x \notin \eball{z_0}{s/3}$,  we let $T$ be the first time that $X^{\D}$ intersects $\eball{z_0}{s/4}$ before exiting $\D$.  Then we have that $P[T < \infty] \geq c_0 r^{\beta}$ and $\qdist{\wh{h}}{\psi^{-1}(\varphi^{-1}(X_T^{\D}))}{\psi^{-1}(\wt{y})} \geq 2M^{-1}$,  if we take $M$ sufficiently large such that
\begin{align*}
\text{dist}_{d_{\wh{h}}}(\psi^{-1}(\varphi^{-1}(\eball{z_0}{s/4})),\psi^{-1}(\varphi^{-1}(\eball{z_0}{s/3}))) \geq 100 M^{-1}.
\end{align*}
Thus we have that 
\begin{align*}
\qdist{\wh{h}}{\psi^{-1}(\varphi^{-1}(X_T^{\D}))}{\psi^{-1}(\wt{y})} \geq 2M^{-1}
\end{align*}
in every case.  Also Lemma~\ref{lem:lower_bound_on_mean_time_spent_by_LBM} implies that
\begin{align*}
E_{X_T^{\D}}[T_A] \gtrsim r^{\alpha + M/4} \quad \text{a.s.  on} \quad \{T<\infty\}
\end{align*}
and so we obtain that
\begin{align*}
E_x \big[ \indicator_{\{T<\infty\}} E_{X^{\D}_T}\big[ T_{A} \big] \big] \gtrsim r^{\alpha + \beta + M/4}.
\end{align*}

Also we have that
\begin{align*}
&E_x \left[ \indicator_{\{M^{M/2}\log(r^{-1})^M / 2 \leq T < \infty\}} \int_0^{\infty} \indicator_{\{X^{\D}_{t+T} \in A\}} \, dt \right] \leq \int_{A} \int_{M^{M/2} \log(r^{-1})^M / 2}^{\infty} p_t^{\CN}(x,w) \, dt \, d\qmeasure{\wt{h}}(w)\\
& \mspace{36mu} \leq \sup_{z \in \D} \left(\int_{A} \int_{M^{M/2} \log(r^{-1})^M / 2}^{\infty} p_t^{\CN}(z,w) \, dt \, d\qmeasure{\wt{h}}(w)\right)
\end{align*}
which implies that 
\begin{align*}
r^{\alpha + \beta  + M/4} \lesssim \sup_{z \in \D} \left( \int_{A} \int_{M^{M/2}\log(r^{-1})^M / 2}^{\infty} p_t^{\CN}(z,w) \, dt \, d\qmeasure{\wt{h}}(w)\right) + I_1+I_2+I_3,
\end{align*}
where
\begin{align*}
& I_1 =E_x\left[ \indicator_{\{T \leq M^{M/2} \log(r^{-1})^M / 2\}} \int_0^{M^{-M/2} \log(r^{-1})^M} \indicator_{\{X^{\D}_{t+T} \in A\}} \, dt \right], \\
& I_2 = E_x\left[ \indicator_{\{T \leq M^{M/2} \log(r^{-1})^M / 2\}} \int_{M^{-M/2} \log(r^{-1})^M}^{M^{M/2} \log(r^{-1})^M} \indicator_{\{X^{\D}_{t+T} \in A\}} \, dt \right], \\
& I_3 = E_x\left[ \indicator_{\{T \leq M^{M/2} \log(r^{-1})^M / 2\}} \int_{M^{M/2} \log(r^{-1})^M}^{\infty} \indicator_{\{X^{\D}_{t+T} \in A\}} \, dt \right].
\end{align*}
Moreover,  we have that
\begin{align*}
I_1 + I_3 &\leq E_x \left[ \indicator_{\{T<\infty\}} \int_{A} \int_0^{M^{-M/2} \log(r^{-1})^M} p_t^{\CN}(X^{\D}_T,w) dtd\qmeasure{\wt{h}}(w)\right]\\
&\qquad+E_x\left[ \int_{M^{M/2} \log(r^{-1})^M}^{\infty} \indicator_{\{X^{\D}_t \in A\}} \, dt \right]\\
&\lesssim \int_0^{M^{-M/2} \log(r^{-1})^{-M}} \frac{\log(r^{-1})^{\kappa}}{t} \exp(-c_2 (2Mt)^{-1/3})dt\\
&\qquad+\int_{A} \int_{M^{M/2} \log(r^{-1})^M}^{\infty} p_t^{\CN}(x,w) \, dt \, \qmeasure{\wt{h}}(w).
\end{align*}
Thus,  by arguing as in Step 2,  we obtain that $I_1 + I_3 \to 0$ as $r \to 0$ faster than any positive power of $r$.  Furthermore,  we have that
\begin{align*}
I_2 \leq \int_{A} \int_{M^{M/2} \log(r^{-1})^M / 2 + M^{-M/2} \log(r^{-1})^{-M}}^{3M^{M/2} \log(r^{-1})^M / 2} p_t^{\CN}(x,w) \,dt \, d\qmeasure{\wt{h}}(w).
\end{align*}
Hence,  by arguing as in Case 1,  we get that there exists $t\in [ M^{-M/2} \log(r^{-1})^{-M}, M^{M/2} \log(r^{-1})^M]$ such that $\int_A p_t^{\CN}(x,w)d\qmeasure{\wt{h}}(w) \geq r^M$.
\end{proof}

We next record the following which will be used in Subsection~\ref{subsec:good_annuli} (see in particular the proof of Lemma~\ref{lem:move_chunks}) in order to bound from below the probabilities that the Liouville Brownian motion intersects certain fixed LQG metric balls.  We chose to state and prove the lemma at this point since its proof follows from the same argument used to prove Lemma~\ref{lem:hitting_bound}.

\begin{lemma}
\label{lem:apriorilbd}
For every $M>1$ there exist constants $K_1,K_2 > 1$, depending on $M$,  so that the following is true.  For $\qsphereinflaw$ a.e.\ instance of $(\CS,h,x,y)$ there exists $\Delta_0 > 0$ so that for all $\delta \in (0,\Delta_0)$ and $u,v \in \s^2$ with $\qdist{h}{u}{v} \leq \delta$ there exists $t \in [\delta^{K_1},\delta^{1/K_1}]$ so that
\[ P_u[ X_t \in \qball{h}{v}{\delta^{M}}] \geq \delta^{K_2},\]
where $X$ denotes Liouville Brownian motion and $P_u$ is the law under which $X$ starts from $u$.
\end{lemma}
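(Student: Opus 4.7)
The plan is to mirror the strategy of Lemma~\ref{lem:hitting_bound} in the setting of the full quantum sphere, using a carefully chosen killing region. Write $d$ for the Euclidean metric on $\s^2$. By~\eqref{eq:comp_metric} there is a deterministic $\alpha\in(0,1)$ and a random $C\geq 1$ with $C^{-1}d(\cdot,\cdot)^{1/\alpha}\leq\qdistnoarg{h}\leq C d(\cdot,\cdot)^\alpha$, so the Euclidean ball $B(v,c\delta^{M/\alpha})$ is contained in $\qball{h}{v}{\delta^M}$ and $d(u,v)\leq C\delta^\alpha$. Fix once and for all $a:=\alpha^2/2\in(0,1)$, set $\CN:=\qball{h}{v}{\delta^a}$, and work with the LBM killed on exiting $\CN$, whose transition density we denote by $p_t^\CN$; note that $u\in\CN$ for $\delta$ small and that $\CN$ contains the Euclidean ball $B(v,c'\delta^{a/\alpha})$.

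The first step is to establish the mean-time lower bound
\[ E_u\bigl[T_{\qball{h}{v}{\delta^M}}^{\CN}\bigr] \;\geq\; \delta^{K_3} \]
for some $K_3=K_3(M)$, via the two-step argument used to prove Lemma~\ref{lem:hitting_bound}. Since the LBM is a time change of planar Brownian motion and the harmonic measure in the Euclidean annulus $\{c\delta^{M/\alpha}<|z-v|<c'\delta^{a/\alpha}\}$ is explicit, the LBM started from $u$ hits $\qball{h}{v}{\delta^M/2}$ before exiting $\CN$ with probability at least a positive constant of order $\alpha(\alpha^2-a)/(M-a)$, uniformly in $\delta$ (this uses $a<\alpha^2$ to make the log ratio of radii bounded below). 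From any $w\in\qball{h}{v}{\delta^M/2}$ the quenched exit-time lower bound in Theorem~\ref{thm:bm_exit} applied to $\qball{h}{w}{\delta^M/2}\subset\qball{h}{v}{\delta^M}$ gives $E_w\bigl[\tau_{\qball{h}{w}{\delta^M/2}}\bigr]\geq c\delta^{4M}(\log\delta^{-1})^{-\kappa}$. Combining the two via the strong Markov property yields the claim with, for instance, $K_3=4M+1$.

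The second step is the time-interval decomposition used in Step~3 of the proof of Lemma~\ref{lem:hitting_bound}. Writing
\[ E_u\bigl[T_{\qball{h}{v}{\delta^M}}^{\CN}\bigr] \;=\; \biggl(\int_0^{\delta^{K_1}}+\int_{\delta^{K_1}}^{\delta^{1/K_1}}+\int_{\delta^{1/K_1}}^{\infty}\biggr)\, P_u\bigl[X_t\in\qball{h}{v}{\delta^M},\,t<\tau_\CN\bigr]\,dt, \]
the first integral is at most $\delta^{K_1}$ because $\int p_t^\CN(u,\cdot)\,d\qmeasure{h}\leq 1$. For the third, the exit-time upper bound of Theorem~\ref{thm:bm_exit} gives $\sup_w E_w[\tau_\CN]\leq c\delta^{4a}(\log\delta^{-1})^\kappa$, and iterating Lemma~\ref{lem:exit1} yields a tail bound $P_u[\tau_\CN>s]\lesssim\exp\bigl(-c s\delta^{-4a}(\log\delta^{-1})^{-\kappa}\bigr)$; for $s=\delta^{1/K_1}$ with $1/K_1<4a=2\alpha^2$, this decays faster than any power of $\delta$, so the third integral is negligible. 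Choosing $K_1>\max(K_3+1,\,1/(2\alpha^2))$ forces the middle integral to be at least $\delta^{K_3}/2$, so some $t\in[\delta^{K_1},\delta^{1/K_1}]$ satisfies $P_u[X_t\in\qball{h}{v}{\delta^M}]\geq \delta^{K_3}/(2\delta^{1/K_1})\geq\delta^{K_2}$ with, e.g., $K_2=K_3+2$.

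The main technical obstacle is coordinating the three exponents $M$, $a$, $K_1$ so that simultaneously (i) the Euclidean-harmonic-measure hitting probability is bounded below by a strictly positive constant as $\delta\to 0$ (requires $a<\alpha^2$), and (ii) the exit-time tail bound at scale $\delta^{1/K_1}$ is super-polynomial (requires $1/K_1<4a$). The single choice $a=\alpha^2/2$ with $\alpha$ the deterministic H\"older exponent from~\eqref{eq:comp_metric} accomplishes both. All random constants that appear (the Hölder constant $C$, the radius $r_0$ in Theorem~\ref{thm:bm_exit}, the constants in Theorem~\ref{thm:ball_concentration}) depend on $h$ but are independent of $\delta,u,v$, and are absorbed into the threshold $\Delta_0$ in the statement.
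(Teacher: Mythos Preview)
Your proposal is correct and follows the same high-level strategy the paper invokes (it simply cites the argument of Lemma~\ref{lem:hitting_bound}): a mean-occupation-time lower bound combined with a three-way split of the time integral, with the long-time piece killed by iterating Markov's inequality via the strong Markov property. Your adaptation actually simplifies two of the ingredients relative to a verbatim repetition of that proof: because the target time window here is polynomial in $\delta$ rather than polylogarithmic, the short-time integral is disposed of by the trivial bound $\int_0^{\delta^{K_1}}1\,dt=\delta^{K_1}$ instead of the off-diagonal heat kernel upper bound used in Step~3 of Lemma~\ref{lem:hitting_bound}, and the mean-time lower bound comes from a direct planar harmonic-measure computation together with Theorem~\ref{thm:bm_exit} rather than the chunk-based Lemma~\ref{lem:lower_bound_on_mean_time_spent_by_LBM}. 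One minor slip: the exponential tail for $\tau_\CN$ is not a consequence of Lemma~\ref{lem:exit1} but of the Markov-iteration argument you describe (exactly as in Step~2 of the proof of Lemma~\ref{lem:hitting_bound}).
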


\begin{proof}
This follows from the same argument used to prove Lemma~\ref{lem:hitting_bound}.
\end{proof}

\begin{proof}[Proof of Proposition~\ref{prop:good_event_happens}]
First we note that by scaling and as in the proof of Proposition~\ref{prop:good_sle_chunks},  it suffices to prove the claim of the proposition in the case that $\delta = 1$.  Moreover,  combining Lemma~\ref{lem:hitting_bound} with the argument in the first paragraph of the proof of Lemma~\ref{lem:reverse_holder_continuity} in order to compare locally the laws of a quantum disk and a quantum wedge of weight $2$,  we obtain that condition~\eqref{it:hitting_bound} occurs with as high probability as we want provided we choose $\mathcal{M}_0$ sufficiently large.  Furthermore,  combining Lemma~\ref{lem:metric_non_boundary_tracing} with the arguments in Steps 3 and 4 in the proof of Lemma~\ref{lem:reverse_holder_continuity_top_boundary} and the argument in the second paragraph of the proof of Lemma~\ref{lem:reverse_holder_continuity},  we obtain that condition~\eqref{it:ballnearby} holds with high probability as well provided we choose $\mathcal{M}_0$ large enough.  Therefore,  the proof is complete by combining with Proposition~\ref{prop:good_sle_chunks}. 
\end{proof}

\subsection{Definition and properties of the good annuli}
\label{subsec:good_annuli}

The main goal of this section is to prove the following proposition.

\begin{proposition}
\label{prop:cover_by_good_annuli}
There exists a constant $\kappa > 0$ so that for $\qsphereinflaw$-a.e.\ $(\CS,h,x,y)$ there exists $\Delta_0 > 0$ so that for all $\delta \in (0,\Delta_0)$ the following is true.  For every $z \in \CS$ there exists an annulus $\CA$ with the following properties.
\begin{enumerate}[(i)]
\item\label{it:good_annulus_contained} $\CA$ is contained in $\qball{h}{z}{\delta^{1/3}(\log \delta^{-1})^\kappa} \setminus \qball{h}{z}{\delta^{1/3} (\log \delta^{-1})^{-\kappa}}$.
\item\label{it:distance} The $\qdistnoarg{h}$-distance between the inner and outer boundaries of $\CA$ is at least $\delta^{1/3} (\log \delta^{-1})^{-\kappa}$.
\item\label{it:move} If $u,v \in \CA$ have $\qdistnoarg{h}$-distance at least $\delta^{1/3} (\log \delta^{-1})^{-\kappa}/2$ from $\partial \CA$, then there exists an element $s$ of $[\delta^{4/3} (\log \delta^{-1})^{-\kappa}, \delta^{4/3} (\log \delta^{-1})^\kappa]$ so that 
\[ \int_{\qball{h}{v}{\delta^{1/3} (\log \delta^{-1})^{-\kappa}}} p_s^\CA(u,a) \, d\qmeasure{h}(a) \geq \exp(- (\log \delta^{-1})^\kappa)\]
where $p^\CA$ denotes the heat kernel for Liouville Brownian motion on $\CA$.
\item\label{it:move_into_and_out} For each $u \in \CA$ there exists $s \leq \delta^{1/\kappa}$ so that
\begin{align*}
\int_{\qball{h}{u}{\delta^{1/3}}} p_s(z,a) \,  d\qmeasure{h}(a) &\geq \exp(-(\log \delta^{-1})^\kappa) \quad\text{and}\\
\int_{\qball{h}{u}{\delta^{1/3}}} p_s(a,z) \, d\qmeasure{h}(a) &\geq \exp(-(\log \delta^{-1})^\kappa).
\end{align*}
\end{enumerate}
\end{proposition}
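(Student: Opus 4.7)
The plan is to apply Proposition~\ref{prop:good_chunks_percolate} with the event $E$ defined in Subsection~\ref{subsec:good_event}, whose probability of occurrence is controlled by Proposition~\ref{prop:good_event_happens}, to produce, for each $z\in\CS$, an annulus $\CA$ around $z$ consisting of good $\SLE_6$ chunks. By the volume upper bound of Theorem~\ref{thm:ball_concentration}, it suffices to build the annulus for each point of a polynomial-size $\delta^{C}$-net of $\CS$, with $C$ large enough that the polynomial-in-$\delta$ discrepancy between a net point and an arbitrary nearby $z$ is absorbed by the polylogarithmic slack in the statement; the remainder of the plan addresses a single net point $z_0$.

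To obtain a quantum disk on which to run the exploration, I use reverse metric exploration from the other marked point $y$ (or from an independently sampled uniform point, invoking the resampling property of $\qsphereinflaw$), stopped at the first time the complementary component containing $z_0$ has LQG diameter of order $\delta^{1/3}$. By the Brownian map/$\sqrt{8/3}$-LQG correspondence recalled in Subsection~\ref{subsec:bread_first}, this component is conditionally a weighted quantum disk $\CD_{z_0}$ with $z_0$ (up to controlled distortion) as its interior marked point. On $\CD_{z_0}$ I apply Proposition~\ref{prop:good_chunks_percolate}: by Proposition~\ref{prop:good_event_happens}, for $M$ large the event $E$ holds for each chunk with probability at least $1-c_0 A^{-2/3}$, so Proposition~\ref{prop:good_chunks_percolate} produces, off an event of probability at most $c_1 \exp(-c_2 \delta^{-a})$, a cluster $\CA$ of good chunks disconnecting $z_0$ from $\partial \CD_{z_0}$. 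A union bound over the polynomial-size net gives the simultaneous existence of all annuli.

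Properties~(\ref{it:good_annulus_contained}) and~(\ref{it:distance}) follow from Proposition~\ref{prop:good_band_contains}: the upper bound places $\CA$ in a $\delta^{p_0}$-neighborhood of $\partial \CD_{z_0}$ (hence inside $\qball{h}{z_0}{\delta^{1/3}(\log \delta^{-1})^\kappa}$), while the chunk-diameter bound~(\ref{it:diameter_bound}) keeps the inner boundary at $\qdistnoarg{h}$-distance at least $\delta^{1/3}(\log \delta^{-1})^{-\kappa}$ from $z_0$; for~(\ref{it:distance}) the distance lower bound $\delta^{p_1}$ between the inner and outer boundaries of $\CA$ is upgraded to the polylogarithmic target by combining the chunk-diameter bound~(\ref{it:diameter_bound}) with the reverse H\"older estimate~(\ref{it:reverse_holder_continuity}). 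Property~(\ref{it:move_into_and_out}) is immediate from Lemma~\ref{lem:apriorilbd}: for any $u\in \CA$ we have $\qdist{h}{z}{u}\leq\delta^{1/3}(\log \delta^{-1})^\kappa$, so the lemma supplies a time $s\leq\delta^{1/\kappa'}$ with $P_z[X_s\in\qball{h}{u}{\delta^{1/3}}]\geq\delta^{K_2/3}$, which easily exceeds $\exp(-(\log \delta^{-1})^\kappa)$ for suitable $\kappa$; the reverse bound follows from $\qmeasure{h}$-symmetry of the Liouville heat kernel.

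The principal obstacle is property~(\ref{it:move}). The strategy is to chain the hitting lower bound~(\ref{it:hitting_bound}) of the good event through the consecutive good chunks composing $\CA$. Given $u,v\in \CA$ at $\qdistnoarg{h}$-distance at least $\delta^{1/3}(\log \delta^{-1})^{-\kappa}/2$ from $\partial \CA$, I select a chain $\CN_0,\ldots,\CN_n$ of adjacent good chunks in $\CA$ with $u\in \CN_0$, $v\in \CN_n$, and $n\leq(\log \delta^{-1})^{\kappa''}$, which is possible because each chunk has $\qdistnoarg{h}$-diameter at most $M\delta^{1/3}$ while $\qdist{h}{u}{v}\leq\delta^{1/3}(\log \delta^{-1})^\kappa$. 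Inside each overlap $\CN_i\cap \CN_{i+1}$ I use~(\ref{it:ballnearby}) to locate a metric ball $\qball{h}{w_i}{r_i}$ of radius $r_i$ polynomial in $\delta$ that lies deep inside both chunks, then iterate~(\ref{it:hitting_bound}) and the Markov property at the $w_i$; the uniformity of~(\ref{it:hitting_bound}) over starting points, together with the ``deep inside'' guarantee from~(\ref{it:ballnearby}), ensures that after $n$ steps the integrated density is at least $\prod_{i}r_i^{M}\geq\exp(-(\log \delta^{-1})^{\kappa'''})$, with the total time lying in the window $[\delta^{4/3}(\log \delta^{-1})^{-\kappa},\delta^{4/3}(\log \delta^{-1})^\kappa]$ prescribed in~(\ref{it:move}). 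The main technical points are verifying this uniform lower bound for restarting the heat kernel inside the intermediate balls and carefully tracking the sum of the per-step times so that it remains in the allowed window.
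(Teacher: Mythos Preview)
Your general framework is correct, and your argument for~(\ref{it:move_into_and_out}) via Lemma~\ref{lem:apriorilbd} together with heat-kernel symmetry is valid --- indeed simpler than the paper's Lemma~\ref{lem:into_good_cluster}. However, there is a genuine gap in how you obtain the polylogarithmic bounds in~(\ref{it:good_annulus_contained}) and~(\ref{it:distance}).

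The problem is the scale of the $\SLE_6$-exploration parameter. Proposition~\ref{prop:good_band_contains} gives inner--outer distance at least $\delta_{\mathrm{exp}}^{p_1}$ and containment in a $\delta_{\mathrm{exp}}^{p_0}$-neighbourhood of $\partial\CD$, where $\delta_{\mathrm{exp}}$ is the exploration parameter on the \emph{unit-boundary-length} disk; both are polynomial in $\delta_{\mathrm{exp}}$. There is no way to ``upgrade'' these to polylogarithmic via conditions~(\ref{it:diameter_bound}) and~(\ref{it:reverse_holder_continuity}): those conditions are precisely the inputs to the proof of Proposition~\ref{prop:good_band_contains}, and that proof already extracts everything they give, namely the polynomial bound $\delta_{\mathrm{exp}}^{qM^3}$. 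The paper's solution, following the exit-time argument in Lemma~\ref{lem:filled_metric_ball_exit_time}, is to stop the metric exploration from the net point $z_i$ toward a second net point $z_j$ at the first time the boundary length enters $[\delta^{2/3}(\log\delta^{-1})^{-a_1},\,\delta^{2/3}(\log\delta^{-1})^{2}]$, rescale the complementary disk to boundary length $1$, and then run the $\SLE_6$ exploration with $\delta_{\mathrm{exp}}=(\log\delta^{-1})^{-a_4}$. Then $\delta_{\mathrm{exp}}^{p_1}$ is polylogarithmic on the rescaled disk and, after undoing the rescaling, yields an inner--outer distance of order $\delta^{1/3}(\log\delta^{-1})^{-c}$. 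This choice of scales is the missing ingredient; once it is made, the total number of chunks in $\CA$ is also polylogarithmic, which is what your chaining for~(\ref{it:move}) needs.

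A smaller issue in your chain for~(\ref{it:move}): the chunks $\CN_i$ have pairwise disjoint interiors, so $\CN_i\cap\CN_{i+1}\subset\partial\CN_i\cap\partial\CN_{i+1}$ and contains no metric ball. The correct chaining, carried out in Lemma~\ref{lem:move_chunks}, places a ball deep inside $\CN_i$ near $\partial\CN_{i+1}$ via condition~(\ref{it:ballnearby}), uses Lemma~\ref{lem:apriorilbd} at a high-power-of-$\delta$ scale (so the hop time is negligible against $\delta^{4/3}$) to land in a ball deep inside $\CN_{i+1}$, and only then applies the within-chunk hitting bound~(\ref{it:hitting_bound}).
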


As explained just after the proof of Lemma~\ref{lem:on_diagonal_lbd},  we will use Proposition~\ref{prop:cover_by_good_annuli} in order to prove that the following is true for $\qsphereinflaw$-a.e.\ $(\CS,h,x,y)$.  Fix $u,v \in \CS$ distinct points and $0<t \lesssim \qdist{h}{u}{v}$.  Then Proposition~\ref{prop:cover_by_good_annuli} implies that we can find a finite and connected chain of topological annuli $\CA_1,\cdots,\CA_N$ connecting $u$ to $v$ and each of them satisfying properties~\eqref{it:good_annulus_contained}-\eqref{it:move_into_and_out} and consisting of unions of good chunks in the sense of Proposition~\ref{prop:good_event_happens}.  Moreover we have that $N \asymp \bigl(\frac{\qdist{h}{u}{v}^4}{t}\bigr)^{1/3}$ and $\diam(\CA_j) \asymp \bigl(\frac{t}{\qdist{h}{u}{v}}\bigr)^{1/3}$ for all $1 \leq j \leq N$,  and the Brownian motion can move between the annuli with uniformly positive probability.  The reason for introducing  properties~\eqref{it:good_annulus_contained} -~\eqref{it:move_into_and_out} is that they will ensure that the required bounds on the heat kernel of the Liouville Brownian motion hold and hence complete the proof of Theorem~\ref{thm:lbd} in Subsection~\ref{subsec:rest_of_proof}.

We will focus on proving Proposition~\ref{prop:cover_by_good_annuli} for the rest of the section.
From now on we fix $M \geq 1$ so that $p_0 \in (0,1)$ from Proposition~\ref{prop:good_event_happens} is large enough so that the assertion of Proposition~\ref{prop:good_chunks_percolate} holds.  

We start by describing the setup of the proof of Proposition~\ref{prop:cover_by_good_annuli}. Suppose that $(\CS,h,x,y)$ has distribution $\qsphereinflaw$,  let $\kappa >0$ be the constant from the statement of Lemma~\ref{lem:filled_metric_ball_exit_time} and fix $u \in (0,1/3)$.  Fix also $r_0 \in (0,1)$ and from now on,  we assume that we are working on the event that 
\begin{align*}
r^4 \log(r^{-1})^{-6-u} \leq \qmeasure{h}(\qball{h}{z}{r}) \leq r^4 \log(r^{-1})^{8+u}, \quad  \frac{\qmeasure{h}(\qball{h}{z}{r})}{\qmeasure{h}(\CS)} \geq r^{4+u} \quad \text{and} \quad \diam(\CS) \geq 6r
\end{align*}
for all $z \in \CS,  r \in (0,r_0)$,  where $(\CS,h,x,y)$ has law $\qsphereinflaw$.  We note that Theorem~\ref{thm:ball_concentration} implies that we can find such $r_0$ satisfying the above properties for $\qsphereinflaw$-a.e.\  instance $(\CS,h,x,y)$.  Let $k \in \N$ be sufficiently large such that $2^{-k} < r_0$ and set $N_k = 2^{4k + u}$.  Let $(z_j)$ be a sequence chosen i.i.d.\ from $\qmeasure{h}$.  Then,  we know from the proof of Lemma~\ref{lem:filled_metric_ball_exit_time} that $\CS \subseteq \cup_{j=1}^{N_k} \qball{h}{z_j}{2^{-k}}$ off an event whose $\qsphereinflaw$ measure tends to $0$ as $k \to \infty$ faster than any power of $2^{-k}$.  We set $\delta_k = 2^{-3k}$ and for all $1 \leq i,j \leq N_k$ such that $\qdist{h}{z_i}{z_j} \geq 2^{-k/2}$,  suppose that we define stopping times $\tau_m^{i,j}$ and events $A_1^{i,j},\ldots,A_m^{i,j},B_1^{i,j},\ldots,B_m^{i,j}$ as in the proof of Lemma~\ref{lem:filled_metric_ball_exit_time} with $r = \delta_k^{1/3}$.  We also define collections of chunks $\CN_1^{i,m,j},\ldots,\CN_N^{i,m,j}$ in a similar way,  where $z_i$ plays the role of $x$ in the proof of Lemma~\ref{lem:filled_metric_ball_exit_time}.  Then,  arguing in the same way as in the proof of Lemma~\ref{lem:filled_metric_ball_exit_time},  we obtain that off an event whose $\qsphereinflaw$ measure tends to $0$ as $k \to \infty$ faster than any power of $2^{-k}$,  it holds that we can find an annulus $\CA_{i,j,k}$ consisting only of good chunks such that
\begin{align*}
 \CA_{i,j,k} \subseteq \qball{h}{z_i}{2\delta_k^{1/3}}\setminus \qball{h}{z_i}{\delta_k^{1/3}},
\end{align*}
$\CA_{i,j,k}$  disconnects $\qball{h}{z_i}{\delta_k^{1/3}}$ from $z_i$,  and the distance between the inner and outer boundaries of $\CA_{i,j,k}$ with respect to the interior-internal metric in $\CA_{i,j,k}$ is at least $\delta_k^{1/3} \log(\delta_k^{-1})^{-\kappa}$.  Therefore by the Borel-Cantelli lemma,  we have that $\qsphereinflaw$-a.e.,  there exists $\cK_0 \in \N$ such that $k \geq \cK_0$ implies that $\CA_{i,j,k}$ satisfies both~\eqref{it:good_annulus_contained} and~\eqref{it:distance} from Proposition~\ref{prop:cover_by_good_annuli}.  We now show that the $\CA_{i,j,k}$ satisfy the other properties.

We start by proving that condition~\eqref{it:move}  holds with high probability.  This will follow from applying a union bound using the Borel-Cantelli lemma and the following lemma combined with the fact that the number of chunks in $\CA_{i,j,k}$ is at most $\log(\delta_k^{-1})^c$ for some constant $c>0$ by~\eqref{it:good_chunks_percolate_round_number}.

\begin{lemma}
\label{lem:move_chunks}
Fix $b>0$.  Then there exists a deterministic constant $c> 0$, depending on $M$ and $b$ so that the following is true off an event whose $\qsphereinflaw$ measure tends to $0$ as $k \to \infty$. Suppose that we have the setup described just after the statement of Proposition~\ref{prop:cover_by_good_annuli}.  Fix $k \in \N$ and suppose that we are working on the event that $k \geq \cK_0$ and $\wt{\delta}_k:= \delta_k \log(\delta_k^{-1})^{-b}  \leq \Delta_0$ where $\Delta_0$ is as in Lemma~\ref{lem:apriorilbd}.  Fix $1 \leq i,j \leq N_k$ and further suppose we are working on the event that $\qdist{h}{z_i}{z_j} \geq 2^{-k/2}$.  Suppose that $\CN_1,\CN_2$ are two adjacent chunks in $\CA = \CA_{i,j,k}$ for which the event $E$ occurs and let $\CN$ be the quantum surface parameterized by the interior of $\ol{\CN}_1 \cup \ol{\CN}_2$.  For each $u,v \in \CN$ with distance at least $\wt{\delta}_k^{1/3}/M$ from $\partial \CN$ there exists $s \in [M^{-M} \wt{\delta}_k^{4/3} / (\log \wt{\delta}_k^{-1})^M, M^M \wt{\delta}_k^{4/3} (\log \wt{\delta}_k^{-1})^M]$ so that
\begin{align*}
 P_u\big[ X_s \in \qball{h}{v}{\wt{\delta}_k^{1/3} M^{-1}}\big] \geq \wt{\delta}_k^{c},
\end{align*}
where $X$ denotes Liouville Brownian motion and $P_u$ is the law under which $X$ starts from $u$.

\end{lemma}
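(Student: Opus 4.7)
The plan is to deduce Lemma~\ref{lem:move_chunks} by chaining applications of the interior hitting bound~\eqref{it:hitting_bound} from the good-chunk event $E$ together with short-distance hops supplied by Lemma~\ref{lem:apriorilbd}, concatenated via the strong Markov property for the (unkilled) Liouville Brownian motion. Set $r := \wt{\delta}_k^{1/3}/M$. If $u$ and $v$ both lie at $\qdistnoarg{h}$-distance $\geq r$ from $\partial \CN_j$ for a single sub-chunk $\CN_j$, then~\eqref{it:hitting_bound} applied inside $\CN_j$ immediately yields $P_u[X_s \in \qball{h}{v}{r}] \geq \int_{\qball{h}{v}{r}} p_s^{\CN_j}(u,w)\,d\qmeasure{h}(w) \geq r^M$ for some $s$ in the required polylogarithmic range. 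The genuinely new case is when $u$ or $v$ lies close to the interface $I := \partial \CN_1 \cap \partial \CN_2$, so that such a point fails the distance-from-boundary hypothesis of~\eqref{it:hitting_bound} in each single chunk.

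For that case, using condition~\eqref{it:ballnearby} applied to $\CN_1$ and $\CN_2$ separately, we select auxiliary interior points $u_1 \in \CN_1$ close to $u$, $v_1 \in \CN_2$ close to $v$, and $y_1 \in \CN_1$, $y_2 \in \CN_2$ both close to a common point of $I$, with each $\qball{h}{\cdot}{2r^{M^2}}$ contained in its respective sub-chunk and $\qdist{h}{y_1}{y_2} \lesssim r^M$. We then chain the transitions $u \rightsquigarrow u_1 \rightsquigarrow y_1 \rightsquigarrow y_2 \rightsquigarrow v_1 \rightsquigarrow v$ via the strong Markov property. The two \emph{long} moves $u_1 \to y_1$ in $\CN_1$ and $y_2 \to v_1$ in $\CN_2$ are handled by~\eqref{it:hitting_bound} at radius $r^{M^2}$, each contributing a hitting probability at least $r^{M^3}$ and a time of order $\wt{\delta}_k^{4/3}$ up to polylog factors; the three \emph{short} moves $u \to u_1$, $y_1 \to y_2$ across the interface, and $v_1 \to v$ are handled by Lemma~\ref{lem:apriorilbd} at LQG distance $\lesssim r^M$, each contributing a probability $\wt{\delta}_k^{O(1)}$ in a time polynomially small in $\wt{\delta}_k$ with exponent exceeding $4/3$ (provided $M$ is taken large compared to the exponent $K_1$ appearing in Lemma~\ref{lem:apriorilbd}). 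Summing, the total time $s$ stays in the prescribed range after a harmless inflation of $M$, and multiplying the probabilities yields the required lower bound $\wt{\delta}_k^c$ with $c = c(M,b)$.

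The main obstacle will be upgrading the individual hitting bounds, which are stated for fixed starting and target points, to bounds uniform over starting points in each auxiliary ball: concatenation via the strong Markov property forces one to take an infimum of $P_x[X_t \in \text{next ball}]$ over $x$ in the previous ball, whereas~\eqref{it:hitting_bound} and Lemma~\ref{lem:apriorilbd} as stated only give estimates for specified starting points. We plan to address this either by re-applying Lemma~\ref{lem:apriorilbd} pointwise combined with the geometric regularity supplied by conditions~\eqref{it:ballnearby} and~\eqref{it:old_conditions}, or by establishing a local parabolic Harnack-type estimate for the Liouville heat kernel in the interior of a good chunk to propagate the estimate from a single point to a small ball. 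Finally, taking a union bound over the polylogarithmic (in $\wt{\delta}_k^{-1}$) number of adjacent good-chunk pairs appearing in $\CA_{i,j,k}$, whose count is controlled by Proposition~\ref{prop:good_chunks_percolate} part~\eqref{it:good_chunks_percolate_round_number}, together with the failure events from Proposition~\ref{prop:good_event_happens} and Lemma~\ref{lem:apriorilbd}, absorbs the failure probabilities and gives the lemma off an event of $\qsphereinflaw$-measure tending to $0$ as $k \to \infty$.
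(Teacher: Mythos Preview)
Your approach is essentially the paper's: chain short hops supplied by Lemma~\ref{lem:apriorilbd} with long interior moves supplied by condition~\eqref{it:hitting_bound}, using condition~\eqref{it:ballnearby} to produce the auxiliary interior points near $u$, $v$, and the interface. The paper uses a four-step chain $u\to u_1\to u_2\to v_2\to v$ (short/long/short/long) where your five-step chain $u\to u_1\to y_1\to y_2\to v_1\to v$ inserts an extra short hop at the end; your version is in fact slightly more careful about the possibility that $v$ sits close to the interface $\partial\CN_1\cap\partial\CN_2$ (hence close to $\partial\CN_2$ even while far from $\partial\CN$), a situation the paper's final long move treats less explicitly. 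The uniformity-over-starting-points issue you flag as ``the main obstacle'' is genuine---the time $t$ in both condition~\eqref{it:hitting_bound} and Lemma~\ref{lem:apriorilbd} depends a priori on the endpoints, so bounding the infimum after the Markov property is not automatic---and the paper's proof does not resolve it any more carefully than you do: it simply asserts ``we can find $s_2,s_4$ so that $p_2,p_4\geq\wt{\delta}_k^{pM^2}$'' for the infimized quantities. Your proposed resolutions (re-applying Lemma~\ref{lem:apriorilbd} pointwise, or a Harnack-type argument) are an honest statement of what is actually needed to close that step.
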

\begin{proof}
Suppose that $\CA$, $\CN_1$, $\CN_2$, $\CN$ are as in the statement of the lemma.  We also let $K_1,K_2$ be as in the statement of Lemma~\ref{lem:apriorilbd}.  We take $p = 8 K_1/3$ so that $\wt{\delta}_k^{p/K_1} = \wt{\delta}_k^{8/3}$ is of lower order than $\wt{\delta}_k^{4/3}$.  Suppose that $u,v \in \CN$ both have distance at least $\wt{\delta}_k^{1/3}/M$ from $\partial \CN$.  We may assume without loss of generality that $u \in \CN_1$.  By condition~\eqref{it:ballnearby} in the definition of $E$ for the chunk $\CN_1$, there exists $u_1 \in \CN_1$ such that $\qdist{h}{u}{u_1} \leq \wt{\delta}_k^p$ and $\qball{h}{u_1}{2\wt{\delta}_k^{p M}} \subseteq \CN_1$.  Let $u_2 \in \CN_1$ have distance $\wt{\delta}_k^p$ from $\partial \CN_2$ with $\qball{h}{u_2}{\wt{\delta}_k^{pM}} \subseteq \CN_1$ and let $v_2 \in \CN_2$ be such that $\qdist{h}{u_2}{v_2} \leq 2\wt{\delta}_k^p$ with $\qball{h}{v_2}{2\wt{\delta}_k^{p M}} \subseteq \CN_2$ (we apply again  condition~\eqref{it:ballnearby} in the definition of $E$ but for the chunk $\CN_2$ in place of $\CN_1$).  If $v \in \CN_1$,  then the claim of the lemma follows from condition~\eqref{it:hitting_bound} in the definition of $E$ for the chunk $\CN_1$. If $v \in \CN_2$,  then for $s_1,s_2,s_3,s_4 > 0$ and $s = s_1 + s_2 + s_3+s_4$, we have that $P_u[ X_s \in \qball{h}{v}{\wt{\delta}_k^{1/3} M^{-1}}] \geq p_1 p_2 p_3 p_4$ where 
\begin{align*}
p_1 &= P_u[ X_{s_1} \in \qball{h}{u_1}{\wt{\delta}_k^{p M}}],\\
p_2 &= \inf_{w \in \qball{h}{u_1}{\wt{\delta}_k^{p M}}} P_w[ X_{s_2} \in \qball{h}{u_2}{\wt{\delta}_k^{pM}}],\\
p_3 &= \inf_{w \in \qball{h}{u_2}{\wt{\delta}_k^{pM}}} P_w[ X_{s_3} \in \qball{h}{v_2}{\wt{\delta}_k^{pM}}],\\
p_4 &= \inf_{w \in \qball{h}{v_2}{\wt{\delta}_k^{pM}}} P_w[ X_{s_4} \in \qball{h}{v}{\wt{\delta}_k^{1/3}/M}].
\end{align*}
By Lemma~\ref{lem:apriorilbd}, there exists a choice of $s_1,s_3 \in [\wt{\delta}_k^{p K_1}, \wt{\delta}_k^{p/K_1}]$ so that $p_1,p_3 \geq \wt{\delta}_k^{p K_2}$. Furthermore,  by condition~\eqref{it:hitting_bound} in the definition of the event $E$ for both chunks $\CN_1$ and $\CN_2$, applied with $r=\wt{\delta}_k^{pM} < \wt{\delta}_k^{1/3}/M$, we  can find $s_2,s_4 \in [M^{-M} \wt{\delta}_k^{4/3} /(\log \wt{\delta}_k^{-1})^M,M^M \wt{\delta}_k^{4/3} (\log \wt{\delta}_k^{-1})^M]$ so that $p_2, p_4 \geq \wt{\delta}_k^{pM^2}$. Altogether, this implies that there exists $s \in [ \tfrac{1}{2} M^{-M} \wt{\delta}_k^{4/3} / (\log \wt{\delta}_k^{-1})^M, 2M^M \wt{\delta}_k^{4/3} (\log \wt{\delta}_k^{-1})^M]$ and a constant $c > 0$ depending on $M$ and $b$  so that $P_u[ X_s \in \qball{h}{v}{\wt{\delta}_k^{1/3} M^{-1}}] \geq \wt{\delta}_k^c$.
\end{proof}

Next we focus on proving that condition~\eqref{it:move_into_and_out}  holds with high probability.  The main ingredient of the proof of the claim is the following lemma which states that with probability tending to $1$ as $k \to \infty$,  we have that condition~\eqref{it:move_into_and_out}  holds for all $\CA_{i,j,k}$ with $\qdist{h}{z_i}{z_j} \geq 2^{-k/2}$ and $\qfb{h}{z_j}{z_i}{\delta_k^{1/3} \log(\delta_k^{-1})^{\kappa}} \subseteq \qball{h}{z_i}{\delta_k^{1/\kappa_0}}$ for some fixed and deterministic constant $\kappa_0 \in (0,\infty)$.  

\begin{lemma}
\label{lem:into_good_cluster}
There exists a constant $\wt{c}>0$ such that the following is true.  Suppose that we have the setup described just after the statement of Proposition~\ref{prop:cover_by_good_annuli}.  Then,  off an event whose $\qsphereinflaw$ measure tends to $0$ as $k \to \infty$,  we have that the following holds.  Fix $1 \leq i,j  \leq N_k$ and further suppose that we are working on the event that $k \in \cK_0,  \qdist{h}{z_i}{z_j} \geq 2^{-k/2}$ and $\qfb{h}{z_j}{z_i}{\delta_k^{1/3} \log(\delta_k^{-1})^{\kappa}} \subseteq \qball{h}{z_i}{\delta_k^{1/\kappa_0}}$ for some fixed and deterministic constant $\kappa_0 \in (0,\infty)$.  Then,  for all $z \in \qball{h}{z_i}{\delta_k^{1/3} \log(\delta_k^{-1})^{-\kappa}}$,  there exists $s \in \bigl(0, \delta_k^{4/\wt{c}} \log(\delta_k^{-1})^{\wt{c}} \bigr]$ such that
\begin{align*}
\int_{\qball{h}{u}{\delta_k^{1/3}}} p_s(z,a)  \, d\qmeasure{h}(a) &\geq \exp(-(\log \delta_k^{-1})^{\wt{c}}) \quad\text{and}\\
\int_{\qball{h}{u}{\delta_k^{1/3}}} p_s(a,z) \, d\qmeasure{h}(a) &\geq \exp(-(\log \delta_k^{-1})^{\wt{c}})
\end{align*}
for all $u \in \CA:=\CA_{i,j,k}$.
\end{lemma}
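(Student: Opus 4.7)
The plan is to derive Lemma~\ref{lem:into_good_cluster} as an almost direct corollary of Lemma~\ref{lem:apriorilbd}, by observing that the hypotheses force a polynomial-in-$\delta_k$ bound on $\qdist{h}{z}{u}$ uniformly for $u\in\CA$.

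First I would check that, on the event considered in the lemma, $\CA\subset\qfb{h}{z_j}{z_i}{\delta_k^{1/3}\log(\delta_k^{-1})^{\kappa}}$; this follows by tracking the radii and timescales in the construction of $\CA_{i,j,k}$ in the proof of Lemma~\ref{lem:filled_metric_ball_exit_time}, where $\CA$ is built inside a filled ball of comparable radius. Combined with the hypothesis $\qfb{h}{z_j}{z_i}{\delta_k^{1/3}\log(\delta_k^{-1})^{\kappa}}\subset\qball{h}{z_i}{\delta_k^{1/\kappa_0}}$ and the fact that $z\in\qball{h}{z_i}{\delta_k^{1/3}\log(\delta_k^{-1})^{-\kappa}}$, the triangle inequality gives $\qdist{h}{z}{u}\le\delta_k^{1/3}\log(\delta_k^{-1})^{-\kappa}+\delta_k^{1/\kappa_0}\le 2\delta_k^{1/\kappa_0}$ for all $u\in\CA$ and $k$ large enough (noting that the hypothesis forces $\kappa_0>3$ for small $\delta_k$).

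Next I would apply Lemma~\ref{lem:apriorilbd} with its free parameter chosen large enough that $M/\kappa_0>1/3$; this gives deterministic constants $K_1,K_2$ and, on an a.s.\ event, a threshold $\Delta_0>0$ such that for any $u\in\CA$ (with $k$ large enough that $2\delta_k^{1/\kappa_0}<\Delta_0$) there is $s\in[(2\delta_k^{1/\kappa_0})^{K_1},(2\delta_k^{1/\kappa_0})^{1/K_1}]$ satisfying
\[
\int_{\qball{h}{u}{\delta_k^{1/3}}}p_s(z,a)\,d\qmeasure{h}(a)\ge P_z\!\left[X_s\in\qball{h}{u}{(2\delta_k^{1/\kappa_0})^{M}}\right]\ge(2\delta_k^{1/\kappa_0})^{K_2},
\]
where the inclusion $\qball{h}{u}{(2\delta_k^{1/\kappa_0})^{M}}\subset\qball{h}{u}{\delta_k^{1/3}}$ holds by the choice of $M$. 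Taking $\wt{c}\ge\max(1,4K_1\kappa_0)$ places both $s$ and the right-hand side in the required ranges for $\delta_k$ small enough. The identity $p_s(a,z)=p_s(z,a)$ (by the $\qmeasure{h}$-symmetry of LBM) gives the second inequality for free, and the ``off an event whose $\qsphereinflaw$-measure tends to zero as $k\to\infty$'' qualifier is produced by Borel--Cantelli on the threshold event $\{2\delta_k^{1/\kappa_0}<\Delta_0\}$ together with the null set excluded in Lemma~\ref{lem:apriorilbd}.

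The only real subtlety I foresee is verifying the containment $\CA\subset\qfb{h}{z_j}{z_i}{\delta_k^{1/3}\log(\delta_k^{-1})^{\kappa}}$: the construction of $\CA_{i,j,k}$ in Lemma~\ref{lem:filled_metric_ball_exit_time} places it inside a filled ball whose radius is $\tau_1^j+r\log(r^{-1})^{-a_2}$ with $r=\delta_k^{1/3}$, and one needs $\kappa$ large enough to accommodate that radius; adjusting constants at this point is routine. Past this bookkeeping, the content is essentially immediate from Lemma~\ref{lem:apriorilbd}, so the proof amounts to calibrating $\wt{c}$ so that both the polynomial-in-$\delta_k$ time bound and the polynomial-in-$\delta_k$ probability bound fit inside the polylogarithmic tolerance $\exp(-(\log\delta_k^{-1})^{\wt{c}})$ demanded by the conclusion.
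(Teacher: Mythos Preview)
Your approach is correct and substantially more economical than the paper's. The paper does not invoke Lemma~\ref{lem:apriorilbd} here; instead it argues directly through the annulus structure: it bounds the expected exit time from $\qball{h}{z}{\delta_k^{1/\kappa_1}}$ via Theorem~\ref{thm:bm_exit} to show the LBM hits $\CA$ quickly, then chains Lemma~\ref{lem:move_chunks} across the at most $\log(\delta_k^{-1})^c$ chunks of $\CA$ to reach $\qball{h}{u}{\delta_k^{1/3}/M}$, and finally combines the on-diagonal lower bound (Lemma~\ref{lem:on_diagonal_lbd}) with the off-diagonal upper bound (Theorem~\ref{thm:ubd}) and the semigroup property to extract a single time $s$ in the required window. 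Your route bypasses all of this by noting that Lemma~\ref{lem:apriorilbd} already packages a global pointwise heat-kernel lower bound on the sphere, and that the hypothesis $\qfb{h}{z_j}{z_i}{\delta_k^{1/3}\log(\delta_k^{-1})^{\kappa}}\subset\qball{h}{z_i}{\delta_k^{1/\kappa_0}}$ forces a uniform polynomial distance bound between $z$ and every $u\in\CA$. What the paper's approach buys is that it stays ``inside'' the good-chunk machinery and makes explicit how the annulus geometry controls the LBM; what your approach buys is that the argument collapses to a single lemma citation plus constant-calibration.

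Two small remarks. First, the quantifier structure of the lemma reads $\forall z\,\exists s\,\forall u$, but both your argument and the paper's actually produce $s$ depending on $u$; this weaker $\forall z\,\forall u\,\exists s$ is all that is used downstream (see property~\eqref{it:move_into_and_out} in Proposition~\ref{prop:cover_by_good_annuli}), so there is no problem. Second, your invocation of ``Borel--Cantelli'' for the threshold event is not quite the right mechanism: what you need is simply that $\{\Delta_0\le 2\delta_k^{1/\kappa_0}\}$ decreases to a $\qsphereinflaw$-null set as $k\to\infty$, which follows from $\Delta_0>0$ a.e.\ once one is working on an event of finite $\qsphereinflaw$-measure (as the ambient setup implicitly provides). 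This is a cosmetic fix.
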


\begin{proof}
Since $k \in \cK_0$,  we have that
\begin{align*}
\CA \subseteq \qfb{h}{z_j}{z_i}{\delta_k^{1/3} \log(\delta_k^{-1})^{\kappa}} \subseteq \qball{h}{z_i}{\delta_k^{1/\kappa_0}} \subseteq \qball{h}{z}{\delta_k^{1/\kappa_1}},
\end{align*}
where $0<\kappa_1 < \kappa_0$ is fixed and deterministic.  Let $X$ be the LBM and let $\tau = \inf\{t \geq 0 : X_t \in \CA\}$.  Then,  under $P_z$,  $\tau$ is clearly bounded from above by the first exit time from $\qball{h}{z}{\delta_k^{1/\kappa_1}}$ so that Theorem~\ref{thm:bm_exit} implies that $E_w\big[ \tau \big] \leq \delta_k^{4/\kappa_1} \log(\delta_k^{-1})^{\kappa_2}$ for all $w \in \qball{h}{z}{\delta_k^{1/\kappa_1}}$,  for some fixed and deterministic constant $\kappa_2 \in (0,\infty)$.  In particular,  if $c_1 > \kappa_2 +1$,  we have that $P_z\big[ \tau \geq \delta_k^{4/\kappa_1} \log(\delta_k^{-1})^{c_1} \big] \to 0$ as $k \to \infty$ faster than any power of $2^{-k}$.  Indeed,  Markov's inequality implies that if $\kappa_3 \in (\kappa_2,c_1-1)$ is fixed and deterministic,  then 
\begin{align*}
P_z \big[ \tau \geq \delta_k^{4/\kappa_1} \log(\delta_k^{-1})^{\kappa_3} \big] \leq \delta_k^{-4/\kappa_1} \log(\delta_k^{-1})^{-\kappa_3} E_z\big[ \tau \big] \leq \log(\delta_k^{-1})^{\kappa_2-\kappa_3}.
\end{align*}
Hence,  the Markov property of $X$ implies that 
\begin{align*}
P_z\big[ \tau \geq \delta_k^{4/\kappa_1} \log(\delta_k^{-1})^{c_1} \big] \leq \exp(-(\kappa_3-\kappa_2) \log(\delta_k^{-1})^{c_1-\kappa_3+1}),
\end{align*}
which proves the claim.  Once $X$ hits $\CA$,  we know from Lemma~\ref{lem:move_chunks} by taking $c_1$ to be larger if necessary and depending only on $\kappa$ and $M$,  that $X$ can move from one chunk in $\CA$ to an adjacent chunk in $\CA$ in time between a constant depending only on $M$ times $\delta_k^{4/3} \log(\delta_k^{-1})^{-c_1}$ and a constant depending only on $M$ times $\delta_k^{4/3} \log(\delta_k^{-1})^{c_1}$ with probability at least $\delta_k^{c_1}$.  Note that the number of chunks which make up $\CA$ is at most $\log(\delta_k^{-1})^{\kappa (2/3 - u)}$ which is at most $\log(\delta_k^{-1})^{c_1}$ by taking $c_1$ to be larger if necessary,  where $u \in (0,1/3)$ is as in the statement of Proposition~\ref{prop:good_chunks_percolate}.  Hence,  we can find $c_2>c_1$ fixed and deterministic,  depending only on $c_1$ and $c$ (where $c$ is the constant in the statement of Lemma~\ref{lem:move_chunks}),  such that $X$ hits $\qball{h}{u}{\delta_k^{1/3} / M}$ in time at most $(\delta_k^{4/\kappa_1} + \delta_k^{4/3}) \log(\delta_k^{-1})^{c_2}$ with probability at least $\exp(-\log(\delta_k^{-1})^{c_2})$.  Suppose that $w \in \qball{h}{u}{\delta_k^{1/3} / M}$ and $t \in [\delta_k^{1/c_2},1/2]$.  Then,  by taking $c_2$ to be larger if necessary,  we can assume using Lemma~\ref{lem:on_diagonal_lbd} that there exists a random constant $C\geq 1$ such that $p_t(w,w) \geq \frac{1}{C t \log(\delta_k^{-1})^{c_2}}$. 

Suppose that $t' = \delta_k^{4/3} \log(t^{-1})^{-2c_2}$.  Note that 
\begin{align*}
p_t(w,w) = \int_{\CS} p_{t-t'}(w,a) p_{t'}(a,w) d\qmeasure{h}(a).
\end{align*}
Then,  Theorem~\ref{thm:ubd} implies that by taking $c_2$ to be larger if necessary,  there exist random constants $A_1,A_2>0$ such that for all $a \in \CS \setminus \qball{h}{u}{\delta_k^{1/3}}$,  we have that
\begin{align*}
p_{t'}(a,w)\leq \frac{A_1 \log((t')^{-1})^{c_2}}{t'} \exp\Biggl( -A_2 \biggl( \frac{\qdist{h}{a}{w}^4}{t'}\biggr)^{1/3} \Biggr)
\end{align*}
and so 
\begin{align*}
\int_{\CS \setminus \qball{h}{u}{\delta_k^{1/3}}} p_{t-t'}(w,a) p_{t'}(a,w) d\qmeasure{h}(a) \to 0
\end{align*}
as $k \to \infty$,  faster than any power of $2^{-k}$.  Therefore,  by increasing $C$ if necessary,  we have that 
\begin{align*}
\int_{\qball{h}{u}{\delta_k^{1/3}}} p_{t-t'}(w,a) p_{t'}(w,a) d\qmeasure{h}(a) \geq \frac{1}{Ct\log(t^{-1})^{c_2}}.
\end{align*}
Applying again Theorem~\ref{thm:ubd} to $p_{t'}(w,a)$,  we obtain that
\begin{align*}
\frac{C \log((t')^{-1})^{c_2}}{t'} \int_{\qball{h}{u}{\delta_k^{1/3}}} p_{t-t'}(w,a) d\qmeasure{h}(a) \geq \frac{1}{C t \log(t^{-1})^{c_2}}.
\end{align*}
After possibly increasing $C$,  we obtain that 
\begin{align*}
\int_{\qball{h}{u}{\delta_k^{1/3}}} p_{t-t'}(w,a) d\qmeasure{h}(a) \geq \frac{t'}{Ct\log(t^{-1})^{c_2}}.
\end{align*}
Note that for all $t \in [\delta_k^{4/c_2}, 2 \delta_k^{4/c_2}]$,  we have $t' \gtrsim \delta_k^{4/3} \log(\delta_k^{-1})^{-2c_2}$ and $t \log(t^{-1})^{2c_2} \lesssim \delta_k^{4/c_2} \log(\delta_k^{-1})^{c_2}$ with the implicit constants depending only on $c_2$,  and so 
\begin{align*}
I:=\int_{\delta_k^{4/c_2}}^{2\delta_k^{4/c_2}} \int_{\qball{h}{u}{\delta_k^{1/3}}} p_{t-t'}(w,a) d\qmeasure{h}(a) \gtrsim \exp(-\log(\delta_k^{-1})^{c_2})
\end{align*}
by taking $c_2$ to be larger if necessary.  It follows that the expected amount of time that the LBM $X$ starting from $z$ spends in $\qball{h}{u}{\delta_k^{1/3}}$ in the time interval $[0,\delta_k^{4/c_2} \log(\delta_k^{-1})^{c_2}]$ is at least $\exp(-2\log(\delta_k^{-1})^{c_2})$.  Therefore,  taking $c_3>c_2$ depending only on $c_2$ and dividing by $\delta^{4/c_2} \log(\delta_k^{-1})^{c_2}$,  we obtain that there exists $s \in \bigl(0, \delta_k^{4/c_3} \log(\delta_k^{-1})^{c_3} \bigr]$ such that
\begin{align*}
\int_{\qball{h}{u}{\delta_k^{1/3}}} p_s(z,a) d\qmeasure{h}(a) \geq \exp(-\log(\delta_k^{-1})^{c_3}).
\end{align*}
This proves the first inequality of the lemma.  The second inequality follows from the symmetry of the heat kernel and the first inequality.
\end{proof}

Recall that in order to be able to apply Lemma~\ref{lem:into_good_cluster},  we need to have with high probability as $k \to \infty$ that
\begin{align*}
\qfb{h}{z_j}{z_i}{\delta_k^{1/3} \log(\delta_k^{-1})^{\kappa}} \subseteq \qball{h}{z_i}{\delta_k^{1/\kappa_0}}
\end{align*}
whenever $\qdist{h}{z_i}{z_j} \geq 2^{-k/2}$ for some fixed and deterministic constant $\kappa_0 \in (0,\infty)$.  This will be a consequence of the following lemma together with the H\"older continuity of $d_h$ with respect to the Euclidean metric.

\begin{lemma}
\label{lem:isoperimetry}
There exists a deterministic constant $p > 0$ so that for $\qsphereinflaw$-a.e.\ instance of $(\CS,h,x,y)$ there exists $\epsilon_0 > 0$ so that for all $\epsilon \in (0,\epsilon_0)$ the following is true.  For every set $S \subseteq \CS$ with $\qdistnoarg{h}$-diameter at most $\epsilon^p$ there exists at most one connected component of $\CS \setminus S$ which contains a $\qdistnoarg{h}$-ball of radius $\epsilon$.
\end{lemma}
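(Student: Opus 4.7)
The plan is to argue by contradiction using the bi-H\"older comparison~\eqref{eq:comp_metric} between the LQG metric $\qdistnoarg{h}$ and the spherical metric $d$ on $\s^2$. Concretely, I would suppose that $S \subseteq \CS$ has $\qdistnoarg{h}$-diameter at most $\epsilon^p$ and that two distinct components $U_1, U_2$ of $\CS \setminus S$ each contain a $\qdistnoarg{h}$-ball of radius $\epsilon$. Fix any $w_0 \in S$ (the case $S = \emptyset$ is trivial) and apply the lower bound in~\eqref{eq:comp_metric} to conclude that $S \subseteq B_d(w_0, C^\alpha \epsilon^{p\alpha})$.

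Now the key topological input: the complement in $\s^2$ of an open spherical ball is path-connected, and this complement is a subset of $\CS \setminus S$, so it lies entirely in a single component of $\CS \setminus S$. Hence at most one component of $\CS \setminus S$ can fail to be contained in $B_d(w_0, C^\alpha \epsilon^{p\alpha})$; in particular one of $U_1, U_2$, say $U_2$, is entirely contained in that spherical ball and therefore has Euclidean diameter at most $2 C^\alpha \epsilon^{p\alpha}$. Applying the upper bound in~\eqref{eq:comp_metric} then forces $\diam_{\qdistnoarg{h}}(U_2) \leq C(2 C^\alpha \epsilon^{p\alpha})^\alpha = C \, 2^\alpha \, C^{\alpha^2} \epsilon^{p\alpha^2}$, which is $o(\epsilon)$ as $\epsilon \to 0$ provided $p\alpha^2 > 1$.

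On the other hand, $(\CS,\qdistnoarg{h})$ is a geodesic metric space and, for $\epsilon$ small enough, $\diam_{\qdistnoarg{h}}(\CS)$ is much larger than $\epsilon$ (this is a deterministic feature of $\qsphereinflaw$-a.e.\ surfaces). Consequently any $\qdistnoarg{h}$-ball of radius $\epsilon$ has $\qdistnoarg{h}$-diameter at least $\epsilon$: a $\qdistnoarg{h}$-geodesic from the centre to any sufficiently far-away point of $\CS$ passes through points at every intermediate $\qdistnoarg{h}$-distance in $(0,\epsilon)$. Therefore $\diam_{\qdistnoarg{h}}(U_2) \geq \epsilon$, which for $\epsilon$ small enough contradicts the upper bound of the previous step.

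Choosing any deterministic $p > 1/\alpha^2$ (for instance $p = 2/\alpha^2$, with $\alpha \in (0,1)$ the deterministic exponent from~\eqref{eq:comp_metric}) therefore gives the conclusion. The only nondeterministic ingredient is the random H\"older constant $C = C(h)$ appearing in~\eqref{eq:comp_metric}, which is finite $\qsphereinflaw$-a.e.; this finiteness determines the threshold $\epsilon_0 > 0$, while $p$ itself is deterministic. I do not anticipate a substantial obstacle: the whole argument is a direct topological consequence of bi-H\"older equivalence combined with the fact that the complement of a ball on $\s^2$ is connected, and the contradiction is purely a matter of comparing two polynomial rates in $\epsilon$.
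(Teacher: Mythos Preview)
Your argument is correct and follows essentially the same route as the paper: use the bi-H\"older comparison~\eqref{eq:comp_metric} to pass to the Euclidean metric, observe that a small set on $\s^2$ can separate off only components of small Euclidean (hence small $\qdistnoarg{h}$) diameter except possibly one, and choose $p>1/\alpha^2$. The paper's version is terser and does not spell out the geodesic argument for why a component of $\qdistnoarg{h}$-diameter $o(\epsilon)$ cannot contain a $\qdistnoarg{h}$-ball of radius $\epsilon$, but that is exactly the point you make explicit.
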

\begin{proof}
We can assume that $(\CS,h,x,y)$ is parameterized by $\s^2$.  Recall that the metric $\qdistnoarg{h}$ is H\"older continuous with respect to the Euclidean metric on $\s^2$.  This implies that there exists $\alpha \in (0,1)$ deterministic and $C \geq 1$ so that for all $u,v \in \CS$ we have that
\begin{equation}
\label{eqn:holder}
C^{-1} d(u,v)^{1/\alpha} \leq \qdist{h}{u}{v} \leq C d(u,v)^\alpha,
\end{equation}
where $d$ denotes the Euclidean metric on $\s^2$.  Fix $\epsilon, p > 0$ and suppose that $S \subseteq \CS$ has $\qdistnoarg{h}$-diameter at most $\epsilon^p$.  Then~\eqref{eqn:holder} implies that $S$ has Euclidean diameter at most $C \epsilon^{\alpha p}$.  All components of $\s^2 \setminus S$ except one have Euclidean diameter at most $C \epsilon^{\alpha p}$.  Therefore by~\eqref{eqn:holder} all components of $\s^2 \setminus S$ except one have $\qdistnoarg{h}$-diameter at most $C^{1+\alpha} \epsilon^{\alpha^2 p}$.  The result thus follows by assuming that $p$ is sufficiently large so that $\alpha^2 p > 1$.
\end{proof}

\begin{proof}[Proof of Proposition~\ref{prop:cover_by_good_annuli}]
Fix $\delta > 0$ and suppose that $(z_j)$ is an i.i.d.  sequence of points in $\CS$ chosen independently from $\qmeasure{h}$.  We explained after the statement of Proposition~\ref{prop:cover_by_good_annuli} why for $k \geq \cK_0$ and $\qdist{h}{z_i}{z_j} \geq 2^{-k/2}$ we have that $\CA_{i,j,k}$ satisfies properties~\eqref{it:good_annulus_contained} and~\eqref{it:distance}.  Lemma~\ref{lem:move_chunks} and the fact that the number of chunks in $\CA_{i,j,k}$ is at most $\log(\delta_k^{-1})^c$ for some constant $c>0$ by~\eqref{it:good_chunks_percolate_round_number} together with another union bound and the Borel-Cantelli lemma implies that, after possibly increasing the value of $\cK_0$, we have that~\eqref{it:move} also holds for each such $\CA_{i,j,k}$ by possibly increasing $\kappa$.  Lemma~\ref{lem:isoperimetry} together with the H\"older continuity of $\qdistnoarg{h}$ with respect to the Euclidean metric implies that by possibly increasing the values of $\cK_0$ and $\kappa$,  we have that if $\qdist{h}{z_i}{z_j} \geq 2^{-k/2}$  then $\qfb{h}{z_j}{z_i}{2^{-k} k^\kappa} \subseteq \qball{h}{z_i}{2^{-k/\kappa}}$. It thus follows from Lemma~\ref{lem:into_good_cluster} that by possibly increasing $\cK_0$ and $\kappa$ further we have that~\eqref{it:move_into_and_out} holds for each $\CA_{i,j,k}$ with $\qdist{h}{z_i}{z_j} \geq 2^{-k/2}$.

By possibly increasing $\cK_0$ further, for every $1 \leq i \leq N_k$ there exists $1 \leq j \leq N_k$ so that $\qdist{h}{z_i}{z_j} \geq 2^{-k/2}$.  For $z \in \qball{h}{z_i}{2^{-k}}$ we take $\CA = \CA_{i,j,k}$ (breaking ties in an arbitrary way).  This choice of $\CA$ then satisfies properties~\eqref{it:good_annulus_contained}, \eqref{it:distance}, \eqref{it:move}, and~\eqref{it:move_into_and_out}.
\end{proof}

\subsection{Completion of the proof}
\label{subsec:rest_of_proof}

We now turn to complete the proof of Theorem~\ref{thm:lbd}. 

\begin{proof}[Proof of Theorem~\ref{thm:lbd}]
Suppose that $(\CS,h,x,y)$ has distribution $\qsphereinflaw$.  Fix $u,v \in \CS$ and let $k \in \Z$ be such that $2^k \leq \qdist{h}{u}{v} < 2^{k+1}$.  Fix $t \in (0, \frac  1 2\Delta_0\qdist{h}{u}{v})$  where $\Delta_0 > 0$ is as in Proposition~\ref{prop:cover_by_good_annuli}.  For each $j \in \Z$ we let $\delta_j = t 2^{-j}$, in particular $\delta_k \leq \Delta_0$.   Let $\gamma$ be a geodesic from $u$ to $v$.  For each $j\geq k$ we consider the following chain of annuli.  We let $z_{0,j} = u = \gamma(0)$ and we let $\CA_{0,j}$ be an annulus which is centered at $z_{0,j}$ and satisfies the properties from Proposition~\ref{prop:cover_by_good_annuli} with parameter $\delta_j$ and let $\kappa>0$ be the constant from the statement of Proposition~\ref{prop:cover_by_good_annuli}. Given that we have defined $z_{0,j},\ldots,z_{n,j}$ we let $z_{n+1,j}$ be a point on the interval of $\gamma$ from $z_{n,j}$ to $v$ which is in $\CA_{n,j}$ and has $\qdistnoarg{h}$-distance at least $\delta_j^{1/3} (\log \delta_j^{-1})^{-\kappa}$ from $\partial \CA_{n,j}$.  Note that $z_{n+1,j}$ is well-defined since by possibly increasing $\kappa$,  we can assume that the $\qdistnoarg{h}$-distance between the inner and outer boundaries of $\CA_{n,j}$ is at least $2 \delta_j^{1/3} \log(\delta_j^{-1})^{-\kappa}$,  by condition~\eqref{it:distance} in the statement of Proposition~\ref{prop:cover_by_good_annuli}. We then let $\CA_{n+1,j}$ be an annulus which satisfies the properties from Proposition~\ref{prop:cover_by_good_annuli} centered at the point $z_{n+1,j}$ with parameter $\delta_j$.  We continue this until we find the point $z_{N_j,j}$ so that $\qdist{h}{z_{N_j,j}}{v} \leq \delta_j^{1/3} (\log \delta_j^{-1})^{-\kappa}$.  Since we have that $\delta_j^{1/3} (\log \delta_j^{-1})^{-\kappa} \leq \qdist{h}{z_{i,j}}{z_{i+1,j}} \leq \delta_j^{1/3} (\log \delta_j^{-1})^{\kappa}$, it follows that $\qdist{h}{u}{v}/(\delta_j^{1/3} (\log \delta_j^{-1})^{\kappa}) \leq N_j \leq \qdist{h}{u}{v}/(\delta_j^{1/3} (\log \delta_j^{-1})^{-\kappa})$.

We take $j_0 \in \Z$ to be the smallest $j \in \Z$ so that $\delta_j^{1/\kappa} \leq \delta_k^{4/3}$.  We note that $\log \delta_j^{-1}$ is comparable to $\log \delta_k^{-1}$ for $k \leq j \leq j_0$.  Therefore we will phrase estimates from Proposition~\ref{prop:cover_by_good_annuli} in terms of $\log \delta_k^{-1}$. Suppose that we have picked $w_j \in \CA_{0,j}$ for each $k \leq j \leq j_0$.  By Proposition~\ref{prop:cover_by_good_annuli}-\eqref{it:move_into_and_out} we know that there exists $s_{j_0} \leq \delta_{j_0}^{1/\kappa} \leq \delta_k^{4/3}$ so that
\begin{equation}
\label{eqn:get_into_small_annulus}
\int_{\qball{h}{w_{j_0}}{\delta_{j_0}^{1/3}}} p_{s_{j_0}}(u,a) \, d\qmeasure{h}(a) \geq \exp(-(\log \delta_k^{-1})^\kappa).
\end{equation}
For each $k+1 \leq j \leq j_0$, we let $m_j$ be the first index $i$ so that $\CA_{i,j} \cap \CA_{0,j-1} \neq \emptyset$.  Then we have that $m_j \leq (\log \delta_k^{-1})^\kappa$.  By iterating Proposition~\ref{prop:cover_by_good_annuli}-\eqref{it:move} $m_j$ times,  we see that by increasing the value of $\kappa$ if necessary, there exists $\delta_j^{4/3} \log(\delta_j^{-1})^{-\kappa} \leq s \leq \delta_j^{4/3} (\log \delta_j^{-1})^\kappa$ so that
\begin{equation}
\label{eqn:good_annulus_to_next}
\int_{\qball{h}{w_j}{\delta_j^{1/3}}} p_s(w_{j+1},a) \, d\qmeasure{h}(a) \geq \exp(-(\log \delta_k^{-1})^{2\kappa}).
\end{equation}	
By applying the semigroup property and iterating~\eqref{eqn:good_annulus_to_next} over $j_0 \leq j \leq k$ and combining with~\eqref{eqn:get_into_small_annulus}, we thus see that by possibly taking $\kappa$ to be larger,  there exists $\delta_k^{4/3} \log(\delta_k^{-1})^{-\kappa} \leq s \leq \delta_k^{4/3} (\log \delta_k^{-1})^\kappa$ so that
\begin{equation}
\label{eqn:get_into_big_annulus}
\int_{\qball{h}{w_k}{\delta_k^{1/3}}} p_s(u,a) \, d\qmeasure{h}(a) \geq \exp(-(k-j_0)(\log \delta_k^{-1})^{2\kappa}).	
\end{equation}
For each $i$, we let $v_{i,k}$ be a point in $\CA_{i,k} \cap \CA_{i+1,k}$ whose $\qdistnoarg{h}$-distance from both $\partial \CA_{i,k}$ and $\partial \CA_{i+1,k}$ is at least $\delta_k^{1/3} \log(\delta_k^{-1})^{-\kappa} / 2$. We let $v_{0,k} = w_k$.  It follows from Proposition~\ref{prop:cover_by_good_annuli}-\eqref{it:move} that there exists $\delta_k^{4/3} (\log \delta_k^{-1})^{-\kappa} \leq s \leq \delta_k^{4/3} (\log \delta_k^{-1})^{\kappa}$ so that
\begin{equation}
\label{eqn:big_annulus_to_big_annulus}
\int_{\qball{h}{v_{i,k}}{\delta_k^{1/3}}} p_s(v_{i-1,k},a) \, d\qmeasure{h}(a) \geq \exp(-(\log \delta_k^{-1})^\kappa).
\end{equation}
By applying the semigroup property and iterating~\eqref{eqn:big_annulus_to_big_annulus} over $1 \leq i \leq N_k$, we see that there exists $N_k \delta_k^{4/3} (\log \delta_k^{-1})^{-\kappa} \leq s \leq N_k \delta_k^{4/3} (\log \delta_k^{-1})^{\kappa}$ so that
\begin{equation}
\label{eqn:cross_big_annuli}	
\int_{\qball{h}{v_{N_k,k}}{\delta_k^{1/3}}} p_s(v_{0,k},a) \, d\qmeasure{h}(a) \geq \exp(-N_k (\log \delta_k^{-1})^\kappa).
\end{equation}
Arguing in the same way as~\eqref{eqn:get_into_big_annulus}, we have that there exists $\delta_k^{4/3} \log(\delta_k^{-1})^{-\kappa} \leq s \leq \delta_k^{4/3} (\log \delta_k^{-1})^{\kappa}$ so that
\begin{equation}
\label{eqn:get_out_of_big_annulus}
\int_{\qball{h}{v_{N_k,k}}{\delta_k^{1/3}}} p_s(a,v) \, d\qmeasure{h}(a) \geq \exp(-(k-j_0)(\log \delta_k^{-1})^{2\kappa}).	
\end{equation}
Combining~\eqref{eqn:get_into_big_annulus}, \eqref{eqn:cross_big_annuli} and~\eqref{eqn:get_out_of_big_annulus}, increasing the value of $\kappa$ if necessary, and applying the semigroup property we see that there exists 
$t_0 \in [t (\log \delta_k^{-1})^{-\kappa}, t(\log \delta_k^{-1})^{\kappa}]$
so that
\begin{equation}
\label{eqn:initial_lower_bound}
p_{t_0}(u,v) \geq \exp(- N_k(\log \delta_k^{-1})^\kappa ).
\end{equation}
Therefore,  the claim of Theorem~\ref{thm:lbd} holds with $t_0$ in place of $t$. We will now complete the proof by establishing the result for $t$ in place of $t_0$.  If we apply the argument described above but with $t (\log \delta_k^{-1})^{-\kappa}/2$ in place of $t$,  then~\eqref{eqn:initial_lower_bound} implies that there exists $t_1 \in [t (\log \delta_k^{-1})^{-2\kappa}/2, t/2]$ so that $p_{t_1}(u,v)$ satisfies the desired lower bound.  We have that
\[ p_t(u,v) \geq \int_{\qball{h}{v}{\delta_k^{1/3}(\log \delta_k^{-1})^{-\kappa}}} p_{t-t_1}(u,a) \, p_{t_1}(a,v) \, d\qmeasure{h}(a).\]

By arguing as above,  we have that for all $a \in \qball{h}{u}{\delta_k^{1/3} \log(\delta_k^{-1})^{-\kappa}}$,  $p_{t_1}(a,u)$ satisfies the desired lower bound. Therefore we just need to get a lower bound on
\[ \int_{\qball{h}{u}{\delta_k^{1/3}(\log \delta_k^{-1})^{-\kappa}}} p_{t-t_1}(u,a) \, d\qmeasure{h}(a) = P_u[ X_{t-t_1} \in \qball{h}{u}{\delta_k^{1/3}(\log \delta_k^{-1})^{-\kappa}}].\]
Take $t_2 = t (\log \delta_k^{-1})^{-2\kappa}$.  From Lemma~\ref{lem:on_diagonal_lbd}, we have that
\[ \int_\CS p_{t-t_1}(u,a) \, p_{t_2}(a,u) \, d\qmeasure{h}(a) =  p_{t-t_1+t_2}(u,u)  \geq \frac{1}{C t (\log t^{-1})^\kappa}\]
for some random constant $C>0$. The  upper heat kernel estimate in Theorem~\ref{thm:ubd} implies that $p_{t_2}(a,u)$ is negligible if $a \notin \qball{h}{u}{\delta_k^{1/3}(\log \delta_k^{-1})^{-\kappa}}$.  Thus by possibly adjusting the value of $C$, we have that
\[ \int_{\qball{h}{u}{\delta_k^{1/3}(\log \delta_k^{-1})^{-\kappa}}} p_{t-t_1}(u,a) \, p_{t_2}(a,u) \, d\qmeasure{h}(a) \geq \frac{1}{C t (\log t^{-1})^\kappa}.\]
Applying Theorem~\ref{thm:ubd} to $p_{t_2}(a,u)$ yields that
\[ \frac{C (\log t_2^{-1})^\kappa}{t_2} \int_{\qball{h}{u}{\delta_k^{1/3}(\log \delta_k^{-1})^{-\kappa}}} p_{t-t_1}(u,a) \, d\qmeasure{h}(a) \geq \frac{1}{C t (\log t^{-1})^\kappa}.\]
Rearranging and increasing the value of $C$ gives
\[\int_{\qball{h}{u}{\delta_k^{1/3}(\log \delta_k^{-1})^{-\kappa}}} p_{t-t_1}(u,a) \, d\qmeasure{h}(a) \geq \frac{t_2}{C t (\log t^{-1})^{2\kappa}}.\]
Combining this with the above implies that $P_u[ X_{t-t_1} \in \qball{h}{u}{\delta_k^{1/3} (\log \delta_k^{-1})^{-\kappa}}]$ satisfies the desired lower bound.
\end{proof}

\appendix

\section{$\SLE_6$ hulls at the times when the tip is on the boundary are Jordan domains}
\label{sec:SLE6-chunk-Jordan}

In Sections~\ref{sec:percolation-exploration}, \ref{sec:exit-time-bounds} and~\ref{sec:LBM-HK-bounds},
we make use of the hulls of $\SLE_6$ to construct nice cellular decompositions of quantum disks
and wedges. An essential feature of the hulls of $\SLE_6$, on which our arguments heavily rely,
is that \emph{they are Jordan domains at the times when the tip of $\SLE_6$ is located on the boundary}.
For completeness, here we give a detailed proof of this fact on the basis of some
fundamental results from the theory of imaginary geometry developed in \cite{ms2016ig1,ms2016ig3}.

We first give the precise formulation of this property in the case of radial $\SLE_6$,
since our arguments in Sections~\ref{sec:percolation-exploration}, \ref{sec:exit-time-bounds}
and~\ref{sec:LBM-HK-bounds} require it mainly for this case. In the same way as in
Subsection~\ref{subsec:sle_explorations}, for a radial $\SLE_6$ $\eta'$ on $\D$ targeted at $0$
and $t \in [0, \inf(\eta')^{-1}(0))$, we define the \emph{hull} $K_{t}$ of $\eta'([0,t])$
as the complement in $\D$ of the $0$-containing component of $\D \setminus \eta'([0,t])$.

\begin{proposition} \label{prop:SLE6-chunk-Jordan-D}
Let $x \in \partial \D$ and let $\eta'$ be a radial $\SLE_6$ on $\D$ from $x$
targeted at $0$. Then a.s., for any $t\in(0,\inf(\eta')^{-1}(0))$ with 
$\partial K_{t} \cap \partial \D \not= \partial \D$ and $\eta'(t) \in \partial \D$,
\begin{equation}\label{eq:SLE6-chunk-Jordan-D}
\begin{minipage}{310pt}
$K_{t} \setminus \partial K_{t}$ is a Jordan domain in $\mathbb{C}$ with boundary
$\partial K_{t}$, and $\partial K_{t} \cap \partial \D$ is a compact interval in
$\partial \D$ containing $\eta'(0)=x$ in its interior.
\end{minipage}
\end{equation}
\end{proposition}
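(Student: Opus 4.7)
The strategy is to use the imaginary geometry coupling of radial $\SLE_{6}$ with a Gaussian free field, as developed in \cite{ms2016ig1,ms2016ig3}, to represent the left and right frontiers of $K_{t}$ as simple $\SLE_{8/3}(\underline{\rho})$-type flow lines, and then to assemble $\partial K_{t}$ as the concatenation of these two simple arcs with an arc of $\partial \D$, producing a Jordan curve.

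First, I would couple $\eta'$ with a Gaussian free field $h$ on $\D$ with appropriate boundary data so that $\eta'$ arises as the counterflow line of $h$ from $x$ to $0$. For each $t \in (0, \inf(\eta')^{-1}(0))$, let $\eta_{t}^{L}$ (resp.\ $\eta_{t}^{R}$) denote the flow line of $h$ with angle $+\pi/2$ (resp.\ $-\pi/2$) started from $\eta'(t)$ and targeted at $x$. By the interaction rules between flow and counterflow lines in \cite{ms2016ig1,ms2016ig3}, the ranges of $\eta_{t}^{L}$ and $\eta_{t}^{R}$ are precisely the left and right frontiers of $K_{t}$ as seen from the tip, and they can be identified as $\SLE_{8/3}(\underline{\rho})$ processes with an explicit force vector. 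Since $8/3<4$, these flow lines are a.s.\ simple and continuous up to $\partial \D$.

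Second, I fix $t$ satisfying the two hypotheses in~\eqref{eq:SLE6-chunk-Jordan-D}, and denote by $a_{t}$ (resp.\ $b_{t}$) the first $\partial \D$-hitting point of $\eta_{t}^{L}$ (resp.\ $\eta_{t}^{R}$) strictly after $\eta'(t)$. By the monotonicity and boundary interaction of flow lines in $h$, $\eta_{t}^{L}$ and $\eta_{t}^{R}$ each meet $\partial \D$ only at $\eta'(t)$ and then at $a_{t}, b_{t}$ before merging to reach $x$ along $\partial \D$, and one has $a_{t}\neq b_{t}$ (otherwise $K_{t}$ would have swallowed all of $\partial \D$, contrary to the hypothesis $\partial K_{t}\cap\partial \D\neq \partial\D$). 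Letting $I_{t}$ be the $\partial \D$-arc from $a_{t}$ to $b_{t}$ not containing $\eta'(t)$, the initial segments of $\eta_{t}^{L}$ and $\eta_{t}^{R}$ up to $a_{t}$ and $b_{t}$ together with $I_{t}$ concatenate into a Jordan curve which is exactly $\partial K_{t}$, so $K_{t}\setminus \partial K_{t}$ is the bounded Jordan domain it bounds. The inclusion $x\in \mathrm{int}(I_{t})$ follows because the two simple flow lines $\eta_{t}^{L}, \eta_{t}^{R}$ merge at $x$ but meet $\partial \D$ on the way only at $a_{t}, b_{t}$, which lie strictly on opposite sides of $x$ along $\partial \D$.

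The main obstacle will be the careful verification that $\eta_{t}^{L}, \eta_{t}^{R}$ are continuous up to and simple on $\partial \D$, and that they do not touch $\partial \D$ at any intermediate points (which would fragment $\partial K_{t}\cap \partial \D$ into several components). This is a delicate application of the boundary hitting and monotonicity results for flow lines in the radial imaginary geometry coupling; conditioning on $\eta'|_{[0,t]}$ and using the local absolute continuity between radial and chordal flow lines of a GFF away from the target point reduces the question to the chordal setting, where the corresponding statements are the main content of \cite{ms2016ig1}.
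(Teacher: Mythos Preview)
Your approach has a genuine gap concerning the quantifier order. The proposition asserts that \emph{almost surely, for every} $t$ with $\eta'(t)\in\partial\D$ and $\partial K_t\cap\partial\D\neq\partial\D$, the Jordan property holds. You instead fix such a $t$ and argue for it individually; but the set of such times is uncountable (it has positive Hausdorff dimension), and $t$ is not a stopping time, so you cannot pass from ``for each fixed $t$, a.s.'' to ``a.s., for all $t$'' by a union bound, nor can you condition on $\eta'|_{[0,t]}$ and invoke the Markov/flow-line description in the usual way. Your final paragraph gestures at local absolute continuity with the chordal setting, but this does not resolve the quantifier issue.

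The paper handles exactly this difficulty, in the chordal setting first, by reducing to a \emph{countable} family of times. It proves the Jordan property at the hitting times $T_a$ for $a\in\mathbb{Q}\setminus\{0\}$ (via the description of the left/right boundaries of the full trace as simple $\SLE_{8/3}(\rho)$ curves, cf.\ \cite[Theorem~1.4]{ms2016ig1}), then shows by a careful topological argument---using reversibility, the domain Markov property, and the absence of triple points---that every $t$ with $\eta'(t)\in\mathbb{R}$ is necessarily equal to some $T_b$, and that the Jordan property at $T_b$ follows from a limiting construction along $T_{b_n}$ with $b_n\in\mathbb{Q}$, $b_n\uparrow b$. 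The radial case is then deduced from the chordal one via the locality of $\SLE_6$, lifting $\eta'$ through the exponential map. Your frontier-at-time-$t$ idea is morally related to the paper's use of boundary flow lines, but the paper applies it only to the \emph{full} trace (and its time-reversal), precisely to stay within a countable framework.
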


Proposition~\ref{prop:SLE6-chunk-Jordan-D} can be obtained by combining with a version of the
locality of $\SLE_6$ the analogous statement for chordal $\SLE_6$ on $\h$ stated as follows.

\begin{proposition} \label{prop:SLE6-chunk-Jordan-H}
Let $\eta'$ be a chordal $\SLE_6$ on $\h$ from $0$ to $\infty$.
Then a.s., for any $t\in(0,\infty)$ with $\eta'(t)\in\mathbb{R}$,
\begin{equation} \label{eq:SLE6-chunk-Jordan-H}
\begin{minipage}{275pt}
$K_{t} \setminus \partial K_{t}$ is a Jordan domain in $\mathbb{C}$ with boundary
$\partial K_{t}$, and $\partial K_{t} \cap \mathbb{R}$ is a compact interval in
$\mathbb{R}$ containing $0$ in its interior.
\end{minipage}
\end{equation}
\end{proposition}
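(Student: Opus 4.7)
The plan is to reduce Proposition~\ref{prop:SLE6-chunk-Jordan-H} to a consequence of the imaginary-geometry description of the left and right outer boundaries of $\eta'$ as simple flow lines of an auxiliary Gaussian free field, combined with a countable-density argument to handle simultaneity over all $t$.

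First, I would work in the imaginary-geometry coupling of Miller--Sheffield: let $h$ be a GFF on $\h$ with boundary data chosen so that $\eta'$ is the counterflow line of $h$ from $0$ to $\infty$ (so $\kappa=8/3$, $\kappa'=6$); see \cite{ms2016ig1,ms2016ig3}. For each $t>0$, the left outer boundary $\eta^{L}_{t}$ of $K_{t}$, viewed as a curve from $\eta'(t)$ to the leftmost point of $\partial K_{t}\cap\mathbb{R}$, and the right outer boundary $\eta^{R}_{t}$, from $\eta'(t)$ to the rightmost such point, can be identified with flow lines of $h$ at angles $+\pi/2$ and $-\pi/2$ respectively. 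Because these flow lines have effective $\kappa$-parameter $8/3<4$, they are a.s.\ simple curves, and with the relevant force-point weights, they do not hit $\mathbb{R}$ except at their two endpoints.

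Second, I would fix a single time $t>0$ with $\eta'(t)\in\mathbb{R}$ and $\partial K_{t}\cap\mathbb{R}\ne\mathbb{R}$. Writing $x^{L},x^{R}\in\mathbb{R}$ for the leftmost and rightmost points of $\partial K_{t}\cap\mathbb{R}$, the previous step gives that $\eta^{L}_{t}$ is a simple curve from $\eta'(t)$ to $x^{L}$ contained in $\h\cup\{\eta'(t),x^{L}\}$, and similarly for $\eta^{R}_{t}$. The monotonicity of flow lines at different angles (see \cite[Theorem~1.5]{ms2016ig1}) implies that $\eta^{L}_{t}$ and $\eta^{R}_{t}$ have disjoint interiors and meet only at $\eta'(t)$, so the concatenation of $\eta^{L}_{t}$, the segment $[x^{L},x^{R}]\subset\mathbb{R}$, and the reverse of $\eta^{R}_{t}$ is a Jordan curve. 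Since $K_{t}$ is the hull generated by $\eta'|_{[0,t]}$, it coincides with the closure of the bounded region enclosed by this Jordan curve, which yields~\eqref{eq:SLE6-chunk-Jordan-H} at this fixed $t$.

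Third, to upgrade to simultaneity, I would work with a countable collection of stopping times. For each pair of rationals $p<0<q$, let $\tau_{p,q}$ be the first time the unbounded component of $\h\setminus\eta'([0,t])$ has real-boundary contained in $(-\infty,p]\cup[q,\infty)$; at each such stopping time $\eta'(\tau_{p,q})\in\mathbb{R}$ a.s., and the previous step yields~\eqref{eq:SLE6-chunk-Jordan-H} on a single probability-one event. An arbitrary $t$ with $\eta'(t)\in\mathbb{R}$ and $\partial K_{t}\cap\mathbb{R}\ne\mathbb{R}$ either equals some $\tau_{p,q}$ (obtained by choosing rationals strictly outside $\partial K_{t}\cap\mathbb{R}$) or is sandwiched between two such $\tau_{p,q}$'s with the same hull, by monotonicity of $K_{\cdot}$ and the continuity of $\eta'$; this promotes~\eqref{eq:SLE6-chunk-Jordan-H} to hold simultaneously for all such $t$. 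The main obstacle is the rigorous identification of $\eta^{L}_{t},\eta^{R}_{t}$ with the advertised flow lines at a boundary-touching time, and the verification that these flow lines do not return to $\mathbb{R}$ before their prescribed terminal points; this requires a careful application of the flow/counterflow interaction rules of \cite{ms2016ig3} together with the non-boundary-hitting criterion for $\SLE_{\kappa}(\underline{\rho})$ with $\kappa\le 8/3$.
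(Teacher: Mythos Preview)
Your overall plan—use the imaginary-geometry description of the outer boundaries as simple $\SLE_{8/3}$-type curves to get the Jordan property at suitable stopping times, then promote to all boundary-touching times—is exactly the paper's strategy. But your Step~3 contains a genuine gap that the paper spends considerable effort closing.

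The claim that an arbitrary $t$ with $\eta'(t)\in\mathbb{R}$ ``either equals some $\tau_{p,q}$ or is sandwiched between two such $\tau_{p,q}$'s with the same hull'' is incorrect. The hull process $t\mapsto K_t$ is strictly increasing, so no two distinct times have the same hull. And if $\partial K_t\cap\mathbb{R}=[a,b]$ with $a,b$ irrational, then for rationals $p<a$, $q>b$ one has $\tau_{p,q}>t$, while for rationals $a<p'<0<q'<b$ one has $\tau_{p',q'}<t$ in general; there is no reason any $\tau_{p,q}$ should equal $t$. Moreover, even granting the Jordan property along a sequence of times converging to $t$, one cannot simply pass to the limit: limits of Jordan curves need not be Jordan.

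The paper handles this as follows. It first establishes the Jordan property at the hitting times $T_a$ for rational $a$ (your Step~2 at stopping times, via locality and the flow-line picture). Then for an arbitrary $t$ with $\eta'(t)=b\in\mathbb{R}$, it does two things. First (Step~7), it takes rationals $b_n\uparrow b$ and carefully patches together the Jordan boundaries $\partial K_{T_{b_n}}$ into a single simple closed curve equal to $\partial K_{T_b}$; this requires choosing the $b_n$ inductively so that successive top-boundaries are disjoint except at single junction points, and verifying the patched curve is actually simple at the limit point $b$. Second (Step~8), it proves that $t=T_b$; this is the most delicate step and uses the \emph{reversibility} of chordal $\SLE_6$ \cite{ms2016ig3} to run the curve backward from $\infty$, apply the flow-line description to the reversed picture at a rational time $s\in(T_b,t)$, and derive a contradiction with the no-triple-point property~\eqref{eq:SLE6-no-triple-point} if $T_b<t$. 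Your proposal does not contain any analogue of either of these steps, and the ``main obstacle'' you flag at the end—identifying the outer boundaries of $K_t$ with flow lines at a general boundary-touching time—is essentially equivalent to the problem you are trying to solve.
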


Note that each $t\in(0,\infty)$ with $\eta'(t)\in\mathbb{R}$ and the property
\eqref{eq:SLE6-chunk-Jordan-H} also satisfies
\begin{equation} \label{eq:SLE6-chunk-Jordan-H-etat}
\eta'(t)\in\{\max(\partial K_{t} \cap \mathbb{R}),\min(\partial K_{t} \cap \mathbb{R})\},
\end{equation}
since for any $t,s\in[0,\infty)$ with $t<s$ we have $K_{t} \not= K_{s}$ and hence
$\eta'((t,s]) \cap \h \not \subset K_{t}$ (see, e.g., \cite[Sections 7--9]{BN16LectSLE}).

\begin{proof}
\emph{Step~1.} We first recall some basic properties of the chordal $\SLE_6$ $\eta'$.
Set $T_{a}:=\inf\{ t \in [0,\infty) \mid \eta'(t) \in [a,\infty)\}$ and
$T_{-a}:=\inf\{ t \in [0,\infty) \mid \eta'(t) \in (-\infty,-a]\}$ for $a \in (0,\infty)$
and $T_{0}:=0$, so that $T_{a}=\inf\{t\in[0,\infty) \mid a \in \partial K_{t}\}$
for any $a\in\mathbb{R}$ since $\partial K_{t} \cap \mathbb{R}$ is non-empty and connected
for any $t\in(0,\infty)$ (see, e.g., \cite[Sections 7--9]{BN16LectSLE}). Note that
$\lim_{b\downarrow 0}T_{-b}<T_{a}\leq T_{a}\vee T_{-a}<\infty$ a.s.\ for each $a\in(0,\infty)$
(see, e.g., \cite[Propositions 6.33 and 6.34]{law2005conformally} or \cite[Proposition 10.3-(b)]{BN16LectSLE}),
which together with the scale invariance of $\eta'$ under the parameterization by half-plane capacity
(see, e.g., \cite[Proposition 6.5]{law2005conformally} or \cite[Proposition 9.3]{BN16LectSLE})
easily implies that $\lim_{b\downarrow 0}T_{b}\vee T_{-b}=0$ a.s. In particular, a.s.,
\begin{equation} \label{eq:SLE6-both-sides-0-infty}
\begin{minipage}{227pt}
$\eta'([0,\infty)) \cap (-\infty,0)$ and $\eta'([0,\infty)) \cap (0,\infty)$
are unbounded and their boundaries in $\mathbb{C}$ contain $0$.
\end{minipage}
\end{equation}
We also easily see, for $a \in \mathbb{R} \setminus \{0\}$ from
\cite[Proposition 6.34]{law2005conformally} or \cite[Proposition 10.3-(b)]{BN16LectSLE},
and for $a=0$ from $\lim_{t\to\infty}|\eta'(t)|=\infty$ a.s.\ (see, e.g.,
\cite[Proposition 6.10]{law2005conformally} or \cite[Proposition 11.7]{BN16LectSLE})
and the above-mentioned scale invariance of $\eta'$, that
\begin{equation} \label{eq:SLE6-avoid-given-real}
\text{$a \not\in \eta'((0,\infty))$ a.s.\ for each $a \in \mathbb{R}$.}
\end{equation}
It is also known by \cite[Remark 5.3]{mw2017intSLE} that a.s.
\begin{equation} \label{eq:SLE6-no-triple-point}
\text{no $(s,t,u)\in\mathbb{R}^{3}$ with $0\leq s<t<u$ satisfies $\eta'(s)=\eta'(t)=\eta'(u)$.}
\end{equation}

\emph{Step~2.}
Let $\eta'_{L}$ and $\eta'_{R}$ denote the \emph{left} and \emph{right boundaries} of $\eta'([0,\infty))$,
respectively, i.e., $\eta'_{L}:=\partial U_{L} \setminus i(-\infty,0)$ and 
$\eta'_{R}:=\partial U_{R} \setminus i(-\infty,0)$, where
$i(-\infty,0) := \{ ia \mid a \in (-\infty,0) \}$ and $U_{L}$ and $U_{R}$ denote
the components of $\mathbb{C} \setminus ( \eta'([0,\infty)) \cup i(-\infty,0))$
containing $-1-i$ and $1-i$, respectively. We claim that a.s.\ $\eta'_{L}$ and
$\eta'_{R}$ are simple curves starting from $0$ and tending to $\infty$
and satisfy $\eta'_{L}\cap(0,\infty)=\emptyset=\eta'_{R}\cap(-\infty,0)$.
Indeed, let $\psi:\h \to \h$ denote the conformal map given by $\psi(z):=-1/z$,
set $\eta_{R}:=(\psi(\eta'_{L})\cup\{0\})\setminus\{\infty\}$ and
$\eta_{L}:=(\psi(\eta'_{R})\cup\{0\})\setminus\{\infty\}$.
By \cite[Theorem 1.4]{ms2016ig1} (see also \cite[Fig.~5]{mw2017intSLE}),
$\eta_{R}$ (resp.\ $\eta_{L}$) is a (chordal) $\SLE_{8/3}((8/3)/2-2;8/3-4)$
(resp.\ $\SLE_{8/3}(8/3-4;(8/3)/2-2)$) curve (on $\h$) from $0$ to $\infty$ with force points
$0^{+}$ and $0^{-}$, a variant of $\SLE_{8/3}$ introduced in \cite[Section 2]{ms2016ig1},
then $\eta_{R}\cap(-\infty,0)=\emptyset=\eta_{L}\cap(0,\infty)$ a.s.\ by \cite[Remark 5.3]{ms2016ig1}
and hence $\eta'_{L}\cap(0,\infty)=\emptyset=\eta'_{R}\cap(-\infty,0)$ a.s.
Moreover, a.s.\ $\eta'_{L}$ and $\eta'_{R}$ are simple curves starting from $0$ and tending to $\infty$,
because $\eta_{R}$ and $\eta_{L}$ are continuous curves starting from $0$ and tending to $\infty$
a.s.\ by \cite[Proposition 7.3]{ms2016ig1} and are simple curves a.s.\ by the fact that
an $\SLE_{\kappa}(\rho^{L};\rho^{R})$ curve $\eta$ with force points $0^{+}$ and $0^{-}$
is a simple curve a.s.\ for any $\kappa\in(0,4)$ and any $\rho^{L},\rho^{R}\in(-2,\infty)$.

This last fact can be verified as follows. \cite[Lemma~7.1]{ms2016ig1}\footnote{We remark that ``let $\eta_{\psi}$ be the flow line of $h_{\psi}$ starting from
$0$ and targeted at $\infty$'' in \cite[p.\ 668, lines 18--19]{ms2016ig1} should read
``let $\eta_{\psi}$ denote the time reparameterization of $\psi \circ \eta_{C}$ by half-plane capacity''.}
implies that the law of an $\SLE_{\kappa}(\rho^{L};\rho^{R})$ curve $\eta$ can be realized as a certain
conditional law of a conformal image of a segment of an $\SLE_{\kappa}(\widehat{\rho}^{L};\widehat{\rho}^{R})$ curve
$\widehat{\eta}$ from $0$ to $\infty$ with force points $0^{+}$ and $0^{-}$ for a suitable choice of
$\widehat{\rho}^{L},\widehat{\rho}^{R}\in[\kappa/2-2,\infty)$.
Then since a chordal $\SLE_{\kappa}$ on $\h$ from $0$ to $\infty$
is a simple curve a.s.\ (see, e.g., \cite[Propositions 6.9 and 6.12]{law2005conformally} or
\cite[Propositions 11.3 and 11.5]{BN16LectSLE}) and has law mutually absolutely continuous
with respect to that of $\widehat{\eta}$ on any compact time interval in $(0,\infty)$
under the parameterization by half-plane capacity by 
$\widehat{\rho}^{L},\widehat{\rho}^{R}\in[\kappa/2-2,\infty)$ and \cite[Remark 2.3]{ms2016ig1},
it follows that $\widehat{\eta}$ and thereby $\eta$ are simple curves a.s.

\emph{Step~3.} Next, we verify that for each $a\in\mathbb{R}\setminus\{0\}$ we a.s.\ have
\eqref{eq:SLE6-chunk-Jordan-H} with $t=T_{a}$. Indeed, let $g$ be the M\"{o}bius transformation
given by $g(z):=az/(z+a)$, which maps $\h,0,\infty,-a$ onto $\h,0,a,\infty$, respectively. Then
$g\circ\eta'$ is (a time reparameterization of) a chordal $\SLE_6$ on $\h$ from $0$ to $a$
by the conformal invariance of chordal $\SLE_6$ (see, e.g., \cite[Proposition 9.3]{BN16LectSLE}),
and the reparameterization of $g\circ\eta'|_{[0,T_{-a}]}$ by its half-plane capacity
has the same law as $\eta'|_{[0,T_{a}]}$ by the locality of $\SLE_6$
(see, e.g., \cite[Proposition 6.14]{law2005conformally} or \cite[Theorem 13.2]{BN16LectSLE}).
In particular, $g^{-1}(\h \setminus K_{T_{a}})$ has the same law as the component
$U_{-a}$ of $\h \setminus \eta'([0,T_{-a}])$ whose boundary contains $-a$,
but $U_{-a}$ coincides with the component of $\h \setminus \eta'([0,\infty))$
whose boundary contains $-a$ by $\eta'((T_{-a},\infty)) \cap K_{T_{-a}} \subset \partial K_{T_{-a}}$.
Then by~\eqref{eq:SLE6-both-sides-0-infty}, \eqref{eq:SLE6-avoid-given-real} and
Step~2, a.s.\ $U_{-a}$ is a Jordan domain whose boundary $\partial U_{-a}$
is of the form $\gamma((0,1)) \cup [\alpha,\beta]$ for some $\alpha,\beta\in\mathbb{R}$
with $\alpha<-a<\beta$ and $0 \not\in [\alpha,\beta]$ and some simple curve
$\gamma:(0,1) \to \h$ with $\lim_{s\downarrow 0}\gamma(s)=\alpha$ and $\lim_{s\uparrow 1}\gamma(s)=\beta$.
Thus a.s.\ $K_{T_{a}} \setminus \partial K_{T_{a}}$ is a Jordan domain with boundary $\partial K_{T_{a}}$,
which is of the form $g\bigl(\gamma((0,1)) \cup (\mathbb{R} \setminus [\alpha,\beta])\bigr)\cup\{a\}$
for some such $\alpha,\beta,\gamma$, proving~\eqref{eq:SLE6-chunk-Jordan-H} with $t=T_{a}$.

\emph{Step~4.}
By using the reversibility of chordal $\SLE_6$ proved in \cite[Theorem 1.1]{ms2016ig3},
which states that $\eta'_{\leftarrow}:=\iota\circ\eta'((\cdot)^{-1})$
($\eta'_{\leftarrow}(0):=\infty$ and $\iota(z):=-\overline{z}$ for $z\in\mathbb{C}$)
is (a time reparameterization of) a chordal $\SLE_6$ on $\h$ from $\infty$ to $0$, the results
of Step~2 can be (partially) extended to $\eta'([0,s))$ for each $s\in(0,\infty)$ as follows.
Let $K_{s,\infty}$ denote the complement in $\h$ of the component of
$\h\setminus\eta'([s,\infty))=\iota(\h\setminus\eta'_{\leftarrow}([0,s^{-1}]))$
whose boundary in $\mathbb{C}$ contains $0$.
Noting that $-1/\eta'_{\leftarrow}$ is (a time reparameterization of) a chordal $\SLE_6$
on $\h$ from $0$ to $\infty$ by the reversibility and the conformal invariance of chordal
$\SLE_6$ and applying to $-1/\eta'_{\leftarrow}$ the fact that $\lim_{b\downarrow 0}T_{b}\vee T_{-b}=0$
a.s.\ mentioned in Step~1 and the result of Step~3 for each $a\in\mathbb{Q}\setminus\{0\}$,
we easily see that, a.s.,
\begin{equation} \label{eq:Ksinfty-bounded-complement}
\text{$\h \setminus K_{s,\infty}$ is a bounded simply connected domain in $\mathbb{C}$.}
\end{equation}
Let $g_{s}:\h \setminus K_{s,\infty} \to \h$ be the unique conformal map such that
$\lim_{|z|\to\infty}(g_{s}(-1/z)-z)=0$
(see, e.g., \cite[Proposition 3.36]{law2005conformally} or \cite[Theorem 4.3]{BN16LectSLE}).
Then the domain Markov property of chordal $\SLE_6$ (see, e.g., \cite[Proposition 9.4]{BN16LectSLE})
applied to $\eta'_{\leftarrow}$ at time $s^{-1}$ implies that the conditional law of
$(\iota\circ g_{s}\circ\iota)\circ\eta'_{\leftarrow}|_{[s^{-1},\infty)}=\iota\circ g_{s}\circ\eta'((\cdot)^{-1})|_{[s^{-1},\infty)}$
given $\eta'|_{[s,\infty)}$ is (a time reparameterization of)
a chordal $\SLE_6$ on $\h$ from $\iota\circ g_{s}(\eta'(s))$ to $\infty$.
Applying the reversibility \cite[Theorem 1.1]{ms2016ig3} of chordal $\SLE_6$ to
$\iota\circ g_{s}\circ\eta'((\cdot)^{-1})|_{[s^{-1},\infty)}$ and then using the
domain Markov property of chordal $\SLE_6$, we further see that
the conditional law of $\varphi_{s}\circ\eta'|_{[0,s)}$ given $\eta'|_{[s,\infty)}$
is (a time reparameterization of) a chordal $\SLE_6$ on $\h$ from $0$ to $\infty$,
where $\varphi_{s}:\h \setminus K_{s,\infty} \to \h$ is the conformal map defined by
$\varphi_{s}(z):=(g_{s}(\eta'(s))-g_{s}(z))^{-1}$. It thus follows from Step~2 that
for each $s \in (0,\infty)$, a.s.
\begin{equation} \label{eq:SLE6-left-right-simple-s}
\begin{minipage}{352pt}
the left and right boundaries $\eta'^{s}_{L},\eta'^{s}_{R}$ of $\varphi_{s}(\eta'([0,s)))$
are simple curves starting from $0$ and tending to $\infty$ and satisfy
$\eta'^{s}_{L}\cap(0,\infty)=\emptyset=\eta'^{s}_{R}\cap(-\infty,0)$.
\end{minipage}
\end{equation}
Recall (see, e.g., \cite[Theorems II.(4.1) and VI.(2.2)]{Wh1963} and \cite[Theorem 2.1]{Po1992})
that $\varphi_{s}^{-1}$ has a continuous extension to $\overline{\h}\cup\{\infty\}$,
so that $\varphi_{s}^{-1}(\mathbb{R}\cup\{\infty\})=\partial(\h \setminus K_{s,\infty})$.
Note that, while $(\varphi_{s}^{-1})^{-1}(z)$ is a singleton for any
$z\in\partial(\h \setminus K_{s,\infty}) \setminus \partial K_{s,\infty}$ by an application
of Carath\'{e}odory's theorem (see, e.g., \cite[Theorem 2.6 and Proposition 2.14]{Po1992}),
$(\varphi_{s}^{-1})^{-1}(z)$ may have two or more elements for
$z\in\partial(\h \setminus K_{s,\infty}) \cap \partial K_{s,\infty}$.

\emph{Step~5.} From~\eqref{eq:SLE6-avoid-given-real}, Step~3, Step~4 and the domain Markov
property of chordal $\SLE_6$, we choose as follows an event for $\eta'$ of probability $1$
on which the assertion of the proposition will be verified in Steps~6--8 below.
For each $s,t \in [0,\infty)$ with $s \leq t$, let $K_{s,t}$
denote the hull of $\eta'([s,t])$ in $\h \setminus K_{s}$, i.e.,
the complement in $\h \setminus K_{s}$ of the unbounded component of
$(\h \setminus K_{s}) \setminus \eta'([s,t])$, so that $K_{s,t} = K_{t} \setminus K_{s}$,
and set $\partial^{\mathrm{top}} K_{s,t} := \partial K_{s,t} \cap (\h \setminus K_{s})$ and
$\partial^{\mathrm{bot}} K_{s,t} := \partial K_{s,t} \cap \partial(\h \setminus K_{s})$.
Then for each $a,b\in\mathbb{R}$ with either $0 \leq a < b$ or $b < a \leq 0$, it follows from Step~3
and the domain Markov property \cite[Proposition 9.4]{BN16LectSLE} of chordal $\SLE_6$
that, a.s.\ on $\{T_{a} < T_{b}\}$, ($\partial(\h \setminus K_{T_{a}})$ is a simple curve,)
\begin{equation} \label{eq:Jordan-KTaTb}
\begin{minipage}{360pt}
$K_{T_{a},T_{b}} \setminus \partial K_{T_{a},T_{b}}$ is a Jordan domain in $\mathbb{C}$
with boundary $\partial K_{T_{a},T_{b}}$, and\\
$\partial^{\mathrm{bot}} K_{T_{a},T_{b}}$ is a compact interval in
$\partial(\h \setminus K_{T_{a}})$ containing $\eta'(T_{a})$ in its interior
\end{minipage}
\end{equation}
(note that~\eqref{eq:Jordan-KTaTb} with $a=0$ is the same as~\eqref{eq:SLE6-chunk-Jordan-H} with $t=T_{b}$).
Also for each $b\in\mathbb{R}$ and each $s \in (0,\infty)$, we easily see from
the domain Markov property of chordal $\SLE_6$, \eqref{eq:SLE6-avoid-given-real}
and~\eqref{eq:Jordan-KTaTb} with $a=0$ that, a.s.,
\begin{gather} \label{eq:SLE6-avoid-Tb}
\eta'(T_{b}) \not\in \eta'((T_{b},\infty)), \\
\max( \partial K_{s} \cap \mathbb{R}  )^{+}, \min( \partial K_{s} \cap \mathbb{R} )^{-} \not\in \widetilde{\eta}'_{s}((s,\infty));
\label{eq:SLE6-avoid-max-real}
\end{gather}
here $\widetilde{\eta}'_{s}$ denotes the lift of $\eta'|_{[s,\infty)}$ to
$(\h \setminus K_{s})\cup\widetilde{\partial}(\h \setminus K_{s})$, with 
$\widetilde{\partial}(\h \setminus K_{s})$ representing the set of prime ends of $\h \setminus K_{s}$,
and $\max( \partial K_{s} \cap \mathbb{R} )^{+}$ and $\min( \partial K_{s} \cap \mathbb{R} )^{-}$
denote the two elements of the boundary of $(\mathbb{R}\cup\{\infty\}) \setminus \partial K_{s}$
in $\widetilde{\partial}(\h \setminus K_{s})$ corresponding to
$\max( \partial K_{s} \cap \mathbb{R} )$ and $\min( \partial K_{s} \cap \mathbb{R} )$,
respectively.

Recalling Step~4, we can thus choose an event $\Omega_{0}$ for $\eta'$ of probability $1$
such that every instance of $\eta'$ from $\Omega_{0}$ satisfies~\eqref{eq:SLE6-both-sides-0-infty},
$\lim_{t\to\infty}|\eta'(t)|=\infty$, \eqref{eq:SLE6-no-triple-point}, \eqref{eq:Ksinfty-bounded-complement},
\eqref{eq:SLE6-left-right-simple-s} and~\eqref{eq:SLE6-avoid-max-real} for any $s\in \mathbb{Q} \cap (0,\infty)$,
\eqref{eq:SLE6-avoid-Tb} for any $b\in\mathbb{Q}$, and either $T_{a}=T_{b}$ or
\eqref{eq:Jordan-KTaTb} for any $a,b\in\mathbb{Q}$ with either $0 \leq a < b$ or $b < a \leq 0$.

\emph{Step~6.} Now we can proceed to the conclusion of the proof as follows.
Fix any instance of $\eta'$ from $\Omega_{0}$, and let $t\in(0,\infty)$ satisfy $\eta'(t)\in\mathbb{R}$.
If $t=T_{b}$ for some $b\in\mathbb{Q}\setminus\{0\}$, then~\eqref{eq:SLE6-chunk-Jordan-H}
holds by~\eqref{eq:Jordan-KTaTb} with $a=0$. Therefore we may assume that $t\not=T_{a}$
for any $a\in\mathbb{Q}\setminus\{0\}$, and we further set $b:=\eta'(t)$, so that
$b\in\mathbb{R}\setminus\{0\}$ by~\eqref{eq:SLE6-avoid-Tb} with $0$ in place of $b$.
By considering $\iota\circ\eta'$ instead of $\eta'$ when $b<0$, where
$\iota(z):=-\overline{z}$ for $z\in\mathbb{C}$, we may and do assume that $b>0$
in the rest of this proof.

Let $a\in\mathbb{Q}\cap(0,\infty)$, so that $\eta'(T_{a})\geq a$.
If $\eta'(T_{a})>b$, then $b$ belongs to the interior of
the compact interval $\partial^{\mathrm{bot}}K_{0,T_{a}}$ in $\mathbb{R}$, hence
$\eta'(t)=b\not\in\eta'([T_{a},\infty))$ by~\eqref{eq:Jordan-KTaTb} with $0$ in place of $a$ and thus $t<T_{a}$.
Combining this observation with $t\not=T_{a}$ and~\eqref{eq:SLE6-avoid-Tb}, we easily obtain
\begin{align} \label{eq:Ta-a-more-than-b}
t&<T_{a} & & \text{for any $a\in\mathbb{Q}\cap(b,\infty)$,} \\
T_{a}<t \quad \text{and} \quad a&\leq\eta'(T_{a})<b & & \text{for any $a\in\mathbb{Q}\cap(0,b)$.}
\label{eq:Ta-a-less-than-b}
\end{align}
In particular,
\begin{equation} \label{eq:KtR-max}
\eta'(s)\leq b \qquad\text{for any $s \in [0,t]$ with $\eta'(s)\in\mathbb{R}$,}
\end{equation}
since for any $a\in\mathbb{Q}\cap(b,\infty)$ we have $s\leq t<T_{a}$
by~\eqref{eq:Ta-a-more-than-b} and hence $\eta'(s)<a$. Moreover,
by the monotonicity of $T_{a}$ in $a\in(0,\infty)$, $\eta'(t)=b$,
the continuity of $\eta'$ and~\eqref{eq:Ta-a-less-than-b} we have $\lim_{a\uparrow b}T_{a}\leq T_{b}\leq t$,
$\eta'(\lim_{a\uparrow b}T_{a})=\lim_{\mathbb{Q}\ni a\uparrow b}\eta'(T_{a})=b$, therefore
\begin{equation} \label{eq:etaTb-etat}
\lim_{a\uparrow b}T_{a}=T_{b}\leq t, \quad \eta'(T_{b})=b \quad\text{and}\quad
\text{$T_{a}<T_{b}$ for any $a\in(0,b)$.}
\end{equation}

\emph{Step~7.} We claim that $b=\eta'(t)$ satisfies~\eqref{eq:Jordan-KTaTb} with $a=0$
under the setting of Step~6. To see this, we inductively construct a sequence
$\{b_{n}\}_{n=0}^{\infty}\subset\mathbb{Q}\cap(0,b)$ as follows.
First, noting~\eqref{eq:etaTb-etat}, choose $b_{0}\in\mathbb{Q}\cap(0,b)$ so that
$|\eta'(s)-b|<b$ for any $s\in[T_{b_{0}},T_{b}]$, and set $a:=\min(\partial K_{T_{b_{0}}}\cap\mathbb{R})$,
which satisfies $a<0$ by~\eqref{eq:Jordan-KTaTb} with $0,b_{0}$ in place of $a,b$.
Next, letting $n\geq 0$, supposing that $b_{n}\in\mathbb{Q}\cap(0,b)$ is given, and noting
\eqref{eq:Ta-a-less-than-b}, \eqref{eq:etaTb-etat} and that we have
\eqref{eq:SLE6-chunk-Jordan-H-etat} with $t=T_{b_{n}}$ by~\eqref{eq:Jordan-KTaTb} with
$0,b_{n}$ in place of $a,b$, choose $b_{n+1}\in\mathbb{Q}\cap[(b_{n}+b)/2,b)$ so that
$\eta'([T_{b_{n+1}},T_{b}])\cap \overline{K_{T_{b_{n}}}} = \emptyset$.
Then $\{b_{n}\}_{n=0}^{\infty},\{T_{b_{n}}\}_{n=0}^{\infty},\{\eta'(T_{b_{n}})\}_{n=0}^{\infty}$
are strictly increasing, $\lim_{n\to\infty}b_{n}=b$, and for any $n\geq 0$ we have
$\min(\partial K_{T_{b_{n}}}\cap\mathbb{R})=a$ by $\eta'([T_{b_{0}},T_{b_{n}}]) \cap (-\infty,0] = \emptyset$,
$\overline{\partial^{\mathrm{top}} K_{0,T_{b_{n}}}} \subset \eta'([0,T_{b_{n}}])$, and
$\overline{\partial^{\mathrm{top}} K_{T_{b_{n}},T_{b_{n+1}}}} \subset \eta'([T_{b_{n}},T_{b_{n+1}}])$,
which in turn is included in $\overline{\h} \setminus \overline{K_{T_{b_{n-1}}}}$ if $n\geq 1$.
This last property combined with~\eqref{eq:Jordan-KTaTb} further implies that
$\overline{\partial^{\mathrm{top}} K_{T_{b_{n}},T_{b_{n+1}}}}\cap\overline{\partial^{\mathrm{top}} K_{0,T_{b_{n}}}}$
is a singleton $\{z_{n}\}$ for any $n\geq 0$ and that
$z_{n}\in\partial^{\mathrm{top}} K_{T_{b_{n-1}},T_{b_{n}}}$ for any $n\geq 1$.
Using these properties together with~\eqref{eq:Jordan-KTaTb}, we can define
a simple closed curve $\gamma:[-1,\infty]\to\overline{\h}$ by
$\gamma(\infty):=b$, $\gamma(-t):=(1-t)a+tb$ for $t\in[0,1]$,
$\gamma|_{[0,1]}$ being a homeomorphism to the closed interval in
$\overline{\partial^{\mathrm{top}} K_{0,T_{b_{0}}}}$ from $a$ to $z_{0}$, and
$\gamma|_{[n,n+1]}$ being a homeomorphism to the closed interval in
$\overline{\partial^{\mathrm{top}} K_{T_{b_{n-1}},T_{b_{n}}}}$ from $z_{n-1}$
to $z_{n}$ for each $n\geq 1$; note that $\lim_{s\to\infty}\gamma(s)=b$
by $\lim_{n\to\infty}b_{n}=b$, \eqref{eq:etaTb-etat},
the continuity of $\eta'$ at $T_{b}$ and the fact that
$\overline{\partial^{\mathrm{top}} K_{T_{b_{n}},T_{b_{n+1}}}}
	\subset \eta'([T_{b_{n}},T_{b_{n+1}}])$
for any $n\geq 0$. Then $\gamma((0,\infty)) \subset \h \cap \eta'([0,T_{b}])$,
$\gamma([-1,0])=[a,b]$, and it is elementary to see from the construction of $\gamma$
that $\eta'([0,T_{b}])$ is included in the closure in $\mathbb{C}$ of the (bounded)
Jordan domain with boundary $\gamma([-1,\infty])$, which together immediately show
$\gamma([-1,\infty])=\partial K_{T_{b}}$ and that $b=\eta'(t)$ satisfies
\eqref{eq:Jordan-KTaTb} with $0$ in place of $a$.

\emph{Step~8.} It remains to prove that $t=T_{b}$ under the setting of Step~6.
For this purpose, noting~\eqref{eq:etaTb-etat} and the fact that
$\eta'((T_{b},s]) \cap \h \not\subset K_{T_{b}}$ for any $s\in(T_{b},\infty)$
as noted just after~\eqref{eq:SLE6-chunk-Jordan-H-etat}, suppose that $T_{b}<t$, and
choose $s\in\mathbb{Q}\cap(T_{b},t)$ so that $\eta'(s) \in \h \setminus K_{T_{b}}$ and
$|\eta'(r)-b|<b$ for any $r\in[T_{b},s]$. Set $a:=\min(\partial K_{T_{b}}\cap\mathbb{R})$,
so that by combining~\eqref{eq:Jordan-KTaTb} with $0$ in place of $a$ from Step~7,
\eqref{eq:KtR-max}, $\eta'(T_{b})=b$ from~\eqref{eq:etaTb-etat},
$\eta'([T_{b},s]) \cap \{ z \in \mathbb{C} \mid \re(z) \leq 0\} = \emptyset$
and $\eta'([s,\infty)) \subset \overline{\h \setminus K_{s}}$, we get $a<0$,
\begin{equation} \label{eq:Ks-bottom-ab}
\partial K_{s} \cap \mathbb{R} = \partial K_{T_{b}} \cap \mathbb{R} = [a,b],
\end{equation}
$\eta'([s,\infty))\cap(a,b)=\emptyset$, and $U \cap K_{s} = U \cap K_{T_{b}}$ for some
open neighborhood $U$ of $a$ in $\mathbb{C}$. In particular, since $\h \setminus K_{T_{b}}$
is a Jordan domain in $\mathbb{C}\cup\{\infty\}$ by~\eqref{eq:Jordan-KTaTb} with $0$
in place of $a$, the point $a \in \partial(\h \setminus K_{s})$ corresponds to a unique
element of $\widetilde{\partial}(\h \setminus K_{s})$, hence $a \not\in \eta'([s,\infty))$
by~\eqref{eq:Ks-bottom-ab}, \eqref{eq:SLE6-avoid-max-real} and $\re(\eta'(s))>0$, and
it thus follows that
\begin{equation} \label{eq:Ksinfty-bottom-complement}
(a-\varepsilon,b)\cap\eta'([s,\infty))=\emptyset
\qquad \text{for some $\varepsilon\in(0,\infty)$.}
\end{equation}

Now we use the notation and the results from Step~4. Note that by
\eqref{eq:Ksinfty-bottom-complement} we have
$(a-\varepsilon,b) \subset \partial(\h \setminus K_{s,\infty})$ and hence that
by Carath\'{e}odory's theorem \cite[Theorem 2.6]{Po1992} the conformal map
$\varphi_{s}:\h \setminus K_{s,\infty} \to \h$ extends continuously to a bounded
$\mathbb{R}$-valued strictly increasing map on $(a-\varepsilon,b)$, which then
satisfies $\varphi_{s}(0)=0$ and $(\varphi_{s}^{-1})^{-1}(u)=\{\varphi_{s}(u)\}$
for any $u \in (a-\varepsilon,b)$. Set $\varphi_{s}(b-):=\lim_{u \uparrow b}\varphi_{s}(u)$.
Recalling~\eqref{eq:SLE6-left-right-simple-s} and noting that
$\varphi_{s}(a) \in \eta'^{s}_{L}$ by~\eqref{eq:Ks-bottom-ab} and that
$\varphi_{s}(b-) \in \eta'^{s}_{R}$ by~\eqref{eq:Ta-a-less-than-b} and~\eqref{eq:etaTb-etat}, we define simple curves
$\gamma_{L},\gamma_{R}:[0,\infty)\to\overline{\mathbb{H}}$ starting from $0$ and tending to $\infty$
by $\gamma_{L}(u):=\varphi_{s}(ua)$ and $\gamma_{R}(u):=\varphi_{s}(ub)$ for $u\in[0,1)$,
$\gamma_{L}|_{[1,\infty)}$ being a homeomorphism to the interval in $\eta'^{s}_{L}$
from $\varphi_{s}(a)$ to $\infty$ and $\gamma_{R}|_{[1,\infty)}$ being a homeomorphism
to the interval in $\eta'^{s}_{R}$ from $\varphi_{s}(b-)$ to $\infty$. Then
$(\gamma_{L}((1,\infty))\cup\gamma_{R}((1,\infty))) \cap [\varphi_{s}(a),\varphi_{s}(b-))=\emptyset$
and therefore
\begin{equation} \label{eq:etasLetasR-later-avoid-ab}
\bigl(\varphi_{s}^{-1}\circ\gamma_{L}((1,\infty))\cup\varphi_{s}^{-1}\circ\gamma_{R}((1,\infty))\bigr) \cap [a,b) =\emptyset.
\end{equation}
Moreover, noting that $\varphi_{s}(\eta'([0,s)))$ in~\eqref{eq:SLE6-left-right-simple-s},
precisely speaking, denotes the image of the lift $\overline{\eta}'_{s}$ of $\eta'|_{[0,s)}$ to $\h$
under $\varphi_{s}^{-1}$, i.e., the continuous map $\overline{\eta}'_{s}:[0,s) \to \overline{\h}$
satisfying $\eta'|_{[0,s)}=\varphi_{s}^{-1}\circ\overline{\eta}'_{s}$, that
$\overline{\eta}'_{s}(T_{b})=\lim_{\mathbb{Q}\ni u\uparrow b}\varphi_{s}(\eta'(T_{u}))=\varphi_{s}(b-)$
by~\eqref{eq:Ta-a-less-than-b} and~\eqref{eq:etaTb-etat}, and that
$\eta'^{s}_{L}\cup\eta'^{s}_{R}\subset\overline{\eta}'_{s}([0,s))$, we have
\begin{equation} \label{eq:etasLetasR-avoid-b}
\varphi_{s}^{-1}\bigl((\eta'^{s}_{L}\cup\eta'^{s}_{R})\setminus\{\varphi_{s}(b-)\}\bigr)
	\subset\varphi_{s}^{-1}\bigl(\overline{\eta}'_{s}([0,s))\setminus\{\overline{\eta}'_{s}(T_{b})\}\bigr)
	\subset\eta'([0,s)\setminus\{T_{b}\}).
\end{equation}
Since $b \not\in \eta'([0,s)\setminus\{T_{b}\})$ by $\eta'(T_{b})=b=\eta'(t)$, $T_{b}<s<t$
and~\eqref{eq:SLE6-no-triple-point}, it follows from~\eqref{eq:etasLetasR-avoid-b},
\eqref{eq:etasLetasR-later-avoid-ab} and~\eqref{eq:Ks-bottom-ab} that
\begin{equation} \label{eq:gammaLgammaR-avoid-R}
\varphi_{s}^{-1}\circ\gamma_{L}((1,\infty))\cup\varphi_{s}^{-1}\circ\gamma_{R}((1,\infty))
	\subset \eta'([0,s)) \setminus [a,b] = \eta'([0,s)) \cap \h.
\end{equation}
On the other hand, \eqref{eq:SLE6-left-right-simple-s} combined with the definition of
$\eta'^{s}_{L}$ and $\eta'^{s}_{R}$ implies that $\eta'^{s}_{L}$ (resp.\ $\eta'^{s}_{R}$)
is located to the left (resp.\ right) of $\eta'^{s}_{R}$ (resp.\ $\eta'^{s}_{L}$), i.e.,
included in the closure in $\mathbb{C}$ of the component of
$\mathbb{C} \setminus (\eta'^{s}_{R}\cup(-\infty,0])$
(resp.\ $\mathbb{C} \setminus (\eta'^{s}_{L}\cup[0,\infty))$ not containing $-i$,
and $\overline{\eta}'_{s}([0,s))$ is located both to the left of $\eta'^{s}_{R}$
and to the right of $\eta'^{s}_{L}$.
Set $\tau_{R}:=\inf\{u\in [1,\infty] \mid \gamma_{R}(u) \in \gamma_{L}([0,\infty])\}$
and $\{\tau_{L}\}:=\gamma_{L}^{-1}(\gamma_{R}(\tau_{R}))$, where
$\gamma_{R}(\infty):=\infty=:\gamma_{L}(\infty)$, so that $\tau_{R},\tau_{L}\in(1,\infty]$.
Further, recalling~\eqref{eq:gammaLgammaR-avoid-R}, choose a simple curve
$\gamma_{s} \subset \varphi_{s}^{-1}\circ\gamma_{L}([1,\tau_{L}])\cup\varphi_{s}^{-1}\circ\gamma_{R}([1,\tau_{R}]) (\subset \eta'([0,s)) \cap \h)$
from $b$ to $a$, which is possible by \cite[Theorem II.(5.1)]{Wh1963}, and
let $U_{s}$ denote the (bounded) Jordan domain with boundary $\gamma_{s}\cup[a,b]$.
Then we easily see from $\gamma_{s} \subset \eta'([0,s)) \cap \h$ and the above-mentioned
topological configuration of $\overline{\eta}'_{s}([0,s))$ in relation to $\eta'^{s}_{R}$
and $\eta'^{s}_{L}$ that $U_{s} \subset K_{s}$ and that $\eta'([0,s]) \setminus \overline{U_{s}}$
is included in the union $C_{s}$ of $\{\eta'(s)\}=\{\varphi_{s}^{-1}(\infty)\}$ and the image by
$\varphi_{s}^{-1}$ of the part of $\mathbb{C}$ located both to the left of
$\gamma_{R}([\tau_{R},\infty))$ and to the right of $\gamma_{L}([\tau_{L},\infty))$.
Since $C_{s}$ is a compact subset of $\h$ by~\eqref{eq:Ksinfty-bounded-complement} and
\eqref{eq:gammaLgammaR-avoid-R}, it follows that $V \cap K_{s} = V \cap \overline{U_{s}} \cap \h$
for some open neighborhood $V$ of $b$ in $\mathbb{C}$, which together with the definition
of $U_{s}$ shows that $b \in \partial(\h \setminus K_{s})$ corresponds to a unique element
of $\widetilde{\partial}(\h \setminus K_{s})$. Thus $b \not\in \eta'((s,\infty))$
by~\eqref{eq:Ks-bottom-ab} and~\eqref{eq:SLE6-avoid-max-real}, which contradicts the
definition $b = \eta'(t)$ of $b$ in view of $t \in (s,\infty)$ and hence proves that $t = T_{b}$.
\end{proof}

\begin{proof}[Proof of Proposition~\ref{prop:SLE6-chunk-Jordan-D}]
We set $\tau := \inf\{ t \in [0,\inf(\eta')^{-1}(0)) \mid \partial K_{t} \cap \partial \D = \partial \D \}$, so that
$\{ t\in(0,\inf(\eta')^{-1}(0)) \mid \text{$\partial K_{t} \cap \partial \D \not= \partial \D$, $\eta'(t) \in \partial \D$}\} \subset [0,\tau)$.
Also let $\overline{\eta}'$ be a chordal $\SLE_6$ on $\h$ from $0$ to $\infty$,
set $\eta'_{*}(t) := x \exp( i \overline{\eta}'(t) )$ for $t \in [0,\infty)$,
define $K^{*}_{t}$ for $t\in[0,\infty)$ to be the complement in $\D$ of the
$0$-containing component of $\D \setminus \eta'_{*}([0,t])$, and set
$\tau_{*} := \inf\{ t \in [0,\infty) \mid \partial K^{*}_{t} \cap \partial \D = \partial \D \}$.
Then since $\{\eta'\}_{t\in[0,\tau)}$ has the same law as a time reparameterization of
$\{\eta'_{*}\}_{t\in[0,\tau_{*})}$ by a version \cite[Proposition 6.22]{law2005conformally} of
the locality of $\SLE_6$, it suffices to show the assertion for $\{\eta'_{*}\}_{t\in[0,\tau_{*})}$.

Fix any instance of $\overline{\eta}'$ with the property~\eqref{eq:SLE6-chunk-Jordan-H}
for any $t\in(0,\infty)$ with $\overline{\eta}'(t)\in\mathbb{R}$, which occurs
a.s.\ by Proposition~\ref{prop:SLE6-chunk-Jordan-H}. Let $t\in(0,\infty)$
satisfy $\partial K^{*}_{t} \cap \partial \D \not= \partial \D$, which means that
we can take a piecewise linear simple curve $\gamma^{t}:[0,1] \to \overline{\D} \setminus \eta'_{*}([0,t])$
with $\gamma^{t}(0)=0$, $\gamma^{t}([0,1)) \subset \D$ and $\gamma^{t}(1) \in \partial \D$.
Then $D_{t} := \D \setminus \gamma^{t}([0,1))$ is a simply connected domain with
$0 \not \in D_{t}$ and hence there exists a unique continuous branch $f_{t}$ of
$-i\log(\cdot/x)$ on $\widetilde{D}_{t} := D_{t}\cup(\partial \D \setminus \{\gamma^{t}(1)\})$
with $f_{t}(x)=0$, so that
$x\exp(i\cdot)|_{f_{t}(\widetilde{D}_{t})}:f_{t}(\widetilde{D}_{t}) \to \widetilde{D}_{t}$
is a homeomorphism with inverse $f_{t}$. Now since $\eta'_{*}([0,t]) \subset \widetilde{D}_{t}$,
$x\exp(if_{t}(\eta'_{*}(s)))=\eta'_{*}(s)=x\exp(i\overline{\eta}'(s))$
for any $s\in[0,t]$ and $f_{t}(\eta'_{*}(0))=f_{t}(x)=0=\overline{\eta}'(0)$,
it follows that $f_{t}\circ\eta'_{*}|_{[0,t]}=\overline{\eta}'|_{[0,t]}$ and thus
that $f_{t}(K^{*}_{t})$ is the complement in $\h$ of the unbounded component of
$\h \setminus f_{t}(\eta'_{*}([0,t])) = \h \setminus \overline{\eta}'([0,t])$.
In particular, if it also holds that $\eta'_{*}(t) \in \partial \D$,
then $\overline{\eta}'(t) = f_{t}(\eta'_{*}(t)) \in \mathbb{R}$, hence
$f_{t}(K^{*}_{t})$ has the property~\eqref{eq:SLE6-chunk-Jordan-H}, and
therefore its image $K^{*}_{t}$ by the homeomorphism $x\exp(i\cdot)|_{V_{t}}$
on a neighborhood $V_{t}$ of $f_{t}(\widetilde{D}_{t})$ in $\mathbb{C}$
has the property~\eqref{eq:SLE6-chunk-Jordan-D}, completing the proof.
\end{proof}

\section{Some L\'{e}vy process estimates} \label{sec:Levy_process_estimates}

In this appendix, we prove some estimates on the $3/2$-stable L\'evy processes with only downward jumps
(Propositions~\ref{prop:running-inf-mean-diverge}, \ref{prop:reflection_mean_before_tau},
\ref{prop:running-inf-mean-upper-bound}, \ref{prop:stable-X1-I1large-I2small} and
\ref{prop:stable-reflected-exp-moment} below), on which our proof of
Proposition~\ref{prop:good_chunks_percolate_half_plane} heavily relies.

Let $X^1, X^2$ be i.i.d.\ $3/2$-stable L\'evy processes with only downward jumps and starting from $0$.
Let $I_t^j = \inf_{0 \leq s \leq t} X_s^j$ and $S_t^j = \sup_{0 \leq s \leq t} X_s^j$ for $j=1,2$, respectively,
be the running infimum and supremum of $X^j$.  We let $\tau^j = \inf\{t \geq 1 \mid X_t^j = I_t^j\}$ for $j=1,2$ and $\tau = \tau^1 \wedge \tau^2$.
Note that by \cite[Chapter~VII, Theorem~1 and Chapter~VI, Proposition~3]{bertoin1996levy} we have
\begin{equation} \label{eq:Xj-Ij-fixed-time}
\p[ \tau^{j} = x ] \leq \p[ X^{j}_{x} = I^{j}_{x} ] = 0 \qquad\text{for any $x\in [1,\infty)$.}
\end{equation}

\begin{lemma}
\label{lem:tauj_tail}
There exists a constant $c>0$ such that $\p[\tau^j \geq x] = c  x^{-1/3} (1+o(1))$ as $x \to \infty$.
\end{lemma}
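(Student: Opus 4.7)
The plan is to reduce the tail of $\tau^{j}$ to a question about the argmin of $X^{j}$ on $[0,x]$, and then invoke a generalized arcsine law.

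I would first observe that $\tau^{j} \geq x$ is equivalent to the location of the minimum of $X^{j}$ on $[0,x]$ lying in $[0,1]$. Writing $m^{*}$ for the (a.s.\ unique) time at which $X^{j}$ attains its minimum on $[0,x]$, clearly $X^{j}_{m^{*}} \leq X^{j}_{s}$ for $s \leq m^{*}$ forces $X^{j}_{m^{*}} = I^{j}_{m^{*}}$, while for any $t \in (m^{*},x]$ uniqueness of the minimum gives $X^{j}_{t} > X^{j}_{m^{*}} = I^{j}_{x} \leq I^{j}_{t}$, so no such $t$ satisfies $X^{j}_{t} = I^{j}_{t}$. Hence $m^{*}$ is the largest element of $\{t \in [0,x] : X^{j}_{t} = I^{j}_{t}\}$, and $\tau^{j} \geq x$ iff $m^{*} \leq 1$. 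By the $3/2$-self-similarity of $X^{j}$, the law of $m^{*}$ coincides with $x$ times the law of the argmin of $X^{j}$ on $[0,1]$, so
\[
\p[\tau^{j} \geq x] \;=\; \p\!\left[\arg\min_{s \in [0,1]} X^{j}_{s} \leq \tfrac{1}{x}\right].
\]

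Next I would apply the generalized arcsine law (equivalently, the Dynkin--Lamperti arcsine theorem applied to the descending ladder time subordinator of $X^{j}$, or Chaumont's formula for the location of the minimum of a stable L\'evy process). For the spectrally negative $3/2$-stable process the positivity parameter is $\rho := \p[X^{j}_{1} > 0] = 2/3$ (computed from the explicit stable characteristic function), and the inverse local time at $0$ of the reflected process $X^{j} - I^{j}$ is a $(1-\rho) = 1/3$-stable subordinator. The Dynkin--Lamperti theorem then yields that the argmin of $X^{j}$ on $[0,1]$ has density $\tfrac{\sin(\pi/3)}{\pi}\, u^{-2/3}(1-u)^{-1/3}$ on $(0,1)$. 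Integrating from $0$ to $1/x$ and using $(1-u)^{-1/3} = 1 + O(u)$ near $u = 0$ gives
\[
\p\!\left[\arg\min_{[0,1]} X^{j} \leq \tfrac{1}{x}\right] \;=\; \tfrac{3\sqrt{3}}{2\pi}\, x^{-1/3}\,(1+o(1)),
\]
which yields the lemma with $c = 3\sqrt{3}/(2\pi)$.

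The main subtlety is identifying the correct index of the descending ladder time subordinator. The naive candidate is the first-passage subordinator $T_{-s} := \inf\{t : X^{j}_{t} < -s\}$, which by the classical identity $\E[e^{-qT_{-s}}] = e^{-sq^{1/\alpha}}$ is $1/\alpha = 2/3$-stable; but $T_{-\cdot}$ is the inverse of the full running infimum $-I^{j}$ (which carries jump contributions whenever $X^{j}$ crosses $I^{j}$ via a downward jump) rather than the inverse of its continuous local-time compensator. The correct index $1-\rho = 1/3$ is furnished by the Wiener--Hopf factorization of the $3/2$-stable spectrally negative process; using this index rather than $1/\alpha$ is precisely what produces the $x^{-1/3}$ tail rather than an $x^{-2/3}$ one, and is the only delicate point of the proof.
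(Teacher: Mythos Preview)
Your proof is correct and takes a genuinely different route from the paper's. The paper conditions on $(X^{1}_{1},I^{1}_{1})$, uses the Markov property to write $\p[\tau^{1}>x\mid X^{1}_{1},I^{1}_{1}]=\p[I^{2}_{1}>-(x-1)^{-2/3}(X^{1}_{1}-I^{1}_{1})]$ via scaling, and then invokes the small-ball asymptotic $\p[I^{2}_{1}>-y]\sim c\,y^{1/2}$ from \cite[Chapter~VIII, Proposition~2]{bertoin1996levy} together with dominated convergence. Your argument instead identifies $\{\tau^{j}\geq x\}$ with $\{\arg\min_{[0,x]}X^{j}\leq 1\}$, rescales, and reads off the answer from the generalized arcsine law for the argmin of a stable process. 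Both approaches ultimately rest on the Wiener--Hopf factorization (the paper's input $\p[I_{1}>-y]\asymp y^{1/2}$ and your Beta$(1-\rho,\rho)$ law for the argmin are two faces of the same structure), but yours is more conceptual and delivers the explicit constant $c=3\sqrt{3}/(2\pi)$, which the paper does not compute.

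One small correction to your final paragraph: the identity $\E[e^{-qT_{-s}}]=e^{-sq^{1/\alpha}}$ is the classical formula for the \emph{ascending} first passage $\inf\{t:X_{t}\geq s\}$ of a spectrally negative process, not for $T_{-s}=\inf\{t:X_{t}<-s\}$. Since $X^{j}$ has downward jumps, $T_{-s}$ overshoots and does not admit this clean Laplace transform. Your point that the relevant index is $1-\rho=1/3$ (the descending ladder time index) rather than $1/\alpha=2/3$ is correct and is what makes the argument work, but the parenthetical justification for the ``naive'' candidate is misattributed. This does not affect the main proof, which relies only on the arcsine law.
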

\begin{proof}
Let $x \in (1,\infty)$. Then setting $F(y) = y^{-1/2} \p[ I^{2}_{1} > -y ]$ for $y \in (0,\infty)$,
by the Markov property of $X^{1}$ and the scaling property of $X^{2}$ we have that
\begin{equation} \label{eq:tauj_tail_proof}
\begin{split}
  &\p[ \tau^1 > x \giv I_1^1, X_1^1]
 = \p[ I^{2}_{x-1} > I_1^1 - X_1^1 \giv X_1^1,I_1^1]
 = \p[ I^{2}_{1} > (x-1)^{-2/3}(I_1^1 - X_1^1) \giv X_1^1,I_1^1] \\
&= (x-1)^{-1/3} (X_1^1 - I_1^1)^{1/2} F\bigl( (x-1)^{-2/3}(X_1^1 - I_1^1) \bigr) \\
&\leq (x-1)^{-1/3} (X^{1}_{1}-I^{1}_{1})^{1/2} \Bigl( F\bigl( (x-1)^{-2/3}(X_1^1 - I_1^1) \bigr) \indicator_{\{X^{1}_{1}-I^{1}_{1} \leq (x-1)^{2/3}\}} + 1 \Bigr)
\end{split}
\end{equation}
and that $\lim_{y \downarrow 0}F(y)=c$ for some $c>0$
by \cite[Chapter~VIII, Proposition~2]{bertoin1996levy}.
Since $X_1^1 - I_1^1$ has a finite mean
by \cite[Chapter~VI, Proposition~3 and Chapter~VII, Corollary~2-(i)]{bertoin1996levy}
and hence a finite $1/2$-moment, it follows by an application of the dominated convergence
theorem based on~\eqref{eq:tauj_tail_proof} and $\lim_{y \downarrow 0}F(y)=c$ that
$\p[\tau^1 \geq x] = \mathbb{E}\bigl[ \p[ \tau^1 > x \giv I_1^1, X_1^1] \bigr]
  = c \mathbb{E}[(X_1^1 - I_1^1)^{1/2}] x^{-1/3} (1+o(1))$ as $x \to \infty$.
\end{proof}

As a consequence of Lemma~\ref{lem:tauj_tail}, there exists a constant $c>0$ such that
\begin{equation}
\label{eqn:tautail}
\p[ \tau \geq x] = \p[\tau^1 \geq x] \, \p[\tau^2 \geq x] = c  x^{-2/3} (1+o(1))
  \qquad\text{as $x\to\infty$.}
\end{equation}

\begin{lemma} \label{lem:Levy-large-overshoot}
Set $\tau^{1}_{0}=\inf\{ t \geq 0 : X^{1}_{t} < -1\}$. Then there exists a constant
$c>0$ such that for any $x \in [1,\infty)$,
\begin{equation}\label{eq:Levy-large-overshoot}
\p\bigl[ \tau^{1}_{0} <1, \, X^{1}_{\tau^{1}_{0}} < -x \bigr] \geq c x^{-3/2}.
\end{equation}
\end{lemma}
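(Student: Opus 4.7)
The plan is to exploit the heavy-tailed downward jump structure of $X^{1}$. The L\'evy measure of $X^{1}$ on $(-\infty, 0)$ has a density of the form $c_{0}|u|^{-5/2}$ for some constant $c_{0} \in (0,\infty)$, so the rate at which $X^{1}$ makes downward jumps of size at least $3x$ equals
\[
\lambda_{x} := c_{0} \int_{3x}^{\infty} u^{-5/2}\,du = c_{1} x^{-3/2}
\]
for an explicit $c_{1} > 0$. First I would perform the L\'evy--It\^{o} decomposition $X^{1} = X^{1,\mathrm{s}} + X^{1,\mathrm{b}}$ at level $3x$, where $X^{1,\mathrm{b}}$ is the compound Poisson process collecting the downward jumps of $X^{1}$ of size at least $3x$, and $X^{1,\mathrm{s}}$ is the independent L\'evy process obtained by removing those jumps. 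Let $T$ denote the first jump time of $X^{1,\mathrm{b}}$, so $T$ is exponential with rate $\lambda_{x}$ and is independent of $X^{1,\mathrm{s}}$.

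Next I would consider the event
\[
A_{x} := \Bigl\{ T \leq 1/2,\; X^{1,\mathrm{s}}_{T} \in [-1/2, 1/2],\; \inf_{0 \leq s \leq T} X^{1,\mathrm{s}}_{s} > -1 \Bigr\}.
\]
On $A_{x}$ the processes $X^{1}$ and $X^{1,\mathrm{s}}$ agree on $[0, T)$, hence $\inf_{s < T} X^{1}_{s} > -1$ and $X^{1}_{T-} = X^{1,\mathrm{s}}_{T} \in [-1/2, 1/2]$; at time $T$ the process $X^{1}$ takes a downward jump of size at least $3x$, landing at $X^{1}_{T} \leq 1/2 - 3x \leq -x - 1 < -x$ for any $x \geq 1$. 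Hence $A_{x} \subset \{\tau^{1}_{0} = T \leq 1/2 < 1,\, X^{1}_{\tau^{1}_{0}} < -x\}$, and it suffices to show $\p[A_{x}] \geq c x^{-3/2}$ for some $c > 0$ uniform in $x \geq 1$.

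By conditioning on $T$ and using its independence from $X^{1,\mathrm{s}}$,
\[
\p[A_{x}] = \int_{0}^{1/2} \lambda_{x} e^{-\lambda_{x} t}\, \p\Bigl[ X^{1,\mathrm{s}}_{t} \in [-1/2, 1/2],\, \inf_{s \leq t} X^{1,\mathrm{s}}_{s} > -1 \Bigr] dt.
\]
Since $\p[X^{1,\mathrm{s}} \ne X^{1} \text{ on } [0,t]] = 1 - e^{-\lambda_{x} t} \leq \lambda_{x} t$, the inner probability is at least $\p\bigl[ X^{1}_{t} \in [-1/2,1/2],\, \inf_{s \leq t} X^{1}_{s} > -1 \bigr] - \lambda_{x} t$, and this tends to $1$ as $t \downarrow 0$ by continuity of $X^{1}$ in probability at $0$. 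Consequently, fixing a small $t_{0} \in (0, 1/2)$ yields $\p[A_{x}] \geq c_{2} \lambda_{x} \geq c_{3} x^{-3/2}$ uniformly in $x$ larger than some $x_{0}$, and the bounded range $x \in [1, x_{0}]$ is handled trivially by monotonicity in $x$ of the left-hand side of~\eqref{eq:Levy-large-overshoot} combined with the fact that $x^{-3/2} \leq 1$ there. The only step requiring any genuine care is the justification of the L\'evy--It\^{o} splitting at level $3x$ and the resulting independence of $T$ and $X^{1,\mathrm{s}}$, but this is standard for spectrally negative stable L\'evy processes.
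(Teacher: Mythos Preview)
Your proof is correct and rests on the same mechanism as the paper's: a L\'evy--It\^{o} splitting into small and large downward jumps, together with the independence of the two pieces. The executions differ in a way that is worth noting. The paper splits at the \emph{fixed} level $-2$, so its small-jump process $X^{1,0}$ is independent of $x$; it then asks that $\sup_{0\le t<1}|X^{1,0}_{t}|\le 1$ (a single event of positive probability, not depending on $x$) and that the compound Poisson big-jump part has no jump in $[-x,-2)$ but at least one in $(-\infty,-x)$ on $[0,1)$. You instead split at the $x$-dependent level $-3x$ and condition on the first big-jump time $T$; because your small-jump process $X^{1,\mathrm{s}}$ now depends on $x$, you need the extra coupling step $p(t,x)\ge q(t)-\lambda_{x}t$ to transfer control back to the fixed process $X^{1}$. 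This works, but the paper's fixed-threshold version avoids that wrinkle and gives the constant slightly more directly. Conversely, your formulation via the first large jump is arguably more transparent about why the exponent $-3/2$ appears: it is exactly the tail of the L\'evy measure, and the rest of the argument just checks that with probability bounded below the process is in the right place when that jump arrives.
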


\begin{proof}
For $j=1,2$, by \cite[Theorem~4.3.7, Corollary~4.2.17 and Theorem~4.2.8]{Str2011}
we can decompose $X^{j}$ uniquely as $X^{j}_{t}=X^{j,0}_{t}+X^{j,1}_{t}$, where
$X^{j,0},X^{j,1}$ are independent L\'{e}vy processes with $X^{j,0}$ having jumps only
in $[-2,0)$ and $X^{j,1}$ a compound Poisson process with jumps only in $(-\infty,-2)$.
Then $X^{1,0},X^{1,1},X^{2,0},X^{2,1}$ are independent, $X^{1,0},X^{2,0}$
have the same law, so do $X^{1,1},X^{2,1}$, and hence the process
$X=\{X_{t}\}_{t\geq 0}$ defined by $X_{t}=X^{1,0}_{t}+X^{2,1}_{t}$
has the same law as $X^{1}$. Set $\tau_{0}=\inf\{ t \geq 0 : X_{t} < -1\}$.

Let $E^{1}$ denote the event that $\sup_{0\leq t<1}|X^{1}_{t}|\leq 1$, so that
$\p[ E^{1} ] >0$ by \cite[Chapter VIII, Proposition 3]{bertoin1996levy} and
$X^{1}|_{[0,1)}=X^{1,0}|_{[0,1)}$ a.s.\ on $E^{1}$. Let $x\in[2,\infty)$, and recall that
the numbers $N^{2}_{[-x,-2)},N^{2}_{(-\infty,-x)}$ of jumps in $[-x,-2),(-\infty,-x)$,
respectively, made by $X^{2,1}|_{[0,1)}$ are independent and have the Poisson
distribution with mean $c(2^{-3/2}-x^{-3/2}),cx^{-3/2}$, respectively,
for some $c>0$ independent of $x$. Since
$\{\tau_{0}<1, \, X_{\tau_{0}} < -(x-1) \} \supset E^{1} \cap \bigl\{ N^{2}_{[-x,-2)} = 0 < N^{2}_{(-\infty,-x)} \bigr\}$
a.s., it follows that
\begin{align*}
 \p\bigl[ \tau^{1}_{0} <1, \, X^{1}_{\tau^{1}_{0}} < -(x-1) \bigr]
&=\p\bigl[ \tau_{0} <1, \, X_{\tau_{0}} < -(x-1) \bigr] \\
&\geq \p\bigl[ E^{1} \cap \bigl\{ N^{2}_{[-x,-2)} = 0 < N^{2}_{(-\infty,-x)} \bigr\} \bigr] \\
&= \p[E^{1}] \cdot \p\bigl[ N^{2}_{[-x,-2)} = 0 \bigr] \cdot \p\bigl[ N^{2}_{(-\infty,-x)} \geq 1 \bigr] \\
&= \p[E^{1}] e^{-c(2^{-3/2}-x^{-3/2})} \bigl(1-e^{-cx^{-3/2}}\bigr) \geq c'(x-1)^{-3/2}
\end{align*}
for some $c'>0$ independent of $x \in [2,\infty)$, proving~\eqref{eq:Levy-large-overshoot}.
\end{proof}

\begin{proposition} \label{prop:running-inf-mean-diverge}
There exists a constant $c > 0$ so that $\E[ - I^{1}_{\tau} \indicator_{\{\tau<A\}} ] \geq c \log A$
for any $A \in [2,\infty)$.
\end{proposition}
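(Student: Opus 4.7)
The plan is to decompose dyadically and use the $2/3$-self-similarity of $X^{1}$. First I would restrict attention to the pairwise disjoint events
\begin{equation*}
E_{k} := \{\tau^{1} \in [2^{k}, 2^{k+1}),\; \tau^{2} \geq 2^{k+1}\}, \qquad k = 0, 1, \ldots, K-1,
\end{equation*}
with $K := \lfloor\log_{2}A\rfloor$, each contained in $\{\tau < A\}$ and on which $\tau = \tau^{1}$ and $-I^{1}_{\tau} = -X^{1}_{\tau^{1}}$. Using independence of $X^{1}$ and $X^{2}$, this yields
\begin{equation*}
\E\bigl[-I^{1}_{\tau}\indicator_{\{\tau<A\}}\bigr]
\; \geq \;
\sum_{k=0}^{K-1} \E\bigl[-X^{1}_{\tau^{1}}\indicator_{\{\tau^{1}\in[2^{k},2^{k+1})\}}\bigr]\,\p[\tau^{2}\geq 2^{k+1}].
\end{equation*}

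Second I would apply self-similarity to the first factor of each summand. Setting $Y_{s} := 2^{-2k/3} X^{1}_{2^{k}s}$ gives $Y \stackrel{\mathrm{d}}{=} X^{1}$ as processes, and the scaling of the running infimum yields $\tau^{1}/2^{k} = \inf\{s \geq 2^{-k} : Y_{s} = I^{Y}_{s}\}$. The key observation is that on the event $\{\tau^{1}\in[2^{k},2^{k+1})\}$ this quantity lies in $[1, 2)$, so the starting-time constraint $s \geq 2^{-k}$ is inactive and the rescaled stopping time there coincides with $\tau^{1}_{Y} := \inf\{s \geq 1 : Y_{s} = I^{Y}_{s}\}$, the $\tau^{1}$-analog for $Y$. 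Since $Y \stackrel{\mathrm{d}}{=} X^{1}$, this produces the scaling identity
\begin{equation*}
\E\bigl[-X^{1}_{\tau^{1}}\indicator_{\{\tau^{1}\in[2^{k},2^{k+1})\}}\bigr]
\; = \;
2^{2k/3}\,c_{0}, \qquad c_{0} := \E\bigl[-X^{1}_{\tau^{1}}\indicator_{\{\tau^{1}\in[1,2)\}}\bigr],
\end{equation*}
with $c_{0} \in (0, \infty)$: positivity follows from $-X^{1}_{\tau^{1}} = -I^{1}_{\tau^{1}} \geq -I^{1}_{1} > 0$ a.s.\ combined with $\p[\tau^{1}\in[1,2)] > 0$, and finiteness from $-X^{1}_{\tau^{1}} \leq -I^{1}_{2}$ on $\{\tau^{1}<2\}$ and the moment bound $\E[-I^{1}_{2}] = 2^{2/3}\E[-I^{1}_{1}] < \infty$ (as in \cite[Chapter~VIII, Proposition~4]{bertoin1996levy}).

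Third, the tail estimate of Lemma~\ref{lem:tauj_tail} yields $\p[\tau^{2} \geq 2^{k+1}] \geq c_{1}\, 2^{-k/3}$ for some constants $c_{1} > 0$ and $k_{0} \in \mathbb{N}$ and all $k \geq k_{0}$. Combining bounds each summand with $k \geq k_{0}$ below by $c_{0}c_{1}\,2^{k/3}$, so the full sum is of order $2^{K/3} \asymp A^{1/3}$; since $A^{1/3} \geq \log A$ for $A$ large, and since $\E[-I^{1}_{\tau}\indicator_{\{\tau<2\}}] > 0$ handles any bounded range, the claimed bound $\geq c\log A$ follows uniformly for $A \in [2, \infty)$ after adjusting the constant.

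The main obstacle is the careful verification of the scaling identity in the second step: after time-rescaling by $2^{k}$, the constraint $t \geq 1$ in the definition of $\tau^{1}$ becomes the weaker constraint $s \geq 2^{-k}$, and one has to argue that on the specific event $\{\tau^{1}\in[2^{k},2^{k+1})\}$ the rescaled stopping time lies in $[1,2)$, so that the weaker constraint is actually inactive and the rescaled $\tau^{1}$ can be identified in distribution with $\tau^{1}_{Y}$. Once this identification is in place, the remainder is a direct calculation.
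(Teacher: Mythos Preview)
Your scaling argument in the second step is flawed, and the conclusion $A^{1/3}$ cannot be correct: Proposition~\ref{prop:running-inf-mean-upper-bound} gives the matching \emph{upper} bound $\E[-I^{1}_{\tau}\indicator_{\{\tau<A\}}] \leq c_{1}\log A$, so the true order is $\log A$, not $A^{1/3}$.

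The gap is in the passage from the event $\{\sigma_{1}^{(k)}\in[1,2)\}$ to $\{\tau^{1}_{Y}\in[1,2)\}$, where $\sigma_{1}^{(k)}:=\inf\{s\geq 2^{-k}:Y_{s}=I^{Y}_{s}\}$. You correctly note that $\sigma_{1}^{(k)}=\tau^{1}_{Y}$ on $\{\sigma_{1}^{(k)}\geq 1\}$, and hence
\[
\E\bigl[-X^{1}_{\tau^{1}}\indicator_{\{\tau^{1}\in[2^{k},2^{k+1})\}}\bigr]
= 2^{2k/3}\,\E\bigl[-Y_{\tau^{1}_{Y}}\indicator_{\{\sigma_{1}^{(k)}\in[1,2)\}}\bigr].
\]
But you then silently replace the indicator $\indicator_{\{\sigma_{1}^{(k)}\in[1,2)\}}$ by $\indicator_{\{\tau^{1}_{Y}\in[1,2)\}}$, which is not valid: the inclusion $\{\sigma_{1}^{(k)}\in[1,2)\}\subset\{\tau^{1}_{Y}\in[1,2)\}$ is strict, since the latter event allows $Y$ to touch its running infimum somewhere in $[2^{-k},1)$ while the former forbids it. A second application of scaling (set $W_{u}=2^{2k/3}Y_{2^{-k}u}$) gives $(\sigma_{1}^{(k)},-Y_{\sigma_{1}^{(k)}})\stackrel{d}{=}(2^{-k}\tau^{1},2^{-2k/3}(-X^{1}_{\tau^{1}}))$, so the display above is a tautology and yields no information. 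Concretely, $\p[\sigma_{1}^{(k)}\in[1,2)]=\p[\tau^{1}\in[2^{k},2^{k+1})]\asymp 2^{-k/3}$ by Lemma~\ref{lem:tauj_tail}, whereas $\p[\tau^{1}_{Y}\in[1,2)]$ is a fixed positive constant; this factor of $2^{-k/3}$ is exactly what reduces the true growth from $2^{2k/3}$ to $2^{k/3}$ and the sum from $A^{1/3}$ to $\log A$.

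The paper's argument is different. It works at integer scales $k$ and shows that, conditionally on $\{\tau^{1}\geq k\}$, with probability bounded below uniformly in $k$ one has $I^{1}_{k}+1\leq X^{1}_{k}\leq\alpha k^{2/3}$ and $I^{1}_{k}\geq-\alpha k^{2/3}$. Independently, the increment process $\{X^{1}_{t+k}-X^{1}_{k}\}_{t\geq 0}$ then makes a downward jump of size at least $2\alpha k^{2/3}$ within one time unit with probability $\gtrsim k^{-1}$ (Lemma~\ref{lem:Levy-large-overshoot}); on the intersection with $\{\tau^{2}\geq k+1\}$ this forces $\tau=\tau^{1}\in[k,k+1)$ and $-I^{1}_{\tau}\geq\alpha k^{2/3}$. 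Combining the probabilities $\p[\tau^{1}\geq k]\asymp k^{-1/3}$, $\p[\tau^{2}\geq k+1]\asymp k^{-1/3}$ and the jump probability $\asymp k^{-1}$, each $k$ contributes $\asymp k^{2/3}\cdot k^{-5/3}=k^{-1}$ to the expectation, and summing over $k\leq A$ gives $\log A$.
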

\begin{proof}
We first claim that
\begin{equation}
\label{eqn:x_giv_tau_tail}
\p[ X_k^1 \geq \alpha k^{2/3} \giv \tau^{1} \geq k] \to 0 \quad\text{as}\quad \alpha \to \infty \quad\text{uniformly in $k$}.
\end{equation}
To see this, fix $\alpha \in [2,\infty)$. For each $k \in \mathbb{N}$, let $\sigma_k^1 = \inf\{t \geq 0 : X_t^1 = k^{2/3}\}$.
Then since $X^1$ has only downward jumps, we have $\{X^{1}_{k} \geq \alpha k^{2/3}\} \subset \{\sigma^{1}_{k}\leq k\}$ and hence that
\begin{align*}
 \p[ X_k^1 \geq \alpha k^{2/3} \giv \tau^1 \geq k]
&= \p[ X_k^1 \geq \alpha k^{2/3},\ \sigma_k^1 \leq k \giv \tau^1 \geq k] \\
&\leq \p[ X_k^1 \geq \alpha k^{2/3} \giv \sigma_k^1 \leq k, \tau^1 \geq k] \\
&= \frac{\p[ X_k^1 \geq \alpha k^{2/3}, \tau^1 \geq k \giv \sigma_k^1 \leq k, \tau^1 \geq \sigma_k^1]}{\p[ \tau^1 \geq k \giv \sigma_k^1 \leq k, \tau^1 \geq \sigma_k^1]} \\
&\leq \frac{\p\bigl[ \sup_{\sigma^{1}_{k} \leq t\leq \sigma^{1}_{k}+k} X^{1}_{t} \geq \alpha k^{2/3} \giv \sigma_k^1 \leq k, \tau^1 \geq \sigma_k^1 \bigr]}{\p[ \tau^1 \geq k \giv \sigma_k^1 \leq k, \tau^1 \geq \sigma_k^1]}.
\end{align*}
Applying the strong Markov property of $X^{1}$ at the time $\sigma_k^1$
and the scaling property of $X^{1}$, we easily see
that the denominator is at least $\p[ I^{1}_{1} > -1 ]$,
which is positive by \cite[Chapter~VIII, Proposition~2]{bertoin1996levy}, and that
the numerator is at most $\p[ S^{1}_{1} \geq \alpha-1 ]$, which tends to $0$
as $\alpha \to \infty$, proving~\eqref{eqn:x_giv_tau_tail}.
On the other hand, since $I^{1}_{k}=I^{1}_{1}$ on $\{\tau^{1} > k\}$,
from Lemma~\ref{lem:tauj_tail} and \cite[Chapter VIII, Proposition 4]{bertoin1996levy} we obtain
\begin{equation} \label{eqn:I_giv_tau_tail}
\p[ I_k^1 \leq -\alpha k^{2/3} \giv \tau^{1} \geq k]
  \leq \frac{\p[ I^{1}_{1} \leq -\alpha k^{2/3} ]}{\p[ \tau^{1} \geq k]}
  \leq c \alpha^{-3/2} k^{-1} k^{1/3} \leq c \alpha^{-3/2}
\end{equation}
for any $k \in \mathbb{N}$ for some $c>0$.

Further, there exists $p_0 > 0$ so that
\begin{align} \label{eq:X1_giv_tau_tail}
\p[ X_k^1 \geq I_k^1+1 \giv \tau^{1} \geq k ]  \geq p_0 \qquad \text{for all $k \in \mathbb{N}$.}
\end{align}
Indeed, for any $t \in [2,\infty)$ and for any $k \in \mathbb{N}$, we have $\p[ X_1^1 \geq I_1^1+t ] > 0$
by \cite[Chapter~VI, Proposition~3 and Chapter~VII, Corollary~2-(i)]{bertoin1996levy},
\begin{align*}
 \p[ X^{1}_{k} \geq I^{1}_{k} + t, \, \tau^{1} \geq k ]
&=\frac{ \p[ \inf_{k \leq t \leq k+1}(X^{1}_{t}-X^{1}_{k}) > -1, \, X^{1}_{k} \geq I^{1}_{k} + t, \, \tau^{1} \geq k ] }{ \p[ I^{1}_{1} > -1 ] } \\
&\leq \p[ I^{1}_{1} > -1 ]^{-1} \p[ X^{1}_{k+1} \geq I^{1}_{k+1} + t-1, \, \tau^{1} \geq k+1 ],
\end{align*}
and hence $\p[ X_k^1 \geq I_k^1+1 \giv \tau^{1} \geq k ] > 0$.
Also, noting that $\p[ I^{1}_{1} \leq -1 ] >0$ by \cite[Chapter~VIII, Proposition~4]{bertoin1996levy}, for any $k \in \mathbb{N}$ we have
\begin{align*}
\p[ X^{1}_{k} < I^{1}_{k} + 1 \giv \tau^{1} \geq k ]
 &=\frac{ \p[ \inf_{k \leq t \leq k+1}(X^{1}_{t}-X^{1}_{k}) \leq -1, \, X^{1}_{k} < I^{1}_{k}+1 \giv \tau^{1} \geq k ] }{ \p[ I^{1}_{1} \leq -1 ] } \\
&\leq \frac{ \p[ \tau^{1} < k+1 \giv \tau^{1} \geq k ] }{ \p[ I^{1}_{1} \leq -1 ] }
 =\frac{ 1 - \p[ \tau^{1} \geq k+1 ]/\p[ \tau^{1} \geq k ] }{ \p[ I^{1}_{1} \leq -1 ] }
 \xrightarrow{k \to \infty} 0,
\end{align*}
where the last limit follows by Lemma~\ref{lem:tauj_tail}. The above results together yield~\eqref{eq:X1_giv_tau_tail}.

Now, combining~\eqref{eqn:x_giv_tau_tail}, \eqref{eqn:I_giv_tau_tail} and~\eqref{eq:X1_giv_tau_tail}, we can choose $\alpha \in [2,\infty)$ sufficiently large so that
\begin{equation}
\label{eqn:x1alphakbound}
\p\bigl[ I_k^1  +1 \leq X_k^1 \leq \alpha k^{2/3}, \, I_k^1 \geq -\alpha k^{2/3} \giv \tau^{1} \geq k \bigr] \geq p_0/2 \qquad\text{for all $k \in \mathbb{N}$.}
\end{equation}
Let $k \in \mathbb{N}$, set $\tau^{1}_{k} = \inf\{ t \geq 0 : X^{1}_{t+k} - X^{1}_{k} < -1 \}$ and let $E^{1}_{k}$
denote the event that $\tau^{1}_{k} < 1$ and $X^{1}_{\tau^{1}_{k}+k} - X^{1}_{k} < -2 \alpha k^{2/3}$.
Then since $\{ X^{1}_{t+k} - X^{1}_{k}\}_{t\geq 0}$ is independent of $X^{1}|_{[0,k]}$
and has the same law as $X^{1}$, we have $\p[E^{1}_{k}] \geq c'k^{-1}$ for a constant $c'>0$
independent of $k$ by Lemma~\ref{lem:Levy-large-overshoot}, and it follows that
\begin{equation}
\label{eqn:ek1givlbd}
\p[E_k^1 \giv  I_k^1 + 1\leq X_k^1 \leq \alpha k^{2/3}, \, I_k^1 \geq -\alpha k^{2/3}, \, \tau^{1} \geq k] = \p[ E_k^1]\geq c' k^{-1}.
\end{equation}
Further, on the event $E_k^1 \cap \{ I_k^1 + 1 \leq X_k^1 \leq \alpha k^{2/3}, \, I_k^1 \geq -\alpha k^{2/3}, \, \tau^{1} \geq k, \tau^{2} \geq k+1 \}$
we have $\tau = \tau^{1} = \tau^{1}_{k}+k \in [k, k+1)$ and $I^1_\tau \leq - \alpha k^{2/3}$.
It therefore follows that there exist constants $c_1,c_2 > 0$ so that for any $A \in \mathbb{N}$,
\begin{align*}
 \E&[ - I^{1}_{\tau} \indicator_{\{ \tau < A \}} ] \\
&\geq \sum_{k=1}^{A-1} \alpha k^{2/3} \p\bigl[ E_k^1 \cap \{  I_k^1 +1\leq X_k^1 \leq \alpha k^{2/3}, \,  I_k^1 \geq -\alpha k^{2/3}, \, \tau^{1} \geq k, \, \tau^{2} \geq k+1 \} \bigr]\\
&\geq \sum_{k=1}^{A-1} k^{2/3} c_1 k^{-1} \p[\tau^{1} \geq k] \, \p[\tau^{2} \geq k+1] \quad \!\text{(by~\eqref{eqn:x1alphakbound}, \eqref{eqn:ek1givlbd}, $X^{1},X^{2}$ independent})\\
&\geq \sum_{k=1}^{A-1} c_2 k^{-1} \quad\text{(by Lemma~\ref{lem:tauj_tail})}\\
 &\geq c_{2} \log A.
\end{align*}
This proves the result.
\end{proof}

\begin{proposition} \label{prop:reflection_mean_before_tau}
$\sup_{A \in [1,\infty)}\E[ (X^{1}_{A} - I^{1}_{A}) \indicator_{\{\tau \geq A\}}] < \infty$.
\end{proposition}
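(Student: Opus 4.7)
The plan is to reduce the estimate in three stages: an independence split, an optional-stopping/compensation identity for a one-dimensional killed-process problem, and a heat-kernel estimate for the killed spectrally negative stable process.

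First, since $V^1_A := X^1_A - I^1_A$ is measurable with respect to $X^1$ while $\{\tau^2 \geq A\}$ is measurable with respect to $X^2$, the independence of $X^1, X^2$ gives
\[
\mathbb{E}[V^1_A \indicator_{\tau \geq A}] = \mathbb{E}[V^1_A \indicator_{\tau^1 \geq A}] \cdot \mathbb{P}[\tau^2 \geq A],
\]
and Lemma~\ref{lem:tauj_tail} yields $\mathbb{P}[\tau^2 \geq A] \leq C A^{-1/3}$. So it suffices to show $\mathbb{E}[V^1_A \indicator_{\tau^1 \geq A}] \leq C' A^{1/3}$ uniformly in $A \geq 1$.

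Next I condition on $\mathcal{F}_1 := \sigma(X^1|_{[0,1]})$ and set $a_1 := X^1_1 - I^1_1 \geq 0$, $Y_s := X^1_{1+s} - X^1_1$. By~\eqref{eq:Xj-Ij-fixed-time}, $\mathbb{P}[\tau^1 = A] = 0$, so one works on $\{\tau^1 > A\}$; on this event $X^1$ makes no new minimum in $[1,A]$, hence $I^1_A = I^1_1$ and $V^1_A = a_1 + Y_{A-1}$. Writing $W_s := a_1 + Y_s$ and $\tau_0 := \inf\{s : W_s \leq 0\}$, the event $\{\tau^1 > A\}$ is exactly $\{\tau_0 > A-1\}$, and the task reduces to controlling
\[
F(a,t) := \mathbb{E}_a\bigl[W_t \indicator_{\tau_0 > t}\bigr], \qquad a = a_1, \ \ t = A - 1,
\]
where $W$ under $\mathbb{P}_a$ is a $3/2$-stable spectrally negative process started at $a$. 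Optional stopping applied to the martingale $W$ at the bounded time $\tau_0 \wedge t$ (which is uniformly integrable since $\sup_{s \leq t}|W_s|$ has moments of every order $p < 3/2$) yields the clean identity
\[
F(a,t) = a + \mathbb{E}_a[O \indicator_{\tau_0 \leq t}], \qquad O := -W_{\tau_0} \geq 0.
\]
Applying the compensation formula with the Lévy measure $\nu(dx) = c|x|^{-5/2}\indicator_{x<0}\,dx$, together with the elementary computation $\int_0^\infty y(y+w)^{-5/2}\,dy = \tfrac{4}{3}w^{-1/2}$, gives $\mathbb{E}_a[O\indicator_{\tau_0\leq t}] = \tfrac{4c}{3}\int_0^t \mathbb{E}_a[W_s^{-1/2}\indicator_{\tau_0 > s}]\,ds$. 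By the $3/2$-scaling $(W_s,\mathbb{P}_a) \stackrel{d}{=} (a W^{(1)}_{s/a^{3/2}}, \mathbb{P}^{(1)}_1)$, this reduces the whole problem to bounding the single function $g(s) := \mathbb{E}_1[W_s^{-1/2}\indicator_{\tau_0 > s}]$.

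The third and main step is to prove $g(s) \leq C(1 \wedge s^{-2/3})$. The bound $g(s) \lesssim 1$ for $s \leq 1$ is immediate from $W_s \to 1$ as $s \downarrow 0$; the decay $g(s) \lesssim s^{-2/3}$ for large $s$ is a consequence of the estimate $q_s(1, w) \leq C w^{1/2} s^{-1}$ on the transition density of the killed process, which is a standard output of the ladder-height factorization for the spectrally negative $3/2$-stable process (the ascending and descending ladder renewal functions are both proportional to $w^{1/2}$, so $h(w) = w^{1/2}$ is a positive harmonic function for the killed process and $\mathbb{E}_a[W_s^{1/2}\indicator_{\tau_0>s}] = a^{1/2}$; see \cite[Chapters~VII–VIII]{bertoin1996levy}). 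Integrating gives $\int_0^t g(s)\,ds \leq C(1 + t^{1/3})$, and undoing the scaling yields $F(a,t) \leq C(a + a^{1/2} t^{1/3})$. Taking expectation over $a_1$ and using \cite[Chapter~VIII, Proposition~4]{bertoin1996levy} to get $\mathbb{E}[a_1^p] < \infty$ for every $p \in (0, 3/2)$ produces $\mathbb{E}[V^1_A \indicator_{\tau^1 \geq A}] \leq C(\mathbb{E}[a_1] + \mathbb{E}[a_1^{1/2}] A^{1/3}) \leq C'' A^{1/3}$, which combines with the first paragraph to give the uniform bound.

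The main obstacle is the killed-density/harmonic-function estimate: naive Hölder bounds using only $\mathbb{E}[(V^1_A)^p] = O(A^{2p/3})$ and $\mathbb{P}[\tau^1 \geq A] = O(A^{-1/3})$ give $\mathbb{E}[V^1_A \indicator_{\tau^1 \geq A}] = O(A^{1/3 + 1/(3p)})$ for $p \in (1, 3/2)$, which is strictly larger than $A^{1/3}$ and thus fails to absorb the $A^{-1/3}$ factor from $\mathbb{P}[\tau^2 \geq A]$. Exploiting the harmonic function $h(w) = w^{1/2}$ (equivalently, bringing in the process conditioned to stay positive) is what gains the missing fractional factor.
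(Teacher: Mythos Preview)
Your overall strategy is sound and genuinely different from the paper's. The independence split in your first paragraph is the same as the paper's, and the reduction to bounding $F(a,t)=\E_a[W_t\indicator_{\tau_0>t}]$ is clean. The optional-stopping/compensation identity
\[
F(a,t)=a+\tfrac{4c}{3}\int_0^t \E_a\bigl[W_s^{-1/2}\indicator_{\tau_0>s}\bigr]\,ds
\]
is correct and elegant. The difficulty is entirely in your third stage.

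There is a real gap in the justification of $g(s)\lesssim s^{-2/3}$. The density bound you state, $q_s(1,w)\le Cw^{1/2}s^{-1}$, even if true, does not by itself give the claimed decay: integrating $w^{-1/2}\cdot w^{1/2}s^{-1}$ over $w\in(0,\infty)$ diverges. You need in addition a decay estimate for large $w$ (e.g.\ the free bound $q_s(1,w)\le s^{-2/3}f_1((w-1)s^{-2/3})$ with the light right tail of $f_1$), and then split the integral at $w\asymp s^{2/3}$. More seriously, the density bound itself is not a ``standard output of the ladder-height factorization'': knowing that $h(w)=w^{1/2}$ is harmonic yields the identity $\E_a[W_s^{1/2}\indicator_{\tau_0>s}]=a^{1/2}$, but this gives information about $\int w^{1/2}q_s(1,w)\,dw$, not about $q_s(1,w)$ pointwise for small $w$. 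A correct route is to pass to the conditioned process $W^\uparrow$ via $q_s(1,w)=w^{-1/2}p^\uparrow_s(1,w)$, use self-similarity to write $p^\uparrow_s(1,w)=s^{-2/3}p^\uparrow_1(s^{-2/3},ws^{-2/3})$, and invoke the entrance law from $0$ (Bertoin VII, Corollary~16, which the paper itself uses in the proof of Proposition~\ref{prop:stable-X1-I1large-I2small}) together with a uniform bound $\sup_{a\in(0,1]}\E^\uparrow_a[(W^\uparrow_1)^{-1}]<\infty$. This last uniformity is where the real work lies and is not addressed in your sketch.

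By contrast, the paper avoids killed-density estimates altogether. After the same independence split, it handles $-I^1_A$ trivially (since $I^1_A=I^1_1$ on the event) and for $X^1_A$ introduces the first-passage time $\sigma^1_A=\inf\{t:X^1_t=A^{2/3}\}$, splits on $\{\sigma^1_A<A\}$ versus $\{\sigma^1_A\ge A\}$, and bounds $\P[\sigma^1_A<\tau^1]$ directly via the two-sided exit formula \cite[Chapter~VII, Theorem~8]{bertoin1996levy}, using only that the scale function is $W(x)\propto x^{1/2}$. This is more elementary and self-contained. Your approach, once the gap is closed, gives a more structural identity and would generalize more readily (e.g.\ to other stable indices), but as written the key analytic input is missing.
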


\begin{proof}
Let $A \in [1,\infty)$. Since $I^{1}_{A} = I^{1}_{1}$ on the event $\{\tau > A \}$ and
$\E[|I_1^1|]<\infty$ by \cite[Chapter~VIII, Proposition~4]{bertoin1996levy},
setting $c_{1}:=\E[|I^{1}_{1}|]$ and using~\eqref{eq:Xj-Ij-fixed-time}, we have
\begin{equation} \label{eqn:infbound}
\E[ -I^{1}_{A} \indicator_{\{\tau \geq A \}}]
= \E[ -I^{1}_{1} \indicator_{\{\tau > A \}}]
\leq \E[ |I^{1}_{1}| ] = c_{1} < \infty.
\end{equation}

Set $\sigma^{1}_{A} = \inf\{t \geq 0 : X^{1}_{t} = A^{2/3}\}$.
By the independence of $X^1,X^2$ and Lemma~\ref{lem:tauj_tail} we have that
\begin{equation}
\label{eqn:xm_bound1}
\E[ X^{1}_{A} \indicator_{\{\tau \geq A \}} ]
 = \p[ \tau^{2} \geq A ] \,  \E[ X^{1}_{A} \indicator_{\{\tau^{1} \geq A \}} ]
 \asymp A^{-1/3} \, \E[ X^{1}_{A} \indicator_{\{\tau^{1} \geq A \}} ].
\end{equation}
We moreover have that
\begin{align}
 \E[ X^{1}_{A} \indicator_{\{\tau^{1} \geq A \}} ]
&=\E\bigl[ X^{1}_A \indicator_{\{\tau^{1} \geq A \}} \bigl(\indicator_{\{ \sigma^{1}_{A} \geq A\}} + \indicator_{\{ \sigma^{1}_{A} < A\}}\bigr) \bigr] \notag \\
&\leq A^{2/3}\p[ \tau^{1} \geq A] + \E\bigl[ X^{1}_{A} \indicator_{\{ \sigma^{1}_{A} < A \leq \tau^{1}\}}\bigr] \notag\\
&\lesssim A^{1/3} + \E\bigl[ X^{1}_{A} \indicator_{\{ \sigma^{1}_{A} < A \leq \tau^{1} \}}\bigr] \quad\text{(by Lemma~\ref{lem:tauj_tail})} \notag\\
&\leq A^{1/3} + \E\bigl[ \bigl( X^{1}_{A} - X^{1}_{\sigma^{1}_{A}} \bigr) \indicator_{\{ \sigma^{1}_{A} < A \leq \tau^{1} \}} \bigr] + A^{2/3} \, \p[\tau^{1} \geq A] \notag\\
&\lesssim A^{1/3} + \E\bigl[ \sup\nolimits_{ 0\leq t \leq A } \bigl( X^{1}_{t + \sigma^{1}_{A}} - X^{1}_{\sigma^{1}_{A}} \bigr) \indicator_{\{ \sigma^{1}_{A} < \tau^{1} \}} \bigr] \quad\text{(by Lemma~\ref{lem:tauj_tail})}\notag\\
&=A^{1/3} + \E[S^{1}_{A}]\, \p[ \sigma^{1}_{A} < \tau^{1}] \quad\text{(by strong Markov at time $\sigma^{1}_{A}$)} \notag \\
& \lesssim A^{1/3} + A^{2/3} \, \p[ \sigma^{1}_{A}< \tau^{1}], \label{eqn:xm_bound2}
\end{align}
where the strong Markov property of $X^{1}$ refers to \cite[Chapter~I, Proposition 6]{bertoin1996levy}
and we used in the last step that $\E[ S^{1}_{A}] =A^{2/3} \E[ S^{1}_{1} ] \asymp A^{2/3}$
by the scaling property of $X^{1}$ and \cite[Chapter~VII, Corollary~2-(i)]{bertoin1996levy}.
Further, by considering $\{X^{1}_{t+1}-X^{1}_{1}\}_{t\geq 0}$, which is independent of
$X^{1}|_{[0,1]}$ and has the same law as $X^{2}$, we obtain
\begin{align*}
\p[ \sigma^{1}_{A} < \tau^{1} ] 
 \leq \p[ \sigma^{1}_{A} \leq 1 ] + \p\bigl[ \text{$X^{1}_{1}<A^{2/3}$, $X^{2}$ hits $A^{2/3} - X^{1}_{1}$ before hitting $(-\infty,I^{1}_{1} - X^{1}_{1}]$} \bigr].
\end{align*}
Then $\p[ \sigma^{1}_{A} \leq 1 ] \leq \p[ S^{1}_{1} \geq A^{2/3} ] \leq c_{2} e^{-A^{2/3}}$
for some $c_{2}>0$ independent of $A$ by \cite[Chapter VII, Corollary~2-(i)]{bertoin1996levy}.
Also,  \cite[Chapter~VII, Theorem~8]{bertoin1996levy} implies that
\begin{align*}
\p&\bigl[ \text{$X^{2}$ hits $A^{2/3} - X^{1}_{1}$ before hitting $(-\infty,I^{1}_{1} - X^{1}_{1}]$} \bigm| X^{1}_{1},I^{1}_{1} \bigr] = \frac{W(X_1^1 - I_1^1)}{W(X_1^1 - I_1^1 + A^{2/3} - X_1^1)},
\end{align*}
where $W$ is the scale function of $X^{1}$ introduced in \cite[Section~VII.2]{bertoin1996levy}. 
Moreover, since the Laplace exponent $\psi(\lambda)$ of $X^{1}$ is proportional to $\lambda^{3/2}$ by the scaling property of $X^{1}$ and the Laplace transform of $W$ is $1/\psi$ by definition, we see that $W(x)$ is proportional to $x^{1/2}$. It thus follows that
\begin{align*}
&\p\bigl[ \text{$X^{2}$ hits $A^{2/3} - X^{1}_{1}$ before hitting $(-\infty,I^{1}_{1} - X^{1}_{1}]$} \bigm| X^{1}_{1},I^{1}_{1} \bigr] \\
&= \frac{(X_1^1 - I_1^1)^{1/2}}{(X_1^1 - I_1^1 + A^{2/3} - X_1^1)^{1/2}} \leq (X_1^1 - I_1^1)^{1/2} A^{-1/3}
\end{align*}
a.s.\  on $\{X_1^1 < A^{2/3}\}$, and hence taking expectations yields
$\p[ \sigma^{1}_{A}< \tau^{1}] \lesssim A^{-1/3}$
in view of the fact that $\mathbb{E}[(X_1^1 - I_1^1)^{1/2}] < \infty$
as noted in the proof of Lemma~\ref{lem:tauj_tail}.
Combining this with~\eqref{eqn:xm_bound1} and~\eqref{eqn:xm_bound2} implies that
$\sup_{A \in [1,\infty)}\E[ X^{1}_{A} \indicator_{\{ \tau \geq A\}}] <\infty$,
which together with~\eqref{eqn:infbound} proves the result.
\end{proof}

In fact, we also have an upper bound on $\mathbb{E}[ (- I^{1}_{\tau})^{p} \indicator_{\{\tau<A\}} ]$
for $p \in [1,\frac{3}{2})$, which with $p=1$ matches the lower bound obtained in Proposition~\ref{prop:running-inf-mean-diverge}, as follows.

\begin{proposition} \label{prop:running-inf-mean-upper-bound}
For any $p \in [1,\frac{3}{2})$ there exists $c_{p} \in (0,\infty)$ depending only on $p$ and the law of $X^{1}$ such that for any $A \in [2,\infty)$,
\begin{equation}
\mathbb{E}[ (- I^{1}_{\tau})^{p} \indicator_{\{\tau<A\}} ] \leq
\begin{cases}
  c_{1} \log A & \textrm{if $p=1$,} \\
  c_{p}A^{2(p-1)/3} & \textrm{if $p \in (1,\frac{3}{2})$.}
\end{cases}
\end{equation}
\end{proposition}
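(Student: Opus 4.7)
The plan is to decompose the expectation based on whether $\tau = \tau^{1}$ or $\tau = \tau^{2}$ realizes the minimum, since these two contributions behave very differently. The key preliminary observation is that $\tau^{1}$ is by definition the first time $\geq 1$ at which $X^{1}$ attains its running infimum; hence $I^{1}_{s} = I^{1}_{1}$ for all $s \in [1, \tau^{1})$ (any drop of $I^{1}$ below $I^{1}_{1}$ on this interval would have to occur at a downward jump of $X^{1}$ to a new minimum, which by definition would be a point where $X^{1} = I^{1}$, contradicting $s < \tau^{1}$). Consequently, on $\{\tau^{2} < \tau^{1}\}$ we have $-I^{1}_{\tau} = |I^{1}_{1}|$, and by independence of $X^{1}$ from $\tau^{2}$ the corresponding contribution to the expectation is at most $\mathbb{E}[|I^{1}_{1}|^{p}]$, which is finite for any $p < 3/2$ by \cite[Chapter~VIII, Proposition~4]{bertoin1996levy}. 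This part is constant in $A$, hence negligible compared to the claimed bounds.

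The substantive work is to control the contribution from $\{\tau^{1} \leq \tau^{2}\}$, where $-I^{1}_{\tau} = -X^{1}_{\tau^{1}}$. I would decompose this contribution dyadically according to $\tau^{1} \in [2^{k}, 2^{k+1})$ for $k = 0, 1, \ldots, \lceil \log_{2} A \rceil$. On each dyadic slice, independence of $X^{1}$ and $\tau^{2}$ combined with Lemma~\ref{lem:tauj_tail} gives a saving of $\p[\tau^{2} \geq 2^{k}] \lesssim 2^{-k/3}$ (this is the crucial factor that breaks the naive $A^{2p/3}$ bound coming from $\mathbb{E}[(-I^{1}_{A})^{p}]$). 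Since $-X^{1}_{\tau^{1}} = -I^{1}_{\tau^{1}} \leq -I^{1}_{2^{k+1}}$ on the slice, and since the observation above gives $I^{1}_{2^{k}} = I^{1}_{1}$ on $\{\tau^{1} \geq 2^{k}\}$, the infimum on $[2^{k}, 2^{k+1}]$ can be re-expressed via the fresh $3/2$-stable process $Z^{k}_{t} := X^{1}_{2^{k}+t} - X^{1}_{2^{k}}$, which is independent of $\sigma(X^{1}|_{[0,2^{k}]}) \ni \indicator_{\{\tau^{1} \geq 2^{k}\}}$. The bound $-I^{1}_{2^{k+1}} \leq |I^{1}_{1}| + |I^{Z^{k}}_{2^{k}}|$ on $\{\tau^{1} \geq 2^{k}\}$ then follows (using also that $-X^{1}_{2^{k}} < |I^{1}_{1}|$ on $\{\tau^{1} > 2^{k}\}$).

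Applying $(a+b)^{p} \leq 2^{p-1}(a^{p}+b^{p})$ and splitting the expectation, the $|I^{Z^{k}}_{2^{k}}|^{p}$-term becomes $\mathbb{E}[|I^{Z^{k}}_{2^{k}}|^{p}]\p[\tau^{1} \geq 2^{k}] \asymp 2^{2pk/3} \cdot 2^{-k/3} = 2^{k(2p-1)/3}$ by independence, scaling, and Lemma~\ref{lem:tauj_tail}, while the $|I^{1}_{1}|^{p}$-term is subdominant (bounded via H\"older's inequality with a conjugate exponent $r \in (1, 3/(2p))$, yielding polynomial decay in $k$). Multiplying by the $2^{-k/3}$-saving gives a per-slice bound of $2^{2k(p-1)/3}$; summing over $k \leq \log_{2} A$ yields $\asymp \log A$ when $p = 1$ and $\asymp A^{2(p-1)/3}$ when $p \in (1, 3/2)$, matching the claim. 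The most delicate step I anticipate is the H\"older argument for the $|I^{1}_{1}|^{p}$-term, since the range $(1, 3/(2p))$ for the exponent $r$ collapses as $p \to 3/2$; however, this term is not the leading contribution and only needs to be \emph{summable} in $k$, which is achievable for every $p \in [1, 3/2)$ with any valid choice of $r$.
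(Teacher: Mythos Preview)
Your proposal is correct and follows essentially the same route as the paper: both proofs hinge on the observation $I^{1}_{s}=I^{1}_{1}$ for $s\in[1,\tau^{1})$, a dyadic decomposition in time, the Markov property to decouple the fresh infimum increment, and Lemma~\ref{lem:tauj_tail} to extract a factor $\p[\tau^{j}\geq 2^{k}]^{2}\asymp 2^{-2k/3}$. The only cosmetic differences are that you organize the argument by first splitting on $\{\tau=\tau^{1}\}$ versus $\{\tau=\tau^{2}\}$ (the paper instead decomposes $\tau$ dyadically and then subdivides), and your H\"older step for the $|I^{1}_{1}|^{p}$-term is unnecessary since simply dropping the indicator already yields a summable bound $\mathbb{E}[|I^{1}_{1}|^{p}]\,\p[\tau^{2}\geq 2^{k}]\lesssim 2^{-k/3}$.
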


\begin{proof}
Let $p \in [1,\frac{3}{2})$, $A \in [2,\infty)$ and set $A':=\min\{n\in\mathbb{N}\mid A\leq 2^{n}\}$. We have
\begin{align}
\mathbb{E}[ (- I^{1}_{\tau})^{p} \indicator_{\{\tau<A\}} ]
\leq
\mathbb{E}[ (- I^{1}_{\tau})^{p} \indicator_{\{\tau<2^{A'}\}} ]
&=
\sum_{k=1}^{A'} \mathbb{E}\bigl[ (- I^{1}_{\tau})^{p} \indicator_{\{2^{k-1} \leq \tau<2^{k}\}} \bigr] \notag \\
&\leq
\sum_{k=1}^{A'} \mathbb{E}\bigl[ (- I^{1}_{2^k})^{p} \indicator_{\{2^{k-1} \leq \tau<2^{k}\}} \bigr].
\label{eq:running-inf-mean-upper-bound-proof1}
\end{align}
Then for each $k \in \mathbb{N}$, since $X^{1}_{t}\geq I^{1}_{1} = I^{1}_{t}$ a.s.\ on $\{\tau^{1} \geq t\}$
for each $t \in [1,\infty)$ and $I^{1}_{2^{k}}=\inf_{2^{k-1}\leq s\leq 2^{k}}X^{1}_{s}$ on $\{2^{k-1} \leq \tau^{1}<2^{k}\}$,
it follows from the independence of $X^{1},X^{2}$, the convexity of the function $\mathbb{R}\ni x\mapsto (x^{+})^{p}$,
the Markov property of $X^{1}$ at time $2^{k-1}$ and
$\mathbb{P}[ \tau^{1} \geq 2^{k-1} ] = \mathbb{P}[ \tau^{2} \geq 2^{k-1} ]$ that
\begin{align}
&\mathbb{E}\bigl[ (- I^{1}_{2^k})^{p} \indicator_{\{2^{k-1} \leq \tau < 2^{k}\}} \bigr] \notag \\
&=
\mathbb{E}\bigl[ (- I^{1}_{2^k})^{p} \indicator_{\{2^{k-1} \leq \tau \leq \tau^{1} < 2^{k}\}} \bigr]
  + \mathbb{E}\bigl[ (- I^{1}_{2^k})^{p} \indicator_{\{2^{k-1} \leq \tau < 2^{k} \leq \tau^{1}\}} \bigr] \notag \\
&\leq
2^{p-1} \mathbb{E}\bigl[ \bigl( X^{1}_{2^{k-1}} - \inf\nolimits_{2^{k-1}\leq s\leq 2^{k}}X^{1}_{s} \bigr)^{p} \indicator_{\{\tau^{1} \geq 2^{k-1}\}} \bigr] \mathbb{P}[ \tau^{2} \geq 2^{k-1} ] \notag \\
&\qquad + 2^{p-1} \mathbb{E}\bigl[ \bigl( (-X^{1}_{2^{k-1}})^{+} \bigr)^{p} \indicator_{ \{ \tau \geq 2^{k-1} \} } \bigr]
  + \mathbb{E}[ (- I^{1}_{1})^{p} \indicator_{\{\tau^{1} \geq 2^{k}\}} ] \mathbb{P}[\tau^{2} \geq 2^{k-1}] \notag \\
&\leq
2^{p-1} \mathbb{E}[ (- I^{1}_{2^{k-1}})^{p} ] \mathbb{P}[ \tau^{2} \geq 2^{k-1} ]^{2}
  + (2^{p-1}+1) \mathbb{E}[ (- I^{1}_{1})^{p} ] \mathbb{P}[\tau^{2} \geq 2^{k-1}].
\label{eq:running-inf-mean-upper-bound-proof2}
\end{align}
Now, recalling that for some $c \in (0,\infty)$ determined solely by the law of $X^{1}$ we have
$\mathbb{E}[ (- I^{1}_{2^{k-1}} )^{p} ] = (2^{k-1})^{2p/3} \mathbb{E}[ (- I^{1}_{1})^{p} ] \leq c (2^{k-1})^{2p/3}$
by the scaling property of $X^{1}$ and \cite[Chapter VIII, Proposition 4]{bertoin1996levy}
and $\mathbb{P}[ \tau^{2} \geq 2^{k-1} ] \leq c 2^{-(k-1)/3}$ by Lemma~\ref{lem:tauj_tail}, we conclude from
\eqref{eq:running-inf-mean-upper-bound-proof1}, \eqref{eq:running-inf-mean-upper-bound-proof2} and $2^{A'}<2A$ that
\begin{align*}
\mathbb{E}[ (- I^{1}_{\tau})^{p} \indicator_{\{\tau<A\}} ] 
 &\leq
\sum_{k=1}^{A'} \bigl( 2^{p-1} c^{3} (2^{k-1})^{2p/3} 2^{-2(k-1)/3} + (2^{p-1}+1) c^{2} 2^{-(k-1)/3} \bigr) \\
&\leq
2^{p-1} c^{3} \sum_{k=1}^{A'} (2^{2(p-1)/3})^{k-1} + \frac{(2^{p-1}+1)c^{2}}{1-2^{-1/3}}
\leq
\begin{cases}
  c_{1} \log A & \textrm{if $p=1$,} \\
  c_{p}A^{2(p-1)/3} & \textrm{if $p \in (1,\frac{3}{2})$}
\end{cases}
\end{align*}
for some $c_{p}\in(0,\infty)$ explicit in $2^{p-1}$ and $c$, completing the proof.
\end{proof}

We also need the following propositions in the proof of Proposition~\ref{prop:good_chunks_percolate_half_plane}.

\begin{proposition} \label{prop:stable-X1-I1large-I2small}
There exists $c_{1} \in (0,\infty)$ such that for any $y \in (0,1]$,
\begin{equation} \label{eq:stable-X1-I1large-I2small}
\mathbb{P}\bigl[ \tau = \tau^{2} < 2, \, X^{1}_{\tau} - I^{1}_{\tau} > 4, \, I^{2}_{\tau} > -y \bigr]
  \geq c_{1}y.
\end{equation}
\end{proposition}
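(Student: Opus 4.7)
The plan is to decompose the event using the independence of $X^{1}$ and $X^{2}$. Writing $\tau = \tau^{2}$ on the event in question and applying Fubini after conditioning on $X^{2}$, one obtains
\[
\mathbb{P}[\tau = \tau^{2} < 2,\, X^{1}_{\tau} - I^{1}_{\tau} > 4,\, I^{2}_{\tau} > -y]
= \mathbb{E}\bigl[\indicator_{\{\tau^{2} < 2,\, I^{2}_{\tau^{2}} > -y\}}\, f(\tau^{2})\bigr],
\]
where $f(t) := \mathbb{P}[\tau^{1} > t,\, X^{1}_{t} - I^{1}_{t} > 4]$. I would then prove separately: (a) $f(t) \geq c_{0} > 0$ for every $t \in [1, 2]$, and (b) $\mathbb{P}[\tau^{2} < 2,\, I^{2}_{\tau^{2}} > -y] \geq c_{0}' y$ for every $y \in (0, 1]$. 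Multiplying the two bounds yields the desired conclusion with $c_{1} = c_{0} c_{0}'$.

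For (a), consider $E_{1} := \{X^{1}_{1} - I^{1}_{1} > 5\} \cap \{\sup_{s \in [1,2]} |X^{1}_{s} - X^{1}_{1}| < 1/2\}$. By the Markov property of $X^{1}$ at time $1$, the two conditions defining $E_{1}$ are independent and, using that $\{X^{1}_{1+s} - X^{1}_{1}\}_{s \geq 0}$ has the same law as $X^{1}$, the small-values theorem for stable processes (cf.\ \cite[Chapter~VIII, Proposition~3]{bertoin1996levy}) and the continuity and positivity of the density of $X^{1}_{1} - I^{1}_{1}$ on $(0, \infty)$ make both factors positive. On $E_{1}$, one has $X^{1}_{s} > X^{1}_{1} - 1/2 > I^{1}_{1} + 9/2$ for every $s \in [1, 2]$, whence $I^{1}_{t} = I^{1}_{1}$ on $[1, 2]$, so that $\tau^{1} > 2 \geq t$ and $X^{1}_{t} - I^{1}_{t} > 9/2 > 4$ for every $t \in [1, 2]$. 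Therefore $f(t) \geq \mathbb{P}[E_{1}] =: c_{0} > 0$.

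For (b), introduce the first-passage time $\sigma_{u} := \inf\{t \geq 0 : X^{2}_{t} \leq -u\}$ and the overshoot $V_{u} := -u - X^{2}_{\sigma_{u}}$. The key inclusion is
\[
\{\sigma_{y/2} \in (1, 2),\, V_{y/2} < y/2\} \subset \{\tau^{2} < 2,\, I^{2}_{\tau^{2}} > -y\}.
\]
Indeed, on the left-hand event $\sigma_{y/2} > 1$ forces $I^{2}_{1} > -y/2$, while $X^{2}_{\sigma_{y/2}} \leq -y/2 < I^{2}_{1}$ makes $\sigma_{y/2}$ a new-infimum time, so $\tau^{2} \leq \sigma_{y/2} < 2$; and $I^{2}_{\tau^{2}}$ equals either $I^{2}_{1}$ (if $\tau^{2} < \sigma_{y/2}$) or $X^{2}_{\sigma_{y/2}}$ (if $\tau^{2} = \sigma_{y/2}$), and both exceed $-y$. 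It thus suffices to establish $\mathbb{P}[\sigma_{y/2} \in (1, 2),\, V_{y/2} < y/2] \gtrsim y$. The scaling property of $X^{2}$ gives $(\sigma_{u}, V_{u}) \stackrel{d}{=} (u^{3/2} \sigma_{1},\, u V_{1})$, so with $A := (y/2)^{-3/2}$ this is equivalent to
\[
\mathbb{P}[\sigma_{1} \in (A, 2A),\, V_{1} < 1] \gtrsim A^{-2/3} \qquad \text{for all $A \geq 1$.}
\]
Since $u \mapsto \sigma_{u}$ is a $2/3$-stable subordinator (by scaling together with \cite[Chapter VIII, Lemma 1]{bertoin1996levy}), $\mathbb{P}[\sigma_{1} \in (A, 2A)] \asymp A^{-2/3}$, reducing the task to showing $\mathbb{P}[V_{1} < 1 \mid \sigma_{1} \in (A, 2A)] \geq c > 0$ uniformly in $A \geq 1$.

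The main obstacle is the uniformity in $A$ of this conditional probability. For $A$ in a bounded range it is immediate from the continuity and positivity of the joint density of $(\sigma_{1}, V_{1})$. For large $A$ I would decompose at the first-passage time: by the Poisson point process structure of the jumps of $X^{2}$, conditional on $X^{2}_{\sigma_{1}^{-}} = w > -1$ the jump at $\sigma_{1}$ has density proportional to $z^{-5/2} \indicator_{\{z > 1+w\}}\, dz$, giving
\[
\mathbb{P}[V_{1} < 1 \mid X^{2}_{\sigma_{1}^{-}} = w] = 1 - \biggl(\frac{1+w}{2+w}\biggr)^{\!3/2},
\]
which is bounded below by a positive constant uniformly for $w$ in any compact subset of $(-1, \infty)$. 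The heart of the matter is therefore to show that, conditional on $\sigma_{1} \in (A, 2A)$ for large $A$, the pre-passage value $X^{2}_{\sigma_{1}^{-}}$ remains in such a bounded range with probability bounded below by a positive constant uniformly in $A$. This is a quasi-stationarity statement for $X^{2}$ killed at its first passage below $-1$ and can be carried out either via explicit scale-function computations (with $W(x) = x^{1/2}/\Gamma(3/2)$) together with the Wiener--Hopf factorization for spectrally negative stable processes, or via general Yaglom theory for killed $\alpha$-stable processes; combining this with the overshoot tail bound of Lemma~\ref{lem:Levy-large-overshoot} to rule out the contribution of large overshoots completes the argument.
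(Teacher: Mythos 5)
Your overall decomposition into (a) a uniform positive lower bound on the $X^{1}$-factor and (b) a linear-in-$y$ bound on the $X^{2}$-factor mirrors the paper's proof, and part (a) is correct and essentially the same as the paper's. The inclusion $\{\sigma_{y/2}\in(1,2),\,V_{y/2}<y/2\}\subset\{\tau^{2}<2,\,I^{2}_{\tau^{2}}>-y\}$ is also valid. But the argument for part (b) is not carried out, and the route you sketch would not deliver it. First, the map $u\mapsto\sigma_{u}=\inf\{t:X^{2}_{t}\leq -u\}$ is \emph{not} a subordinator: $X^{2}$ has only downward jumps, so it creeps \emph{upward}, and it is the upward first-passage process that is the $2/3$-stable subordinator; the downward first-passage process has overshoots and does not have independent increments. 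The correct tail follows from what the paper itself records in the proof of Lemma~\ref{lem:tauj_tail}: $\mathbb{P}[\sigma_{1}>t]=\mathbb{P}[I^{2}_{1}>-t^{-2/3}]\asymp (t^{-2/3})^{1/2}=t^{-1/3}$, using $\mathbb{P}[I^{2}_{1}>-x]\sim cx^{1/2}$ from \cite[Chapter~VIII, Proposition~2]{bertoin1996levy}. So $\mathbb{P}[\sigma_{1}\in(A,2A)]\asymp A^{-1/3}$, not $A^{-2/3}$. Second, the uniform constant lower bound for $\mathbb{P}[V_{1}<1\mid\sigma_{1}\in(A,2A)]$ that you aim for is almost certainly false. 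Granting the proposition, $\mathbb{P}[\sigma_{1}\in(A,2A),\,V_{1}<1]\asymp A^{-2/3}$, so with the correct tail the conditional probability is $\asymp A^{-1/3}\to 0$. This is consistent with the observation that, conditioned on $\{\sigma_{1}\in(A,2A)\}$, the pre-passage value $X^{2}_{\sigma_{1}^{-}}$ is typically of order $A^{2/3}$ (the conditioned process is at height $\asymp t^{2/3}$), so the overshoot-below-$1$ probability $1-\bigl((1+w)/(2+w)\bigr)^{3/2}\asymp(1+w)^{-1}$ is only $\asymp A^{-2/3}$ for typical $w$; the linear bound comes from the $A^{-1/3}$-rare event that $X^{2}_{\sigma_{1}^{-}}$ has order one, and your ``quasi-stationarity'' plan would be proving that this rare event has constant conditional probability, which is not the case.

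Beyond these two errors, the main gap is that you never actually establish $\mathbb{P}[\sigma_{1}\in(A,2A),\,V_{1}<1]\gtrsim A^{-2/3}$: the final paragraph lists candidate tools (scale functions, Wiener--Hopf, Yaglom theory) without an argument, and this is precisely the quantitative heart of the proposition. The paper avoids conditioning on the passage time altogether. It applies the Markov property at time $1$ to factor the $X^{2}$-probability into $\mathbb{P}[I^{2}_{1}>-y,\,X^{2}_{1}\in[1,2]]$ (small running infimum but macroscopic endpoint at time $1$, which is $\gtrsim y^{1/2}$) and $\inf_{z\in[1,3]}\mathbb{P}[\tau_{z}\leq 1,\,X^{2}_{\tau_{z}}>-z-y]$ (small overshoot when crossing from a macroscopic level, which is $\gtrsim y^{1/2}$ via the explicit overshoot law of \cite[Chapter~VIII, Exercise~3]{bertoin1996levy}); the two $y^{1/2}$ factors multiply to give $y$. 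Some decomposition that disentangles ``being close to the barrier at time $1$'' from ``small overshoot at passage'' seems essential here, and if you fill in your missing step you are likely to be led back to the paper's argument.
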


\begin{proof}
Let $x,y \in (0,\infty)$. By~\eqref{eq:Xj-Ij-fixed-time} and the independence of $X^{1},X^{2}$,
\begin{align}
\mathbb{P}&\bigl[ \tau = \tau^{2} < 2, \, X^{1}_{\tau} - I^{1}_{\tau} > x, \, I^{2}_{\tau} > -2y \bigr] \notag \\
&\geq \mathbb{P}\bigl[ \tau^{1} \geq 2, \, \inf\nolimits_{ 1 \leq t \leq 2 }(X^{1}_{t}-I^{1}_{t}) > x, \, \tau^{2} \leq 2, \, I^{2}_{\tau^{2}} > -2y \bigr] \notag \\
&= \mathbb{P}\bigl[ \tau^{1} \geq 2, \, \inf\nolimits_{ 1 \leq t \leq 2 }(X^{1}_{t}-I^{1}_{t}) > x \bigr] \, \mathbb{P}\bigl[ \tau^{2} \leq 2, \, I^{2}_{\tau^{2}} > -2y \bigr].
\label{eq:stable-X1-I1large-I2small-X1X2indep}
\end{align}
For the first term of the product in~\eqref{eq:stable-X1-I1large-I2small-X1X2indep},
we have $\mathbb{P}[ X^{1}_{1} - I^{1}_{1} > 2x ] > 0$
by \cite[Chapter~VI, Proposition~3 and Chapter~VII, Corollary~2-(i)]{bertoin1996levy} and
$\mathbb{P}[ I^{1}_{1} > -x ] > 0$ by \cite[Chapter~VIII, Proposition~2]{bertoin1996levy},
and then, since $\{ X^{1}_{t+1} - X^{1}_{1} \}_{t \geq 0}$ has the same law as $X^{1}$
and is independent of $\{ X^{1}_{t} \}_{0 \leq t\leq 1}$,
\begin{align}
  \mathbb{P}\bigl[ \tau^{1} \geq 2, \, \inf\nolimits_{ 1 \leq t \leq 2 }(X^{1}_{t}-I^{1}_{t}) > x \bigr]
&\geq \mathbb{P}\bigl[ X^{1}_{1} - I^{1}_{1} > 2x, \, \inf\nolimits_{ 0 \leq t \leq 1 }(X^{1}_{t+1}-X^{1}_{1}) > -x \bigr] \notag \\
&= \mathbb{P}[ X^{1}_{1} - I^{1}_{1} > 2x ] \,  \mathbb{P}[ I^{1}_{1} > -x ]
  > 0.
\label{eq:stable-X1-I1large-I2small-X1}
\end{align}
For the second term of the product in~\eqref{eq:stable-X1-I1large-I2small-X1X2indep},
setting $\tau_{z} := \inf\{ t \geq 0 \mid X^{2}_{t} \leq -z \}$ for $z \in (0,\infty)$,
by $\tau^{2} \geq 1$ and the Markov property of $X^{2}$ at time $1$ we have
\begin{align}
&\mathbb{P}\bigl[ \tau^{2} \leq 2, \, I^{2}_{\tau^{2}} > -2y \bigr] \notag \\
&\geq \mathbb{P}\bigl[ I^{2}_{1} > -y, \, X^{2}_{1} \in [1,2], \, \tau_{y} \leq 2, \, X^{2}_{\tau_{y}} > -2y \bigr] \notag \\
&\geq \mathbb{P}\bigl[ I^{2}_{1} > -y, \, X^{2}_{1} \in [1,2] \bigr] \, \inf\nolimits_{z \in [1,2]}\mathbb{P}\bigl[ \tau_{z+y} \leq 1, \, X^{2}_{\tau_{z+y}} > -z-2y \bigr].
\label{eq:stable-X1-I1large-I2small-X2}
\end{align}
Since the left-hand side of~\eqref{eq:stable-X1-I1large-I2small} is non-decreasing in $y$,
in view of~\eqref{eq:stable-X1-I1large-I2small-X1X2indep}, \eqref{eq:stable-X1-I1large-I2small-X1}
and~\eqref{eq:stable-X1-I1large-I2small-X2} it suffices to show that there exist
$c_{2},c_{3},c_{4} \in (0,\infty)$ with $c_{2} \leq 1$ such that for any $y \in (0,c_{2}]$ and any $z \in [1,3]$,
\begin{align} \label{eq:stable-X1-I1large-I2small-X2-term1}
\mathbb{P}\bigl[ I^{2}_{1} > -y, \, X^{2}_{1} \in [1,2] \bigr] &\geq c_{3}y^{1/2}, \\
\mathbb{P}\bigl[ \tau_{z} \leq 1, \, X^{2}_{\tau_{z}} > -z-y \bigr] &\geq c_{4}y^{1/2}.
\label{eq:stable-X1-I1large-I2small-X2-term2}
\end{align}

For~\eqref{eq:stable-X1-I1large-I2small-X2-term1}, set $\sigma_{z} := \inf\{ t \geq 0 \mid X^{2}_{t} > z \}$
for $z \in [0,\infty)$, so that $\mathbb{P}[ \sigma_{z} < \infty ] = 1$
by \cite[Chapter~VII, Proof of Theorem~1]{bertoin1996levy}, and let $y,u \in (0,\infty)$.
Then by the scaling property of $X^{2}$ and \cite[Chapter~VIII, Proposition~2]{bertoin1996levy},
for a constant $c_{5} \in [1,\infty)$ independent of $y,u$ we have
$\mathbb{P}\bigl[ I^{2}_{u/2} > -y \bigr] = \mathbb{P}\bigl[ I^{2}_{1} > -(u/2)^{-2/3}y \bigr] \leq c_{5}u^{-1/3}y^{1/2}$,
which together with the scaling property of $X^{2}$ and \cite[Chapter~VII, Theorem~8]{bertoin1996levy}
implies that, with $a := u^{-2/3}$,
\begin{equation} \label{eq:stable-X1-I1large-I2small-X2-term1-1}
\begin{split}
\mathbb{P}\bigl[ I^{2}_{\sigma_{a}} > -ay, \, \sigma_{a} < 1/2 \bigr]
  &= \mathbb{P}\bigl[ I^{2}_{\sigma_{1}} > -y, \, \sigma_{1} < u/2 \bigr] \\
&= \mathbb{P}\bigl[ I^{2}_{\sigma_{1}} > -y \bigr]
  - \mathbb{P}\bigl[ I^{2}_{\sigma_{1}} > -y, \, \sigma_{1} \geq u/2 \bigr] \\
&\geq y^{1/2}/(1+y)^{1/2} - \mathbb{P}\bigl[ I^{2}_{u/2} > -y \bigr] \\
&\geq \bigl( (1+y)^{-1/2} - c_{5}u^{-1/3} \bigr) y^{1/2}.
\end{split}
\end{equation}
Choosing $u := 125c_{5}^{3}$ and replacing $y$ with $a^{-1}y=u^{2/3}y$ in
\eqref{eq:stable-X1-I1large-I2small-X2-term1-1}, for any $y \in (0,a]$ we obtain
\begin{equation} \label{eq:stable-X1-I1large-I2small-X2-term1-2}
\mathbb{P}\bigl[ I^{2}_{\sigma_{a}} > -y, \, \sigma_{a} < 1/2 \bigr]
  \geq \bigl( 2^{-1/2} - c_{5}u^{-1/3} \bigr) u^{1/3} y^{1/2}
  > 2c_{5}y^{1/2}.
\end{equation}
Moreover, for all $x>0$, we let $\p_x$ denote the law of $X^{2}+x$, and
let $\p_x^{+}$ be the probability measure on the space of $[0,\infty)$-valued cadlag paths
(equipped with the $\sigma$-algebra generated by the coordinate process $X=\{X_{t}\}_{t\geq 0}$) given by 
\begin{align*}
\p_x^{+}\bigl[ X_t \in dy \bigr] = \frac{W(y)}{W(x)} \p_x\bigl[ X_t \in dy, \, t < T_{(-\infty,0)} \bigr],
\end{align*}
where $W$ is as in the proof of Proposition~\ref{prop:reflection_mean_before_tau} and
$T_{(-\infty,0)} := \inf\{ t \geq 0 \mid X_{t} < 0 \}$. Then,
by \cite[Chapter~VII, Proposition~14]{bertoin1996levy}
there exists a probability measure $\p_0^{+}$ such that $\p_x^{+} \to \p_0^{+}$ as $x \downarrow 0$
in the sense of finite-dimensional distributions, and
by \cite[Chapter~VII, Corollary~16]{bertoin1996levy} we have
\begin{align*}
\p_0^{+}\bigl[ X_t \in dy \bigr] = \frac{yW(y)}{t} \p\bigl[ X^{2}_{t} \in dy \bigr].
\end{align*}
It follows that 
\begin{align*}
&\p\bigl[ X_{1/2}^2 > 3/2 - b, \, I_{1/2}^2 \geq - b \bigr] = \p_b\Bigl[ X_{1/2} > 3/2, \, \inf_{0 \leq s \leq 1/2} X_{s} > 0 \Bigr] \\
&=\p_b\bigl[ X_{1/2} > 3/2, \, T_{(-\infty,0)} > 1/2 \bigr] = W(b) \int_{3/2}^{\infty} \frac{1}{W(z)} \p_b^{+}\bigl[ X_{1/2} \in dz \bigr]
\end{align*}
for any $b > 0$, and
\begin{align*}
\int_{3/2}^{\infty} \frac{1}{W(z)} \p_b^{+}\bigl[ X_{1/2} \in dz \bigr]
  &\xrightarrow{b \downarrow 0} \int_{3/2}^{\infty} \frac{1}{W(z)} \p_0^{+}\bigl[ X_{1/2} \in dz \bigr] 
  =2\int_{3/2}^{\infty} z \p\bigl[ X^{2}_{1/2} \in dz \bigr] > 0
\end{align*}
since $\mathbb{P}[ X^{2}_{1/2} > 3/2 ] > 0$ by the scaling property of
$X^{2}$ and \cite[Chapter~VII, Corollary~2-(i)]{bertoin1996levy}.
We see therefore that, if $c_{5} \in [1,\infty)$ is large enough so that
$a = u^{-2/3} = (5c_{5})^{-2}$ is small enough, then 
\begin{equation}\label{eq:stable-X1-I1large-I2small-X2-term1-3}
q_1 := \p\bigl[ \sigma_{3/2 - a} < 1/2, \, I_{\sigma_{3/2-a}}^2 \geq -a \bigr]
  \geq \p\bigl[ X_{1/2}^2 > 3/2 - a, \, I_{1/2}^2 \geq -a \bigr] > 0.
\end{equation}
Noting that $\mathbb{P}\bigl[ X^{2}_{\sigma_{z}}=z \bigr] = 1$ for $z \in (0,\infty)$ and that
$q_{2} := \mathbb{P}\bigl[ \sup_{ 0 \leq t \leq 1 }|X^{2}_{t}| \leq 1/2 \bigr] > 0$
by \cite[Chapter~VIII, Proposition~3]{bertoin1996levy}, from the strong Markov property of
$X^{2}$ at times $\sigma_{a},\sigma_{3/2}$ (see, e.g., \cite[Chapter~I, Proposition~6]{bertoin1996levy}),
\eqref{eq:stable-X1-I1large-I2small-X2-term1-3} and~\eqref{eq:stable-X1-I1large-I2small-X2-term1-2} we get
\begin{align*}
&\mathbb{P}\bigl[ I^{2}_{1} > -y, \, X^{2}_{1} \in [1,2] \bigr] \\
&\geq \mathbb{P}\Bigl[ I^{2}_{\sigma_{a}} > -y, \, \sigma_{a} < \tfrac{1}{2}, \, \inf_{ \sigma_{a} \leq t \leq \sigma_{3/2} } X^{2}_{t} \geq 0, \, \sigma_{3/2} - \sigma_{a} < \tfrac{1}{2}, \, \sup_{ 0 \leq t \leq 1 } \bigl|X^{2}_{t+\sigma_{3/2}}-\tfrac{3}{2}\bigr| \leq \tfrac{1}{2} \Bigr] \\
&= \mathbb{P}\bigl[ I^{2}_{\sigma_{a}} > -y, \, \sigma_{a} < \tfrac{1}{2} \bigr] q_{1} q_{2}
  \geq (2c_{5}q_{1}q_{2}) y^{1/2},
\end{align*}
proving~\eqref{eq:stable-X1-I1large-I2small-X2-term1}.

Next, to see~\eqref{eq:stable-X1-I1large-I2small-X2-term2}, let $y \in (0,1]$ and $z \in [1,3]$.
By \cite[Chapter~VIII, Exercise~3]{bertoin1996levy},
\begin{equation} \label{eq:stable-X1-I1large-I2small-X2-term2-1}
\mathbb{P}\bigl[ X^{2}_{\tau_{z}} > -z-y \bigr]
  = \frac{1}{\pi} \int_{0}^{y/(z+y)}x^{-1/2}(1-x)^{-1/2}\,dx 
  \geq \frac{y^{1/2}}{\pi}.
\end{equation}
Let $b \in (0,\infty)$, and recall that
$\bigl| \mathbb{E}\bigl[ e^{\sqrt{-1}\lambda X^{2}_{1}} \bigr] \bigr| = e^{-c_{6}|\lambda|^{3/2}}$
for any $\lambda \in \mathbb{R}$ for some $c_{6} \in (0,\infty)$ by \cite[Chapter~VIII, equation~(1)]{bertoin1996levy}
and hence that the law of $X^{2}_{1}$ has a bounded continuous density
$f_{1}\colon \mathbb{R} \to [0,\infty)$ by Fourier inversion. Then noting that
$\pi^{-1}\int_{0}^{\alpha/(\alpha+\beta)} x^{-1/2} (1-x)^{-1/2} \, dx \leq (\alpha/\beta)^{1/2}$
for any $\alpha,\beta \in (0,\infty)$ by considering the cases of $\alpha \leq \beta$
and $\alpha > \beta$ separately, we see from the Markov property of $X^{2}$ at time $b$,
\cite[Chapter~VIII, Exercise~3]{bertoin1996levy} and the scaling property of $X^{2}$ that
\begin{align}
&\mathbb{P}\bigl[ \tau_{z} > b, \, X^{2}_{\tau_{z}} > -z-y \bigr] \notag \\
&= \mathbb{E}\bigl[ \indicator_{ \{ \tau_{z} > b \} } \bigl( \mathbb{P}[ X^{2}_{\tau_{z+x}} > -z-y-x ] |_{x=X^{2}_{b}} \bigr) \bigr] \notag \\
&= \mathbb{E}\biggl[ \indicator_{ \{ \tau_{z} > b \} } \frac{1}{\pi} \int_{0}^{y/(z+y+X^{2}_{b})} x^{-1/2} (1-x)^{-1/2} \, dx \biggr] \notag \\
&\leq y^{1/2} \mathbb{E}\bigl[ ( z+X^{2}_{b} )^{-1/2} \indicator_{ \{ \tau_{z} > b \} } \bigr]
  = y^{1/2} b^{-1/3} \mathbb{E}\bigl[ ( w + X^{2}_{1} )^{-1/2} \indicator_{ \{ \tau_{w} > 1 \} } \bigr] \notag \\
&\leq y^{1/2} b^{-1/3} \mathbb{E}\bigl[ ( w + X^{2}_{1} )^{-1/2} \indicator_{ \{ w + X^{2}_{1} > 0 \} } \bigr]
  = y^{1/2} b^{-1/3} \int_{0}^{\infty} x^{-1/2} f_{1}(x-w) \, dx \notag \\
&\leq y^{1/2} b^{-1/3} \biggl( c_{7} \int_{0}^{1} x^{-1/2} \, dx + \int_{1}^{\infty} f_{1}(x-w) \, dx \biggr)
  \leq y^{1/2} b^{-1/3} (2c_{7}+1),
\label{eq:stable-X1-I1large-I2small-X2-term2-2}
\end{align}
where $w := b^{-2/3} z$ and $c_{7} := \sup_{x\in\mathbb{R}}f_{1}(x)$.
Thus by choosing $b:=\pi^{3}(4c_{7}+2)^{3}$, from~\eqref{eq:stable-X1-I1large-I2small-X2-term2-1}
and~\eqref{eq:stable-X1-I1large-I2small-X2-term2-2} we obtain
\begin{equation} \label{eq:stable-X1-I1large-I2small-X2-term2-3}
\begin{split}
\mathbb{P}\bigl[ \tau_{z} \leq b, \, X^{2}_{\tau_{z}} > -z-y \bigr]
  &= \mathbb{P}\bigl[ X^{2}_{\tau_{z}} > -z-y \bigr]
  - \mathbb{P}\bigl[ \tau_{z} > b, \, X^{2}_{\tau_{z}} > -z-y \bigr] \\
&\geq (2\pi)^{-1} y^{1/2}.
\end{split}
\end{equation}
Further, by \cite[Chapter~VIII, Proposition~4]{bertoin1996levy} and the scaling property
of $X^{2}$ there exist $A \in [2,\infty)$ and $c_{8} \in (0,\infty)$ such that for any
$\epsilon \in [ \frac{1}{10}b^{-2/3}, b^{-2/3} ]$, any $x \in [ \frac{1}{10}A, 10A ]$ and any $s \in (0,\infty)$,
\begin{equation*}
\mathbb{P}\bigl[ I^{2}_{s^{-3/2}} \in [ -x/s, -(1-\epsilon)x/s ) \bigr] 
  = \mathbb{P}\bigl[ I^{2}_{1} \in [ -x, -(1-\epsilon)x ) \bigr]
  \geq c_{8},
\end{equation*}
which in turn, with $\epsilon = \epsilon_{0} := \frac{1}{6} b^{-2/3}$,
$x = x_{z,b} := ( z - \frac{1}{2}b^{-2/3} ) A$ and $s=A$, yields
\begin{equation} \label{eq:stable-X1-I1large-I2small-X2-term2-4}
\begin{split}
&\mathbb{P}\bigl[ \tau_{z,b} \leq \tfrac{1}{2}, \, X^{2}_{\tau_{z,b}} \in [ -z+\tfrac{1}{2}b^{-2/3}, -z+b^{-2/3} ] \bigr] \\
&\geq \mathbb{P}\bigl[ I^{2}_{A^{-3/2}} \in [ -z+\tfrac{1}{2}b^{-2/3}, -z+b^{-2/3} ) \bigr] \\
&\geq \mathbb{P}\bigl[ I^{2}_{A^{-3/2}} \in [ -x_{z,b}/A, -(1-\epsilon_{0})x_{z,b}/A ) \bigr]
  \geq c_{8},
\end{split}
\end{equation}
where $\tau_{z,b} := \tau_{z-b^{-2/3}}$. It follows from the strong Markov property
of $X^{2}$ at time $\tau_{z,b}$, \eqref{eq:stable-X1-I1large-I2small-X2-term2-4},
the scaling property of $X^{2}$ and~\eqref{eq:stable-X1-I1large-I2small-X2-term2-3}
that, provided $y \in (0,\frac{1}{2}b^{-2/3}]$,
\begin{align*}
&\mathbb{P}\bigl[ \tau_{z} \leq 1, \, X^{2}_{\tau_{z}} > -z-y \bigr] \\
&\geq \mathbb{P}\bigl[ \tau_{z,b} \leq \tfrac{1}{2}, \, X^{2}_{\tau_{z,b}} \in [ -z+\tfrac{1}{2}b^{-2/3}, -z+b^{-2/3} ], \, \tau_{z} \leq \tau_{z,b} + \tfrac{1}{2}, \, X^{2}_{\tau_{z}} > -z-y \bigr] \\
&= \mathbb{E}\bigl[ \indicator_{ \{ \tau_{z,b} \leq \frac{1}{2}, \, X^{2}_{\tau_{z,b}} \in [ -z+\frac{1}{2}b^{-2/3}, -z+b^{-2/3} ] \} } \bigl( \mathbb{P}[ \tau_{z+x} \leq \tfrac{1}{2}, \, X^{2}_{\tau_{z+x}} > -z-y-x ]|_{ x = X^{2}_{\tau_{z,b}} } \bigr) \bigr] \\
&\geq c_{8} \inf\nolimits_{ x \in [ \frac{1}{2}b^{-2/3}, b^{-2/3} ] } \mathbb{P}\bigl[ \tau_{x} \leq \tfrac{1}{2}, \, X^{2}_{\tau_{x}} > -x-y \bigr] \\
&= c_{8} \inf\nolimits_{ x \in [1,2] } \mathbb{P}\bigl[ \tau_{x} \leq \sqrt{2}b, \, X^{2}_{\tau_{x}} > -x-2b^{2/3}y \bigr]
  \geq c_{8} (2\pi)^{-1} (2b^{2/3})^{1/2} y^{1/2},
\end{align*}
which proves~\eqref{eq:stable-X1-I1large-I2small-X2-term2} and thereby completes
the proof of~\eqref{eq:stable-X1-I1large-I2small}.
\end{proof}

\begin{proposition} \label{prop:stable-reflected-exp-moment}
Let $X = \{X_{t}\}_{t \geq 0}$ be a $3/2$-stable L\'{e}vy process with only downward jumps and
$X_{0} = 0$, and set $Z_{t} := X_{t} - \inf_{0\leq s\leq t}(X_{s} \wedge 0)$ for $t \in [0,\infty)$.
Then there exist $c_{1},c_{2} \in (0,\infty)$ such that
$\mathbb{P}[\sup_{ 0 \leq t \leq 1 } Z_{t} \geq x] \leq c_{1} e^{-c_{2} x}$
for any $x \in [0,\infty)$.
\end{proposition}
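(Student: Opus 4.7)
The plan is to combine the exponential martingale of the spectrally negative $3/2$-stable process with Doob's maximal inequality, after observing that multiplying this martingale by the non-decreasing process $e^{-\lambda I_{t}}$ preserves the submartingale property. Writing $I_{t}:=\inf_{0\leq s\leq t}X_{s}$, we have $I_{t}\leq 0$ since $X_{0}=0$, so $Z_{t}=X_{t}-I_{t}$. Because $X$ has only downward jumps, its Laplace exponent $\psi(\lambda):=\log\mathbb{E}[e^{\lambda X_{1}}]$ is finite for every $\lambda\in[0,\infty)$, and by scaling equals $c\lambda^{3/2}$ for some $c\in(0,\infty)$; hence $M_{t}^{(\lambda)}:=e^{\lambda X_{t}-\psi(\lambda)t}$ is a non-negative martingale for every $\lambda>0$.

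Since $-I_{t}$ is non-decreasing in $t$, the process $U_{t}^{(\lambda)}:=e^{\lambda Z_{t}-\psi(\lambda)t}=M_{t}^{(\lambda)}\cdot e^{-\lambda I_{t}}$ is a non-negative submartingale: for $0\leq s<t$, $e^{-\lambda I_{t}}\geq e^{-\lambda I_{s}}$ pointwise and $M_{t}^{(\lambda)}\geq 0$, so
\[
\mathbb{E}\bigl[U_{t}^{(\lambda)}\bigm|\mathcal{F}_{s}\bigr]\geq e^{-\lambda I_{s}}\,\mathbb{E}\bigl[M_{t}^{(\lambda)}\bigm|\mathcal{F}_{s}\bigr]=e^{-\lambda I_{s}}M_{s}^{(\lambda)}=U_{s}^{(\lambda)}.
\]
Doob's maximal inequality applied to $U^{(\lambda)}$ on $[0,1]$, together with the observation that $\sup_{0\leq t\leq 1}Z_{t}\geq x$ forces $\sup_{0\leq t\leq 1}U_{t}^{(\lambda)}\geq e^{\lambda x-\psi(\lambda)}$ (using $\psi(\lambda)t\leq\psi(\lambda)$ for $t\in[0,1]$), therefore yields
\[
\mathbb{P}\Bigl[\sup_{0\leq t\leq 1}Z_{t}\geq x\Bigr]\leq e^{-\lambda x+\psi(\lambda)}\,\mathbb{E}\bigl[U_{1}^{(\lambda)}\bigr]=e^{-\lambda x}\,\mathbb{E}\bigl[e^{\lambda Z_{1}}\bigr].
\]
Everything thus reduces to showing $\mathbb{E}[e^{\lambda Z_{1}}]<\infty$ for some (equivalently, for every) $\lambda>0$.

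To prove this finiteness I would invoke the standard time-reversal identity $Z_{1}\stackrel{d}{=}S_{1}:=\sup_{0\leq t\leq 1}X_{t}$: the process $\widetilde{X}_{u}:=X_{1}-X_{(1-u)^{-}}$ on $[0,1]$ has the same law as $X$, and a direct computation gives $\sup_{u\in[0,1]}\widetilde{X}_{u}=X_{1}-I_{1}=Z_{1}$ a.s. Doob's inequality applied once more to $M^{(\mu)}$ then yields $\mathbb{P}[S_{1}\geq y]\leq e^{-\mu y+\psi(\mu)}$ for every $\mu>0$; picking $\mu=2\lambda$ and using the layer-cake formula,
\[
\mathbb{E}\bigl[e^{\lambda S_{1}}\bigr]=1+\int_{0}^{\infty}\lambda e^{\lambda y}\mathbb{P}[S_{1}\geq y]\,dy\leq 1+\lambda e^{\psi(2\lambda)}\int_{0}^{\infty}e^{-\lambda y}\,dy<\infty.
\]
Fixing any $\lambda>0$ and setting $c_{2}:=\lambda$, $c_{1}:=\mathbb{E}[e^{\lambda Z_{1}}]$ will then finish the proof.

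There is essentially no serious obstacle: the argument rests on two standard observations, namely (i) that $e^{\lambda Z_{t}-\psi(\lambda)t}$ is a non-negative submartingale because $-I_{t}$ is non-decreasing, and (ii) that the spectrally negative stable process has Laplace exponent $\psi$ finite on all of $[0,\infty)$, which via Doob forces the running supremum $S_{1}$ to possess exponential moments of every order. The one mildly subtle point is the time-reversal identity $Z_{1}\stackrel{d}{=}S_{1}$ used to transfer the moment bound from $S_{1}$ to $Z_{1}$; this sidesteps the (crude) attempt to bound $\sup_{0\leq t\leq 1}Z_{t}$ by $S_{1}-I_{1}$, which would only produce a polynomial tail because $-I_{1}$ has only polynomial tail.
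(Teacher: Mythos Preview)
Your argument is correct and takes a genuinely different route from the paper's proof. The paper uses an excursion-theoretic decomposition: it defines stopping times $\sigma_{n},\tau_{n}$ at which $Z$ reaches level $1$ and returns to $0$, observes that the resulting excursions are i.i.d., bounds the number $N$ of such excursions in $[0,1]$ by a geometric random variable, and controls the supremum within each excursion by the exponential tail of $S_{1}$ from \cite[Chapter~VII, Corollary~2-(i)]{bertoin1996levy}. Your approach is more direct: the key observation that $U_{t}^{(\lambda)}=M_{t}^{(\lambda)}e^{-\lambda I_{t}}$ is a submartingale (as the product of a non-negative martingale with a non-decreasing adapted factor) lets a single application of Doob's maximal inequality handle the entire supremum, once the integrability $\mathbb{E}[e^{\lambda Z_{1}}]<\infty$ is secured via the time-reversal identity $Z_{1}\stackrel{d}{=}S_{1}$ and the exponential tail of $S_{1}$. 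Your method is shorter and yields the bound for every $\lambda>0$ simultaneously; the paper's renewal argument is more hands-on but perhaps more robust in settings where the exponential martingale is unavailable. One small point of care: the submartingale property in your Step~2 requires integrability of $U_{t}^{(\lambda)}$ for $t\in[0,1]$, which you should note follows from $\mathbb{E}[e^{\lambda Z_{t}}]=\mathbb{E}[e^{\lambda S_{t}}]\leq\mathbb{E}[e^{\lambda S_{1}}]<\infty$ (established first in your Step~5), so the logical order is Steps~4--5 before Steps~2--3.
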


\begin{proof}
Set $\tau_{0}:=0$, and define sequences $\{\sigma_{n}\}_{n=1}^{\infty},\{\tau_{n}\}_{n=1}^{\infty}$
of stopping times for $X$ inductively by $\sigma_{n}:=\inf\{t \in [\tau_{n-1},\infty) \mid Z_{t} \geq 1\}$
and $\tau_{n}:=\inf\{t \in [\sigma_{n},\infty) \mid Z_{t} =0\}$ for $n \geq 1$, so that
$Z_{\sigma_{n}}=1$ on $\{\sigma_{n}<\infty\}$ by the absence of upward jumps of $X$.
Since $\{Z_{t}\}_{t\geq 0}$ is strong Markov by \cite[Chapter~VI, Proposition~1]{bertoin1996levy},
it easily follows from \cite[Chapter~VII, Theorem~8]{bertoin1996levy} that
$\sigma_{n} < \tau_{n} < \infty$ for any $n \geq 1$ a.s.\ and hence that
$\{ \{ Z_{t+\sigma_{n}} \}_{t \in [0, \tau_{n}-\sigma_{n})} \}_{n=1}^{\infty}$
is i.i.d.\ with law given by that of $\{ X_{t+\sigma} \}_{t\in[0,\tau-\sigma)}$ with
$\sigma:=\inf\{t\in[0,\infty) \mid X_{t} \geq 1\}$ and $\tau:=\inf\{t\in[\sigma,\infty) \mid X_{t} \leq 0\}$.
Moreover, recalling that $\{ X_{t+\sigma}-1 \}_{t\geq 0}$ has the same law as $X$ by
the strong Markov property \cite[Chapter~I, Proposition~6]{bertoin1996levy} of $X$, we have
$q := \mathbb{P}[\tau-\sigma \leq 1] \in (0,1)$ by \cite[Chapter~VIII, Propositions~2 and 4]{bertoin1996levy}
and $c_{3}:=\mathbb{E}[\exp( 2(-\log q) \sup_{0 \leq t < (\tau-\sigma) \wedge 1} X_{t+\sigma} )] < \infty$
by \cite[Chapter~VII, Corollary~2-(i)]{bertoin1996levy}, and the random variable
$N := \min\{ n \geq 1 \mid \tau_{n} > 1 \}$ satisfies
$\mathbb{P}[N \geq n ] \leq \mathbb{P}[\bigcap_{j=1}^{n-1}\{\tau_{j}-\sigma_{j}\leq 1\}] = q^{n-1}$
for any $n \geq 1$. It thus follows that for any $x \in [1,\infty)$, with
$n:=\min(\mathbb{N}\cap[x,\infty))$, $c_{2}:=-\log q$ and $c_{4}:=c_{3}c_{2}^{-1}e^{c_{2}-1}+1$,
\begin{align*}
\mathbb{P}[\sup\nolimits_{0 \leq t \leq 1} Z_{t} \geq x]
	&\leq \mathbb{P}\Bigl[ \bigcup\nolimits_{1 \leq j \leq n} \{ \sup\nolimits_{0 \leq t < (\tau_{n} - \sigma_{n})\wedge 1} Z_{t+\sigma_{j}} \geq x\} \Bigr]
		+ \mathbb{P}[N > n] \\
&\leq n \mathbb{P}[\sup\nolimits_{0 \leq t < (\tau-\sigma) \wedge 1}X_{t+\sigma} \geq x] + q^{n} \\
&\leq n c_{3} e^{-2c_{2}x}+e^{-c_{2}n} \leq (x+1)c_{3}e^{-2c_{2}x}+e^{-c_{2}x} \leq c_{4} e^{-c_{2}x},
\end{align*}
proving the assertion for $x \in [1,\infty)$. The assertion for $x \in [0,1)$ follows by setting $c_{1} := c_{4} \vee e^{c_{2}}$.
\end{proof}

\bibliographystyle{abbrv}
\bibliography{literature}

\end{document}